\DeclareFontFamily{OT1}{pzc}{}
\DeclareFontShape{OT1}{pzc}{m}{it}{<-> s * [1.200] pzcmi7t}{}
\DeclareMathAlphabet{\mathpzc}{OT1}{pzc}{m}{it}
\def\mathcat{\mathpzc}
\renewenvironment{abstract}{
      \@beginparpenalty\@lowpenalty
      \begin{center}
        \bfseries \abstractname
        \@endparpenalty\@M
      \end{center}}
     {\par 
     }
\newtheorem{thm}{Theorem}[section]
\newtheorem{cor}[thm]{Corollary}
\newtheorem{lem}[thm]{Lemma}
\newtheorem{prop}[thm]{Proposition}
\newtheorem{comp}[thm]{Complement}
\theoremstyle{definition}
\newtheorem{examp}[thm]{Example}
\theoremstyle{remark}
\newtheorem{rema}[thm]{Remark}
\newcommand{\Aut}{\mathop{\mbox{\rm Aut}}\nolimits}
\newcommand{\Out}{\mathop{\mbox{\rm Out}}\nolimits}
\newcommand{\Gal}{\mathop{\mbox{\rm Gal}}\nolimits}
\newcommand{\End}{\mathop{\mbox{\rm End}}\nolimits}
\newcommand{\Der}{\mathop{\mbox{\rm Der}}\nolimits}
\newcommand{\Pic}{\mathop{\mbox{\rm Pic}}\nolimits}
\newcommand{\Hom}{\mathop{\mbox{\rm Hom}}\nolimits}
\newcommand{\res}{\mathop{\mbox{\rm res}}\nolimits}
\newcommand{\im}{\mathop{\mbox{\rm im}}\nolimits}
\def\ppsi{\psi}
\def\pphi{\phi}
\def\Pphi{\Phi}
\def\rrr{\sigma_{\substack{\mbox{\tiny{$G$}}}}}
\def\rrn{\sigma_{\substack{\mbox{\tiny{$N$}}}}}
\def\llambda{\kappa_G}
\def\kappaQ{\kappa_Q}
\def\kappaQI{\kappa_{Q,I}}
\def\sigmaQ{\sigma_{\substack{\mbox{\tiny{$Q$}}}}}
\def\sigmaG{\sigma_{\substack{\mbox{\tiny{$G$}}}}}
\def\fiel{\mathfrak k}
\def\bbeta{\gamma}
\def\ttheta{\pi_G}
\def\piN{\pi_N}
\def\piQ{\pi_Q}
\def\ggamma{x}
\def\omu{\mu}
\def\ome{\omega}
\def\Mx{N}
\def\MsA{M_{\substack{\mathstrut{\sigma_{\substack{\mbox{\tiny{$A$}}}}}}}}
\def\BsA{C_{\substack{\mathstrut{\sigma_{\substack{\mbox{\tiny{$A$}}}}}}}}
\def\BbB{M_{\mathrm e_{\substack{\mbox{\tiny{$Q$}}}}}}
\def\BBB{C}
\def\sta{\mathrm e_{\substack{\mbox{\tiny{$Q$}}}} ,\vartheta}
\def\XBSQ{\mathrm{XB}(S|S;\{e\},Q)}
\def\compo{\odot}
\def\mumu{\mu}
\def\dDelta{\omega_{\substack{\mbox{\tiny{$\mathcat {Pic}_{S,Q}$}}}}}
\def\jp{j_{\substack{\mbox{\tiny{$\mathcat {Pic}_{S,Q}$}}}}}
\def\mup{\mumu_{\substack{\mbox{\tiny{$\mathcat {Pic}_{S,Q}$}}}}}
\def\piEG{\pi_{\mathrm e_{\substack{\mbox{\tiny{$G$}}}}}}
\def\piEN{\pi_{\mathrm e_{\substack{\mbox{\tiny{$N$}}}}}}
\def\piEQ{\pi_{\mathrm e_{\substack{\mbox{\tiny{$Q$}}}}}}
\def\iEG{i_{\mathrm e_{\substack{\mbox{\tiny{$G$}}}}}}
\def\iEQ{i_{\mathrm e_{\substack{\mbox{\tiny{$Q$}}}}}}
\def\jEQ{j_{\mathrm e_{\substack{\mbox{\tiny{$Q$}}}}}}
\def\thetaEG{\vartheta_{\mathrm e_{\substack{\mbox{\tiny{$G$}}}}}}
\def\Ggamma{\Gamma}
\def\Egamma{K}
\def\EG{\Gamma_G}
\def\EN{\Gamma_N}
\def\EQ{\Gamma_Q}
\def\Ho{\mathrm H}
\def\Bsp{X}
\def\prin{\xi}
\def\Map{\mathrm{Map}}
\def\MZ{\mathbb Z}
\numberwithin{equation}{section}
\title 
{Normality of algebras over commutative rings, crossed pairs,
and the Teichm\"uller class}
\author{Johannes Huebschmann 
\\ 
USTL, UFR de Math\'ematiques\\
CNRS-UMR 8524
\\
Labex CEMPI (ANR-11-LABX-0007-01)
\\
\newline
59655 Villeneuve d'Ascq Cedex, France\\
Johannes.Huebschmann@math.univ-lille1.fr
 }
\long
\def\MSC#1\EndMSC{\def\arg{#1}\ifx\arg\empty\relax\else
      {\par\narrower\noindent
      2010 Mathematics Subject Classification: #1\par}\fi}
\long
\def\KEY#1\EndKEY{\def\arg{#1}\ifx\arg\empty\relax\else
    {\par\narrower\noindent
      Keywords and Phrases: #1\par}\fi}
\begin{document}

\maketitle

\thispagestyle{plain}
\par\vspace*{.015\textheight}{\centering \em Dedicated to Ronnie Brown on the occasion of his 80th birthday.\par}

\bigskip
\begin{abstract}
Given a commutative ring $S$, a group $Q$ 
that acts on $S$ by ring automorphisms,
and an $S$-algebra endowed with an outer action of $Q$,
we study the associated Teichm\"uller class
in the appropriate third group cohomology group. We extend the classical 
results to this general setting.
\end{abstract}

\bigskip

\MSC 12G05 13B05 16H05 16K50 16S35 20J06
\EndMSC

\KEY Teichm\"uller cocycle, crossed module, crossed pair, 
normal algebra, crossed product,
Deuring embedding problem, group cohomology, 
Galois theory of commutative rings, Azumaya algebra, Brauer group,
Galois cohomology, non-commutative Galois theory,
non-abelian cohomology, number fields, rings of integers,
Brauer group of a topological space,
metacyclic group,
$\mathrm C^*$-algebra
\EndKEY

\tableofcontents

\addcontentsline{toc}{section}{Introduction}
\section*{Introduction}
Let $S$ be a unitary commutative ring, $Q$ a (not necessarily finite)
group, and $\kappaQ \colon 
Q\to \Aut(S)$
an action (not necessarily injective) of $Q$ on $S$ by ring automorphisms.
We refer to an algebra $A$ having $S$ as its center as
a {\em central} $S$-algebra. Given a central $S$-algebra $A$, we denote
by $\Aut(A)$ the group of ring automorphisms of $A$ and by $\Out(A)$ that of 
ring automorphisms modulo inner ones. We {\em define} a $Q$-normal
$S$-{\em algebra} to be a pair $(A, \sigma )$, where $A$ is a central
$S$-algebra and $\sigma \colon Q\to \Out (A)$ a homomorphism that lifts
the action $\kappaQ$ of $Q$ on $S$ in the sense that the composite of
$\sigma$ with 
the obvious map $\Out(A) \to \Aut(S)$ coincides with $\kappaQ$.
The special case where $S$ is a field and $Q$ a finite group of 
automorphisms of $S$ is classical;
in that case, a finite dimensional central simple $S$-algebra
$A$ having the property that
 each member of $Q$ can be extended to an automorphism of $A$
was termed $Q$-{\em normal} by Eilenberg and Mac Lane \cite{MR0025443}.
In view of the Skolem-Noether theorem, see, e.~g., Proposition \ref{skolnoet}
below for a generalization,
 the restriction map
$\Out(A) \to \Aut(S) $ is then injective, and
our definition is then therefore equivalent to 
that of Eilenberg and Mac Lane \cite{MR0025443}.
The classical case was studied already by Teich\-m\"uller \cite{MR0002858}. 
Teich\-m\"uller associated to a  $Q$-normal central simple 
algebra over a field $S$
a 3-cocycle 
of $Q$ with values in the multiplicative group $\mathrm U(S)$ of non-zero
elements of the field $S$, endowed with the $Q$-module structure
coming from the $Q$-action on $S$;
this 3-cocycle was then termed the 
{\em Teich\-m\"uller-cocycle\/}
by Eilenberg and Mac Lane \cite{MR0025443},
see also  \cite{MR0026993}, \cite{MR0025442}.

For the case of a general commutative unitary ring $S$ and a general action
$\kappaQ\colon Q \to \Aut(S)$ of $Q$ on $S$ by ring automorphisms, 
relative to the abelian group $\mathrm U(S)$ of invertible elements of 
the ring $S$, endowed with the $Q$-module structure coming from the 
$Q$-action on $S$,
to any $Q$-normal $S$-algebra
$(A, \sigma )$,
we associate
 a crossed~2-fold extension 
$
\mathrm e_{(A,  \sigma )}
$
starting at $\mathrm U(S)$ and ending at $Q$;
see Subsection~\ref{twothree} below for details.
We refer to this 
crossed~2-fold extension
as the {\em Teich\-m\"uller complex} of $(A, \sigma )$.
In view of the interpretation of 
the third group cohomology group
$\mathrm H^3(Q,\mathrm U(S))$ 
of $Q$ with coefficients in $\mathrm U(S)$
in terms 
of crossed 2-fold
extensions given in  
\cite{crossed}, see also 
\cite{historicalnote},
 $\mathrm e_{(A,  \sigma )}$ represents a class
$[\mathrm e_{(A,  \sigma )}]\in \mathrm H^3(Q,\mathrm U(S))$,  and  
we refer to this class as the 
 {\em Teich\-m\"uller class} of  $(A,  \sigma )$.
In Subsection~\ref{twothree} 
we show
that, in the cocycle description, the 
Teich\-m\"uller class is the one represented by a  Teich\-m\"uller cocycle.
Using that description of the Teichm\"uller class,
we extend the classical results related with the
Teich\-m\"uller cocycle to our general setting
and derive a number of new results.
We organize the material in three chapters.
In Chapter I, we explain the absolute case,
in Chapter II the relative case, and in Chapter III
we offer a number of examples.
Each chapter has its own abstract and introduction.

To our knowledge,
in the literature, 
apart from an interpretation of the \lq\lq Nakayama\rq\rq\ 
cocycle in \cite[\S 4]{MR0051266} 
in terms of abstract kernels,
the Teich\-m\"uller cocycle and its offspring
was never related with crossed modules
or with an equivalent notion.
In this paper, we borrow a number of constructions
from  \cite{MR1803361} and generalize them
within our setting.
It is, perhaps, worthwhile
noting that 
none of the papers by
Fr\"ohlich and Wall seems to contain a reference to Teich\-m\"uller
or to any of the follow-up papers thereof and that
none of the twelve articles that, according to the MR citations,
quote \cite{MR1803361}, contains a reference to
Teichm\"uller or to any of the follow-up papers.
Likewise, among the  articles that, according to the MR citations,
quote Teichm\"uller's original paper \cite{MR0002858},
only \cite{MR1787593} and \cite{MR1843316} 
contain a reference to the paper \cite{MR0349804} by Fr\"ohlich and Wall,
and there is no other reference to the papers  by Fr\"ohlich and Wall
related with the subject, in particular, no reference at all to
\cite{MR1803361} where a version of the Teichm\"uller
cocycle appears.
In this paper, we also explore 
the crossed pair concept, developed in \cite{MR597986}.
This concept
is closely related with that of a
pseudo-module \cite{MR0059911}; 
that paper arose out of a thesis supervised by J.H.C. Whitehead,
pseudo-modules being a generalization of crossed modules.
It seems that, thereafter, pseudo-modules were largely forgotten.
In 
\cref{pseudo}
we  make the relationship between
crossed pairs and pseudo-modules explicit.
We also note that, in class field theory, over a field,
the Teichm\"uller cocycle yields a certain obstruction
to the corresponding local-global principle
\cite{MR0051266} but is rarely spelled out 
explicitly; an explicit hint may be found, e.~g., in 
\cite[Section 11 p.~199]{MR0220697}.
We comment on this principle in 
Subsection \ref{generalremarks} below.

We  work over commutative rings 
(rather than fields on one hand or schemes on the other), and our methods 
are conceptual,
do not involve chain complexes, and avoid cocycle calculations.
It might be worthwhile to extend our methods to ringed spaces,
cf. \cite{MR0199213}.

It is a pleasure to dedicate this paper to Ronnie Brown.
In my thesis,
written with the help and
encouragement of B.~Eckmann,
 I had developed an interpretation of the group cohomology
groups in terms of crossed $n$-fold extensions, cf.
\cite{crossed}. At the time (in 1977), S. Mac Lane had suggested
I should get in contact with R. Brown, which I did,
and Ronnie got excited seeing that interpretation 
of group cohomology. It is fair to say, if I hadn't met Ronnie
at the time I might not have become a research mathematician.
Needless to point out, Ronnie has a long record of research papers
dealing with crossed modules, crossed $n$-fold extensions
and variants thereof, as well as numerous articles on applications
of these notions in topology and on foundational 
issues related with these notions.
The recent monograph \cite{MR2841564} reflects this activity and contains
a host of references.
In the present paper we show how crossed modules and
crossed pairs, a somewhat more
general notion, occur elsewhere in mathematical nature, 
in a way that is, perhaps, a bit surprising at first glance.
\chapter{The absolute case}
\label{cI}

\begin{abstract}
Let $S$ be a unitary commutative ring, $Q$ a group
that acts on $S$ by ring automorphisms, 
and let $R$ denote the subring of $S$ fixed under $Q$.
A $Q$-{\em normal\/}
$S$-algebra consists of  a central
$S$-algebra $A$ and a homomorphism 
$\sigma \colon Q\to \Out (A)$ into the group
$\Out(A)$ of outer automorphisms of $A$
that lifts
the action of $Q$ on $S$.
With respect to the abelian group $\mathrm U(S)$
of invertible elements of $S$, endowed with the $Q$-module structure
coming from the $Q$-action on $S$,
we associate to a $Q$-normal $S$-algebra
$(A, \sigma )$ a crossed~2-fold extension $\mathrm e_{(A,  \sigma )}$
starting at $\mathrm U(S)$ and ending at $Q$,
the {\em Teichm\"uller complex} of $(A, \sigma )$, and this complex, 
in turn, represents
a class, the
{\em Teichm\"uller class} of $(A, \sigma )$,
 in
the third group cohomology group
 $\mathrm H^3(Q,\mathrm U(S))$ of $Q$ 
with coefficients in $\mathrm U(S)$.
We extend some of the classical results to this general setting.
Among others, we relate the Teichm\"uller cocycle map
with the generalized Deuring embedding problem
and develop a seven term exact sequence
involving suitable generalized Brauer groups and
the generalized Teichm\"uller cocycle map.
We also relate the generalized Teichm\"uller cocycle map
with a
suitably defined abelian group
$k\mathcat{Rep}(Q,\mathcat B_{S,Q})$ 
of classes of representations of $Q$ in the $Q$-graded Brauer category
$\mathcat B_{S,Q}$ of $S$ with respect to the given action
of $Q$ on $S$.
\end{abstract}

\section{Introduction}

Exploiting the description of the Teich\-m\"uller class in terms of 
the Teich\-m\"uller complex, we show how 
the classical results
related with the Teich\-m\"uller cocycle for finite dimensional normal 
central simple algebras extend 
to general $Q$-normal algebras  
as defined above. We do not a priori assume 
$A$ to be, e.~g., an Azumaya algebra.
We shall make such an assumption only when truly necessary 
(in Subsections \ref{disc},  \ref{from}, \ref{genazalg}, 
and from Section~\ref{9} on) but
to begin with such an assumption would hide the formal
nature of the arguments.
We  now explain the results in the 
present chapter.

\smallskip

\noindent (1)  Let $(A,\sigma )$ be a $Q$-normal central $S$-algebra.
Theorem \ref{fouro} below says that $(A,\sigma )$ 
{\em has zero Teich\-m\"uller class
if and only if the $Q$-normal structure 
$\sigma_{\vert Q\vert}\colon Q \to \Out(M_{\vert Q\vert}(A))$ 
induced from $\sigma$ 
on the matrix algebra $M_{\vert Q\vert}(A)$ 
over $A$ 
is equivariant\/}, i.e., if and only if
the $Q$-normal structure 
$\sigma_{\vert Q\vert }$
arises from an ordinary action of $Q$ on $M_{\vert Q\vert}(A)$ 
by ring automorphisms. 
In particular, if $S|R$ is a Galois extension of commutative rings 
over $R=S^Q$
with
group $Q$ then an equivariant structure on  $M_{\vert Q\vert }(A)$ comes from
extension of scalars, by Galois descent, and so $(A,\sigma )$ then
{\em has zero Teich\-m\"uller class if and only if
 $(M_{\vert Q\vert }(A), \sigma_{\vert Q\vert })$ arises by extension
of scalars.}

\smallskip

\noindent (2) In order to explain our second result suppose for simplicity that
$S|R$ is a Galois extension of commutative rings 
over $R=S^Q$
with Galois group $Q$.
Corollary \ref{cor2} below
says that then {\em a central 
$S$-algebra $A$ has a $Q$-normal structure $\sigma $ with zero Teich\-m\"uller
class if and only if $A$ admits an embedding into a central $R$-algebra $C$
so that {\rm (i)} the centralizer of $S$ in $C$ is just $A$, and 
{\rm (ii)} each 
automorphism   $\kappaQ(q)$ of $S$, as $q$ ranges over $Q$, 
extends to an inner automorphism
$\alpha $ of $C$ which (in view of {\rm (i)}) maps $A$ to itself in such a
way that the class of $\alpha|A$ in $\Out (A)$ 
coincides with
 $\sigma (q)$; moreover,
if $A$ is an Azumaya $S$-algebra then $C$ may be taken to be an Azumaya $R$-algebra.}

In the classical situation, 
such an embedding problem was raised by Deu\-ring \cite{0014.20001}, 
and Teich\-m\"uller apparently invented his
cocycle in order to settle Deuring's embedding
problem.
Actually, in Theorem~\ref{4.2}below, we give a 
 somewhat more general result than that just
spelled out.

\smallskip

\noindent (3) Suitable equivalence classes of $Q$-normal Azumaya $S$-algebras
constitute an abelian  group, the {\em crossed Brauer group}, which we
 denote by $\mathrm{XB}(S,Q)$,
and the Teich\-m\"uller class depends only on the class in the crossed Brauer
group, see Section~{\ref{9}} below. Moreover, the crossed Brauer group is a functor
on a suitable category, to be
explained in Subsection~\ref{coa} below, 
the map $t\colon \mathrm{XB} (S,Q) \to \mathrm H^3(Q,\mathrm U(S))$
given by the assignment to
a $Q$-normal Azumaya $S$-algebra of its Teich\-m\"uller complex
is a natural homomorphism which we refer to henceforth as the 
{\em Teich\-m\"uller map}, and in case that $Q$ is a finite group, the kernel
of the Teich\-m\"uller map consists precisely of the classes of equivariant
$Q$-normal $S$-algebras, in view of~(1).

The results spelled out as (1), (2) and (3) extend the corresponding
results of Teich\-m\"uller 
\cite{MR0002858}.
For in the classical situation where $Q$ is a finite group of automorphisms
of a field $S$,
the crossed Brauer group $\mathrm{XB}(S,Q)$ comes down to the subgroup
$\mathrm B(S)^Q$ of the Brauer group $\mathrm B(S)$ of $S$
that consists of the classes fixed under $Q$,
and these are precisely the $Q$-normal classes. 

\smallskip

\noindent (4) An alternate approach in terms of the
\lq\lq group-like stably $Q$-graded symmetric monoidal
category\rq\rq\ $\mathcat B_{S,Q}$
associated with the symmetric monoidal
{\em Brauer\/} category $\mathcat B_S$ of the ground ring $S$
and the $Q$-action on $S$
 \cite{genbrauer},
\cite{MR0349804},
\cite{MR1803361}, see Section \ref{sgsmc}
for details,
involves the abelian group  $k\mathcat{Rep}(Q,\mathcat B_{S,Q})$
of classes of representations of $Q$ in $\mathcat B_{S,Q}$
(see Subsection \ref{sqgbc} for the notation), and we refer
to the abelian group $k\mathcat{Rep}(Q,\mathcat B_{S,Q})$
as the {\em generalized crossed Brauer group\/}
and to its members as {\em generalized $Q$-normal Azumaya algebras\/},
for the following reason: In Subsection \ref{from} below
we associate, to any generalized $Q$-normal Azumaya algebra, 
an ordinary $Q$-normal algebra whose underlying algebra is
an Azumaya algebra if and only if the subgroup $\kappaQ(Q)$ of $\Aut(S)$
is a finite group; see Theorem \ref{fundclass1} for details. 
In Subsection \ref{genazalg}
we then define the Teichm\"uller class
of the given  generalized $Q$-normal Azumaya algebra
to be the Teichm\"uller class of the associated ordinary
$Q$-normal algebra.
Here our approach in terms of general algebras
rather than just Azumaya algebras pays off
since the $Q$-normal algebra
associated to a  generalized $Q$-normal Azumaya algebra
need not be an Azumaya algebra.
The map $t\colon k\mathcat{Rep}(Q,\mathcat B_{S,Q}) \to \mathrm H^3(Q,\mathrm U(S))$
given by the assignment to 
a generalized $Q$-normal Azumaya $S$-algebra of its Teich\-m\"uller class
is a natural homomorphism which, combined with the obvious homomorphism
from the crossed Brauer group $\mathrm{XB}(S,Q)$ to
$k\mathcat{Rep}(Q,\mathcat B_{S,Q})$, 
necessarily injective,
cf. Theorem \ref{tentwentytw},
coincides with the Teichm\"uller cocycle map
from $\mathrm{XB}(S,Q)$ to $\mathrm H^3(Q,\mathrm U(S))$
defined previously. 
If, furthermore, the subgroup $\kappaQ(Q)$ of $\Aut(S)$
is a finite group,
the obvious 
 homomorphism
from the crossed Brauer group $\mathrm{XB}(S,Q)$ to
$k\mathcat{Rep}(Q,\mathcat B_{S,Q})$ is an isomorphism of abelian groups.
See Subsection \ref{gencrob} for details.
A byproduct of our reasoning is the following observation,
where $\mathrm B(S,Q)$ denotes the subgroup
of the Brauer group $\mathrm B(S)$ whose members are 
represented by Azumaya $S$-algebras $A$ having the property that
each automorphism of the kind $\kappaQ(x)$ of $S$ as $x$ ranges over $Q$
extends to an automorphism of $A$:
The canonical homomorphism from 
$\mathrm B(S,Q)$ to $\mathrm B(S)^Q=\mathrm H^0(Q,\mathrm B(S))$ is
an isomorphism when the subgroup $\kappaQ(Q)$ of $\Aut(S)$
is a finite group; see Corollary \ref{2.21} for details.

\smallskip

\noindent
(5)  With respect to the data, 
let $\mathrm{EB}(S,Q)$   
denote
the equivariant Brauer group,
see Section \ref{eleven}
below for details. 
For the particular case
where the group $Q$ is finite,
the exact sequence  \eqref{twelvet} below
involving the Teichm\"uller cocycle map $t$
yields an extension of the kind
\begin{equation*}
\ldots
\to 
(\Pic (S))^Q
\to \mathrm H^2(Q,\mathrm U(S))
\to \mathrm{EB}(S,Q) \to
\mathrm{XB}(S,Q)
\stackrel{t}\to \mathrm H^3(Q,\mathrm U(S))
\end{equation*}
of the corresponding 
classical low degree four term exact sequence
by three more terms.
If, furthermore, 
$S|R$ is a Galois extension
of commutative rings 
over $R=S^Q$
with Galois group $Q$, by Galois descent, 
the abelian group  $\mathrm{EB}(S,Q)$ is isomorphic to
the ordinary Brauer group $\mathrm{B}(R)$ of $R$.

Generalizations of the  Teich\-m\"uller cocycle map
were also developed by Childs~\cite{MR0311701}, 
Fr\"ohlich and Wall~\cite{MR1803361},
Knus~\cite{amitsuteich},
Pareigis~\cite{MR0161881},
Ulbrich~\cite{MR1011607}, \cite{MR1284782}, 
and Zelinski~\cite{MR0432621}. Our approach 
is substantially different from that in each of those papers.
Indeed, 
in those articles  
except \cite{amitsuteich} and \cite{MR1803361},
given an action $\kappa\colon Q \to \Aut(S)$ 
of a group $Q$ on the commutative ring $S$,
an $S$-Azumaya algebra $A$ is defined to be
$Q$-normal if each automorphism $\kappa(q)$ of $S$, as $q$ ranges over $Q$,
extends to an automorphism of $A$;
accordingly, the values of the Teichm\"uller cocycle map developed
in those articles do not necessarily lie in
the cohomology group $\mathrm H^3(Q, \mathrm U(S))$.
When $S$ is a field,
in view of the Skolem-Noether theorem, 
that definition is equivalent to ours but over
a general ring $S$ this is not the case; needless, perhaps, to point out that,
given the $S$-algebra $A$, with our definition involving a homomorphism
$\sigma \colon Q \to \Out(A)$,
we indeed obtain a Teichm\"uller cocycle map with values in 
$\mathrm H^3(Q, \mathrm U(S))$. 
The (unpublished) manuscript \cite{amitsuteich}
offers a purely
Amitsur complex approach.
Over a general commutative ring $S$, (without a name,)
the crossed Brauer group 
is introduced
in \cite[p.~43]{genbrauer}, see also  \cite[Section 3]{MR1803361},
denoted there by $Q\mathrm{B}(R,\Gamma)$ 
(beware: that notation $Q$ has nothing to do with our notation $Q$ for
a group)
where
$R$ corresponds to our notation $S$ and 
$\Gamma$ to our notation $Q$. 
While none of the papers by
Fr\"ohlich and Wall seems to contain a reference to Teich\-m\"uller
or to any of the follow-up papers thereof,
in \cite[Theorem 3.4 (i)]{MR1803361},
the attentive reader will discover an explicit cocycle description
of the Teichm\"uller cocycle map
(without a name)
from the crossed Brauer group $\mathrm{XB}(S,Q)$ to the corresponding
third group cohomology group,
as well as a 
cocycle description
of the Teichm\"uller cocycle map
(still without a name)
from  $k\mathcat{Rep}(Q,\mathcat B_{S,Q})$
to the corresponding
third group cohomology group.
Also the injective homomorphism
from $\mathrm{XB}(S,Q)$ to $k\mathcat{Rep}(Q,\mathcat B_{S,Q})$ is given
in \cite{MR1803361}, as well as a proof of the fact that, for
finite $Q$, this homomorphism is an isomorphism. Indeed, our 
construction of the $Q$-normal algebra associated to
a generalized $Q$-normal Azumaya algebra 
given in Subsection \ref{from} below
involves a variant of a construction
that is used in \cite{MR1803361} to establish the surjectivity of
the homomorphism from $\mathrm{XB}(S,Q)$ to
$k\mathcat{Rep}(Q,\mathcat B_{S,Q})$ in the special case where
the group $Q$ is finite.

\section{Preliminaries}
\label{one}

\subsection{Basics}
For the reader's convenience we recall some basic definitions and facts
used in the paper.

Below we  sometimes write the identity morphism on an object as $1$. 
Given a morphism $\alpha$ on an object, we  occasionally
denote the restriction to a subobject by \lq\lq $\alpha|$\rq\rq.
Given an action $\kappa \colon G\to \Aut_{\mathcat C}(C)$ of a group
$G$ on an object $C$ of a category $\mathcat C$, we  write the action as
\begin{equation*} 
G \times C \longrightarrow C,\ (x,y) \longmapsto {}^x\! y,\ x\in G,\ y \in C.
\end{equation*}

The commutative unitary ring $S$ remains fixed, unless the contrary is admitted
explicitly, and the notation $\otimes $ refers to the tensor product over $S$.
By an $S$-{\em algebra} we mean an algebra  $A$ whose center contains
$S$ as a subring; thus given an $S$-algebra $A$, 
$S$ acts faithfully on $A$. As in the introduction,
 a {\em central\/} $S$-algebra is 
an $S$-algebra such that $S$ coincides with the center
of $A$.
Given an $S$-algebra $A$ (not necessarily central), we denote by
$A^{\mathrm{op}}$ 
the {\em opposite} algebra.
Given the $S$-algebra $A$,
consider the tensor product algebra $ A\otimes A^{\mathrm{op}}$;
the map 
\begin{equation}
\eta \colon A\otimes A^{\mathrm{op}} \to \Hom_S(A,A),
\ 
\eta (a \otimes b^{\mathrm{op}}) c = acb, \ a, b, c\in A,
\label{azu}
\end{equation}
turns $A$ into an
$(A\otimes A^{\mathrm{op}})$-module.
As in 
\cite{MR0121392}, we refer to an $S$-algebra $A$ as being 
{\em separable\/} 
when $A$ is a projective 
as an $(A\otimes A^{\mathrm{op}})$-module. A central separable
$S$-algebra $A$ is also referred to as an {\em Azumaya} $S$-{\em algebra}.
This agrees with the definition in 
\cite[p.~180]{MR0217051} since,
by  
\cite[Theorem 2.1]{MR0121392}, a central $S$-algebra $A$
is separable if and only if the map $\eta $ is an isomorphism and if,
as an $S$-module, $A$ is
finitely generated and projective.

Given an action of
a group $G$ on a (not necessarily commutative)
ring $\Lambda $, 
written as
$(x,t) \mapsto {}^x t$, for $x \in G$ and $t \in \Lambda$,
the {\em twisted group ring} $\Lambda^t G$ 
has as its underlying $\Lambda $-module
the free
left $\Lambda $-module having as its basis the elements of $G$,  with
multiplication given by $sx ty =s({}^xt)xy$, where 
$s,t\in \Lambda$, $x, y\in G$.

Occasionally we  use the familiar notation $\rightarrowtail$
for an injection and $\twoheadrightarrow$
for a surjection. 

\subsection{Galois extensions of commutative rings}
\label{galext}

As in the introduction, given a group $Q$ and an
action $\kappaQ \colon Q \to \Aut(S)$ of $Q$ on $S$,
 relative to  the action,
the twisted group ring
$S{}^tQ$ is defined.
Using the notation $R = S^Q$ for the subring of $S$ 
that consists of all members of $S$
left fixed by every element of $Q$, consider the $R$-algebra homomorphism
\begin{equation*}
 j \colon S^tQ \longrightarrow \End_R(S),\ 
(j(sq))(x) = s({}^qx),\, s,x\in S,\, q\in Q.
\end{equation*}
Furthermore, we  consider the $S$-algebra
$\mathrm{Map}(G,S)$ of $S$-valued functions on $G$ 
with pointwise multiplication
and the $S$-algebra map
\begin{equation*}
h \colon S \otimes_R S \longrightarrow \mathrm{Map}(G,S),\ 
(h(s_1 \otimes s_2))(x)=s_1 x(s_2),\  s_1,s_1 \in S,\ x \in Q.
\end{equation*}

Suppose now that the structure map $\kappaQ\colon Q \to \Aut(S)$ is
injective and write $R=S^Q$.
Thus $Q$ is now a finite group of operators on the commutative
ring $S$.
Under these circumstances, $S|R$ is defined to be a 
{\em Galois extension of commutative rings with Galois group\/} $Q$ 
\cite[Def. 1.4 p.~6]{MR0195922}
if any of 
the subsequent equivalent
conditions (i)--(iv) holds:
\smallskip

{(i)} The ring $S$ is a finitely generated projective $R$-module,
and the $R$-algebra homomorphism
$j \colon S^tQ \to \End_R(S)$
is an isomorphism of $R$-algebras.

\smallskip

{(ii)} {\em (Galois descent).}
Given a left $S^tQ$-module $M$, viewed as a left $Q$-module in the
obvious way, the map 
\begin{equation*}
w\colon S\otimes_R M^Q \longrightarrow M,\quad 
w(s\otimes m) = sm, \ s\in S, \ m\in M^Q, 
\end{equation*}
is an $S$-module isomorphism, indeed, an $S^tQ$-module isomorphism relative to
the obvious $S^tQ$-module structure on  $S\otimes_R M^Q$.

\smallskip

{(iii)} Given a member $q$ of $Q$ distinct from the neutral element
and a maximal ideal $\mathfrak p$ 
of $S$, 
there exists $s = s(\mathfrak p,q)$ in $S$ with $s-{}^qs$ not in $\mathfrak p$.

\smallskip

{(iv)} The $S$-algebra map $
h \colon S \otimes_R S \longrightarrow \mathrm{Map}(G,S)$
is an isomorphism of $S$-algebras.

\smallskip

The equivalence of (i)--(iv) is established in 
\cite[Theorem 1.3 p.~4]{MR0195922}.
In 
\cite[p.~396]{MR0121392}, (i) is taken as
the definition of a Galois extension of commutative rings
with Galois group $Q$.

We  now recall a somewhat more sophisticated characterization of a 
Galois extension of commutative rings that has the advantage
of being susceptible to generalization.
To this end, let $H_Q= \mathrm{Map}(Q,R)$, endowed with the obvious structure
of a commutative $R$-Hopf algebra, the diagonal 
$\Delta \colon H_Q \to H_Q \otimes_R H_Q$
being induced by the group structure map of $Q$.
Since the group $Q$ is finite,
the $Q$-action $Q \times S \to S$ 
of $Q$ on $S$ from the left
is {\em rational\/} in the sense that it
is determined by the right
$H_Q$-comodule structure
\begin{equation}
\Delta \colon S \longrightarrow  S \otimes_R H_Q ,\ \Delta(s)
= \sum_{x \in Q} {}^xs \otimes f_x, \ f_x(y) = \delta_{x,y},
\ x,y \in Q,\ s \in S,
\end{equation}
on $S$.
Let $\mu \colon S \otimes_R S \to S$ denote the multiplication map of $S$
and consider the canonical map
\begin{equation}
S\otimes_R S \longrightarrow S \otimes_R H_Q
\label{can91}
\end{equation}
which arises as the composite
\begin{equation}
\begin{CD}
S\otimes_R S @>{S \otimes_R \Delta}>>  S \otimes _R S \otimes _R H_Q  
@>{\mu \otimes _R H_Q}>> S \otimes_R H_Q .
\end{CD}
\end{equation}

The following observation 
and the corresponding interpretation in terms of {\em Galois objects\/}
is due to
\cite[Ch. II p.~59]{MR0260724}.

\begin{prop}
The ring extension $S|R$ 
($R= S^Q$)
is a Galois extension of commutative rings
with Galois group $Q$ if and only if the following holds:

{\rm (v)}
The canonical map \eqref{can91}
is a bijection.
\end{prop}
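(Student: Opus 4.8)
The plan is to show that the canonical map \eqref{can91} is, after a canonical identification of its target, literally the map $h$ of condition~(iv), so that (v) and (iv) coincide as conditions; the equivalence with the Galois property then follows at once from the already-recalled equivalence of (i)--(iv).

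First I would record the identification of the target. Since $Q$ is finite, the $R$-Hopf algebra $H_Q=\mathrm{Map}(Q,R)$ is a free $R$-module with basis the indicator functions $\{f_x\}_{x\in Q}$, where $f_x(y)=\delta_{x,y}$. Consequently the $S$-linear map
\[
S\otimes_R H_Q \longrightarrow \mathrm{Map}(Q,S),\quad s\otimes f \longmapsto \bigl(y\mapsto s\,f(y)\bigr),
\]
is an isomorphism of $S$-algebras, with inverse sending a function $g$ to $\sum_{x\in Q} g(x)\otimes f_x$; under it the basis vector $1\otimes f_x$ corresponds to the indicator function $f_x\in\mathrm{Map}(Q,S)$.

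Next I would compute the canonical map on a simple tensor $s_1\otimes s_2$ by chasing it through the two arrows of \eqref{can91}. Applying $S\otimes_R\Delta$ to $s_1\otimes s_2$ yields $\sum_{x\in Q} s_1\otimes {}^x s_2\otimes f_x$, and applying $\mu\otimes_R H_Q$ then produces $\sum_{x\in Q} s_1({}^x s_2)\otimes f_x$. Transporting this element along the isomorphism of the previous paragraph gives the function $y\mapsto s_1({}^y s_2)$ on $Q$, which is exactly $h(s_1\otimes s_2)$, since $(h(s_1\otimes s_2))(x)=s_1\,x(s_2)=s_1({}^x s_2)$. Thus the canonical map \eqref{can91} agrees with $h$ up to the canonical isomorphism $S\otimes_R H_Q\cong\mathrm{Map}(Q,S)$; in particular one of the two is bijective precisely when the other is.

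Finally I would conclude: condition~(v) holds if and only if condition~(iv) holds, and by the equivalence of (i)--(iv) recalled above from \cite[Theorem 1.3 p.~4]{MR0195922}, this is in turn equivalent to $S|R$ being a Galois extension of commutative rings with Galois group $Q$. The only point that requires genuine care — and hence the main obstacle — is checking that the identification $S\otimes_R H_Q\cong\mathrm{Map}(Q,S)$ is well defined over $R$ (using $R=S^Q$ so that ${}^x r=r$ for $r\in R$) and that it carries the comodule datum $\Delta$ into the evaluation formula defining $h$; once this bookkeeping is settled, the proof is the short computation above together with an appeal to the known equivalences.
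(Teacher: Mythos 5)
Your proposal is correct and follows essentially the same route as the paper: both identify $S\otimes_R H_Q$ with $\mathrm{Map}(Q,S)$ via the basis of indicator functions and observe that under this identification the canonical map \eqref{can91} becomes the map $h$ of characterization (iv), whence the claim follows from the equivalence of (i)--(iv). Your version merely spells out the computation on simple tensors that the paper leaves implicit.
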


\begin{proof}
In view of the canonical isomorphism
$S \otimes _R H_Q =S \otimes_R \mathrm{Map}(Q,R) \cong \mathrm{Map}(Q,S)$,
the bijectivity of the canonical map \eqref{can91}
is equivalent to characterization (iv) above.
\end{proof}

Given merely the finite group $Q$ of operators on the ring $S$,
maintaining the notation $R=S^Q$, we note that 
$S\otimes_R S$ acquires a left $S$-module structure
from the left-hand copy of $S$ and a right $H_Q$-comodule structure
$\Delta\colon S\otimes_R S \to S \otimes_R S \otimes_R H_Q$ 
from the  right $H_Q$-comodule structure
$\Delta\colon S \to  S \otimes_R H_Q$ 
on the right-hand copy of $S$ in $S\otimes_R S$, and
the canonical map \eqref{can91}
is a morphism of left $S$-modules and
right $H_Q$-comodules.
Hence:

\begin{prop}
The ring extension $S|R$ 
($R= S^Q$)
is a Galois extension of commutative rings
with Galois group $Q$ if and only if the following holds:

{\rm (vi)}
The canonical map \eqref{can91}
is an isomorphism of left $S$-modules and right $H_Q$-comodules. \qed
\end{prop}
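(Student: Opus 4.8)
The plan is to reduce statement (vi) to the bijectivity criterion (v) established in the previous proposition, using the fact---recorded in the paragraph immediately preceding the statement---that the canonical map \eqref{can91} is, independently of any Galois hypothesis, a morphism of left $S$-modules and right $H_Q$-comodules. The point is that passing from ``bijection'' to ``isomorphism in the category of $S$-modules and $H_Q$-comodules'' costs nothing, because the structure-preserving property is already built in.

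First I would recall from the previous proposition that $S|R$ is a Galois extension with group $Q$ if and only if the canonical map \eqref{can91} is a bijection, i.e.\ condition (v) holds. Since (vi) manifestly implies (v)---an isomorphism of modules and comodules is in particular a bijection of underlying sets---it remains only to prove that (v) implies (vi), that is, that a bijective morphism of left $S$-modules and right $H_Q$-comodules is automatically an isomorphism in that category.

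So suppose the canonical map, call it $\phi\colon S\otimes_R S \to S\otimes_R H_Q$, is a bijection. The set-theoretic inverse $\phi^{-1}$ is then $S$-linear, by the standard remark that the inverse of a bijective homomorphism of modules is again a homomorphism. For the comodule structure, writing the two structure maps as $\Delta_{\mathrm{dom}}$ and $\Delta_{\mathrm{cod}}$, the hypothesis that $\phi$ is a comodule morphism reads $\Delta_{\mathrm{cod}}\circ\phi = (\phi\otimes_R H_Q)\circ\Delta_{\mathrm{dom}}$. Because $\phi$ is an isomorphism of $R$-modules, so is $\phi\otimes_R H_Q$, with inverse $\phi^{-1}\otimes_R H_Q$; composing the displayed identity on the right with $\phi^{-1}$ and on the left with $\phi^{-1}\otimes_R H_Q$ yields $\Delta_{\mathrm{dom}}\circ\phi^{-1} = (\phi^{-1}\otimes_R H_Q)\circ\Delta_{\mathrm{cod}}$, so that $\phi^{-1}$ is a comodule morphism as well. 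Hence $\phi$ is an isomorphism of left $S$-modules and right $H_Q$-comodules, which is (vi).

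I do not anticipate a genuine obstacle here; the only step requiring any care is the formal verification that $\phi^{-1}$ respects the comodule structure, and this is immediate once one observes that $\phi\otimes_R H_Q$ is invertible. Combining the two implications with the previous proposition gives the equivalence of (vi) with the Galois property, which is exactly the ``Hence'' flagged before the statement.
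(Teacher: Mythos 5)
Your proof is correct and follows exactly the route the paper intends: the paper marks the proposition \qed{} precisely because, the canonical map being a morphism of left $S$-modules and right $H_Q$-comodules in all cases, condition (vi) reduces to the bijectivity condition (v) of the preceding proposition. Your explicit verification that the set-theoretic inverse is again a morphism of modules and comodules merely spells out what the paper treats as immediate.
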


Characterization (vi) above of a Galois extension of commutative rings
generalizes as follows, cf. \cite[8.1.1 Definition p.~ 123]{MR1243637}:
Fix a ground ring $\fiel$,
let $H$ be a Hopf algebra,
write the structure maps as $\Delta \colon H \to H \otimes H$,
$\mu\colon H \otimes H \to H$, $\eta\colon \fiel \to H$ (unit),
$\varepsilon \colon H \to \fiel$ (counit),
 and consider an algebra
$A$ endowed with 
a right $H$-comodule structure
$\Delta \colon A \to A \otimes H$.
Then $(A,\Delta)$ is defined to be a
a {\em right\/} $H$-{\em comodule algebra\/} if, with regard to the multiplication 
map
$\mu\colon H \otimes H \to H$ of $H$,
the multiplication map $\mu \colon A \otimes A\to A$ of $A$
is a morphism of
right $H$-comodules and if, furthermore,
the unit map $\eta\colon\fiel \to A$ of $A$ is a morphism of right $H$-comodules.
Now, consider
a right $H$-comodule algebra
$(A,\Delta)$, recall the operators 
\begin{equation}
d_0,d_1\colon A \longrightarrow \Hom(H,A),\ 
d_0(a)(x)=x q,\ d_1(a)(x) = \varepsilon (x)a,\ a \in A,\ x \in H 
\end{equation}
and let
\begin{equation}
B=A^{\mathrm{co}H}= \mathrm{ker}(d_0-d_1)\colon A \longrightarrow \Hom(H,A).
\end{equation}
Similarly as before, 
the tensor product $A\otimes_B A$ (not necessarily an algebra)
acquires
a left $A$-module structure from the left-hand copy of $A$ in $A \otimes_BA$
and a right $H$-comodule structure $A \otimes_BA \to A \otimes_BA \otimes H$
from the right $H$-comodule structure of 
the right-hand copy of
$A$ in  $A\otimes_B A$.
The extension $A|B$ is defined to be a {\em right\/} $H$-{\em Galois extension\/}
if the canonical map
\begin{equation}
\begin{CD}
A\otimes_B A @>{A \otimes \Delta}>> A\otimes_B A \otimes H
@>{\mu \otimes H}>> A\otimes H
\end{CD}
\end{equation}
is an isomorphism of left $A$-modules and right $H$-comodules.
See \cref{ringsofint}
for an example
of this more general notion of Galois extension. 

\begin{examp}
\label{exatwo}
Given a (finite) Galois extension $K|\fiel$ of algebraic number fields 
with Galois group 
$Q$,
the extension $S|R$ of the associated rings of integers is
a Galois extension of commutative rings
with Galois group $Q$ if and only if the extension
 $K|\fiel$ is unramified  \cite[Remark 1.5 (d) p.~7]{MR0195922}.
This fact is a consequence of \cite[Theorem 2.5 p.~753]{MR0106929}.
The property of being unramified is somewhat rare, however:
The field of rational numbers does not possess any unramified extension.
In general, the maximal abelian unramified extension of a number field
$\fiel $
is what is known as the {\em Hilbert class field of \/} $\fiel$.
The Galois group thereof is isomorphic to the ideal class group of $\fiel$.
See, e.~g., \cite[Phrase before III.7.9 Theorem p. 164,
III.8.8 Theorem p.~172]{MR3058613} for details.
For a general finite extension $K|\fiel$ of algebraic number fields,
the primes of $\fiel$ which ramify in $K$ are those which divide the 
discriminant of $K$ over $\fiel$. Hence only finitely many primes of
$\fiel$ ramify in $K$. Let $\mathbb S_{K|\fiel}$ 
denote the finite set of primes
that ramify in $K$,
let  $\mathbb S_K$ denote those primes of $K$ that lie over
 $\mathbb S_{K|\fiel}$, and let
$R_{\mathbb S_{K|\fiel}}\subseteq \fiel$
and
$S_{\mathbb S_{K}} \subseteq K$
denote the subrings  that arise when we invert the primes in
${\mathbb S_{K|\fiel}}$
and in ${\mathbb S_{K|\fiel}}$, respectively. We can make these constructions
precise in the language of {\em valuations\/}.
Then 
$S_{\mathbb S_{K}}|R_{\mathbb S_{K|\fiel}}$
is a Galois extension of commutative rings with Galois group $Q$.
We shall come back to this class of examples in 
\cref{ringsofint}.
\end{examp}

\begin{examp}
\label{exathr}
Given an action
$\mu \colon Y \times Q \to Y$ of a
 finite group $Q$ on a Hausdorff space
$Y$ with orbit space $X$, the ring extension $C^0(Y)|C^0(X)$
of the associated rings of continuous functions
is a Galois extension of commutative rings with Galois group $Q$
if and only if the projection $\mathrm{pr}\colon Y \to X$ is an ordinary
regular  covering map  
having the group $Q$ as its group of deck transformations
\cite[Remark 1.5 (e) p.~7]{MR0195922}.
Indeed, this is an immediate consequence of (iv) above since
the projection map $\mathrm{pr}\colon Y \to X$
is an ordinary
regular  covering map  
with $Q$ as its group of deck transformations
if and only if
 the canonical map 
$(\mathrm{pr},\mu)\colon Y \times Q \to Y \times_X Y$
is a homeomorphism.
Formally the same property, but phrased in the language of
affine varieties, characterizes affine principal bundles
relative to an affine algebraic group $Q$, the characterization
being spelled out in terms of a right
$H$-Galois extension where $H$ is the coordinate ring
of the algebraic group $Q$ under discussion.
See, e.~g., 
\cite{MR0444680}, \cite{MR1098988} for details.
Likewise, an ordinary principal bundle having as structure
group a general Lie group can as well be seen as a generalized 
Galois extension.
\end{examp}

\subsection{Crossed modules}

A {\em crossed module} $(C,\Gamma, \partial )$  
\cite[p. 453] {whitefiv}
consists of groups $C$ and $\Gamma $, an action of $\Gamma $ on 
$C$ (from the left), written as $(\gamma , x) \mapsto {}^\gamma x, \gamma \in \Gamma , x\in C$,
and a homomorphism $\partial \colon C\to \Gamma $ of $\Gamma $-groups where
$\Gamma $ acts on itself by conjugation, 
subject to the axiom
\begin{equation*}
bc b^{-1} = {}^{\partial b}c, \; b,c\in C.
\end{equation*}
Given two crossed modules $(C,\Gamma, \partial )$
and $(C',\Gamma', \partial')$, a {\em morphism\/}
\begin{equation*}
(\varphi,\psi)\colon (C,\Gamma, \partial) \longrightarrow (C',\Gamma', \partial')
\end{equation*}
{\em of crossed modules\/}
is given by a commutative diagram
\begin{displaymath}
\xymatrix{
 &C \ar[d]_{\varphi} \ar[r]^{\partial} &\Gamma \ar[d]^{\psi}& 
\\
 &C' \ar@{=} \ar[r]^{\partial'}      &\Gamma'&
}
\end{displaymath}
in the category of $\Gamma$-groups, the $\Gamma$-action
on $C'$ and $\Gamma'$ being induced by the homomorphism $\psi$.
A {\em crossed $2$-fold extension\/}
is an exact sequence of groups
\begin{equation*}
\mathrm e^2 \colon 0 \longrightarrow M \longrightarrow C 
\stackrel{\partial}\longrightarrow \Gamma \longrightarrow G \longrightarrow 1
\end{equation*}
involving a crossed module $(C, \Gamma , \partial)$
\cite{crossed}; since $M$  is then central
in $C$ it is an abelian group, and the $\Gamma$-action on $C$ induces a 
$G$-module structure
on $M$. 
Given two crossed 2-fold extensions
\begin{equation*}
\mathrm e^2 \colon 0 \to M \to C 
\stackrel{\partial}\to \Gamma \to G \longrightarrow 1,
\ 
\hat{\mathrm e}^2 \colon 0 \to \hat M \to \hat C 
\stackrel{\hat \partial}\to \hat \Gamma \to \hat G \longrightarrow 1,
\end{equation*}
a {\em morphism\/} $(\alpha,\varphi,\psi,\beta)
\colon\mathrm e^2 \to \hat{\mathrm e}^2$ of crossed 2-fold extensions
is a commutative diagram
\begin{displaymath}
\xymatrix{
0  \ar[r]   & M\ar[d]_{\alpha}\ar[r] &C \ar[d]_{\varphi} \ar[r]^{\partial} &\Gamma \ar[d]_{\psi} \ar[r]
&G \ar[d]_{\beta} \ar[r] &1 \\
0  \ar[r]   &\hat M \ar[r]  &\hat C \ar@{=} \ar[r]^{\hat \partial}&\hat \Gamma\ar[r]&\hat G \ar[r] &1 
}
\end{displaymath}
 in the category of groups such that
$(\varphi,\psi)\colon (C,\Gamma, \partial) \to (\hat C,\hat \Gamma, \hat \partial)$
is a morphism of crossed modules;
a morphism of crossed 2-fold extensions of the kind
$(1,\varphi,\psi,1)
\colon\mathrm e^2 \to \hat{\mathrm e}^2$
(so that, in particular, $\hat M = M$ and $\hat G = G$)
is referred to as a {\em congruence\/}.
The notion of congruence of group extensions with abelian kernel
is classical, cf. \cite[IV.3 p.~109]{maclaboo}.

When the group $G$ and the $G$-module $M$ are fixed, 
under the equivalence relation generated by 
congruence,
the classes of crossed $2$-fold extensions
starting at $M$ and ending at $G$
constitute an abelian group $\mathrm{Opext}^2(G,M)$, 
and this group is naturally
isomorphic to the ordinary group cohomology group $\mathrm H^3(G,M)$; this fact
is a special 
case of 
the main result in \cite[\S~7]{crossed}. 
See also 
Mac~Lane's Historical Note \cite{historicalnote}.

\section{Stably graded symmetric monoidal categories}
\label{sgsmc}
We  recall
some material from the theory of stably graded
symmetric monoidal categories \cite{genbrauer},
\cite{MR0349804},
\cite{MR1803361}
and explain, in particular, the significance of
group-like 
stably graded
symmetric monoidal categories for our purposes.
This enables us to introduce notation
needed thereafter in the paper.

\subsection{Symmetric monoidal categories}

A {\em symmetric monoidal category\/} 
\cite{MR0354798}  consists of a category
$\mathpzc C$, a covariant functor 
\[
\compo\colon \mathpzc C\times \mathpzc C \longrightarrow \mathpzc C, 
\]
an object
$E$ of $\mathpzc C$, referred to as the {\em unit object\/},
and natural structure equivalences
written as $e\colon E \compo \,\cdot\,  \to   \,\cdot\,$,
$a\colon   \,\cdot\,\compo\, (\cdot\, \compo \,\cdot\,)  \to   
 (\,\cdot\,\compo\, \cdot\,) \compo \,\cdot\,  $, and 
$c_{A,B}\colon A\compo B \to B \compo A  $,
where $A$ and $B$ range over objects of $\mathpzc C$;
these pieces of structure are subject to certain nowadays
standard axioms. The terminology in
\cite{genbrauer},
\cite{MR0349804}, \cite{MR1803361}
is \lq monoidal\rq\ for the present \lq symmetric monoidal\rq. 
We now recall some more terminology from [op. cit.].

Consider a symmetric monoidal category
$\mathpzc C$.
The notation $k \mathpzc C$
refers to the abelian monoid (with unit) 
of isomorphism classes of objects of $\mathpzc C$,
 usually required to be a set,
and $\mathrm U(\mathpzc{C})$ denotes the group
of automorphisms 
$\Aut_\mathpzc{C}(E)$
of the unit object $E$ in
$\mathpzc{C}$ \cite[\S 2 p.~235]{MR0349804}.

Given an object $C$ of $\mathpzc{C}$, for $u \in \mathrm U(\mathpzc{C})$, the
composite
\begin{equation}
\vartheta_C(u)\colon
C
\stackrel{e^{-1}_C}
\longrightarrow
E \compo C
\stackrel{u \compo 1_C}
\longrightarrow
E \compo C
\stackrel{e_C}
\longrightarrow
C
\end{equation}
defines an automorphism of $C$ and,
by
\cite[Theorem 2.2  p.~235]{MR0349804},
\begin{equation}
\vartheta_C\colon \mathrm U(\mathpzc{C}) \longrightarrow \Aut_{\mathpzc{C}}(C)
\label{varthetaC}
\end{equation}
is a homomorphism, an isomorphism if $C$ is an invertible object of
$\mathpzc{C}$ and, in general, the subgroup 
$\mathrm{im}(\vartheta_C) \subseteq \Aut_{\mathpzc{C}}(C)$
lies in the center of $\Aut_{\mathpzc{C}}(C)$.
An object $C$ of $\mathpzc C$ is {\em faithful\/} if 
$\vartheta_C$ is injective \cite[\S 6 p.~47]{genbrauer},
\cite[\S 4  p.~243]{MR0349804}.

A symmetric monoidal category $\mathpzc C$ is {\em group-like\/} if
every object and every morphism in $\mathpzc C$ is invertible
\cite[\S 2 p.~237]{MR0349804}, \cite[\S 1]{MR1803361}.
In the last reference, the definition of being
group-like is applied only to
\lq\lq precise\rq\rq\  
symmetric monoidal categories but this is presumably not intended and,
after all, in \cite{MR0349804},
the definition of being
group-like is applied to general symmetric monoidal categories.
The monoid  $k \mathpzc C$
associated with a group-like symmetric monoidal category $\mathpzc C$
is an abelian group.  
 
\subsection{The Brauer category associated with a commutative ring}
\label{bcacr}
For later reference, we recall the {\em standard construction\/} of the
{\em Brauer group\/} $\mathrm B(S)$ of the commutative ring $S$
\cite[p.~381]{MR0121392}. Two  Azumaya $S$-algebras $A_1$ and 
 $A_2$ are {\em Brauer equivalent\/}
if there are faithful finitely
generated projective 
$S$-modules $M_1$ and $M_2$
such that the Azumaya $S$-algebras
$A_1 \otimes \End_S(M_1)$
and $A_2 \otimes \End_S(M_2)$
are isomorphic $S$-algebras.
Given two faithful finitely
generated projective 
$S$-modules $M_1$ and $M_2$, the tensor product
$M_1\otimes M_2$ is again 
a faithful finitely
generated projective 
$S$-module, and the $S$-algebra
$\End_S(M_1) \otimes \End_S(M_2)$
is canonically isomorphic to
the $S$-algebra $\End_S(M_1 \otimes M_2)$.
Hence
that relation 
is indeed an  equivalence relation, 
cf. \cite[p.~381]{MR0121392}, referred to as {\em Brauer equivalence\/};
under the operation of
tensor product 
and under the assignment
to the class of an Azumaya $S$-algebra $A$
of the class of its opposite algebra $A^{\mathrm{op}}$, 
the equivalence classes 
constitute an abelian group, the {\em Brauer group\/}
$\mathrm B(S)$ of $S$,
having the class of $S$ as its unit element.
The assignment to a commutative ring of its Brauer group 
is a functor from commutative ring to abelian groups.
In particular, 
 the  
$Q$-action on $S$ induces a $Q$-action on the
Brauer group
$\mathrm B(S)$ of $S$ that turns $\mathrm B(S)$ into a $Q$-module. 
Given a homomorphism 
$f \colon S \to T$
of commutative rings, we denote by $\mathrm B(T|S)$
the kernel of the induced homomorphism
$\mathrm B(S) \to \mathrm B(T)$.
The abelian group $\mathrm B(T|S)$ really depends on the
homomorphism $f$ rather than just on $S$ and $T$ but, for intelligibility,
we  stick to the familiar notation $\mathrm B(T|S)$ 
which is classical when $S$ and $T$ are fields (and $f$  necessarily
injective).

Recall that, given two Azumaya $S$-algebras $A$ and $B$, 
a $(B,A)$-bimodule $M$ is {\em invertible\/}
if there exists an  $(A,B)$-bimodule $M'$
such that $M\otimes_A M' \cong B$ as $(B,B)$-bimodules and
$M'\otimes_B M \cong A$  as $(A,A)$-bimodules,
cf., e.~g., \cite{MR0249491}.
 The {\em Brauer category\/} $\mathpzc B_S$
of the commutative ring $S$ 
\cite[\S 3 p.~23]{genbrauer}, \cite[p.~230]{MR0349804}, \cite[\S 2]{MR1803361},
written in
\cite[\S 3 p.~23]{genbrauer} and \cite[\S 2]{MR1803361}
as $\mathpzc {B}_R$ 
and in \cite[p.~230]{MR0349804} as $\mathpzc {Br}_R$,
has as
 {\em objects\/} the
Azumaya $S$-algebras,
a {\em morphism\/} $[M]\colon A \to B$ in $\mathpzc B_S$ 
between two Azumaya algebras $A$ and $B$,
necessarily an isomorphism in $\mathpzc B_S$, 
being an isomorphism class
of an invertible $(B,A)$-bimodule $M$.
Given three Azumaya $S$-algebras $A$, $B$, $C$ and morphisms
 $[{}_BM_A]\colon A \to B$
and $[{}_AM_C]\colon C\to A$
in $\mathpzc B_{S}$, the
composite of 
 $[{}_BM_A]\colon A \to B$ with 
 $[{}_AM_C]\colon C\to A$ in $\mathpzc B_{S}$ 
is given by the morphism  
$[{}_BM_A\otimes_A {}_AM_C]\colon C\to B$  
 in $\mathpzc B_{S}$, 
where $[{}_BM_A\otimes_A {}_AM_C]$ 
refers to the isomorphism class
of the invertible
$(B,C)$-bimodule ${}_BM_A\otimes_A {}_AM_C$.
The operation of tensor product over the ground ring $S$ and the assignment
to an Azumaya $S$-algebra $A$ of its opposite algebra $A^{\mathrm{op}}$
turn $\mathcat B_S$ 
into a group-like symmetric monoidal category
\cite[\S 5 p.~247]{MR0349804}, \cite[\S 2]{MR1803361}
having $\mathrm U(\mathpzc{B}_S) = \Pic(S)$ 
\cite[\S 3]{MR1803361}.
The members of
the abelian group
$k\mathcat B_S$ 
are 
Morita equivalence classes of Azumaya $S$-algebras.
Since Morita equivalence is equivalent to Brauer equivalence,
cf., e.~g., \cite[p.~41]{MR0460308},
the canonical homomorphism from $\mathrm B(S)$ to 
the abelian group $k\mathcat B_S$ is an isomorphism.
By construction, then, given an Azumaya $S$-algebra $A$,
its group $\Aut_{\mathcat B_S}(A)$ of automorphisms in
$\mathcat B_S$
is the group of
faithful projective rank one $(A,A)$-bimodules,
and the assignment to a projective rank one $S$-module $J$ of
the $(A,A)$-bimodule $A\otimes J$ yields an isomorphism
$\Pic(S) \to \Aut_{\mathcat B_S}(A)$ of abelian groups.

\subsection{Stably graded categories}

As before, $Q$ denotes a group. We view $Q$ as a category with a 
single object. 
A $Q$-{\em graded\/} category is a pair 
$(\mathpzc C_Q,g)$ that consists of a category
$\mathpzc C_Q$ and a functor $g\colon \mathpzc C_Q \to Q$, the {\em grading\/}
\cite[\S 1 p.~2]{genbrauer},  
\cite[\S 3 p.~240]{MR0349804}, 
\cite[\S 1]{MR1803361}.
The grading $g$ 
of a  $Q$-graded category 
$(\mathpzc C_Q,g)$
is {\em stable\/} if, given an object $C$ of 
$\mathpzc{C}_Q$ and $x \in Q$, there is an equivalence $f$ in 
$\mathpzc{C}_Q$ with domain $C$ and $g(f)=x\in Q$
\cite[\S 1 p.~3]{genbrauer},  \cite[\S 3 p.~240]{MR0349804}, 
\cite[\S 1]{MR1803361}.

We  usually suppress the functor $g$ from the notation
unless it is convenient to spell it out for clarity.
Given a $Q$-graded category $\mathpzc C_Q$, the notation 
$\mathpzc{Ker}(\mathpzc C_Q)$ refers to the category having the same
objects as $\mathpzc C_Q$ but whose morphisms are only those of grade 1
\cite[\S 1 p.~2]{genbrauer}, \cite[\S 3 p.~240]{MR0349804}, 
\cite[\S 1]{MR1803361}.
Below always indicate the fact that a
$Q$-graded category is under discussion by the subscript ${-}_Q$.
Given a $Q$-graded category $\mathpzc C_Q$,
we  then use the notation $\mathpzc C$ for 
$\mathpzc{Ker}(\mathpzc C_Q)$.

Given a stably $Q$-graded category $\mathpzc{C}_Q$, the group $Q$ acts 
on $k\mathpzc{C}= k\mathpzc{Ker}(\mathpzc{C}_Q)$ 
as follows \cite[\S 1 p.~3]{genbrauer}, \cite[Lemma 3.1 p.~240]{MR0349804},
\cite[\S 1]{MR1803361}:
Given an object $C$ of $\mathpzc{C}= \mathpzc{Ker}(\mathpzc{C}_Q)$ 
and $x \in Q$, 
keeping in mind that $\mathpzc{Ker}(\mathpzc{C}_Q)$ 
and $\mathpzc C_Q$ have the same objects, 
choose a morphism
$f\colon C \to D$ in $\mathpzc{C}_Q$ of grade $x$ and define
the result of the action ${}^x[C]$ of $x$ on $[C]\in k\mathpzc{C}$
by
${}^x[C]=[D]\in k\mathpzc{C}$.
This action is well defined, cf. 
\cite[\S 3 pp.~240/41]{MR0349804}.
Denote $k\mathpzc{C}$, endowed with this $Q$-action, by 
$k_Q\mathpzc C_Q$, cf. the notation $k_{\Gamma}$ in 
\cite[Lemma 3.1 p.~240]{MR0349804}.
The association  $\mathpzc C_Q \mapsto k_Q\mathpzc C_Q$, as
$\mathpzc C_Q$ ranges over 
stably $Q$-graded symmetric monoidal categories,
defines a functor 
$k_Q$
from the category of
stably $Q$-graded symmetric monoidal categories
to the category 
$Q\mathpzc{Set}$ of $Q$-sets, cf.
 \cite[Lemma 3.1 p.~240]{MR0349804}.

A $Q$-{\em functor\/} 
\cite[\S 3 p.~241] {MR0349804}
is one over the identity map of $Q$ or, equivalently,
a functor preserving grades of morphisms.
Below {\em natural transformation of \/} $Q$-{\em functors\/} will
be required to be of grade $1$.
The {\em category\/} $\mathpzc{Rep}(Q,\mathpzc C_Q)$
of {\em representations of\/} $Q$ {\em in a\/} 
$Q$-graded category
$(\mathpzc C_Q,g)$
is the category of $Q$-functors $F\colon Q \to \mathpzc C_Q$,
that is,
functors $F$ from $Q$ to $\mathpzc C_Q$
such that the composite $g \circ F$ 
of $F$ with the grade functor $g$
is the identity functor on $Q$,
and natural transformations of grade $1$
 \cite[\S 1 p.~2]{genbrauer}, 
\cite[Introduction, \S 3 p.~241] {MR0349804}, 
\cite[\S 1]{MR1803361}.
Thus
an object of 
$\mathpzc{Rep}(Q,\mathpzc C_Q)$ 
is a representation $h\colon Q\to \Aut_{\mathpzc C_Q}(A)$,
in the category $\mathpzc C_Q$,
of $Q$ on an object $A$ of the category 
$\mathpzc C=\mathpzc{Ker}(\mathpzc C_Q)$.

Given two $Q$-graded categories $(\mathpzc C_Q,g)$ and  $(\mathpzc C'_Q,g')$,
write  $\mathpzc C_Q \times_Q \mathpzc C'_Q$ for the {\em pull back
category\/} of $(g,g')$. This pull back category
acquires an obvious $Q$-grading, and this grading is stable
if $g$ and $g'$ are. The pull back category yields the {\em product\/}
in the category of $Q$-graded categories.

\subsection{Stably graded symmetric monoidal categories}
\label{stablygraded}

A {\em stably\/} $Q$-{\em graded symmetric monoidal category\/}
\cite[\S 3 p.~240] {MR0349804}, 
\cite[\S 1]{MR1803361}
consists of a
stably $Q$-graded category
$(\mathpzc C_Q,g)$, a covariant $Q$-functor 
$\odot\colon \mathpzc C_Q \times_Q \mathpzc C_Q
\to \mathpzc C_Q$, that is, 
the composite $h_1 \odot h_2$ of two morphisms $h_1$ and $h_2$
is defined only when $h_1$ and $h_2$ have the same grade, and
$g(h_1 \odot h_2)=g(h_1)=g(h_2)$,
a covariant $Q$-functor $E\colon Q \to \mathpzc C_Q$, referred to as
the {\em unit object\/} of $\mathpzc C_Q$,
 and  grade 1 natural 
equivalences 
$e\colon E \compo \,\cdot\,  \to   \,\cdot\,$,
$a\colon   \,\cdot\,\compo\, (\cdot\, \compo \,\cdot\,)  \to   
 (\,\cdot\,\compo\, \cdot\,) \compo \,\cdot\,  $, and 
$c_{A,B}\colon A\compo B \to B \compo A  $,
where $A$ and $B$ range over objects of $\mathpzc C_Q$;
these equivalences are subject to the standard axioms.

Let $\mathpzc C_Q$ be a stably $Q$-graded symmetric monoidal category.
The category
$\mathpzc C =\mathpzc {Ker}(\mathpzc C_Q)$
acquires an obvious symmetric monoidal category structure having,
in particular, $E(e)$ as its unit object.
(N.B.: $E$ is a $Q$-functor, 
and $e\in Q$ refers to the neutral element of $Q$.)
By definition, the {\em unit group\/} $\mathrm U(\mathpzc C_Q)$
of $\mathpzc C_Q$ 
is the (abelian) unit group 
\[
\mathrm U(\mathpzc C) =\mathrm U(\mathpzc {Ker}(\mathpzc C_Q))
=\Aut_{\mathpzc C}(E(e))
\]
of the category $\mathpzc C =\mathpzc {Ker}(\mathpzc C_Q)$
\cite[(2.4) p.~8]{genbrauer}, 
\cite[\S 3 p.~241] {MR0349804}, 
\cite[\S 1]{MR1803361}.

The grading induces a surjective homomorphism $\Aut_{\mathpzc C_Q}(E(e))\to Q$
which fits into a group extension
\begin{equation}
\mathrm e_{\mathcat C_Q}^{\mathrm U(\mathpzc C)}\colon
1
\longrightarrow
\mathrm U(\mathpzc C)
\longrightarrow
\Aut_{\mathpzc C_Q}(E(e))
\longrightarrow
Q
\longrightarrow
1, 
\label{eECC}
\end{equation}
and $E \colon Q \to \Aut_{\mathpzc C_Q}(E(e))$ 
splits this group extension. Thus, via the $Q$-functor $E$, 
the group $\Aut_{\mathpzc C_Q}(E(e))$ decomposes as the semi-direct product
\begin{equation}
\Aut_{\mathpzc C_Q}(E(e)) = \mathrm U(\mathpzc C) \rtimes Q,
\end{equation}
and this decomposition induces a $Q$-module structure on
$\mathrm U(\mathpzc C)$
\cite[\S 2 p.~8]{genbrauer},
 \cite[\S 3 p.~241]{MR0349804}, \cite[\S 1]{MR1803361}.
We  use the notation
$\mathrm U(\mathpzc C_Q)$ for
$\mathrm U(\mathpzc C)$, endowed with the $Q$-module structure just explained.

Given an object $C$ of $\mathpzc C_Q$, the 
group $\Aut_{\mathpzc C_Q}(C)$ of automorphisms of $C$ in
$\mathpzc C_Q$ has the 
subgroup of grade $1$ automorphisms as a normal subgroup,
this subgroup is canonically isomorphic
to the group $\Aut_{\mathpzc C}(C)$ of automorphisms
of $C$ in $\mathpzc C= \mathpzc {Ker}(\mathpzc C_Q)$, and
the above homomorphism 
$\vartheta_C\colon \mathrm U(\mathpzc C)
\longrightarrow
\Aut_{\mathpzc C}(C)$ is available,  cf. \eqref{varthetaC}.
An object $C$ of $\mathpzc C_Q$ is
{\em faithful\/} if it is faithful as on object of
$\mathpzc C= \mathpzc {Ker}(\mathpzc C_Q)$ or, equivalently, if,
with a slight abuse of the notation $\vartheta_C$,
the induced homomorphism
\begin{equation}
\vartheta_C\colon \mathrm U(\mathpzc C)
\longrightarrow
\Aut_{\mathpzc C_Q}(C)
\label{varthetaqCC}
\end{equation}
is injective.

The category $\mathpzc C_Q$ being a 
stably $Q$-graded symmetric monoidal category,
the values of the functor $k_Q$ now lie in the category of
$Q$-monoids (monoids endowed with a $Q$-action that is compatible with
the monoid structure).
Given an  object $C$ of $\mathpzc C_Q$, the grade homomorphism
$\Aut_{\mathpzc C_Q}(C) \to Q$
is surjective if and only if
the class $[C]\in 
k_Q\mathpzc C_Q$ is fixed under $Q$.
Hence
the group $\Aut_{\mathpzc C_Q}(C)$ of 
automorphisms in $\mathpzc C_Q$ of
a faithful {\em invertible\/} object $C$ of $\mathpzc C_Q$
whose class $[C]\in 
k_Q\mathpzc C_Q$ 
is fixed under $Q$ fits into a group extension
\begin{equation}
\mathrm e_C^{\mathrm U(\mathpzc C)}\colon
1
\longrightarrow
\mathrm U(\mathpzc C)
\stackrel{\vartheta_C}
\longrightarrow
\Aut_{\mathpzc C_Q}(C)
\longrightarrow
Q
\longrightarrow
1.
\label{eUCC}
\end{equation}

The category $\mathpzc{Rep}(Q,\mathpzc C_Q)$ 
of representations of $Q$ in $\mathpzc C_Q$ acquires 
an obvious
symmetric monoidal category structure.
In particular, the constituent $E$
in the definition of a stably $Q$-graded symmetric monoidal
category $\mathpzc C_Q$ is the unit object of 
$\mathpzc{Rep}(Q,\mathpzc C_Q)$
 \cite[\S 3 p.~241]{MR0349804}, and
$\mathrm U(\mathpzc {Rep}(Q, \mathpzc C_Q))\cong 
\mathrm H^0(Q,\mathrm U(\mathpzc C_Q))$
\cite[(3.5) p.~242]{MR0349804}.
By definition, a member of $\mathpzc{Rep}(Q,\mathpzc C_Q)$, that is,
a representation $F\colon Q \to\Aut_{\mathpzc C_Q}(C)$
of $Q$ by automorphisms in $\mathpzc C_Q$ 
of an object $C$ of $\mathpzc C_Q$, combined with
the grade homomorphism $\Aut_{\mathpzc C_Q}(C) \to Q$,
yields the identity map of $Q$ whence the class $[C]\in k_Q\mathpzc C_Q$
is fixed under $Q$; if furthermore, $C$ is faithful, the representation
$F$ splits the associated group extension \eqref{eUCC}.

The notion of inverse 
and the definition of
$\mathpzc C_Q$ being {\em group-like\/} extend to
stably $Q$-graded symmetric monoidal categories in an obvious way.
When the category $\mathpzc C_Q$ is group-like
and has every object faithful,
 the assignment to
an object $C$  of $\mathpzc C_Q$
of the group extension \eqref{eUCC}
induces a homomorphism
\begin{equation}
\omega_{\mathpzc C_Q}\colon 
\mathrm H^0(Q,k_Q\mathpzc C_Q)
\longrightarrow
\mathrm H^2(Q,\mathrm U(\mathpzc C))
\label{omegac}
\end{equation}
of abelian groups.
By  \cite[Lemma 3.2 p.~241]{MR0349804}, if 
the given stably $Q$-graded symmetric monoidal category
$\mathpzc C_Q$ is group-like, so is the category
$\mathpzc {Rep}(Q, \mathpzc C_Q)$. 
In particular, in the proof of
\cite[Lemma 3.2 p.~241]{MR0349804},
an explicit construction is given for the
inverse in the category $\mathpzc {Rep}(Q, \mathpzc C_Q)$
associated with a group-like
stably $Q$-graded symmetric monoidal category
$\mathpzc C_Q$.

The forgetful functor
$\mathpzc {Rep}(Q, \mathpzc C_Q) \to \mathpzc {Rep}(\{e\}, \mathpzc C_Q) 
\cong \mathpzc C_Q$
induces a monoid homomorphism 
$\mumu_{\mathpzc C_Q}\colon k\mathpzc {Rep}(Q, \mathpzc C_Q) 
\to k_Q\mathpzc C_Q$
whose values lie in 
$\mathrm H^0(Q,k_Q\mathpzc C_Q)$ since,
given an object $F$ of  $\mathpzc {Rep}(Q, \mathpzc C_Q)$,
that is, a 
representation 
$F \colon Q \to \Aut_{\mathpzc C_Q}(C)$
of $Q$ on an object
$C$ of $\mathpzc C_Q$, 
the existence of the homomorphism $F$
plainly entails that the grade homomorphism from 
$\Aut_{\mathpzc C_Q}(C)$ to $Q$ is surjective; in fact, when $C$
is a faithful invertible object, $F$ splits the group extension
\eqref{eUCC}.

Let $\mathpzc E$
denote the full symmetric monoidal subcategory
of $\mathpzc C=\mathpzc {Ker}(\mathpzc C_Q)$
that has $E(e)$ as its single object.
This category is isomorphic to the abelian group 
$\Aut_{\mathpzc C}(E(e))\cong \mathrm U(\mathpzc C)$,
viewed as a category with a single object.
Let $\mathpzc E_Q$ 
denote the associated stably $Q$-graded symmetric monoidal category
or, equivalently, the stably $Q$-graded symmetric monoidal subcategory
of $\mathpzc C_Q$ having the single object $E(e)$; this category is
isomorphic to the group 
$\Aut_{\mathpzc C_Q}(E(e))\cong \mathrm U(\mathpzc C) \rtimes Q$, viewed as a category
with a single object.
The standard interpretation of $\mathrm H^1(Q, \mathrm U(\mathpzc C))$
as classes of sections of
\eqref{eECC}, two sections being identified 
whenever they differ by conjugation
in $\Aut_{\mathpzc C_Q}(E(e))$ by a member of
$\mathrm U(\mathpzc C)$,
induces a canonical isomorphism
\begin{equation}
\mathrm H^1(Q,\mathrm U(\mathpzc C)) \longrightarrow
\mathpzc {Rep}(Q,\mathpzc E_Q)
\label{inter1}
\end{equation}
of abelian monoids whence, since
$\mathrm H^1(Q,\mathrm U(\mathpzc C))$ is an abelian group, so is
$\mathpzc {Rep}(Q,\mathpzc E_Q)$ \cite[(4.2) p.~243]{MR0349804}.
The obvious injection
$k\mathpzc {Rep}(Q,\mathpzc E_Q) \to k\mathpzc {Rep}(Q,\mathpzc C_Q)$ 
of abelian monoids yields an injection
\begin{equation}
j_{\mathpzc C_Q}\colon \mathrm H^1(Q,\mathrm U(\mathpzc C_Q))
\longrightarrow
k\mathpzc {Rep}(Q,\mathpzc C_Q)
\label{inj1}
\end{equation}
of abelian monoids.
When $\mathpzc C_Q$ is group-like
and has
all objects faithful, the sequence
\begin{equation}
0
\longrightarrow
\mathrm H^1(Q,\mathrm U(\mathpzc C_Q))
\stackrel{j_{\mathpzc C_Q}}\longrightarrow
k \mathpzc{Rep}(Q,\mathpzc C_Q)
\stackrel{\mumu_{\mathpzc C_Q}}
\longrightarrow
\mathrm H^0(Q,k_Q \mathpzc C_Q)
\stackrel{\omega_{\mathpzc C_Q}}
\longrightarrow
\mathrm H^2(Q,\mathrm U(\mathpzc C_Q)) 
\label{fittwelve}
\end{equation}
is an exact sequence of abelian groups
\cite[Theorem 5 \S 6 p.~48]{genbrauer}, 
\cite[Theorem 4.5 p.~244]{MR0349804}, \cite[\S 1]{MR1803361}.

\subsection{The  stably $Q$-graded Brauer category  
associated with a
commutative ring endowed with a $Q$-action}
\label{sqgbc}

Given two Azumaya algebras $A$ and $B$, a $(B,A)$-{\em bimodule
grade\/} $x \in Q$ is a $(B,A)$-bimodule $M$ so that
the left $S$-module structure of $M$ via $S \to B$ and the right
$S$-module structure via $S \to A$ are connected by the identity
\begin{equation*}
y s= ({}^xs) y,\ y \in M,\ s \in S.
\end{equation*} 
The  $Q$-{\em graded Brauer category\/} 
$\mathpzc B_{S,Q}$ associated with
the commutative ring $S$ and the $Q$-action $\kappaQ\colon Q \to \Aut(S)$
on $S$
\cite[\S 3 p.~23]{genbrauer}, \cite[p.~230]{MR0349804}, \cite[\S 2]{MR1803361},
written in
\cite[\S 3 p.~23]{genbrauer} and \cite[\S 2]{MR1803361}
as $\mathpzc {B}_R$ 
and in \cite[p.~230]{MR0349804} as $\mathpzc {Br}_R$,
has as
 {\em objects\/} the
Azumaya $S$-algebras,
 a {\em morphism\/} $([M],x)\colon A \to B$ in 
$\mathpzc B_{S,Q}$ {\em of grade\/} 
$x\in Q$ between two 
Azumaya algebras $A$ and $B$,
necessarily an isomorphism in $\mathpzc B_{S,Q}$, 
being a pair
$([M],x)$ where $[M]$ is an isomorphism class of an invertible
$(B,A)$-bimodule $M$ of grade  $x\in Q$.
Given three Azumaya algebras $A$, $B$, $C$ and morphisms
 $([{}_BM_A],x)\colon A \to B$
and $([{}_AM_C],x)\colon C\to A$
in $\mathpzc B_{S,Q}$, the
composite of 
 $([{}_BM_A],x)\colon A \to B$ with 
 $([{}_AM_C],x)\colon C\to A$ in $\mathpzc B_{S,Q}$ 
is given by the morphism  
$([{}_BM_A\otimes_A {}_AM_C],x)\colon C\to B$  
of grade $x\in Q$ in $\mathpzc B_{S,Q}$, 
where $[{}_BM_A\otimes_A {}_AM_C]$ 
refers to the isomorphism class
of the invertible
$(B,C)$-bimodule ${}_BM_A\otimes_A {}_AM_C$.
The operation of tensor product over the ground ring $S$ and the assignment
to an Azumaya $S$-algebra $A$ of its opposite algebra $A^{\mathrm{op}}$
turn $\mathcat B_{S,Q}$ 
into a group-like symmetric monoidal category
\cite[\S 5 p.~247]{MR0349804}, \cite[\S 2]{MR1803361}
having $(S,\kappaQ\colon Q \to \Aut(S))$ as unit object and
\[
\mathrm U(\mathpzc B_{S,Q}) = 
\mathrm U(\mathpzc B_S) = 
\Aut_{\mathpzc B_S}(S)
=\Pic(S)
\] 
\cite[\S 3]{MR1803361},
the assignment to an object $A$ of $\mathcat B_{S,Q}$ of the neutral 
element $e$ of $Q$ and that to a morphism in $\mathcat B_{S,Q}$
of its grade in $Q$ yields a functor $g$ from  $\mathcat B_{S,Q}$ to $Q$,
viewed as a category with a single object, i.~e., 
turn $\mathcat B_{S,Q}$ into a $Q$-graded 
symmetric monoidal
category, and the grading is stable.
In particular, the induced $Q$-action on
 $\mathrm U(\mathpzc B_{S,Q}) = \Pic(S)$
is the standard $Q$-action on $\Pic(S)$.
Since the category 
$\mathpzc B_{S,Q}$
is group-like, so is 
$\mathpzc {Rep}(Q, \mathpzc B_{S,Q})$,
and thence 
$k\mathpzc {Rep}(Q, \mathpzc B_{S,Q})$
is an abelian group. However,
apart from trivial cases, this group is {\em not\/}
the equivariant Brauer group of $S$ relative to $Q$,
this group being defined as the obvious equivariant generalization of the
ordinary Brauer group and
explored
in Section \ref{eleven} below.

By construction, then, 
the assignment to an automorphism in $\mathcat B_{S,Q}$
of an Azumaya algebra $A$  of its grade in $Q$
yields a homomorphism 
\begin{equation}
\pi^{\substack{\mbox{\tiny{$\Aut_{\mathcat B_{S,Q}}(A)$}}}}
\colon \Aut_{\mathcat B_{S,Q}}(A) \longrightarrow Q
\label{eBpi}
\end{equation}
which
is surjective if and only if the Brauer class
$[A]\in \mathrm B(S)$ of $A$ in $\mathrm B(S)$ is fixed under $Q$, and
the group $\Aut_{\mathcat B_{S,Q}}(A)$ associated to 
an Azumaya $S$-algebra 
$A$ whose Brauer class $[A]$ is fixed under $Q$ fits into a group 
extension of the kind \eqref{eUCC}, viz.
\begin{equation}
\mathrm e^{\substack{\mbox{\tiny{$\Pic(S)$}}}}_A\colon
1
\longrightarrow
\Pic (S)
\longrightarrow
\Aut_{\mathcat B_{S,Q}}(A)
\stackrel{\pi^{\substack{\mbox{\tiny{$\Aut_{\mathcat B_{S,Q}}(A)$}}}}}
\longrightarrow
Q
\longrightarrow
1
\label{eApic}
\end{equation}
with abelian kernel in such a way that the assignment to $A$ of 
$\mathrm e^{\substack{\mbox{\tiny{$\Pic(S)$}}}}_A$
yields a homomorphism 
\begin{equation}
\ome_{\mathcat B_{S,Q}}\colon \mathrm H^0(Q,\mathrm B(S)) \longrightarrow \mathrm H^2(Q,\Pic(S)).
\label{d2pic}
\end{equation}
The sequence \eqref{fittwelve} now takes the  form
\begin{equation}
0 \longrightarrow \mathrm H^1(Q, \Pic (S)) 
\stackrel{j_{\mathpzc B_{S,Q}}}\longrightarrow 
k\mathcat{Rep}(Q,\mathcat B_{S,Q})
\stackrel{\omu_{\mathpzc B_{S,Q}}}\longrightarrow \mathrm B(S)^Q
\stackrel{\ome_{\mathpzc B_{S,Q}}}
\longrightarrow \mathrm H^2(Q, \Pic (S)) 
\label{FW}
\end{equation}
and 
is an exact sequence of abelian groups 
since the category $\mathcat B_{S,Q}$ is group-like. 
This sequence is spelled out in
\cite[Corollary 1 p.~51]{genbrauer} as a sequence of abelian monoids,
where the notation 
$k\mathcat{Rep} \tilde {\mathrm B}_S $
corresponds to our notation
$k\mathcat{Rep}(Q,\mathcat B_{S,Q})$.
It is also a special case of the exact sequence of abelian groups
given in
\cite[Corollary (4.6) p.~245] {MR0349804}.
The tilde notation in the setting of \cite{genbrauer} 
refers to the additional structure of an involution
which, however, is not present in our
approach. 

The following is immediate; we spell it out for later reference.
\begin{prop}
\label{isomsq}
Given two Azumaya $S$-algebras $A$ and $B$ and an invertible
$(B,A)$-bimodule $M$ so that the isomorphism class of $M$
yields an isomorphism $A \to B$ in $\mathcat B_S$,
the induced
isomorphism
$\Aut_{\mathcat B_{S,Q}}(A) \to \Aut_{\mathcat B_{S,Q}}(B)$ 
of groups 
is given by the assignment to an $(A,A)$-bimodule $N_x$ of grade $x\in Q$
of the $(B,B)$-bimodule 
\begin{equation*}
M\otimes_A N_x \otimes_A M^* 
\cong \Hom_A(M,M\otimes_A N_x)
\end{equation*}
of grade $x$. \qed
\end{prop}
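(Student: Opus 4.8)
The plan is to deduce the statement from the elementary categorical observation that in any groupoid an isomorphism $\phi\colon A \to B$ induces a group isomorphism $\Aut(A) \to \Aut(B)$ by conjugation, $\alpha \mapsto \phi \circ \alpha \circ \phi^{-1}$, and then to render this conjugation concrete through the composition law of $\mathcat B_{S,Q}$. Here the isomorphism at hand is $[M]\colon A \to B$. Since by hypothesis $[M]$ is a morphism of $\mathcat B_S = \mathcat{Ker}(\mathcat B_{S,Q})$, it has grade $e$; that is, regarded as a morphism of $\mathcat B_{S,Q}$ it is the pair $([M],e)$, and its inverse is $([M^{*}],e)$, where $M^{*}$ is the inverse $(A,B)$-bimodule furnished by the invertibility of $M$, so that $[M^{*}] = [M]^{-1}$.

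Next I would unwind the conjugation explicitly. An automorphism of $A$ in $\mathcat B_{S,Q}$ is a pair $([N_x],x)$ given by an invertible $(A,A)$-bimodule $N_x$ of grade $x \in Q$. Because composition in $\mathcat B_{S,Q}$ is realized by the tensor product of the underlying bimodules over the intervening algebra, and composites are formed in the same left-to-right order, the conjugate $([M],e) \circ ([N_x],x) \circ ([M^{*}],e)$ is represented by the $(B,B)$-bimodule $M \otimes_A N_x \otimes_A M^{*}$, associativity of $\otimes_A$ making this unambiguous. As the grading is a functor into $Q$, the grade of this conjugate is $e\cdot x\cdot e = x$; thus conjugation by $[M]$ sends grade-$x$ automorphisms of $A$ to grade-$x$ automorphisms of $B$, as claimed.

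Finally I would record the displayed identification $M \otimes_A N_x \otimes_A M^{*} \cong \Hom_A(M, M \otimes_A N_x)$. Setting $M^{*} = \Hom_A(M,A)$ as an $(A,B)$-bimodule and $P = M \otimes_A N_x$, this is the standard natural isomorphism $P \otimes_A \Hom_A(M,A) \cong \Hom_A(M,P)$, which holds because the invertible bimodule $M$ is finitely generated projective as a right $A$-module. There is no genuine obstacle here; the only points needing a moment's attention — and the reason the result is flagged as merely \emph{immediate} — are the grade bookkeeping under composition and the fact that $[M]$ contributes the neutral grade, after which the claim is the familiar behaviour of conjugation by an isomorphism.
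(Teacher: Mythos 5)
Your argument is correct and is exactly the one the paper leaves implicit (the proposition is stated with \qed as ``immediate''): conjugation by the grade-$e$ isomorphism $[M]$, unwound via the tensor-product composition law of $\mathcat B_{S,Q}$, together with the standard isomorphism $P\otimes_A\Hom_A(M,A)\cong\Hom_A(M,P)$ for $M$ finitely generated projective as a right $A$-module. The grade bookkeeping and the composition order both check out against the paper's conventions, so nothing is missing.
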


\subsection{Picard categories}
\label{piccat}
The {\em Picard category\/}  $\mathpzc{Pic}_S$ 
associated with the commutative ring $S$, written in 
\cite[\S 2 p.~17]{genbrauer}, \cite[\S 2]{MR1803361} as 
$\mathpzc{C}_R$,
has as
objects the faithful finitely generated invertible projective
$S$-modules, that is, the faithful finitely generated projective rank one
$S$-modules,
a morphism in $\mathpzc{Pic}_S$ an isomorphism 
between two $S$-modules
in  $\mathpzc{Pic}_S$.
Given three  faithful finitely generated projective rank one
$S$-modules and two morphisms between them,
composition 
in  $\mathpzc{Pic}_S$
is defined in the obvious way, that is, via 
ordinary composition of $S$-linear maps.
The operation of tensor product over the ground ring $S$
and the assignment to a faithful finitely generated projective rank one
$S$-module of its $S$-dual turn
$\mathpzc{Pic}_S$ into
a group-like symmetric monoidal category
having the ground ring $S$ ,
viewed as a free rank one $S$-module,
as its unit object and
$\mathrm U(\mathpzc{Pic}_S)=\Aut_{\mathpzc{Pic}_S}(S)= \mathrm U(S)$, 
the group of units of the ground ring $S$.
The abelian group $k\mathpzc{Pic}_S$ is canonically isomorphic to
the ordinary Picard group $\Pic(S)$ of $S$.

The  $Q$-{\em graded Picard category\/} 
$\mathpzc {Pic}_{S,Q}$ associated with
the commutative ring $S$ and the $Q$-action $\kappaQ\colon Q \to \Aut(S)$
on $S$, written in \cite[\S 3]{MR1803361} as
$\mathpzc{C}_{R}$,
has the same objects as $\mathpzc {Pic}_S$,
 a {\em morphism\/} $(f,x)\colon J_1 \to J_2$ in 
$\mathpzc {Pic}_{S,Q}$ {\em of grade\/} 
$x\in Q$ between two 
faithful projective rank $S$-modules
$J_1$ and $J_2$,
necessarily an isomorphism in $\mathpzc {Pic}_{S,Q}$, 
being a pair
$(f,x)$ where $f\colon J_1 \to J_2$ is an isomorphism 
over $R=S^Q$ such that
$f(sy)={}^x\!sf(y)$, for $s \in S$ and $y \in J_1$.
Given three  faithful finitely generated projective rank one
$S$-modules and two morphisms between them,
composition 
in  $\mathpzc{Pic}_{S,Q}$
is defined in the obvious way, that is, via 
ordinary composition of $S$-linear maps.
The operation of tensor product over the ground ring $S$
and the assignment to a faithful finitely generated projective rank one
$S$-module of its $S$-dual turn
$\mathpzc{Pic}_{S,Q}$ into
a group-like  symmetric monoidal category
having $(S,\kappaQ\colon Q \to \Aut(S))$ as unit object, and
$\mathrm U(\mathpzc{Pic}_{S,Q})=\mathrm U(\mathpzc{Pic}_S)
=\Aut_{\mathpzc{Pic}_S}(S)= \mathrm U(S)$, 
the group of units of the ground ring $S$.
The assignment to an object of 
$\mathpzc{Pic}_{S,Q}$ 
of the neutral element $e$ of $Q$ and that to a morphism of its
grade in $Q$ turn
$\mathpzc{Pic}_{S,Q}$ into a $Q$-graded symmetric monoidal category,
and the grading is stable.
In particular, the induced $Q$-action on
 $\mathrm U(\mathpzc{Pic}_{S,Q}) = \mathrm U(S)$
is the standard $Q$-action on $\mathrm U(S)$,
 endowed with its
induced $Q$-module structure. 
Since the category $\mathpzc {Pic}_{S,Q}$ is group-like,
so is the category $\mathpzc {Rep}(Q, \mathpzc {Pic}_{S,Q})$,
and thence 
$k\mathpzc {Rep}(Q, \mathpzc {Pic}_{S,Q})$
is an abelian group.
This group is canonically isomorphic to
the equivariant Picard group $\mathrm{EPic}(S,Q)$ of $S$
with respect to the $Q$-action $\kappaQ\colon Q \to \Aut(S)$
\cite[\S 5 p.~38]{genbrauer}, \cite{MR0409424},
\cite[\S 3]{MR1803361} (written as $C(R,\Gamma)$
and referred to as the equivariant class group). 
The sequence \eqref{fittwelve} now takes the  form
\begin{equation}
0 \longrightarrow \mathrm H^1(Q, \mathrm U (S)) 
\stackrel{\jp}
\longrightarrow 
\mathrm{EPic}(S,Q)
\stackrel{\mup}
\longrightarrow \Pic(S)^Q
\stackrel{\dDelta}
\longrightarrow \mathrm H^2(Q, \mathrm U (S)) 
\label{ldes}
\end{equation}
and 
is an exact sequence of abelian groups 
since the category $\mathcat {Pic}_{S,Q}$ is group-like. 
This construction recovers 
the classical four-term exact sequence associated with the data;
this sequence can, of course, be obtained by straightforward ad
hoc constructions. For example,
the $Q$-action $\kappaQ\colon Q \to \Aut(S)$ splits
the group extension 
\begin{equation}
\mathrm e_{\substack{\mbox{\tiny{${\mathcat {Pic}_{S,Q}}$}}}}
\colon
0
\longrightarrow
\mathrm U(S) \longrightarrow 
\Aut_{\mathpzc {Pic}_{S,Q}}(S)
\stackrel{\pi^{\substack{\mbox{\tiny{$\Aut_{\mathcat {Pic}_{S,Q}}(A)$}}}}}
\longrightarrow
Q
\longrightarrow 1,
\end{equation}
and the
homomorphism $j_{\mathpzc {Pic}_{S,Q}}$ is induced
by the assignment to a derivation $d\colon Q \to \mathrm U(S)$
of the associated section 
for $\pi^{\substack{\mbox{\tiny{$\Aut_{\mathcat {Pic}_{S,Q}}(A)$}}}}$.
For later reference we note that the exactness of \eqref{ldes} 
at $\mathrm{EPic}(S,Q)$
says that
\begin{equation}
j_{\mathpzc {Pic}_{S,Q}} \colon 
\mathrm H^1(Q, \mathrm U (S))  \longrightarrow 
\mathrm{EPic}(S|S,Q)
\label{picf}
\end{equation}
is an isomorphism from $\mathrm H^1(Q, \mathrm U (S))$
onto the subgroup $\mathrm{EPic}(S|S,Q)$
of $\mathrm{EPic}(S,Q)$ which consists of classes of
objects in 
$\mathpzc {Rep}(Q, \mathpzc {Pic}_{S,Q})$
whose underlying $S$-modules are free of rank 1.

\subsection{Change of actions}
\label{coa}
We define the {\em change of actions category\/} $\mathcat {Change}$
as follows:
The {\em objects} of $\mathcat{Change}$ 
are triples $(S,Q,\kappa)$ that consist of a commutative
ring $S$,
a group $Q$, and an action 
$\kappa \colon Q\to \Aut(S)$ of $Q$ on $S$;
given two objects $(S,Q,\kappaQ)$ and $(T,G,\llambda )$, a 
{\em morphism}
$
(f,\varphi ) \colon (S,Q, \kappa) \longrightarrow (T,G,\llambda )
$
in $\mathcat{Change}$ consists of a ring homomorphism $f \colon S \to T$ and a group 
homomorphism $\varphi \colon G \to Q$ such that, given $s\in S$ and $x\in G$,
\begin{equation}
f( {}^{\varphi (x)}s) = {}^x(f(s)).
\label{changeaxiom}
\end{equation}
In this category, composition of morphisms is defined in the obvious way.

To describe the morphisms in a somewhat more picturesque way,
given the ring homomorphism $f \colon S \to T$, let
$\Aut^S(T)$ denote the subgroup of $\Aut(S)\times \Aut(T)$
that consists of those pairs $(\alpha,\beta)$ of automorphisms 
which have the property
 that
$\beta \circ f = f \circ \alpha$, and
let $\Aut(T|S)$ denote the kernel of
the obvious homomorphism $\Aut^S(T) \to \Aut(S)$.
In the special case where $f$ is injective,
the group $\Aut(T|S)$ amounts to the ordinary group
of automorphisms of $T$ that leave $S$ elementwise fixed
whence the notation. In the general case,
given, furthermore, the homomorphism $\varphi\colon G \to Q$,
the condition \eqref{changeaxiom} says that
$(\kappaQ\circ \varphi,\llambda)\colon G \to \Aut^S(T)$
yields a commutative diagram
\begin{equation}
\begin{CD}
1
@>>>
\mathrm{ker}(\varphi)
@>>>
G
@>{\varphi}>>
Q
\\
@.
@VVV
@V{(\kappaQ\circ \varphi,\llambda)}VV
@V{\kappaQ}VV
\\
1
@>>>
\Aut(T|S)
@>>>
\Aut^S(T) 
@>>>
\Aut(S)
\label{changea2}
\end{CD}
\end{equation}
in the category of groups with exact rows.

The stably $Q$-graded categories $\mathcat B_{S,Q}$
and $\mathcat {Pic}_{S,Q}$ behave functorially on $\mathcat{Change}$
in an obvious way with respect to the variable $(S,Q)$.
Likewise,
group cohomology $\mathrm H^*(Q,\mathrm U(S))$
 is a covariant functor on the change of actions category $\mathcat{Change}$
with respect to the variable $(S,Q)$,
and so are $k\mathcat{Rep}(Q,\mathcat B_{S,Q})$ and
$\mathrm {EPic}(S,Q)$.
More examples will show up later.

\section{Normal algebras and their Teich\-m\"uller complexes}
\label{2}
\subsection{Normal algebras}
\label{2.1}
Let $A$ be a central $S$-algebra. Denote by $\Aut(A)$ the group of ring
automorphisms
of $A$ and by $\mathrm U(A)$ the group of units of $A$. The 
obvious homomorphism
$\partial \colon \mathrm U(A) \to \Aut (A)$
assigns to a unit of $A$ the associated inner automorphism of $A$, 
and the obvious action of $\Aut(A)$
on $\mathrm U(A)$ turns the triple $(\mathrm U(A),\Aut(A), \partial )$ into a crossed module.
Moreover, $\ker (\partial) = \mathrm U(S)$, the group of units of $S$, and
 $\partial (\mathrm U(A))$ is a normal subgroup of $\Aut(A)$. 
As in the introduction,
write 
$\Out(A) = \mathrm{coker} (\partial) $.

Each inner automorphism of $A$ leaves $S$ elementwise fixed whence
 the  
restriction map $\Aut(A) \to \Aut(S)$ induces a homomorphism 
${\Out(A) \to \Aut(S)}$, and the data fit into the crossed 2-fold extension
\begin{equation}
\mathrm e_A \colon 0 \longrightarrow \mathrm U(S) \longrightarrow \mathrm U(A) \stackrel{\partial}\longrightarrow \Aut(A) \longrightarrow \Out(A) 
\longrightarrow 1.
\label{eA}
\end{equation}
As in the introduction, let 
$Q$ be a group and
$\kappaQ \colon Q\to \Aut(S)$ an action of  $Q$ on
$S$ by ring automorphisms.
With respect to
$\kappaQ \colon Q \to \Aut(S)$, let
\[
\Aut (A,Q)=\Aut(A) \times_{\Aut(S)} Q,\ \Out (A,Q)=\Out(A) \times_{\Aut(S)} Q
\] 
denote the indicated fiber product groups. 
The group $\Aut (A,Q)$ acts on $\mathrm U(A)$ in the
obvious way,  and this action, together with
the obvious map 
$\partial\colon  \mathrm U(A) \longrightarrow \Aut(A,Q)$, yields 
a crossed 2-fold extension
\begin{equation}
{\mathrm e}_{(A,Q)} \colon 0 \longrightarrow \mathrm U(S) \longrightarrow \mathrm U(A) \stackrel{\partial}\longrightarrow \Aut(A,Q) 
\longrightarrow \Out(A,Q) \longrightarrow 1.
\label{eAQ}
\end{equation}
 As in the introduction, we define a $Q$-{\em normal structure\/} on 
the central $S$-algebra $A$ 
{\em relative to the action 
$\kappaQ \colon Q \to \Aut(S)$
of $Q$ on\/} $S$
to be a
homomorphism $\sigma\colon Q \to \Out(A)$ that lifts the action 
$\kappaQ \colon Q \to \Aut(S)$ 
of $Q$ on $S$
in the sense that the composite of
$\sigma$ with 
the obvious restriction map $\res\colon \Out(A) \to \Aut(S)$ coincides with $\kappaQ$.
A $Q$-{\em normal algebra\/} is, then, a central $S$-algebra
$A$ together with a $Q$-normal structure $\sigma\colon Q \to \Out(A)$.

Occasionally we refer to the canonical homomorphism
$\Aut(A,Q) \to Q$ as well as to
the canonical homomorphism
$\Out(A,Q) \to Q$ as a {\em grade homomorphism\/}
and to the value in $Q$ of a member of
$\Aut(A,Q)$ and, likewise, of a member of
$\Out(A,Q)$, as the {\em grade\/} of that member.

\subsection{Discussion of normality and generalized normality}
\label{disc}

Given a central $S$-algebra $A$, we  say that $A$ is {\em weakly 
$Q$-normal\/}
if each automorphism $\kappaQ(x)$ of $S$, as $x$ ranges over $Q$,
extends to a ring automorphism of $A$.
For Azumaya algebras,
this is the notion of $Q$-normality
used by
Childs~\cite{MR0311701}, Pareigis~\cite{MR0161881},
Ulbrich~\cite{MR1011607}, \cite{ MR1284782}, 
and Zelinski~\cite{MR0432621}. Equivalently, a central $S$-algebra $A$
is weakly $Q$-normal if and only if the canonical homomorphism
$\pi^{\substack{\mbox{\tiny{$\Out(A,Q)$}}}}\colon
\Out(A,Q) \to Q$ is surjective.

Let $A$ be an Azumaya $S$-algebra and let $\lambda$ be automorphisms of $A$
over the center $S$ of $A$. Let ${}_{\lambda}A$ be the
$(A\otimes A^{\mathrm{op}})$-module 
which, as an $S$-module, is just $A$, and
whose structure map
is given by
\begin{equation*}
(A\otimes A^{\mathrm{op}}) \otimes {}_{\lambda}A 
\longrightarrow {}_{\lambda},
\ 
(a \otimes b)\otimes y \longmapsto \lambda(a)y b,\ a,b,y \in A.
\end{equation*} 
Then 
\[
 {}_{\lambda}J=\Hom_{A^{\mathrm e}}(A,{}_{\lambda}A)\cong
\{a \in A; \lambda(x) a = ax,\  \text{for\ all}\  x \in A\}
\]
is a faithful projective rank one $S$-module in such a way that
the canonical evaluation map
\begin{equation*}
\Hom_{A^{\mathrm e}}(A,{}_{\lambda}A) \otimes A \to {}_{\lambda}A
\end{equation*}
is an isomorphism of $A^{\mathrm e}$-modules, the
 $A^{\mathrm e}$-module structure on the left-hand side being the one 
induced by the 
canonical $A^{\mathrm e}$-module structure on $A$.

Let $\Pic(A)$ denote the abelian group (under the operation
of taking tensor products) that consists of left $A$-isomorphism classes
of left $(A \otimes A^{\mathrm{op}})$-modules $P$
which, as $S$-modules, are finitely generated and projective,
 such that, for every maximal ideal $\mathfrak m$ in $S$, the
$S_{\mathfrak m}$-module $P \otimes  S_{\mathfrak m}$ is isomorphic to
$A \otimes  S_{\mathfrak m}$.

Recall the following generalization of the Skolem-Noether theorem
\cite[Theorem 5]{MR0148709}.

\begin{prop}
\label{skolnoet}
Given an Azumaya $S$-algebra $A$, 
the assignment
to an automorphism $\lambda$ of $A$ over $S$ of the
class $\alpha(\lambda)=[{}_{\lambda}J]\in \Pic(S)$ 
and the assignment to the class $[J] \in \Pic(S)$ 
of the class $\beta([J])= [A \otimes J] \in \Pic(A)$
yields an exact sequence
\begin{equation*}
1
\longrightarrow
 \Out(A|S) 
\stackrel{\alpha} \longrightarrow  \Pic(S)
\stackrel{\beta} \longrightarrow  \Pic(A)
\longrightarrow
1
\end{equation*}
of abelian groups. In particular, the group $\Out(A|S)$
is an abelian group,
and the image  $\alpha(\Out(A|S))\subseteq \Pic(S)$
consists of isomorphism classes of projective rank one modules
$J$ such that $J\otimes A$ is isomorphic to $A$ as a left $A$-module. 
\end{prop}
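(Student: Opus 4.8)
The engine of the proof is the Morita equivalence furnished by the Azumaya hypothesis. Since $A$ is Azumaya, the map $\eta$ of \eqref{azu} identifies $A^{\mathrm e}=A\otimes A^{\mathrm{op}}$ with $\End_S(A)$, and $A$ is a faithful finitely generated projective $S$-module; hence $A$ is a progenerator realizing an equivalence $A^{\mathrm e}\text{-Mod}\simeq S\text{-Mod}$, with quasi-inverse functors $J\mapsto A\otimes_S J$ and $P\mapsto\Hom_{A^{\mathrm e}}(A,P)$, and the evaluation isomorphism recorded just before the statement is its counit in the case $P={}_\lambda A$. I would set this up first, note $\Hom_{A^{\mathrm e}}(A,A)=S$, and observe that the equivalence carries rank-one projective $S$-modules precisely to the $A^{\mathrm e}$-modules that are locally isomorphic to $A$, i.e. to the objects of $\Pic(A)$. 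This single input drives every exactness assertion.

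Next I would treat $\alpha$. For $S$-automorphisms $\lambda,\mu$ of $A$, writing ${}_\lambda J=\{a\in A:\lambda(x)a=ax\ \text{for all}\ x\}$, a direct computation shows that $\lambda(x)a=ax$ and $\mu(x)b=bx$ imply $\lambda(\mu(x))(ab)=(ab)x$, so the multiplication of $A$ restricts to a pairing ${}_\lambda J\otimes_S{}_\mu J\to{}_{\lambda\circ\mu}J$. Both sides being rank-one projective, this pairing is an isomorphism, as one checks locally, where it reduces to the classical Skolem--Noether statement; hence $\alpha(\lambda\circ\mu)=\alpha(\lambda)\alpha(\mu)$. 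To compute the kernel, suppose $\alpha(\lambda)$ is trivial, so ${}_\lambda J=uS$ is free on a generator $u$. Feeding this into the evaluation isomorphism ${}_\lambda J\otimes_S A\cong{}_\lambda A$ shows that left multiplication by $u$ is bijective, forcing $u$ to be a two-sided unit; the defining relation $\lambda(x)u=ux$ then reads $\lambda(x)=uxu^{-1}$, so $\lambda$ is inner. Conversely an inner automorphism visibly has trivial class, so $\ker\alpha$ is exactly the inner automorphism subgroup. Thus $\alpha$ descends to an injection $\Out(A|S)\hookrightarrow\Pic(S)$; this is exactness at $\Out(A|S)$ and, $\Pic(S)$ being abelian, it forces $\Out(A|S)$ to be abelian.

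Then I would turn to $\beta$. That $\beta$ lands in $\Pic(A)$ and is a homomorphism is the computation $(A\otimes_S J_1)\otimes_A(A\otimes_S J_2)\cong A\otimes_S(J_1\otimes_S J_2)$ together with the local triviality noted above. Surjectivity is immediate from the Morita equivalence: any $P\in\Pic(A)$ corresponds to $J=\Hom_{A^{\mathrm e}}(A,P)$, which is rank-one projective over $S$ and satisfies $A\otimes_S J\cong P$, whence $\beta([J])=[P]$. For exactness at $\Pic(S)$, the composite $\beta\circ\alpha$ sends $\lambda$ to $[A\otimes_S{}_\lambda J]=[{}_\lambda A]$ by the evaluation isomorphism, and $\lambda\colon A\to{}_\lambda A$ is an isomorphism of \emph{left} $A$-modules, so $[{}_\lambda A]=[A]$ is trivial in $\Pic(A)$; hence $\im\alpha\subseteq\ker\beta$. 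Conversely, if $A\otimes_S J\cong A$ as left $A$-modules, I would transport the right $A$-action of $A\otimes_S J$ across this isomorphism; since the endomorphism ring of the left regular module $A$ is $A^{\mathrm{op}}$, this yields an $S$-algebra automorphism $\lambda$ with $A\otimes_S J\cong{}_\lambda A$ as $A^{\mathrm e}$-modules, and applying $\Hom_{A^{\mathrm e}}(A,-)$ gives $J\cong{}_\lambda J=\alpha(\lambda)$. This proves $\ker\beta\subseteq\im\alpha$, so the sequence is exact. The final \lq\lq in particular\rq\rq\ assertion is then merely a restatement of this last exactness, since $\beta([J])=[A\otimes_S J]$ is trivial exactly when $J\otimes A\cong A$ as left $A$-modules.

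The main obstacle, and the only place the full force of the Azumaya hypothesis is needed, is the surjectivity of $\beta$ and, underlying it, the Morita equivalence $A^{\mathrm e}\text{-Mod}\simeq S\text{-Mod}$ together with the compatibility between the two notions at play: isomorphism of $A^{\mathrm e}$-modules versus the coarser left $A$-isomorphism used in the definition of $\Pic(A)$. Care is also required in the kernel computation, where one must argue that a generator of a free rank-one ${}_\lambda J$ is a genuine unit of $A$ rather than merely one-sided; bijectivity of the evaluation map is exactly what supplies this. All remaining verifications are local and reduce, after passing to stalks, to the classical Skolem--Noether theorem.
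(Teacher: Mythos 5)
Your argument is correct, but note that the paper itself offers no proof of this proposition: it is stated as a recollection of \cite[Theorem 5]{MR0148709} (Rosenberg--Zelinsky), so there is nothing in the source to compare against line by line. What you have written is essentially the standard argument of that reference --- the Morita equivalence $A^{\mathrm e}\text{-Mod}\simeq S\text{-Mod}$ coming from $\eta$ and the progenerator $A$, the identification of $\ker\alpha$ with the inner automorphisms via the observation that a free generator of ${}_{\lambda}J$ must be a two-sided unit, and exactness at $\Pic(S)$ by transporting the right $A$-action across a left $A$-isomorphism $A\otimes J\cong A$ --- so it is a faithful reconstruction of the proof the paper delegates to the literature.
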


The following is immediate.

\begin{cor}
A weakly $Q$-normal Azumaya algebra $A$ admits a $Q$-normal structure
if and only if the associated group extension 
\begin{equation}
\mathrm e^{\substack{\mbox{\tiny{$\Out(A|S)$}}}}_A
\colon 1
\longrightarrow
 \Out(A|S) 
\longrightarrow  \Out(A,Q)
\stackrel{\pi^{\substack{\mbox{\tiny{$\Out(A,Q)$}}}}}
\longrightarrow  Q
\longrightarrow
1
\label{eAO}
\end{equation}
(with abelian kernel) splits, and $Q$-normal structures on $A$ 
are then in one-one 
correspondence with sections $\sigma\colon Q \to  \Out(A,Q)$
for 
$\pi^{\substack{\mbox{\tiny{$\Out(A,Q)$}}}}$.
\end{cor}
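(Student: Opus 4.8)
The plan is to observe that the statement is a formal consequence of the universal property of the fiber product defining $\Out(A,Q)$, once the constituents of the extension \eqref{eAO} are identified. Throughout I abbreviate the grade homomorphism $\pi^{\substack{\mbox{\tiny{$\Out(A,Q)$}}}}$ by $\pi$.

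First I would pin down the kernel of $\pi$. By definition $\Out(A,Q)=\Out(A)\times_{\Aut(S)}Q$ consists of the pairs $(\bar\alpha,q)$ with $\res(\bar\alpha)=\kappaQ(q)$, and $\pi$ is the projection $(\bar\alpha,q)\mapsto q$. Hence $\ker\pi$ consists of the pairs $(\bar\alpha,e)$ with $\res(\bar\alpha)=\mathrm{id}_S$, that is, of $\ker(\res\colon\Out(A)\to\Aut(S))=\Out(A|S)$, embedded via $\bar\alpha\mapsto(\bar\alpha,e)$. By \cref{skolnoet} the group $\Out(A|S)$ is abelian, which accounts for the abelian kernel. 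Moreover, as recalled in \cref{disc}, $A$ being weakly $Q$-normal means exactly that $\pi$ is surjective, so that under the standing hypothesis \eqref{eAO} is a genuine group extension.

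The heart of the argument is the universal property of the pullback. A group homomorphism $s\colon Q\to\Out(A,Q)$ is the same datum as a pair of homomorphisms $\sigma\colon Q\to\Out(A)$ and $\tau\colon Q\to Q$ with $\res\circ\sigma=\kappaQ\circ\tau$; under this identification $s(q)=(\sigma(q),\tau(q))$ and $\pi\circ s=\tau$. Requiring $s$ to be a section, i.e.\ $\pi\circ s=\mathrm{id}_Q$, forces $\tau=\mathrm{id}_Q$, whereupon the constraint on $\sigma$ becomes $\res\circ\sigma=\kappaQ$ --- precisely the defining condition of a $Q$-normal structure. Conversely, a $Q$-normal structure $\sigma$ produces the section $q\mapsto(\sigma(q),q)$. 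This exhibits the asserted one-one correspondence between sections of $\pi$ and $Q$-normal structures on $A$.

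Finally, since a splitting of \eqref{eAO} is by definition a homomorphic section of $\pi$, the correspondence of the previous paragraph shows at once that $A$ admits a $Q$-normal structure if and only if \eqref{eAO} splits, and refines this equivalence into the stated bijection. I expect no genuine obstacle: the only points to verify are that the passage $s\leftrightarrow\sigma$ is well defined in both directions and bijective, both immediate from the fiber-product description, the substantive input --- abelianness of the kernel --- having already been supplied by \cref{skolnoet}.
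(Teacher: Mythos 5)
Your proposal is correct and fills in precisely the routine verification that the paper leaves implicit (the corollary is introduced there only with ``The following is immediate,'' the sole substantive input being the abelianness of $\Out(A|S)$ from Proposition \ref{skolnoet}, which you invoke correctly). The identification of $\ker\pi^{\substack{\mbox{\tiny{$\Out(A,Q)$}}}}$ with $\Out(A|S)$ and the pullback argument matching homomorphic sections with $Q$-normal structures is exactly the intended reasoning.
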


The $Q$-invariant Brauer classes  $\mathrm B(S)^Q$ in $\mathrm B(S)$
constitute a subgroup of $\mathrm B(S)$,
and the classes of weakly $Q$-normal Azumaya $S$-algebras in 
$\mathrm B(S)$ constitute a subgroup 
of $\mathrm B(S)^Q$;
 we denote by
$\mathrm B(S,Q)$ that subgroup of $\mathrm B(S)^Q$.
When the class $[A]\in \mathrm B(S)$ of an Azumaya $S$-algebra
$A$ is fixed under $Q$, the algebra $A$ need not be weakly $Q$-normal, however.

Given a ring automorphism $f\colon B \to B$ of an $S$-algebra $B$,
we  denote by $B_f$ the $(B,B)$-bimodule 
which, as a left $S$-module, is just $B$ and
whose structure map
is given by
\begin{equation*}
B \otimes B_f \otimes B \longrightarrow B_f,\ b_1 \cdot b \cdot b_2=
b_1b f(b_2),\ 
b,b_1,b_2 \in B.
\end{equation*}

Given an Azumaya $S$-algebra $A$,
the assignment to an automorphism $\alpha_x$ of $A$ that extends
the automorphism $\kappaQ(x)$ of $S$, as $x$ ranges over 
the image 
$
\pi^{\substack{\mbox{\tiny{$\Out(A,Q)$}}}}(\Out(A,Q))
\subseteq Q
$ 
of $\Out(A,Q)$ in $Q$ under $\pi^{\substack{\mbox{\tiny{$\Out(A,Q)$}}}}$,
of the  invertible 
$(A,A)$-bimodule $A_{\alpha_x}$ induces an injective 
homomorphism 
\begin{equation}
\Theta\colon \Out(A,Q) \longrightarrow
\Aut_{\mathcat B_{S,Q}}(A)
\label{Thetain}
\end{equation}
such that the composite
\begin{equation}
\Out(A,Q)
\stackrel{\Theta}
\longrightarrow
\Aut_{\mathcat B_{S,Q}}(A)
\stackrel{\pi^{\substack{\mbox{\tiny{$\Aut_{\mathcat B_{S,Q}}(A)$}}}}}
\longrightarrow
Q
\label{compo99}
\end{equation}
coincides with
$\pi^{\substack{\mbox{\tiny{$\Out(A,Q)$}}}}\colon
\Out(A,Q) \to Q$.
Hence:

\begin{prop}
\label{difference}
An Azumaya $S$-algebra $A$
whose Brauer class
$[A]\in \mathrm B(S)$ in $\mathrm B(S)$ is fixed under $Q$
is weakly $Q$-normal if and only if the composite
{\rm\eqref{compo99}} is surjective. \qed
\end{prop}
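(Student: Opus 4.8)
The plan is to read off the equivalence from two facts already in place, so the argument is essentially one of matching up grade homomorphisms. First I would recall from Subsection~\ref{disc} that a central $S$-algebra $A$ is weakly $Q$-normal if and only if the grade homomorphism $\pi^{\Out(A,Q)}\colon\Out(A,Q)\to Q$ attached to the crossed $2$-fold extension \eqref{eAQ} is surjective; this is nothing but the defining requirement that every $\kappaQ(x)$, $x\in Q$, lift to a ring automorphism of $A$, recast in the fiber-product language. The whole matter therefore reduces to comparing surjectivity of $\pi^{\Out(A,Q)}$ with surjectivity of the composite \eqref{compo99}.

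The decisive input is the identity, noted in the paragraph immediately preceding the statement, that the composite \eqref{compo99} coincides with $\pi^{\Out(A,Q)}$ as a homomorphism $\Out(A,Q)\to Q$. To verify this I would trace a class through the construction: the injection $\Theta$ of \eqref{Thetain} sends a class in $\Out(A,Q)$ of grade $x$ to the isomorphism class of the invertible $(A,A)$-bimodule $A_{\alpha_x}$, where $\alpha_x$ is a ring automorphism of $A$ extending $\kappaQ(x)$; by construction $A_{\alpha_x}$ has grade $x$ in the $Q$-graded Brauer category $\mathcat B_{S,Q}$, so applying the grade functor $\pi^{\Aut_{\mathcat B_{S,Q}}(A)}$ returns $x\in Q$. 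Hence the two maps agree on every element and are literally equal.

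Combining the two observations, the composite \eqref{compo99} is surjective if and only if $\pi^{\Out(A,Q)}$ is surjective, which in turn holds if and only if $A$ is weakly $Q$-normal; this is precisely the asserted equivalence. I do not expect any genuine obstacle, since the argument is pure bookkeeping with grades, and the only point needing a moment's care is the role of the hypothesis that the Brauer class $[A]$ be fixed under $Q$: by \eqref{eBpi} this is exactly what makes the ambient grade map $\pi^{\Aut_{\mathcat B_{S,Q}}(A)}$ surjective, so that the substance of the proposition is the sharper claim that the image $\Theta(\Out(A,Q))$ of the \emph{restricted} map already exhausts $Q$. It is worth remarking that this hypothesis is in fact automatic once $A$ is weakly $Q$-normal, for then $A_{\alpha_x}$ exists for every $x$ and forces $[A]$ to be $Q$-fixed via \eqref{eBpi}.
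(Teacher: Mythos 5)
Your proposal is correct and follows exactly the route the paper intends: the proposition is stated as an immediate consequence ("Hence:") of the preceding paragraph, which records that the composite \eqref{compo99} coincides with $\pi^{\substack{\mbox{\tiny{$\Out(A,Q)$}}}}$, combined with the earlier observation that weak $Q$-normality is equivalent to surjectivity of that grade homomorphism. Your closing remark that $Q$-invariance of $[A]$ is automatic for weakly $Q$-normal algebras also matches the paper's statement that the weakly $Q$-normal classes form a subgroup of $\mathrm B(S)^Q$.
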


Thus, given a weakly $Q$-normal Azumaya $S$-algebra $A$,
the diagram
\begin{equation*}
\begin{CD}
\mathrm e^{\substack{\mbox{\tiny{$\Out(A|S)$}}}}_A
\colon
1
@>>>
\Out(A|S)
@>>>
\Out(A,Q)
@>{\pi^{\substack{\mbox{\tiny{$\Out(A,Q)$}}}}}>>
Q
@>>>
1
\\
@.
@V{\alpha}VV
@V{\Theta}VV
@|
@.
\\
\mathrm e^{\substack{\mbox{\tiny{$\Pic(S)$}}}}_A\,\,\,\,\, \colon
1
@>>>
\Pic(S)
@>>>
\Aut_{\mathcat B_{S,Q}}(A)
@>{\pi^{\substack{\mbox{\tiny{$\Aut_{\mathcat B_{S,Q}}(A)$}}}}}>>
Q
@>>>
1
\end{CD}
\end{equation*}
is commutative with exact rows.

\begin{rema} In particular, 
an Azumaya algebra $A$ having $\alpha \colon \Out(A|S) \to \Pic(S)$
surjective is weakly $Q$-normal  if its Brauer class
$[A]\in \mathrm B(S)$ is fixed under $Q$. When $S$ is a field, 
$\Pic(S)$ is trivial whence then
an Azumaya algebra $A$ is weakly $Q$-normal, in fact even normal, if and only 
if its Brauer class
$[A]\in \mathrm B(S)$ is fixed under $Q$.
\end{rema}

\begin{rema}
\label{fundamental}
Given a weakly $Q$-normal Azumaya $S$-algebra $A$,
there is a fundamental difference between the two
group extensions $\mathrm e^{\substack{\mbox{\tiny{$\Out(A|S)$}}}}_A$
and
$\mathrm e^{\substack{\mbox{\tiny{$\Pic(S)$}}}}_A$:
The kernel of the former depends on $A$ whereas that of the latter does not.
\end{rema}

A $Q$-normal Azumaya $S$-algebra $(A,\sigma)$
represents a member of $\mathrm B(S,Q) (\subseteq \mathrm B(S)^Q)$
in such a way that $\sigma$ splits the associated group extension
\eqref{eAO}, and the composite 
\begin{equation}
\Theta_{\sigma}\colon 
Q 
\stackrel{\sigma} \longrightarrow
\Out(A,Q)
\stackrel{\Theta} \longrightarrow
\Aut_{\mathcat B_{S,Q}}(A)
\label{Thetas}
\end{equation}
of $\sigma$ with the
homomorphism
$\Theta$ from $\Out(A,Q)$ to
$\Aut_{\mathcat B_{S,Q}}(A)$ 
given above as \eqref{Thetain}
yields the object 
$(A,\Theta_{\sigma})$
of 
$\mathcat{Rep}(Q,\mathcat B_{S,Q})$
and hence a member of 
$k\mathcat{Rep}(Q,\mathcat B_{S,Q})$.
We therefore refer to an object of
$\mathcat{Rep}(Q,\mathcat B_{S,Q})$ 
as a {\em generalized\/} $Q$-{\em normal Azumaya algebra\/}.
In Subsection \ref{from} we shall show that, when the group $Q$ is finite,
each member of
$k\mathcat{Rep}(Q,\mathcat B_{S,Q})$ arises in this manner
from a $Q$-normal Azumaya $S$-algebra.

\subsection{Equivariant algebras and scalar extension}
\label{twotwo}  
Let $Q$ be a group and $\kappaQ\colon Q \to \Aut(S)$ an action of $Q$ on $S$.
A $Q$-{\em equivariant} $S$-algebra $(A, \tau )$ 
consists of a
central $S$-algebra $A$ together with a homomorphism 
$\tau \colon Q \to \Aut (A)$ that induces $\kappaQ$.

Let $R = S^Q$ be the subring of $S$ which is elementwise fixed
under the $Q$-action. Given  a central $R$-algebra $B$, 
{\em scalar extension} 
$B \mapsto A = B\otimes_R S$ yields the central $S$-algebra $A$;
then the action of $Q$ on $A= B\otimes_R S$ induced by the action $\kappaQ$ 
of $Q$ on $S$
yields a $Q$-equivariant
structure  $\tau_0 \colon Q \to \Aut (A)$ and hence a $Q$-normal 
structure $\sigma_0 \colon Q \to \Out(A)$, and we  say that 
$(A, \tau_0)$ and $(A,\sigma_0)$ arise from $B$ by
{\em scalar extension.}
By Galois descent, cf. Subsection \ref{galext}~(ii), 
if $S| R$ is a Galois extension of commutative 
rings with
Galois group $Q$, any $Q$-equivariant $S$-algebra arises by scalar
extension.

\subsection{The Teich\-m\"uller class of a $Q$-normal algebra}
\label{twothree}
As in the classical approach of
 Teich\-m\"uller \cite{MR0002858} and  Eilenberg-Mac Lane \cite{MR0025443},
we seek to classify $Q$-normal $S$-algebras modulo those which
are obtained by  extension of scalars, rather than modulo the equivariant
ones (in case of algebras over fields this makes no difference by
Galois descent), by means of certain 3-dimensional cohomology classes.

Let $(A, \sigma )$ be a $Q$-normal $S$-algebra,
 with respect to
$\sigma\colon Q \to \Out(A)$, let $B^\sigma $ denote the fiber
product group ${\Aut (A) \times_{\Out (A)}Q}$, let $B^\sigma $
act on $\mathrm U(A)$ in the obvious way, 
that is, via the canonical homomorphism from 
$B^\sigma $ to $\Aut(A)$,
and let
$\partial ^\sigma \colon \mathrm U(A) \to B^\sigma$ denote the homomorphism
induced by $\partial \colon \mathrm U(A) \to \Aut(A)$.
Pulling back the
crossed 2-fold extension 
 \eqref{eA} 
above
yields the crossed 2-fold extension
\begin{equation}
\mathrm e_{(A,\sigma)} \colon 0 \longrightarrow \mathrm U(S) 
\longrightarrow \mathrm U(A) 
\stackrel{\partial^\sigma}\longrightarrow B^\sigma 
\longrightarrow Q \longrightarrow 1,
\label{pb1}
\end{equation}
uniquely determined by $(A, \sigma )$. By construction, $B^\sigma $ may be
identified with a certain subgroup of $\Aut(A,Q)$. 
The crossed 2-fold extension
$\mathrm e_{(A,\sigma)}$ represents a class
$[\mathrm e_{(A,\sigma)}] \in \mathrm H^3 (Q, \mathrm U(S))$
\cite[Section 7]{crossed},
cf. Section \ref{one} above. We refer to 
$\mathrm e_{(A,\sigma )}$ as 
the {\em Teich\-m\"uller complex} of $(A,\sigma )$
and to the corresponding class in $\mathrm H^3(Q,\mathrm U(S))$ as the 
{\em Teich\-m\"uller class} of $(A,\sigma )$.

For completeness, and for future reference, we indicate how the
Teich\-m\"uller complex is related with the classical 
\lq\lq Teich\-m\"uller cocycle\rq\rq: Let $\mathrm e_Q$ denote the 
crossed standard resolution
of $Q$ introduced in   
\cite[Section~9]{crossed}, and lift
the identity map to a commutative diagram of the kind
\begin{displaymath}
\xymatrix{
\mathrm e_Q \colon \ldots\ar[r] &C_2\ar[d]^\xi \ar[r]   &C_1 \ar[d]^\beta \ar[r]
                   &F \ar[d]^\alpha \ar[r]    &Q \ar@{=}[d] \ar[r] &1 \\
\mathrm e_ {(A, \sigma)} \colon0\ar[r]&\mathrm U(S)\ar[r] &\mathrm U(A)\ar[r]
&B^\sigma \ar[r]&Q\ar[r]&1,}
\end{displaymath}
so that $(\beta , \alpha )$ is a morphism of crossed modules, 
cf. \cite[Section 5]{crossed}. In view of 
\cite[Section 9 $({}^{\ast \ast})$]{crossed},
the map $\xi $ is a 3-cocycle of $Q$ with values in $\mathrm U(S)$.
By the main Theorem in \cite{crossed}, the class 
$[\mathrm e_{(A,\sigma )}]\in \mathrm H^3(Q,\mathrm U(S))$ 
coincides with the class which, in the cocycle description, is
 represented by $\xi$;
more precisely, 
the isomorphism 
$\mathrm{Opext}^2(Q,\mathrm U(S)) \to \mathrm H^3(Q,\mathrm U(S))$ 
results from the assignment to
a crossed 2-fold extension of a 3-cocycle by the method
just explained. It is manifest that,
in case $S$ is a field, $Q$ a finite group of automorphisms of $S$
and $A$ a finite dimensional central simple $S$-algebra,
the 3-cocycle $\xi $ is a Teich\-m\"uller cocycle for $A$,
 cf. Teich\-m\"uller \cite{MR0002858} 
and Eilenberg and Mac Lane \cite{MR0025443}, and so $[\mathrm e_{(A,\sigma )}]$
then comes down to the Teich\-m\"uller class represented by the Teich\-m\"uller
cocycle.

Henceforth we  denote by $\mathrm e_0$ the crossed 2-fold extension
\begin{equation*}
\mathrm e_0 \colon 0 \longrightarrow \mathrm U(S) \stackrel{=}\longrightarrow \mathrm U(S)  \stackrel{0}\longrightarrow Q \stackrel{=}\longrightarrow
Q \longrightarrow 1,
\end{equation*}
the crossed module structure being given by 
the $Q$-module structure on $\mathrm U(S)$.

\begin{prop}
\label{2.3.1}
The Teich\-m\"uller class  $[\mathrm e_{(A,\sigma )}]$ of an equivariant
$Q$-normal $S$-algebra $(A,\sigma )$ is zero.
\end{prop}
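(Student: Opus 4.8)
The plan is to use the equivariant structure to produce a genuine group-theoretic splitting of the grade homomorphism $B^\sigma \to Q$ and to upgrade it to a congruence between the Teich\-m\"uller complex $\mathrm e_{(A,\sigma)}$ and the trivial crossed $2$-fold extension $\mathrm e_0$. By hypothesis the $Q$-normal structure $\sigma$ is the composite of a homomorphism $\tau\colon Q \to \Aut(A)$ inducing $\kappaQ$ with the projection $\Aut(A)\to \Out(A)$. First I would note that $s\colon Q\to B^\sigma$, $s(x)=(\tau(x),x)$, is well defined because $[\tau(x)]=\sigma(x)$ in $\Out(A)$, is a homomorphism because $\tau$ is one, and splits the grade homomorphism $B^\sigma\to Q$. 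The whole point of the proposition is that this homomorphic---as opposed to merely set-theoretic---section is available precisely when $(A,\sigma)$ is equivariant.

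Next I would assemble the congruence. Write $i\colon \mathrm U(S)\hookrightarrow \mathrm U(A)$ for the inclusion. I claim that $(\mathrm{id}_{\mathrm U(S)},i,s,\mathrm{id}_Q)$ is a morphism of crossed $2$-fold extensions from $\mathrm e_0$ to $\mathrm e_{(A,\sigma)}$. The left-hand square commutes trivially (both composites equal $i$); the $\partial$-square $\partial^\sigma\circ i=s\circ 0$ commutes because both composites are constant at the neutral element of $B^\sigma$, namely $s\circ 0$ since $0$ is the trivial map, while $\partial^\sigma\circ i$ is trivial because every unit of $S$ is central in $A$, so its inner automorphism is the identity; and the grade square commutes since $s$ is a section. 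It remains to check that $(i,s)$ is a morphism of crossed modules $(\mathrm U(S),Q,0)\to(\mathrm U(A),B^\sigma,\partial^\sigma)$, for which the only nontrivial condition is equivariance: for $m\in\mathrm U(S)$ one has ${}^{s(x)}m=\tau(x)(m)=\kappaQ(x)(m)={}^xm$, the middle equality holding because $\tau$ lifts $\kappaQ$ and $\mathrm U(S)\subseteq S$.

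Since this morphism is the identity on the outer terms $\mathrm U(S)$ and $Q$, it is a congruence in the sense recalled in \cref{one}, so $\mathrm e_{(A,\sigma)}$ and $\mathrm e_0$ represent the same element of $\mathrm{Opext}^2(Q,\mathrm U(S))\cong\mathrm H^3(Q,\mathrm U(S))$. As $\mathrm e_0$ represents the neutral element, I conclude $[\mathrm e_{(A,\sigma)}]=0$.

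The only step requiring genuine verification is the crossed-module-morphism property of $(i,s)$---specifically the vanishing of $\partial^\sigma\circ i$ (centrality of $S$) and the equivariance identity (that $\tau$ restricts to $\kappaQ$ on $S$)---and these are routine consequences of the definitions, so I do not expect any real obstacle. The one conceptual observation worth stressing is that equivariance is literally the homomorphy of the section $s$, which is exactly the feature that forces the class to vanish, in contrast with the general $Q$-normal case where only a set-theoretic section of $B^\sigma\to Q$ is at hand.
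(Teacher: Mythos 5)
Your proof is correct and is essentially the paper's own argument: the paper simply asserts the existence of a congruence $(1,\cdot,\cdot,1)\colon \mathrm e_0 \to \mathrm e_{(A,\sigma)}$ and cites $[\mathrm e_0]=0$, while you supply the explicit section $s(x)=(\tau(x),x)$ into $B^\sigma$ and verify the crossed-module-morphism conditions. Nothing further is needed.
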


\begin{proof}
There is a congruence morphism $(1,\cdot,\cdot , 1) \colon \mathrm e _0 \to \mathrm e_{(A,\sigma )}$ of 
crossed 2-fold extensions, and $[\mathrm e_0] = 0 \in \mathrm H^3(Q,\mathrm U(S))$,
see  
\cite{crossed}.
\end{proof}

\subsection{Opposite algebras}
\label{2.4}
Given an algebra $A$, we denote its opposite algebra by $A^{\mathrm{op}}$
as usual.
Let $A$ be a central $S$-algebra. The association
\begin{equation*} 
\alpha \mapsto \hat\alpha,\   
\hat\alpha (a^{\mathrm{op}}) = (\alpha a)^{\mathrm{op}},\  a\in A,
\alpha \in \Aut (A),
\end{equation*}
yields an isomorphism 
$\,{}\widehat{} \,\colon \Aut (A) \to \Aut (A^{\mathrm{op}})$
and, with an abuse of notation, we denote
by $\,{}\widehat{} \, \colon \Out (A) \to \Out (A^{\mathrm{op}})$
the induced isomorphism as well.
Moreover,
inversion yields an isomorphism 
$({^\mathrm{op}})^{-1} \colon  \mathrm U(A) \to \mathrm U(A^{\mathrm{op}})$, 
and so we get an isomorphism
\begin{equation*}
(-1, ({^\mathrm{op}})^{-1},\,\widehat \,, \,{}\widehat{} \,) \colon \mathrm e_A \longrightarrow {\mathrm e}_{A^{\mathrm{op}}}
\end{equation*}
of crossed 2-fold extensions.

Given a $Q$-normal $S$-algebra $(A,\sigma )$, 
we equip $A^{\mathrm{op}}$
with a $Q$-normal structure by setting 
$\sigma^{\mathrm{op}} =  \,{}\widehat{} \,\circ\sigma $;
we refer to $(A^{\mathrm{op}}, \sigma^{\mathrm{op}})$ as the {\em opposite} of $(A,\sigma )$.
Likewise, given   a $Q$-equivariant $S$-algebra $(A,\tau )$,
letting
$\tau^{\mathrm{op}} = \,{}{\widehat{}}\, \circ \tau $, we obtain a $Q$-equivariant structure on 
$A^{\mathrm{op}}$, and we refer to 
$(A^{\mathrm{op}},\tau^{\mathrm{op}})$ is the {\em opposite} of $(A, \tau )$.

\begin{prop}
\label{2.4.1}
For a
 $Q$-normal $S$-algebra $(A,\sigma )$, the isomorphism $\,\widehat{}\, $
 induces an isomorphism
\begin{equation*}
(-1,\cdot,\cdot ,1) \colon \mathrm e_{(A,\sigma )} \longrightarrow  \mathrm e_{(A^{\mathrm{op}},\sigma ^{\mathrm{op}})}
\end{equation*}
of crossed $2$-fold extensions, and therefore
\begin{equation*}
[\mathrm e_{(A,\sigma )}] +  [\mathrm e_{(A^{\mathrm{op}},\sigma ^{\mathrm{op}})}] = 0 \in \mathrm H^3(Q,\mathrm U(S)).
\end{equation*}
\end{prop}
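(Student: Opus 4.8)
The plan is to transport the isomorphism $(-1,({^\mathrm{op}})^{-1},\,\widehat{}\,,\,\widehat{}\,)\colon \mathrm e_A \to \mathrm e_{A^{\mathrm{op}}}$ of crossed 2-fold extensions constructed above through the pullback operation that defines the two Teichm\"uller complexes, and then to read off the consequence on cohomology classes from its effect on the abelian kernel. Recall from Subsection~\ref{twothree} that $\mathrm e_{(A,\sigma)}$ is the pullback of $\mathrm e_A$ along $\sigma\colon Q \to \Out(A)$ and, symmetrically, that $\mathrm e_{(A^{\mathrm{op}},\sigma^{\mathrm{op}})}$ is the pullback of $\mathrm e_{A^{\mathrm{op}}}$ along $\sigma^{\mathrm{op}}\colon Q \to \Out(A^{\mathrm{op}})$. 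The decisive compatibility is the very definition $\sigma^{\mathrm{op}}=\,\widehat{}\,\circ\sigma$ together with the fact that the rightmost constituent of the isomorphism $\mathrm e_A \to \mathrm e_{A^{\mathrm{op}}}$ is exactly $\,\widehat{}\,\colon \Out(A) \to \Out(A^{\mathrm{op}})$.

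First I would record the induced map on the middle terms. Writing $B^\sigma=\Aut(A)\times_{\Out(A)}Q$ and, analogously, $B^{\sigma^{\mathrm{op}}}=\Aut(A^{\mathrm{op}})\times_{\Out(A^{\mathrm{op}})}Q$, the assignment $(\alpha,q)\mapsto(\hat\alpha,q)$ is well defined: if the class of $\alpha$ in $\Out(A)$ is $\sigma(q)$, then the class of $\hat\alpha$ in $\Out(A^{\mathrm{op}})$ is $\,\widehat{}\,(\sigma(q))=\sigma^{\mathrm{op}}(q)$, so $(\hat\alpha,q)$ lies in the fibre product. Combined with $({^\mathrm{op}})^{-1}\colon \mathrm U(A)\to\mathrm U(A^{\mathrm{op}})$ on the crossed-module term, with the homomorphism $-1\colon\mathrm U(S)\to\mathrm U(S)$ on the kernel, and with the identity on $Q$, this produces the quadruple $(-1,\cdot,\cdot,1)$ asserted in the statement; the $Q$-component of a fibre-product element is left untouched, so the grade homomorphisms are respected and the right-hand square commutes.

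That this quadruple is a morphism of crossed 2-fold extensions --- commutativity of both squares and the crossed-module-morphism condition on the middle pair --- is inherited by restriction to the fibre products from the corresponding assertions for $(-1,({^\mathrm{op}})^{-1},\,\widehat{}\,,\,\widehat{}\,)\colon \mathrm e_A \to \mathrm e_{A^{\mathrm{op}}}$, so this step is routine; it is an isomorphism because each of its constituents is. Finally I would invoke the naturality in the coefficient module of the identification $\mathrm{Opext}^2(Q,\mathrm U(S))\cong \mathrm H^3(Q,\mathrm U(S))$ of \cite{crossed}: a morphism of crossed 2-fold extensions lying over the identity of $Q$ whose effect on the abelian kernel is the homomorphism $-1\colon\mathrm U(S)\to\mathrm U(S)$ induces multiplication by $-1$ on $\mathrm H^3(Q,\mathrm U(S))$, whence $[\mathrm e_{(A^{\mathrm{op}},\sigma^{\mathrm{op}})}]=-[\mathrm e_{(A,\sigma)}]$ and the two classes sum to zero. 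I expect the only point requiring care to be this last invocation of functoriality --- namely verifying that inversion on the kernel realizes the $(-1)$-action on $\mathrm H^3$, so that one obtains the additive relation rather than a congruence of the two classes; everything preceding it is the formal transport of the opposite-algebra isomorphism through the pullback.
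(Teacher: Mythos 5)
Your proposal is correct and follows exactly the route the paper intends: the paper offers no written proof, treating the claim as immediate from the isomorphism $(-1,({^\mathrm{op}})^{-1},\,\widehat{}\,,\,\widehat{}\,)\colon \mathrm e_A \to \mathrm e_{A^{\mathrm{op}}}$ constructed in Subsection~\ref{2.4}, restricted to the fibre products via $\sigma^{\mathrm{op}}=\,\widehat{}\,\circ\sigma$, together with the functoriality of $\mathrm{Opext}^2(Q,-)\cong\mathrm H^3(Q,-)$ in the coefficient module applied to the kernel map $-1$. Your explicit verification that $(\alpha,q)\mapsto(\hat\alpha,q)$ lands in $B^{\sigma^{\mathrm{op}}}$ and that $(-1)_*$ realizes sign change on $\mathrm H^3$ supplies precisely the details the paper leaves tacit.
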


\subsection{Matrix algebras}
\label{2.4.22}
Now let $A$ be a central $S$-algebra, and let $\mathrm M_I(A)$ be a matrix algebra
over $A$; if $I$ is not finite, we interpret $\mathrm M_I(A)$ 
as being
the endomorphism ring
of $\oplus_I A^{\mathrm{op}}$. The algebra $\mathrm M_I(A)$ is again a central $S$-algebra.
It is obvious that an automorphism of $A$ yields one of $\mathrm M_I(A)$
in a unique way, and the obvious map $A \to \mathrm M_I(A)$ is a ring homomorphism.
Hence a $Q$-normal structure $\sigma \colon Q \to \Out(A)$ on $A$
determines one on $\mathrm M_I(A)$, 
and we
denote this structure by 
\[
\sigma_I \colon Q \longrightarrow \Out(\mathrm M_I(A));
\]
likewise a $Q$-equivariant structure $\tau \colon Q \to \Aut(A)$ 
on $A$ determines an
obvious $Q$-equivariant structure on $\mathrm M_I(A)$,
and we denote this structure by
\[
\tau _I\colon Q \longrightarrow \Aut(\mathrm M_I(A)).
\]
A special case is $A = S$ and $\sigma = \tau = \kappaQ$;
then we get the obvious equivariant structure $\kappaQI$ on $\mathrm M_I(S)$.

\begin{prop}
\label{2.4.2}
Given a $Q$-normal $S$-algebra $(A,\sigma )$,
the obvious algebra map ${A \to \mathrm M_I(A)}$ extends to a morphism 
$(A, \sigma ) \to (\mathrm M_I(A), \sigma_I)$ of $Q$-normal $S$-algebras, 
and there
is an induced congruence 
$(1,\cdot,\cdot, 1) \colon 
\mathrm e_{(A,\sigma)} \longrightarrow \mathrm e_{(\mathrm M_I(A),\sigma_I)}$
involving the corresponding crossed $2$-fold extensions. Hence
\begin{equation*}
[\mathrm e_{(A,\sigma)}] =  [\mathrm e_{(\mathrm M_I(A),\sigma_I)}] \in \mathrm H^3(Q,\mathrm U(S)).
\end{equation*}
\end{prop}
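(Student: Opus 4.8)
The plan is to realize the asserted congruence as the pullback along $\sigma$ of a morphism of crossed $2$-fold extensions $\mathrm e_A \to \mathrm e_{\mathrm M_I(A)}$ induced by the functor $\mathrm M_I$ together with the unital central embedding $f\colon A \to \mathrm M_I(A)$, $a \mapsto a\cdot 1$ (the diagonal embedding $a\mapsto \mathrm{diag}(a,\ldots,a)$ in the finite case, and the corresponding unital embedding into $\End(\oplus_I A^{\mathrm{op}})$ in general). First I would record that $f$ is a homomorphism of central $S$-algebras restricting to the identity of $S$ on centers, and that the functorial assignment $\mathrm M_I\colon \Aut(A) \to \Aut(\mathrm M_I(A))$ is compatible with $f$ in the sense that $f\circ\alpha = \mathrm M_I(\alpha)\circ f$ for every $\alpha \in \Aut(A)$; since restricting $\mathrm M_I(\alpha)$ to the center returns $\alpha|S$, the functor $\mathrm M_I$ covers the identity of $\Aut(S)$.

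The key computation is that $\mathrm M_I$ carries inner automorphisms to inner automorphisms: for $u \in \mathrm U(A)$ one has $\mathrm M_I(\partial(u)) = \partial(f(u))$, because conjugation by the central matrix $f(u)=u\cdot 1$ in $\mathrm M_I(A)$ realizes exactly the entrywise conjugation $\mathrm M_I(\partial(u))$. Consequently $\mathrm M_I$ descends to a homomorphism $\overline{\mathrm M_I}\colon \Out(A) \to \Out(\mathrm M_I(A))$, the pair $(\mathrm U(f),\mathrm M_I)$ is a morphism of the crossed modules underlying \eqref{eA}, and, since $f$ carries $\mathrm U(S)$ identically into the center, the quadruple $(1,\mathrm U(f),\mathrm M_I,\overline{\mathrm M_I})$ is a morphism $\mathrm e_A \to \mathrm e_{\mathrm M_I(A)}$ of crossed $2$-fold extensions that is the identity on $\mathrm U(S)$. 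By construction $\sigma_I = \overline{\mathrm M_I}\circ\sigma$, which in particular reconfirms that $\sigma_I$ lifts $\kappaQ$.

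Next I would pull this morphism back along $\sigma$. Since $\mathrm e_{(A,\sigma)}$ and $\mathrm e_{(\mathrm M_I(A),\sigma_I)}$ are, by definition \eqref{pb1}, the pullbacks of $\mathrm e_A$ and $\mathrm e_{\mathrm M_I(A)}$ along $\sigma$ and $\sigma_I=\overline{\mathrm M_I}\circ\sigma$ respectively, the universal property of the fiber product produces the map $\psi\colon B^\sigma \to B^{\sigma_I}$, $(\alpha,q)\mapsto(\mathrm M_I(\alpha),q)$, and yields a morphism $(1,\mathrm U(f),\psi,1)\colon \mathrm e_{(A,\sigma)} \to \mathrm e_{(\mathrm M_I(A),\sigma_I)}$. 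Being the identity on both the kernel $\mathrm U(S)$ and the cokernel $Q$, this morphism is a congruence, and it is the promised extension of $f$ to a morphism of $Q$-normal algebras. The equality $[\mathrm e_{(A,\sigma)}]=[\mathrm e_{(\mathrm M_I(A),\sigma_I)}]$ in $\mathrm H^3(Q,\mathrm U(S))$ then follows from the identification $\mathrm{Opext}^2(Q,\mathrm U(S))\cong \mathrm H^3(Q,\mathrm U(S))$ of \cite{crossed}, under which congruent crossed $2$-fold extensions represent the same cohomology class.

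I expect no serious obstacle here: every verification is a naturality check with no cocycle computation. The only point deserving attention is the case of infinite $I$, where $\mathrm M_I(A)$ is interpreted as the endomorphism ring of $\oplus_I A^{\mathrm{op}}$; there one must confirm that the central embedding $f$ and the induced action of $\Aut(A)$ remain well defined and that the inner-to-inner identity $\mathrm M_I(\partial(u))=\partial(f(u))$ persists. This rests solely on the centrality of $f(u)$ and introduces no new idea.
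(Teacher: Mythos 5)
Your proof is correct and follows exactly the route the paper intends: the paper gives no explicit proof of Proposition \ref{2.4.2}, treating it as immediate from the remarks preceding it (the functoriality of $\alpha \mapsto \mathrm M_I(\alpha)$, the unital embedding $A \to \mathrm M_I(A)$, and the fact that this data induces $\sigma_I$), and your writeup supplies precisely the missing verifications — in particular the key identity $\mathrm M_I(\partial(u))=\partial(f(u))$ and the pullback along $\sigma$ producing the congruence $(1,\cdot,\cdot,1)$. The only quibble is terminological: $f(u)=u\cdot 1$ is a scalar matrix, not a central element of $\mathrm M_I(A)$ unless $u\in\mathrm U(S)$; what your argument actually uses is that conjugation by $\mathrm{diag}(u,\ldots,u)$ acts entrywise, which is what you compute.
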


\subsection{Tensor products}
\label{2.4.33}
Given two $Q$-normal $S$-algebras $(A_1,\sigma_1)$ and  $(A_2,\sigma_2)$,
the $Q$-normal structures determine an obvious homomorphism 
$\sigma_1 \otimes \sigma_2 \colon Q \longrightarrow \Out(A_1 \otimes A_2)$
so that $(A_1 \otimes A_2, \sigma_1 \otimes \sigma_2)$ is a $Q$-normal
$S$-algebra;
likewise, given two $Q$-equivariant
$S$-algebras $(A_1, \tau_1)$ and $(A_2, \tau_2)$, 
the $Q$-equivariant structures determine an obvious
homomorphism 
$\tau_1 \otimes \tau_2 \colon Q \longrightarrow \Aut (A_1 \otimes A_2)$
in such a way that 
$(A_1 \otimes A_2, \tau_1 \otimes \tau_2)$ is a $Q$-equivariant
$S$-algebra.

Given two $Q$-normal $S$-algebras $(A_1,\sigma_1)$ and  $(A_2,\sigma_2)$,
consider the Baer sum
\begin{equation*} 
{\mathrm e}_{(A_1,\sigma_1)} +{\mathrm e}_{(A_2,\sigma_2)} \colon 0 \longrightarrow \mathrm U(S) \longrightarrow \mathrm U(A_1) \times^{\mathrm U(S)} \mathrm U(A_2) \longrightarrow
B^{\sigma_1} \times_Q B^{\sigma_2}\longrightarrow Q \longrightarrow 1
\end{equation*}
of the crossed 2-fold extensions
${\mathrm e}_{(A_1,\sigma_1)}$ and ${\mathrm e}_{(A_2,\sigma_2)}$,
cf. \cite{crossed} for details.
This is a crossed
2-fold extension 
that
 represents the sum 
 $[{\mathrm e}_{(A_1,\sigma_1)}] + [{\mathrm e}_{(A_2,\sigma_2)}]$
in $\mathrm H^3(Q,\mathrm U(S))$.

\begin{prop}
\label{2.4.3}
There is an obvious congruence 
\begin{equation*}
(1,\cdot ,\cdot ,1) \colon {\mathrm e}_{(A_1,\sigma_1)} + {\mathrm e}_{(A_2,\sigma_2)} \longrightarrow {\mathrm e}_{(A_1\otimes A_2, \sigma_1\otimes \sigma_2)}
\end{equation*}
of crossed $2$-fold extensions. Hence
\begin{equation*}
[{\mathrm e}_{(A_1\otimes A_2, \sigma_1\otimes \sigma_2)}]
= [{\mathrm e}_{(A_1,\sigma_1)}] + [{\mathrm e}_{(A_2,\sigma_2)}] \in \mathrm H^3(Q,\mathrm U(S)).   \qed
\end{equation*}
\end{prop}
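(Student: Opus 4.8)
The plan is to exhibit the asserted congruence explicitly and then to invoke, as in the description of the Baer sum recalled just above the statement, the two facts from \cite{crossed} that congruent crossed $2$-fold extensions represent the same class in $\mathrm H^3(Q,\mathrm U(S))$ and that the Baer sum represents the sum of the two classes. Thus it suffices to produce vertical homomorphisms $\varphi$ and $\psi$ that restrict to the identity on $\mathrm U(S)$ and induce the identity on $Q$, fit into a commutative ladder from $\mathrm e_{(A_1,\sigma_1)} + \mathrm e_{(A_2,\sigma_2)}$ to $\mathrm e_{(A_1\otimes A_2,\sigma_1\otimes\sigma_2)}$, and constitute a morphism of the underlying crossed modules.

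First I would define the map at the term $\mathrm U(A_1) \times^{\mathrm U(S)} \mathrm U(A_2)$, which is the quotient of $\mathrm U(A_1) \times \mathrm U(A_2)$ by the antidiagonal copy $\{(s,s^{-1}) : s \in \mathrm U(S)\}$, by setting $\varphi([u_1,u_2]) = u_1 \otimes u_2 \in \mathrm U(A_1\otimes A_2)$. This is well defined because $S$ is central, so for $s \in \mathrm U(S)$ one has $(su_1)\otimes(s^{-1}u_2) = u_1 \otimes u_2$; it is visibly a group homomorphism, and it restricts to the identity on $\mathrm U(S)$, embedded via $s\mapsto[s,1]=[1,s]$. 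Next I would define the map at the term $B^{\sigma_1} \times_Q B^{\sigma_2}$: an element is a pair $((\alpha_1, q),(\alpha_2, q))$ with $\alpha_i \in \Aut(A_i)$ of common grade $q$ and $[\alpha_i] = \sigma_i(q)$ in $\Out(A_i)$, and I set $\psi((\alpha_1,q),(\alpha_2,q)) = (\alpha_1\otimes\alpha_2, q)$. By the very definition of $\sigma_1 \otimes \sigma_2$ in Subsection~\ref{2.4.33}, the class of $\alpha_1\otimes\alpha_2$ in $\Out(A_1\otimes A_2)$ equals $(\sigma_1\otimes\sigma_2)(q)$, so $\psi$ lands in $B^{\sigma_1\otimes\sigma_2}$; it is a homomorphism and is compatible with the grade homomorphisms to $Q$, giving the identity there.

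It then remains to check the two squares and the crossed-module condition. For the square involving the boundary maps, the boundary of the Baer sum sends $[u_1,u_2]$ to $(\partial^{\sigma_1}(u_1),\partial^{\sigma_2}(u_2))$, a pair of inner automorphisms of grade $e$; since the inner automorphism of $A_1\otimes A_2$ determined by $u_1\otimes u_2$ is the tensor product of those determined by $u_1$ and $u_2$, we obtain $\partial^{\sigma_1\otimes\sigma_2}(\varphi([u_1,u_2])) = \psi(\partial^{\sigma_1}(u_1),\partial^{\sigma_2}(u_2))$. The square involving the grade maps commutes because both routes send $((\alpha_1,q),(\alpha_2,q))$ to $q$. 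Finally, $(\varphi,\psi)$ is a morphism of crossed modules: the action of $((\alpha_1,q),(\alpha_2,q))$ on $[u_1,u_2]$ is $[\alpha_1 u_1,\alpha_2 u_2]$, whose image under $\varphi$ is $(\alpha_1 u_1)\otimes(\alpha_2 u_2) = (\alpha_1\otimes\alpha_2)(u_1\otimes u_2)$, which is exactly $\psi((\alpha_1,q),(\alpha_2,q))$ acting on $\varphi([u_1,u_2])$.

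The one point deserving care, and the only genuine content beyond bookkeeping, is the well-definedness underlying $\psi$: that tensoring automorphisms descends to a pairing $\Out(A_1)\times\Out(A_2)\to\Out(A_1\otimes A_2)$ making $\sigma_1\otimes\sigma_2$ a bona fide $Q$-normal structure. This rests on the fact that the tensor product of an inner automorphism with the identity is again inner (by $u_1\otimes 1$, respectively $1\otimes u_2$), which is precisely what is used when $(A_1\otimes A_2,\sigma_1\otimes\sigma_2)$ is introduced in Subsection~\ref{2.4.33}. Granting this, all the above verifications are routine, the ladder $(1,\varphi,\psi,1)$ is a congruence, and the identity $[\mathrm e_{(A_1\otimes A_2,\sigma_1\otimes\sigma_2)}] = [\mathrm e_{(A_1,\sigma_1)}] + [\mathrm e_{(A_2,\sigma_2)}]$ in $\mathrm H^3(Q,\mathrm U(S))$ follows.
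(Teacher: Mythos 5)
Your proposal is correct and is precisely the fleshing-out of the congruence the paper declares ``obvious'' (the proposition is stated with no written proof): the maps $[u_1,u_2]\mapsto u_1\otimes u_2$ and $((\alpha_1,q),(\alpha_2,q))\mapsto(\alpha_1\otimes\alpha_2,q)$, together with the observation that well-definedness on outer classes rests on the tensor product of an inner automorphism with the identity being inner, are exactly the intended verification. Nothing is missing.
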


Combining Propositions \ref{2.4.1} and \ref{2.4.3} with the observation  
that the Teich\-m\"uller class of $(S,\kappaQ)$ is zero, we see
that the Teich\-m\"uller classes of $Q$-normal $S$-algebras constitute a
subgroup of $\mathrm H^3(Q,\mathrm U(S))$. 

\subsection{Behavior under change of actions}
\label{2.59}

\begin{prop}
\label{2.5.1}
Let
$(f,\varphi ) \colon (S,Q,\kappa) \to (T,G,\lambda )$ be a morphism
in $\mathcat{Change}$ between two given
objects $(S,Q,\kappa)$ and $(T,G,\lambda )$ of $\mathcat{Change}$, so that
the group $G$ acts on $S$ via $\varphi \colon G \to Q$.

\noindent
{\rm (i)}
Given a $Q$-normal $S$-algebra $(A,\sigma )$, the structure map $\sigma $
and $(f, \varphi )$ induce a canonical $G$-normal structure 
$\sigma_{(f,\varphi)}\colon G \to \Out(T \otimes A)$ on 
$T \otimes A$ that is compatible with
the operations of taking opposite algebras and tensor products.

\noindent 
{\rm (ii)}
Given a
$Q$-equivariant $S$-algebra $(A,\tau )$, the structure map
$\tau $ and $(f, \varphi)$ induce a canonical $G$-equivariant structure
$\tau_{(f,\varphi )}\colon G \to \Aut(T \otimes A)$ on $T \otimes A$ 
that is compatible with the operations of taking
opposite algebras and tensor products.

\noindent 
{\rm (iii)}
Given
a $Q$-normal $S$-algebra $(A,\sigma )$, the structure map $\sigma $
and $(f, \varphi )$ induce, in a canonical way, morphisms
\begin{equation*}
(1, \cdot , \cdot ,  \varphi) \colon {\mathrm e}_{(A,\sigma \varphi) } 
\to {\mathrm e}_{(A,\sigma ) } \;
\textrm{and} \; 
(f ,\cdot, \cdot,  1) \colon {\mathrm e}_{(A,\sigma \varphi )} \to {\mathrm e}_{(T\otimes A,\sigma_{(f,\varphi )} )} 
\end{equation*}
of crossed $2$-fold extensions. 
Consequently,
\begin{equation*}
[{\mathrm e}_{(T\otimes A, \sigma_{(f,\varphi  )}) } ]
= (f, \varphi )_{\ast } [{\mathrm e}_{(A,\sigma )}]\in \mathrm H^3(G,\mathrm U(T) ),
\end{equation*}
where $(f,\varphi )_\ast $ denotes the map induced on cohomology. \qed
\end{prop}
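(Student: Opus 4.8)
The plan is to build the $G$-normal structure on $T\otimes A$ object-wise from automorphisms of the shape $\lambda(x)\otimes\alpha$, and then to deduce the cohomological identity in (iii) by exhibiting the two asserted morphisms of crossed $2$-fold extensions and reading off the induced maps on $\mathrm H^3$ from the functoriality of the isomorphism $\mathrm{Opext}^2(\,\cdot\,,\,\cdot\,)\cong\mathrm H^3(\,\cdot\,,\,\cdot\,)$ recalled in Section~\ref{one}. Throughout, $T\otimes A$ denotes $T\otimes_S A$, with $S$ acting on $T$ via $f$.

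For (i) and (ii), fix $x\in G$ and an automorphism $\alpha\in\Aut(A)$ lifting $\kappa(\varphi(x))\in\Aut(S)$. The compatibility axiom \eqref{changeaxiom}, written $f({}^{\varphi(x)}s)={}^x(f(s))$, says exactly that $\lambda(x)\colon T\to T$ and $\alpha\colon A\to A$ are compatible over $f$ with the two $S$-actions entering $T\otimes A$; a direct check on a generator $t\otimes a$, balancing $S$ across the tensor product, shows that $\lambda(x)\otimes\alpha$ is a well-defined ring automorphism of $T\otimes A$. Replacing $\alpha$ by $\partial(u)\circ\alpha$ for a unit $u\in\mathrm U(A)$ replaces $\lambda(x)\otimes\alpha$ by $\partial(1\otimes u)\circ(\lambda(x)\otimes\alpha)$, since $1\otimes u$ commutes with the central factor $T$; hence the class of $\lambda(x)\otimes\alpha$ in $\Out(T\otimes A)$ depends only on $\sigma(\varphi(x))\in\Out(A)$, and this defines $\sigma_{(f,\varphi)}(x)$. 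Because $\lambda$, $\varphi$ and $\sigma$ are homomorphisms and $(\lambda(x)\otimes\alpha)(\lambda(y)\otimes\alpha')=\lambda(xy)\otimes(\alpha\alpha')$, the assignment $x\mapsto\sigma_{(f,\varphi)}(x)$ is a homomorphism $G\to\Out(T\otimes A)$; and since $\lambda(x)\otimes\alpha$ restricts to $\lambda(x)$ on $T\cong T\otimes 1$, it lifts the $G$-action $\lambda$, so $(T\otimes A,\sigma_{(f,\varphi)})$ is $G$-normal. Part (ii) is the same computation carried out with honest automorphisms $\tau(\varphi(x))$ in place of the classes $\sigma(\varphi(x))$, and compatibility with opposite algebras and tensor products is the formal functoriality of the constructions $A\mapsto A^{\mathrm{op}}$ and $(A_1,A_2)\mapsto A_1\otimes A_2$.

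For (iii), I would first note that $(A,\sigma\varphi)$ is a $G$-normal $S$-algebra for the $G$-action $\kappa\varphi$ on $S$, and that its fibre-product group $B^{\sigma\varphi}=\Aut(A)\times_{\Out(A)}G$ is canonically $B^\sigma\times_Q G$; hence $\mathrm e_{(A,\sigma\varphi)}$ is literally the pullback of $\mathrm e_{(A,\sigma)}$ along $\varphi$. This supplies the morphism $(1,\cdot,\cdot,\varphi)\colon\mathrm e_{(A,\sigma\varphi)}\to\mathrm e_{(A,\sigma)}$ and yields $[\mathrm e_{(A,\sigma\varphi)}]=\varphi^{\ast}[\mathrm e_{(A,\sigma)}]$ in $\mathrm H^3(G,\mathrm U(S))$. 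Next I would write down the morphism $(f,\cdot,\cdot,1)\colon\mathrm e_{(A,\sigma\varphi)}\to\mathrm e_{(T\otimes A,\sigma_{(f,\varphi)})}$ with components $f\colon\mathrm U(S)\to\mathrm U(T)$, the map $u\mapsto 1\otimes u$ on $\mathrm U(A)\to\mathrm U(T\otimes A)$, the map $(\alpha,x)\mapsto(\lambda(x)\otimes\alpha,x)$ on $B^{\sigma\varphi}\to B^{\sigma_{(f,\varphi)}}$, and the identity on $G$. The only square needing comment is the leftmost one, which commutes precisely because $f(s)\otimes 1=1\otimes s$ in $T\otimes A$; equivariance of the crossed-module part is immediate since $\lambda(x)$ fixes $1\in T$, and one checks that $f$ is $G$-equivariant from $\kappa\varphi$ to $\lambda$ by \eqref{changeaxiom} again. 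As this morphism is the identity on the quotient $G$ and induces $f$ on kernels, functoriality gives $[\mathrm e_{(T\otimes A,\sigma_{(f,\varphi)})}]=f_{\ast}[\mathrm e_{(A,\sigma\varphi)}]$ in $\mathrm H^3(G,\mathrm U(T))$. Composing the two identities and recalling from Subsection~\ref{coa} that $(f,\varphi)_{\ast}=f_{\ast}\circ\varphi^{\ast}$ yields the asserted equality.

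The step I expect to require the most care is checking that $T\otimes A$ is again a \emph{central} $T$-algebra, so that the kernel of $\partial\colon\mathrm U(T\otimes A)\to\Aut(T\otimes A)$ is $\mathrm U(T)$ and the complex $\mathrm e_{(T\otimes A,\sigma_{(f,\varphi)})}$ genuinely has coefficients in $\mathrm U(T)$; this is where one invokes (flat) base change or the relevant standing hypothesis on $A$, and everything downstream is then formal. By contrast, the two morphisms of crossed $2$-fold extensions and the passage to cohomology are pure bookkeeping once the well-definedness of $\lambda(x)\otimes\alpha$ has been secured from \eqref{changeaxiom}.
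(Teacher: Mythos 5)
Your proof is correct and is exactly the argument the paper intends: the paper states Proposition~\ref{2.5.1} with no proof at all (it ends in \verb|\qed| as immediate), and your construction of $\sigma_{(f,\varphi)}(x)$ as the class of $\lambda(x)\otimes\alpha$, the identification $B^{\sigma\varphi}\cong B^{\sigma}\times_Q G$ giving the first morphism, and the map $(\alpha,x)\mapsto(\lambda(x)\otimes\alpha,x)$ giving the second are the evident details being suppressed. One remark on the point you rightly single out: for general central $S$-algebras the center of $T\otimes_S A$ can genuinely be larger than $T$ (this already fails for non-Azumaya orders under reduction mod $p$), and flatness of $f$ alone does not repair it; the clean sufficient condition is that $A$ be Azumaya, so the caveat is really one about the paper's stated level of generality rather than about your argument, which is otherwise complete.
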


\subsection{Embedding algebras into algebras with smaller center}
\label{eaas}
The title of this subsection is intended to remind the reader of
Deuring's paper  \cite{0014.20001} having the same title.

As before,
$Q$  denotes a group and $\kappaQ\colon Q \to \Aut(S)$ an action of $Q$ on 
the commutative ring $S$.
Let $A$ be a central $S$-algebra, 
let $R$ denote the subring $S^Q$ of elements of $S$
that are elementwise fixed under $Q$,
and let $C$ be an $R$-algebra
containing $A$ as a subalgebra.
We  refer to the embedding of $A$ into $C$ 
as a {\em Deuring embedding
with respect to the action\/} $\kappaQ\colon Q \to \Aut(S)$
of $Q$ on $A$ if each automorphism 
$\kappaQ(x)$, as $x$ ranges over $Q$, extends to an inner automorphism
of $C$ that maps $A$ to itself. 
In the special case where $A$ coincides with the centralizer of $T$ in $C$,
the requirement that each inner automorphism of $C$
that lifts an automorphism of $S$ of the kind $\kappaQ(x)$
as $x$ ranges over $Q$ map $A$ to itself is redundant.

For technical reasons, we need a stronger notion of Deuring embedding.
We  now prepare for the description of this stronger notion.

Thus, let $C$ be an $R$-algebra
that contains $A$ as a subalgebra.
Let  
$N^{\mathrm U(C)}(A)$ denote the {\em normalizer of} 
$A$ in the group 
$\mathrm U(C)$ of invertible elements of $C$; the group 
$N^{\mathrm U(C)}(A)$ consists
of those $u\in \mathrm U(C)$ such that, for each $a\in A$, 
the member $u a u^{-1}$ of $C$ already lies in  $A$.
Denote by $i \colon \mathrm U(A) \to N^{\mathrm U(C)}(A)$ the inclusion.

\begin{prop}
\label{twosixo}
{\rm (i)} 
Conjugation in $C$ induces a morphism
\begin{equation*}
(1,\eta ) \colon (\mathrm U(A), N^{\mathrm U(C)}(A),i) \longrightarrow (\mathrm U(A), \Aut (A), \partial)
\end{equation*}
of crossed modules,
the requisite action of
$N^{\mathrm U(C)}(A)$ on $\mathrm U(A)$ being given by conjugation,
 and hence an 
$(N^{\mathrm U(C)}(A)/\mathrm U(A))$-normal structure 
\begin{equation}
\eta_{\sharp} \colon N^{\mathrm U(C)}(A)/\mathrm U(A) \longrightarrow \Out(A)
\label{homo1}
\end{equation}
on $A$.
Explicitly, the 
homomorphism $\eta\colon N^{\mathrm U(C)}(A) \to \Aut (A)$ is given by
\begin{equation*}
(\eta(u))(a)= u a u^{-1} \in A,\ u \in  N^{\mathrm U(C)}(A),\ a \in A.
\end{equation*}

\noindent
{\rm (ii)}
The induced homomorphism 
\begin{equation*}
\begin{CD}
\eta_{\flat}\colon
N^{\mathrm U(C)}(A)/\mathrm U(A) @>{\eta_{\sharp}}>> 
\Out(A) @>{\res}>>  \Aut(S)
\end{CD}
\end{equation*}
maps onto the subgroup
$\kappaQ(Q) \subseteq \Aut(S)$
if and only if the embedding of $A$ into $C$ is a Deuring embedding,
that is, if and only if each automorphism $\kappaQ(q)$
of $S$, as $q$ ranges over
$Q$, extends
to an inner automorphism of $C$ that
normalizes $A$.

\noindent
{\rm (iii)}
If $A$ 
coincides with  the centralizer of $S$ in $C$, then the induced homomorphism
$\eta_{\flat}$ from
 $N^{\mathrm U(C)}(A)/\mathrm U(A)$ to $\Aut(S)$ 
spelled out in {\rm (ii)} above is injective.

\noindent
{\rm (iv)}
Suppose that  the given action $\kappaQ\colon Q \to \Aut(S)$ of $Q$ on $S$
lifts to
a homomorphism ${\chi \colon Q \to N^{\mathrm U(C)}(A)/\mathrm U(A)}$ 
in the sense
that the
combined map 
\begin{equation}
\begin{CD}
Q @>{\chi}>> N^{\mathrm U(C)}(A)/\mathrm U(A) @>{\eta_{\flat}}>> \Aut(S)
\end{CD}
\label{compelev}
\end{equation}
coincides with $\kappaQ\colon Q \to \Aut(S)$. 
Then  the composite
\begin{equation}
\begin{CD}
\sigma \colon Q @>{\chi}>> N^{\mathrm U(C)}(A)/\mathrm U(A) @>{\eta_{\sharp}}>> \Out(A)
\end{CD}
\label{comp1}
\end{equation}
of $\chi$ with the homomorphism
{\rm\eqref{homo1}} 
in {\rm{(i)}} above
yields  a $Q$-normal structure $\sigma$ 
on $A$; in particular, a strong Deuring embedding structure map
$\chi $ exists
and is uniquely determined if $A$ coincides with
 the centralizer of $S$ in $C$.

\noindent
{\rm (v)} 
Given a lift
 $\chi \colon Q \to N^{\mathrm U(C)}(A)/\mathrm U(A)$
in the sense
that the
composite {\rm \eqref{compelev}} thereof with  
$\eta_{\flat}$ coincides
with
 the structure map $\kappaQ\colon Q \to \Aut(S)$,
the obvious map from the 
fiber product group ${\Gamma = N^{\mathrm U(C)}(A) 
\times_{N^{\mathrm U(C)}(A)/\mathrm U(A)} Q}$
to $Q$ yields a group extension
\begin{equation}
1 \longrightarrow \mathrm U(A) \stackrel{j}\longrightarrow \Gamma 
\longrightarrow Q \longrightarrow 1,
\label{ext2}
\end{equation}
the injection $j$ being the obvious homomorphism
from $\mathrm U(A)$ to $\Gamma$, 
and the obvious action $\vartheta\colon \Gamma \to \Aut(A)$
of $\Gamma$ on $A$
(induced by conjugation in $C$ or, equivalently, by the
homomorphism $\eta\colon N^{\mathrm U(C)}(A) \to \Aut(A)$ in {\rm{(i)}} above,)
yields a morphism
\begin{equation}
(1,\vartheta) \colon (\mathrm U(A), \Gamma ,j) \longrightarrow 
(\mathrm U(A), \Aut (A), \partial )
\label{mor1}
\end{equation}
of crossed modules,
the requisite action of $\Gamma$
on $\mathrm U(A)$ being given by conjugation.
The morphism $(1,\vartheta)$ of crossed
modules, in turn,
 induces the $Q$-normal structure
{\rm \eqref{comp1}} on $A$. 
\end{prop}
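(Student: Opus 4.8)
The plan is to base all five parts on a single construction, the conjugation homomorphism $\eta$, and then to read off (ii)--(v) by elementary diagram chases together with the cokernel interpretation of a morphism of crossed modules recalled above (following \cite{crossed}). For (i), given $u \in N^{\mathrm U(C)}(A)$ the conjugation $c_u\colon C \to C$, $c_u(y)=uyu^{-1}$, satisfies $c_u(A)\subseteq A$ by the normalizer condition; since $N^{\mathrm U(C)}(A)$ is a group, $u^{-1}$ normalizes $A$ as well, so $c_u(A)=A$ and $\eta(u)=c_u|_A\in\Aut(A)$. The assignment $u\mapsto\eta(u)$ is visibly a homomorphism. The square involving $i$ and $\partial$ commutes because $\eta(i(a))=\partial(a)$ for $a\in\mathrm U(A)$, and the conjugation action of $N^{\mathrm U(C)}(A)$ on $\mathrm U(A)$ is intertwined by $\eta$ with the tautological $\Aut(A)$-action, so $(1,\eta)$ is a morphism of crossed modules. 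Since each $\eta(u)$ carries units of $A$ to units, $\mathrm U(A)$ is normal in $N^{\mathrm U(C)}(A)$, and passing to cokernels yields the homomorphism $\eta_{\sharp}$, which is the asserted normal structure.

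For (ii) and (iii) I would simply unwind $\eta_{\flat}$. As $S=Z(A)$ is preserved by every element of $\Aut(A)$, one has $\eta_{\flat}(\bar u)=c_u|_S$, so $\im(\eta_{\flat})$ is exactly the group of automorphisms of $S$ induced on the centre by inner automorphisms of $C$ that normalize $A$. By definition the embedding is a Deuring embedding precisely when each $\kappaQ(x)$ arises in this way, i.e.\ when $\kappaQ(Q)\subseteq\im(\eta_{\flat})$; this is (ii). For (iii), if $A$ is the centralizer of $S$ in $C$ and $\eta_{\flat}(\bar u)=\mathrm{id}_S$, then $u$ centralizes $S$, whence $u\in A$; applying the same to $u^{-1}$ gives $u^{-1}\in A$, so $u\in\mathrm U(A)$ and $\bar u=1$, proving $\eta_{\flat}$ injective.

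Part (iv) is then formal: $\res\circ\sigma=\res\circ\eta_{\sharp}\circ\chi=\eta_{\flat}\circ\chi=\kappaQ$, so $\sigma$ lifts $\kappaQ$ and is a $Q$-normal structure. When $A$ coincides with the centralizer of $S$ in $C$, the injectivity from (iii) shows that any lift $\chi$ of $\kappaQ$ through $\eta_{\flat}$ is unique; and if the embedding is Deuring, (ii) gives $\kappaQ(Q)\subseteq\im(\eta_{\flat})$, so that $\chi=(\eta_{\flat})^{-1}\circ\kappaQ$ is well defined and, $(\eta_{\flat})^{-1}$ being a homomorphism on $\im(\eta_{\flat})$, is itself a homomorphism, which gives existence.

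Finally, for (v) I would form the fiber product $\Gamma$, whose elements are the pairs $(u,x)$ with $u\in N^{\mathrm U(C)}(A)$, $x\in Q$, and $\bar u=\chi(x)$. The projection $\Gamma\to Q$ is surjective because $N^{\mathrm U(C)}(A)\to N^{\mathrm U(C)}(A)/\mathrm U(A)$ is, and its kernel is $\{(a,1):a\in\mathrm U(A)\}$, giving the extension \eqref{ext2} with $j(a)=(a,1)$. Setting $\vartheta=\eta\circ\mathrm{pr}$, where $\mathrm{pr}\colon\Gamma\to N^{\mathrm U(C)}(A)$, one checks $\vartheta\circ j=\partial$ and that conjugation of the normal subgroup $j(\mathrm U(A))$ inside $\Gamma$ is intertwined with the $\Aut(A)$-action, so $(1,\vartheta)$ is a morphism of crossed modules. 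The induced map on cokernels $Q=\Gamma/j(\mathrm U(A))\to\Out(A)$ sends $x$, represented by any $(u,x)$, to the class of $\eta(u)=\eta_{\sharp}(\bar u)=\eta_{\sharp}(\chi(x))=\sigma(x)$, so it is precisely $\sigma$. I expect no serious obstacle, since the content lies entirely in organizing these chases; the two points deserving genuine care are in (iv), namely confirming that the set-theoretic inverse $(\eta_{\flat})^{-1}\circ\kappaQ$ is a group homomorphism (where the injectivity of (iii) and the containment of (ii) both enter), and in (v), checking that the induced cokernel map equals $\sigma$ on the nose rather than merely up to inner automorphisms.
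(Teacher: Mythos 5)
Your proposal is correct. The paper in fact offers no proof of this proposition at all — it is stated and immediately followed by the definition of a strong Deuring embedding, the verifications being treated as routine — so there is nothing to compare against; your diagram chases are exactly the omitted checks, and they are all sound. One small point worth keeping: the existence claim in (iv) as literally stated needs the embedding to be a Deuring embedding (so that $\kappaQ(Q)\subseteq \im(\eta_{\flat})$), a hypothesis the paper only makes explicit in the follow-up Proposition \ref{deucent}; you correctly supply it when constructing $\chi=(\eta_{\flat})^{-1}\circ\kappaQ$.
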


Consider a central $S$-algebra $A$;
given an algebra $C$ over $R=S^Q$,
we define a {\em strong Deuring embedding} of 
$A$
into $C$ {\em relative to\/} the action $\kappaQ\colon Q \to \Aut(S)$ 
of $Q$ on $S$
to be
an embedding of $A$ into $C$ together with a homomorphism 
$\chi \colon Q \to N^{\mathrm U(C)}(A)/\mathrm U(A)$ 
such that
 the combined map 
\begin{equation}
\begin{CD}
Q @>{\chi}>> N^{\mathrm U(C)}(A)/\mathrm U(A) @>{\eta_{\flat}}>> \Aut(S) 
\end{CD}
\label{comb1}
\end{equation}
coincides with 
the structure map $\kappaQ\colon Q \to \Aut(S)$.

Given a strong 
Deuring embedding $(A\subseteq C,\chi)$ of the $S$-algebra $A$ into an algebra
$C$ relative to the action $\kappaQ\colon Q \to \Aut(S)$
of $Q$ on the commutative ring $S$,
the homomorphism $\sigma\colon Q \to \Out(A)$
given as \eqref{comp1} above yields a $Q$-normal structure
on $A$, and we 
 say that
this $Q$-normal structure {\em arises from the strong Deuring embedding\/} 
 $(A\subseteq C,\chi)$ of $A$ into $C$ relative to $\kappaQ$
or that {\em the strong Deuring embedding\/} 
 $(A\subseteq C,\chi)$ {\em of\/} $A$ {\em into\/} $C$ {\em relative to\/} $\kappaQ$
{\em induces the\/} $Q$-normal structure $\sigma\colon Q \to \Out(A)$
on $A$.

\begin{prop}
\label{deucent}
Let $C$ be an algebra over $R=S^Q$,  let $A\subseteq C$
be an embedding such that $A$ coincides with
the centralizer of $S$ in $C$, and
suppose that the embedding
is a Deuring embedding
with respect to the action $\kappaQ\colon Q \to \Aut(S)$
of $Q$ on $S$.
Then the data determine a unique group homomorphism
$\chi \colon Q \to N^{\mathrm U(C)}(A)/\mathrm U(A)$  that
turns the embedding $A \subseteq C$ into a strong Deuring embedding
with respect to $\kappaQ$.
 \end{prop}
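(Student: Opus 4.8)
The plan is to deduce the statement directly from the two structural facts about the homomorphism $\eta_{\flat}$ established in Proposition~\ref{twosixo}, so that the only real work is to assemble them into an isomorphism and then invert it. First I would invoke the centralizer hypothesis: since by assumption $A$ coincides with the centralizer of $S$ in $C$, part~(iii) of Proposition~\ref{twosixo} tells us that the induced homomorphism
\[
\eta_{\flat}\colon N^{\mathrm U(C)}(A)/\mathrm U(A) \longrightarrow \Aut(S)
\]
is injective. Next, since the embedding is assumed to be a Deuring embedding with respect to $\kappaQ$, part~(ii) of the same proposition identifies the image of $\eta_{\flat}$ as precisely the subgroup $\kappaQ(Q)\subseteq \Aut(S)$. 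Combining the two, $\eta_{\flat}$ restricts to an isomorphism of groups from $N^{\mathrm U(C)}(A)/\mathrm U(A)$ onto $\kappaQ(Q)$.

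For the existence of $\chi$, I would factor the structure map $\kappaQ\colon Q \to \Aut(S)$ through its image as a corestriction $Q \twoheadrightarrow \kappaQ(Q)$ followed by the inclusion $\kappaQ(Q)\hookrightarrow \Aut(S)$, and then define $\chi$ to be the composite of the corestriction with the inverse of the isomorphism $\eta_{\flat}\colon N^{\mathrm U(C)}(A)/\mathrm U(A) \to \kappaQ(Q)$ furnished above. This $\chi$ is a genuine group homomorphism, and by construction the combined map $\eta_{\flat}\circ \chi$ equals $\kappaQ$ as maps $Q \to \Aut(S)$; hence $\chi$ satisfies the defining condition~\eqref{comb1} and turns the embedding $A\subseteq C$ into a strong Deuring embedding with respect to $\kappaQ$.

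For uniqueness, suppose $\chi'\colon Q \to N^{\mathrm U(C)}(A)/\mathrm U(A)$ is any homomorphism with $\eta_{\flat}\circ \chi' = \kappaQ$. Then $\eta_{\flat}\circ \chi' = \eta_{\flat}\circ \chi$, and the injectivity of $\eta_{\flat}$ forces $\chi' = \chi$. There is no serious obstacle here, since the substantive content is already carried by Proposition~\ref{twosixo}; the only point that needs a little care is verifying that injectivity together with the exact determination of the image genuinely yields an isomorphism onto $\kappaQ(Q)$, so that the corestriction of $\kappaQ$ may legitimately be composed with $(\eta_{\flat})^{-1}$ to produce $\chi$.
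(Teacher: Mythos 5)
Your proposal is correct and follows essentially the same route as the paper: both arguments combine the injectivity of $\eta_{\flat}$ from Proposition~\ref{twosixo}(iii) (via the centralizer hypothesis) with the fact that the Deuring-embedding hypothesis puts $\kappaQ(Q)$ in the image of $\eta_{\flat}$ (the paper derives this by identifying $N^{\mathrm U(C)}(A)$ with $N^{\mathrm U(C)}(S)$, which is the content of Proposition~\ref{twosixo}(ii) that you cite), and then conclude that $\kappaQ$ lifts uniquely through $\eta_{\flat}$.
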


\begin{proof}
Since $A$ coincides with the centralizer of $S$ in $C$,
the normalizer $N^{\mathrm U(C)}(A)$ 
of $A$ in $C$ coincides with the normalizer
$N^{\mathrm U(C)}(S)$ of $S$ in $C$.
The hypothesis that 
the embedding of $A$ into $C$ be a Deuring embedding 
with respect to the action $\kappaQ\colon Q \to \Aut(S)$
of $Q$ on $S$, that is, that
each 
automorphism $\kappaQ(q)$ 
of $S$, as $q$ ranges over $Q$, 
extends to an inner automorphism of $C$
entails that 
the canonical homomorphism
from 
the group $N^{\mathrm U(C)}(S)=N^{\mathrm U(C)}(A)$ 
to $\Aut(S)$ is a surjective homomorphism onto
the subgroup $\kappaQ(Q)$ of $\Aut(S)$.

Since $A$ coincides with the centralizer of $S$ in $C$, 
 by 
Proposition \ref{twosixo}(iii),
the homomorphism 
\[
 \eta_{\flat}\colon N^{\mathrm U(C)}(A)/\mathrm U(A) \longrightarrow \Aut(S)
\] 
is 
injective. Consequently the structure map $\kappaQ\colon Q \to \Aut(S)$ lifts
to a
uniquely determined homomorphism
$\chi \colon Q \to N^{\mathrm U(C)}(A)/\mathrm U(A)$ in the sense 
that the composite \eqref{comb1}
coincides with $\kappaQ$.
\end{proof}

Recall that the Teich\-m\"uller class
of a $Q$-normal $S$-algebra $(A,\sigma)$ is represented by the associated
crossed 2-fold extension ${\mathrm e}_{(A,\sigma  )}$
introduced as \eqref{pb1} above.

\begin{prop}
\label{2.6.5}
If a $Q$-normal structure $\sigma \colon Q \to \Out(A)$ 
on a central $S$-algebra $A$
arises from a strong Deuring embedding $(C,\chi)$ of $A$ into an algebra $C$ 
over $R=S^Q$,
then
the Teich\-m\"uller class 
$[{\mathrm e}_{(A,\sigma  )}]\in \mathrm H^3 (Q,\mathrm U(S))$ 
of $(A,\sigma  )$
is zero.
\end{prop}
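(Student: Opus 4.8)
The plan is to use the group $\Gamma$ furnished by Proposition~\ref{twosixo}(v) to realize the Teich\-m\"uller complex $\mathrm e_{(A,\sigma)}$ of \eqref{pb1} as the \emph{target} of a morphism of crossed $2$-fold extensions whose \emph{source} carries the trivial coefficient module, so that naturality of the correspondence $\mathrm{Opext}^2\cong\mathrm H^3$ forces the class to vanish. Recall from Proposition~\ref{twosixo}(v) that the strong Deuring embedding $(C,\chi)$ supplies a group extension $1\to\mathrm U(A)\stackrel{j}{\to}\Gamma\to Q\to 1$, with grade homomorphism $p\colon\Gamma\to Q$ and injective $j$, together with the action $\vartheta\colon\Gamma\to\Aut(A)$ induced by conjugation in $C$, and that $(1,\vartheta)\colon(\mathrm U(A),\Gamma,j)\to(\mathrm U(A),\Aut(A),\partial)$ is a morphism of crossed modules inducing exactly the structure $\sigma$ of \eqref{comp1}. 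Since $j$ is injective with cokernel $Q$, the crossed module $(\mathrm U(A),\Gamma,j)$ underlies the crossed $2$-fold extension
\begin{equation*}
\hat{\mathrm e}\colon 0 \longrightarrow 0 \longrightarrow \mathrm U(A)\stackrel{j}{\longrightarrow} \Gamma \stackrel{p}{\longrightarrow} Q \longrightarrow 1,
\end{equation*}
whose class lies in $\mathrm H^3(Q,0)=0$.

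First I would assemble $\vartheta$ and $p$ into a single homomorphism $\widetilde\vartheta\colon\Gamma\to B^\sigma=\Aut(A)\times_{\Out(A)}Q$, $\gamma\mapsto(\vartheta(\gamma),p(\gamma))$; this lands in the fibre product precisely because $\vartheta$ induces $\sigma$, i.e.\ the class of $\vartheta(\gamma)$ in $\Out(A)$ equals $\sigma(p(\gamma))$. By construction $\widetilde\vartheta$ covers $\mathrm{id}_Q$ and restricts on $\mathrm U(A)$ to $\partial^\sigma$, since $\vartheta\circ j=\partial$ and $j(u)$ has neutral grade. I would then check that $(\mathrm{id}_{\mathrm U(A)},\widetilde\vartheta)\colon(\mathrm U(A),\Gamma,j)\to(\mathrm U(A),B^\sigma,\partial^\sigma)$ is a morphism of crossed modules: commutativity $\widetilde\vartheta\circ j=\partial^\sigma$ is the previous remark, and equivariance reduces to the observation that $\gamma\in\Gamma$ and its image $\widetilde\vartheta(\gamma)\in B^\sigma$ act on $\mathrm U(A)$ through the \emph{same} automorphism $\vartheta(\gamma)$ of $A$. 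Together with $\mathrm{id}_Q$ on the right and the zero map $\alpha\colon 0\to\mathrm U(S)$ on the left, this produces a morphism of crossed $2$-fold extensions
\begin{equation*}
(\alpha,\mathrm{id}_{\mathrm U(A)},\widetilde\vartheta,\mathrm{id}_Q)\colon \hat{\mathrm e}\longrightarrow \mathrm e_{(A,\sigma)}.
\end{equation*}

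Finally I would invoke the naturality of the identification $\mathrm{Opext}^2(Q,-)\cong\mathrm H^3(Q,-)$ under morphisms of crossed $2$-fold extensions from \cite{crossed}: a morphism with group component $\mathrm{id}_Q$ and coefficient component $\alpha$ yields $\alpha_\ast[\hat{\mathrm e}]=[\mathrm e_{(A,\sigma)}]$, whence
\begin{equation*}
[\mathrm e_{(A,\sigma)}]=\alpha_\ast[\hat{\mathrm e}]=\alpha_\ast(0)=0\in\mathrm H^3(Q,\mathrm U(S)).
\end{equation*}
Conceptually this is the statement that $[\mathrm e_{(A,\sigma)}]$ is the obstruction to lifting the group extension $1\to\partial^\sigma(\mathrm U(A))\to B^\sigma\to Q\to1$ through the central extension $0\to\mathrm U(S)\to\mathrm U(A)\to\partial^\sigma(\mathrm U(A))\to1$, and $\Gamma$ with $\widetilde\vartheta$ is exactly such a lift; here centrality of $A$ (so that $\ker\partial^\sigma=\mathrm U(S)$, equivalently $\ker\widetilde\vartheta=\mathrm U(S)$) is what makes the lift faithful. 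The main obstacle is that the comparison is a genuine morphism rather than a congruence, since it changes the coefficient module from $0$ to $\mathrm U(S)$: I must use the full functoriality of the $\mathrm{Opext}^2/\mathrm H^3$ correspondence and verify with care that $(\mathrm{id},\widetilde\vartheta)$ respects the crossed-module actions, the remaining verifications being routine.
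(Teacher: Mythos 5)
Your argument is correct, and it rests on exactly the same input as the paper's proof, namely the group extension $\mathrm U(A)\rightarrowtail\Gamma\twoheadrightarrow Q$ and the morphism $(1,\vartheta)$ of crossed modules supplied by Proposition~\ref{twosixo}(v), repackaged as the lift $\widetilde\vartheta\colon\Gamma\to B^\sigma$. The difference is purely in the bookkeeping. You compare $\mathrm e_{(A,\sigma)}$ with the trivial-coefficient crossed $2$-fold extension $0\to 0\to\mathrm U(A)\to\Gamma\to Q\to 1$ via a single morphism over $\alpha\colon 0\to\mathrm U(S)$ and $\mathrm{id}_Q$, and then invoke the identity $\alpha_\ast[\hat{\mathrm e}]=[\mathrm e_{(A,\sigma)}]$; this requires the full functoriality of the $\mathrm{Opext}^2\cong\mathrm H^3$ correspondence under change of the coefficient module, which is available in \cite{crossed} but goes beyond the congruence relation that the paper's preliminaries actually set up (a congruence there is by definition a morphism $(1,\varphi,\psi,1)$ fixing both ends). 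The paper sidesteps this by interposing the auxiliary extension $0\to\mathrm U(S)\to\mathrm U(S)\times\mathrm U(A)\stackrel{\partial}\to\Gamma\to Q\to 1$, which admits genuine congruences onto both $\mathrm e_{(A,\sigma)}$ and $\mathrm e_0$, so that only the congruence-generated equivalence is ever used. Your route is slightly shorter and makes the "obstruction to lifting" interpretation transparent; the paper's route is self-contained relative to the machinery it states. Both are fine; if you want to stay strictly within the stated framework, replace your source $\hat{\mathrm e}$ by the product extension and check that the two comparison maps $(s,a)\mapsto sa$ and $(s,a)\mapsto s$ are congruences.
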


\begin{proof}
This  
is a consequence of
Proposition \ref{twosixo}(v).
Indeed, 
$(\mathrm U(S)\times \mathrm U(A),\Gamma,\partial)$
being endowed with the obvious crossed module structure,
let
 $\mathrm e$ denote the associated crossed 2-fold extension
\begin{equation*}
\mathrm e \colon 0 \longrightarrow \mathrm U(S) 
    \longrightarrow  
\mathrm U(S)\times \mathrm U(A)\stackrel{\partial}
\longrightarrow \Gamma 
\longrightarrow Q
\longrightarrow 1.
\end{equation*}
There are 
obvious congruences
$(1,\cdot ,\cdot, 1) \colon {\mathrm e} \to {\mathrm e}_{(A,\sigma )}$ 
and
$(1,\cdot ,\cdot, 1) \colon {\mathrm e} \to {\mathrm e}_0$
of crossed 2-fold extensions 
whence the assertion.
\end{proof}

\begin{rema}
\label{rema1}
The morphism \eqref{mor1}
of crossed modules induces 
the morphism
\begin{equation*}
(\mathrm{Id}, \,\cdot\,) \colon
(\mathrm U(A),\Gamma,j) \longrightarrow (\mathrm U(A), B^{\psi},\partial^{\psi})
\end{equation*}
of crossed modules.
This morphism of  crossed modules displays the fact that,
in the language of abstract kernels, cf. 
\cite[Section IV.8 p.~124]{maclaboo},
the abstract kernel associated to the crossed module
$(\mathrm U(A),B^{\psi},\partial^{\psi})$
is an extendible kernel. Consequently
its obstruction class in 
$\mathrm H^3(Q,\mathrm U(S))$ vanishes.
This obstruction class coincides with the
associated Teich\-m\"uller class.
\end{rema}

\begin{rema}
For the converse of Proposition \ref{2.6.5}, see Theorem \ref{4.2} below.
\end{rema}

It seems worthwhile spelling out a special case of Proposition \ref{2.6.5}.

\begin{prop}
\label{2.6.6}
Let $C$ be an algebra over $R=S^Q$ that contains $A$
as a subalgebra in such a way that $A$ coincides with
the centralizer of $S$ in $C$, and
suppose that the embedding of $A$ into $C$ is a Deuring embedding
with respect to the action $\kappaQ\colon Q \to \Aut(S)$
of $Q$ on $S$.
Then the $Q$-normal structure $\sigma\colon Q \to \Out(A)$ on $A$
induced by the Deuring embedding 
of $A$ into $C$
via the associated homomorphism 
$\chi \colon Q \to N^{\mathrm U(C)}(A)/\mathrm U(A)$
in Proposition {\rm \ref{deucent}} above
has zero Teich\-m\"uller class 
$[{\mathrm e}_{(A,\sigma )}] \in \mathrm H^3(Q,\mathrm U(S))$.
\end{prop}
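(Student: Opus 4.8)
The plan is to recognize that the hypotheses here furnish precisely a strong Deuring embedding, so that the statement reduces immediately to Proposition \ref{2.6.5}. First I would invoke Proposition \ref{deucent}: since $A$ coincides with the centralizer of $S$ in $C$ and the embedding $A \subseteq C$ is a Deuring embedding with respect to $\kappaQ$, that proposition produces a unique group homomorphism $\chi\colon Q \to N^{\mathrm U(C)}(A)/\mathrm U(A)$ whose composite with $\eta_{\flat}$, namely the combined map \eqref{comb1}, coincides with $\kappaQ$. By the very definition given before Proposition \ref{2.6.5}, the pair $(A \subseteq C,\chi)$ is therefore a strong Deuring embedding of $A$ into $C$ relative to $\kappaQ$.

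Next I would observe that the $Q$-normal structure $\sigma\colon Q \to \Out(A)$ named in the present statement is, by construction, the one induced by this strong Deuring embedding through the composite \eqref{comp1}, i.e. $\sigma = \eta_{\sharp}\circ\chi$. This identification is immediate from the way $\chi$ is produced in Proposition \ref{deucent} together with the recipe recorded in Proposition \ref{twosixo}(iv)--(v); there is nothing to compute. With this in hand, Proposition \ref{2.6.5} applies verbatim and yields $[{\mathrm e}_{(A,\sigma )}] = 0$ in $\mathrm H^3(Q,\mathrm U(S))$, the vanishing ultimately coming from the congruences of ${\mathrm e}_{(A,\sigma )}$ with ${\mathrm e}_0$ established in the proof of Proposition \ref{2.6.5} via the crossed-module morphism of Proposition \ref{twosixo}(v).

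Since the argument is a direct specialization, I do not expect any genuine obstacle. The only point deserving care is the bookkeeping that confirms the $Q$-normal structure supplied by the uniqueness clause of Proposition \ref{deucent} is literally the same structure fed into Proposition \ref{2.6.5}; once the equality $\sigma = \eta_{\sharp}\circ\chi$ is acknowledged, the conclusion is automatic.
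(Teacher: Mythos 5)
Your argument is exactly the one the paper intends: Proposition \ref{2.6.6} is introduced explicitly as a special case of Proposition \ref{2.6.5}, obtained by using Proposition \ref{deucent} to upgrade the Deuring embedding to a strong one with the unique structure map $\chi$, and then applying Proposition \ref{2.6.5} to the induced structure $\sigma=\eta_{\sharp}\circ\chi$. Your bookkeeping remark about identifying the two $Q$-normal structures is the only content needed, and you handle it correctly.
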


\subsection{The normal algebra associated to
a generalized normal Azumaya algebra}
\label{from}
Until the end of this section, given an algebra $A$, a left $A$-module
${}_AM$ and a right $A$-module $M_A$, we  use the notation
${}_A\End({}_AM)$ for the algebra of left $A$-endomorphisms and
$\End_A(M_A)$ for the algebra of right $A$-endomorphism
of  $M_A$; accordingly we use the notation
${}_A\Hom(\,\cdot\, ,\,\cdot\,)$ and $\Hom_A(\,\cdot\, ,\,\cdot\,)$.

As before, let $R=S^Q \subseteq S$.
Consider an Azumaya $S$-algebra $A$, viewed as an object of $\mathcat B_{S,Q}$,
let $Q_A\subseteq Q$ denote the image of the canonical homomorphism 
$\Aut_{\mathcat B_{S,Q}}(A)
\longrightarrow Q$,
and consider the associated group extension
\begin{equation}
\mathrm e^{\substack{\mbox{\tiny{$\Pic(S)$}}}}_A\colon
1
\longrightarrow
\Pic (S)
\longrightarrow
\Aut_{\mathcat B_{S,Q}}(A)
\longrightarrow
Q_A
\longrightarrow
1
\label{eApictilde}
\end{equation}
of the kind \eqref{eApic},
with $Q_A$ rather than $Q$.
Then
$A$ represents
a member of $\mathrm B(S,Q_A)$, and
$Q_A=Q$ if and only if $A$ represents
a member of $\mathrm B(S,Q)$.
Let $\sigma_A\colon Q_A \to \Aut_{\mathcat B_{S,Q}}(A)$
be a section for \eqref{eApictilde} of the underlying sets
that sends the neutral element of $Q_A$ to  the neutral element of
$\Aut_{\mathcat B_{S,Q}}(A)$, not necessarily 
a homomorphism. 

To simplify the exposition, we  now suppose that $\kappaQ$ is injective.
The general case then results 
from the special case
with the subgroup $\kappaQ(Q)$ of $\Aut(S)$
substituted for $Q$.

For each $x \in Q_A$, the value 
$\sigma_A(x)\in \Aut_{\mathcat B_{S,Q}}(A)$ 
is an isomorphism class
of an invertible $(A,A)$-bimodule
of grade $x\in Q$, and
we choose an
invertible $(A,A)$-bimodule $M_x$ in $\sigma_A(x)$;
while $M_x$ depends on $\sigma_A$,
we do not indicate this dependence in notation, to simplify the exposition.
In particular,  $M_e$ denotes the algebra $A$, viewed as an
$(A,A)$-bimodule in the canonical way.
Let $\MsA=\oplus _{z\in Q_A} M_z$ and ${\BsA}={}_A\End(\MsA)$,
the algebra of left $A$-endomorphisms of $\MsA$.
When the group $Q_A$ is finite, ${\BsA}$ (as well as $\BsA^{\mathrm{op}}$)
is an Azumaya $S$-algebra.

\begin{rema} The construction of $\BsA$ may be found in the proof
of \cite[Theorem 4.1 (ii)]{MR1803361} where it is the basic tool to 
establish the surjectivity of a homomorphism spelled out below
as \eqref{theta} in the case where the group $Q$ is finite.
See also Remark \ref{frwall} below.
\end{rema}

Recall that, given a ring automorphism $\alpha$ of $\BsA^{\mathrm{op}}$,
the notation $(\MsA)_{\alpha}$ refers to the right
$\BsA^{\mathrm{op}}$-module structure on $\MsA$ given by
\begin{equation*}
\MsA \otimes \BsA^{\mathrm{op}} \longrightarrow \MsA,\
m \cdot b = m \alpha(b),\ m \in \MsA, \ b \in \BsA^{\mathrm{op}}.
\end{equation*}

\begin{prop}
\label{prop8} 
Given a member $\alpha$ of $\Aut(\BsA^{\mathrm{op}},Q)$,
the operation
\begin{equation*}
A \otimes \Hom_{\BsA^{\mathrm{op}}}(\MsA, (\MsA)_\alpha) \otimes A \longrightarrow 
\Hom_{\BsA^{\mathrm{op}}}(\MsA, (\MsA)_\alpha)
\end{equation*}
given by
$a_1\otimes \varphi \otimes a_2 \longmapsto 
a_1\circ \varphi \circ a_2 $ ($a_1,a_2 \in A$, $\varphi \in  \Hom_{\BsA^{\mathrm{op}}}(\MsA, (\MsA)_\alpha)$) where, for $m \in \MsA$, the value
$(a_1\circ \varphi \circ a_2)(m)$ equals $a_1(\varphi(a_2m))$,
yields
an obvious invertible $(A,A)$-bimodule structure
on \begin{equation}
N_\alpha=
\Hom_{\BsA^{\mathrm{op}}}(\MsA, (\MsA)_\alpha )
\label{bimod1}
\end{equation}
of grade equal to the grade of $\alpha$ in such a way that
the assignment to $\alpha$ of 
$N_{\alpha}$
induces 
an injective 
homomorphism $\Theta_{\BsA}\colon \Out({\BsA},Q) \to \Aut_{\mathcat B_{S,Q}}(A)$
that is compatible with the grades in $Q_A$.
\end{prop}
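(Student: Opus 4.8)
The plan is to build the map first on $\Aut(\BsA^{\mathrm{op}},Q)$, show that $\alpha\mapsto[N_\alpha]$ is a grade-preserving homomorphism into $\Aut_{\mathcat B_{S,Q}}(A)$, and then pass to $\Out(\BsA,Q)\cong\Out(\BsA^{\mathrm{op}},Q)$ via the canonical isomorphism $\Aut(\BsA)\cong\Aut(\BsA^{\mathrm{op}})$ of Subsection~\ref{2.4} and prove injectivity. First I would record the $(A,A)$-bimodule structure. Writing $\lambda\colon A\to\End_S(\MsA)$ for the left $A$-action, the image $\lambda(A)$ commutes with $\BsA={}_A\End(\MsA)$, so postcomposition with $\lambda(a_1)$ and precomposition with $\lambda(a_2)$ preserve $\BsA^{\mathrm{op}}$-linearity; this is exactly the recipe $a_1\circ\varphi\circ a_2$ and endows $N_\alpha$ with the asserted bimodule structure, the axioms being routine. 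For the grade, note that for central $s\in S$ one has $\lambda(s)\in\BsA$, so for $\varphi\in N_\alpha$ and $m\in\MsA$ the computation $\varphi(sm)=\varphi(m\cdot\lambda(s))=\varphi(m)\cdot\alpha(\lambda(s))=\varphi(m)\cdot\lambda({}^xs)=({}^xs)\varphi(m)$ — where $x$ is the grade of $\alpha$, so that $\alpha$ restricts to $\kappaQ(x)$ on the center $S$ and hence fixes $\lambda(S)$ up to ${}^x(\cdot)$ — gives $\varphi s={}^xs\,\varphi$, i.e.\ $N_\alpha$ has grade $x$.

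\textbf{Homomorphism property.} The conceptual core is that the group law is implemented by plain composition. If $\varphi\in N_\alpha$ and $\psi\in N_\beta$, then for $b\in\BsA^{\mathrm{op}}$ one has $\varphi(\psi(m\cdot b))=\varphi(\psi(m)\cdot\beta(b))=\varphi(\psi(m))\cdot(\alpha\beta)(b)$, so $\varphi\circ\psi\in N_{\alpha\beta}$; this pairing is $A$-balanced and induces a natural, grade-compatible map $N_\alpha\otimes_A N_\beta\to N_{\alpha\beta}$. To see these are isomorphisms I would first identify $N_e=\End_{\BsA^{\mathrm{op}}}(\MsA)$ with $A$: since $M_e=A$ is a direct summand of $\MsA$, the module $\MsA$ is a generator for left $A$-modules, hence balanced, so the canonical homomorphism $A\to\End_{\BsA^{\mathrm{op}}}(\MsA)=N_e$ is an isomorphism of $(A,A)$-bimodules. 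Feeding $\beta=\alpha^{-1}$ into the composition pairing then yields $N_\alpha\otimes_A N_{\alpha^{-1}}\cong N_e\cong A$, and symmetrically on the other side, so each $N_\alpha$ is an invertible $(A,A)$-bimodule; thus $\alpha\mapsto[N_\alpha]$ is a well-defined homomorphism $\Aut(\BsA^{\mathrm{op}},Q)\to\Aut_{\mathcat B_{S,Q}}(A)$, and its composite with the grade homomorphism recovers the grade in $Q_A$.

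\textbf{Descent to $\Out$ and injectivity.} An inner automorphism $\alpha=\mathrm{conj}(u)$, $u\in\mathrm U(\BsA^{\mathrm{op}})$, yields a bijective $\varphi_0\in N_\alpha$ (namely $m\mapsto m\cdot u$), so $N_\alpha\cong A$; hence the homomorphism kills inner automorphisms and factors through $\Out(\BsA^{\mathrm{op}},Q)\cong\Out(\BsA,Q)$, giving $\Theta_{\BsA}$. For injectivity, suppose $[N_\alpha]=1$ in $\Aut_{\mathcat B_{S,Q}}(A)$. Then $\alpha$ has grade $e$ (the identity has grade $e$ and the grade of $N_\alpha$ is that of $\alpha$), and the trivialization $A\cong N_\alpha$ singles out an element $\varphi_0\in N_\alpha$ which, by the invertibility established above together with $N_e\cong A$, admits a two-sided inverse under composition; thus $\varphi_0\colon\MsA\to(\MsA)_\alpha$ is a bijective $\BsA^{\mathrm{op}}$-linear, left-$A$-linear map. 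Reading bijectivity inside $\BsA=\End_A(\MsA)$, the intertwining relation becomes $\alpha(b)=\varphi_0\, b\,\varphi_0^{-1}$, so $\alpha$ is inner; hence $\Theta_{\BsA}$ is injective. In the Azumaya case (finite $Q_A$) this last step is precisely the Skolem--Noether statement of Proposition~\ref{skolnoet}, the class of $N_\alpha$ in $\Pic(S)\cong\Aut_{\mathcat B_S}(A)$ (cf.\ Subsection~\ref{bcacr}) matching the Skolem--Noether class $[{}_\alpha J]$ of the $S$-automorphism $\alpha$.

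\textbf{Main obstacle.} The step I expect to be the crux is the identification $N_e\cong A$ via the balanced generator property, since it is what makes the composition pairing an isomorphism and simultaneously drives both invertibility and injectivity. I deliberately route everything through this generator argument (using only that $M_e=A$ is a summand of $\MsA$) and through the bijectivity-forces-inner argument rather than through Proposition~\ref{skolnoet} directly, because when $Q_A$ is infinite the algebra $\BsA$ need not be Azumaya and $\MsA$ need not be finitely generated, so a naive appeal to Skolem--Noether for $\BsA$ is unavailable; the generator/composition approach is insensitive to this finiteness. The remaining ingredients — the bimodule axioms, the grade computation, and the bookkeeping of the extension structure over $Q_A$ — are routine.
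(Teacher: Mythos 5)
Your proof is correct, and its overall skeleton matches the paper's: the grade computation (showing $\varphi\circ s=\alpha(s)\circ\varphi$, hence $N_\alpha$ has the grade of $\alpha$) and the Morita-theoretic composition pairings establishing invertibility are exactly what the paper does, and the paper explicitly leaves the remaining claims (homomorphism property, descent to $\Out$, injectivity) to the reader, so your filling them in is welcome rather than divergent. The one place where you take a genuinely different route is the identification $A\cong\End_{\BsA^{\mathrm{op}}}(\MsA)$, which the paper isolates as Lemma \ref{lem8} and proves by a hands-on argument: it decomposes $\BsA$ into components ${}_A\Hom(M_u,M_v)$, uses the invertibility of each $M_x$ to produce partitions of the identity $\sum \tilde h^j_{y,x}\circ h^j_{x,y}=\mathrm{Id}_{M_x}$ forcing any $\BsA$-linear endomorphism to be diagonal, and then uses the evaluation isomorphism to show the diagonal entries agree. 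You instead observe that $M_e=A$ is a direct summand of $\MsA$, so ${}_A\MsA$ is a generator and the standard balanced-module theorem gives the isomorphism at once; this is a clean and valid shortcut that, like the paper's argument, works for infinite $Q_A$, and it has the added benefit of delivering the finite generation and projectivity of $\MsA$ over $\BsA^{\mathrm{op}}$ that underwrite the $\Hom\otimes_A\Hom\cong\End$ isomorphisms (which both you and the paper otherwise assert without further comment). Your injectivity argument — a generator $\varphi_0$ of a trivialized $N_\alpha$ is a two-sided unit under composition, hence a unit of $\BsA$ conjugating $b$ to $\alpha(b)$ — is the natural completion of what the paper omits and is consistent with its framework.
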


We postpone the proof after Theorem \ref{fundclass1}
below.

\begin{rema}
Suppose that the group $Q_A$ is finite.
Then, by construction, the $(A,\BsA^{\mathrm{op}})$-module $\MsA$ yields an 
isomorphism $\MsA \colon \BsA^{\mathrm{op}} \to A$ in $\mathcat B_{S,Q}$.
By Proposition \ref{isomsq},
given a $(\BsA^{\mathrm{op}},\BsA^{\mathrm{op}})$-bimodule
$N_x$ of grade $x\in Q$,
the $(A,A)$-bimodule
\begin{equation}
\MsA\otimes_{\BsA^{\mathrm{op}}} N_x \otimes_{\BsA^{\mathrm{op}}} \MsA^* 
\cong \Hom_{{\BsA^{\mathrm{op}}}}(\MsA, \MsA\otimes_{\BsA^{\mathrm{op}}} N_x)
\label{bimod14}
\end{equation}
of grade $x$ spelled out in Proposition \ref{isomsq}
represents the image of $[N_x] \in \Aut_{\mathcat B_{S,Q}}(\BsA^{\mathrm{op}})$
in  $\Aut_{\mathcat B_{S,Q}}(A)$
under the isomorphism 
$(\MsA)_*\colon \Aut_{\mathcat B_{S,Q}}(\BsA^{\mathrm{op}}) \longrightarrow \Aut_{\mathcat B_{S,Q}}(A)$
induced by the isomorphism $[\MsA] \colon \BsA^{\mathrm{op}} \to A$
in $\mathcat B_{S,Q}$.
\end{rema}

\begin{thm}
\label{fundclass1}
The central $S$-algebras ${\BsA}$ and $\BsA^{\mathrm{op}}$ 
are weakly $Q_A$-normal.
Furthermore,
if the
section
$\sigma_A\colon Q_A \to \Aut_{\mathcat B_{S,Q}}(A)$,
of the underlying sets,
for {\rm{\eqref{eApictilde}}} 
is a homomorphism and hence
defines a generalized
$Q_A$-normal structure on $A$,
this section
$\sigma_A$ 
determines
a $Q_A$-normal structure 
\[
\sigma_{\BsA}
\colon Q_A \longrightarrow 
\Out({\BsA},Q)
\]
on ${\BsA}$
such that
\begin{equation*}
\Theta_{\BsA} \circ \sigma_{\BsA} 
=\sigma_A\colon Q_A \to \Aut_{\mathcat B_{S,Q}}(A),
\end{equation*}
and the $Q_A$-normal $S$-algebra 
$(\BsA,\sigma_{\BsA})$
is then determined by
$(A,\sigma_A)$ up to within isomorphism.
\end{thm}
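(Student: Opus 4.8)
The plan is to realise $\BsA^{\mathrm{op}}$ as a generalized crossed product of $A$ by the section $\sigma_A$, to read weak normality off its internal homogeneous structure, and then to transport everything back to $A$ through the injective grade-compatible homomorphism $\Theta_{\BsA}$ of Proposition~\ref{prop8}. Concretely, for finite $Q_A$ the left $A$-module $\MsA=\bigoplus_{z\in Q_A}M_z$ is a progenerator and the Remark preceding the theorem records the Morita isomorphism $\MsA\colon\BsA^{\mathrm{op}}\to A$ in $\mathcat B_{S,Q}$; right multiplication then identifies $\BsA^{\mathrm{op}}$ with the $Q_A$-graded algebra $\bigoplus_{z\in Q_A}M_z$ whose multiplication is bimodule composition in $\mathcat B_{S,Q}$ and whose associativity is governed by the $\Pic(S)$-valued factor set $c$ measuring the failure of $\sigma_A$ to be a homomorphism. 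In this picture each $M_x$ sits inside $\BsA^{\mathrm{op}}$ as an invertible homogeneous $(A,A)$-subbimodule of grade $x$. (For general $Q_A$ one first replaces $Q$ by $\kappaQ(Q)$ and argues with the subbimodules $M_x\subseteq\BsA^{\mathrm{op}}$ directly.)

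First I would prove weak $Q_A$-normality. For each $x\in Q_A$ let $\alpha_x$ be the inner automorphism of $\BsA^{\mathrm{op}}$ attached to the invertible bimodule $M_x$ (conjugation by $M_x$, using a dual pair for $M_x$ and its inverse summing to $1\in M_e=A$). It carries the component $M_z$ isomorphically onto the component of grade $xzx^{-1}$, hence permutes the homogeneous pieces, and on $S$ it restricts to $\kappaQ(x)$ precisely because $M_x$ has grade $x$; thus $\alpha_x$ is an honest grade-$x$ automorphism. Producing such an extension of $\kappaQ(x)$ for every $x\in Q_A$ shows $\BsA^{\mathrm{op}}$ is weakly $Q_A$-normal, and $\BsA$ then is as well by the opposite-algebra isomorphism of Subsection~\ref{2.4}. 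It is worth stressing that one must conjugate inside the ring rather than attempt the naive shift $M_x\otimes_A\MsA\cong\MsA$ of left $A$-modules: that isomorphism fails in general, the obstruction being the image under $\beta$ (in the Skolem--Noether sequence of Proposition~\ref{skolnoet}) of the determinant $\prod_{z\in Q_A}c(x,z)$ of the factor set, and it is exactly the internal crossed-product structure that circumvents this.

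Next, when $\sigma_A$ is a homomorphism I would set $\sigma_{\BsA}(x)$ to be the class of $\alpha_x$ in $\Out(\BsA,Q)$, transported through the opposite-algebra identification. The key compatibility $\Theta_{\BsA}\circ\sigma_{\BsA}=\sigma_A$ is the assertion that $\Theta_{\BsA}$ sends the class of conjugation by $M_x$ to $[M_x]=\sigma_A(x)$; this follows from Proposition~\ref{isomsq} applied to the Morita isomorphism $\MsA\colon\BsA^{\mathrm{op}}\to A$ together with the description of $N_\alpha$ in Proposition~\ref{prop8}. Granting this, the homomorphism property of $\sigma_{\BsA}$ is automatic, since $\Theta_{\BsA}$ is injective and $\Theta_{\BsA}(\sigma_{\BsA}(x)\sigma_{\BsA}(y))=\sigma_A(x)\sigma_A(y)=\sigma_A(xy)=\Theta_{\BsA}(\sigma_{\BsA}(xy))$ forces $\sigma_{\BsA}(x)\sigma_{\BsA}(y)=\sigma_{\BsA}(xy)$. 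Grade-compatibility of $\Theta_{\BsA}$ gives $\res\circ\sigma_{\BsA}=\kappaQ$ on $Q_A$, so $\sigma_{\BsA}$ is a $Q_A$-normal structure, and injectivity of $\Theta_{\BsA}$ shows it is the unique one lying over $\sigma_A$.

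Finally, for uniqueness of $(\BsA,\sigma_{\BsA})$ up to isomorphism I would note that the construction depends only on the chosen representatives $M_z\in\sigma_A(z)$: a different choice $M_z'$ is bimodule-isomorphic to $M_z$, the component isomorphisms assemble into a left $A$-module isomorphism $\MsA\cong\MsA'$ and hence a ring isomorphism $\BsA\cong\BsA'$, which by the naturality embodied in Proposition~\ref{isomsq} intertwines $\alpha_x$ with $\alpha_x'$, and therefore $\sigma_{\BsA}$ with $\sigma_{\BsA'}$, yielding an isomorphism of $Q_A$-normal algebras. The main obstacle I anticipate is the structural input underlying all of this: recognizing $\BsA^{\mathrm{op}}$ as the crossed product so that the grade-$x$ automorphisms exist as genuine conjugations, and verifying $\Theta_{\BsA}([\alpha_x])=[M_x]$ through the Morita equivalence $\MsA$. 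This is where finiteness of $Q_A$, and hence the Azumaya property making $\MsA$ a progenerator, enters most essentially, the general case reducing to it by passing to the subgroup $\kappaQ(Q)$ of $\Aut(S)$.
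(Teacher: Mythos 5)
There is a genuine gap, and it sits at the heart of your construction of the grade-$x$ automorphisms. You propose to ``realise $\BsA^{\mathrm{op}}$ as a generalized crossed product'' by identifying it with $\bigoplus_{z\in Q_A}M_z$, and then to obtain $\alpha_x$ as the inner automorphism given by ``conjugation by $M_x$'' inside that ring. But $\BsA={}_A\End(\MsA)$ is \emph{not} isomorphic to $\bigoplus_z M_z$: by the decomposition \eqref{desc3} it is the ``matrix algebra'' $\prod_{u}\bigoplus_{v}{}_A\Hom(M_u,M_v)$, with $|Q_A|^2$ homogeneous blocks ${}_A\Hom(M_u,M_v)\cong M_u^*\otimes_AM_v$ of grade $u^{-1}v$, whereas $\bigoplus_z M_z$ has only $|Q_A|$ components and (when $\sigma_A$ is a homomorphism) is a crossed-product algebra merely \emph{Morita equivalent} to $\BsA^{\mathrm{op}}$, not isomorphic to it. Moreover ``conjugation by $M_x$'' is not a ring-theoretic operation: $M_x$ is an invertible bimodule with, in general, no invertible elements, and the dual-pair expression $a\mapsto\sum m_i\,a\,m_i^*$ with $\sum m_im_i^*=1$ is not multiplicative. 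What Proposition \ref{isomsq} provides is an automorphism of the \emph{group} $\Aut_{\mathcat B_{S,Q}}(A)$ of bimodule classes, not a ring automorphism of $\BsA^{\mathrm{op}}$; so the object on which your whole second and third paragraphs rest is never actually constructed.

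The paper's route around this is different and is the real content of the first assertion: for each $x$ one forms the isomorphism $\beta_x\colon M_x\otimes_A\MsA\to\bigoplus_y J_{x,y}\otimes M_{xy}$ of $(A,\BsA^{\mathrm{op}})$-bimodules assembled from the structure isomorphisms $f_{x,y}\colon M_x\otimes_AM_y\to J_{x,y}\otimes M_{xy}$, observes that \emph{both} sides are invertible $(A,\BsA^{\mathrm{op}})$-bimodules so that each has left-$A$-endomorphism algebra canonically identified with $\BsA^{\mathrm{op}}$, and defines $\alpha_x$ as the discrepancy between the two canonical identifications transported along $\beta_{x,*}$; the computation $\beta_{x,*}(\mathrm{can}_1(f_s))=f_{{}^xs}$ then shows $\alpha_x$ extends $\kappaQ(x)$. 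You correctly note that $M_x\otimes_A\MsA\not\cong\MsA$ in general, but the fix is not to ``conjugate inside the ring''--it is that no such isomorphism is needed. Your later steps (deducing that $\sigma_{\BsA}$ is a homomorphism from the injectivity of $\Theta_{\BsA}$, and the uniqueness up to isomorphism) are sound in outline and match the paper, but the verification that $\Theta_{\BsA}([\alpha_x])=[M_x]$ also needs the concrete identification $N_{\alpha_x}=\Hom_{\BsA^{\mathrm{op}}}(\MsA,(\MsA)_{\alpha_x})\cong\Hom_{\BsA^{\mathrm{op}}}(\MsA,M_x\otimes_A\MsA)\cong M_x$, which in turn rests on the isomorphism $A\to\End_{\BsA^{\mathrm{op}}}(\MsA)$ of Lemma \ref{lem8} (nontrivial for infinite $Q_A$) and Corollary \ref{cor9}; citing Propositions \ref{isomsq} and \ref{prop8} alone does not supply this.
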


\begin{cor}
\label{2.21}
When the group
$\kappaQ(Q)$ is finite,
the injection $\mathrm B(S,Q) \to \mathrm H^0(Q,\mathrm B(S))$
is the identity. \qed
\end{cor}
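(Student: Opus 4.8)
The plan is to prove the slightly stronger statement that the inclusion is surjective, i.e. that $\mathrm B(S,Q)=\mathrm B(S)^Q$. Since the reverse inclusion $\mathrm B(S,Q)\subseteq\mathrm B(S)^Q$ was already recorded, what must be produced is, for each $Q$-fixed Brauer class, a weakly $Q$-normal Azumaya representative. First I would reduce to the case of an injective action. Setting $\bar Q=\kappaQ(Q)\subseteq\Aut(S)$, a finite group acting tautologically, the $Q$-module structure on $\mathrm B(S)$ factors through the surjection $\kappaQ\colon Q\to\bar Q$, so that $\mathrm B(S)^Q=\mathrm B(S)^{\bar Q}$; and since weak normality refers only to the automorphisms $\kappaQ(x)$ of $S$, weak $Q$-normality and weak $\bar Q$-normality coincide, whence $\mathrm B(S,Q)=\mathrm B(S,\bar Q)$ inside $\mathrm B(S)$. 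This reduction is essential: $Q_A$ need not be finite when $Q$ is infinite, even if $\kappaQ(Q)$ is. It thus suffices to treat a finite group acting injectively, which is the standing hypothesis of the construction in \cref{from}.

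The core step is then to feed a $Q$-fixed class into that construction. Given $[A]\in\mathrm B(S)^Q$ with $A$ Azumaya, viewed as an object of $\mathcat B_{S,Q}$: because $[A]$ is $Q$-fixed, the grade homomorphism \eqref{eBpi} is surjective, hence $Q_A=Q$ and a set-theoretic section $\sigma_A$ for \eqref{eApictilde} exists (no homomorphism property is needed here). I would then form $\MsA$ and $\BsA^{\mathrm{op}}$; since $Q_A=Q$ is finite, $\BsA^{\mathrm{op}}$ is an Azumaya $S$-algebra, and \cref{fundclass1} supplies that it is weakly $Q_A$-normal, that is, weakly $Q$-normal. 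It remains only to check that it lies in the class $[A]$.

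For that I would invoke the Remark recording that, when $Q_A$ is finite, the bimodule $\MsA$ is an isomorphism $\BsA^{\mathrm{op}}\to A$ in $\mathcat B_{S,Q}$. By the definition of the $Q$-action on $k_Q\mathcat B_{S,Q}=\mathrm B(S)$ through morphisms of a prescribed grade, such an isomorphism forces $[A]$ and $[\BsA^{\mathrm{op}}]$ into a single $Q$-orbit. But $\BsA^{\mathrm{op}}$, being weakly $Q$-normal, has $Q$-fixed Brauer class, so that orbit is a point and $[A]=[\BsA^{\mathrm{op}}]$; hence $[A]\in\mathrm B(S,Q)$, giving $\mathrm B(S)^Q\subseteq\mathrm B(S,Q)$ and the asserted equality. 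The step requiring the finiteness of $\kappaQ(Q)$—and the main thing to get right—is precisely the Azumaya property of $\BsA^{\mathrm{op}}$ together with the availability of the isomorphism $\MsA$, both of which fail for infinite $Q_A$; the final identification of classes is then a formal consequence of $Q$-fixedness rather than a computation.
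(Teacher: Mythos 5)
Your argument is correct and follows exactly the route the paper intends: after reducing to the case of an injective action of the finite group $\kappaQ(Q)$ (as the paper does at the start of Subsection \ref{from}), you feed a $Q$-fixed class into the construction of $\BsA^{\mathrm{op}}$, invoke the first assertion of Theorem \ref{fundclass1} for weak $Q$-normality (correctly noting that only a set-theoretic section is needed), and identify $[\BsA^{\mathrm{op}}]$ with $[A]$ via the Remark that $\MsA$ is an isomorphism in $\mathcat B_{S,Q}$. The only cosmetic difference is that you deduce $[A]=[\BsA^{\mathrm{op}}]$ by a $Q$-orbit argument rather than by observing that $\MsA$ has grade $e$; both are valid, and your writeup supplies precisely the details the paper's \qed leaves implicit.
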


\begin{rema}
The main difference between $A$ and ${\BsA}$ is that, while the canonical 
homomorphism from
$\Out(A,Q_A)$ to $Q_A$ is not necessarily surjective,
the (grade) homomorphism $\Out(\BsA,Q_A) \to Q_A$ is surjective,
and $Q_A$ coincides with the image $Q_{\BsA}\subseteq Q$
of the grade homomorphism $\Out({\BsA},Q) \to Q$.
\end{rema}

Given a generalized $Q$-normal Azumaya algebra
$(A,\sigma_A\colon Q \to \Aut_{\mathcat B_{S,Q}}(A))$,
we refer to a $Q$-normal algebra of the kind
\begin{equation*}
(\BsA^{\mathrm{op}},\sigma_{\BsA^{\mathrm{op}}}\colon Q \to \Out(\BsA^{\mathrm{op}},Q))=
({}_A\End(\MsA)^{\mathrm{op}},\sigma_{\BsA^{\mathrm{op}}}),
\end{equation*}
the notation being that in Theorem \ref{fundclass1},
as the $Q$-{\em normal $S$-algebra associated to\/} $(A,\sigma_A)$,
the definite article being justified by the fact that
$(\BsA^{\mathrm{op}},\sigma_{\BsA^{\mathrm{op}}})$
is uniquely determined by 
$(A,\sigma_A)$ up to within isomorphism.

\begin{proof}[Proof of the first assertion of Theorem {\rm \ref{fundclass1}}]
From the group extension \eqref{eApic} of $Q$ by $\Pic(S)$ we deduce 
that,
given $x,y\in Q_A$, there is a projective rank one $S$-module
$J_{x,y}$ and an isomorphism
\begin{equation*}
f_{x,y}\colon M_x \otimes_A M_y \longrightarrow J_{x,y} \otimes M_{xy}
\end{equation*}
of $(A,A)$-bimodules of grade $xy$.
Let $x \in Q_A$. Then
\begin{equation*}
M_x\otimes_A \MsA= \oplus_{y\in Q_A} M_x\otimes_A M_y.
\end{equation*}
Define an isomorphism
\begin{equation*}
\begin{aligned}
\beta_x \colon M_x\otimes_A \MsA &\longrightarrow \oplus_{y\in Q_A}
J_{x,y} \otimes M_{xy}
\end{aligned}
\end{equation*}
of $(A,\BsA^{\mathrm{op}})$-modules by
\begin{equation*}
\beta_x(m_x\otimes m_y)= f_{x,y}(m_x\otimes m_y) \in J_{x,y} \otimes M_{xy},
\ m_x \in M_x, m_y \in M_y,\ y \in Q_A.
\end{equation*}
The canonical isomorphisms
\[
\mathrm{can}_1\colon {\BsA} \longrightarrow
{}_A\End(M_x\otimes_A \MsA),\ 
\mathrm{can}_2 \colon {\BsA} \to
{}_A\End( \oplus_{y\in Q_A}J_{x,y} \otimes M_{xy})
\]
that are induced by the invertible
$(A, \BsA^{\mathrm{op}})$-bimodule structures
are given by
\begin{equation*}
\mathrm{can}_1(f)(m_x \otimes m_y)=m_x \otimes f(m_y),
\
\mathrm{can}_2(f)(j \otimes m_z)\ \, = j \otimes f(m_z),
\end{equation*}
where $f\in {\BsA}={}_A\End(\MsA)$, $m_x \in M_x$, $m_y \in M_y$, 
$m_z \in M_z$, $j \in J_{x,y}$, $y,z \in Q$.
The automorphism 
$\alpha_{x}$
of $\BsA^{\mathrm{op}}$ that makes the diagram 
\begin{equation*}
\begin{CD}
\BsA^{\mathrm{op}}
@>{\mathrm{can}_1}>>
{}_A\End(M_x\otimes_A \MsA)
\\
@V{\alpha_x}VV
@V{\beta_{x,*}}VV
\\
\BsA^{\mathrm{op}}
@>{\mathrm{can}_2}>>
{}_A\End( \oplus_{y\in Q_A}J_{x,y} \otimes M_{xy})
\end{CD}
\end{equation*}
commutative yields an automorphism of $\BsA^{\mathrm{op}}$
that extends the automorphism $\kappaQ(x)$ of $S$.
Indeed, given $\varphi \in{}_A\End(M_x\otimes_A \MsA)$, the value
$\beta_{x,*}(\varphi) \in {}_A\End( \oplus_{y\in Q_A}J_{x,y} \otimes M_{xy})$ 
makes the diagram
\begin{equation*}
\begin{CD}
M_x\otimes_A \MsA
@>{\varphi}>>
M_x\otimes_A \MsA
\\
@V{\beta_{x}}VV
@V{\beta_{x}}VV
\\
\oplus_{y\in Q_A}J_{x,y} \otimes M_{xy}
@>{\beta_{x,*}(\varphi)}>>
\oplus_{y\in Q_A}J_{x,y} \otimes M_{xy}
\end{CD}
\end{equation*}
commutative.
In the standard manner, 
the  assignment to $s\in S$ of the $A$-linear
endomorphism $f_s\colon \MsA \to \MsA$
given by $f_s(m)=sm$ (via the left $A$-module structure on $\MsA$)
as $m$ ranges over $\MsA$
embeds $S$ into ${\BsA}={}_A\End(\MsA)$.
Since, given $y \in Q_A$, the isomorphism
$f_{x,y}$
is one of $(A,A)$-bimodules
of grade $xy$,
given $s \in S$ and $m_x \in M_x$, $m_y \in M_y$, $j \in J_{x,y}$,
we find
\begin{align*}
\mathrm{can}_1(f_s)(m_x \otimes m_y)&=m_x \otimes f_s(m_y)=
m_x \otimes (sm_y) =m_x s \otimes m_y  =({}^x s) m_x  \otimes m_y 
\\
\beta_{x}(\mathrm{can}_1(f_s)(m_x \otimes m_y)) &
=\beta_{x}(({}^x s) m_x  \otimes m_y )= f_{x,y}(({}^x s) m_x  \otimes m_y)
=({}^x s)  f_{x,y}( m_x \otimes m_y)
\\
\mathrm{can}_2(f_{{}^xs})(j \otimes m_z)\ \, &= j \otimes ({}^x s)m_z= 
({}^x s)(j \otimes m_z) .
\end{align*}
Consequently
\[
\beta_{x,*}(\mathrm{can}_1(f_s))= f_{{}^xs} .
\]
Hence the automorphism  $\alpha_{x}$ 
of ${\BsA}$ extends the automorphism
$\kappaQ(x)$ of $S$.
Since $x\in Q_A$ is arbitrary,  the algebras
${\BsA}$ and $\BsA^{\mathrm{op}}$ are weakly $Q_A$-normal.
\end{proof}

We now prepare for the proof of the \lq\lq Furthermore\rq\rq\ statement
of Theorem \ref{fundclass1}.
We view $\MsA$ as 
an
$(A,\BsA^{\mathrm{op}})$-bimodule in the obvious manner.
By assumption, for each $x\in Q_A$, the left $A$-module structure
on $M_x$ induces an isomorphism
$A\to \End_{A^{\mathrm{op}}}(M_x)$  and
the right $A$-module structure
on $M_x$ induces an isomorphism
$A^{\mathrm{op}}\to {}_A\End(M_x)$.
These right $A$-module structures induce an injection
$A^{\mathrm{op}}\to {\BsA}$ of $S$-algebras.

Given $x,y \in Q_A$, the $(A,A)$-bimodule
${}_A\Hom(M_x,M_y) \cong M^*_x \otimes_AM_y $, necessarily  of grade 
$x^{-1}y$, is an $S$-submodule
of ${\BsA}$ 
in an obvious manner.
Let $x,y,z\in Q_A$; given
$h_{y,x}\colon M_x \to M_y$ and
$h_{z,y}\colon M_y \to M_z$,
the composite $h_{z,y}\circ h_{y,x}\colon M_x \to M_z$ is defined
so that,
as $S$-modules,
\begin{equation}
 {\BsA}=\prod_{u\in Q_A} \bigoplus_{v\in Q_A} {}_A\Hom(M_u,M_v)
\cong \prod_{u\in Q_A}{}_A\Hom(M_u,\MsA).
\label{desc3}
\end{equation}
Thus, with a grain of salt, we can think of the members of ${\BsA}$
as being matrices whose columns have only finitely many
non-zero entries. 

\begin{lem}
\label{lem8}
The injection $A \to {}_{\BsA}\End(\MsA)= \End_{\BsA^{\mathrm{op}}}(\MsA)$ given by
the assignment to $a \in A$ of $f_a \in {}_{\BsA}\End(\MsA)$ where $f_a(m)=am$ ($m \in \MsA$)
is an isomorphism.
\end{lem}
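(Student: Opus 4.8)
The plan is to read this as a double-centralizer (biendomorphism) statement: writing $B=\BsA={}_A\End(\MsA)$, the claim is that the natural map $A\to \End_{\BsA^{\mathrm{op}}}(\MsA)={}_{\BsA}\End(\MsA)$, $a\mapsto f_a$, identifies $A$ with the bicommutant of $B$ acting on $\MsA$ by evaluation. The conceptual reason this holds is that $\MsA$ is a \emph{generator} for the category of left $A$-modules, since $M_e=A$ occurs as a direct summand of $\MsA=\bigoplus_{z\in Q_A}M_z$, and every generator is balanced. Rather than invoke this through the progenerator form of the double-centralizer theorem (which is delicate here, as the index set $Q_A$ may be infinite and $\MsA$ need not be finitely generated), I would give a direct argument that exploits the grading together with the two isomorphisms $A\xrightarrow{\ \cong\ }\End_{A^{\mathrm{op}}}(M_x)$ and $A^{\mathrm{op}}\xrightarrow{\ \cong\ }{}_A\End(M_x)$ recorded just before the statement.

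First I would unravel $\End_{\BsA^{\mathrm{op}}}(\MsA)$: a right $\BsA^{\mathrm{op}}$-linear endomorphism $f$ is exactly one commuting with the evaluation action of every $b\in\BsA$. Applying this to the idempotent projections $e_z\colon\MsA\twoheadrightarrow M_z\hookrightarrow\MsA$, which are left $A$-linear and hence lie in $\BsA$, forces $f(M_z)\subseteq M_z$, so $f=\bigoplus_z f_z$ with $f_z\colon M_z\to M_z$. Next I would apply commutation to the diagonal right-multiplication operators: right multiplication by $a\in A$ on each $M_z$ is left $A$-linear, so it assembles to an element of $\BsA$ (the image of $A^{\mathrm{op}}\to\BsA$), and commutation forces each $f_z$ to be right $A$-linear; thus $f_z\in\End_{A^{\mathrm{op}}}(M_z)\cong A$, say $f_z(m)=a_z m$ for a unique $a_z\in A$. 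Finally I would glue the $a_z$: for each $z$ and each $\varphi\in{}_A\Hom(M_e,M_z)$, extend $\varphi$ by zero to an element of $\BsA$ (a single finite column in the description \eqref{desc3}); commutation of $f$ with it gives $a_z\varphi(m)=\varphi(a_e m)=a_e\varphi(m)$ for all $m\in M_e=A$, using left $A$-linearity of $\varphi$. Since the evaluation ${}_A\Hom(A,M_z)\to M_z$, $\varphi\mapsto\varphi(1)$, is surjective, this says $(a_z-a_e)$ annihilates $M_z$ by left multiplication; as $M_z$ is a faithful left $A$-module (the map $A\to\End_{A^{\mathrm{op}}}(M_z)$ being injective), $a_z=a_e$. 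Hence $f=f_{a_e}$, which proves surjectivity; injectivity is immediate since $f_a$ restricts on $M_e=A$ to left multiplication by $a$, so $f_a(1)=a$.

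The one genuine obstacle is the possible infiniteness of $Q_A$: it rules out a black-box appeal to the finitely generated projective version of the double-centralizer theorem, and it means that $\sum_z e_z$ is not available as an element of $\BsA$. The argument above is designed precisely to circumvent both points, since it uses only \emph{individual} idempotents $e_z$ and \emph{single-column} off-diagonal maps, each of which lies in $\BsA$ by the column-finite description \eqref{desc3}, and it closes the gluing step with the faithfulness of each $M_z$ rather than with any finiteness hypothesis. Everything else is routine bookkeeping with the bimodule structures.
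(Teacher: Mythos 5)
Your proof is correct and follows essentially the same route as the paper's: restrict a $\BsA$-linear $f$ to diagonal blocks by commuting it with idempotents of $\BsA$, identify each block with left multiplication by some $a_z$ through the embedding $A^{\mathrm{op}}\hookrightarrow\BsA$ and the isomorphism $A\cong\End_{A^{\mathrm{op}}}(M_z)$, then glue the $a_z$ by commuting $f$ with off-diagonal elements of ${}_A\Hom(M_x,M_y)\subseteq\BsA$ and invoking faithfulness of the $M_z$. The only cosmetic differences are that you use the projections $e_z$ directly where the paper assembles $\mathrm{Id}_{M_x}$ from the invertibility isomorphism ${}_A\Hom(M_x,M_y)\otimes_A{}_A\Hom(M_y,M_x)\cong A^{\mathrm{op}}$, and you anchor the gluing at $M_e=A$ where the paper compares arbitrary pairs $M_x,M_y$ via the evaluation isomorphism.
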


In the case where the group $Q_A$ is finite the claim 
of the lemma
is immediate, but for
general $Q_A$ we must be more circumspect.

\begin{proof}
Let $f\colon \MsA \to \MsA$ be ${\BsA}$-linear and, given $x,w,z\in Q_A$, consider 
the restrictions $f \colon M_x \to M_z$ 
and $f \colon M_y \to M_w$. Given $y \in Q$ and
$h_{x,y} \in {}_A\Hom(M_x,M_y)$, since $f$ is ${\BsA}$-linear, the diagram
\begin{equation*}
\begin{CD}
M_x
@>f>>
M_z
\\
@V{h_{x,y}}VV
@V{h_{x,y}}VV
\\
M_y
@>f>>
M_w
\end{CD}
\end{equation*}
is commutative.

Composition of endomorphisms yields a canonical isomorphism
\begin{equation*}
 {}_A\Hom(M_x,M_y)
\otimes_A
 ({}_A\Hom(M_y,M_x))
\longrightarrow
{}_A\End(M_x) \cong A^{\mathrm{op}} 
\end{equation*}
of $(A,A)$-bimodules
whence there are finitely many $h^j_{x,y} \in  {}_A\Hom(M_x,M_y)$
and 
\linebreak
$\tilde h^j_{y,x} \in  {}_A\Hom(M_y,M_x)$
such that
$\sum \tilde h^j_{y,x} \circ  h^j_{x,y} = \mathrm{Id}_{M_x}$. 
Accordingly, the diagram
\begin{equation*}
\begin{CD}
M_x
@>f>>
M_z
\\
@V{\sum \tilde h^j_{y,x} \circ  h^j_{x,y}}VV
@VV{\sum \tilde h^j_{y,x} \circ  h^j_{x,y}}V
\\
M_x
@>f>>
M_z
\end{CD}
\end{equation*}
is commutative. However the right-hand vertical arrow is zero unless $x = z$.
Thus only the components of $f$ of the kind
$M_x \to M_x$ are non-zero and, since $f$ is ${\BsA}$-linear,
relative to the embedding of $A^{\mathrm{op}}$ into ${\BsA}$,
the endomorphism
$f\colon M_x \to M_x$ is $A^{\mathrm{op}}$-linear and hence,
in view of the $S$-algebra isomorphism $A\to \End_{A^{\mathrm{op}}}(M_x)$,
on $M_x$, the endomorphism $f$ is given by left multiplication
by a uniquely determined $a_x \in A$ so that
$f(m)=a_xm$.

Now, given $x,y \in Q_A$, 
and
$h_{x,y} \in {}_A\Hom(M_x,M_y)$, the diagram
\begin{equation}
\begin{CD}
M_x
@>{f_{a_x}}>>
M_x
\\
@V{h_{x,y}}VV
@V{h_{x,y}}VV
\\
M_y
@>{f_{a_y}}>>
M_y
\end{CD}
\label{diag8}
\end{equation}
is commutative.

However, the evaluation map
\begin{equation*}
{}_A\Hom(M_x,M_y) \otimes_A M_x \longrightarrow M_y
\end{equation*}
is an isomorphism. Hence, given $m \in M_y$, there are
$b_1, \ldots, b_k \in {}_A\Hom(M_x,M_y) $ and
$m_1, \ldots, m_k \in M_x$ such that
$m = \sum b_j m_j$.
Since the diagram \eqref{diag8} is commutative, 
\begin{equation*}
f_{a_y}(m) =\sum f_{a_y} b_j m_j  =\sum  b_j f_{a_x} m_j.
\end{equation*}
On the other hand, 
\begin{equation*}
\sum  b_j f_{a_x} m_j = \sum   f_{a_x} b_j m_j =  f_{a_x} \sum b_j m_j
=f_{a_x} (m).
\end{equation*}
Since $m\in M_y$ is arbitrary, we conclude $f_{a_x} = f_{a_y}$,
whence the injection $A \to \End_{\BsA}(\MsA)$ is surjective as asserted.
\end{proof}

\begin{cor}
\label{cor9}
Given an invertible $(A,A)$-bimodule $\Mx$,
the injection 
\begin{equation*}
\iota_{\Mx}\colon 
\Mx \longrightarrow \Hom_{\BsA^{\mathrm{op}}}(\MsA, \Mx\otimes _A \MsA),
\ m \mapsto f_m,
\ f_m(w)= m \otimes w,
\ m \in \Mx,\ w \in \MsA, 
\end{equation*}
is an isomorphism
of $(A,A)$-bimodules.
\end{cor}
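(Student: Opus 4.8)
The plan is to deduce this from Lemma \ref{lem8}, of which it is the \lq\lq twisted\rq\rq\ version: taking $\Mx=A$ (with its canonical $(A,A)$-bimodule structure) and using the identification $A\otimes_A\MsA\cong\MsA$, the map $\iota_{\Mx}$ becomes exactly the injection $A\to\End_{\BsA^{\mathrm{op}}}(\MsA)$, $a\mapsto f_a$, of Lemma \ref{lem8}. For general invertible $\Mx$ I would obtain the statement by tensoring that isomorphism with $\Mx$.

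First I would record the routine verifications. The assignment $m\mapsto f_m$ does land in $\Hom_{\BsA^{\mathrm{op}}}(\MsA,\Mx\otimes_A\MsA)$: for $b\in\BsA$ one has $f_m(b\cdot w)=m\otimes b(w)=b\cdot(m\otimes w)$, the left $\BsA$-action on $\Mx\otimes_A\MsA$ being $b\cdot(n\otimes w)=n\otimes b(w)$. Moreover $\iota_{\Mx}$ is a map of $(A,A)$-bimodules for the bimodule structure $(a_1\cdot\varphi\cdot a_2)(w)=a_1\cdot\varphi(a_2 w)$ on the target (the same structure as in Proposition \ref{prop8}, the left $A$-action being that on the factor $\Mx$), since $a\cdot\iota_{\Mx}(m)=\iota_{\Mx}(am)$ and $\iota_{\Mx}(m)\cdot a=\iota_{\Mx}(ma)$ by direct inspection.

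The heart of the argument is the natural map
\begin{equation*}
\theta\colon \Mx\otimes_A\End_{\BsA^{\mathrm{op}}}(\MsA)\longrightarrow
\Hom_{\BsA^{\mathrm{op}}}(\MsA,\Mx\otimes_A\MsA),\quad
\theta(n\otimes\psi)=\bigl(w\mapsto n\otimes\psi(w)\bigr),
\end{equation*}
where $\End_{\BsA^{\mathrm{op}}}(\MsA)$ carries the left $A$-module structure transported from $A$ along the isomorphism $\lambda\colon A\to\End_{\BsA^{\mathrm{op}}}(\MsA)$, $\lambda(a)=f_a$, of Lemma \ref{lem8}. One checks $\theta$ is well defined and $(A,A)$-bilinear. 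Since $\Mx$ is an invertible $(A,A)$-bimodule it is finitely generated projective as a right $A$-module, so $\theta$ is an isomorphism: it is visibly bijective for $\Mx=A$, hence for $\Mx$ a finitely generated free right $A$-module by additivity, and hence for $\Mx$ a direct summand of such by naturality. Precomposing $\theta$ with $\Mx\cong\Mx\otimes_A A\xrightarrow{1\otimes\lambda}\Mx\otimes_A\End_{\BsA^{\mathrm{op}}}(\MsA)$ sends $n$ to $n\otimes\mathrm{id}_{\MsA}$ and then to $(w\mapsto n\otimes w)=\iota_{\Mx}(n)$; thus $\iota_{\Mx}$ is the composite of two isomorphisms, and in particular an isomorphism of $(A,A)$-bimodules.

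The point requiring care — and the only place where the possibly infinite rank of $\MsA$ (when $Q_A$ is infinite) could cause trouble — is the isomorphy of $\theta$. The crucial observation is that the finite decomposition is performed on the factor $\Mx$, which is finitely generated projective, and not on $\MsA$: the elementary identities $\Hom_{\BsA^{\mathrm{op}}}(\MsA,X^{n})\cong\Hom_{\BsA^{\mathrm{op}}}(\MsA,X)^{n}$ hold for finite $n$ and arbitrary $X$, so no finiteness of $\MsA$ is needed. This is exactly the subtlety that forced the circumspect argument in Lemma \ref{lem8}; here it is absorbed into the finite generation of the invertible bimodule $\Mx$.
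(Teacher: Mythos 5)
Your argument is correct and is essentially the paper's own: both proofs reduce to Lemma \ref{lem8} together with the fact that $\Mx$, being invertible, is finitely generated projective as a right $A$-module, so that tensoring with $\Mx$ commutes with $\Hom_{\BsA^{\mathrm{op}}}(\MsA,-)$. The paper packages this via the adjointness isomorphism and the evaluation map $\Mx\otimes_A\Hom_A(\Mx,-)\to(-)$ rather than your map $\theta$, but the content, including the observation that the finiteness is carried by $\Mx$ and not by $\MsA$, is the same.
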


\begin{proof}
The injection $\iota_{\Mx}$ is the image of $\mathrm{Id}_{\Mx\otimes _A \MsA}$
under
the adjointness isomorphism
\begin{equation}
\mathrm{ad}\colon \End_{\BsA^{\mathrm{op}}}(\Mx\otimes _A \MsA)
\longrightarrow
\Hom_A(\Mx, \Hom_{\BsA^{\mathrm{op}}}(\MsA, \Mx\otimes _A \MsA)).
\label{adj1}
\end{equation}
Since, as a right $A$-module, $\Mx$ is finitely generated projective,
evaluation yields an isomorphism
\begin{equation}
\mathrm{ev}\colon
\Mx \otimes_A (\Hom_A(\Mx, \Hom_{\BsA^{\mathrm{op}}}(\MsA, \Mx\otimes _A \MsA)))
\longrightarrow \Hom_{\BsA^{\mathrm{op}}}(\MsA, \Mx\otimes _A \MsA)
\label{evalu1}
\end{equation}
of $(A,A)$-bimodules.
In view of Lemma \ref{lem8},
the canonical isomorphism 
\[
\End_{\BsA^{\mathrm{op}}}(\Mx\otimes _A \MsA)\cong  
\End_{\BsA^{\mathrm{op}}}(\MsA),
\]
 combined with the isomorphism
$A \to \End_{\BsA^{\mathrm{op}}}(\MsA)$, identifies
$\End_{\BsA^{\mathrm{op}}}(\Mx\otimes _A \MsA)$ with $A$.
Up to this identification,
the injection $\iota_{\Mx}$ is the composite 
\begin{equation*}
\Mx \otimes_A \End_{\BsA^{\mathrm{op}}}(\Mx\otimes _A \MsA)
\stackrel{\mathrm{ev}\circ(\mathrm{Id}_{\Mx}\otimes_A\mathrm{ad})}\longrightarrow
 \Hom_{\BsA^{\mathrm{op}}}(\MsA, \Mx\otimes _A \MsA) .
\end{equation*}
\end{proof}

\begin{proof}[Proof of Proposition {\rm\ref{prop8}}]
Given $\varphi \in N_{\alpha}=\Hom_{\BsA^{\mathrm{op}}}(\MsA,(\MsA)_{\alpha})$,
$m \in \MsA$, and $s \in S$,
\begin{equation*}
(\varphi \circ s)(m) =\varphi (s(m)); 
\end{equation*}
however, $f_s(m) =sm$ ($m \in \MsA$) defines a member $f_s$ of
$\BsA^{\mathrm{op}}={}_A\End(\MsA)^{\mathrm{op}}$ and, by the definition of $(\MsA)_{\alpha}$,
\[
\varphi(sm)=\varphi(mb_s)=
\varphi (f_s(m))= f_{\alpha(s)}\varphi(m)= \varphi(m)b_{\alpha(s)}
=\alpha(s)\varphi(m)
\]
whence $\varphi \circ s= \alpha(s)\circ \varphi$.
Furthermore,
\begin{align*}
\Hom_{\BsA^{\mathrm{op}}}(\MsA, (\MsA)_\alpha ) \otimes_A 
\Hom_{\BsA^{\mathrm{op}}}((\MsA)_\alpha,\MsA) &\cong \End_{\BsA^{\mathrm{op}}}(\MsA) \cong A,
\\ 
\Hom_{\BsA^{\mathrm{op}}}((\MsA)_\alpha,\MsA) \otimes_A 
\Hom_{\BsA^{\mathrm{op}}}(\MsA, (\MsA)_\alpha) &\cong \End_{\BsA^{\mathrm{op}}}((\MsA)_{\alpha})
\cong A.
\end{align*}
Consequently $N_{\alpha}$ is an invertible $(A,A)$-bimodule
of grade equal to the grade of $\alpha$ as asserted.
We leave the proof of the remaining claims to the reader.
\end{proof}

\begin{proof}[Proof of the \lq\lq Furthermore\rq\rq\ statement of 
Theorem {\rm \ref{fundclass1}}]
Suppose that the given section 
$\sigma_A$
\linebreak
 from $ Q$ to $\Aut_{\mathcat B_{S,Q}}(A)$
is a homomorphism of groups.
Fix $x \in Q_A$. 
The construction of the extension $\alpha_x\colon {\BsA} \to {\BsA}$
of the automorphism $\kappaQ(x)$ of $S$
in the proof of the first statement
simplifies since now the constituents $J_{x,y}$ 
are trivial, i.~e., copies of the commutative ring $S$.
Thus, given $y\in Q_A$, there is an isomorphism
\begin{equation*}
f_{x,y}\colon M_x \otimes_A M_y \longrightarrow M_{xy}
\end{equation*}
of (invertible) $(A,A)$-bimodules.
Then
\begin{equation*}
M_x\otimes_A \MsA= \oplus_{y\in Q_A} M_x\otimes_A M_y.
\end{equation*}
Define an isomorphism
\begin{equation*}
\beta_{x} \colon M_x\otimes_A \MsA \longrightarrow \oplus_{y\in Q_A} M_{xy} = \MsA,
\end{equation*}
of $(A,\BsA^{\mathrm{op}})$-modules by
\begin{equation*}
\beta_{x}(m_x\otimes m_y)= f_{x,y}(m_x\otimes m_y) \in M_{xy},
\ 
m_x \in M_x,\, m_y \in M_y,\,y \in Q_A.
\end{equation*}
The canonical isomorphism
\[
\mathrm{can}_x\colon {\BsA} \longrightarrow
{}_A\End(M_x\otimes_A \MsA) 
\]
that is induced by the invertible
$(A, \BsA^{\mathrm{op}})$-bimodule structure on $M_x\otimes_A \MsA$
is given by
\begin{equation*}
\mathrm{can}_x(f)(m_x \otimes m_y)=m_x \otimes f(m_y),\ m_x \in M_x,\ m_y \in M_y,\ f\in {\BsA}={}_A\End(\MsA).
\end{equation*}
The automorphism $\alpha_{x}$ of ${\BsA}$ that makes the diagram 
\begin{equation*}
\begin{CD}
{\BsA}
@>{\mathrm{can}_x}>>
{}_A\End(M_x\otimes_A \MsA)
\\
@V{\alpha_{x}}VV
@V{\beta_{x,*}}VV
\\
{\BsA}
@=
{}_A\End(\MsA)
\end{CD}
\end{equation*}
commutative yields an automorphism of ${\BsA}$
that extends the automorphism $\kappaQ(x)$ of $S$.
Now, given $\varphi \in {}_A\End(M_x\otimes_A \MsA)$,
the value $\beta_{x,*}(\varphi)$ makes the diagram
\begin{equation*}
\begin{CD}
M_x \otimes_A \MsA
@>{\varphi}>>
M_x \otimes_A \MsA
\\
@V{\beta_x}VV
@V{\beta_x}VV
\\
\MsA
@>{\beta_{x,*}(\varphi)}>>
\MsA
\end{CD}
\end{equation*}
commutative.
Hence, given $b \in {\BsA}={}_A\End(\MsA)$,
\begin{equation}
(\alpha_x(b))(\beta_x(m_x \otimes m)) = \beta_x(m_x\otimes bm).
\label{fund5}
\end{equation}
Consequently, the ${\BsA}$-module structure on $M_x \otimes _A \MsA$ being given by
\begin{equation*}
{\BsA} \otimes (M_x \otimes _A \MsA) \longrightarrow M_x \otimes _A \MsA,
\ 
f \otimes m_x \otimes m \longmapsto m_x \otimes f(m),
\end{equation*}
the isomorphism $\beta_x$ 
is one of left ${\BsA}$-modules from
$M_x \otimes _A \MsA$ onto ${}_{\alpha_x} \MsA$ or, equivalently,
 one of right $\BsA^{\mathrm{op}}$-modules from
$M_x \otimes _A \MsA$ onto $(\MsA)_{\alpha_x}$.
We define the value 
\linebreak
$\sigma_{\substack{\mbox{\tiny{$\BsA$}}}  }(x)\in \Out({\BsA},Q)$ to be the
class of $\alpha_x$ in $\Out({\BsA},Q)$.

By Proposition \ref{prop8}, the homomorphism 
$\Theta_{\BsA}\colon \Out({\BsA},Q) \to \Aut_{\mathcat B_{S,Q}}(A)$
sends the class of $\alpha_x$ to the isomorphism class of the
invertible $(A,A)$-bimodule 
\[
N_{\alpha_x}=\Hom_{\BsA^{\mathrm{op}}}(\MsA,(\MsA)_{\alpha_x})
\cong
\Hom_{\BsA^{\mathrm{op}}}(\MsA,M_x \otimes _A \MsA)
\cong M_x, 
\]
the last isomorphism being a consequence of
Corollary \ref{cor9}.
Hence 
$\Theta_{\BsA}$ sends the class of $\alpha_x$ to the isomorphism class of the
$(A,A)$-bimodule $M_x$. Consequently
$\Theta_{\BsA} \circ \sigma_{\BsA} = \sigma_A$.
Furthermore, since $\Theta_{\BsA}$ is injective and since
$\sigma_A$ is a homomorphism, 
the section $\sigma_{\substack{\mbox{\tiny{$\BsA$}}}  }$, at first one of the underlying sets,
is a homomorphism. 

We leave the proof of the claim that the
 $Q_A$-normal $S$-algebra 
$(\BsA,\sigma_{\BsA})$
is determined by
$(A,\sigma_A)$ up to within isomorphism
to the reader.
\end{proof}

\subsection{The Teichm\"uller class of a generalized $Q$-normal
Azumaya algebra}
\label{genazalg}

Let $(A,\sigma_A\colon Q \to \Aut_{\mathcat B_{S,Q}}(A))$
be a generalized $Q$-normal Azumaya algebra, and let
\begin{equation*}
(\BBB^{\mathrm{op}},\sigma_{\BBB^{\mathrm{op}}})= (\BsA^{\mathrm{op}},\sigma_{\BsA^{\mathrm{op}}}\colon Q \to \Out(\BsA^{\mathrm{op}},Q))=
({}_A\End(\MsA)^{\mathrm{op}},\sigma_{\BsA^{\mathrm{op}}})
\end{equation*}
be the $Q$-normal $S$-algebra associated to $(A,\sigma_A)$.
We then refer to the Teichm\"uller complex 
$\mathrm e_{(\BBB^{\mathrm{op}},\sigma_{\BBB^{\mathrm{op}}})}$
of the $Q$-normal $S$-algebra $(\BBB^{\mathrm{op}},\sigma_{\BBB^{\mathrm{op}}})$,
cf. \eqref{pb1}, as the {\em Teichm\"uller complex\/}
of $(A,\sigma_A)$
 and to
the class
$[\mathrm e_{(\BBB^{\mathrm{op}},\sigma_{\BBB^{\mathrm{op}}})}] \in \mathrm H^3(Q,\mathrm U(S))$
as the {\em Teichm\"uller class\/}
of $(A,\sigma_A)$.

\begin{thm}
\label{genaz}
The assignment to a generalized $Q$-normal Azumaya algebra
$(A,\sigma_A)$ of its Teichm\"uller class
$[\mathrm e_{(\BBB^{\mathrm{op}},\sigma_{\BBB^{\mathrm{op}}})}] \in \mathrm H^3(Q,\mathrm U(S))$
yields a homomorphism 
\begin{equation}
t \colon k\mathcat{Rep}(Q,\mathcat B_{S,Q}) \longrightarrow \mathrm H^3(Q,\mathrm U(S))
\label{teich1}
\end{equation}
of abelian groups such that, when 
the generalized $Q$-normal Azumaya $S$-algebra
$(A,\sigma_A)$ 
arises from an ordinary
$Q$-normal algebra structure $\sigma\colon Q \to \Out(A,Q)$ on $A$,
\begin{equation}
[\mathrm e_{(\BBB^{\mathrm{op}},\sigma_{\BBB^{\mathrm{op}}})}] 
=
[\mathrm e_{(A,\sigma)}]
\in \mathrm H^3(Q,\mathrm U(S)) .
\label{identity1}
\end{equation}

\end{thm}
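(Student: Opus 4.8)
The plan is to establish, in turn, that the assignment $(A,\sigma_A)\mapsto[\mathrm e_{(\BsA^{\mathrm{op}},\sigma_{\BsA^{\mathrm{op}}})}]$ is well defined on isomorphism classes, that it carries products to sums and the neutral element to $0$, and finally that it reduces to the previously defined Teichm\"uller class on ordinary $Q$-normal algebras, i.e.\ \eqref{identity1}. The common engine throughout is the invariance of the Teichm\"uller class under passage to matrix algebras, Proposition \ref{2.4.2}, together with its additivity under tensor products, Proposition \ref{2.4.3}, and its vanishing for equivariant algebras, Proposition \ref{2.3.1}. Well-definedness I would read off from Theorem \ref{fundclass1}: the pair $(\BsA^{\mathrm{op}},\sigma_{\BsA^{\mathrm{op}}})$ is determined by $(A,\sigma_A)$ up to isomorphism of $Q$-normal $S$-algebras, and an $S$-algebra isomorphism intertwining two $Q$-normal structures is the identity on $\mathrm U(S)$ and on $Q$, hence induces a congruence of the associated crossed $2$-fold extensions \eqref{pb1} and thus equal classes in $\mathrm H^3(Q,\mathrm U(S))$; an isomorphism in $\mathcat{Rep}(Q,\mathcat B_{S,Q})$ produces such an isomorphism of the associated ordinary algebras with the help of Proposition \ref{isomsq}.

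For the identity \eqref{identity1} I would argue concretely. When $(A,\sigma_A)$ arises from an ordinary structure $\sigma\colon Q\to\Out(A,Q)$, so that $\sigma_A=\Theta_\sigma$ as in \eqref{Thetas}, one may run the construction of $\BsA$ with $M_x=A_{\alpha_x}$, where $\alpha_x\in\Aut(A)$ lifts $\sigma(x)$; the gluing line bundles $J_{x,y}$ are then trivial, exactly as in the proof of the ``Furthermore'' statement of Theorem \ref{fundclass1}. Hence $\MsA=\bigoplus_{x\in Q}A_{\alpha_x}$ is free of rank $|Q|$ as a left $A$-module, so $\BsA={}_A\End(\MsA)\cong\mathrm M_Q(A^{\mathrm{op}})$ and $\BsA^{\mathrm{op}}\cong\mathrm M_Q(A)$. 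It remains to check that under this identification the structure $\sigma_{\BsA^{\mathrm{op}}}$ corresponds to the matrix structure $\sigma_{|Q|}$ induced by $\sigma$; this is a direct inspection of the automorphism $\alpha_x$ of $\BsA$ built from $\beta_x$ and $f_{x,y}$, which is the entrywise action of $\alpha_x$ composed with the index shift $y\mapsto xy$ and therefore has class $\sigma(x)$. Proposition \ref{2.4.2} then yields $[\mathrm e_{(\BsA^{\mathrm{op}},\sigma_{\BsA^{\mathrm{op}}})}]=[\mathrm e_{(\mathrm M_Q(A),\sigma_{|Q|})}]=[\mathrm e_{(A,\sigma)}]$, which is \eqref{identity1}; moreover \eqref{identity1} applied to the unit object $(S,\kappaQ)$, an equivariant algebra, shows via Proposition \ref{2.3.1} that $t$ carries the neutral element to $0$.

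For additivity, let $(A_1,\sigma_1)$ and $(A_2,\sigma_2)$ be generalized $Q$-normal Azumaya algebras with product $(A_1\otimes A_2,\sigma_1\odot\sigma_2)$, and write $N_i=\bigoplus_{z\in Q}M^i_z$, $C_i={}_{A_i}\End(N_i)$, and $C_{12}={}_{A_1\otimes A_2}\End(\bigoplus_z M^1_z\otimes M^2_z)$, the last computed from the choice $M^{12}_z=M^1_z\otimes M^2_z$. By Proposition \ref{2.4.3} the class $[\mathrm e_{(C_1^{\mathrm{op}}\otimes C_2^{\mathrm{op}},\,\cdot\,)}]$ equals $t[(A_1,\sigma_1)]+t[(A_2,\sigma_2)]$, so it suffices to compare $C_1^{\mathrm{op}}\otimes C_2^{\mathrm{op}}\cong{}_{A_1\otimes A_2}\End(N_1\otimes N_2)^{\mathrm{op}}$ with $C_{12}^{\mathrm{op}}$. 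Reindexing $N_1\otimes N_2=\bigoplus_{z,w}M^1_z\otimes M^2_w$ by $w=zu$ and feeding in the bimodule isomorphisms $M^2_z\otimes_{A_2}M^2_u\cong J_{z,u}\otimes M^2_{zu}$ should exhibit $N_1\otimes N_2$ as a sum of $|Q|$ left $(A_1\otimes A_2)$-module copies of $\bigoplus_z M^1_z\otimes M^2_z$, whence $C_1^{\mathrm{op}}\otimes C_2^{\mathrm{op}}\cong\mathrm M_Q(C_{12}^{\mathrm{op}})$ compatibly with the $Q$-normal structures. Proposition \ref{2.4.2} then gives $[\mathrm e_{(C_1^{\mathrm{op}}\otimes C_2^{\mathrm{op}},\,\cdot\,)}]=[\mathrm e_{(C_{12}^{\mathrm{op}},\sigma_{C_{12}^{\mathrm{op}}})}]=t[(A_1\otimes A_2,\sigma_1\odot\sigma_2)]$, so $t$ is a homomorphism.

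The main obstacle lies in this additivity step, specifically in the left-module identification $N_1\otimes N_2\cong(\bigoplus_z M^1_z\otimes M^2_z)^{(Q)}$ and in verifying that it respects the $Q$-normal structures. The difficulty is genuinely at the level of the Picard group: over a non-local base $S$ the bimodules $M^i_z$ are only locally free over the respective algebras, the line bundles $J_{z,u}\in\Pic(S)$ need not vanish, and one must check that the multiset of twists occurring in $N_1\otimes N_2$ reassembles, after the reindexing $w=zu$, into $|Q|$ copies of the diagonal module, and that the extending automorphisms constructed as in Theorem \ref{fundclass1} match under the resulting isomorphism. I expect this to be the only place requiring real care; once the matrix amplification is in hand, the invocations of Propositions \ref{2.4.2}, \ref{2.4.3} and \ref{2.3.1}, and the structure-matching in \eqref{identity1}, are routine verifications resting on Corollary \ref{cor9} and the explicit form of $\Theta_{\BsA}$ from Proposition \ref{prop8}.
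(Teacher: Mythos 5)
Your proof is essentially correct, but for the key identity \eqref{identity1} you take a genuinely different route from the paper. You identify $\BsA^{\mathrm{op}}\cong \mathrm M_{|Q|}(A)$ directly (using that $\MsA=\oplus_x A_{\varepsilon(x)}$ is $A$-free in the ordinary case), match $\sigma_{\BsA^{\mathrm{op}}}$ with the amplified structure $\sigma_{|Q|}$, and conclude by the matrix-invariance Proposition \ref{2.4.2}; this is in the spirit of Proposition \ref{tpf}(iv) and is a perfectly viable argument, arguably more concrete. The paper instead avoids the structure-matching on matrix entries altogether: it observes that the canonical map $A\otimes\BsA\to\End_S(\MsA)$ is an isomorphism carrying $\sigma\otimes\sigma_{\BsA}$ to an \emph{induced} $Q$-normal structure on the split algebra $\End_S(\MsA)$, so that Proposition \ref{8.2} gives $[\mathrm e_{(A,\sigma)}]+[\mathrm e_{(\BsA,\sigma_{\BsA})}]=0$ via the additivity Proposition \ref{2.4.3}, and then Proposition \ref{2.4.1} flips the sign to yield \eqref{identity1}. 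What the paper's route buys is that the only thing one must check about the $Q$-normal structure is that it is induced by a semi-linear group action on $\MsA$, which is immediate from the construction of the $\alpha_x$; your route requires the explicit identification of the outer class of $\alpha_x$ as $\sigma(x)$ under the matrix identification, which is the same kind of computation the paper outsources to Proposition \ref{tpf}(iv) in the proof of Theorem \ref{fouro}.

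On the homomorphism property, be aware that the paper gives no more than you do: it states that Proposition \ref{2.4.3}, ``suitably adjusted,'' yields additivity and leaves the details to the reader. The obstacle you flag is real: $N_1\otimes N_2=\oplus_{z,w}M^1_z\otimes M^2_w$ does not literally reassemble into $|Q|$ copies of the diagonal module $\oplus_z M^1_z\otimes M^2_z$, both because of the Picard twists $J_{z,u}$ and because $M^1_z\otimes M^2_w$ for $z\neq w$ is not even a graded bimodule in $\mathcat B_{S,Q}$ (the monoidal product there is only defined on morphisms of equal grade). A cleaner way to close this, consistent with the paper's own strategy for \eqref{identity1}, is to avoid comparing endomorphism algebras of the two modules directly and instead show that the tensor product of $(\BsA_{12}^{\mathrm{op}},\sigma_{\BsA_{12}^{\mathrm{op}}})$ with $(\BsA_1,\sigma_{\BsA_1})\otimes(\BsA_2,\sigma_{\BsA_2})$ is an induced $Q$-normal split algebra, then invoke Propositions \ref{8.2}, \ref{2.4.3} and \ref{2.4.1}; this replaces your module-reindexing by a vanishing statement that is insensitive to the twists.
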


\begin{rema}
A variant of the homomorphism $t$, written there as $\chi$, 
is given in 
\cite[Theorem 3.4 (ii)]{MR1803361}
by the assignment to a generalized $Q$-normal Azumaya $S$-algebra
of an explicit 3-cocycle of $Q$ with values in $\mathrm U(S)$.
The identity \eqref{identity1} is equivalent to the statement of
\cite[Theorem 3.4 (iii)]{MR1803361}.
\end{rema}

\begin{proof}[Proof of the first assertion Theorem {\rm \ref{genaz}}]
Proposition \ref{2.4.3}, suitably adjusted to the present situation,
entails that $t$ is a homomorphism. We leave the details to the reader.
\end{proof}

We postpone the proof of the second assertion of Theorem \ref{genaz}
to  Subsection \ref{ptgenaz} below.

\section{Crossed products with normal algebras}
\label{three}

As before,
$Q$  denotes a group, $\kappaQ\colon Q \to \Aut(S)$ an action of $Q$ on 
the commutative ring $S$,
and $R$ the subring $R = S^Q \subset S$ of $S$ that consists
of the elements fixed under $Q$.
In this section, we  explore certain crossed products of $Q$ with
$Q$-normal $S$-algebras. 
The special case  that involves certain algebras 
over fields is due to 
Teich\-m\"uller \cite{MR0002858}, 
the corresponding crossed product algebras being referred to in
\cite{MR0002858} as \lq\lq Normalringe\rq\rq.

Consider a central $S$-algebra $A$.
Suppose that there is a group
extension
\begin{equation}
\mathrm e_Q \colon 1 \longrightarrow \Egamma \stackrel{j}\longrightarrow 
\Gamma \stackrel{\pi}
\longrightarrow Q \longrightarrow 1
\label{getw}
\end{equation}
together with a morphism 
\begin{equation}
(i,\vartheta) \colon (\Egamma,\Gamma , j) \longrightarrow (\mathrm U(A), \Aut (A), \partial )
\label{morthr}
\end{equation}
of crossed modules having $i$ injective.
Such a morphism of crossed modules, in turn, induces a $Q$-normal structure
${\sigma_{\vartheta}}\colon Q \to \Aut(A)$ on $A$.
Conversely, given
a $Q$-normal structure 
${\sigma\colon Q \to \Out(A)}$ on $A$,
an extension of the kind \eqref{getw} 
with $\Egamma=\mathrm U(A)$
together with a morphism of crossed
modules of the kind \eqref{morthr} 
inducing $\sigma$
exists if and only if the Teich\-m\"uller 
class $[\mathrm e_{(A,\sigma)}] \in \mathrm H^3(Q, \mathrm U(S))$
of $(A,\sigma)$ is zero, cf. the proof of Proposition \ref{2.6.5}
as well as Remark \ref{rema1} above.
Thus we consider a $Q$-normal algebra
$(A,\sigma)$ having
zero Teich\-m\"uller class and  fix an extension of the kind
\eqref{getw} together with a morphism of crossed modules of the kind
\eqref{morthr}.

We remind the reader that the second cohomology group
$\mathrm H^2(Q, Z_\Egamma)$
of $Q$ with values in the center $Z_\Egamma$ 
of $\Egamma$ acts faithfully and transitively
on the congruence classes of extensions of the kind 
\eqref{getw} that have the same outer action $Q \to \Out(\Egamma)$, 
cf. \cite[Section IV.8 Theorem 8.8 p.~128]{maclaboo}.

\subsection{First crossed product algebra construction}
\label{first}
Relative to the action of $\Gamma$ on $A$ given by the homomorphism
$\vartheta\colon \Gamma \to \Aut(A)$,
let $A^t\Gamma $ be the twisted group ring of $\Gamma$ with twisted 
coefficients in $A$, and let
$<y-j(y),y\in \Egamma>$ denote the two-sided ideal in 
$A^t\Gamma $ generated by
the elements  $y-j(y)\in A^t \Egamma\,(\subseteq A^t\Gamma)$ 
as $y$ ranges over $\Egamma$.
Define the algebra $(A,Q,\mathrm e_Q,\vartheta)$ to be the quotient algebra
\begin{equation*}
(A,Q,\mathrm e_Q,\vartheta) = A^t \Gamma / <y-j(y),y\in \Egamma>.
\end{equation*}
It is obvious that the ring $R\,(=S^Q)$ lies in the center
of $(A,Q,\mathrm e_Q,\vartheta)$ 
whence  $(A,Q,\mathrm e_Q,\vartheta)$ is
an $R$-algebra.
We refer to the algebra $(A,Q,\mathrm e_Q,\vartheta )$ as the {\em crossed product}
of $A$ and $Q$, {\em with respect to $\mathrm e_Q$ and} $\vartheta$.

\subsection{Second crossed product algebra construction}
\label{second}

Relative to the group extension \eqref{getw},
let $v \colon Q \to  \Gamma $ be a section for $\pi$  of the underlying sets,
i.e., a section which is not necessarily a homomorphism.
Assume for convenience that $v(1) = 1$ and, for $q\in Q$, write
$v_q = v(q)$. Let $\varphi \colon Q\times Q \to \Egamma$ be 
a corresponding normalized
2-cocycle  relative to 
$v$ so that
\begin{equation*}
v_p v_q = \varphi (p,q)v_{pq}, \ p,q\in Q.
\end{equation*}
Then the {\em crossed product\/} $(A,Q,\mathrm e_Q,\vartheta )$ of $A$ and $Q$,
{\em with respect to the\/} 2-{\em cocycle\/} $\varphi $ 
{\em and the homomorphism\/} 
$\vartheta \colon \Gamma \to \Aut(A)$, is the algebra
having
$\oplus_Q Av_q$
as its underlying left $A$-module and whose multiplicative structure 
is given by
\begin{equation*}
v_q a = ({}^{\vartheta(v_q)}a)v_q,\  v_pv_q = \varphi (p,q)v_{pq},
\ a\in A,\ p, q\in Q.
\end{equation*}
In the special case where the group $Q$ is finite,  
where $A = S$, and where $S|R$ is
a Galois extension of commutative rings with Galois 
group $Q$, that notion of crossed
product comes down to the ordinary crossed product algebra.
The more general construction of $(A,Q,\varphi , \vartheta)$ was given,
in the classical situation 
(i.e., $S$ a field etc.),
in \cite[p.~145]{MR0002858} as well as 
in
\cite[p.~9]{MR0025443}, 
and for Azumaya algebras 
in \cite[p.~13]{MR0311701} where 
$S| R$ 
was still assumed to be a Galois extension of commutative rings with Galois
group $Q$.

\subsection{Equivalence of the two crossed product algebra constructions}

The two notions of crossed product algebra given above are equivalent:

\begin{prop}
\label{3.1}
The association 
\begin{equation}
(A,Q,\mathrm e_Q,\vartheta ) \longrightarrow (A,Q,\varphi , \vartheta ),
\ 
x \longmapsto (x v^{-1}_{\pi(x)})v_{\pi(x)},\ x \in \Gamma,
\label{mor4}
\end{equation}
where $x v^{-1}_{\pi(x)} \in \Egamma$
is to be interpreted as a member of $A$,
yields a morphism of algebras and, likewise,
the association
$v_q \mapsto v_q$ ($q \in Q$) 
yields a morphism
${(A,Q,\varphi, \vartheta ) \to (A,Q, \mathrm e_Q, \vartheta )}$
of algebras; 
these morphisms
preserve the structures and are inverse to each
other.  Hence the 
section $v \colon Q\to \Gamma $, in the category of sets, 
for the surjection $\pi$ in the extension {\rm \eqref{getw}}
yields a basis of the left $A$-module that underlies
the crossed product algebra $(A,Q,\mathrm e_Q,\vartheta )$.
\end{prop}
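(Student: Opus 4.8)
The plan is to realize the two stated associations as mutually inverse homomorphisms of algebras, with the essential content being that the first association is a well-defined algebra map. Write $B=(A,Q,\mathrm e_Q,\vartheta)$ for the quotient of the twisted group ring $A^t\Gamma$, and for $\gamma\in\Gamma$ let $\bar\gamma$ be its image in $B$. Since $v$ is a set-section for $\pi$ with $v(1)=1$, every $\gamma\in\Gamma$ factors uniquely as $\gamma=j(y)\,v_{\pi(\gamma)}$ with $y=y(\gamma)\in\Egamma$ determined by $\gamma v_{\pi(\gamma)}^{-1}=j(y)$; this is just the decomposition of $\Gamma$ into the cosets $j(\Egamma)v_q$, $q\in Q$. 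I would fix this coset decomposition once and for all, as all the bookkeeping below is organized by it.

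First I would define, on the twisted group ring, the $A$-linear map $\tilde\Psi\colon A^t\Gamma\to(A,Q,\varphi,\vartheta)$ sending the basis element $\gamma$ to $i(y(\gamma))\,v_{\pi(\gamma)}$; this is exactly the formula $\gamma\mapsto(\gamma v_{\pi(\gamma)}^{-1})v_{\pi(\gamma)}$ of the statement, with $\gamma v_{\pi(\gamma)}^{-1}\in\Egamma$ read as the element $i(\gamma v_{\pi(\gamma)}^{-1})$ of $A$. The crux is to check that $\tilde\Psi$ is multiplicative. For two basis elements $\gamma=j(y)v_p$ and $\delta=j(z)v_q$, I would compute the product in $\Gamma$ by pushing $v_p$ past $j(z)$ via $v_p j(z)=j({}^{v_p}z)v_p$ and using $v_pv_q=j(\varphi(p,q))v_{pq}$, obtaining $\gamma\delta=j(y\cdot{}^{v_p}z\cdot\varphi(p,q))v_{pq}$, hence $\tilde\Psi(\gamma\delta)=i(y)\,i({}^{v_p}z)\,i(\varphi(p,q))v_{pq}$. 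The $\Gamma$-equivariance of $i$, namely $i({}^{v_p}z)={}^{\vartheta(v_p)}i(z)$, matches this against $\tilde\Psi(\gamma)\tilde\Psi(\delta)=i(y)v_p\,i(z)v_q=i(y)\,({}^{\vartheta(v_p)}i(z))\,\varphi(p,q)v_{pq}$. Compatibility with the twisted scalar multiplication reduces to $\tilde\Psi(\gamma)\,b=({}^{\vartheta(\gamma)}b)\,\tilde\Psi(\gamma)$, which follows from the crossed-module-morphism relation $\vartheta\circ j=\partial\circ i$: since $\vartheta(\gamma)=\partial(i(y))\,\vartheta(v_p)$ is $\vartheta(v_p)$ followed by conjugation by $i(y)$, one gets $({}^{\vartheta(\gamma)}b)i(y)=i(y)\,({}^{\vartheta(v_p)}b)$.

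Once $\tilde\Psi$ is known to be an algebra map, it annihilates the generators $y-j(y)$ of the defining ideal (with $y$ read as $i(y)\in A$): indeed $\tilde\Psi(j(y))=i(y)v_1=i(y)$ because $v_1=1$, while $\tilde\Psi(i(y))=i(y)$ by $A$-linearity. Being a two-sided ideal killed by a homomorphism, the whole ideal lies in $\ker\tilde\Psi$, so $\tilde\Psi$ descends to an algebra map $\Psi\colon B\to(A,Q,\varphi,\vartheta)$. In the reverse direction I would take $\Phi\colon(A,Q,\varphi,\vartheta)\to B$ to be the $A$-linear map $v_q\mapsto\bar v_q$, which is well defined since the target of $\Phi$'s source is free on $\{v_q\}$; that it respects multiplication is the short computation $\bar v_p\bar v_q=\overline{j(\varphi(p,q))v_{pq}}=i(\varphi(p,q))\bar v_{pq}$, matching $v_pv_q=\varphi(p,q)v_{pq}$, together with the relation $\bar v_q a=({}^{\vartheta(v_q)}a)\bar v_q$ inherited from $A^t\Gamma$.

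Finally I would evaluate the two composites on generators: $\Psi(\Phi(v_q))=\tilde\Psi(v_q)=i(e)v_q=v_q$, and $\Phi(\Psi(\bar\gamma))=\Phi(i(y)v_p)=i(y)\bar v_p=\overline{j(y)v_p}=\bar\gamma$, so $A$-linearity gives $\Psi\Phi=\mathrm{id}$ and $\Phi\Psi=\mathrm{id}$. Since $\Phi$ is then an isomorphism of left $A$-modules and $(A,Q,\varphi,\vartheta)$ is by definition free on $\{v_q\}_{q\in Q}$, the images $\{\bar v_q\}$ form an $A$-basis of $B$, which is the concluding assertion. I expect the single genuine obstacle to be the algebra-homomorphism verification for $\tilde\Psi$, where one must carefully distinguish the conjugation action of $\Gamma$ on $\Egamma$ from the $\vartheta$-twisted action on $A$; both crossed-module-morphism axioms — equivariance of $i$ and $\vartheta\circ j=\partial\circ i$ — are needed there, and everything else is routine once the coset decomposition $\Gamma=\bigsqcup_q j(\Egamma)v_q$ is fixed.
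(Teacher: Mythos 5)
Your proof is correct. It differs from the paper's in how it distributes the work. The paper's own proof declares the map \eqref{mor4} ``plainly well defined and surjective'' and then gets bijectivity not from an explicit inverse but from a module-theoretic base-change argument: $A^t\Gamma$ is free as an $A^t\Egamma$-module on $\{v_q\}_{q\in Q}$, the surjection $A^t\Egamma\to A$, $au\mapsto a\,a_u$, has kernel precisely the two-sided ideal generated by the elements $y-j(y)$, and therefore the quotient $(A,Q,\mathrm e_Q,\vartheta)$ inherits the structure of a free left $A$-module on the images of the $v_q$, matching the free module $\oplus_Q Av_q$ that underlies $(A,Q,\varphi,\vartheta)$ by construction; the isomorphism and the final ``basis'' assertion then come for free. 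You instead construct a two-sided inverse $\Phi$ and check the composites on generators, and -- more substantially -- you supply in full the multiplicativity verification for $\tilde\Psi$ that the paper suppresses, correctly isolating the two crossed-module-morphism axioms that make it work (equivariance of $i$, giving $i({}^{v_p}z)={}^{\vartheta(v_p)}i(z)$, and $\vartheta\circ j=\partial\circ i$, giving the commutation $({}^{\vartheta(\gamma)}b)\,i(y)=i(y)\,({}^{\vartheta(v_p)}b)$). Both arguments rest on the same coset decomposition $\Gamma=\bigsqcup_q j(\Egamma)v_q$; the paper's route is shorter once one accepts the unproved well-definedness, while yours is self-contained and makes explicit exactly where the crossed-module hypotheses enter. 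There is no gap in your argument.
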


\begin{proof}
The morphism \eqref{mor4}
of algebras is plainly well defined and surjective.

By construction, the
$A^t\Egamma$-module that underlies the $R$-algebra $A^t\Gamma$
is free, having as basis the family $\{v_q; q \in Q\} \subseteq \Gamma$. 
Furthermore, 
write the injection $\Egamma \to A$ as $u \mapsto a_u$ 
($u \in \Egamma$);
the kernel of the canonical surjection
$A^t\Egamma \to A$
of algebras given by the association
$A^t\Egamma \ni au \mapsto aa_u \in A$
is the two-sided ideal in $A^t\Egamma$
generated by
the elements  $y-j(y)\in A^t\Egamma$ as $y$ ranges over $\Egamma$.
Now, the $A$-module that underlies the algebra
$(A,Q,\mathrm e_Q,\vartheta )$
arises from $A^t\Gamma$, viewed as
an $A^t\Egamma$-module, 
through the surjection
$A^t\Egamma \to A$ of algebras.
By construction,
as an $A$-module,
the algebra
$(A,Q,\varphi,\vartheta )$
has likewise the family $\{v_q; q \in Q\} \subseteq \Gamma$
as basis 
whence \eqref{mor4}
is an isomorphism of algebras.
\end{proof}

\begin{rema}
The classical fact that, up to within
isomorphism, 
the crossed product algebra depends only on the congruence 
class of the corresponding
group extension extends to our more general situation in an obvious
way;
we leave the details to the reader.
\end{rema}

\subsection{Properties of the crossed product algebra}

We  write  $(A,Q,\mathrm e_Q,\vartheta )$ or  $(A,Q,\varphi , \vartheta )$ 
according as
which construction of the crossed product algebra 
is more convenient for the particular situation under discussion. 
In the special case
where $A = S$, the action $\vartheta\colon \Gamma \to \Aut(S)$
of $\Gamma$ on $S$
necessarily coincides with the composite $\kappaQ \circ \pi$. 
Proofs of the statements below
are straightforward; we leave most of them to the reader.

\begin{prop}
\label{threet}
{\rm(i)}
The algebra $A = Av_1$ is a subalgebra of the crossed product algebra
 $(A,Q,\varphi , \vartheta)$
and lies in the centralizer of $S$. 

\noindent {\rm(ii)}
If
$S| R$ is a Galois extension of commutative rings
with Galois group $Q$, the algebra $A$ coincides with the centralizer of $S$.
In particular,
when $A = S$, this comes down to the familiar
fact that $S$ is a maximal commutative subring of 
the algebra $(S,Q,\varphi,\kappaQ\circ \pi )$.

\noindent {\rm(iii)}
If $S| R$ is a Galois extension of commutative rings with Galois group $Q$,
the ring $R$ coincides with the center of $(A,Q,\varphi , \vartheta )$.

\noindent {\rm(iv)}
The group $\Gamma $ embeds canonically into the
normalizer $N^{\mathrm U(A,Q,\varphi,\vartheta)}(A)$ of $A$ 
in 
$\mathrm U(A,Q,\varphi,\vartheta)$ so that, given 
$a = av_1 \in A$ and $x \in \Gamma $,
\begin{equation*}
x a x^{-1} = {}^{\vartheta (x)} a.
\end{equation*}
In particular, each automorphism $\kappaQ(q)$ of $S = S v_1$,
as $q$ ranges over $Q$, extends to an inner automorphism of 
$(A,Q,\varphi , \vartheta )$
that normalizes $A$. If $S| R$ is a Galois extension of commutative 
rings with Galois group $Q$ and if the injection $i\colon \Egamma \to \mathrm U(A)$
is an isomorphism of groups,
then ${\Gamma = N^{\mathrm U(A,Q,\varphi,\vartheta)}(A)}$.
\end{prop}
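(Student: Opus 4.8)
The plan is to treat the four assertions in order, using the basis presentation of Proposition~\ref{3.1} for (i)--(iii) and the twisted group ring description of Subsection~\ref{first} for the conjugation formula in (iv); for brevity write $C=(A,Q,\varphi,\vartheta)$. Assertion (i) is immediate: since $v_1=v(1)=1$ and $\varphi$ is normalized, $Av_1$ is closed under the multiplication of $C$ and carries the original multiplication of $A$, so $A=Av_1$ is a subalgebra, and as $S$ is the center of $A$ every element of $A$ commutes with $S$, whence $A$ lies in the centralizer of $S$. For (ii) and (iii) the key device is the family of Galois coordinates furnished by characterization~(iv) in Subsection~\ref{galext}, i.e.\ elements $x_i,y_i\in S$ with $\sum_i x_i\,{}^qy_i=\delta_{q,1}$ for all $q\in Q$. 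Writing a general element as $\sum_q a_qv_q$ and imposing commutation with $s\in S$ gives, coefficientwise, $a_q(s-{}^qs)=0$; substituting $s=y_i$, multiplying by the central $x_i$ and summing yields $a_q(1-\delta_{q,1})=0$, so $a_q=0$ for $q\neq 1$. This proves (ii). Part (iii) then follows, since a central element lies in the centralizer $A$ of $S$, hence in $Z(A)=S$, and commuting with each $v_q$ forces ${}^qa=a$, so it lies in $S^Q=R$; conversely $R$ is central.

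For (iv), the embedding is read off from the presentation $C=A^t\Gamma/\langle y-j(y)\rangle$: the images of the basis elements of $\Gamma$ give a homomorphism $\Gamma\to\mathrm U(C)$ carrying $\Egamma$ into $\mathrm U(A)$ via $i$, and it is injective because the $v_q$ form a left $A$-basis (Proposition~\ref{3.1}) and $i$ is injective. In $A^t\Gamma$ one has $xa={}^{\vartheta(x)}a\,x$ for $a\in A$, which descends to $xax^{-1}={}^{\vartheta(x)}a\in A$; in particular conjugation by $v_q$ extends $\kappaQ(q)$ and normalizes $A$, so $\Gamma\subseteq N^{\mathrm U(C)}(A)$ and the embedding of $A$ into $C$ is a Deuring embedding.

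The substance of the statement is the reverse inclusion, under the hypotheses that $S|R$ is Galois with group $Q$ and that $i$ is an isomorphism. First, by (ii) the algebra $A$ coincides with the centralizer of $S$, so $N^{\mathrm U(C)}(A)=N^{\mathrm U(C)}(S)$. Given $u=\sum_p a_pv_p\in N^{\mathrm U(C)}(S)$, conjugation by $u$ restricts on $S$ to an automorphism $\tau$, and comparing coefficients in $us=\tau(s)u$ gives $a_p({}^ps-\tau(s))=0$ for all $p$ and all $s\in S$. Since the embedding is a Deuring embedding, Proposition~\ref{twosixo}(ii),(iii) identifies $\tau$ as a member of $\kappaQ(Q)$, say $\tau=\kappaQ(q_0)$, with $q_0$ unique because $\kappaQ$ is injective. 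For $p\neq q_0$ the relation becomes, after replacing $s$ by ${}^{q_0^{-1}}s$, the identity $a_p({}^{pq_0^{-1}}s-s)=0$ with $pq_0^{-1}\neq 1$, and the Galois-coordinate computation of the previous paragraph then forces $a_p=0$; hence $u=a_{q_0}v_{q_0}$. Finally $u$ is invertible and the values of $\varphi$ are units of $A$ via $i$, so applying the same analysis to $u^{-1}$ and multiplying out shows $a_{q_0}\in\mathrm U(A)$, after which surjectivity of $i$ exhibits $u$ as the image of an element of $\Gamma$. Thus $N^{\mathrm U(C)}(A)\subseteq\Gamma$, giving equality.

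I expect the genuine obstacle to be the control of the induced automorphism $\tau$: showing that conjugation by an arbitrary normalizing unit restricts on $S$ to an element of $\kappaQ(Q)$, rather than to some larger group of $R$-automorphisms of $S$. This is precisely the step where the Galois hypothesis must be used essentially, through the Deuring property together with Proposition~\ref{twosixo}, and it is what makes the support of $u$ collapse to a single grade $q_0$. Once this is in hand, the support-killing argument and the verification that the surviving coefficient is a unit are routine consequences of the Galois coordinates and of invertibility, so the difficulty is concentrated entirely in pinning down $\tau$.
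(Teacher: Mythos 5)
Your handling of (i)--(iii) and of the embedding, conjugation formula, and Deuring property in (iv) is correct. The only part the paper actually proves is (ii), and there you take a slightly different route: the paper forms the annihilator ideal of $Sa_q$, places it in a maximal ideal, and invokes the separation property (characterization (iii) of Subsection~\ref{galext}), whereas you use the Galois coordinates $x_i,y_i$ with $\sum_i x_i\,{}^qy_i=\delta_{q,e}$ coming from characterization (iv) to annihilate $a_q$ directly. Both arguments are standard and equivalent; yours is marginally more explicit and buys nothing essential either way.

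The genuine gap is in the last assertion of (iv), and it sits exactly where you say the difficulty is concentrated --- but the citation you offer does not close it. You need that conjugation by an arbitrary $u\in N^{\mathrm U(C)}(A)=N^{\mathrm U(C)}(S)$ restricts on $S$ to an automorphism $\tau$ belonging to $\kappaQ(Q)$. Proposition~\ref{twosixo}(ii) only says that the image of $\eta_\flat$ \emph{contains} $\kappaQ(Q)$ exactly when the embedding is a Deuring embedding, and \ref{twosixo}(iii) only says that $\eta_\flat$ is injective; neither statement bounds the image of $\eta_\flat$ from above by $\kappaQ(Q)$. What you actually need is that every automorphism of $S$ over $R$ arising from such a $u$ lies in $\kappaQ(Q)$, and for a general Galois extension of commutative rings this fails once $S$ has nontrivial idempotents: then $\tau$ is only of the form $\tau(s)=\sum_q e_q\,{}^qs$ for a system of orthogonal idempotents $e_q$, not a single translate. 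Concretely, take $R$ a field, $S=\mathrm{Map}(Q,R)$ with $Q$ acting by translation, $A=S$, $\varphi=1$, $\Gamma=\mathrm U(S)\rtimes Q$ with $i$ the identity; then $(A,Q,\varphi,\vartheta)\cong \mathrm M_{|Q|}(R)$, the normalizer of the diagonal subalgebra $S$ is the full monomial group $\mathrm U(S)\rtimes\Sigma_{|Q|}$, and for $|Q|\geq 3$ this strictly contains the image of $\Gamma$. So the step ``$\tau=\kappaQ(q_0)$'' cannot be supplied from the stated hypotheses alone; it requires an extra input such as connectedness of $S$ (no nontrivial idempotents), under which the idempotent decomposition above degenerates and forces $\tau=\kappaQ(q_0)$ for a unique $q_0$. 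Granting that, the remainder of your argument --- killing $a_p$ for $p\neq q_0$ via the Galois coordinates and checking $a_{q_0}\in\mathrm U(A)$ from invertibility of $u$, then using surjectivity of $i$ --- does go through.
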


\begin{proof}[Proof of {\rm (ii)}]
Let $x = \sum a_q v_q \in (A,Q,\varphi , \vartheta)$
and suppose that $xs = sx$ 
for each $s\in S$. This implies that $a_qv_q s = sv_q a_q$
and therefore 
$({}^qs-s)a_q = 0$, for any $s\in S$ and $q\in Q$. 
Consider $q \in Q$ distinct from
$e\in Q$. If $a_q \neq 0$ then $S a_q \subset A$
is a cyclic $S$-submodule; let $J$ denote the annihilator ideal of 
$Sa_q$. Since $S| R$ is assumed to be a 
Galois extension of commutative rings with Galois group $Q$,
by characterization (iii) of a Galois extension
in Subsection \eqref{galext} above, 
there is an $s\in S$  with ${}^qs-s \not\in J$, and so
$({}^q s-s)a_q \ne  0$.
\end{proof}

To simplify the notation, we denote by
$\BbB$ the left $A$-module that underlies the crossed product algebra
$(A,Q,\mathrm e_Q, \vartheta )$; by construction, 
as an $S$-module, $\BbB \cong \oplus_Q Av_q$.
The crossed product algebra structure on $\BbB$
yields some additional structure on the central
$S$-algebra ${}_A\End (\BbB) \cong \mathrm M_{\vert Q\vert }(A^{\mathrm{op}})$:

\begin{prop}
\label{tpf}
{\rm (i)} 
With respect to the action of the group $\Gamma$
on $S$ via the combined map $\kappaQ\circ \pi\colon \Gamma \to Q \to \Aut(S)$,
the association
\begin{equation}
\Gamma \times \BbB \longrightarrow \BbB,\ 
(x , b) \longmapsto x b \in (A,Q,\varphi , \vartheta ),\ 
x \in \Gamma,\ b\in \BbB,
\label{assoc9}
\end{equation}
yields an $S^t\Gamma $-module structure on $\BbB$.

\noindent{\rm (ii)}
The induced action $\beta_1 \colon \Gamma \to \Aut ({}_A\End (\BbB))$ 
on ${}_A\End (\BbB)$ given
by 
\begin{equation*}
({}^{\beta_1(x) } f)b = x f(x^{-1} b),\  x \in \Gamma ,
\ b\in \BbB, \ f\in {}_A\End(\BbB),
\end{equation*}
is trivial on $\Egamma \subset \Gamma $ and 
hence induces, on ${}_A\End (\BbB)$,  a $Q$-equivariant
structure 
\begin{equation}
\tau_{\mathrm e_Q}\colon Q \longrightarrow \Aut({}_A\End (\BbB)) .  
\label{tauo}
\end{equation}

\noindent
{\rm (iii)}
Setting
\begin{equation*}
{}^x (av_q) = ({}^{\vartheta(x) } a)v_q,
\ a\in A, 
\ x \in \Gamma,
\ q\in Q, 
\end{equation*}
we obtain another $S^t\Gamma $-module structure on $\BbB$.

\noindent
{\rm (iv)}
The action $\beta_2 \colon \Gamma \to \Aut ({}_A\End (\BbB))$ on ${}_A\End (\BbB)$ 
induced
by the $S^t\Gamma $-module structure in  
{\rm (iii)}
is given by the identity
\begin{equation*}
({}^{\beta_{2}(x )}f)(av_q) =
a\,{}^x (f(v_q)), 
\ x \in \Gamma , 
\ a\in A, 
\ f\in {}_A\End (\BbB), q\in Q,
\end{equation*}
and induces a $Q$-normal structure 
$\sigma_{\sta}
\colon Q \to \Out({}_A\End (\BbB))$
on ${}_A\End (\BbB)$ which,
under the isomorphism
 ${}_A\End (\BbB) \cong \mathrm M_{\vert Q\vert}(A^{\mathrm{op}})$ 
(cf. Subsections~{\rm \ref{2.4}} and {\rm \ref{2.4.22}} above),
corresponds to the $Q$-normal structure
$\sigma^{\mathrm{op}}_{\vartheta,\vert Q\vert }$
on $\mathrm M_{\vert Q\vert}(A^{\mathrm{op}})$, 
i.~e., to the $Q$-normal structure  on $\mathrm M_{\vert Q\vert}(A^{\mathrm{op}})$
induced by the $Q$-normal structure ${\sigma_{\vartheta}\colon Q \to \Aut(A)}$
on $A$.

\noindent
{\rm (v)}
Given  $x \in \Gamma$,
the association 
$v_q \longmapsto x v_q \in (A,Q,\varphi , \vartheta)$,
as $q$ ranges over $Q$,
yields an 
$A$-linear automorphism $i_x \colon \BbB \to \BbB$, i.e., a member of
\begin{equation*}
\mathrm U({}_A\End (\BbB)) = {}_A\Aut(\BbB) 
\cong \mathrm{GL}_{\vert Q\vert }(A^{\mathrm{op}}).
\end{equation*}

\noindent
{\rm (vi)}
The two $S^t\Gamma $-structures on $\BbB$ given in 
{\rm (i)} and {\rm (iii)}
are related by the identity
\begin{equation*}
x b = {}^x(i_x (b)), 
\ x \in \Gamma , 
\ b\in \BbB.
\end{equation*}

\noindent
{\rm (vii)}
Given $x \in \Gamma $ and $f\in \End_A(\BbB)$,
\begin{equation*}
{}^{\beta_2 (x^{-1})}({}^{\beta_1 (x)}f) = i_x f i^{-1}_x .
\end{equation*}
Thus, in view of  
{\rm (ii)}, {\rm (iv)}, and {\rm (v)},
the $Q$-normal structure 
$\sigma_{\sta}$
on ${}_A\End (\BbB)$ factors through the
$Q$-equivariant
structure  
{\rm \eqref{tauo}}
on ${}_A\End (\BbB)$ and is therefore $Q$-equivariant.

\noindent
{\rm (viii)}
Given $u \in (A,Q,\varphi, \vartheta)$, define $f_u\in \End_A(\BbB)$ by
$f_u (b) = bu \in (A,Q, \varphi , \vartheta)$,
 for $b\in \BbB$.  
The assignment to 
$u^{\mathrm{op}}\in (A,Q,\varphi , \vartheta)^{\mathrm{op}}$  of $f_u$ 
yields an injection 
\begin{equation*}
(A,Q,\varphi , \vartheta)^{\mathrm{op}} \longrightarrow \End_A(\BbB) 
\end{equation*}
which
identifies the algebra
$(A,Q,\varphi , \vartheta)^{\mathrm{op}} $ with 
the subalgebra ${}_A\End (\BbB)^Q$ of  ${}_A\End (\BbB)$, i.~e., 
with the subalgebra that consists 
of the elements fixed under the $Q$-equivariant structure 
{\rm \eqref{tauo}}.

\noindent
{\rm (ix)}
If $S|R$ is a Galois extension of commutative rings with Galois
group $Q$, then the obvious map 
\begin{equation*}
\alpha \colon S\otimes_R (A,Q,\varphi , \vartheta)^{\mathrm{op}} \longrightarrow {}_A\End (\BbB)
\end{equation*}
given by 
$[\alpha (s\otimes u^{\mathrm{op}})]b = sbu$, 
for $s\in S$, 
 $b\in \BbB$,
$u \in (A, Q, \varphi, \vartheta)$, 
is an isomorphism of $S$-algebras as well as,
relative to the $Q$-equivariant structure {\rm \eqref{tauo}},
an isomorphism of $S^tQ$-modules.
In particular, when $A = S$, this statement 
recovers the familiar fact that 
the crossed product $R$-algebra
$(S,Q,\varphi,\kappaQ\circ \pi )$ is split by $S$.

\noindent
{\rm (x)}
If $S|R$ is a Galois extension of commutative rings
with Galois group $Q$, then the induced
isomorphism
\begin{equation*} 
\alpha_{\sharp} \colon \Aut (S\otimes_R (A, Q, \varphi , \vartheta)^{\mathrm{op}}) \longrightarrow \Aut ({}_A\End (\BbB))
\end{equation*}
identifies the obvious $Q$-equivariant structure on 
$S\otimes_R (A, Q, \varphi , \vartheta)^{\mathrm{op}}$ which comes from scalar extension
with the $Q$-equivariant structure 
{\rm\eqref{tauo}} on ${}_A\End (\BbB)$.
Consequently the $Q$-normal algebra
${({}_A\End (\BbB),\sigma_{\sta})}$
then arises from the
$R$-algebra $(A,Q,\varphi,\vartheta)^{\mathrm{op}}$ by scalar extension.

\noindent
{\rm (xi)}
If $S|R$ is a Galois extension of commutative rings with Galois group
$Q$ and if $A$ is an Azumaya $S$-algebra, then $(A,Q,\varphi , \vartheta )$
is an  Azumaya $R$-algebra.
\end{prop}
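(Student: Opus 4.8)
The plan is to treat the eleven parts as three threads: first build the two $S^t\Gamma$-module structures on $\BbB$ and read off the induced automorphism actions (parts (i)--(v)); second, compare them by the explicit inner twist $i_x$ (parts (vi)--(vii)), which is what upgrades the a priori merely normal structure to an equivariant one; third, identify the fixed-point algebra and feed it into Galois descent (parts (viii)--(xi)). Throughout I would use the canonical embedding $\Gamma\hookrightarrow\mathrm U(A,Q,\varphi,\vartheta)$ of \ref{threet}(iv) and the identification ${}_A\End(\BbB)\cong\mathrm M_{\vert Q\vert}(A^{\mathrm{op}})$ coming from $\BbB\cong\oplus_Q Av_q$ as a free left $A$-module of rank $\vert Q\vert$.

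For (i), since $\vartheta(x)$ restricts on $S$ to $\kappaQ(\pi(x))$, left multiplication by $\Gamma$ together with the left $S$-action obey $x\,s=({}^{\pi(x)}s)\,x$, which is precisely the defining relation of $S^t\Gamma$; thus left multiplication in the crossed product makes $\BbB$ an $S^t\Gamma$-module. For (iii) one checks directly, using that $\vartheta$ is a homomorphism, that the twisted-coefficient assignment ${}^x(av_q)=({}^{\vartheta(x)}a)v_q$ respects the same relations. For (ii), if $y\in\Egamma$ then $y$ acts on $\BbB$ as left multiplication by $i(y)\in\mathrm U(A)$, so $A$-linearity of $f$ gives $({}^{\beta_1(y)}f)(b)=i(y)f(i(y)^{-1}b)=f(b)$; hence $\beta_1$ kills $\Egamma$, descends to $\tau_{\mathrm e_Q}\colon Q\to\Aut({}_A\End(\BbB))$, and its restriction to the center $S$ is $\kappaQ$, making it an equivariant structure. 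For (v), $xv_q$ lies in $\mathrm U(A)v_{\pi(x)q}$, so $i_x$ is a well-defined $A$-linear automorphism, i.e. a unit of ${}_A\End(\BbB)$.

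Parts (iv), (vi), (vii) are the crux. Computing $\beta_2$ from the action (iii) yields the stated formula, and since $\vartheta(\Egamma)=\partial(i(\Egamma))$ consists of inner automorphisms of $A$, $\beta_2$ descends to a map $\sigma_{\sta}\colon Q\to\Out({}_A\End(\BbB))$; unwinding the matrix identification shows this is exactly the structure $\sigma^{\mathrm{op}}_{\vartheta,\vert Q\vert}$ of \ref{2.4.22}. The comparison identity (vi) $xb={}^x(i_x(b))$ is verified on the basis vectors $v_q$ and extended $A$-linearly; translating it into an identity on endomorphisms gives (vii), ${}^{\beta_2(x^{-1})}({}^{\beta_1(x)}f)=i_x f i_x^{-1}$, which exhibits $\beta_2(x^{-1})\beta_1(x)$ as conjugation by the unit $i_x$. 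Passing to $\Out$ this conjugation is trivial, so $\sigma_{\sta}$ agrees with the image of $\tau_{\mathrm e_Q}$; this is the assertion that $\sigma_{\sta}$ factors through the equivariant structure, i.e. is $Q$-equivariant. I expect (vii) to be the main obstacle: it is the one genuinely nonformal computation, and it is what converts the normal structure into an equivariant one, so that the descent thread below actually produces a $Q$-equivariant algebra rather than merely a normal one.

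For (viii), right multiplication $f_u(b)=bu$ is left $A$-linear and $u\mapsto f_u$ is an anti-homomorphism, hence a homomorphism out of $(A,Q,\varphi,\vartheta)^{\mathrm{op}}$, injective because $f_u(v_e)=u$; its image is precisely the set of endomorphisms commuting with every left $\Gamma$-multiplication, i.e. the $\tau_{\mathrm e_Q}$-fixed elements ${}_A\End(\BbB)^Q$, since these are exactly the endomorphisms linear over the whole crossed product acting on itself as a free rank-one module. Assuming now $S\vert R$ Galois with group $Q$, the $S$-algebra $M={}_A\End(\BbB)$ is an $S^tQ$-module via $\tau_{\mathrm e_Q}$, and Galois descent (characterization (ii) of \S\ref{galext}) gives an isomorphism $S\otimes_R M^Q\to M$, $s\otimes m\mapsto sm$; combining with (viii) this is exactly the map $\alpha$ of (ix), and it is by construction $S^tQ$-linear, which is the content of (x) (the scalar-extension equivariant structure on the left is carried to $\tau_{\mathrm e_Q}$). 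Finally, for (xi), if $A$ is Azumaya over $S$ then so is $\mathrm M_{\vert Q\vert}(A^{\mathrm{op}})={}_A\End(\BbB)$, whence by (ix) $S\otimes_R(A,Q,\varphi,\vartheta)^{\mathrm{op}}$ is Azumaya over $S$; since $S\vert R$ is finitely generated projective and faithfully flat, Azumaya descends, so $(A,Q,\varphi,\vartheta)^{\mathrm{op}}$ and hence $(A,Q,\varphi,\vartheta)$ is an Azumaya $R$-algebra.
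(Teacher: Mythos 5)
Your proposal is correct and, on the parts the paper actually writes out --- (vii), (viii), (ix), (xi) --- it follows the paper's own architecture almost verbatim: the comparison identity of (vi) converts $\beta_2(x^{-1})\beta_1(x)$ into conjugation by a unit of ${}_A\End(\BbB)$, the $\tau_{\mathrm e_Q}$-fixed endomorphisms are exactly the left $(A,Q,\varphi,\vartheta)$-linear ones and hence right multiplications (the paper's appeal to ${}_\Lambda\End(\Lambda)\cong\Lambda^{\mathrm{op}}$ is your ``free rank-one module'' remark), and (ix) is Galois descent, characterization (ii) of Subsection \ref{galext}, applied to $M={}_A\End(\BbB)$. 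The one genuinely different step is (xi). The paper stays inside separability theory: by (ix) the scalar extension $S\otimes_R(A,Q,\varphi,\vartheta)$ is Azumaya over $S$; $S$ is separable over $R$ \cite[Corollary A.5]{MR0121392}, so the extension is separable over $R$ by transitivity \cite[Theorem 2.3]{MR0121392}; $R$ is an $R$-module direct summand of $S$, so separability passes down to $(A,Q,\varphi,\vartheta)$ \cite[Propositions A.3 and 1.7]{MR0121392}; and centrality is Proposition \ref{threet}(iii). You instead descend the Azumaya property along $R\to S$, which is faithfully flat since $S$ is finitely generated projective with $R$ a direct summand. This is legitimate and arguably cleaner (finite projectivity and bijectivity of the enveloping map both descend faithfully flatly), but to land on the paper's definition of Azumaya --- \emph{central} separable, with the quoted characterization \cite[Theorem 2.1]{MR0121392} presupposing centrality --- you still owe the identification of the center of $(A,Q,\varphi,\vartheta)$ with $R$; either cite Proposition \ref{threet}(iii), which genuinely uses the Galois hypothesis, or invoke the classical fact that the enveloping isomorphism forces the center to equal $R$.

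One point of precision in your treatment of (vi)--(vii): neither side of the comparison identity is $A$-linear --- $b\mapsto xb$ and the composites with $i_x$ are $\vartheta(x)$-semilinear --- so checking on the basis $\{v_q\}$ does suffice, but by semilinear, not $A$-linear, extension. If you carry out that check you will find the composition order matters: writing $xv_q=u_qv_{\pi(x)q}$ with $u_q\in\mathrm U(A)$, one gets $xb=i_x({}^xb)$, whereas ${}^x(i_x(b))$ produces ${}^{\vartheta(x)}u_q$ in place of $u_q$; correspondingly the $A$-linear automorphism $b\mapsto{}^{x^{-1}}(xb)$ appearing in the (vii) computation differs from $i_x$ by twisting the units $u_q$ by $\vartheta(x^{-1})$. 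None of this damages your conclusion --- $\beta_2(x^{-1})\beta_1(x)$ is still conjugation by an explicit unit of ${}_A\End(\BbB)$, so $\sigma_{\sta}$ does factor through $\tau_{\mathrm e_Q}$ --- but a casual ``verify on basis vectors and extend $A$-linearly'' hides exactly the bookkeeping where such arguments usually go wrong.
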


Concerning statement (x), we note that 
statement (vii)
already implies that that the $Q$-normal algebra 
$({}_A\End (\BbB), 
\sigma_{\sta})$
arises by scalar extension but  statement (x) is more precise.

\begin{proof} 
(vii):
Let $a \in A$, $x \in \Gamma$ and $q \in Q$. In view of the definitions of the various actions,
\begin{align*}
\left({}^{\beta_2 (x^{-1})}({}^{\beta_1 (x)}f)\right)(av_q) &= 
a\, {}^{x^{-1}}\negthickspace\left(({}^{\beta_1 (x)}f)(v_q)\right)
=a\, {}^{x^{-1}}\negthickspace \left(x f(x^{-1}v_q)\right)
\\
&=
a (i_x f i^{-1}_x)(v_q) 
\end{align*}
since, with $b= f(x^{-1}v_q) \in \BbB$,
 in view of (vi),
\begin{equation*}
 {}^{x^{-1}}\negthinspace(x b) =i_x(b).
\end{equation*}

\noindent
(viii): The action of $Q$ on ${}_A\End(\BbB)$ in 
(ii) is given 
by 
\begin{equation*}
({}^q f)(b) = x f(x^{-1} b),\ f\in {}_A\End(\BbB),\
q \in Q,
\end{equation*}
 where $x \in \Gamma $ is a 
pre-image of $q\in Q$; hence given $u \in (A,Q,\varphi , \vartheta)$,
\begin{equation*}
({}^qf_u)(b) = x f_u(x^{-1} b)
=
xx^{-1}bu = bu = f_u b.
\end{equation*}

On the other hand, let $f \colon \BbB \to \BbB$ be an $A$-linear 
endomorphism of $\BbB$ that is
fixed under $Q$.
Let $q\in Q$ and choose a pre-image $x \in \Gamma $ of $q$. Then,
for any $a\in A$,
\begin{equation*}
f(av_q) = a f(v_q) = ax f(x^{-1}v_q)
        = ax^{-1}x v_q f(1) = a v_q f(1).
\end{equation*}
Hence $f$ is given by multiplication in $(A,Q,\varphi, \vartheta)$ by $f(1)$.
The argument really comes down to the standard fact that, for any
ring $\Lambda$, the algebra  ${}_{\Lambda}\End(\Lambda ) $ is canonically
isomorphic to $\Lambda^{\mathrm{op}}$.

\noindent
(ix): This follows from  
Proposition \ref{tpf}(viii)
since, by Galois descent, cf. Subsection ~\ref{galext}~(ii), the canonical map
$S\otimes_R {}_A\End (\BbB)^Q \to {}_A\End (\BbB)$ is an isomorphism
of $S$-algebras. 

\noindent
(xi):
In view of 
Proposition \ref{tpf}(ix), 
$S \otimes_R (A,Q,\varphi, \vartheta)$ is an  Azumaya $S$-algebra,
i.e., a central separable $S$-algebra. By  
\cite[Corollary A.5 p.~398]{MR0121392},
the ring $S$ is separable over $R$ and therefore, by
 \cite[Theorem 2.3~p.~374]{MR0121392},
the algebra
 $S\otimes_R (A,Q,\varphi ,\vartheta)$ is
separable over $R$. By  \cite[Proposition A.3]{MR0121392}, 
as an $R$-module, the ring $R$ is a direct summand
of $S$  and hence, by  
\cite[Proposition 1.7]{MR0121392},
$(A,Q,\varphi, \vartheta )$ is separable over $R$.  
By Proposition \ref{threet}(iii) above,
the ring $R$ coincides with 
the center of $(A, Q, \varphi , \vartheta )$. 
\end{proof}

\begin{rema} 
Proposition \ref{tpf}(xi)
is related with  
\cite[Corollary 5.2 p.~13]{MR0311701} 
and with 
\cite[Theorem~4 p.~110]{MR0168605}.
\end{rema}

\begin{rema}
\label{specialAS}
Consider the special case where $(A,\sigma)$ is the ground ring $S$, 
endowed with 
the given $Q$-action $\kappaQ\colon Q \to \Aut(S)$, and
suppose that the group $Q$ is finite and that
$\kappaQ$ is injective so that $Q$ is a finite group of operators on $S$.
Then the above crossed product construction is classical,
and the assignment to a group extension 
$\mathrm e \colon \mathrm U(S)\rightarrowtail \Gamma 
\stackrel{\pi^{\Gamma}}\twoheadrightarrow Q$
of its associated crossed product algebra
$(S,Q,\mathrm e, \kappaQ\circ \pi^{\Gamma})$
yields the familiar homomorphism
\begin{equation}
\mathrm H^2(Q,\mathrm U(S))
\longrightarrow
\mathrm B(S|R)
\label{ordinarycp}
\end{equation}
into the subgroup $\mathrm B(S|R)$ of the Brauer group
$\mathrm B(R)$ of $R=S^Q$ that consists of the Brauer classes split by $S$.
When $S$ is, furthermore, 
a field, the homomorphism \eqref{ordinarycp}
is an isomorphism.
In that case,
the inverse of \eqref{ordinarycp} is induced by the assignment to 
a central simple $R$-algebra $A$ having $S$ as a maximal commutative subfield
of the group extension
\begin{equation}
\begin{CD}
\mathrm e_A\colon
0
@>>>
\mathrm U(S)
@>>>
N^{\mathrm U(A)}(S)
@>>>
Q
@>>>
1.
\end{CD}
\end{equation}
Here, similarly as before, 
$N^{\mathrm U(A)}(S)$ designates the normalizer
of $S$ in the group $\mathrm U(A)$ of invertible elements of $A$, 
and the canonical homomorphism
$N^{\mathrm U(A)}(S)\to Q$ is surjective, in view of the
Skolem-Noether theorem. Thus, when $S|R$ is a Galois extension
of general 
commutative rings with Galois group $Q$, given an Azumaya $R$-algebra
$A$ that contains $S$ as a maximal commutative subalgebra, the algebra
$A$ can be written as a crossed product algebra with respect to the $Q$-action
on $S$ if and only if 
the canonical homomorphism
$N^{\mathrm U(A)}(S)\to Q$ is surjective.
\end{rema}

\begin{rema}
Formally similar crossed products have been explored in the literature
in the framework of $\mathrm C^*$-algebras.
See \cref{dynamical}
for details.
\end{rema}

\section{Normal algebras with zero Teich\-m\"uller class}
\label{nazet}

As before, $S$ denotes a commutative ring and $\kappaQ\colon Q \to \Aut(S)$
an action of a group $Q$ on $S$.

\begin{thm}
\label{fouro}  
The Teich\-m\"uller class
of a $Q$-normal $S$-algebra $(A,\sigma )$
is zero if and only if, for $I = Q$, the $Q$-normal structure $\sigma_I$
on the matrix algebra $\mathrm M_I(A)$ is equivariant.
\end{thm}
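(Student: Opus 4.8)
The plan is to prove the two implications separately. The forward (``if'') direction is a short consequence of the matrix-algebra invariance of the Teich\-m\"uller class, while the reverse (``only if'') direction is the substantial one and rests on the crossed product construction of Section~\ref{three}.

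For the easy implication, I would suppose that $\sigma_I$ with $I=Q$ is equivariant, so that $(\mathrm M_I(A),\sigma_I)$ is a $Q$-equivariant $S$-algebra. Then Proposition~\ref{2.3.1} gives $[\mathrm e_{(\mathrm M_I(A),\sigma_I)}]=0$ in $\mathrm H^3(Q,\mathrm U(S))$, and since Proposition~\ref{2.4.2} yields the congruence $[\mathrm e_{(A,\sigma)}]=[\mathrm e_{(\mathrm M_I(A),\sigma_I)}]$, the Teich\-m\"uller class of $(A,\sigma)$ vanishes. I would record this first, as it is immediate.

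For the reverse implication, assume $[\mathrm e_{(A,\sigma)}]=0$. By the introductory discussion of Section~\ref{three}, the vanishing of the Teich\-m\"uller class furnishes a group extension $\mathrm e_Q\colon 1\to\mathrm U(A)\to\Gamma\to Q\to 1$ together with a morphism $(i,\vartheta)$ of crossed modules of the kind \eqref{morthr}, with $i$ injective, whose induced $Q$-normal structure $\sigma_\vartheta$ coincides with the given $\sigma$. I would then form the crossed product algebra $(A,Q,\mathrm e_Q,\vartheta)$; its underlying left $A$-module $\BbB$ is free of rank $\vert Q\vert$, so that ${}_A\End(\BbB)\cong\mathrm M_{\vert Q\vert}(A^{\mathrm{op}})$. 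By Proposition~\ref{tpf}(vii) the $Q$-normal structure $\sigma_{\sta}$ on ${}_A\End(\BbB)$ factors through the $Q$-equivariant structure \eqref{tauo} and is therefore equivariant, and by Proposition~\ref{tpf}(iv) this structure corresponds, under the above isomorphism, to the matrix structure on $\mathrm M_{\vert Q\vert}(A^{\mathrm{op}})$ induced by $\sigma_\vartheta=\sigma$. Thus the induced $Q$-normal structure on $\mathrm M_{\vert Q\vert}(A^{\mathrm{op}})$ is equivariant.

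It then remains to transport this conclusion from $\mathrm M_{\vert Q\vert}(A^{\mathrm{op}})$ back to $\mathrm M_I(A)$ with $I=Q$. Under the canonical identification $\mathrm M_{\vert Q\vert}(A)^{\mathrm{op}}\cong\mathrm M_{\vert Q\vert}(A^{\mathrm{op}})$, and using the compatibility of the opposite-algebra operation with the passage to matrix algebras (Proposition~\ref{2.4.1} together with Subsection~\ref{2.4.22}), the equivariant structure just obtained is precisely the opposite of $\sigma_I$ on $\mathrm M_I(A)$. Since the isomorphism $\Aut(B)\to\Aut(B^{\mathrm{op}})$ of Subsection~\ref{2.4} carries honest actions to honest actions, equivariance is preserved under passage to opposites; hence $\sigma_I$ on $\mathrm M_I(A)$ is itself equivariant, which is exactly what is wanted. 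I expect the main obstacle to be the bookkeeping in this last transfer---namely verifying that the equivariant structure delivered by the crossed product is exactly $\sigma_I$, and not some twist of it, once one passes through the opposite algebra---together with the preliminary check, drawn from Section~\ref{three}, that the homomorphism $\vartheta$ supplied by the vanishing of the class can indeed be chosen so that $\sigma_\vartheta=\sigma$.
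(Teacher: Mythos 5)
Your proof is correct and follows essentially the same route as the paper: sufficiency via Propositions~\ref{2.3.1} and \ref{2.4.2}, and necessity via the crossed product construction of Section~\ref{three} together with Proposition~\ref{tpf}. The only difference is that the paper sidesteps the final transfer step you flag as the main obstacle by applying the crossed product construction to the opposite algebra $(A^{\mathrm{op}},\sigma^{\mathrm{op}})$ (which also has zero Teich\-m\"uller class, by Proposition~\ref{2.4.1}), so that $\End_{A^{\mathrm{op}}}$ of the underlying module comes out directly as $\mathrm M_{\vert Q\vert}(A)$ carrying the structure $\sigma_{\vert Q\vert}$.
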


\begin{proof}
In view of Propositions \ref{2.3.1} and \ref{2.4.2},
the condition is sufficient.

To show that the condition is necessary, consider a $Q$-normal algebra
$(A,\sigma )$
having zero Teich\-m\"uller class;
then
 so has the opposite algebra $(A^{\mathrm{op}}, \sigma^{\mathrm{op}})$,
by Proposition \ref{2.4.1}. 
Consider the crossed module
$(\mathrm U(A^{\mathrm{op}}),B^{\sigma^{\mathrm{op}}},\partial)$ 
which defines the crossed 2-fold extension 
$\mathrm e_ {(A^{\mathrm{op}},\sigma^{\mathrm{op}})}$
associated to 
$(A^{\mathrm{op}}, \sigma ^{\mathrm{op}})$.
By 
\cite[\S 10 Theorem]{crossed}, there is a group extension
\begin{equation*}
\mathrm e \colon 1 \longrightarrow \mathrm U(A^{\mathrm{op}}) \stackrel{j}\longrightarrow\Gamma \longrightarrow Q \longrightarrow 1
\end{equation*}
 together with a
morphism
\begin{equation*}
(1,\hat \vartheta) \colon (\mathrm U(A^{\mathrm{op}}), \Gamma ,j) 
\longrightarrow 
(\mathrm U(A^{\mathrm{op}}), B^{\sigma^{\mathrm{op}}}, \partial)
\end{equation*}
of crossed modules inducing the identity map of $Q$.
Via the canonical homomorphism from
$B^{\sigma^{\mathrm{op}}}$ to $\Aut(A^{\mathrm{op}})$,
the morphism $(1,\hat \vartheta)$ of crossed modules yields a morphism
\begin{equation*}
(1,\vartheta) \colon (\mathrm U(A^{\mathrm{op}}), \Gamma , j) 
\longrightarrow (\mathrm U(A^{\mathrm{op}}), \Aut (A^{\mathrm{op}}), \partial )
\end{equation*}
of crossed modules that induces $\sigma^{\mathrm{op}}$. Then
$\End_{A^{\mathrm{op}}}(A^{\mathrm{op}},Q,\mathrm e , \vartheta) 
= \mathrm M_{\vert Q\vert}(A)$ 
and, by 
Proposition \ref{tpf}(vii), the $Q$-normal structure
$\sigma_{\vert Q\vert } \colon Q \to \Aut(\mathrm M_{\vert Q\vert}(A))$ is equivariant.
\end{proof}

When $S|R$ is a Galois extension of commutative rings 
with Galois group $Q$, Theorem~\ref{fouro} has an obvious
consequence to be phrased in terms of extensions of scalars, by
Galois descent, cf.
Propositions \ref{2.3.1}, 
\ref{tpf}(ix) and \ref{tpf}(x). 
We content ourselves with the following simplified
form which in the classical case comes down to the corresponding result 
of Teich\-m\"uller \cite{MR0002858}, 
see also Eilenberg-Mac Lane \cite{MR0025443}.

\begin{cor}
\label{cor1}
Suppose that $S|R$ is a Galois extension of commutative rings with Galois
group $Q$.
Then the Teich\-m\"uller class of a $Q$-normal $S$-algebra $(A, \sigma )$ is 
zero if and only if, for some matrix algebra $\mathrm M_I(A)$, the $Q$-normal structure
arises by scalar extension.
\end{cor}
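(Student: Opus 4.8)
The plan is to read off the corollary from Theorem~\ref{fouro} once one identifies, under the Galois hypothesis, the property of \emph{being equivariant} with the property of \emph{arising by scalar extension}. These two properties coincide precisely when $S|R$ is Galois with group $Q$: by the very construction in Subsection~\ref{twotwo}, scalar extension of a central $R$-algebra always produces an equivariant structure, so arising by scalar extension trivially forces equivariance; the converse implication—that every $Q$-equivariant $S$-algebra arises by scalar extension—is exactly Galois descent, cf. Subsection~\ref{galext}~(ii) and the statement recorded at the end of Subsection~\ref{twotwo}.

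For the \emph{necessity} direction, suppose $[\mathrm e_{(A,\sigma)}]=0$. Taking $I=Q$ in Theorem~\ref{fouro}, the induced $Q$-normal structure $\sigma_I$ on $\mathrm M_I(A)$ is equivariant, i.e. there is a genuine $Q$-action $\tau_I\colon Q \to \Aut(\mathrm M_I(A))$ lifting $\kappaQ$ whose class in $\Out(\mathrm M_I(A))$ is $\sigma_I$. Since $S|R$ is Galois with group $Q$, Galois descent applied to the $Q$-equivariant $S$-algebra $(\mathrm M_I(A),\tau_I)$ shows that it arises by scalar extension from the central $R$-algebra of its $Q$-fixed points; hence the $Q$-normal structure $\sigma_I$ on $\mathrm M_I(A)$ arises by scalar extension, which is the asserted conclusion for this particular $I$.

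For the \emph{sufficiency} direction, suppose that for some index set $I$ the $Q$-normal structure $\sigma_I$ on $\mathrm M_I(A)$ arises by scalar extension. Then $\sigma_I$ is equivariant, so Proposition~\ref{2.3.1} gives $[\mathrm e_{(\mathrm M_I(A),\sigma_I)}]=0$. On the other hand, the congruence produced by Proposition~\ref{2.4.2} for the obvious map $A \to \mathrm M_I(A)$ yields $[\mathrm e_{(A,\sigma)}]=[\mathrm e_{(\mathrm M_I(A),\sigma_I)}]$, and combining the two equalities gives $[\mathrm e_{(A,\sigma)}]=0$. This direction uses neither Theorem~\ref{fouro} nor the Galois hypothesis; it holds for arbitrary $I$, which is why the existential quantifier \lq\lq for some $\mathrm M_I(A)$\rq\rq\ in the statement is compatible with the specific choice $I=Q$ forced in the necessity direction.

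The argument is essentially formal, so I expect no serious obstacle; the only point requiring care is to invoke Galois descent in the correct direction (equivariant $\Rightarrow$ scalar extension) and to keep the two notions aligned, since over a general ring $S$ an equivariant structure need not arise by scalar extension—it is precisely the Galois assumption on $S|R$ that closes this gap.
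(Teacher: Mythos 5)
Your proof is correct and follows essentially the same route as the paper, which derives the corollary from Theorem~\ref{fouro} together with Galois descent (the identification, recorded at the end of Subsection~\ref{twotwo}, of equivariant structures with those arising by scalar extension when $S|R$ is Galois with group $Q$), and obtains the sufficiency direction from Propositions~\ref{2.3.1} and~\ref{2.4.2}. The only cosmetic difference is that the paper also points to Proposition~\ref{tpf}(ix) and (x) for the explicit scalar-extension description of the equivariant structure on the matrix algebra, whereas you invoke the general Galois-descent statement directly; this changes nothing of substance.
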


\begin{rema}
In the corollary one cannot in general assert that, if the Teich\-m\"uller class
of a $Q$-normal $S$-algebra $(A,\sigma )$ is zero, 
the $Q$-normal structure
$\sigma\colon Q \to \Out(A) $ itself comes
from scalar extension, see 
\cite[\S~14]{MR0025443}. 
Likewise,
the $Q$-normal structure $\sigma $ in Theorem~\ref{fouro}  not in general 
itself be $Q$-equivariant.
\end{rema}

The Teich\-m\"uller class of a $Q$-normal $S$-algebra $(A,\sigma)$
is the obstruction to the existence of a strong Deuring embedding 
relative to $\kappaQ\colon Q \to \Aut(S)$
of $A$ into an 
algebra $C$ over $R = S^Q$, in the following sense:

\begin{thm}
\label{4.2}
The
Teich\-m\"uller class 
of a $Q$-normal $S$-algebra $(A,\sigma )$
is zero if and only if $A$ admits a strong Deuring embedding
\[
(C, \chi \colon Q \to N^{\mathrm U(C)}(A)/\mathrm U(A))
\]
relative to $\kappaQ\colon Q \to \Aut(S)$
into an algebra $C$ over $R=S^Q$  that
induces the $Q$-normal structure $\sigma$ on $A$
(in the sense that 
the associated homomorphism 
{\rm \eqref{comp1}}
coincides with
$\sigma $, cf. 
Proposition {\rm \ref{twosixo}(v)}).
Hence a central $S$-algebra $A$ admits a strong
Deuring embedding relative to
$\kappaQ\colon Q \to \Aut(S)$
into some algebra $C$  over $R=S^Q$ 
if and only if $A$ admits
a $Q$-normal structure with zero Teich\-m\"uller class.
\end{thm}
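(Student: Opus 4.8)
The statement is an equivalence, and the plan is to treat the two directions by quite different means. The implication from the existence of a strong Deuring embedding inducing $\sigma$ to the vanishing of the Teichm\"uller class is already in hand: it is precisely Proposition~\ref{2.6.5}, so the substance of the theorem is the converse. The final \lq\lq Hence\rq\rq\ clause will then follow formally, since by Proposition~\ref{twosixo}(iv) a strong Deuring embedding always induces \emph{some} $Q$-normal structure on $A$, whose Teichm\"uller class vanishes again by Proposition~\ref{2.6.5}.

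So suppose $[\mathrm e_{(A,\sigma)}]=0\in\mathrm H^3(Q,\mathrm U(S))$. The crossed module $(\mathrm U(A),B^{\sigma},\partial^{\sigma})$ underlying the Teichm\"uller complex \eqref{pb1} then has vanishing obstruction class, and I would apply \cite[\S 10 Theorem]{crossed} exactly as in the proof of Theorem~\ref{fouro}: this yields a group extension $\mathrm e\colon 1\to\mathrm U(A)\to\Gamma\to Q\to 1$, with injection $j$ and projection $\pi$, together with a morphism $(1,\hat\vartheta)\colon(\mathrm U(A),\Gamma,j)\to(\mathrm U(A),B^{\sigma},\partial^{\sigma})$ of crossed modules inducing the identity on $Q$. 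Composing $\hat\vartheta$ with the canonical homomorphism $B^{\sigma}\to\Aut(A)$ produces a morphism $(1,\vartheta)\colon(\mathrm U(A),\Gamma,j)\to(\mathrm U(A),\Aut(A),\partial)$ inducing $\sigma$. This is exactly the data \eqref{getw}--\eqref{morthr} feeding the first crossed product construction of Subsection~\ref{first}, and I would take $C=(A,Q,\mathrm e,\vartheta)$, an $R$-algebra containing $A$ as a subalgebra.

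It remains to manufacture $\chi$ and verify the strong Deuring embedding axioms. By Proposition~\ref{threet}(iv), $\Gamma$ embeds canonically into $N^{\mathrm U(C)}(A)$ with $xax^{-1}={}^{\vartheta(x)}a$ for $a\in A=Av_1$ and $x\in\Gamma$. Since the $v_q$ form a left $A$-basis of $C$ (Proposition~\ref{3.1}), one checks that $\Gamma\cap\mathrm U(A)=j(\mathrm U(A))=\ker\pi$, so the composite $\Gamma\to N^{\mathrm U(C)}(A)\to N^{\mathrm U(C)}(A)/\mathrm U(A)$ annihilates $\ker\pi$ and descends to the sought homomorphism $\chi\colon Q\to N^{\mathrm U(C)}(A)/\mathrm U(A)$. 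Because $\vartheta$ induces $\sigma$ and $\sigma$ lifts $\kappaQ$, the identity $xax^{-1}={}^{\vartheta(x)}a$ shows at once that $\eta_{\sharp}\circ\chi=\sigma$ and that the combined map $\eta_{\flat}\circ\chi$ equals $\kappaQ$; in the language of Proposition~\ref{twosixo}(iv),(v) this says precisely that $(C,\chi)$ is a strong Deuring embedding inducing the homomorphism \eqref{comp1} equal to $\sigma$.

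I expect the genuine cohomological content to be entirely absorbed by \cite[\S 10]{crossed}, so that the remaining work is bookkeeping. The step I would watch most carefully is the identification $\Gamma\cap\mathrm U(A)=\ker\pi$ (needed for $\chi$ to be defined on $Q$ rather than merely on $\Gamma$) together with the matching of the three structure maps --- the inclusion $j(\mathrm U(A))\subseteq\mathrm U(C)$, conjugation in $C$ against $\vartheta$, and the restriction $\eta_{\flat}$ against $\kappaQ$ --- each of which is routine but must be tracked simultaneously in order to conclude that the induced structure is $\sigma$ itself, and not merely a $Q$-normal structure sharing its vanishing Teichm\"uller class.
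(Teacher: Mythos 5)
Your proposal is correct and follows essentially the same route as the paper: the forward direction is Proposition~\ref{2.6.5}, and the converse realizes the vanishing of the class via a group extension of $Q$ by $\mathrm U(A)$ with a crossed-module morphism inducing $\sigma$ (the paper cites this as the discussion at the start of Section~\ref{three}, resting on the same appeal to \cite[\S 10]{crossed}), then takes $C=(A,Q,\mathrm e,\vartheta)$ and extracts $\chi$ from the embedding $\Gamma\to N^{\mathrm U(C)}(A)$ of Proposition~\ref{threet}(iv). The extra bookkeeping you flag (that $\ker\pi$ lands in $\mathrm U(A)$ so $\chi$ descends to $Q$, and that the induced structure is $\sigma$ itself) is exactly what the paper leaves implicit via Proposition~\ref{twosixo}(v).
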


\begin{proof}
Proposition \ref{2.6.5} entails that a $Q$-normal structure arising
from a strong Deuring embedding (relative to
the structure map $\kappaQ\colon Q \to \Aut(S)$)
has zero Teich\-m\"uller class. 

Conversely,
consider a  $Q$-normal $S$-algebra
$(A, \sigma )$ with zero Teich\-m\"uller class.
Since the Teich\-m\"uller class is zero,
there is a group
extension
\begin{equation}
\mathrm e \colon 1 \longrightarrow \mathrm U(A) \stackrel{j}\longrightarrow \Gamma \stackrel{\pi}
\longrightarrow Q \longrightarrow 1,
\end{equation}
cf. \eqref{getw} above,
together with a morphism 
\begin{equation}
(1,\vartheta) \colon (\mathrm U(A),\Gamma , j) \longrightarrow (\mathrm U(A), \Aut (A), \partial )
\end{equation}
of crossed modules that induces $\sigma $,
cf.
\eqref{morthr} above.
The embedding of $A$ into
the crossed product algebra  $C = (A,Q,\mathrm e,\vartheta)$,
cf. \cref{threet}(i),
together with the
homomorphism $\chi \colon Q \to N^{\mathrm U(C)}(A)/\mathrm U(A)$
induced by the injection $\Gamma \to N^{\mathrm U(C)}(A)$, 
cf. \cref{threet}(iv),
is a strong Deuring embedding of $A$ into $C$ relative to 
$\kappaQ\colon Q \to \Aut(S)$ that induces the
$Q$-normal structure $\sigma$ on $A$.
\end{proof}

\begin{rema}
Under the circumstances
of Theorem \ref{4.2}, since
$\Gamma$ injects into $N^{\mathrm U(C)}(A)$, the structure map
$\chi \colon Q \to N^{\mathrm U(C)}(A)/\mathrm U(A)$
is
injective as well.
\end{rema}

Theorem \ref{4.2} generalizes the corresponding results
of Teich\-m\"uller \cite{MR0002858}, in view of the following.

\begin{cor}
\label{cor2}
Let $S|R$ be a Galois extension of commutative rings with Galois group $Q$, and let
$(A,\sigma )$ be a $Q$-normal $S$-algebra.
Then the Teich\-m\"uller class of $(A,\sigma )$ is zero if and only
if there is a central $R$-algebra $C$ which contains $A$ in such a way
that

\noindent
{\rm (i)} the centralizer 
of $S$ in $C$ coincides with $A$, and 

\noindent
{\rm (ii)} each automorphism of $S|R$, i.e., each member $q$ of $Q$, 
extends to an inner automorphism $\alpha $ of $C$ which (in view of {\rm (i)})
maps $A$ to itself, in such a way that 
the class of $\alpha | A$ in $\Out (A)$
coincides with $\sigma (q)$. Moreover, if $A$ is an Azumaya $S$-algebra then $C$ may
be taken to be an Azumaya $R$-algebra.
\end{cor}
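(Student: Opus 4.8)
The plan is to deduce both implications from Theorem \ref{4.2}, which already equates the vanishing of the Teich\-m\"uller class with the existence of a strong Deuring embedding, and to use the Galois hypothesis only to translate the abstract strong-Deuring-embedding data into the concrete centralizer conditions (i) and (ii). No new mechanism is needed; the work is in matching up the structures correctly.

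For the forward direction I would start from the crossed product algebra $C=(A,Q,\mathrm e,\vartheta)$ produced in the proof of Theorem \ref{4.2} from a group extension $\mathrm e$ and a morphism of crossed modules inducing $\sigma$. Since $S|R$ is a Galois extension, Proposition \ref{threet}(iii) shows that $R$ is the center of $C$, so $C$ is a central $R$-algebra, and Proposition \ref{threet}(ii) shows that $A$ is exactly the centralizer of $S$ in $C$, giving (i). Proposition \ref{threet}(iv) then shows that each $\kappaQ(q)$ extends to an inner automorphism of $C$ normalizing $A$, and by construction of the strong Deuring embedding the class in $\Out(A)$ of this automorphism restricted to $A$ is $\sigma(q)$, giving (ii). Finally, if $A$ is Azumaya, Proposition \ref{threet}(xi) shows that $C$ is Azumaya over $R$, which yields the \lq\lq Moreover\rq\rq\ clause.

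For the converse I would suppose a central $R$-algebra $C\supseteq A$ satisfies (i) and (ii). Condition (i) says $A$ is the centralizer of $S$ in $C$, and condition (ii) says in particular that the embedding is a Deuring embedding relative to $\kappaQ$. By Proposition \ref{deucent} the data then determine a unique homomorphism $\chi\colon Q\to N^{\mathrm U(C)}(A)/\mathrm U(A)$ turning $A\subseteq C$ into a strong Deuring embedding; here the injectivity of $\eta_{\flat}$ from Proposition \ref{twosixo}(iii), which holds precisely because $A$ is the centralizer of $S$, is what forces the assignment $q\mapsto \chi(q)$ to be a group homomorphism rather than merely a set map. The matching clause in condition (ii) then identifies the induced $Q$-normal structure $\eta_{\sharp}\circ\chi$ with the prescribed $\sigma$, so Proposition \ref{2.6.6} (equivalently the easy half of Theorem \ref{4.2} via Proposition \ref{2.6.5}) shows that the Teich\-m\"uller class of $(A,\sigma)$ is zero.

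The step needing the most care is the bookkeeping of outer-automorphism classes: one must verify that the $Q$-normal structure induced by the crossed product construction in the forward direction, and the one induced by $\chi$ via $\eta_{\sharp}$ in the converse, agree on the nose with the \emph{given} $\sigma$, not merely with some $Q$-normal structure on $A$. This is where condition (ii) and the uniqueness statement in Proposition \ref{deucent} do the essential work; once that matching is secured, the Galois hypothesis, through Proposition \ref{threet}, supplies the centralizer and center statements and the Azumaya refinement automatically.
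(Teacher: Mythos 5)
Your proof is correct and follows essentially the same route as the paper: the ``if'' direction via Propositions \ref{deucent} and \ref{2.6.6}, and the ``only if'' direction via the crossed product algebra from the proof of Theorem \ref{4.2} together with Proposition \ref{threet} and the Azumaya statement, with the same attention to matching the induced $Q$-normal structure with the given $\sigma$. The only slip is a citation: the Azumaya refinement is Proposition \ref{tpf}(xi), not \ref{threet}(xi) (the latter has only four parts).
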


\begin{proof}
The \lq\lq if\rq\rq\ part of the first assertion was given in 
Proposition \ref{2.6.6}. On the other
hand, given a $Q$-normal $S$-algebra 
$(A,\sigma )$
with zero Teich\-m\"uller
class, the algebra $C$ in the proof of Theorem \ref{4.2} will have the desired
properties, by Propositions \ref{threet}
and \ref{tpf}(xi). 
\end{proof}

In the classical case considered by Deuring \cite{0014.20001} and
Teich\-m\"uller \cite{MR0002858} there was no need to spell out condition (ii) in
Corollary \ref{cor2}, in view of the Skolem-Noether theorem.

The crossed product construction yields sort of a \lq\lq generic\rq\rq\  
solution of
the Deuring embedding problem, provided the solution exists, that is, the 
obstruction vanishes:

\begin{thm}
\label{4.3}
Let $(A,\sigma )$ be a $Q$-normal $S$-algebra.
Suppose that $A$ admits a strong Deuring embedding 
$(C, \chi \colon Q \to N^{\mathrm U(C)}(A)/\mathrm U(A))$ 
relative to the action
$\kappaQ\colon Q \to \Aut(S)$ 
of $Q$ on $S$
into an algebra
$C$ over $R=S^Q$ 
that induces
the $Q$-normal structure $\sigma $ on $A$
in the sense that 
the composite of $\chi$ with the homomorphism 
$\eta_{\sharp}\colon N^{\mathrm U(C)}(A)/\mathrm U(A) \to \Out(A)$
induced by conjugation in the normalizer 
$N^{\mathrm U(C)}(A)$ of $A$ in the group $\mathrm U(C)$ of invertible 
elements of $C$ 
coincides with the $Q$-normal structure
$\sigma \colon Q \to \Out(A)$ on $A$, cf. Theorem {\rm \ref{4.2}} above.
Then the
strong Deuring embedding of $A$ into $C$ determines a group extension
\begin{equation*}\mathrm e \colon 1 \longrightarrow \mathrm U(A) 
\stackrel{j} \longrightarrow \Gamma \longrightarrow Q \longrightarrow 1,
\end{equation*}
together with
a morphism
\begin{equation*}(1,\vartheta) \colon (\mathrm U(A), \Gamma , j) \longrightarrow (\mathrm U(A), \Aut (A), \partial)\end{equation*}
of crossed modules that induces $\sigma $, and 
the data induce
a morphism
\begin{equation}
(A,Q,\mathrm e ,\vartheta ) \longrightarrow C
\label{mor9}
\end{equation}
of algebras over $R=S^Q$ which is compatible with the strong 
Deuring embeddings.
\end{thm}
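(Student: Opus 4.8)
The plan is to obtain the first assertion as a direct instance of Proposition~\ref{twosixo}(v), and then to produce the algebra morphism \eqref{mor9} by mapping the twisted group ring that underlies the crossed product into $C$ via the two pieces of data at hand: the embedding $A\hookrightarrow C$ and the first projection $\Gamma\to N^{\mathrm U(C)}(A)\subseteq\mathrm U(C)$.

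First I would apply Proposition~\ref{twosixo}(v) to the given lift $\chi$. This furnishes the fiber product group $\Gamma=N^{\mathrm U(C)}(A)\times_{N^{\mathrm U(C)}(A)/\mathrm U(A)}Q$, the extension $\mathrm e$ with injection $j\colon\mathrm U(A)\to\Gamma$, and the morphism $(1,\vartheta)\colon(\mathrm U(A),\Gamma,j)\to(\mathrm U(A),\Aut(A),\partial)$ of crossed modules, where $\vartheta=\eta\circ\rho$ is conjugation in $C$ and $\rho\colon\Gamma\to N^{\mathrm U(C)}(A)$, $(u,q)\mapsto u$, denotes the first projection. By the final clause of Proposition~\ref{twosixo}(v) this morphism induces the $Q$-normal structure \eqref{comp1}, which by hypothesis equals $\sigma$; so the group extension and crossed-module morphism demanded by the statement are exactly these.

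Next I would form $(A,Q,\mathrm e,\vartheta)=A^t\Gamma/I$, where $I$ is generated by the elements identifying the basis element $y\in\mathrm U(A)\subseteq\Gamma$ with the coefficient $y\in\mathrm U(A)\subseteq A$, and define $\Phi\colon A^t\Gamma\to C$ on the free generators by $\Phi(a\cdot x)=a\,\rho(x)$ ($a\in A$, $x\in\Gamma$), using the embedding $A\hookrightarrow C$ on coefficients. Since $\rho$ is a group homomorphism (a projection of a fiber product) and, for $x\in\Gamma$ and $a\in A$, one has $\rho(x)\,a=\eta(\rho(x))(a)\,\rho(x)=({}^{\vartheta(x)}a)\,\rho(x)$ in $C$—which is precisely the twisting relation governing multiplication in $A^t\Gamma$—the map $\Phi$ is an algebra homomorphism. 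Because $\rho\circ j$ is the inclusion $\mathrm U(A)\hookrightarrow C$, each generator of $I$ maps to $\rho(j(y))-y=y-y=0$, so $\Phi$ descends to an $R$-algebra morphism $\bar\Phi\colon(A,Q,\mathrm e,\vartheta)\to C$, the $R$-linearity being automatic as $R$ lies centrally in both algebras.

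It remains to record compatibility with the strong Deuring embeddings. Here I would note that $\bar\Phi$ restricts on the subalgebra $A\subseteq(A,Q,\mathrm e,\vartheta)$ of Proposition~\ref{threet}(i) to $a\mapsto a\,\rho(1)=a$, i.e. to the given embedding $A\hookrightarrow C$, and that it carries the canonical copy of $\Gamma$ inside $N^{\mathrm U((A,Q,\mathrm e,\vartheta))}(A)$ of Proposition~\ref{threet}(iv) onto $\rho(\Gamma)\subseteq N^{\mathrm U(C)}(A)$; passing to quotients modulo $\mathrm U(A)$ then returns $\chi$, exhibiting $\bar\Phi$ as a morphism of strong Deuring embeddings. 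The hard part, to the extent there is one, is purely bookkeeping: identifying $\vartheta(x)$ with conjugation by $\rho(x)$—which is built into the two equivalent descriptions of $\vartheta$ in Proposition~\ref{twosixo}(v)—and pinning down the precise sense of \lq\lq compatible\rq\rq, which I would take to be exactly the morphism of strong Deuring embeddings just described.
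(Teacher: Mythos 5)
Your proposal is correct and follows essentially the same route as the paper's proof: the extension $\mathrm e$ and the morphism $(1,\vartheta)$ are exactly those of Proposition~\ref{twosixo}(v) (the fiber product $\Gamma = N^{\mathrm U(C)}(A)\times_{N^{\mathrm U(C)}(A)/\mathrm U(A)}Q$), and the algebra morphism \eqref{mor9} is induced by the canonical map $A^t\Gamma\to C$, with compatibility checked by restricting to normalizers and passing to quotients mod $\mathrm U(A)$. You merely spell out in more detail the verification that the canonical map is multiplicative and kills the ideal $\langle y-j(y)\rangle$, which the paper leaves implicit.
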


\begin{proof}
Let $\mathrm e$ be the group extension \eqref{ext2}
and $(1,\vartheta)$ 
the morphism \eqref{mor1}
of crossed modules.
Recall that $N^{\mathrm U(C)}(A)$ denotes the normalizer
of $A$ in the group $\mathrm U(C)$ of invertible elements
of $C$.
By construction, the group $\Gamma$ is the fiber product group
$\Gamma = N^{\mathrm U(C)}(A) 
\times_{N^{\mathrm U(C)}(A)/\mathrm U(A)} Q$, the requisite homomorphism
from $Q$ to $N^{\mathrm U(C)}(A)/\mathrm U(A)$ being the strong
Deuring embedding 
structure
map $\chi \colon Q \to N^{\mathrm U(C)}(A)/\mathrm U(A)$.

The canonical morphism
$A^t\Gamma \to C$
of algebras induces the morphism
$(A,Q,\mathrm e ,\vartheta ) \to C$ 
of algebras we seek.
In particular, this morphism of algebras restricts to a homomorphism
$\mathrm U(A,Q,\mathrm e ,\vartheta ) \to \mathrm U(C)$
between the groups of invertible elements, and this homomorphism, in turn,
restricts to a homomorphism
\[
N^{\mathrm U(A,Q,\mathrm e ,\vartheta )}(A) \longrightarrow 
N^{\mathrm U(C)}(A)
\]
from the normalizer 
$N^{\mathrm U(A,Q,\mathrm e ,\vartheta )}(A)$
of $A$ in  $\mathrm U(A,Q,\mathrm e ,\vartheta )$
to the normalizer $N^{\mathrm U(C)}(A)$ of $A$ in 
$\mathrm U(C)$.
The composite
\begin{equation}
\begin{CD}
Q @>>> N^{\mathrm U(A,Q,\mathrm e ,\vartheta )}(A)/\mathrm U(A)
@>>> N^{\mathrm U(C)}(A)/\mathrm U(A)
\end{CD}
\end{equation}
of the strong Deuring embedding structure map
with respect to
$(A,Q,\mathrm e ,\vartheta )$
and the induced homomorphism
$
N^{\mathrm U(A,Q,\mathrm e ,\vartheta )}(A)/\mathrm U(A)
\to N^{\mathrm U(C)}(A)/\mathrm U(A)
$
yields the strong Deuring embedding structure map
$\chi$.
\end{proof}

\section{Induced normal and equivariant structures}
\label{eight}

As before, $S$ denotes a commutative ring
and $\kappaQ\colon Q \to \Aut(S)$ an action of a group $Q$ on $S$, 
and let $R=S^Q$.
Recall that a {\em split algebra} over $S$ is an algebra which
is isomorphic to an algebra of endomorphisms of some faithful
$S$-module $M$. If $S$ is a field, any split 
algebra admits an obvious $Q$-equivariant structure. 
However, if $S$ is an arbitrary ring and $M$ a faithful $S$-module,
some more structure on $M$ is necessary 
to guarantee the existence
of a  
$Q$-equivariant
structure on $A = \End_S(M)$
 or at least of a  $Q$-normal structure
as we now explain.

\subsection{Induced $Q$-normal structures}
\label{indqn}

Consider 
an $S$-module $M$. Suppose that $M$ admits an 
$S^t\Gamma $-module structure for some group $\Gamma $ which maps onto
$Q$ in such a way that $\Gamma $ acts on the coefficients from 
$S$ via the projection
 $\pi \colon \Gamma \to Q$, that is to say, $\Gamma $ acts on
$M$ by semi-linear transformations in the sense that
\begin{equation}
{}^x (sy) = 
({}^{\pi (x)}s) ({}^x y),\  x \in \Gamma, \, s\in S, \, y\in M.
\label{semil1}
\end{equation}
The group $\mathrm U(\End_S(M))$ of invertible elements of 
$\End_S(M)$ coincides with the group $\Aut_S(M)$.
The action of $\Gamma$ on $M$ 
restricts to 
an ordinary representation $\alpha\colon 
\ker(\pi) \to \Aut_S(M)$ of $\ker(\pi)$ 
on $M$ by $S$-linear transformations.
The inclusion
$i \colon \ker(\pi) \to \Gamma $ and the action of
$\Gamma$  on $\ker(\pi)$ constitute a crossed module, and
the action of $\Gamma $ on $M$ induces an action
${\beta\colon \Gamma \to \Aut(\End_S(M))}$ of $\Gamma$ on 
$\End_S(M)$ in such a way that
\begin{equation}
(\alpha , \beta ) \colon (\ker(\pi), \Gamma, i)
 \longrightarrow (\mathrm U(\End_S(M)),\Aut (\End_S(M)), \partial) 
\label{mor11}
\end{equation}
is a morphism of crossed modules; 
this morphism of crossed modules induces
a $Q$-normal structure
$\sigma= \sigma_{(\alpha , \beta )}\colon Q \to \Out(\End_S(M))$ 
on $\End_S(M)$, and 
we refer
to such a structure as
an {\em induced\/} $Q$-{\em normal structure\/} on $\End_S(M)$. 
Accordingly we define an {\em induced $Q$-normal split algebra\/}
to be a $Q$-normal algebra of the kind $(\End_S(M),\sigma) $
for some faithful $S$-module $M$, where $\sigma$ is an induced
$Q$-normal structure on $\End_S(M)$.

Let $M$ be an $S$-module. 
Define the subgroup $\Aut(M,Q)$
of $\Aut_R(M)\times Q$ by
\begin{equation}
\Aut(M,Q) = 
\{(\alpha,x);\  
 \alpha (sy) = {}^x\!s\,\alpha(y),\,s\in S, \,y\in M\}
\subseteq \Aut_R(M)\times Q,
\label{eightthirtythree}
\end{equation}
and let
$\pi^{\substack{\mbox{\tiny{$\Aut(M,Q)$}}}}\colon \Aut(M,Q) \to Q$
denote the obvious homomorphism.
The following is immediate, and we spell it out for later reference.

 \begin{prop}
\label{8.3} {\rm (i)} The homomorphism
$\pi^{\substack{\mbox{\tiny{$\Aut(M,Q)$}}}}\colon \Aut(M,Q) \to Q$
has $\Aut_S(M)$ as its kernel.

\noindent
{\rm (ii)}
The homomorphism $\pi^{\substack{\mbox{\tiny{$\Aut(M,Q)$}}}}$ 
is surjective if and only if
$M$ admits an 
$S^t\Gamma $-module structure for some group $\Gamma $ which maps onto
$Q$ in such a way that $\Gamma $ acts on the coefficients from 
$S$ via the projection
 $\pi \colon \Gamma \to Q$, that is to say, $\Gamma $ acts on
$M$ by semi-linear transformations in the sense that
{\rm \eqref{semil1}} holds.
If this happens to be the case, the group $\Aut(M,Q)$
fits into a group extension
\begin{equation}
1
\longrightarrow
\Aut_S(M) 
\longrightarrow
\Aut(M,Q)
\stackrel{\pi^{\substack{\mbox{\tiny{$\Aut(M,Q)$}}}}}
\longrightarrow
Q
\longrightarrow
1 .
\label{extseventyone}
\end{equation}

\noindent
{\rm (iii)} Suppose that 
the homomorphism $\pi^{\substack{\mbox{\tiny{$\Aut(M,Q)$}}}}$ 
is surjective. 
Then the group extension {\rm \eqref{extseventyone}} splits if an only 
if $M$ admits an  $S^tQ$-module structure, and the 
$S^tQ$-module structures on $M$ are in bijective correspondence with
the sections 
$Q \to
\Aut(M,Q)
$
for
$\pi^{\substack{\mbox{\tiny{$\Aut(M,Q)$}}}}$.

\noindent
{\rm (iv)} In particular, a free $S$-module $M$ admits 
an  $S^tQ$-module structure and, if $M$ has finite rank,
the $S^tQ$-module structures on $M$ are parametrized by $S$-bases. 

\noindent
{\rm (v)} Suppose that $M$ is a free $S$-module of finite rank.
Then every $Q$-equivariant structure on
$\End_S(M)$, necessarily an induced one, arises from
an  $S^tQ$-module structure on $M$,
and
the $Q$-equivariant structures 
on the central $S$-algebra $\End_S(M)$
are parametrized by classes of
$S^tQ$-module structures 
$s\colon Q \to
\Aut(M,Q)$ (sections for $\pi^{\substack{\mbox{\tiny{$\Aut(M,Q)$}}}}$)
 on $M$; indeed, two 
 sections $s_1,s_2\colon Q \to \Aut(M,Q)$
for $\pi^{\substack{\mbox{\tiny{$\Aut(M,Q)$}}}}$
induce the same
$Q$-equivariant
structure on the $S$-algebra $\End_S(M)$ if and only if,
relative to the obvious injection $\mathrm U(S) \to \Aut_S(M)$,
the two sections $s_1$ and $s_2$ differ by a derivation 
$Q \to \mathrm U(S)$.
Furthermore, $\End_S(M)$ acquires a unique induced
$Q$-normal
structure
$\sigma\colon Q \to \Out(\End_S(M),Q)$, necessarily an equivariant one.
\end{prop}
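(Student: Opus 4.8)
The plan is to dispatch parts (i)--(iv) by unwinding the definitions and to concentrate the real work in part (v). For (i), I would observe that the kernel of $\pi^{\substack{\mbox{\tiny{$\Aut(M,Q)$}}}}$ consists of the pairs $(\alpha,e)$ with $e\in Q$ the neutral element, and that for such a pair the defining identity in \eqref{eightthirtythree} reads $\alpha(sy)=s\,\alpha(y)$, i.e.\ $\alpha\in\Aut_S(M)$; the assignment $(\alpha,e)\mapsto\alpha$ is then the asserted isomorphism. For (ii), surjectivity of $\pi^{\substack{\mbox{\tiny{$\Aut(M,Q)$}}}}$ means precisely that every $x\in Q$ is the grade of some $R$-linear automorphism acting $\kappaQ(x)$-semilinearly; given such surjectivity I would take $\Gamma=\Aut(M,Q)$ itself, acting on $M$ through its first coordinate, which is tautologically a semilinear action in the sense of \eqref{semil1} with projection $\pi^{\substack{\mbox{\tiny{$\Aut(M,Q)$}}}}$, and conversely any semilinear $S^t\Gamma$-action produces the required automorphisms by letting each element of $\Gamma$ act on $M$. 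Combined with (i), this yields the extension \eqref{extseventyone}.

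For (iii), I would note that a splitting of \eqref{extseventyone} is by definition a homomorphism $Q\to\Aut(M,Q)$ over $Q$, that is, a semilinear $Q$-action, which is exactly an $S^tQ$-module structure; hence sections and structures correspond bijectively. Part (iv) then follows because a free module carries the standard structure in which a chosen basis is declared $Q$-fixed while coefficients are moved by $\kappaQ$, so the relevant section exists, and for finite rank one simply reads off the semilinear action relative to a fixed basis.

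The substance is in (v), and here the plan is first to produce the distinguished induced structure and then to classify. Fixing a basis gives, via (iv) and the morphism of crossed modules \eqref{mor11}, a semilinear action $\rho_0\colon Q\to\Aut_R(M)$ and hence the equivariant structure $\tau_0$ obtained by conjugation, together with its class $\sigma_0$ in $\Out$. To compare two sections $s_1,s_2$ I would write their semilinear parts $\rho_1,\rho_2$ and set $d(x)=\rho_1(x)\rho_2(x)^{-1}\in\Aut_S(M)$; a short computation shows $d$ is a crossed homomorphism into $\Aut_S(M)$, and the two induced conjugation actions on $\End_S(M)$ agree if and only if each $d(x)$ centralizes $\End_S(M)$, hence lies in the scalars $\mathrm U(S)\subseteq\Aut_S(M)$, which is precisely the condition that $s_1$ and $s_2$ differ by a derivation $Q\to\mathrm U(S)$. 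Passing to $\Out$ then annihilates all inner ambiguity, giving the uniqueness of the induced $Q$-normal structure asserted in the last sentence.

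The hard part will be the claim that \emph{every} $Q$-equivariant structure on $\End_S(M)$ is induced, i.e.\ realised by a semilinear $Q$-action on $M$. Given an equivariant $\tau$, for each $x$ the composite $\psi(x)=\tau(x)\tau_0(x)^{-1}$ is an $S$-algebra automorphism of $\End_S(M)$, and realisability of $\tau(x)$ by a semilinear map is equivalent to $\psi(x)$ being inner. By the Skolem--Noether/Rosenberg--Zelinsky description recalled in Proposition~\ref{skolnoet}, the obstruction lives in $\Out(\End_S(M)|S)$, which embeds into $\Pic(S)$ as the classes $[J]$ with $J\otimes M\cong M$. I expect this Picard-group obstruction to be the main difficulty: one must argue that for free $M$ each $\psi(x)$ is inner so that $\tau$ lifts, and this is exactly the point at which the hypothesis that $M$ be \emph{free} of finite rank (rather than merely faithfully projective) should enter. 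Once that is secured, the lifted semilinear maps assemble into an $S^tQ$-module structure inducing $\tau$, and the classification of the resulting structures is governed by the comparison of sections carried out above.
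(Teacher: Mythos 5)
Your handling of (i)--(iv) and of the comparison of two sections in (v) is correct and coincides with what the paper intends (the paper records no details for these parts, declaring the proposition ``immediate'' apart from a sketch for (v)): the kernel computation, the tautological choice $\Gamma=\Aut(M,Q)$ for the converse in (ii), the identification of sections with $S^tQ$-module structures, and the observation that $d(x)=\rho_1(x)\rho_2(x)^{-1}$ is a crossed homomorphism into $\Aut_S(M)$ landing in $\mathrm U(S)$ exactly when the two conjugation actions agree (since the centralizer of $\End_S(M)$ in itself is $S$) are all sound.

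The substance of (v) is, however, not established, and the gap sits exactly where you place it. You reduce the claim that every equivariant $\tau$ is realised on $M$ to the innerness of the $S$-algebra automorphisms $\psi(x)=\tau(x)\tau_0(x)^{-1}$, and you correctly identify the obstruction group as $\Out(\End_S(M)|S)\cong\{[J]\in\Pic(S):J\otimes M\cong M\}$ via Proposition~\ref{skolnoet} --- but you then write only that ``one must argue'' this vanishes. That is precisely the single assertion on which the paper's own proof rests: it states that for $M$ free of finite rank the map $\beta\colon\Pic(S)\to\Pic(\End_S(M))$ of Proposition~\ref{skolnoet} is an isomorphism, so that $\Out(\End_S(M)|S)=\ker\beta$ is trivial and every $S$-linear automorphism of $\End_S(M)$ is inner. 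To close your argument you must therefore prove that $J\otimes S^n\cong S^n$ (equivalently $J^{\oplus n}\cong S^{\oplus n}$) forces $[J]=[S]$, and you should be aware that this is not a formality: taking top exterior powers only yields $J^{\otimes n}\cong S$, and over a Dedekind domain the subgroup $\{[J]:J^{\oplus n}\cong S^{\oplus n}\}$ is the full $n$-torsion of the class group, so the freeness hypothesis does not dispose of the kernel of $\beta$ by any evident general argument. A second, independent gap is your closing sentence that ``the lifted semilinear maps assemble into an $S^tQ$-module structure'': each lift $\rho(x)$ is determined only up to a unit of $S$, so the family $(\rho(x))_x$ is a homomorphism $Q\to\Aut(M,Q)$ only up to a $2$-cocycle with values in $\mathrm U(S)$; producing an actual $S^tQ$-module structure requires this class in $\mathrm H^2(Q,\mathrm U(S))$ to vanish (this is exactly the splitting of the extension $\mathrm e^{M,\tau}$ of Proposition~\ref{8.0}), and nothing in your argument addresses it --- compare the familiar situation of a projective representation $Q\to\mathrm{PGL}_n$ that does not lift to $\mathrm{GL}_n$. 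Both points must be confronted before (v) can be considered proved.
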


\begin{proof} We sketch an argument for statement (v).
When $M$ is free of finite rank,
the homomorphism
$\beta\colon \Pic(S) \to \Pic(\End_S(M))$
in the generalized Skolem-Noether theorem, i.~e.,
in Proposition \ref{skolnoet} above,
is an isomorphism and hence every $S$-linear automorphism of
$\End_S(M)$ is an inner automorphism. This observation implies the
assertion. We leave the details to the reader.
\end{proof}

\begin{rema} Proposition \ref{8.3}(ii) shows that, in the definition
of an induced $Q$-normal structure on a split algebra
$\End_S(M)$ over some faithful $S$-module $M$, we may take the group
$\Gamma$ to be the group
$\Aut(M,Q)$.
\end{rema}

\begin{prop}
\label{8.2}
Given an $S$-module $M$,
the Teich\-m\"uller class $[\mathrm e_{(A,\sigma)}]\in \mathrm H^3(Q,\mathrm U(S))$
of an induced $Q$-normal structure
$\sigma\colon Q \to \Out(A)$ on $A=\End_S(M)$ is zero.
\end{prop}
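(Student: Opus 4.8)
The plan is to reduce the vanishing of $[\mathrm e_{(A,\sigma)}]$ to the criterion recorded in the introduction to Section~\ref{three}: for a $Q$-normal structure $\sigma\colon Q\to\Out(A)$, the Teich\-m\"uller class $[\mathrm e_{(A,\sigma)}]\in\mathrm H^3(Q,\mathrm U(S))$ vanishes precisely when there exists a group extension of $Q$ by $\mathrm U(A)$ of the kind \eqref{getw}, with $\Egamma=\mathrm U(A)$, together with a morphism of crossed modules of the kind \eqref{morthr} having injective first component and inducing $\sigma$ (cf.\ the proof of Proposition~\ref{2.6.5} and Remark~\ref{rema1}). Thus I would simply assemble such data directly out of the module $M$; no cocycle computation is needed.

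First I would take $\Gamma=\Aut(M,Q)$ as the ambient group. Since $\sigma$ is an \emph{induced} $Q$-normal structure, the module $M$ carries, by the very definition in Subsection~\ref{indqn}, an $S^t\Gamma'$-module structure for some group $\Gamma'$ mapping onto $Q$ by semi-linear transformations in the sense of \eqref{semil1}; hence, by Proposition~\ref{8.3}(ii), the grade homomorphism $\pi^{\substack{\mbox{\tiny{$\Aut(M,Q)$}}}}\colon\Aut(M,Q)\to Q$ is surjective and we obtain the group extension \eqref{extseventyone}. By Proposition~\ref{8.3}(i) its kernel is $\Aut_S(M)=\mathrm U(\End_S(M))=\mathrm U(A)$, so \eqref{extseventyone} is exactly an extension of $Q$ by $\mathrm U(A)$ of the form \eqref{getw} with $\Egamma=\mathrm U(A)$.

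Next I would produce the accompanying morphism of crossed modules. The semi-linear action of $\Aut(M,Q)$ on $M$ induces, through conjugation on $S$-linear endomorphisms, a homomorphism $\beta\colon\Aut(M,Q)\to\Aut(\End_S(M))=\Aut(A)$, and together with the inclusion $j\colon\mathrm U(A)=\Aut_S(M)\hookrightarrow\Aut(M,Q)$ this is the morphism \eqref{mor11}, which here reads $(1,\beta)\colon(\mathrm U(A),\Aut(M,Q),j)\to(\mathrm U(A),\Aut(A),\partial)$. The first component is the identity: the kernel $\Aut_S(M)$ acts $S$-linearly, so each of its elements is sent by $\beta\circ j$ to its own inner automorphism, giving $\beta\circ j=\partial$. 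By the definition of the induced $Q$-normal structure in Subsection~\ref{indqn}, this morphism induces $\sigma$.

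With an extension of $Q$ by $\mathrm U(A)$ ($\Egamma=\mathrm U(A)$) and a grade-compatible morphism of crossed modules having injective (indeed identity) first component and inducing $\sigma$ now in hand, the cited equivalence yields $[\mathrm e_{(A,\sigma)}]=0$. Equivalently, one may argue as in Proposition~\ref{2.6.5}, forming the crossed $2$-fold extension $\mathrm e$ attached to $(\mathrm U(A),\Gamma,j)$ and writing down the obvious congruences $\mathrm e\to\mathrm e_{(A,\sigma)}$ and $\mathrm e\to\mathrm e_0$. I do not anticipate a real obstacle; the only point meriting care is the identity-first-component claim for \eqref{mor11} together with the fact that $\beta$ is defined on \emph{all} of $\Aut(M,Q)$ rather than merely on the image of some a priori chosen lifting $\Gamma'$—and this is exactly what the maximal choice $\Gamma=\Aut(M,Q)$ secures.
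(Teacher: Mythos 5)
Your argument is correct and takes essentially the same route as the paper: both proofs exhibit a group extension of $Q$ together with a morphism of crossed modules into $(\mathrm U(A),\Aut(A),\partial)$ inducing $\sigma$, and conclude by the general vanishing criterion for crossed $2$-fold extensions (as in Proposition \ref{2.6.5}). The only, harmless, variation is that you normalize the acting group to the maximal choice $\Aut(M,Q)$ so that the kernel is exactly $\mathrm U(A)$ — precisely as sanctioned by the remark following Proposition \ref{8.3} — whereas the paper's proof works directly with the given $\Gamma$ and the commutative diagram built from the morphism \eqref{mor11}.
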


\begin{proof}
The $Q$-normal structure $\sigma$ is induced by a semi-linear
action of some group $\Gamma$ on $M$ that maps onto $Q$ in such a way that
$\Gamma$ acts on the coefficients via this projection, combined with
$\kappaQ\colon Q \to \Aut(S)$.
The corresponding 
morphism \eqref{mor11} of crossed modules induces a commutative diagram
\begin{equation*}
\begin{CD}
1 @>>> \mathrm{ker}(\pi) @>>> \Gamma @>{\pi}>> Q @>>> 1
\\
@.
@V{\alpha}VV
@V{\beta^{\sigma}}VV
@|
@.
\\
@. 
\mathrm U(A)
@>{\partial^{\sigma}}>>
B^{\sigma}
@>>>
Q
@>>>
1 
\end{CD}
\end{equation*}
with exact rows,
cf. \eqref{pb1} 
for the notation $\partial^{\sigma}$ etc.
This implies that
the Teich\-m\"uller class $[\mathrm e_{(A,\sigma)}]\in \mathrm H^3(Q,\mathrm U(S))$
is zero.
\end{proof}

\subsection{Proof of the second assertion of Theorem {\rm \ref{genaz}}}
\label{ptgenaz}

Suppose that the generalized $Q$-normal Azumaya $S$-algebra
$(A,\sigma_A)$ 
arises from an ordinary
$Q$-normal algebra structure $\sigma\colon Q \to \Out(A,Q)$ on $A$.
Then the construction of  
$\MsA$ and $\BsA$, cf. Subsection \ref{from} above, 
simplifies as follows: For each $x \in Q$, let
$\varepsilon_x \colon A \to A$ be an automorphism 
of grade $x$
that lifts
the automorphism $\kappaQ(x)$ of $S$,
and take $M_x$ to be the $(A,A)$-bimodule
$A_{\varepsilon(x)}$, that is, the algebra $A$ itself,
viewed as an invertible $(A,A)$-bimodule of grade $x$
via the structure map
\begin{equation*}
A \otimes A_{\varepsilon(x)} \otimes A \longrightarrow A_{\varepsilon(x)},
\ 
a_1\cdot a \cdot a_2 =a_1 a (\varepsilon(x) (a_2)),\ a, a_1, a_2 \in A.
\end{equation*}
Furthermore, given $x,y\in Q$, the isomorphism
$f_{x,y}\colon  A_{\varepsilon(x)} \otimes_A A_{\varepsilon(y)} \to
 A_{\varepsilon(xy)}
$ may be taken to be an inner automorphism of $A$ in the sense that, 
for some invertible member $u$ of $A$, given
$a_1 \otimes a_2 \in  A_{\varepsilon(x)} \otimes_A A_{\varepsilon(y)}$,
the value
$f_{x,y}(a_1 \otimes a_2)$ equals $u a_1 a_2 u^{-1}$. Now
$\MsA= \oplus_{z\in Q} A_{\varepsilon(z)}$ is a free left $A$-module.

From the obvious isomorphisms
\begin{equation}
\End_S(\MsA)=\prod_{u\in Q_A} \bigoplus_{v\in Q_A} \Hom_S(M_u,M_v)
\cong \prod_{u\in Q_A}\Hom_S(M_u,\MsA)
\label{desc2}
\end{equation}
we deduce that the canonical homomorphism
$A \otimes \BBB \to \End_S(\MsA)$ is an isomorphism of $S$-algebras.
(This is immediate when the group $\kappaQ(Q)\subseteq \Aut(S)$
is finite.) Under this isomorphism, the tensor product $Q$-normal structure
$\sigma\otimes \sigma_{\BBB}$
on $A \otimes \BBB$ corresponds to an induced $Q$-normal structure
on $\End_S(\MsA)$. In view of Proposition \ref{2.4.3},
Proposition \ref{8.2} entails that the sum
$[\mathrm e_{(A,\sigma)}] +[\mathrm e_{(\BBB,\sigma_{\BBB})}] 
\in \mathrm H^3(Q,\mathrm U(S))$ is zero whence,
in view of Proposition \ref{2.4.1},
 $[\mathrm e_{(\BBB^{\mathrm{op}},\sigma_{\BBB^{\mathrm{op}}})}] 
=
[\mathrm e_{(A,\sigma)}]
\in \mathrm H^3(Q,\mathrm U(S))$
as asserted.

\subsection{Induced $Q$-equivariant structures}
\label{indqe}

Consider an $S$-module $M$.
Suppose that $M$ admits an 
$S^t\Gamma $-module structure for some group $\Gamma $ which maps onto
$Q$ in such a way that $\Gamma $ acts on the coefficients from 
$S$ via the projection
 $\pi \colon \Gamma \to Q$,
consider the associated morphism
\eqref{mor11} of crossed modules, and
suppoas that
$\alpha$ maps $\ker(\pi)$ to $\mathrm U(S)\subseteq \mathrm U(\End_S(M))$. 
Then the homomorphism $\beta $ induces  a $Q$-equivariant structure 
$\tau = \tau_{(\alpha , \beta )}\colon Q \to \Aut(\End_S(M))$ on $\End_S(M)$, 
and we refer 
to such a structure as an 
{\em induced\/} $Q$-equivariant structure  on $\End_S(M)$.
Accordingly,
we define an {\em induced $Q$-equivariant split algebra\/}
to be a $Q$-equivariant algebra of the kind $(\End_S(M),\tau)$
for some faithful $S$-module $M$, where $\tau$ is an induced
$Q$-equivariant structure on $\End_S(M)$.

Let $M$ be an $S$-module
let $\tau\colon Q \to \Aut(\End_S(M))$ be a $Q$-equivariant structure
on $\End_S(M)$,
let $\Aut(M,Q,\tau)$
denote the subgroup of $\Aut(M,Q)$ defined by
\begin{equation*}
\Aut(M,Q,\tau) = 
\{(\alpha,x);\  
 \alpha (ay) = ({}^{\tau (x)}a)\alpha (y),\,a\in \End_S(M), \,y\in M\}
\subseteq \Aut_R(M)\times Q,
\end{equation*}
and let
 $\pi^{\substack{\mbox{\tiny{$\Aut(M,Q,\tau)$}}}}\colon \Aut(M,Q,\tau) \to Q$
denote the canonical homomorphism. The following is immediate.

\begin{prop}
\label{8.0}
{\rm (i)} The homomorphism
$\pi^{\substack{\mbox{\tiny{$\Aut(M,Q,\tau)$}}}}\colon \Aut(M,Q,\tau) \to Q$
has $\mathrm U(S)$ as its kernel.

\noindent
{\rm (ii)}
The homomorphism $\pi^{\substack{\mbox{\tiny{$\Aut(M,Q,\tau)$}}}}$ 
is surjective if and only if
$M$ admits an 
$S^t\Gamma $-module structure for some group $\Gamma $ which maps onto
$Q$ in such a way that $\Gamma $ acts on
$M$ by semi-linear transformations 
via the projection $\pi\colon \Gamma \to Q$
in the sense that
{\rm \eqref{semil1}} holds
and such that $\mathrm{ker}(\pi)$ maps to $\mathrm U(S)$.
If this happens to be the case, the group $\Aut(M,Q,\tau)$
fits into a group extension
\begin{equation}
\mathrm e^{M,\tau}\colon
1
\longrightarrow
\mathrm U(S) 
\longrightarrow
\Aut(M,Q,\tau)
\stackrel{\pi^{\substack{\mbox{\tiny{$\Aut(M,Q,\tau)$}}}}}
\longrightarrow
Q
\longrightarrow
1 .
\label{ext711} 
\end{equation}
\end{prop}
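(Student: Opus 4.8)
The plan is to prove (i) and (ii) exactly in parallel with Proposition \ref{8.3}, the only genuinely new point being that the coefficient constraint defining $\Aut(M,Q,\tau)$ is imposed over the full endomorphism algebra $\End_S(M)$ rather than over $S$. First I would record why $\Aut(M,Q,\tau)$ is a subgroup of $\Aut(M,Q)$, since this lets me reuse \eqref{extseventyone}. Because $\tau$ is a $Q$-equivariant structure lifting $\kappaQ$, its restriction to the center satisfies $\tau(x)|_S=\kappaQ(x)$; specializing the defining identity $\alpha(ay)=({}^{\tau(x)}a)\alpha(y)$ to $a=s\in S\subseteq\End_S(M)$ yields $\alpha(sy)={}^x\!s\,\alpha(y)$, which is precisely membership in $\Aut(M,Q)$ via \eqref{semil1}. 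Thus $\pi^{\substack{\mbox{\tiny{$\Aut(M,Q,\tau)$}}}}$ is the restriction of $\pi^{\substack{\mbox{\tiny{$\Aut(M,Q)$}}}}$.

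For (i), I would compute the kernel directly. An element of the kernel is a pair $(\alpha,e)$, and since $\tau(e)=\mathrm{id}$ the defining condition collapses to $\alpha(ay)=a\,\alpha(y)$ for all $a\in\End_S(M)$, i.e.\ $\alpha$ commutes with the left $\End_S(M)$-action on $M$. Taking $a=s\in S$ shows $\alpha$ is $S$-linear, so $\alpha\in\End_S(M)$; commuting with every element of $\End_S(M)$ then places $\alpha$ in the center of $\End_S(M)$. As $M$ is faithful (and finitely generated projective, so that $\End_S(M)$ is a central $S$-algebra, which is implicit since $\tau$ is a $Q$-equivariant structure and these are defined only for central $S$-algebras), that center is $S$; an invertible such $\alpha$ is therefore scalar multiplication by a unit of $S$, and conversely every unit of $S$ qualifies. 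Hence $\ker\pi^{\substack{\mbox{\tiny{$\Aut(M,Q,\tau)$}}}}=\mathrm U(S)$, which is also $\Aut_S(M)\cap\Aut(M,Q,\tau)$ as it must be.

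For (ii), in the forward direction I would take $\Gamma=\Aut(M,Q,\tau)$ with its first-coordinate action on $M$; by the inclusion above this action is semilinear and carries the obvious $S^t\Gamma$-module structure, its kernel maps to $\mathrm U(S)$ by (i), and conjugation by $(\alpha,x)$ realizes $\tau(x)$ by the very definition of $\Aut(M,Q,\tau)$, so the induced $Q$-equivariant structure (via the morphism \eqref{mor11} of crossed modules, cf.\ Subsection \ref{indqe}) is $\tau$. Conversely, a semilinear $\Gamma$-action with $\ker(\pi)$ mapping into $\mathrm U(S)$ and inducing $\tau$ gives, for each $x\in Q$, a preimage $\gamma$ whose action $\alpha_\gamma\in\Aut_R(M)$ realizes $\tau(x)$ by conjugation (the kernel acts by central scalars, so the induced map descends to $\tau$), whence $(\alpha_\gamma,x)\in\Aut(M,Q,\tau)$ lies over $x$ and $\pi^{\substack{\mbox{\tiny{$\Aut(M,Q,\tau)$}}}}$ is surjective. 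The extension \eqref{ext711} is then simply \eqref{extseventyone} restricted to the subgroup $\Aut(M,Q,\tau)$, its kernel cut down from $\Aut_S(M)$ to $\mathrm U(S)$ by (i).

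I expect the only steps requiring care to be the centrality input in (i)—the sole non-formal ingredient, resting on faithfulness of $M$—and, in the reverse half of (ii), the bookkeeping that the prescribed $\tau$ is indeed the $Q$-equivariant structure induced by the given semilinear action; everything else is the formal pull-back/restriction argument already used for Proposition \ref{8.3}.
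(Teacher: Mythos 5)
Your proof is correct, and since the paper offers no argument for this proposition (it is declared ``immediate''), there is nothing to diverge from: your write-up is exactly the routine verification the paper has in mind, with the two genuinely non-formal inputs correctly isolated (centrality of $S$ in $\End_S(M)$ for (i), and the descent of the conjugation action to $\tau$ for (ii)). One remark: in the reverse half of (ii) you quietly strengthen the hypothesis to ``\dots and inducing $\tau$'', which is not literally in the statement; this is the right reading (it is what the subsequent Remark and the definition of an induced $Q$-equivariant structure in Subsection \ref{indqe} require, and without it the ``if'' direction would need the Skolem--Noether argument of Lemma \ref{8.00} and would not be immediate), but it is worth being explicit that you are supplying an implicit hypothesis rather than proving the statement verbatim.
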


\begin{rema} Let $M$ be a faithful $S$-module. \cref{8.0}(ii) 
shows that, in the definition of an induced $Q$-equivariant structure on 
the split algebra $\End_S(M)$ over  $M$, given a $Q$-equivariant structure 
$\tau\colon Q \to \Aut(\End_S(M))$ on $\End_S(M)$, we may take the group
$\Gamma$ to be the group $\Aut(M,Q,\tau)$.
\end{rema}

\begin{lem}
\label{8.00}
Given a faithful $S$-module $M$,
let $\tau \colon Q \to \Aut(\End_S(M))$ be a $Q$-equivariant structure
on the central $S$-algebra $\End_S(M)$, and suppose that
$\pi^{\substack{\mbox{\tiny{$\Aut(M,Q)$}}}}\colon
\Aut(M,Q) \to Q$ is surjective. Then
$\pi^{\substack{\mbox{\tiny{$\Aut(M,Q,\tau)$}}}}\colon
\Aut(M,Q,\tau) \to Q$ is surjective as well, whence
$\tau$ is then an induced $Q$-equivariant structure.
\end{lem}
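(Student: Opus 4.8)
The plan is to prove surjectivity of $\pi^{\Aut(M,Q,\tau)}$ one element at a time: for each $x\in Q$ I will produce an $R$-linear automorphism $\alpha$ of $M$ with $(\alpha,x)\in\Aut(M,Q,\tau)$, that is, $\alpha(ay)={}^{\tau(x)}a\,\alpha(y)$ for all $a\in\End_S(M)$ and $y\in M$. The first thing to record is that $\Aut(M,Q,\tau)$ is a subgroup of $\Aut(M,Q)$: putting $a=s\in S$ in the defining relation and using that $\tau(x)$ restricts to $\kappaQ(x)$ on the centre $S$ recovers the semilinearity condition \eqref{semil1}. Moreover, for a pair $(\alpha,x)\in\Aut(M,Q)$, the stronger condition defining $\Aut(M,Q,\tau)$ is equivalent to demanding that conjugation by $\alpha$ on $\End_S(M)$ coincide with $\tau(x)$. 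So the task reduces to realising the algebra automorphism $\tau(x)$ as conjugation by an $x$-semilinear automorphism of $M$.

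Next I would feed in the hypothesis. Since $\pi^{\Aut(M,Q)}$ is surjective there is a $\beta\in\Aut_R(M)$ with $(\beta,x)\in\Aut(M,Q)$, i.e. $\beta(sy)={}^xs\,\beta(y)$. A short computation shows that conjugation $\rho_\beta\colon a\mapsto\beta a\beta^{-1}$ sends $\End_S(M)$ into itself and is an algebra automorphism of $\End_S(M)$ whose restriction to the centre $S$ equals $\kappaQ(x)$; thus $(\rho_\beta,x)$ lies in the fibre product $\Aut(\End_S(M),Q)$. As $\tau(x)$ also lifts $\kappaQ(x)$, the composite $g:=\tau(x)\circ\rho_\beta^{-1}$ is an $S$-linear automorphism of the central $S$-algebra $\End_S(M)$.

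The heart of the matter is to show that $g$ is inner, say $g=\delta(-)\delta^{-1}$ for some $\delta\in\mathrm U(\End_S(M))=\Aut_S(M)$. This is exactly where the generalised Skolem--Noether theorem (Proposition \ref{skolnoet}) must be invoked: for the endomorphism algebra $A=\End_S(M)$ the class of $g$ in $\Out(A|S)$ is carried injectively into $\Pic(S)$, and, as in the proof of Proposition \ref{8.3}(v), the homomorphism $\Pic(S)\to\Pic(A)$ appearing in Proposition \ref{skolnoet} controls this class and forces $g$ to be inner. Granting this, put $\alpha:=\delta\circ\beta$; then $\alpha$ is again $x$-semilinear, and for $a\in\End_S(M)$, $y\in M$ one finds $\alpha(ay)=\delta(\rho_\beta(a)\,\beta(y))=g(\rho_\beta(a))\,\alpha(y)=\tau(x)(a)\,\alpha(y)$, the last equality because $g\circ\rho_\beta=\tau(x)$. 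Hence $(\alpha,x)\in\Aut(M,Q,\tau)$, and as $x$ was arbitrary, $\pi^{\Aut(M,Q,\tau)}$ is surjective; by Proposition \ref{8.0}(ii) (with $\Gamma=\Aut(M,Q,\tau)$ and the extension \eqref{ext711}) this is precisely the statement that $\tau$ is an induced $Q$-equivariant structure.

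The step I expect to be the genuine obstacle is the innerness of $g$. The surjectivity hypothesis only guarantees that \emph{some} automorphism lifting $\kappaQ(x)$ — namely $\rho_\beta$ — is realised by a semilinear map of $M$, whereas what is needed is that the \emph{specific} automorphism $\tau(x)$ be so realised; closing this gap amounts to the vanishing of the discrepancy class of $g$ in $\Out(\End_S(M)|S)\subseteq\Pic(S)$. This is the point at which one must lean on Proposition \ref{skolnoet} together with the special structure of endomorphism algebras (the mechanism that, in the free case of Proposition \ref{8.3}(v), made every $S$-linear automorphism of $\End_S(M)$ inner), rather than on the surjectivity hypothesis alone.
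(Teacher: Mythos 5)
Your reduction is sound and your skeleton matches the paper's proof: both lift $x\in Q$ to a semilinear pair $(\beta,x)\in\Aut(M,Q)$, compare $\tau(x)$ with conjugation $\rho_\beta$ by $\beta$, and absorb the $S$-linear discrepancy $g=\tau(x)\circ\rho_\beta^{-1}$ into an inner automorphism before correcting $\beta$. The gap is exactly the one you flag and do not close: you never prove that $g$ is inner, and the mechanism you propose for doing so fails. Proposition \ref{skolnoet} is stated for Azumaya algebras, whereas the lemma only assumes $M$ faithful, so $\End_S(M)$ need not be Azumaya at all; and even when $M$ is faithful finitely generated projective, Proposition \ref{skolnoet} identifies $\Out(\End_S(M)|S)$ with $\ker(\Pic(S)\to\Pic(\End_S(M)))$, which is in general nonzero --- take $M=S\oplus J_0$ with $[J_0]$ a nontrivial $2$-torsion class in $\Pic(S)$, so that $M\otimes J_0\cong M$ and $[J_0]$ yields a non-inner $S$-linear automorphism of $\End_S(M)$. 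The phenomenon you invoke, that every $S$-linear automorphism of $\End_S(M)$ is inner, is precisely what requires the freeness hypothesis in Proposition \ref{8.3}(v); it is not available here, so Skolem--Noether alone cannot force your particular $g$ to be inner.

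The paper obtains the innerness from a different source: its proof opens by identifying the $Q$-normal structure induced by the conjugation action $\beta\colon\Aut(M,Q)\to\Aut(\End_S(M))$ with the $Q$-normal structure $\sigma_\tau$ associated to $\tau$, i.e.\ it asserts the equality $[\rho_{\beta}]=[\tau(q)]$ in $\Out(\End_S(M))$ as an identity of entire $Q$-normal structures, and only then extracts the $S$-linear $\tilde\alpha$ conjugating one representative into the other. A priori the two homomorphisms $q\mapsto[\rho_{\beta_q}]$ and $q\mapsto[\tau(q)]$ both lift $\kappaQ$ but differ by a crossed homomorphism $Q\to\Out(\End_S(M)|S)$, and it is this identification --- in effect the hypothesis of Corollary \ref{eighto}, which is what is actually available where the lemma is applied in the proof of Theorem \ref{twelvetw} --- and not any general innerness of $S$-linear automorphisms, that kills the discrepancy. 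To repair your argument you must supply that identification rather than appeal to Proposition \ref{skolnoet}.
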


\begin{proof}
By \cref{8.3}(ii), the homomorphism
 $\pi^{\substack{\mbox{\tiny{$\Aut(M,Q)$}}}}\colon \Aut(M,Q) \to Q$ 
is surjective and induces,
via the associated action $\beta\colon  \Aut(M,Q) \to  \Aut(\End_S(M))$
of $\Aut(M,Q)$ on $\End_S(M)$, the
$Q$-normal structure $\sigma_\tau \colon Q \to \Out(\End_S(M))$ on $\End_S(M)$
associated to $\tau$.

The canonical homomorphism $\Aut(M,Q,\tau)\to Q$
is surjective.
Indeed,
let $x\in \Aut(M,Q)$. Then the automorphisms $\beta(x)$ and
$\tau(\pi^{\substack{\mbox{\tiny{$\Aut(M,Q)$}}}}(x))$
of $\End_S(M)$ have the same image in $\Out(\End_S(M))$
whence there exists an $S$-linear
automorphism $\tilde\alpha_{\ggamma} $ of $M$ so that, given 
$a\in \End_S(M)$, 
\begin{equation*}
\tilde\alpha_{\ggamma} ({}^{\beta (\ggamma)} a)\tilde\alpha_{\ggamma}^{-1} = {}^{\tau (\pi(\ggamma))}a .
\end{equation*}
Consequently, given $q\in Q$ and a pre-image 
$\ggamma \in \Aut(M,Q)$ of $q$, 
\begin{equation*}
\tilde\alpha_{\ggamma}({}^\ggamma (a y)) = ({}^{\tau (q)}a) \tilde\alpha_{\ggamma} ({}^\ggamma y),
\ a\in \End_S(M),\, y \in M;
\end{equation*}
thus the automorphism  
$\alpha_x \in \Aut_R(M)$ given by $\alpha_x(y) = \tilde\alpha_{\ggamma}({}^xy)$
then yields a pre-image $(\alpha_x,q)\in \Aut(M,Q,\tau)$ of $q \in Q$.
Hence
 the canonical homomorphism from $\Aut(M,Q,\tau)$ to $Q$
is surjective.
By construction, the induced action of $\Aut(M,Q,\tau)$ on
$\End_S(M)$ 
induces
the given $Q$-equivariant structure $\tau $ on $\End_S(M)$. 
\end{proof}

\begin{cor}
\label{eighto}
Given 
a $Q$-equivariant structure
$\tau\colon Q \to\Aut(\End_S(M))$ on the central $S$-algebra
$\End_S(M)$ over a 
faithful $S$-module $M$,
suppose  that the associated
$Q$-normal structure $\sigma_\tau \colon Q \to \Out(\End_S(M))$ on $\End_S(M)$
associated to $\tau$ 
is induced as a $Q$-normal structure. Then 
the $Q$-equivariant structure
$\tau $ on $\End_S(M)$ is an induced $Q$-equivariant structure.
\end{cor}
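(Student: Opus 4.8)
The plan is to chain together Proposition \ref{8.3}(ii) and Lemma \ref{8.00}, so that the corollary becomes an immediate consequence requiring no fresh construction. The key observation is that the hypothesis --- namely that the $Q$-normal structure $\sigma_\tau$ associated with $\tau$ is \emph{induced} --- already encodes exactly the datum that Lemma \ref{8.00} needs, namely the surjectivity of the grade homomorphism $\pi^{\substack{\mbox{\tiny{$\Aut(M,Q)$}}}}\colon \Aut(M,Q)\to Q$.

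First I would unwind the definition of an induced $Q$-normal structure. By that definition, saying that $\sigma_\tau$ is induced means that there is a group $\Gamma$ mapping onto $Q$ together with a semi-linear $S^t\Gamma$-module structure on $M$ in the sense of \eqref{semil1}, the associated morphism \eqref{mor11} of crossed modules inducing $\sigma_\tau$. I would then invoke Proposition \ref{8.3}(ii), whose content is precisely that the existence of such a semi-linear $\Gamma$-action is equivalent to the surjectivity of $\pi^{\substack{\mbox{\tiny{$\Aut(M,Q)$}}}}$. Thus the existence of the induced $Q$-normal structure hands us surjectivity of the grade homomorphism for free.

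With that surjectivity in hand, the second step is to apply Lemma \ref{8.00} verbatim: since $\pi^{\substack{\mbox{\tiny{$\Aut(M,Q)$}}}}$ is surjective and $\tau$ is a $Q$-equivariant structure on $\End_S(M)$, the lemma yields that $\pi^{\substack{\mbox{\tiny{$\Aut(M,Q,\tau)$}}}}$ is surjective as well, and hence that $\tau$ is an induced $Q$-equivariant structure. This completes the argument.

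There is essentially no serious obstacle here; the only point demanding care is the bookkeeping, namely checking that the notion of \lq\lq induced $Q$-normal structure\rq\rq\ appearing in the hypothesis of the corollary is the very notion whose existence Proposition \ref{8.3}(ii) characterizes. Both are phrased through the same semi-linearity condition \eqref{semil1} and the same group $\Aut(M,Q)$ (cf. the remark following Proposition \ref{8.3} that one may always take $\Gamma = \Aut(M,Q)$), so the identification is transparent and the corollary follows with no further hypotheses on $M$ beyond faithfulness.
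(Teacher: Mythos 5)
Your argument is correct and matches the paper's intended route: the corollary is stated as an immediate consequence of Lemma \ref{8.00}, with the hypothesis that $\sigma_\tau$ is induced supplying, via Proposition \ref{8.3}(ii), exactly the surjectivity of $\pi^{\substack{\mbox{\tiny{$\Aut(M,Q)$}}}}$ that the lemma requires. Nothing further is needed.
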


\subsection{Induced $Q$-equivariant structures and crossed product
algebras}
 
Suppose that the group $Q$ is a finite group.
Let $M$ be a faithful 
finitely generated projective
$S$-module  and 
$\tau\colon Q \to\Aut(\End_S(M))$ 
an induced $Q$-equivariant structure 
on the split central $S$-algebra
$\End_S(M)$. With respect to an associated group
extension 
$\mathrm e\colon 
\mathrm U(S) \stackrel{i}\rightarrowtail \Gamma \stackrel{\pi^\Gamma}\twoheadrightarrow Q$
and  morphism
\begin{equation*}
(j,\beta)\colon (\mathrm U(S),\Gamma,i) \longrightarrow
(\Aut_S(M),\Aut(\End_S(M)),\partial)
\end{equation*}
of crossed modules
inducing the $Q$-equivariant structure $\tau$,
let $M_{\mathrm e}$ denote the $S^t\Gamma$-module
that underlies the crossed product algebra 
$(S,Q,\mathrm e, \kappa_Q \circ \pi^\Gamma)$,
by construction, free as an $S$-module,
let $\tau_{\mathrm e}\colon Q \to \Aut(\End_S(M_{\mathrm e}))$
denote the associated induced $Q$-equivariant structure on 
$\End_S(M_{\mathrm e})$,
and consider the $S$-module
$\Hom_S(M,M_{\mathrm e})
$, necessarily finitely generated projective and
faithful. 
The association
\begin{equation}
\End(M)^{\mathrm{op}}\otimes \End_S(M_{\mathrm e}) \otimes\Hom_S(M,M_{\mathrm e})
\longrightarrow
\Hom_S(M,M_{\mathrm e})
\label{action8}
\end{equation}
which, for $h\in \End_S(M)$, $f \in \End_S(M_{\mathrm e})$, 
$\varphi\in \Hom_S(M, M_{\mathrm e})$, is given by
\begin{equation*}
h \otimes f \otimes \varphi
\longmapsto
f \circ \varphi \circ h,
\end{equation*}
identifies the central $S$-algebras
$\End_S(M)^{\mathrm{op}}\otimes \End_S(M_{\mathrm e})$ and
$\End_S(\Hom_S(M,M_{\mathrm e}))$.

\begin{prop}
\label{diagact}
The diagonal action 
of $\Gamma$ on  $\Hom_S(M,M_{\mathrm e})$
given by the association
\[                                                                         
(\alpha,\varphi) \longmapsto \alpha(\varphi)=
\alpha \circ \varphi \circ \alpha^{-1},\ \alpha \in \Gamma,\ \varphi \in \Hom_S(M,M_{\mathrm e}),
\]
is trivial on $\mathrm U(S)=\mathrm{ker}(\pi^\Gamma)$ and hence descends to an
$S^tQ$-module structure
\begin{equation}
Q \times \Hom_S(M,M_{\mathrm e}) \longrightarrow
\Hom_S(M,M_{\mathrm e})
\label{stq}
\end{equation}
on  $\Hom_S(M,M_{\mathrm e})$. 
Consequently,
in terms of the notation $\tau_0$ for
the $Q$-equivariant structure on
$(\End_S(\Hom_S(M,M_{\mathrm e}))$
induced by {\rm \eqref{stq}},
\[
{(\End_S(M)^{\mathrm{op}}\otimes \End_S(M_{\mathrm e}), \tau^{\mathrm{op}} 
\otimes \tau_{\mathrm e})} 
\cong (\End_S(\Hom_S(M,M_{\mathrm e})),\tau_0)
\]
as $Q$-equivariant central $S$-algebras. 
\end{prop}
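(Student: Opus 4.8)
The plan is to transport the two semilinear $\Gamma$-actions, on $M$ and on $M_{\mathrm e}$, through the canonical identification \eqref{action8}, using that the kernel $\mathrm U(S)$ of $\mathrm e$ acts on both modules by scalars. The first thing to record is how the image $i(u)$ of a unit $u\in\mathrm U(S)=\ker(\pi^{\Gamma})$ acts. On $M_{\mathrm e}$ the $S^t\Gamma$-module structure is left multiplication in the crossed product algebra $(S,Q,\mathrm e,\kappaQ\circ\pi^{\Gamma})$, cf. \cref{tpf}(i); the defining relation of that crossed product identifies the basis element $i(u)$ with the scalar $u\in S$, so $i(u)$ acts on $M_{\mathrm e}$ as multiplication by $u$. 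On $M$, since $\tau$ is an induced $Q$-equivariant structure, the kernel acts through $\mathrm U(S)\subseteq\Aut_S(M)$, and with the standard normalization the component $j$ of the crossed-module morphism is the inclusion $\mathrm U(S)\hookrightarrow\mathrm U(S)\subseteq\Aut_S(M)$, so $i(u)$ likewise acts on $M$ as multiplication by $u$.

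For the first assertion I would check, by a short semilinearity computation, that for $\alpha\in\Gamma$ with $q=\pi^{\Gamma}(\alpha)$ and $\varphi\in\Hom_S(M,M_{\mathrm e})$ the map $\alpha\circ\varphi\circ\alpha^{-1}$ is again $S$-linear and satisfies $\alpha(s\varphi)=({}^{q}s)\,\alpha(\varphi)$, so that the diagonal action is semilinear over $S$ via $\pi^{\Gamma}$. Triviality on $\mathrm U(S)$ is then immediate from the preceding step: because $i(u)$ acts as the central scalar $u$ on both $M$ and $M_{\mathrm e}$, the conjugate $i(u)\circ\varphi\circ i(u)^{-1}$ equals $u\varphi u^{-1}=\varphi$. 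Hence the action factors through $Q$ and yields the $S^tQ$-module structure \eqref{stq}.

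For the isomorphism of $Q$-equivariant algebras I would verify that \eqref{action8} intertwines $\tau^{\mathrm{op}}\otimes\tau_{\mathrm e}$ with $\tau_0$ by a single bookkeeping computation. Fix $x\in\Gamma$ over $q\in Q$ and write $x_M$, $x_{M_{\mathrm e}}$ for its actions on $M$ and $M_{\mathrm e}$. The element $h^{\mathrm{op}}\otimes f$ corresponds to the endomorphism $F$ with $F(\varphi)=f\circ\varphi\circ h$; applying $\tau^{\mathrm{op}}(q)\otimes\tau_{\mathrm e}(q)$ replaces $h$ by $x_M\circ h\circ x_M^{-1}$ and $f$ by $x_{M_{\mathrm e}}\circ f\circ x_{M_{\mathrm e}}^{-1}$. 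On the other side $\tau_0(q)$ conjugates $F$ by the diagonal action, whose lift is $\mathbf x(\varphi)=x_{M_{\mathrm e}}\circ\varphi\circ x_M^{-1}$, and expanding $\mathbf x\circ F\circ\mathbf x^{-1}$ gives precisely $\varphi\mapsto(x_{M_{\mathrm e}}\circ f\circ x_{M_{\mathrm e}}^{-1})\circ\varphi\circ(x_M\circ h\circ x_M^{-1})$, so the two agree. Independence of the chosen lift $x$ in each of $\tau$, $\tau_{\mathrm e}$, $\tau_0$ follows once more from triviality of conjugation by the scalars $i(\mathrm U(S))$, so all three structures descend to $Q$ and the identification is equivariant.

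I expect the only genuinely delicate point to be this normalization: one must be sure the kernel acts by the tautological scalar $u$, rather than by some twisted scalar $j(u)$, on $M$, for otherwise the diagonal action would fail to descend to $Q$. Granting this, the remainder is routine tracking of semilinear maps and opposite algebras.
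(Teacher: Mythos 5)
Your proposal is correct and follows essentially the same route as the paper, whose entire proof consists of the observation that on both $M$ and $M_{\mathrm e}$ the kernel $\mathrm U(S)$ of $\pi^\Gamma$ acts by tautological scalar multiplication, so that the diagonal conjugation is trivial on the kernel; you simply make explicit the routine semilinearity and intertwining verifications that the paper leaves to the reader. The normalization point you flag at the end is indeed the crux, and it is exactly what the paper's one-line argument asserts (and what holds automatically when $\Gamma$ is taken to be $\Aut(M,Q,\tau)$ as in \cref{8.0}).
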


\begin{proof}
For both $M$ and $M_{\mathrm e}$, the
restriction of the $\Gamma$-action 
 to the kernel $\mathrm U(S)$ 
of $\pi^\Gamma$
 coincides with the $\mathrm U(S)$-module structure
coming from multiplication by members of the coefficient ring $S$.
This implies the assertion.
\end{proof}

\section{Crossed Brauer group, generalized crossed Brauer group, and Picard group}
\label{9}

\subsection{Crossed Brauer group}
\label{cbg}
As before, $S$ denotes a commutative ring and 
$\kappaQ\colon Q \to \Aut(S)$ an action of a group $Q$ on $S$. 
Given a faithful finitely generated projective  {$S$-module},
the central $S$-algebra $\End_S(M)$ is an Azumaya algebra.
We  say that
two {$Q$-normal} Azumaya $S$-algebras $(A_1,\sigma_1)$ and 
 $(A_2,\sigma_2)$ are {\em normally Brauer equivalent\/}
if there are faithful finitely
generated projective  $S$-modules
modules $M_1$ and $M_2$ 
together with induced $Q$-normal structures
\[
\rho_1\colon Q \to \Out(B_1),
\
B_1 = \End_S(M_1),
\
\rho_2\colon Q \to \Out(B_2),
\
B_2 = \End_S(M_2),
\]
such that 
$(A_1 \otimes B_1, \sigma_1 \otimes \rho_1 )$
and $(A_2 \otimes B_2, \sigma_2 \otimes \rho_2)$
are isomorphic $Q$-normal {$S$-algebras}.
Since the tensor
product of two induced $Q$-normal split algebras is again an induced 
$Q$-normal
split algebra in an obvious manner, 
that relation, 
referred to henceforth as {\em normal Brauer equivalence\/}, 
is indeed an equivalence relation, similarly as
in 
\cite[p.~381]{MR0121392}, and this equivalence relation
is compatible with the operation of taking
tensor products. Hence the equivalence classes constitute an abelian monoid;
moreover, given a $Q$-normal Azumaya $S$-algebra $(A,\sigma)$, 
the map \eqref{azu} being an isomorphism,
the $S$-algebra $(A \otimes A^{\mathrm{op}},
\sigma \otimes \sigma^{\mathrm{op}})$ is an induced $Q$-normal split algebra,
the requisite semi-linear action 
on the $S$-module that underlies $A$
being given by the action
$B^{\sigma} \to \Aut(A)$ of the fiber product group
$B^{\sigma}=\Aut(A)\times_Q \Out(A)$ with respect to $\sigma\colon Q \to \Out(A)$
that yields the crossed 2-fold extension \eqref{pb1},
and so, taking the class of $(A^{\mathrm{op}}, \sigma^{\mathrm{op}})$ as the 
inverse of
the class of $(A,\sigma )$, we get in fact an abelian group,
the identity element being
the equivalence class of
induced $Q$-normal split algebras $(\End_S(M),\sigma)$
where $M$ ranges over faithful 
finitely generated projective $S$-modules
having the property that the obvious homomorphism
$\pi^{\substack{\mbox{\tiny{$\Aut(M,Q)$}}}}$ from $\Aut(M,Q)$ to $Q$
is surjective, cf. Proposition \ref{8.3} above.
In particular, $(S,\kappaQ)$ represents the identity element.
We refer to that group
as the {\em crossed Brauer group} of $S$ 
relative to $\kappaQ \colon Q \to \Aut(S)$,
denote this group 
by $\mathrm{XB}(S,Q)$,
and we refer to the construction just given as the 
{\em standard construction\/}.
An equivalent construction of the crossed Brauer group 
(as the cokernel of a homomorphism of certain abelian monoids)
is given in
\cite[Theorem 4 p.~43]{genbrauer},
\cite[Section 3, a few lines before Theorem 3.2]{MR1803361}
(written as $QB(R,\Gamma)$), cf. 
\eqref{Seq2} below.

\begin{thm}
\label{nineo}
The crossed Brauer group is a functor on the 
change of actions category $\mathcat{Change}$  introduced
in Subsection {\rm\ref{coa}} in such a way that the following hold. 

\noindent
{\rm (i)}
The assignment to a $Q$-normal Azumaya $S$-algebra
of its Teich\-m\"uller complex yields a natural homomorphism
\begin{equation}t \colon \mathrm{XB} (S, Q) \longrightarrow \mathrm H^3 (Q, \mathrm U(S))
\label{teich2}
\end{equation}
of abelian groups.

\noindent
{\rm (ii)}
The class $[(A,\sigma)] \in \mathrm{XB}(S,Q)$ 
of a $Q$-normal Azumaya {$S$-algebra} $(A,\sigma )$ is zero
if and only if,  for some faithful
finitely generated projective $S$-module
$M$, the algebra $A$ is isomorphic to
$\End_S(M)$
in such a way that $\sigma$
is an induced $Q$-normal structure.
\end{thm}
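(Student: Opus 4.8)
The plan is to establish functoriality first, read off (i) from the additivity results of \cref{2.59,2.4.33}, and obtain (ii) from the structure theory of split algebras developed in \cref{eight}. For functoriality, given a morphism $(f,\varphi)\colon (S,Q,\kappa)\to (T,G,\lambda)$ in $\mathcat{Change}$, I would define the induced map on crossed Brauer groups by sending the class of a $Q$-normal Azumaya $S$-algebra $(A,\sigma)$ to the class of the $G$-normal $T$-algebra $(T\otimes A,\sigma_{(f,\varphi)})$ supplied by \cref{2.5.1}(i). Three routine points then have to be verified: that scalar extension along $f$ carries Azumaya $S$-algebras to Azumaya $T$-algebras and induced $Q$-normal split algebras to induced $G$-normal split algebras (a semi-linear $\Gamma$-action on $M$ base-changes to one on $T\otimes_S M$, now over $G$ via $\varphi$); that the construction is compatible with opposites and with tensor products, which is part of \cref{2.5.1}(i); and hence that it descends to normal Brauer equivalence classes and respects the group law. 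Functoriality in $(f,\varphi)$ is immediate.

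For part (i), the assignment $(A,\sigma)\mapsto[\mathrm e_{(A,\sigma)}]$ is well defined on normal Brauer equivalence classes: if $(A_1\otimes\End_S(M_1),\sigma_1\otimes\rho_1)\cong(A_2\otimes\End_S(M_2),\sigma_2\otimes\rho_2)$ with $\rho_1,\rho_2$ induced, then \cref{2.4.3} turns each tensor factor into a Baer summand while \cref{8.2} makes the contributions of the induced split factors vanish, so $[\mathrm e_{(A_1,\sigma_1)}]=[\mathrm e_{(A_2,\sigma_2)}]$. The same instance of \cref{2.4.3} shows that $t$ is a homomorphism (compatibility with inverses being \cref{2.4.1}), and naturality is exactly \cref{2.5.1}(iii), which gives the commuting square relating $t$ over $(S,Q)$ with $t$ over $(T,G)$.

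For part (ii), the ``if'' direction is immediate from the standard construction in \cref{cbg}: if $A\cong\End_S(M)$ with $\sigma$ induced, then the grade homomorphism $\Aut(M,Q)\to Q$ is surjective by \cref{8.3}(ii), so $(A,\sigma)$ is among the chosen representatives of the identity and $[(A,\sigma)]=0$. For the converse, unwinding the definition of the identity class shows that $[(A,\sigma)]=0$ means precisely that $(A,\sigma)\otimes(\End_S(P),\mu)\cong(\End_S(P'),\mu')$ for some induced split algebras $(\End_S(P),\mu)$ and $(\End_S(P'),\mu')$. Comparing Brauer classes forces $[A]=0$ in $\mathrm B(S)$, so $A\cong\End_S(\tilde M)$ with $\tilde M=\Hom_{\End_S(P)}(P,P')$ after Morita cancellation of the split factor $\End_S(P)$. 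It then remains to produce a semi-linear action on $\tilde M$ inducing $\sigma$: I would form the fibre product over $Q$ of the two semi-linear groups acting on $P$ and on $P'$ and let it act diagonally on $\tilde M$, exactly as in \cref{diagact}, and check that the resulting induced $Q$-normal structure on $\End_S(\tilde M)\cong A$ is $\sigma$, using that the given isomorphism intertwines $\sigma\otimes\mu$ with $\mu'$.

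The one genuinely delicate point is this final cancellation in the converse of (ii). Zero Teichm\"uller class alone yields, via \cref{fouro}, only equivariance after passing to $\mathrm M_{\vert Q\vert}(A)$ and cannot force $A$ to be split; the extra split tensor factors coming from normal Brauer equivalence are precisely what kill the residual $\Pic(S)$-obstruction to lifting automorphisms of $\End_S(\tilde M)$ to semi-linear automorphisms of $\tilde M$. Verifying that the diagonal action reconstructs $\sigma$ on the nose, rather than some cohomologous structure, is the crux, and I expect the Morita identifications together with \cref{diagact} to be the tools that carry it.
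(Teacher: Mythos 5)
Your treatment of functoriality and of part (i) coincides with the paper's: well-definedness on normal Brauer equivalence classes follows from \cref{2.4.3} together with \cref{8.2}, additivity from \cref{2.4.3}, inverses from \cref{2.4.1}, and naturality from \cref{2.5.1}(iii). The ``if'' half of (ii) is likewise as immediate for you as in the text. The problem is the ``only if'' half of (ii), where your construction as stated has a genuine gap. The fibre product over $Q$ of the two semi-linear groups acting on $P$ and on $P'$ does \emph{not} act on $\tilde M=\Hom_{\End_S(P)}(P,P')$: for a pair $(\alpha,\alpha')$ of grade $x$, the assignment $\varphi\mapsto\alpha'\circ\varphi\circ\alpha^{-1}$ preserves $\End_S(P)$-linearity only when the automorphism of $\End_S(P)$ induced by $\alpha'$ through the embedding $\End_S(P)\hookrightarrow\End_S(P')$ agrees \emph{on the nose} with the one induced by $\alpha$, whereas the isomorphism intertwining $\sigma\otimes\mu$ with $\mu'$ only controls these automorphisms modulo inner ones. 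Already the kernel of your fibre product, $\Aut_S(P)\times\Aut_S(P')$, fails to preserve $\tilde M$, since conjugation by an element of $\Aut_S(P')$ need not stabilize the subalgebra $\End_S(P)\subseteq\End_S(P')$. The construction can be repaired by passing to the subgroup of pairs whose induced actions on $\End_S(P)$ agree exactly, but then one must \emph{prove} that this smaller group still surjects onto $Q$ (by correcting a chosen $\alpha'$ by a suitable $v_x\in\Aut_S(P')$ so that $\beta'(\alpha')=\tilde\sigma(x)\otimes\beta(\alpha)$) and that the resulting induced structure on $\End_S(\tilde M)\cong A$ is $\sigma$; these are exactly the steps you defer to ``the Morita identifications together with \cref{diagact}'', so the argument is not complete as written. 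Note also that \cref{diagact} is proved for \emph{equivariant} structures, where the kernels act through $\mathrm U(S)$; in the merely normal situation the kernels act through the full groups $\Aut_S(P)$, $\Aut_S(P')$, which is the source of the difficulty above.

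The paper's own proof of the converse in (ii) takes a different and shorter route through Subsection \ref{10.29}. From $[(A,\sigma)]=0$ one first gets $[A]=0$ in $\mathrm B(S)$, hence $A\cong\End_S(M)$ by \cite[Proposition~5.3]{MR0121392}, so that $[(\End_S(M),\sigma)]$ lies in $\XBSQ$. The class is then detected by the injective homomorphism \eqref{tenthr} into $\mathrm H^1(Q,\Pic(S))$, realized by the derivation $d_{(M,\sigma)}\colon Q\to\Pic(S)$, $q\mapsto[\Hom_{\End_S(M)}({}^{w(q)}M,M)]$ of Propositions \ref{10.4}--\ref{10.6}; vanishing of the class forces $d_{(M,\sigma)}$ to be an inner derivation, and \cref{10.7} then twists $M$ to $M\otimes J$ so that each module $\Hom_{\End_S(M)}({}^{w(q)}M,M)$ becomes free on a single generator $u_q$, which is precisely the required semi-linear lift of $q$. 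The $\Pic(S)$-obstruction you gesture at is thus made explicit there as a $1$-cocycle and killed by a twist of the module rather than by the auxiliary split tensor factors; if you want to salvage your diagonal-action argument, the matching condition you need is equivalent to the innerness of this derivation, so you would in effect be reproving Propositions \ref{10.4}--\ref{10.7}.
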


\begin{proof}
Functoriality of the crossed Brauer group is a consequence of
\cref{2.5.1}(i) together
with the fact that induced structures on split algebras are preserved
under change of rings. Statement (i) is a consequence
of \cref{8.2}, combined
with Propositions \ref{2.4.1}, \ref{2.4.3} and \ref{2.5.1}(iii). 
We leave the details 
to the reader. 
As for Statement (ii),
the \lq\lq if\rq\rq\ statement is manifest but the 
\lq\lq only if\rq\rq\ statement is not.
We shall complete the proof thereof in the next subsection.
\end{proof}

\begin{rema}
A variant of the homomorphism \eqref{teich2}, written there as $\rho$, 
is given in
\cite[Theorem 3.4 (i)]{MR1803361}
by the assignment to a $Q$-normal Azumaya $S$-algebra
of an explicit 3-cocycle of $Q$ with values in $\mathrm U(S)$.
The statement of Theorem \ref{nineo}(ii) generalizes  
\cite[Proposition~5.3]{MR0121392}.
\end{rema}

\subsection{Crossed Brauer group and Picard group}
\label{10.29}

Given a morphism $(f, \varphi ) \colon (S, Q, \kappa)  \to (T,G, \lambda )$ 
in the change of actions category $\mathcat{Change}$
introduced 
in Subsection \ref{coa}, we denote by $\mathrm{XB}(T|S;G,Q)$ 
the kernel of the 
induced homomorphism from $\mathrm{XB}(S,Q)$ to $\mathrm{XB}(T,G)$.
Thus $\mathrm{XB}(S|S;Q,Q)$ is the trivial group whereas
$\XBSQ$ is the kernel of the forgetful homomorphism 
from $\mathrm{XB}(S,Q)$ to $\mathrm{B}(S)$.
The notation $\mathrm{XB}(\,\cdot \, ;\,\cdot \,   ,\,\cdot \,)$
might look a bit heavy to the reader;
we use it for the sake of consistency with 
a similar notation for the equivariant case
which we introduce in 
\cref{releqb}.

We  now define a homomorphism
from $\XBSQ$ to $\mathrm H^1(Q, \Pic (S))$.
To this end, let $M$ be a faithful finitely generated projective 
$S$-module, and consider the split $S$-algebra $\End_S(M)$.
Given an algebra 
automorphism $\alpha$ of $\End_S(M)$, 
let
${}^\alpha M$ denote the $\End_S(M)$-module whose $\End_S(M)$-module structure is given 
by the formula
\begin{equation*}
a\cdot x = ({}^\alpha a) y \ , \ a\in \End_S(M), \, y\in M ;
\end{equation*}
in particular, $S$ then acts on ${}^\alpha M$ by
\begin{equation*}
s\cdot y = ({}^{\alpha |} s) y, \ s\in S, \ y\in M ,
\end{equation*}
and so the association $a\mapsto {}^\alpha a $ yields an isomorphism 
$\End_S(M) \to \End_S( {}^\alpha M)$ of $S$-algebras. Consequently,
$J (\alpha ) = \Hom_{\End_S(M)}({}^\alpha M, M)$ is a 
faithful
finitely generated 
projective rank one $S$-module, cf., e.~g., 
\cite[Lemma~9]{MR0148709}, and 
the evaluation map
\[
 \Hom ({}^\alpha M, M)\otimes {}_{\End_S(M)}{}^\alpha M \longrightarrow M
\]
is an isomorphism of $S$-modules \cite[Prop.~A.6]{MR0117252}.

Now, let $\sigma $ be a $Q$-normal structure on $\End_S(M)$. Then
$(\End_S(M),\sigma )$ represents a member of $\XBSQ$.
Let $w \colon Q\to  \Aut (\End_S(M))$ be a morphism of the underlying sets 
which lifts $\sigma$.

\begin{lem}
Let $w' \colon Q \to \Out (\End_S(M))$ be another lifting of $\sigma $. 
Given $q\in Q$, there is an $S$-linear automorphism 
$\alpha_q \colon M \to M$ so that
\begin{equation*}
{}^{w(q)}a = \alpha_q ({}^{w'(q)}a)\alpha_q^{-1} 
\colon M \longrightarrow M, \ a\in \End_S(M),
\end{equation*}
whence the class $[J(w(q))]\in \Pic(S)$
depends only on $M$ and $\sigma$ and not on the choice of $w$.
Hence
the map
\begin{equation}
d_{(M,\sigma)} \colon Q \longrightarrow \Pic(S), 
\ 
d_{(M,\sigma)}(q) = [J (w(q))]\in \Pic (S),\  q\in Q,
\label{der1}
\end{equation}
is well-defined in the sense that it does not depend on the choice
of $w$. 
\end{lem}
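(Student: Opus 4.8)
The plan is to produce, for each $q \in Q$, an explicit $S$-module isomorphism $J(w(q)) \cong J(w'(q))$; this forces $[J(w(q))] = [J(w'(q))]$ in $\Pic(S)$ and hence the independence of $d_{(M,\sigma)}$ from the chosen lift. The identity in the statement is the scaffolding that makes this comparison possible, so I would prove it first and then feed it into a twisted-module argument.

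First I would produce the automorphism $\alpha_q$. Since $w$ and $w'$ both lift $\sigma$, the two algebra automorphisms $w(q)$ and $w'(q)$ of $\End_S(M)$ have the same image $\sigma(q)$ in $\Out(\End_S(M))=\mathrm{coker}(\partial)$; hence they differ by an inner automorphism of $\End_S(M)$. By the crossed module $(\mathrm U(\End_S(M)),\Aut(\End_S(M)),\partial)$ of Subsection \ref{2.1}, an inner automorphism is conjugation by a unit of $\End_S(M)$, and here $\mathrm U(\End_S(M))=\Aut_S(M)$. Thus there is an $S$-linear automorphism $\alpha_q$ of $M$ with $w(q)(a)=\alpha_q\, w'(q)(a)\,\alpha_q^{-1}$ for all $a\in\End_S(M)$, which is exactly the asserted identity ${}^{w(q)}a = \alpha_q({}^{w'(q)}a)\alpha_q^{-1}$.

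Next I would use $\alpha_q$ to compare the twisted modules. Writing $\beta=w(q)$ and $\beta'=w'(q)$, a direct check shows that $\alpha_q$, regarded as a map ${}^{\beta'}M \to {}^{\beta}M$, is $\End_S(M)$-linear: for $a\in\End_S(M)$ and $y\in M$ one has $\alpha_q(a\cdot y)=\alpha_q\beta'(a)y$, while $a\cdot\alpha_q(y)=\beta(a)\alpha_q(y)=\alpha_q\beta'(a)\alpha_q^{-1}\alpha_q y=\alpha_q\beta'(a)y$. Being an invertible $S$-linear map, $\alpha_q$ is therefore an isomorphism ${}^{\beta'}M \cong {}^{\beta}M$ of $\End_S(M)$-modules, and precomposition $f\mapsto f\circ\alpha_q$ gives an isomorphism $\Hom_{\End_S(M)}({}^{\beta}M,M)\to\Hom_{\End_S(M)}({}^{\beta'}M,M)$, i.e. $J(w(q))\cong J(w'(q))$.

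The one point requiring care — and the main obstacle — is verifying that this last map respects the $S$-module structures that make $J(w(q))$ and $J(w'(q))$ objects of $\Pic(S)$. Here I would record explicitly that $S$ acts on $J(\alpha)=\Hom_{\End_S(M)}({}^{\alpha}M,M)$ through the target copy of $M$, via $(s\cdot f)(y)=s(f(y))$; this is $\End_S(M)$-linear because $S$ is central in $\End_S(M)$, and by that same centrality it agrees with the action induced from the source. With this convention $(s\cdot f)\circ\alpha_q = s\cdot(f\circ\alpha_q)$ is immediate, so $f\mapsto f\circ\alpha_q$ is $S$-linear and an isomorphism in $\Pic(S)$. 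Consequently $[J(w(q))]=[J(w'(q))]$, and since $q$ was arbitrary the map $d_{(M,\sigma)}$ of \eqref{der1} depends only on $M$ and $\sigma$. Everything after the first step is routine; the only thing one must not gloss over is the placement of the $S$-action, since a careless choice would twist it by $\alpha_q|_S$ and break $S$-linearity.
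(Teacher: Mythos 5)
Your proof is correct and follows essentially the same route as the paper, which obtains $\alpha_q$ directly from the exactness of $\Aut_S(M)\to\Aut(\End_S(M))\to\Out(\End_S(M))\to 1$. The remainder of your argument simply writes out the routine verification, left implicit in the paper, that conjugate liftings give $S$-linearly isomorphic modules $J(w(q))\cong J(w'(q))$.
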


\begin{proof}
This is an immediate consequence of the exactness of the sequence
\[
\Aut_S(M) \longrightarrow
\Aut (\End_S(M))
\longrightarrow
\Out (\End_S(M))
\longrightarrow
1.
\]
\end{proof}

The following is immediate.

\begin{prop}
\label{10.4}
Given $q, r\in Q$, the $S$-modules $J(w(qr))$ and 
$J(w(q)) \otimes {}^q J(w(r))$ are isomorphic in an obvious way.
Consequently the map $d_{(M,\sigma)}$ defined by {\rm \eqref{der1}}
is a derivation on $Q$ with values in $\Pic(S)$. \qed
\end{prop}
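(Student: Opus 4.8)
The plan is to produce the asserted isomorphism by a direct construction and then read off the derivation property by passing to classes in $\Pic(S)$. Write $A=\End_S(M)$. First I would reduce to honest composites: by the preceding lemma the class $[J(w(g))]\in\Pic(S)$ does not depend on the chosen set-theoretic lift $w$, and both $w(qr)$ and the composite $w(q)\circ w(r)$ lift $\sigma(qr)$; hence $[J(w(qr))]=[J(w(q)\circ w(r))]$, and it suffices to exhibit a natural isomorphism $J(w(q)\circ w(r))\cong J(w(q))\otimes {}^q J(w(r))$ of $S$-modules.

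Next I would build the map from the underlying semilinear maps. An element $\varphi\in J(w(q))=\Hom_A({}^{w(q)}M,M)$ is, on the level of the underlying abelian group $M$, a $\kappaQ(q)^{-1}$-semilinear endomorphism, and similarly for $\psi\in J(w(r))$; the defining $A$-linearity of these maps with respect to the twisted $A$-actions shows that a suitable composite of the two underlying maps again satisfies the $A$-linearity defining $J(w(q)\circ w(r))=\Hom_A({}^{w(q)w(r)}M,M)$. The content of the construction is to assemble these composites into a single $S$-linear map out of the tensor product $J(w(q))\otimes {}^q J(w(r))$. Here I would use the grade-$q$ automorphism $w(q)$ itself to transport the second factor: the twist functor ${}^{w(q)}(-)$ on $A$-modules is precisely the incarnation of the $Q$-action by $q$ on $\Pic(S)$, so that $\varphi$ composed with the $w(q)$-transport of $\psi$ lands in $J(w(q)\circ w(r))$ and depends $S$-bilinearly on $(\varphi,\psi)$ with respect to $J(w(q))$ and the $q$-twisted factor ${}^q J(w(r))$. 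Bijectivity I would verify by tensoring the candidate map with ${}^{w(q)w(r)}M$ over $A$ and invoking the three evaluation isomorphisms $J(w(g))\otimes_A {}^{w(g)}M\cong M$ recalled above, which reduce it to a canonical identification; alternatively one localizes at each maximal ideal of $S$, where all three rank one projective modules become free and the map is given by a unit.

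Finally, passing to isomorphism classes in $\Pic(S)$ turns the displayed isomorphism into $d_{(M,\sigma)}(qr)=d_{(M,\sigma)}(q)+{}^q d_{(M,\sigma)}(r)$, which is exactly the cocycle identity for a derivation on $Q$ with values in the $Q$-module $\Pic(S)$; the normalization $d_{(M,\sigma)}(e)=0$ follows since $J(\mathrm{id})=\End_A(M)\cong S$.

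I expect the main obstacle to be the bookkeeping of the semilinear twists: one must check that the composite is genuinely $S$-linear into the correct twisted tensor factor, not merely additive or semilinear, and that the twist which appears is precisely the $Q$-action by $q$ (and not, say, by $r$ or by $q^{-1}$). This is sensitive to the conventions for ${}^{w(q)}M$, for the order of composition of automorphisms, and for the functorial $Q$-module structure on $\Pic(S)$, and making all three line up is the delicate computational heart of the argument; the consistency checks $q=e$ and $r=e$ serve as a useful guard against sign errors in the twist.
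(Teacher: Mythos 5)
The paper offers no argument here at all (the proposition is declared ``immediate''), so there is no written proof to compare against; judged on its own terms your plan is the right one and essentially complete. The reduction to $J(w(q)w(r))$ via the preceding lemma, the realization of the isomorphism by composing the underlying twisted-$A$-linear maps, the proof of bijectivity by tensoring with ${}^{w(q)w(r)}M$ over $\End_S(M)$ and invoking the evaluation isomorphisms (or by localizing at maximal ideals), and the passage to classes in $\Pic(S)$ are exactly what is needed.

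The one point you must still nail down is the one you yourself flag, and your proposed safeguard is not strong enough to catch an error there. With the paper's convention $a\cdot y=({}^{\alpha}a)y$ on ${}^{\alpha}M$ one has ${}^{\gamma}({}^{\delta}M)={}^{\delta\circ\gamma}M$, so the composite landing in $\Hom_{\End_S(M)}({}^{w(q)w(r)}M,M)$ is $\psi\circ\varphi$ with $\varphi\in J(w(q))$ applied \emph{first}, i.e.\ $\psi$ precomposed with the ${}^{w(r)}$-transport of $\varphi$ --- not ``$\varphi$ composed with the $w(q)$-transport of $\psi$'', which with the usual reading of ``composed with'' lands in $\Hom_{\End_S(M)}({}^{w(r)w(q)}M,M)$. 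One must then compute $\psi\circ(s\varphi)$ and $(s\psi)\circ\varphi$ from the defining relations $\varphi(({}^{w(q)}a)y)=a\varphi(y)$ and $\psi(({}^{w(r)}a)y)=a\psi(y)$ to see on which tensor factor the twist sits and by which group element; obtaining the stated normalization $J(w(q))\otimes{}^{q}J(w(r))$, rather than a formula of the shape $[J(w(qr))]={}^{r^{-1}}[J(w(q))]+[J(w(r))]$ (which is what a careless choice of composite produces), is forced only after matching against the paper's convention for the $Q$-action on $\Pic(S)$. Your consistency checks $q=e$ and $r=e$ do not discriminate between these two candidate cocycle identities, since both collapse to trivialities when either argument is the identity; a check such as $r=q^{-1}$, or a direct comparison with the conjugation action of $Q$ on the kernel of the extension $\Pic(S)\rightarrowtail\Out(\End_S(M),Q)\twoheadrightarrow Q$, is needed to certify that the twist really is ${}^{q}$ and really sits on the second factor. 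Once that bookkeeping is done, the rest of your argument (bijectivity and the derivation identity, with $d_{(M,\sigma)}(e)=0$ from $J(\mathrm{id})\cong S$) goes through as you describe.
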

\begin{prop}
\label{10.5}
The class of $d_{(M,\sigma)}$ in $\mathrm H^1(Q, \Pic (S))$ depends only on
$(\End_S(M),\sigma)$ and not on a particular choice of $M$.
\end{prop}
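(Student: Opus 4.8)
The plan is to show that any two faithful finitely generated projective $S$-modules presenting the same $Q$-normal split algebra $(\End_S(M),\sigma)$ differ by a twist by a rank one projective module, and that such a twist changes the derivation $d_{(M,\sigma)}$ only by a principal derivation, so that the class in $\mathrm H^1(Q,\Pic(S))$ is unaffected.

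First I would reduce to a single elementary move. Suppose $M_1$ and $M_2$ are faithful finitely generated projective $S$-modules together with an $S$-algebra isomorphism $\End_S(M_1)\cong\End_S(M_2)$ carrying the two copies of $\sigma$ into one another; write $A$ for this common algebra and view $M_1,M_2$ as $A$-progenerators with $\End_A(M_1)\cong S\cong\End_A(M_2)$. Standard Morita theory, which also underlies the exactness in Proposition \ref{skolnoet}, then furnishes a rank one projective $S$-module $J_0=\Hom_A(M_1,M_2)$ and an $A$-linear isomorphism $M_2\cong M_1\otimes J_0$. Hence it suffices to compare $d_{(M,\sigma)}$ with $d_{(M\otimes J_0,\sigma)}$ for a fixed $J_0\in\Pic(S)$, where the canonical identification $\End_S(M\otimes J_0)\cong\End_S(M)=A$ (using $\End_S(J_0)=S$) carries $\sigma$ to $\sigma$, so that a single set-theoretic lift $w\colon Q\to\Aut(A)$ of $\sigma$ serves for both modules.

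Next I would carry out the twist computation. Fix $q\in Q$; the lift $w(q)\in\Aut(A)$ restricts to $\kappaQ(q)$ on the scalars $S\subseteq A$, since $\sigma(q)$ lifts $\kappaQ(q)$ under $\res\colon\Out(A)\to\Aut(S)$. A direct inspection of the $A$-action shows that twisting $M\otimes J_0$ by $w(q)$ distributes as
\begin{equation*}
{}^{w(q)}(M\otimes J_0)\;\cong\;\bigl({}^{w(q)}M\bigr)\otimes\bigl({}^qJ_0\bigr),
\end{equation*}
where ${}^qJ_0$ is the $Q$-translate of $J_0$ for the action on $\Pic(S)$ used throughout (Subsection \ref{piccat}), the $Q$-twist on the line-bundle factor arising precisely because $w(q)$ acts on $S\subseteq A$ through $\kappaQ(q)$. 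Since $A$ acts only on the first tensor factor and the line-bundle factors carry trivial $A$-action, the adjunction isomorphisms for $-\otimes J_0$ give
\begin{equation*}
\Hom_A\bigl({}^{w(q)}(M\otimes J_0),\,M\otimes J_0\bigr)\;\cong\;\Hom_A\bigl({}^{w(q)}M,\,M\bigr)\otimes\bigl({}^qJ_0\bigr)^*\otimes J_0 .
\end{equation*}
Passing to classes in $\Pic(S)$, this reads $d_{(M\otimes J_0,\sigma)}(q)=d_{(M,\sigma)}(q)+[J_0]-{}^q[J_0]$.

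Finally I would conclude. The assignment $q\mapsto[J_0]-{}^q[J_0]$ (or its negative, according to the chosen convention for the $Q$-action on $\Pic(S)$) is the coboundary of the $0$-cochain $[J_0]\in\Pic(S)$, hence a principal derivation; consequently $d_{(M\otimes J_0,\sigma)}$ and $d_{(M,\sigma)}$ represent the same class in $\mathrm H^1(Q,\Pic(S))$, and combined with the Morita reduction of the second paragraph this proves the assertion. I expect the main obstacle to be the middle step: one must verify carefully that the twist of $M\otimes J_0$ by $w(q)$ factors as the twist of $M$ tensored with the translate ${}^qJ_0$, getting the $Q$-action convention on the line-bundle factor right so that the discrepancy is exactly $\partial[J_0]$ rather than some other derivation; the Morita reduction and the $\Hom$-adjunction identities are routine once this bookkeeping is pinned down.
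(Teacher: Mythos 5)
Your proposal is correct and follows essentially the same route as the paper: both identify $M'$ with $M\otimes J$ for the rank one projective module $J=\Hom_{\End_S(M)}(M,M')$ via Morita theory, and both show that the two derivations differ by the principal derivation $q\mapsto [J]\,({}^q[J])^{-1}$. The only difference is that you spell out the twist computation $\Hom_A({}^{w(q)}(M\otimes J_0),M\otimes J_0)\cong J(w(q))\otimes({}^qJ_0)^*\otimes J_0$ in more detail, where the paper simply asserts the resulting inner derivation.
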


\begin{proof}
Let $M'$ be another faithful finitely generated 
projective $S$-module having the property that $\End_S(M)$ and $\End_S(M')$
are isomorphic central $S$-algebras. Then 
\[
J = \Hom_{\End_S(M)}(M,M') 
\]
is a faithful finitely generated 
projective rank one
$S$-module in such a way that 
$M'$ and $M\otimes J$ are isomorphic ${\End_S(M)}$-modules under the
obvious map, the ${\End_S(M)}$-action on the latter being given by
\begin{equation*}
a(y\otimes f) = ay\otimes f,\  a\in {\End_S(M)}, \, y \in M,\ f\in J.
\end{equation*}
Given $q\in Q$, the module  
$\Hom_S({}^qJ,S)$ represents ${}^q[J]^{-1}\in \Pic (S)$,
and so $d_{(M,\sigma)}$ and $d_{(M',\sigma)}$ differ by the inner derivation
\begin{equation*}
Q \longrightarrow\Pic (S),\ q \longmapsto [J] ({}^q[J]^{-1}), \ q \in Q.
\end{equation*}
Hence the class of $d_{(M,\sigma)}$ in $\mathrm H^1(Q, \Pic (S))$ depends only on
$({\End_S(M)},\sigma)$ and not on the choice of $M$.
\end{proof}

\begin{prop}
\label{10.6}
Suppose that the $Q$-normal structure $\sigma $ on $\End_S(M)$
is induced. Then $d_{(M,\sigma)}$ is zero.
\end{prop}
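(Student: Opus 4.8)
The plan is to exploit the very construction that makes $\sigma$ induced, and to choose the set-theoretic lifting $w$ occurring in the definition of $d_{(M,\sigma)}$ so that it is compatible with that construction. Since the preceding Lemma guarantees that the class $[J(w(q))]\in\Pic(S)$, and hence the derivation $d_{(M,\sigma)}$, does not depend on the choice of $w$, I am free to select any convenient lifting. By hypothesis $\sigma$ is induced, so by the construction in Subsection \ref{indqn} there are a group $\Gamma$, a surjection $\pi\colon\Gamma\to Q$, and a semi-linear action of $\Gamma$ on $M$, that is ${}^x(sy)=({}^{\pi(x)}s)\,({}^xy)$ for $x\in\Gamma$, $s\in S$, $y\in M$, whose associated action $\beta\colon\Gamma\to\Aut(\End_S(M))$ induces $\sigma$. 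First I would fix, for each $q\in Q$, a pre-image $x_q\in\Gamma$ of $q$ under $\pi$ and set $w(q)=\beta(x_q)$; this is a lifting of $\sigma$ of the underlying sets, exactly as wanted.

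The heart of the argument is to identify the twisted module ${}^{w(q)}M$. Writing $\phi_q\colon M\to M$ for the semi-linear bijection $m\mapsto{}^{x_q}m$, the definition of $\beta$ gives $\phi_q(am)=({}^{w(q)}a)\,\phi_q(m)$ for all $a\in\End_S(M)$ and $m\in M$, while semi-linearity gives $\phi_q(sm)=({}^qs)\,\phi_q(m)$ for $s\in S$. Reading the first identity against the defining formula $a\cdot y=({}^{w(q)}a)y$ for the $\End_S(M)$-module structure of ${}^{w(q)}M$, I would check that $\phi_q$ is an isomorphism $M\to{}^{w(q)}M$ of $\End_S(M)$-modules; the second identity shows, moreover, that $\phi_q$ is $S$-linear once $S$ is made to act on the target by $s\cdot y=({}^qs)y$, which is precisely the $S$-action carried by ${}^{w(q)}M$ (recall that $w(q)$ restricts to $\kappaQ(q)$ on $S$). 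Thus ${}^{w(q)}M\cong M$ as $\End_S(M)$-modules in an $S$-compatible way.

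It then follows that precomposition with $\phi_q$ furnishes an $S$-module isomorphism
\[
J(w(q))=\Hom_{\End_S(M)}({}^{w(q)}M,M)\xrightarrow{\ \cong\ }\Hom_{\End_S(M)}(M,M)=\End_{\End_S(M)}(M),
\]
the $S$-structures on both sides being taken through the common target $M$. The final input, which I regard as the main technical point to be spelled out carefully, is the classical double-centralizer fact that for a faithful finitely generated projective $S$-module $M$ the canonical map $S\to\End_{\End_S(M)}(M)$ sending a scalar to the corresponding homothety is an isomorphism, i.e. $S$ and $\End_S(M)$ are mutual commutants on $M$. Granting this, $J(w(q))\cong S$ is free of rank one, so $[J(w(q))]=0\in\Pic(S)$ for every $q\in Q$; hence $d_{(M,\sigma)}(q)=0$ for all $q$ and $d_{(M,\sigma)}$ is the zero derivation. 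The only points requiring genuine care are the two compatibility checks of the second paragraph (that $\phi_q$ respects both the $\End_S(M)$- and the twisted $S$-structure) and the invocation of the double-centralizer identification; everything else is bookkeeping.
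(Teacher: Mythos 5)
Your proof is correct and follows the same route as the paper: choose the set-theoretic lift $w=\beta\circ w'$ coming from the semi-linear $\Gamma$-action and observe that each $J(w(q))$ is then free of rank one. The paper states the isomorphism $J(w(q))\cong S$ without justification, whereas you supply the details (the semi-linear bijection $\phi_q\colon M\to{}^{w(q)}M$ and the identification $\End_{\End_S(M)}(M)\cong S$ for a faithful finitely generated projective $M$), which is a welcome but inessential elaboration.
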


\begin{proof}
Some semi-linear action of some group $\Gamma $
on $M$  maps onto $Q$ via (say) $\pi \colon \Gamma \to Q$ and
yields an action
$\beta \colon \Gamma \to \Aut (\End_S(M))$ 
of $\Gamma$ on $\End_S(M)$
which, in turn, induces $\sigma $.
Let $w' = Q \to \Gamma$ be a section 
for $\pi $ of the underlying sets, and let
$w = \beta w'$. Then each finitely generated projective rank one $S$-module
$J(w(q)) = \Hom_{\End_S(M)} ({}^{w(q)}M,M)$,
as $q$ ranges over $Q$,
 is isomorphic to $S$, 
and so $d_{(M,\sigma)}$ is zero.
\end{proof}

\begin{prop}
\label{10.7}
Suppose that $d_{(M,\sigma)}$ is an inner derivation, that is to say, 
there is a faithful finitely
generated projective rank one $S$-module $J$ so that, 
for each $q\in Q$, the $S$-modules
$J(w(q))$ and ${}^q J \otimes \Hom_S(J,S)$ are isomorphic.
Then 
$({\End_S(M)},\sigma )$ is an induced $Q$-normal 
split algebra.
\end{prop}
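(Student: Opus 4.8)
The plan is to use the hypothesis to replace $M$ by a Brauer-equivalent module $M'$ for which the associated derivation vanishes, and then to prove directly that a vanishing derivation forces the $Q$-normal structure to be induced. The hypothesis says precisely that $d_{(M,\sigma)}$ is the principal derivation $q\mapsto {}^q[J]\,[J]^{-1}$ determined by the class $[J]\in\Pic(S)$, since $[{}^qJ\otimes\Hom_S(J,S)]={}^q[J]\,[J]^{-1}$. First I would set $M'=M\otimes J$ (replacing $J$ by its $S$-dual $\Hom_S(J,S)$ should the sign convention demand it); then $\End_S(M')\cong\End_S(M)$ canonically as $S$-algebras, this isomorphism carries $\sigma$ to a $Q$-normal structure $\sigma'$ on $\End_S(M')$, and the two $Q$-normal algebras $(\End_S(M),\sigma)$ and $(\End_S(M'),\sigma')$ are isomorphic. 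By the computation carried out in the proof of Proposition \ref{10.5}, passing from $M$ to $M\otimes J$ alters $d_{(M,\sigma)}$ by the inner derivation $q\mapsto[J]\,{}^q[J]^{-1}$, which exactly cancels $d_{(M,\sigma)}$, so that $d_{(M',\sigma')}=0$.

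It then remains to show that $d_{(M',\sigma')}=0$ implies $\sigma'$ is induced. Choose, for each $q\in Q$, a lift $w'(q)\in\Aut(\End_S(M'))$ of $\sigma'(q)$. The vanishing of the derivation means $J(w'(q))=\Hom_{\End_S(M')}({}^{w'(q)}M',M')\cong S$, which, by the Morita dictionary for the Azumaya algebra $\End_S(M')$, is equivalent to ${}^{w'(q)}M'\cong M'$ as $\End_S(M')$-modules. Unwinding such an isomorphism produces an $R$-linear automorphism $h_q$ of $M'$, semilinear over $\kappaQ(q)$ in the sense that $h_q(sy)={}^q\!s\,h_q(y)$, with $w'(q)=h_q(\,\cdot\,)h_q^{-1}$. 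Consequently the pair $(h_q,q)$ lies in $\Aut(M',Q)$ and the grade homomorphism $\pi^{\substack{\mbox{\tiny{$\Aut(M',Q)$}}}}\colon\Aut(M',Q)\to Q$ is surjective. By Proposition \ref{8.3}(ii) and the remark following it, I may take $\Gamma=\Aut(M',Q)$ acting tautologically and semilinearly on $M'$; the induced action $\beta\colon\Aut(M',Q)\to\Aut(\End_S(M'))$ sends $(h_q,q)$ to $w'(q)$, so the $Q$-normal structure induced by this action assigns to $q$ the class of $w'(q)$, namely $\sigma'(q)$. Hence $\sigma'$ is an induced $Q$-normal structure on $\End_S(M')$, and therefore $(\End_S(M),\sigma)\cong(\End_S(M'),\sigma')$ is an induced $Q$-normal split algebra.

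The routine but delicate bookkeeping is the sign of the twist in the first step, where one must match the inner derivation produced in Proposition \ref{10.5} against the principal derivation supplied by the hypothesis, replacing $J$ by $\Hom_S(J,S)$ if necessary; and the precise direction of semilinearity of $h_q$ in the second step. The main conceptual point, and the only one genuinely using the Azumaya hypothesis, is the chain of equivalences $J(w'(q))\cong S\iff{}^{w'(q)}M'\cong M'\iff w'(q)$ is conjugation by a semilinear automorphism; this rests on the generalized Skolem--Noether theorem (Proposition \ref{skolnoet}) and the Morita equivalence between $\End_S(M')$-modules and $S$-modules, and it is exactly what allows the pointwise liftings $w'(q)$ to be realized simultaneously inside the single group $\Aut(M',Q)$ rather than merely one index at a time.
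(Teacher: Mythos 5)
Your proof is correct and follows essentially the same route as the paper: replace $M$ by $M\otimes J$ so that each module $\Hom_{\End_S(M\otimes J)}({}^{w(q)}(M\otimes J),M\otimes J)$ becomes free of rank one, and read off a generator as a semi-linear automorphism realizing $w(q)$ by conjugation, whence the structure is induced (via $\Gamma=\Aut(M',Q)$ as in Proposition \ref{8.3}). The only cosmetic difference is that you phrase the first step as cancelling $d_{(M,\sigma)}$ against the inner derivation from Proposition \ref{10.5}, whereas the paper computes the tensor product of rank-one modules directly; these are the same calculation.
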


\begin{proof}
For each $q\in Q$,
\begin{equation*}
\Hom_{\End_S(M)} ({}^{w(q)}(M\otimes J), M\otimes J) 
\cong J(w(q)) \otimes \Hom_S ({}^q J,S) \otimes J \cong S.
\end{equation*}
Hence, replacing $M$ by $M\otimes J$, we may assume 
that $\End_S(M)$ has the property that each 
projective rank one $S$-module $\Hom_{\End_S(M)}({}^{w(q)}M,M)$ is a free
$S$-module $Su_q$ on a single generator $u_q$, necessarily 
an isomorphism $u_q \colon {}^{w(q)} M \to M$  of ${\End_S(M)}$-modules. 
For each $q\in Q$, the isomorphism $u_q$ then satisfies the identity 
\begin{equation*}
(u_q)^{-1}(ay) = {}^{w(q)} a (u_q)^{-1}(y),\ a \in {\End_S(M)},\, y\in M.
\end{equation*}
Thus each $q\in Q$ extends to a semi-linear transformation 
of $M$, and $({\End_S(M)},\sigma )$ is therefore an induced $Q$-normal 
split algebra.
\end{proof}

Given a $Q$-normal Azumaya $S$-algebra $(A,\sigma)$
that represents a member of
$\mathrm{XB}(S|S;{e},Q)$, 
in view of normal Brauer equivalence,
there are faithful finitely generated projective $S$-modules
$M_1$ and $M_2$ together with an induced $Q$-normal structure
$\sigma_1\colon Q \to \Out(\End_S(M_1))$ on $\End_S(M_1)$
such that 
$A \otimes \End_S(M_1)$ and $\End_S(M_2)$ are isomorphic as central 
$S$-algebras and, 
under this isomorphism, the $Q$-normal structure
$\sigma\otimes \sigma_1$ on $A \otimes \End_S(M_1)$
corresponds to a $Q$-normal structure
$\sigma_2\colon Q \to \End_S(M_2)$ on $\End_S(M_2)$.

\begin{thm}
\label{10.2} 
The assignment to the class  $[(A,\sigma)] \in \mathrm {XB}(S|S;\{e\},Q)$
of a $Q$-normal Azumaya $S$-algebra $(A,\sigma)$
that represents a member of
$\mathrm{XB}(S|S;{e},Q)$
of $[d_{M_2,\sigma_2}] \in \mathrm H^1(Q,\Pic(S))$
yields an injective homomorphism
\begin{equation}
\label{tenthr}
\iota\colon      
\XBSQ \longrightarrow \mathrm H^1(Q, \Pic (S))
\end{equation}
which is, furthermore, natural on the change of actions category $\mathcat{Change}$ introduced in 
Subsection {\rm \ref{coa}} above.
\end{thm}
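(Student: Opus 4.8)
The plan is to reduce the whole statement to the case of \emph{split} $Q$-normal Azumaya algebras and to control the behaviour of the derivation $d_{(M,\sigma)}$ under tensor products. The crucial auxiliary fact I would establish first is an \emph{additivity lemma}: given two faithful finitely generated projective $S$-modules $M$ and $N$ carrying $Q$-normal structures $\sigma$ and $\tau$ on $\End_S(M)$ and $\End_S(N)$, the canonical identification $\End_S(M\otimes N)\cong\End_S(M)\otimes\End_S(N)$ and, for set-theoretic lifts $w,v$ of $\sigma,\tau$, the observation ${}^{w(q)\otimes v(q)}(M\otimes N)={}^{w(q)}M\otimes{}^{v(q)}N$ together with the natural isomorphism
\[
\Hom_{\End_S(M)\otimes\End_S(N)}\bigl({}^{w(q)}M\otimes{}^{v(q)}N,\,M\otimes N\bigr)
\cong
\Hom_{\End_S(M)}({}^{w(q)}M,M)\otimes\Hom_{\End_S(N)}({}^{v(q)}N,N)
\]
yield $J((w\otimes v)(q))\cong J(w(q))\otimes J(v(q))$, and hence the equality of $\Pic(S)$-valued derivations $d_{(M\otimes N,\sigma\otimes\tau)}=d_{(M,\sigma)}+d_{(N,\tau)}$. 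This is a direct computation for the compatible lift $w\otimes v$, and by the independence of $d$ from the chosen lift it gives the corresponding identity of classes in $\mathrm H^1(Q,\Pic(S))$.

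With this in hand I would prove that $\iota$ is a well-defined homomorphism. By definition of $\XBSQ$ as the kernel of the forgetful homomorphism to $\mathrm B(S)$, every one of its classes is represented, after normal Brauer equivalence, by a split $Q$-normal Azumaya algebra $(\End_S(M_2),\sigma_2)$, and by \cref{10.5} the class $[d_{(M_2,\sigma_2)}]$ depends only on the $Q$-normal algebra $(\End_S(M_2),\sigma_2)$, not on the module $M_2$; the same transport-of-structure argument shows isomorphic $Q$-normal split algebras have equal $d$-classes. Independence of the splitting choices then reduces to showing that normally Brauer equivalent split algebras have equal $d$-classes: if $(\End_S(M_2)\otimes\End_S(P),\sigma_2\otimes\rho)\cong(\End_S(M_2')\otimes\End_S(P'),\sigma_2'\otimes\rho')$ with $\rho,\rho'$ induced, the additivity lemma gives $[d_{(M_2,\sigma_2)}]+[d_{(P,\rho)}]=[d_{(M_2',\sigma_2')}]+[d_{(P',\rho')}]$, and \cref{10.6} forces $[d_{(P,\rho)}]=[d_{(P',\rho')}]=0$, whence $[d_{(M_2,\sigma_2)}]=[d_{(M_2',\sigma_2')}]$. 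The same additivity, combined with the fact that tensor product of $Q$-normal algebras induces the group operation in the crossed Brauer group, shows at once that $\iota$ is a homomorphism.

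For injectivity I would invoke \cref{10.7}. Suppose $\iota([(A,\sigma)])=0$, so that $d_{(M_2,\sigma_2)}$ is an inner derivation for the split representative $(\End_S(M_2),\sigma_2)$. Then \cref{10.7} shows that $(\End_S(M_2),\sigma_2)$ is an induced $Q$-normal split algebra, and by the standard construction of $\mathrm{XB}(S,Q)$ such an algebra represents the identity element. Since $A\otimes\End_S(M_1)\cong\End_S(M_2)$ compatibly with the $Q$-normal structures and $\sigma_1$ is induced, in $\mathrm{XB}(S,Q)$ one gets $[(A,\sigma)]=[(\End_S(M_2),\sigma_2)]-[(\End_S(M_1),\sigma_1)]=0$, proving injectivity. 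Finally, naturality on $\mathcat{Change}$ I would obtain by base change: a morphism $(f,\varphi)\colon(S,Q,\kappa)\to(T,G,\lambda)$ sends $(\End_S(M),\sigma)$ to $(\End_T(T\otimes_S M),\sigma_{(f,\varphi)})$, cf. \cref{2.5.1}(i), and since $M$ is finitely generated projective the functor $T\otimes_S(-)$ commutes with the relevant $\Hom$'s, giving $T\otimes_S J(\alpha)\cong J(T\otimes_S\alpha)$ as rank one $T$-modules; choosing the $G$-lift to be the base change of $w\circ\varphi$ and using independence of the lift, one reads off $d_{(T\otimes_S M,\sigma_{(f,\varphi)})}=(f,\varphi)_{\ast}\,d_{(M,\sigma)}$, which is the commutativity of the naturality square.

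The step I expect to be the main obstacle is the well-definedness argument: it is where the additivity lemma, the independence-of-module statement \cref{10.5}, the vanishing on induced structures \cref{10.6}, and the bookkeeping of normal Brauer equivalence with its induced correction factors must be combined coherently. Once additivity is established, the homomorphism property, the injectivity via \cref{10.7}, and the naturality become comparatively formal.
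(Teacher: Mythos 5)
Your proof is correct and follows essentially the same route as the paper, which simply invokes Propositions \ref{10.5} and \ref{10.6} for well-definedness, Proposition \ref{10.7} for injectivity, and leaves the homomorphism property and naturality to the reader. Your additivity lemma $d_{(M\otimes N,\sigma\otimes\tau)}=d_{(M,\sigma)}+d_{(N,\tau)}$ is precisely the detail the paper leaves implicit, and you combine it with the cited propositions exactly as intended.
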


\begin{proof}
Propositions \ref{10.5} and \ref{10.6}
entail that \eqref{tenthr} is well-defined, and
Proposition \ref{10.7} implies that
the map \eqref{tenthr} is injective.
We leave the proofs that the map 
\eqref{tenthr} is a natural homomorphism of abelian groups to the reader.
\end{proof}

\begin{rema}
The injectivity of \eqref{tenthr} may be found
in \cite[Theorem 3.3]{MR1803361}.
\end{rema}

\begin{proof}[Proof of Theorem~{\rm \ref{nineo} (ii)}]
Suppose that $[(A,\sigma)] =0 \in \mathrm {XB}(S,Q)$.
By \cite[Proposition~5.3]{MR0121392}, there is a faithful finitely 
generated projective $S$-module $M$ such that $A \cong \End_S(M)$,
and $[(\End_S(M),\sigma)]=0 \in \mathrm {XB}(S|S;\{e\},Q)$.
Then the derivation $d_{(M,\sigma)}$ is an inner derivation
whence, by Proposition \ref{10.7},
$(\End_S(M),\sigma)$ is an induced $Q$-normal split algebra.
\end{proof}

\subsection{The generalized crossed Brauer group}
\label{gencrob}

Inspection shows that the assignment to a $Q$-normal Azumaya algebra
$(A,\sigma)$ of the associated generalized $Q$-normal Azumaya algebra
of $\mathcat{Rep}(Q,\mathcat B_{S,Q})$,
cf. {\rm \eqref{Thetas}}, yields a homomorphism
\begin{equation}
\theta
\colon
\mathrm{XB}(S,Q) \longrightarrow
k\mathcat{Rep}(Q,\mathcat B_{S,Q})
\label{theta}
\end{equation}
of abelian groups, cf.
\cite[Section 3, a few lines before Theorem 3.2; Theorem 3.3]{MR1803361}.
We  therefore refer to
$k\mathcat{Rep}(Q,\mathcat B_{S,Q})$ as the 
{\em generalized crossed Brauer group\/} of $S$ 
relative to $\kappaQ \colon Q \to \Aut(S)$.
Let $\mathrm{can}\colon \mathrm B(S,Q)\to \mathrm H^0(Q,\mathrm B(S))$ 
denote the canonical injection.

\begin{thm}
\label{tentwentytw}
\noindent
{\rm (i)} 
The diagram
\begin{equation}
\begin{gathered}
\xymatrix{
0 \ar[r] 
&\XBSQ \ar[d]^{\iota}  \ar[r] 
&\mathrm{XB}(S,Q) \ar[d]^{\theta}  \ar[r]
&\mathrm B(S,Q)\ar[d]^{\mathrm{can}}  
\\ 
0 \ar[r]
&\mathrm H^1(Q,\Pic (S)) \ar[r]^{j_{\mathcat B_{S,Q}}} 
&k\mathcat{Rep}(Q,\mathcat B_{S,Q}) \ar[r]^{\mumu_{\mathcat B_{S,Q}}} 
&\mathrm H^0(Q,\mathrm B(S))
}
\end{gathered}
\label{CD02}
\end{equation}
is a commutative diagram of abelian groups with exact rows whence, 
in particular, the homomorphism $\theta$ is injective;
here $j_{\mathcat B_{S,Q}}$ is the homomorphism of abelian group 
{\rm \eqref{inj1}} above for $\mathcat C_Q=\mathcat B_{S,Q}$,
and $\mumu_{\mathcat B_{S,Q}}$ refers to the canonical homomorphism, 
cf., e.~g., the sequence \eqref{FW} above.

\noindent
{\rm (ii)} The composite of $\theta$ with {\rm \eqref{teich1}}
coincides with {\rm \eqref{teich2}}.

\noindent
{\rm (iii)} If the group $Q$ is finite, the homomorphism
$\theta$ is surjective and hence an isomorphism
whence
$\iota$ is then an isomorphism as well.

\noindent
{\rm (iv)} If the group $Q$ is finite, the homomorphism
$\mathrm{can}\colon \mathrm{B}(S,Q) \to \mathrm H^0(Q,\mathrm B(S))$
is the identity.
\end{thm}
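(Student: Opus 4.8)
The plan is to treat the four parts in order of increasing difficulty, disposing of (ii) and (iv) by direct appeal to earlier results and then doing the real work for (i) and (iii). Part (ii) is immediate from Theorem \ref{genaz}: by definition $\theta$ sends a $Q$-normal Azumaya algebra $(A,\sigma)$ to the generalized $Q$-normal Azumaya algebra $(A,\Theta_\sigma)$ of \eqref{Thetas}, which arises from the ordinary $Q$-normal structure $\sigma$; hence the identity \eqref{identity1} gives $t(\theta[(A,\sigma)]) = [\mathrm e_{(\BsA^{\mathrm{op}},\sigma_{\BsA^{\mathrm{op}}})}] = [\mathrm e_{(A,\sigma)}]$, which is exactly the value of the Teich\-m\"uller map \eqref{teich2}. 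Part (iv) is literally Corollary \ref{2.21}, since $Q$ finite forces $\kappaQ(Q)$ to be finite.

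For (i) I would first check that both rows are exact. The top row is exact at $\XBSQ$ because that group is defined as a kernel, and exact at $\mathrm{XB}(S,Q)$ because the forgetful map lands in $\mathrm B(S,Q)$ (a $Q$-normal Azumaya algebra is weakly $Q$-normal, as $\sigma(q)$ lifts $\kappaQ(q)$ into $\Out(A)$) and its kernel is $\XBSQ$ by definition; normal Brauer equivalence does not change the underlying Brauer class, so the map is well defined. The bottom row is the initial segment of the exact sequence \eqref{FW} for $\mathcat C_Q = \mathcat B_{S,Q}$, with $\mathrm U(\mathcat B_{S,Q}) = \Pic(S)$ and $\mathrm H^0(Q,\mathrm B(S)) = \mathrm B(S)^Q$. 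Commutativity of the right-hand square is then transparent: both composites send $[(A,\sigma)]$ to the $Q$-fixed Brauer class $[A]$, since $\mumu_{\mathcat B_{S,Q}}$ is the forgetful map and $k\mathcat B_{S,Q}\cong\mathrm B(S)$.

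The genuine content of (i) is the left-hand square. Given $[(A,\sigma)]\in\XBSQ$ I would use $[A]=0$ in $\mathrm B(S)$ together with Proposition 5.3 of \cite{MR0121392} to write $A = \End_S(M)$ for a faithful finitely generated projective $M$, so that $\iota[(A,\sigma)] = [d_{(M,\sigma)}]$ with $d_{(M,\sigma)}(q) = [J(w(q))]$ as in \eqref{der1}, while $\theta[(A,\sigma)] = [(\End_S(M),\Theta_\sigma)]$ with $\Theta_\sigma(q) = [A_{w(q)}]$. The Morita equivalence $\End_S(M)\sim S$ given by the invertible $(S,\End_S(M))$-bimodule $M^*=\Hom_S(M,S)$ induces, by Proposition \ref{isomsq}, an isomorphism $\Aut_{\mathcat B_{S,Q}}(\End_S(M))\to\Aut_{\mathcat B_{S,Q}}(S)$ carrying the grade-$q$ bimodule $A_{w(q)}$ to $M^*\otimes_A A_{w(q)}\otimes_A M$. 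The key computation is to identify this grade-$q$ $(S,S)$-bimodule with the line bundle $J(w(q)) = \Hom_{\End_S(M)}({}^{w(q)}M,M)$; granting this, the transported representation $\Theta_\sigma$ is exactly the section of the grade extension $\Pic(S)\to\Aut_{\mathcat B_{S,Q}}(S)\to Q$ that the class $d_{(M,\sigma)}$ represents under the interpretation \eqref{inter1}, so $j_{\mathcat B_{S,Q}}([d_{(M,\sigma)}]) = \theta[(A,\sigma)]$. This bimodule identification, matching the two independent recipes for the associated $\mathrm H^1(Q,\Pic(S))$-class, is where I expect the only real work to lie; it is a bookkeeping argument with the definitions of $A_\alpha$, ${}^\alpha M$ and $J(\alpha)$, carried out so as to respect grades. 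Injectivity of $\theta$ then follows from the already-established injectivity of $\iota$ (Theorem \ref{10.2}) and of $j_{\mathcat B_{S,Q}}$ (part of the exact sequence) by a short chase.

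For (iii), surjectivity of $\theta$ is the content of Theorem \ref{fundclass1}: a class in $k\mathcat{Rep}(Q,\mathcat B_{S,Q})$ is represented by a homomorphic section $\sigma_A\colon Q\to\Aut_{\mathcat B_{S,Q}}(A)$, so $Q_A=Q$; since $Q$, hence $\kappaQ(Q)$, is finite, $\BsA$ and $\BsA^{\mathrm{op}}$ are Azumaya, and Theorem \ref{fundclass1} produces the $Q$-normal Azumaya $S$-algebra $(\BsA^{\mathrm{op}},\sigma_{\BsA^{\mathrm{op}}})$ together with the Morita isomorphism $\MsA\colon\BsA^{\mathrm{op}}\to A$ in $\mathcat B_{S,Q}$ identifying $(A,\sigma_A)$ with $(\BsA^{\mathrm{op}},\Theta_{\sigma_{\BsA^{\mathrm{op}}}}) = \theta[(\BsA^{\mathrm{op}},\sigma_{\BsA^{\mathrm{op}}})]$; thus $\theta$ is onto, and with (i) it is an isomorphism. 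Finally $\iota$ is an isomorphism by a four-term diagram chase: given $\xi\in\mathrm H^1(Q,\Pic(S))$, set $y = \theta^{-1}(j_{\mathcat B_{S,Q}}(\xi))$; exactness of the bottom row gives $\mumu_{\mathcat B_{S,Q}}(\theta(y)) = 0$, commutativity of the right square and injectivity of $\mathrm{can}$ force $y\in\XBSQ$, and commutativity of the left square with injectivity of $j_{\mathcat B_{S,Q}}$ then yield $\iota(y) = \xi$, so $\iota$ is surjective and, being already injective, an isomorphism.
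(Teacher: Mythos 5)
Your proposal is correct and follows essentially the same route as the paper: (ii) from Theorem \ref{genaz}, (iv) from Corollary \ref{2.21}, injectivity of $\theta$ deduced from that of $\iota$ (Theorem \ref{10.2}) via the commutative diagram, and surjectivity in (iii) from Theorem \ref{fundclass1}. You merely supply details the paper compresses into ``commutativity of the diagram is straightforward'' (the bimodule bookkeeping for the left square) and make explicit the four-term chase showing $\iota$ is onto, which the paper leaves implicit.
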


In particular,
when the group $Q$ is a finite group,
the exact sequence \eqref{FW}
is valid with $\mathrm {XB}(S,Q)$
substituted for 
$k\mathcat{Rep}(Q,\mathcat B_{S,Q})$.

\begin{rema}
\label{frwall}
The injectivity of \eqref{theta} may be found
in \cite[Theorem 4 p.~43]{genbrauer}, \cite[Theorem 3.3]{MR1803361},
but spelled out for monoids rather than groups.
The commutative diagram \eqref{CD02} is essentially the diagram
in \cite[Theorem 3.3]{MR1803361}.
Statement (ii) above is equivalent to the statement of
\cite[Theorem 3.4 (iii)]{MR1803361}.
Statement (iii) above is equivalent to the statement of
\cite[Theorem 4.1 (ii)]{MR1803361}.
\end{rema}

\begin{proof}
Commutativity of the diagram \eqref{CD02} is straightforward, 
and the injectivity
of $\iota$, established in Theorem \ref{10.2} above, 
entails that of $\theta$.
 Statement (ii) follows at once from 
Theorem \ref{genaz}. 
Statement (iii) is an immediate consequence of the fact that 
in case the group $Q$ is finite,
given the generalized $Q$-normal Azumaya $S$-algebra
$(A,\sigma)$,
the associated $Q$-normal algebra 
$(\BBB^{\mathrm{op}},\sigma_{\substack{\mbox{\tiny{$\BBB^{\mathrm{op}}$}}}})$, cf.
Theorem \ref{fundclass1}, is a $Q$-normal Azumaya $S$-algebra
that represents a member of $\mathrm{XB}(S,Q)$
which, under $\theta$, goes to the class of $(A,\sigma)$ 
in $k\mathcat{Rep}(Q,\mathcat B_{S,Q})$.
Statement (iv) is the statement of Corollary \ref{2.21}.
\end{proof}

\section{The equivariant Brauer group}
\label{eleven}

As before, $S$ denotes a commutative ring and 
$\kappaQ\colon Q \to \Aut(S)$ an action of a group $Q$ on $S$. 

\subsection{The construction}

Given a split algebra  $\End_S(M)$ for some faithful
$S$-module $M$,
we  refer to an induced $Q$-equivariant structure
$\phi\colon Q \to \Aut(\End_S(M))$ on $\End_S(M)$ 
that arises from an $S^tQ$-module structure
on $M$ as a {\em trivially induced\/} $Q$-{\em equivariant
structure\/}, and we then refer to
\linebreak 
$(\End_S(M),\phi) $ as a
{\em trivially induced $Q$-equivariant split algebra\/}.
We  say that
two {$Q$-equivariant} Azumaya $S$-algebras $(A_1,\tau_1)$ and 
 $(A_2,\tau_2)$ are {\em equivariantly Brauer equivalent\/}
if there are 
$S^tQ$-modules $M_1$ and $M_2$
whose  
underlying $S$-modules are
faithful finitely
generated projective such that, relative to the associated
$Q$-equivariant structures
\[
\phi_1\colon Q \to \Aut(B_1),
\
B_1 = \End_S(M_1),
\
\phi_2\colon Q \to \Aut(B_2),
\
B_2 = \End_S(M_2),
\]
the algebras
$(A_1 \otimes B_1, \tau_1 \otimes \phi_1 )$
and $(A_2 \otimes B_2, \tau_2 \otimes \phi_2)$
are isomorphic $Q$-equivariant {$S$-algebras}.
Since the tensor
product of two trivially induced $Q$-equivariant 
split algebras is again a trivially induced 
$Q$-equivariant
split algebra in an obvious manner, 
that relation 
is an equivalence relation, 
referred to henceforth as {\em equivariant Brauer equivalence\/},
cf. Section~\ref{9} above or \cite[p.~381]{MR0121392};
under the operation of
tensor product 
and under the assignment
to the class of an equivariant algebra $(A,\tau)$
of the class of $(A^{\mathrm{op}},\tau^{\mathrm{op}})$, 
the equivalence classes 
constitute an abelian group,
the identity element being
the equivalence class of
trivially induced $Q$-equivariant split algebras $(\End_S(M),\tau)$
where $M$ ranges over $S^tQ$-modules
whose underlying $S$-module is faithful and finitely generated projective.
This group is the 
{\em equivariant Brauer group\/} of $S$ with respect to
$\kappaQ\colon Q \to \Aut(S)$,  introduced 
in \cite{MR0409424} and 
\cite[p.~40]{genbrauer}.
We  denote this group by
$\mathrm{EB}(S,Q)$,
and we refer to the construction just given as the 
{\em standard construction\/}.

\subsection{Some properties of the equivariant Brauer group}

Let $R = S^Q$.
It is manifest that extension of scalars yields an obvious homomorphism
\linebreak
$\mathrm B(R) \to \mathrm{EB}(S,Q)$. If 
$S|R$ is a Galois extension of commutative rings with Galois
group $Q$, by Galois descent, cf. Subsection \ref{galext}~(ii),
that homomorphism is actually an isomorphism.

In the general case of an arbitrary action $\kappaQ \colon Q \to \Aut(S)$
of $Q$ on $S$, the following holds.

\begin{thm}
\label{elevo}
{\rm (i)} The equivariant Brauer group is a functor on the 
change of actions category $\mathcat{Change}$ introduced in
Subsection {\rm \ref{coa}}.

\noindent
{\rm (ii)} 
The assignment to a $Q$-equivariant Azumaya $S$-algebra of its
canonically associated $Q$-normal $S$-algebra yields a natural homomorphism
\begin{equation*}\res \colon \mathrm{EB}(S,Q) \longrightarrow \mathrm{XB}(S,Q).\end{equation*}

\noindent
{\rm (iii)} 
The composite
\begin{equation}
\label{11.2}
\mathrm{EB}(S,Q) \stackrel{\res}\longrightarrow 
\mathrm{XB}(S,Q) \stackrel{t}\longrightarrow 
\mathrm H^3(Q,\mathrm U(S)) 
\end{equation}
is zero. If furthermore,
the group $Q$ is finite, the sequence {\rm \eqref{11.2}}
is exact and, furthermore, necessarily natural in the data.
\end{thm}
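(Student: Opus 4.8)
The plan is to treat the three assertions separately, since (i) and (ii) are formal consequences of the change-of-actions machinery while the exactness in (iii) carries the genuine content. For (i), given a morphism $(f,\varphi)\colon (S,Q,\kappa)\to(T,G,\lambda)$ in $\mathcat{Change}$, I would use Proposition \ref{2.5.1}(ii) to attach to a $Q$-equivariant Azumaya $S$-algebra $(A,\tau)$ the canonical $G$-equivariant structure $\tau_{(f,\varphi)}$ on $T\otimes A$. Since scalar extension along $f$ preserves the Azumaya property and carries a trivially induced equivariant split algebra on a faithful finitely generated projective $M$ to one on $T\otimes M$ (via the identification $T\otimes\End_S(M)\cong\End_T(T\otimes M)$), this assignment respects equivariant Brauer equivalence and the tensor/opposite operations, hence descends to a homomorphism $\mathrm{EB}(S,Q)\to\mathrm{EB}(T,G)$; functoriality then reduces to that of the constructions in Proposition \ref{2.5.1}. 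For (ii), I would compose an equivariant structure $\tau\colon Q\to\Aut(A)$ with $\Aut(A)\to\Out(A)$ to obtain its associated $Q$-normal structure $\sigma_\tau$, and check that a trivially induced equivariant split algebra---arising from an $S^tQ$-module structure, hence in particular a semi-linear action---restricts to an induced $Q$-normal split algebra. This shows equivariant Brauer equivalence maps to normal Brauer equivalence, giving the natural homomorphism $\res$.

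For the vanishing of the composite \eqref{11.2}, the key point is that the associated $Q$-normal structure $\sigma_\tau$ of an equivariant algebra is itself equivariant, so Proposition \ref{2.3.1} immediately yields $t(\res[(A,\tau)])=[\mathrm e_{(A,\sigma_\tau)}]=0$.

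The substantive part, and the step I expect to be the main obstacle, is exactness at $\mathrm{XB}(S,Q)$ for finite $Q$, i.e.\ that $\ker t\subseteq\im\res$. Here I would start from a $Q$-normal Azumaya $S$-algebra $(A,\sigma)$ with $t[(A,\sigma)]=0$ and invoke Theorem \ref{fouro}: since $Q$ is finite, $\mathrm M_{\vert Q\vert}(A)$ is a genuine finite matrix algebra (again Azumaya), and the induced structure $\sigma_{\vert Q\vert}$ on it is equivariant; I fix an equivariant lift $\tau$ with $\sigma_\tau=\sigma_{\vert Q\vert}$. The crux is then to recognize $[(A,\sigma)]$ and $[(\mathrm M_{\vert Q\vert}(A),\sigma_{\vert Q\vert})]$ as the same class in $\mathrm{XB}(S,Q)$: under $\mathrm M_{\vert Q\vert}(A)\cong A\otimes\mathrm M_{\vert Q\vert}(S)$ the structure $\sigma_{\vert Q\vert}$ is $\sigma\otimes\sigma'$, where $\sigma'$ is the $Q$-normal structure associated to the obvious equivariant structure on $\mathrm M_{\vert Q\vert}(S)=\End_S(S^{\vert Q\vert})$ coming from $\kappaQ$. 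Because this $\sigma'$ comes from the semi-linear diagonal $S^tQ$-structure on the free module $S^{\vert Q\vert}$, it is an induced $Q$-normal structure, so $(\mathrm M_{\vert Q\vert}(S),\sigma')$ represents the identity of $\mathrm{XB}(S,Q)$ (its grade homomorphism is surjective by Proposition \ref{8.3}(iv)). Consequently
\[
[(A,\sigma)]=[(A\otimes\mathrm M_{\vert Q\vert}(S),\,\sigma\otimes\sigma')]=[(\mathrm M_{\vert Q\vert}(A),\sigma_{\vert Q\vert})]=\res[(\mathrm M_{\vert Q\vert}(A),\tau)],
\]
so $[(A,\sigma)]$ lies in the image of $\res$, as required. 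Naturality of the resulting exact sequence will follow from that of $\res$ in (ii) and of $t$ in Theorem \ref{nineo}(i). The delicate points I will need to watch are the precise identification $\sigma_{\vert Q\vert}=\sigma\otimes\sigma'$ over $S$ (both tensor factors must lift $\kappaQ(q)$ simultaneously, so the second factor is genuinely $\kappa_{Q,\vert Q\vert}$ rather than the identity) and the essential use of finiteness of $Q$, without which $\mathrm M_{\vert Q\vert}(A)$ is merely an endomorphism ring and Theorem \ref{fouro} fails to deliver an Azumaya representative.
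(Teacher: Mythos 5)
Your proposal is correct and follows essentially the same route as the paper: (i) and (ii) from Proposition \ref{2.5.1}(ii) together with the preservation of induced structures on split algebras under change of rings, the vanishing of $t\circ\res$ from Proposition \ref{2.3.1}, and exactness at $\mathrm{XB}(S,Q)$ from Theorem \ref{fouro} plus the observation that for finite $Q$ the matrix algebra $\mathrm M_{\vert Q\vert}(A)$ is again Azumaya. The paper's proof is a one-liner citing exactly these ingredients; your identification $[(A,\sigma)]=[(\mathrm M_{\vert Q\vert}(A),\sigma_{\vert Q\vert})]$ via $\sigma_{\vert Q\vert}=\sigma\otimes\kappa_{Q,\vert Q\vert}$ correctly fills in the detail the paper leaves implicit.
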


\begin{proof}
This follows from the observation that induced equivariant structures
on split algebras are preserved under change of rings, together with 
Proposition~\ref{2.5.1}(ii)
and Theorem~\ref{fouro}; we only note that in case that
$Q$ is a finite group the matrix algebra
$\mathrm M_{|Q|}(A)$ over an Azumaya $S$-algebra is again an Azumaya 
$S$-algebra. \end{proof}

\subsection{Group extensions and equivariant Azumaya 
algebras}

We maintain the notation $R=S^Q$. Let
$
\mathrm e_Q \colon \mathrm U(S) \stackrel{\iEQ}
\rightarrowtail \EQ 
\stackrel{\piEQ}
\twoheadrightarrow Q
$
be a group extension.
Then the crossed product 
 $R$-algebra
$(S,Q,{\mathrm e}_Q,\kappaQ \circ \piEQ)$ is defined.
By construction,
the left 
$S$-module  
$\BbB$
that underlies the algebra $(S,Q,{\mathrm e}_Q,\kappaQ \circ \piEQ)$
is free with basis in one-one 
correspondence with the elements of $Q$,
and the $Q$-equivariant structure \eqref{tauo}, viz.
\[
\tau_{\mathrm e_Q}\colon Q \longrightarrow \Aut({}_S\End (\BbB)),
\]
is defined.
When the group $Q$ is finite, the algebra 
${}_S\End(\BbB)$ is an Azumaya $S$-algebra.

\begin{prop}
Suppose that the group $Q$ is finite. Then the assignment to a group extension
$\mathrm e_Q$ of $Q$ by $\mathrm U(S)$ of the $Q$-equivariant algebra 
$({}_S\End(\BbB)^{\mathrm{op}},\tau^{\mathrm{op}}_{\mathrm e_Q})$
yields a homomorphism
\begin{equation}
\mathrm{cpr} \colon 
\mathrm H^2(Q,\mathrm U(S)) \longrightarrow \mathrm{EB}(S,Q)
\label{inftwty}
\end{equation}
of abelian groups that is natural on the change of actions category
$\mathcat{Change}$. \qed
\end{prop}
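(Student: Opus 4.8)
The plan is to check three things: that the assignment descends from extensions to $\mathrm H^2(Q,\mathrm U(S))$, that the resulting map carries Baer sum to the group law of $\mathrm{EB}(S,Q)$, and that it is natural on $\mathcat{Change}$. Throughout I use that, $Q$ being finite, $\BbB$ is a free $S$-module of rank $\vert Q\vert$, so $({}_S\End(\BbB)^{\mathrm{op}},\tau^{\mathrm{op}}_{\mathrm e_Q})$ is a $Q$-equivariant Azumaya $S$-algebra and represents a class in $\mathrm{EB}(S,Q)$, and that $\mathrm H^2(Q,\mathrm U(S))$ is the group of congruence classes of extensions of $Q$ by the $Q$-module $\mathrm U(S)$ under Baer sum, cf. \cite[Section IV.8]{maclaboo}. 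For well-definedness I first note that $({}_S\End(\BbB)^{\mathrm{op}},\tau^{\mathrm{op}}_{\mathrm e_Q})$ is attached to $\mathrm e_Q$ with no auxiliary choice, the crossed product $(S,Q,\mathrm e_Q,\kappaQ\circ\piEQ)$ and its semilinear $\EQ$-module structure being canonical. A congruence $\mathrm e_Q\to\mathrm e'_Q$ is an isomorphism of the middle groups over $Q$ and under $\mathrm U(S)$; since in both cases the action on $S$ factors through $Q$, it induces an isomorphism of twisted group rings carrying the defining ideal $\langle y-\iEQ(y)\rangle$ to its primed counterpart, hence an isomorphism $\BbB\cong\BbB'$ intertwining the two semilinear actions. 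The induced isomorphism ${}_S\End(\BbB)\cong{}_S\End(\BbB')$ then intertwines $\tau_{\mathrm e_Q}$ with $\tau_{\mathrm e'_Q}$, so the two classes in $\mathrm{EB}(S,Q)$ agree and $\mathrm{cpr}$ is defined on $\mathrm H^2(Q,\mathrm U(S))$.

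Additivity is the core. Writing $M=\BbB$ and $M'$ for the module of a second extension $\mathrm e'_Q$, with middle groups $\Gamma,\Gamma'$, the diagonal action of the fibre product $\Gamma\times_Q\Gamma'$ on $M\otimes_S M'$ is semilinear over $Q$, and its antidiagonal copy $\{(u,u^{-1}):u\in\mathrm U(S)\}$ acts by $\xi\otimes\xi'\mapsto u\xi\otimes u^{-1}\xi'=\xi\otimes\xi'$, that is, trivially. Hence the action descends to a semilinear action of the Baer sum group $\Gamma''=(\Gamma\times_Q\Gamma')/\{(u,u^{-1})\}$ representing $\mathrm e_Q+\mathrm e'_Q$, whose kernel $\mathrm U(S)$ again acts by scalars; so $M\otimes_S M'$ is a faithful finitely generated projective semilinear $\Gamma''$-module whose induced $Q$-equivariant structure on $\End_S(M\otimes_S M')$ is $\tau_{\mathrm e_Q}\otimes\tau_{\mathrm e'_Q}$. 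I then apply Proposition \ref{diagact}, taking $M\otimes_S M'$ for its $M$ and the crossed product module $M_{\mathrm e_Q+\mathrm e'_Q}$ for its $M_{\mathrm e}$ (both attached to the same extension $\Gamma''$), which yields
\begin{equation*}
\bigl({}_S\End(M\otimes_S M')^{\mathrm{op}}\otimes {}_S\End(M_{\mathrm e_Q+\mathrm e'_Q}),\,\tau^{\mathrm{op}}\otimes\tau\bigr)\cong\bigl(\End_S(\Hom_S(M\otimes_S M',M_{\mathrm e_Q+\mathrm e'_Q})),\,\tau_0\bigr)
\end{equation*}
together with the fact that $\Hom_S(M\otimes_S M',M_{\mathrm e_Q+\mathrm e'_Q})$ is an $S^tQ$-module, so $\tau_0$ is trivially induced and the right-hand class vanishes in $\mathrm{EB}(S,Q)$. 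Since the inverse in $\mathrm{EB}(S,Q)$ is the opposite algebra and the group law is tensor product, and since ${}_S\End(M)^{\mathrm{op}}\otimes{}_S\End(M')^{\mathrm{op}}\cong{}_S\End(M\otimes_S M')^{\mathrm{op}}$ carries $\tau^{\mathrm{op}}\otimes\tau^{\mathrm{op}}$ to $(\tau_{\mathrm e_Q}\otimes\tau_{\mathrm e'_Q})^{\mathrm{op}}$, the displayed vanishing reads precisely $\mathrm{cpr}(\mathrm e_Q)+\mathrm{cpr}(\mathrm e'_Q)=\mathrm{cpr}(\mathrm e_Q+\mathrm e'_Q)$.

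For naturality, given a morphism $(f,\varphi)\colon(S,Q,\kappa)\to(T,G,\lambda)$ of $\mathcat{Change}$, the induced map on $\mathrm H^2$ is pullback of extensions along $\varphi$ followed by pushout along $f_*\colon\mathrm U(S)\to\mathrm U(T)$, whereas on equivariant Brauer groups it is scalar extension $(A,\tau)\mapsto(T\otimes_S A,\tau_{(f,\varphi)})$ with its canonical $G$-equivariant structure, cf. Subsection \ref{coa}. The square then commutes once one identifies $T\otimes_S\BbB$, equipped with the semilinear action of the pulled-back and pushed-out group, with the crossed product module over $T$ of $(f,\varphi)^*\mathrm e_Q$; this is a routine comparison of twisted group rings under base change, which I would carry out and whose remaining bookkeeping I would leave to the reader.

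The step I expect to be the main obstacle is additivity, and inside it the identification of the tensor-product equivariant structure on $M\otimes_S M'$ with the structure descended from $\Gamma''$: the clean mechanism is precisely the triviality of the antidiagonal action, which is what turns Baer sum of extensions into tensor product of the associated split equivariant algebras.
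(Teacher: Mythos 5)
Your proposal is correct. Note, however, that the paper itself supplies no proof of this proposition: it is stated with a terminal \qed, i.e.\ treated as immediate from the preceding crossed-product machinery (\cref{tpf}, the remark that the crossed product depends only on the congruence class of the extension, and \cref{diagact}). So there is no argument in the paper to compare against line by line; what you have written is a legitimate filling-in of the omitted details, and it uses exactly the tools the surrounding text makes available. In particular, your additivity mechanism --- the antidiagonal copy of $\mathrm U(S)$ in $\Gamma\times_Q\Gamma'$ acting trivially on $M\otimes_S M'$, so that the Baer-sum group acts semilinearly with $\mathrm U(S)$ acting by scalars, followed by an appeal to \cref{diagact} to identify the two induced equivariant structures up to a trivially induced factor --- is precisely the mechanism the paper deploys later in the proof of \cref{twelvetw} (exactness at $\mathrm{EB}(S,Q)$), so your argument is fully consistent with the paper's intended reading of the proposition. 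The one point worth being explicit about, which you do handle, is that the $Q$-equivariant structure on $\End_S(M\otimes_S M')$ descended from the Baer-sum group coincides with $\tau_{\mathrm e_Q}\otimes\tau_{\mathrm e'_Q}$ under the canonical identification $\End_S(M\otimes_S M')\cong\End_S(M)\otimes\End_S(M')$; with that in place, passing to opposite algebras converts the identity from \cref{diagact} into the asserted additivity.
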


\begin{rema} When $S|R$ is a Galois extension of rings
with Galois group $Q$, Galois descent, cf. Subsection \ref{galext}~(ii), 
 yields an isomorphism
$\mathrm{EB}(S,Q) \cong \mathrm B(R)$, and \eqref{inftwty}
amounts to the composite of \eqref{ordinarycp} with the injection
$\mathrm B(S|R) \to \mathrm B(R)$.
\end{rema}

\section{The seven term exact sequence}
\label{12}

\begin{thm}
\label{twelvetw}
Suppose that the group $Q$ is finite. 
Then the extension
\begin{equation}
\ldots \longrightarrow 
(\Pic (S))^Q
\stackrel{\dDelta}\longrightarrow \mathrm H^2(Q,\mathrm U(S))
\stackrel{\mathrm{cpr}} \longrightarrow \mathrm{EB}(S,Q) \stackrel{\res}\longrightarrow
\mathrm{XB}(S,Q)
\stackrel{t}\longrightarrow \mathrm H^3(Q,\mathrm U(S))
\label{twelvet}
\end{equation}
of the exact sequence {\rm \eqref{ldes}}
is defined and yields a seven term exact sequence that is natural
in terms of the data.
If, furthermore, $S|R$ is a Galois extension of commutative rings 
over $R=S^Q$ with 
group $Q$, then,
with $\Pic (S|R)$,
$\Pic (R)$ and $\mathrm B(R)$ substituted for,
respectively 
$\mathrm H^1(Q,\mathrm U(S))$,
$\mathrm{EPic} (S,Q)$ and $\mathrm{EB}(S,Q)$,
the homomorphisms {\rm cpr} and {\rm res} being
modified accordingly,
the sequence is exact as well.
\end{thm}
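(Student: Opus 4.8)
The plan is to regard the passage from \eqref{ldes} to \eqref{twelvet} as the task of establishing exactness at the three new positions $\mathrm H^2(Q,\mathrm U(S))$, $\mathrm{EB}(S,Q)$ and $\mathrm{XB}(S,Q)$: exactness at $\mathrm H^1(Q,\mathrm U(S))$, $\mathrm{EPic}(S,Q)$ and $\Pic(S)^Q$ is exactly the content of the four-term sequence \eqref{ldes}, while exactness at $\mathrm{XB}(S,Q)$ together with $t\circ\res=0$ is Theorem \ref{elevo}(iii). Finiteness of $Q$ enters at two points: the map $\mathrm{cpr}$ of \eqref{inftwty} is only defined then (so that $\End_S$ of the free rank-$|Q|$ crossed-product module is Azumaya), and the exactness at $\mathrm{XB}(S,Q)$ rests on the matrix-algebra criterion of Theorem \ref{fouro}, which needs $\mathrm M_{|Q|}(A)$ to remain Azumaya. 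Naturality on $\mathcat{Change}$ will then follow termwise, the constituent maps $\dDelta$, $\mathrm{cpr}$, $\res$, $t$ having already been recorded as natural in \eqref{inftwty} and Theorems \ref{elevo} and \ref{nineo}. Throughout, the workhorse is Proposition \ref{diagact}, which computes the tensor product of two induced split equivariant algebras attached to one and the same group extension as a \emph{trivially} induced one.

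For exactness at $\mathrm{EB}(S,Q)$ I would first prove $\res\circ\mathrm{cpr}=0$: by Proposition \ref{tpf}(vii) the $Q$-normal structure underlying a crossed-product equivariant algebra is itself $Q$-equivariant, hence induced, so its class in $\mathrm{XB}(S,Q)$ vanishes by Theorem \ref{nineo}(ii). For the reverse inclusion $\ker(\res)\subseteq\im(\mathrm{cpr})$, take $(A,\tau)$ with $\res[(A,\tau)]=0$; by Theorem \ref{nineo}(ii) one may assume $A=\End_S(M)$ with underlying induced normal structure, whereupon Lemma \ref{8.00} and Corollary \ref{eighto} show that $\tau$ itself is an induced equivariant structure, arising from a semi-linear action of the extension $\Aut(M,Q,\tau)$ of $Q$ by $\mathrm U(S)$ (Proposition \ref{8.0}(ii)); call its class $c\in\mathrm H^2(Q,\mathrm U(S))$. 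Comparing $M$ with the crossed-product module $M_c$ for the same extension and applying Proposition \ref{diagact} exhibits $(A,\tau)$ and $\mathrm{cpr}(c)$ as mutually inverse modulo trivially induced split algebras, so $[(A,\tau)]=\mathrm{cpr}(-c)\in\im(\mathrm{cpr})$.

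For exactness at $\mathrm H^2(Q,\mathrm U(S))$ the composite relation $\mathrm{cpr}\circ\dDelta=0$ I would obtain by specializing Proposition \ref{diagact} to $M=J$, a $Q$-invariant rank-one projective bearing the tautological semi-linear action of $\Aut_{\mathcat{Pic}_{S,Q}}(J)$, whose extension class is $\dDelta([J])$; since $\End_S(J)\cong S$ with its given $Q$-action, Proposition \ref{diagact} forces the crossed-product algebra attached to $\dDelta([J])$ to be trivially induced, that is $\mathrm{cpr}(\dDelta([J]))=0$. The hard part will be the reverse inclusion $\ker(\mathrm{cpr})\subseteq\im(\dDelta)$: given $c$ with the crossed product equivariantly split, one must manufacture a genuine $Q$-invariant line bundle $J$ with $\dDelta([J])=c$. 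The approach is to unwind ``trivially induced'' for the semi-linear module $M_c$ into a $\Gamma_c$-semi-linear isomorphism $M_c\cong P\otimes_S J$ in which $P$ carries an honest $S^tQ$-structure and $J$ is of rank one, so that the whole obstruction $c$ is concentrated on the rank-one factor; identifying $\Aut_{\mathcat{Pic}_{S,Q}}(J)$ with $\Gamma_c$ then gives $\dDelta([J])=c$. Extracting such a $J$ from mere equivariant Brauer triviality is the principal technical obstacle, and this is where a Morita-type descent argument, in the spirit of Theorem \ref{10.2}, has to be supplied.

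Finally, for the Galois case I would transport exactness along the descent identifications of Subsection \ref{galext}(ii): $\mathrm{EB}(S,Q)\cong\mathrm B(R)$ and $\mathrm{EPic}(S,Q)\cong\Pic(R)$, with the structure maps corresponding to extension of scalars $-\otimes_R S$, together with $\mathrm H^1(Q,\mathrm U(S))\cong\Pic(S|R)$ coming from the isomorphism \eqref{picf} onto the subgroup of classes that become free of rank one over $S$. Under these isomorphisms $\mathrm{cpr}$ becomes the ordinary crossed-product map landing in $\mathrm B(S|R)\subseteq\mathrm B(R)$ and $\res$ becomes scalar extension, so exactness of the substituted sequence is immediate from the exactness of \eqref{twelvet} already established in the general finite case; naturality of the identifications on $\mathcat{Change}$ is routine and may be left to a verification.
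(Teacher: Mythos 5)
Your overall architecture coincides with the paper's: the three new exactness statements are proved separately, with finiteness of $Q$ entering exactly where you say, and the Galois case is handled by descent at the end. Your argument for exactness at $\mathrm{EB}(S,Q)$ is in fact a legitimate shortcut relative to the paper's: where you invoke Theorem \ref{nineo}(ii) to replace $(A,\tau)$ outright by $(\End_S(M),\tau)$ and then apply Corollary \ref{eighto} and Proposition \ref{diagact} to the single module $M$, the paper instead works with a normal Brauer equivalence $(A,\sigma_\tau)\otimes(\End_S(M_1),\sigma_1)\cong(\End_S(M_2),\sigma_2)$, stabilizes by $S^{|Q|}$, uses Theorem \ref{fouro} to lift $\sigma_1$ to an equivariant $\tau_1$, and only then applies Lemma \ref{8.00} and Proposition \ref{diagact} to both factors; the two routes reach the same conclusion $[(A,\tau)]=\mathrm{cpr}(\pm c)$, yours with less bookkeeping.

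The genuine gap is the inclusion $\ker(\mathrm{cpr})\subseteq\im(\dDelta)$, which you explicitly defer. Two things are wrong with the sketch you do give. First, the hypothesis $\mathrm{cpr}(c)=0$ does \emph{not} say that $(\End_S(M_{\mathrm e}),\tau_{\mathrm e})$ is itself trivially induced; it says only that it becomes trivially induced after tensoring, i.e.\ there are honest $S^tQ$-modules $M_1,M_2$ (faithful, finitely generated projective over $S$) with $(\End_S(M_{\mathrm e}),\tau_{\mathrm e})\otimes(\End_S(M_1),\tau_1)\cong(\End_S(M_2),\tau_2)$ as $Q$-equivariant algebras. So there is no $\Gamma$-semi-linear factorization $M_{\mathrm e}\cong P\otimes_S J$ to unwind; the obstruction has to be extracted from the comparison of $M_{\mathrm e}\otimes M_1$ with $M_2$. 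Second, the actual device is not a descent argument but the elementary one you half-anticipate from Theorem \ref{10.2}: set
\begin{equation*}
J=\Hom_{\End_S(M_{\mathrm e}\otimes M_1)}(M_{\mathrm e}\otimes M_1,\,M_2),
\end{equation*}
a faithful finitely generated projective rank one $S$-module, and let $\Gamma$ act on $J$ by $(x,f)\mapsto \pi(x)\circ f\circ x^{-1}$, where $x$ acts on $M_{\mathrm e}\otimes M_1$ diagonally through the semi-linear $\Gamma$-structure and $\pi(x)\in Q$ acts on $M_2$ through its $S^tQ$-structure. This action is semi-linear of grade $\pi(x)$, so it furnishes a homomorphism $\Gamma\to\Aut(J,Q)$ which is the identity on $\mathrm U(S)$ and covers the identity of $Q$, i.e.\ a congruence $(1,\cdot,1)\colon\mathrm e\to\mathrm e_J$ of group extensions; hence $[J]\in(\Pic(S))^Q$ and $\dDelta[J]=[\mathrm e]$. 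Without this construction the seven-term sequence is not established, since this is precisely the step that converts equivariant Brauer triviality of the crossed product into a $Q$-invariant Picard class. The remaining parts of your proposal (the composite relations, exactness at $\mathrm{XB}(S,Q)$, and the Galois-descent substitutions) are correct as stated.
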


\begin{rema}
The lower long sequence in
\cite[Theorem 4.2]{MR1803361}
yields a long exact sequence of the kind
\eqref{twelvet}.
\end{rema}

\begin{proof}
The {\em exactness at\/} $\mathrm{XB}(S,Q)$ follows from 
Theorem~\ref{fouro} or
Theorem \ref{elevo}.

\noindent{\em Exactness at\/} $\mathrm H^2(Q,\mathrm U(S))$:
Let $J$ be a finitely generated projective rank one $S$-module
representing a class in $(\Pic (S))^Q$, 
consider the group $\Aut(J,Q)\cong \Aut_{\mathcat{Pic}_{S,Q}}(J)$, 
i.~e., the group \eqref{eightthirtythree}
with $J$ substituted for $M$,
and let
\begin{equation}
\mathrm e_J \colon 0 \longrightarrow 
\mathrm U(S) \longrightarrow 
\Aut(J,Q) 
\stackrel{\pi_J}\longrightarrow Q \longrightarrow 1
\label{eJ}
\end{equation}
be the associated group extension 
\eqref{eUCC} with $\mathcat{Pic}_{S,Q}$ substituted for $\mathcat C_Q$
so that 
\[
\dDelta [J] = [\mathrm e_J]\in \mathrm H^2(Q,\mathrm U(S)).
\]
Consider
the crossed product $R$-algebra $(S,Q,{\mathrm e}_J,\kappaQ \circ\pi_J)$, 
let 
$M_{\mathrm e_J}$ denote the free $S$-module
that underlies  $(S,Q,{\mathrm e}_J,\kappaQ \circ\pi_J)$, and recall that
the corresponding association \eqref{assoc9}, now of the kind
$\Aut(J,Q) \times M_{\mathrm e_J} \to M_{\mathrm e_J}$,
induces, on  the  Azumaya
$S$-algebra $\End_S(M_{\mathrm e_J})$, 
via the association
\[
(\alpha,f) \longmapsto \alpha(f)=\alpha\circ f \circ \alpha^{-1}, \ 
\alpha \in \Aut(J,Q),\ f \in \End_S(M_{\mathrm e_J}),
\]
an induced $Q$-equivariant structure
$\tau_{\mathrm e_J}\colon Q \to \Aut(\End_S(M_{\mathrm e_J}))$
where we do not distinguish in notation between
$\alpha \in \Aut(J,Q)$ and 
the associated semi-linear  automorphism 
$\alpha\colon M_{\mathrm e_J} \to M_{\mathrm e_J}$.
By construction, then, the value $\mathrm {cpr}([\mathrm e_J]) \in 
\mathrm {EB}(S|S;Q,Q) \subseteq \mathrm {EB}(S,Q)$
is represented by the $Q$-equivariant Azumaya algebra
$(\End(M_{\mathrm e_J})^{\mathrm{op}},\tau_{\mathrm e_J}^{\mathrm{op}})$.

Since $J$ is a faithful finitely generated 
projective rank one $S$-module, the operation of
composition $(f,\varphi)\longmapsto f_{\sharp}(\varphi)=f \circ \varphi$, 
as $f$ ranges over $\End(M_{\mathrm e_J})$
and $\varphi$ over  $\Hom_S(J,M_{\mathrm e_J})$,
induces an isomorphism
\begin{equation}
\End_S(M_{\mathrm e_J}) \longrightarrow
\End_S(\Hom_S(J,M_{\mathrm e_J})),\ 
f \longmapsto f_{\sharp},
\label{iso9}
\end{equation}
of $S$-algebras.
Since the restriction
of the $\Aut(J,Q)$-action on $M_{\mathrm e_J}$
 to the kernel $\mathrm U(S)$ 
of $\pi_J$
 coincides with the $\mathrm U(S)$-module structure
coming from multiplication by members of the coefficient ring $S$,
the diagonal action
of $\Aut(J,Q)$ on $\Hom_S(J,M_{\mathrm e_J})$
given by the assignment to
$(\alpha,\varphi)$  of 
$\alpha_J(\varphi)=\alpha\circ \varphi \circ \alpha^{-1}$,
where $\alpha \in \Aut(J,Q)$ and $\varphi \in \Hom_S(J,M_{\mathrm e_J})$,
descends to an action
\begin{equation}
Q \times \Hom_S(J,M_{\mathrm e_J}) \longrightarrow
\Hom_S(J,M_{\mathrm e_J})
\label{desc1}
\end{equation}
of $Q$ on $\Hom_S(J,M_{\mathrm e_J})$
that turns  $\Hom_S(J,M_{\mathrm e_J})$
into an $S^tQ$-module.
The 
$S^tQ$-module structure \eqref{desc1} on
$\Hom_S(J,M_{\mathrm e_J})$, in turn,
induces 
a trivially induced $Q$-equivariant structure on 
$\End_S(\Hom_S(J,M_{\mathrm e_J}))$
via the association
\[
(\alpha,h) \longmapsto \alpha_J(h)=\alpha_J\circ h \circ \alpha_J^{-1}, \ 
\alpha \in \Aut(J,Q),\ h\in \End_S(\Hom_S(J,M_{\mathrm e_J})).
\]
By construction, given $\varphi \in \Hom_S(J,M_{\mathrm e_J})$,
 $f \in \End_S(M_{\mathrm e_J})$, 
 and
$\alpha \in \Aut(J,Q)$,
\begin{align*}
(\alpha_J(f_{\sharp})) (\varphi) &=
(\alpha_J\circ f_{\sharp} \circ \alpha_J^{-1})(\varphi)=
\alpha_J (f_{\sharp} (\alpha_J^{-1} (\varphi)))
=
\alpha_J (f_{\sharp} (\alpha^{-1} \circ \varphi \circ \alpha))
\\
&=\alpha_J (f \circ\alpha^{-1} \circ \varphi \circ \alpha)
=\alpha\circ (f \circ\alpha^{-1} \circ \varphi \circ \alpha)\alpha^{-1}
=\alpha \circ f \circ\alpha^{-1} \circ \varphi
\end{align*}
whence
the induced $Q$-equivariant structure 
$\tau_{\mathrm e_J}\colon Q \to
\Aut(\End_S(M_{\mathrm e_J}))$ 
gets identified, under the isomorphism \eqref{iso9}, with the 
trivially induced
$Q$-equivariant structure induced by the $S^tQ$-module structure
\eqref{desc1} on 
$\Hom_S(J,M_{\mathrm e_J})$.
Consequently $(\End_S(M_{\mathrm e_J}),\tau_{\mathrm e_J})$
represents zero in $\mathrm{EB}(S,Q)$.

Conversely, let
$\mathrm e \colon \mathrm U(S) \rightarrowtail \Ggamma\stackrel{\pi} 
\twoheadrightarrow Q$
be a group extension, 
consider the associated crossed product algebra
$(S,Q,\mathrm e, \kappaQ\circ \pi)$, 
let $M_{\mathrm e}$ denote the free $S$-module that underlies the algebra
$(S,Q,\mathrm e, \kappaQ\circ \pi)$,
let $\tau_{\mathrm e}\colon Q \to \Aut(\End_S(M_{\mathrm e}))$
denote the corresponding induced $Q$-equivariant structure,
and suppose that $(\End_S(M_{\mathrm e}), \tau_{\mathrm e})$ represents
zero in $\mathrm{EB}(S,Q)$. 
In view of equivariant Brauer equivalence,
there are 
$S^tQ$-modules $M_1$ and $M_2$ whose
underlying $S$-modules
are faithful
and finitely generated
projective so that,
in terms of the notation
$\tau_1\colon Q \to \Aut(\End_S(M_1))$
and
$\tau_2\colon Q \to \Aut(\End_S(M_2))$
for the associated trivially induced
$Q$-equivariant structures,
$(\End_S(M_{\mathrm e}),\tau_{\mathrm e})\otimes 
(\End_S(M_1),\tau_1)$ and $(\End_S(M_2),\tau_2)$ are isomorphic 
as $Q$-equivariant $S$-algebras.
Let $J 
= \Hom_{\End_S(M_{\mathrm e}\otimes M_1)}(M_{\mathrm e}\otimes M_1,M_2)$;
this is a
faithful
 finitely generated projective rank one $S$-module.
Moreover, the association
\begin{equation*}
\Ggamma \times J \longrightarrow J, \ 
(x,f)\mapsto \pi(x) f x^{-1} \colon M_{\mathrm e}\otimes M_1 
\longrightarrow M_2 ,
\ x\in \Ggamma,\ f\in J,
\end{equation*}
where we do not distinguish in notation 
between $x\in \Ggamma$ and the 
induced automorphisms of 
\linebreak
$M_{\mathrm e}\otimes M_1$ 
nor between $\pi(x) \in Q$ and the induced automorphism
of $M_2$, yields a
semi-linear action of $\Ggamma$ on $J$;
hence the group extension  $\mathrm e_J$, cf. \eqref{eJ}, is defined
relative to $J$,  
and the $\Ggamma$-action on $J$, in turn,
induces a homomorphism $\Ggamma \to \Aut(J,Q)$ and
hence yields a congruence
$(1,\cdot,1) \colon {\mathrm e} \to \mathrm e_J $ of group extensions;
in particular, $[J]\in (\Pic (S))^Q$. 
The congruence
$(1,\cdot,1) \colon {\mathrm e} \to \mathrm e_J $ of group extensions
entails that
$\dDelta [J] = [\mathrm e]\in \mathrm H^2(Q,\mathrm U(S))$.

\noindent{\em Exactness at $\mathrm{EB}(S,Q)$\/}:
Since for a group extension $\mathrm e$ of 
$\mathrm U(S)$ by $Q$ the $Q$-equivariant structure
$\tau_{\mathrm e}$ on the algebra $\End_S(M_{\mathrm e})$ 
of $S$-linear endomorphisms of
the free $S$-module $M_{\mathrm e}$ that underlies
the corresponding crossed product algebra
$(S,Q,\mathrm e, \kappaQ\circ \pi)$
is an induced $Q$-equivariant structure, it is 
obvious that the composite $\res \circ \,\mathrm{cpr}$ is
zero.

Now we  show  that $\ker (\res ) \subset \im (\mathrm{cpr} )$.
Thus, let $(A,\tau )$ 
be a $Q$-equivariant Azumaya $S$-algebra, and suppose that
the class 
of its associated $Q$-normal algebra
$(A,\sigma_\tau)$
goes to zero in $\mathrm{XB}(S,Q)$.
In view of normal Brauer equivalence, there
are induced $Q$-normal split algebras
$(\End_S(M_1),\sigma_1)$ and $(\End_S(M_2),\sigma_2)$
such that 
$
(A,\sigma_{\tau})\otimes (\End_S(M_1),\sigma_1)$
and 
$(\End_S(M_2),\sigma_2)$
are isomorphic as $Q$-normal $S$-algebras.
Replacing $M_1$ and $M_2$ by
$S^{|Q|}\otimes M_1$ and $S^{|Q|}\otimes M_2$, respectively,
and adjusting the notation accordingly,
since  $(\End_S(M_1),\sigma_1)$ has zero Teichm\"uller class,
by Theorem \ref{fouro},
the $Q$-normal structure $\sigma_1$ on
$\End_S(M_1)$ lifts to a $Q$-equivariant structure
$\tau_1\colon Q \to
\End_S(M_1)$ on $\End_S(M_1)$
such that
$\sigma_1=\sigma_{\tau_1}$.  Thus
$
(A\otimes \End_S(M_1),\sigma_{\tau \otimes \tau_1})
$
and $(\End_S(M_2),\sigma_2)$
are isomorphic as $Q$-normal $S$-algebras.
Since $\sigma_{\tau \otimes \tau_1}$ is equivariant,
so is $\sigma_2$;
more precisely, the $Q$-equivariant structure
$\tau \otimes \tau_1$ induces, 
on $\End_S(M_2)$,
via the
isomorphism
$A\otimes \End_S(M_1)\cong \End_S(M_2)$,
a $Q$-equivariant structure
$\tau_2\colon Q \to \End_S(M_2)$ such that
$(A\otimes \End_S(M_1),\tau \otimes \tau_1)
\cong (\End_S(M_2),\tau_2)$, and
$\sigma_2=\sigma_{\tau_2}$.
Since
$\sigma_2$ is an induced $Q$-normal structure,
the  homomorphism
$\pi^{\substack{\mbox{\tiny{$\Aut(M_2,Q)$}}}}\colon \Aut(M_2,Q)\to Q$ is surjective;
by Lemma \ref{8.00},
the induced homomorphism 
$\pi^{\substack{\mbox{\tiny{$\Aut(M_2,Q,\tau_2)$}}}}\colon 
\Aut(M_2,Q,\tau_2)\to Q$ 
is surjective
as well
whence the $Q$-equivariant structure
$\tau_2$ on $M_2$ is induced.

By Proposition \ref{diagact}, with respect to associated group
extensions 
$\mathrm e_1\colon 
\mathrm U(S) \stackrel{i_1}\rightarrowtail \Ggamma_1 \stackrel{\pi_1}
\twoheadrightarrow Q$
and
$\mathrm e_2\colon 
\mathrm U(S) \stackrel{i_1}\rightarrowtail \Ggamma_2 \stackrel{\pi_2}
\twoheadrightarrow Q$
and morphisms
\begin{align*}
(j_1,\beta_1)&\colon (\mathrm U(S),\Ggamma_1,i_1) \longrightarrow
(\Aut_S(M_1),\Aut(\End_S(M_1)),\partial_1),
\\
(j_2,\beta_2)&\colon (\mathrm U(S),\Ggamma_2,i_2) \longrightarrow
(\Aut_S(M_2),\Aut(\End_S(M_2)),\partial_2)
\end{align*}
of crossed modules 
inducing the $Q$-equivariant structures $\tau_1$ and $\tau_2$, respectively,
the $Q$-equivariant $S$-algebra
$(\End_S(M_1),\tau_1)$
is equivariantly Brauer equivalent
to a $Q$-equivariant $S$-algebra of the kind
$(\End_S(M_{\mathrm e_1}),\tau_{\mathrm e_1})$
and the $Q$-equivariant $S$-algebra
$(\End_S(M_2),\tau_2)$
to a $Q$-equivariant $S$-algebra of the kind
$(\End_S(M_{\mathrm e_2}),\tau_{\mathrm e_2})$.
Consequently 
the $Q$-equivariant $S$-algebra
$(A,\tau)$
is equivariantly Brauer equivalent
to a $Q$-equivariant $S$-algebra of the kind
$(\End_S(M_{\mathrm e}),\tau_{\mathrm e})$, for some
 group
extension 
$\mathrm e\colon 
\mathrm U(S) \rightarrowtail \Ggamma \twoheadrightarrow Q$
such that 
\[
[\mathrm e]+[\mathrm e_1]=[\mathrm e_2]\in \mathrm H^2(Q,\mathrm U(S))
\]
whence 
$
{\mathrm{cpr}([\mathrm e]) = [(A,\tau)]  \in 
\mathrm {EB}(S,Q)}$.

Suppose now that $S|R$ is a Galois extension of commutative rings
with Galois group $Q$. Then $(A,Q,\mathrm e,\kappaQ\circ\pi^\Ggamma)$ is an Azumaya $R$-algebra, see, e.~g., 
Proposition \ref{tpf}(xi) above; moreover,
by Proposition \ref{tpf}(ix) above,
the obvious homomorphism 
\begin{equation*}
S\otimes_R (A,Q,\mathrm e,\kappaQ\circ\pi^\Ggamma)^{\mathrm{op}} 
\longrightarrow \End_S(M_{\mathrm e})
\end{equation*}
is then
an isomorphism of $S$-algebras and, by  Proposition \ref{tpf}(x),
the $Q$-equivariant structure $\tau_{\mathrm e}$ comes from
scalar extension; hence 
the canonical homomorphism $\mathrm B(R) \to \mathrm{EB}(S,Q)$ 
is then an isomorphism.
Likewise, again by Galois descent, cf. Subsection \ref{galext}~(ii), 
the canonical homomorphisms $\mathrm H^1(Q,\mathrm U(S))\to \Pic (S|R)$ and
$\Pic (R) \to \mathrm{EPic} (S,Q)$ are isomorphisms.

We leave the  proofs of the naturality assertions to the 
reader. \end{proof}

\chapter{Crossed pairs and the relative case}
\label{cII}
\begin{abstract}
Using a suitable notion of normal Galois extension of
commutative rings,
we develop the relative theory of the generalized Teichm\"uller cocycle
map.
We interpret the theory in terms of the Deuring embedding problem,
construct an eight term exact sequence involving
the relative Teichm\"uller cocycle map
and suitable relative versions of generalized Brauer groups
and compare the theory with the
group cohomology eight term exact sequence 
involving crossed pairs.
We also develop somewhat more sophisticated 
versions of the ordinary, equivariant 
and 
crossed relative Brauer groups and show that the resulting exact sequences
behave better with regard to comparison of the theory with group cohomology 
than do 
the naive notions
of the generalized relative Brauer groups.
\end{abstract}

\section{Introduction}

We explore further the approach to the
\lq\lq Teichm\"uller cocycle map\rq\rq\ 
developed in 
\cref{cI}
in terms of crossed $2$-fold extensions.
We keep the numbering of items from the introduction
to \cref{cI}. 

\smallskip

\noindent (6)  In 
\cref{normalr},
we  introduce the concept of a $Q$-{\em normal Galois
extension of commutative rings\/}; associated with such a $Q$-normal
Galois extension  $T|S$ of commutative rings
is 
a {\em structure extension\/}
$\mathrm e_{(T|S)}\colon N\rightarrowtail G \twoheadrightarrow Q$
of $Q$ by the Galois group $N=\Aut(T|S)$  of $T|S$
and an action $G \to \Aut(T)$ of $G$ on $T$ by ring automorphisms.
In \cref{cpal},
we associate to a {\em crossed pair} $(\mathrm e, \psi )$
with respect to $\mathrm e_{(T|S)}$ and $\mathrm U(T)$, 
endowed with the $G$-module structure coming from the $G$-action on $T$,
see 
\cite{MR597986}
or \cref{cpal}
for details on the crossed pair concept, 
a $Q$-normal crossed product algebra
$(A_\mathrm e , \sigma_\psi )$, and
we refer to such a $Q$-normal algebra as a {\em crossed pair algebra}. 
The crossed pair algebra $(A_\mathrm e , \sigma_\psi )$ represents
a member of the kernel  $\mathrm{XB}(T|S; G, Q)$ of the obvious 
homomorphism from
$\mathrm{XB}(S,Q)$ to $\mathrm{XB}(T,G)$; this homomorphism 
exists and is unique, in view of the
functoriality of the crossed Brauer group. Actually,  the assignment to
$(\mathrm e , \psi )$ of $(A_\mathrm e , \sigma_\psi )$ yields a natural 
homomorphism
\begin{equation}
\mathrm{Xpext} (G,N;\mathrm U(T)) \longrightarrow \mathrm{XB}(T|S;G,Q)
\label{nat1}
\end{equation}
of abelian groups from 
the 
corresponding
abelian
group $\mathrm{Xpext} (G,N;\mathrm U(T)) $ of
congruence classes of crossed pairs introduced in
\cite{MR597986} to the subgroup $\mathrm{XB}(T|S;G,Q)$ of the
crossed Brauer group.
Concerning the crossed pair concept, suffice it to mention at this stage that
the notion of crossed pair generalizes that of crossed module.

\smallskip

\noindent (7) Let $T|S$ be a $Q$-normal Galois extension of commutative rings, 
with structure extension 
$\mathrm e_{(T|S)}\colon N\rightarrowtail G \twoheadrightarrow Q$
of $Q$ by $N=\Aut(T|S)$
and associated $G$-action on $T$. 
\cref{normalcrossed}
says that 
{\em a class  $k \in \mathrm H^3(Q, \mathrm U(S))$ 
is the Teich\-m\"uller class of some crossed pair algebra 
$(A_\mathrm e , \sigma_\psi )$
with respect to the data if and only if $k$ is split in}  $T| S$
in the sense that, under inflation 
$\mathrm H^3(Q,\mathrm U(S)) \to \mathrm H^3(G,\mathrm U(T))$,
the class $k$ goes to zero.
This yields a partial answer to the question as to which classes in 
$\mathrm H^3(Q,\mathrm U(S))$ are Teich\-m\"uller classes.

Now, in the classical situation, 
$T|S$ is a Galois field extension with Galois group $Q$, 
and we denote by $\mathrm B(T|S)$ the subgroup
of the Brauer group $\mathrm B(S)$ of $S$ 
that consists of the classes split by $T$;
the group
$\mathrm{Xpext}(G,N;\mathrm U(T))$ of crossed pair extensions
now comes down to  the subgroup $\mathrm H^2(N,\mathrm U(T))^Q$
of 
$\mathrm H^2(N,\mathrm U(T))$ that consists of the classes
fixed by $Q$,
the crossed 
Brauer group
$\mathrm{XB}(T|S;G,Q)$  then boils down to the subgroup
$\mathrm B(T|S)^Q$ of $\mathrm B(T|S)$ that consists of the classes
fixed by $Q$---this is related with Speiser's principal genus 
theorem (which is often 
referred to as Hilbert's Satz~90)---, and the homomorphism \eqref{nat1} 
amounts to 
the 
classical isomorphism
\begin{equation}
\mathrm H^2(N,\mathrm U(T))^Q
\longrightarrow
\mathrm B(T|S)^Q.  
\label{nat2}
\end{equation}
Hence~(6) generalizes the 
corresponding result of Eilenberg and Mac Lane \cite{MR0025443}, and 
so does~(7):
 For Eilenberg-Mac Lane's characterization of the Teich\-m\"uller classes 
given in \cite[Theorem 7.1, Theorem 10.1] {MR0025443} 
is actually a crossed product theorem which
covers all the Teich\-m\"uller classes since classically each $Q$-normal algebra
class contains a $Q$-normal crossed product algebra, but such a result does not
hold in the general situation considered here.

\smallskip

\noindent (8)  In 
\cref{7},
given a $Q$-normal Galois extension $T|S$ of commutative rings, 
with structure extension 
$\mathrm e_{(T|S)}\colon N\rightarrowtail G \twoheadrightarrow Q$
of $Q$ by $N=\Aut(T|S)$,
we again focus our attention on
the Deuring embedding problem of a central $T$-algebra
into a central $S$-algebra and establish two
somewhat technical results, 
\cref{7.1} and \cref{7.2}.
These results entail,
in particular that, if 
a class
$k \in \mathrm H^3(Q,\mathrm U(S))$ goes under inflation to the 
Teich\-m\"uller class
in $\mathrm H^3(G,\mathrm U(T))$ of some $G$-normal central $T$-algebra
$A$, 
then $k$ is itself
the Teich\-m\"uller class of some $Q$-normal central $S$-algebra $B$
in such a way that,
when
$A$ is a $G$-normal Azumaya $T$-algebra, 
the algebra $B$ may be taken to be a $Q$-normal Azumaya $S$-algebra.

\smallskip

\noindent (9) Given a $Q$-normal
Galois extension  $T|S$ 
of commutative rings
with associated structure extension
$\mathrm e_{(T|S)}\colon \Aut(T|S)\rightarrowtail G \twoheadrightarrow Q$
and $G$-action on $T$,
let $\mathrm{EB}(T|S;G,Q)$ denote the kernel of the induced homomorphism
from $\mathrm{EB}(S,Q)$ to $\mathrm{XB}(T,G)$;
the exact sequence 
\eqref{AAAAA}
involving the Teichm\"uller map $t$ now
yields an extension
of the kind
\begin{equation*}
\ldots 
\to
\mathrm H^2(Q,\mathrm U(S))
\to
 \mathrm{EB}(T|S;G,Q)
\to
\mathrm{XB}(T|S;G,Q) 
\stackrel{t}
\to
\mathrm H^3(Q,\mathrm U(S))
\stackrel{\inf}\to \mathrm H^3(G,\mathrm U(T))
\end{equation*}
of the corresponding classical low degree four term exact sequence
by four more terms.
We  refer to the resulting theory as the
{\em naive relative theory\/}.
The exactness of that sequence
at $\mathrm H^3(Q,\mathrm U(S))$ follows
from (4) above and from the naturality of the Teich\-m\"uller map.  
In 
\cref{eighttermcomp}
we compare that exact sequence 
with the eight
term exact sequence in the cohomology of the group extension 
$\mathrm e_{(T|S)}$
with coefficients in $\mathrm U(T)$ constructed in
\cite{MR597986}.
\smallskip

\noindent
(10) A more sophisticated variant
of the relative theory involves
certain abelian groups which we denote by
$\mathrm B_{\mathrm{fr}}(T|S)$,
$\mathrm{EB}_{\mathrm{fr}}(T|S;G,Q)$,
$\mathrm{XB}_{\mathrm{fr}}(T|S;G,Q)$, and
$k\mathcat{Rep}(Q,\mathcat B_{T|S;G,Q})$,
see 
Subsection \ref{variantrt};
these groups fits into 
seven and eight term exact sequences
 similar to
those involving
$\mathrm B(S)$,
$\mathrm{EB}(S,Q)$,
$\mathrm{XB}(S,Q)$, 
$\mathrm B(T|S)$,
$\mathrm{EB}(T|S;G,Q)$,
$\mathrm{XB}(T|S;G,Q)$, and
$k\mathcat{Rep}(Q,\mathcat B_{S,Q})$.
This more sophisticated variant behaves better with 
regard to comparison of the theory
with group cohomology than does the naive relative theory; see
Theorems \ref{CTC} - \ref{CTCC} and Theorem \ref{withoutp}.
\smallskip

In \cite[Theorem 3 p.~303]{MR641328}, we developed a conceptual description 
of the differential 
$d_3 \colon \mathrm E^{0,2}_3 \to \mathrm E^{3,0}_3$ of
the Lyndon-Hochschild-Serre spectral sequence
associated with a group extension and a module over the extension group,
and in \cite{MR597986} we developed an eight term exact sequence, given there
as (1.7) and (1.8), that involves a certain
map $\Delta$, cf. 
\eqref{13.11} below;
in view of
\cite[Subsection 1.4]{MR597986},  
the map $\Delta$ lifts that differential to a map.
These observations entail that
the exactness of 
\eqref{AAAAA}
generalizes the result of Hochschild
and Serre~\cite[p.~130]{MR0052438} saying 
that, in the classical case when the rings under discussion are fields, 
the Teich\-m\"uller map coincides with
the transgression map in the low degree five term exact sequence in the
cohomology of the corresponding group extension.

The appendix recollects some material from the theory 
of stably graded symmetric monoi\-dal categories.

\section{Normal ring extensions}
\label{normalr}

As in \cref{cI},
$S$ denotes a commutative ring and $\kappaQ\colon Q \to \Aut(S)$
an action of a group $Q$ on $S$.
Let $T|S$ be a Galois extension of commutative rings with Galois group $N=\Aut(T|S)$.
We refer to $T|S$ as being $Q$-{\em normal} when each automorphism
$\kappaQ(q)$ of $S$, as $q$ ranges over $Q$, extends to an automorphism of $T$.

Somewhat more formally, 
given a Galois extension $T|S$ of commutative rings with Galois group
$N$,
denote by $\Aut^S(T)$ the group of those automorphisms
of $T$ that map $S$ to itself, let 
$\res \colon \Aut^S(T) \to \Aut(S)$ denote the obvious restriction map,
so that  $N=\Aut(T|S)$ is the kernel of $\res$,
let
$G$ denote the fiber product group
$G = \Aut^S(T) \times_{\Aut(S)} Q$ relative to $\kappaQ\colon Q \to \Aut(S)$,
and let $\pi_Q \colon G \to Q$ denote the canonical homomorphism
and
$i^N\colon N \to G$ the obvious injection.
The obvious homomorphism $\llambda\colon G \to \Aut^S(T)$
makes the diagram 
\begin{equation}
\begin{CD}
1 @>>> N 
@>{i^N}>> G @>{\pi_Q}>> Q 
\\
@.
@|
@V{\llambda}VV
@V{\kappaQ}VV
\\
1 @>>> \Aut(T|S) 
@>>>  \Aut^S(T)  @>{\res}>> \Aut(S) 
\end{CD}
\label{CD1}
\end{equation}
commutative,
 where the unlabeled 
arrow is the obvious
homomorphism. This diagram is a special case of a diagram of the kind
\eqref{changea2}.
The Galois extension
$T|S$ of commutative rings is plainly $Q$-normal if and only if
the homomorphism $\pi_Q\colon G \to Q$ is surjective,
that is, if and only if the sequence
\begin{equation}
\mathrm e_{(T|S)} \colon 1 
\longrightarrow N 
\stackrel{i^N} \longrightarrow G 
\stackrel{\pi_Q}
\longrightarrow Q \longrightarrow 1
\label{eTS0}
\end{equation}
is exact, i.e., an extension of $Q$ by $N$.
Given a $Q$-normal Galois extension $T|S$ of commutative rings,
we refer to the corresponding group extension 
\eqref{eTS0} as the {\em associated structure extension\/}
and to the corresponding homomorphism $\llambda \colon G \to \Aut^S(T)$
as the {\em associated structure homomorphism\/}.
It is immediate
that a $Q$-normal Galois extension
$T|S$ with structure extension \eqref{eTS0} and
structure homomorphism
$\llambda\colon G \to \Aut^S(T)$,
the injection $S \subseteq T$ being denoted by $i\colon S \subseteq T$,
yields the morphism
\begin{equation}
(i,\pi_Q )\colon(S,Q,\kappaQ) \longrightarrow (T,G,\llambda)
\label{mqng}
\end{equation}
in the change of actions category $\mathcat{Change}$ introduced in 
Subsection \ref{coa} above.

\begin{examp}
\label{exa1}
Let $K|P$ be a Galois extension of algebraic number fields, and denote 
by $G$ the Galois group of 
$K|P$. Let $Z$ be a subfield of $K$ that contains $P$ and is
a normal extension of $P$, and let $N = \Gal (K|Z)$ and $Q = \Gal (Z|P)$.
Let $T$, $S$ and $R$ denote the rings of integers in, respectively, $K$, $Z$
and $P$. Suppose that $K|Z$ is unramified but 
that $Z|P$ is ramified. Then $T|S$ is
a $Q$-normal Galois extension of commutative rings but $T|R$ and $S|R$
are not Galois extensions  of commutative rings, cf. 
\cref{exatwo}.
\end{examp}

Let $(S, Q, \kappa)$ and $(\hat S, \hat Q, \hat \kappa)$ 
be objects of the change of actions category $\mathcat{Change}$ introduced
in Subsection \ref{coa},
and let $T|S$ and $\hat T|\hat S$ be normal Galois extension of commutative rings with respect
to $Q$ and $\hat Q$, with structure extensions  
\[
\mathrm {\mathrm e}_{(T|S)} \colon
N\rightarrowtail G \twoheadrightarrow Q,\quad
\mathrm e_{(\hat T|\hat S)} \colon 
\hat N \rightarrowtail \hat G \twoheadrightarrow \hat Q
\]
and structure homomorphisms $\llambda\colon G \to \Aut^S(T)$
and
$\hat \llambda\colon \hat G \to \Aut^{\hat S}(\hat T)$,
respectively. Then a {\em morphism}
\begin{equation*}
(h,\phi) \colon T|S \longrightarrow \hat T|\hat S
\end{equation*}
{\em of normal Galois extensions}
 consists of a ring homomorphism 
$h \colon T \to \hat T$ and a 
group homomorphism $\phi\colon \hat G \to G$
such that

(i) $f = h|S$ is a ring homomorphism $S \to \hat S$,

(ii) the values of $\phi|\hat N$ lie in $N$, that is,
$\phi|\hat N$ is a homomorphism $\hat N \to N$, and

(iii) $h ({}^{\phi (\hat x)} t) = {}^{\hat x}(h(t)), 
\ \hat x \in \hat G, \ t\in T$.

\section{Crossed pair algebras}
\label{cpal}

As before, $S$ denotes a commutative ring and $\kappaQ\colon Q \to \Aut(S)$
an action of a group $Q$ on $S$.
In this section we  use the results of \cite{MR597986} 
to offer a partial
answer to the question as to 
which classes in $\mathrm H^3(Q,\mathrm U(S))$
are Teich\-m\"uller classes. Our result extends the classical answer 
of Eilenberg and Mac Lane \cite{MR0025443} (reproduced in 
\cite{MR0052438}); later in the paper we shall give a complete answer.

\subsection{Crossed pairs}
\label{6.11}
For intelligibility, we recall that notion from \cite[p.~152]{MR597986}.

Let
\begin{equation}
\begin{CD}
1 @>>> N 
@>{i^N}>> G @>>> Q @>>> 1
\label{ge}
\end{CD}
\end{equation}
be a group extension and $M$ a $G$-module;
we write the $G$-action $G \times M \longrightarrow M$ on $M$ as
$(x,y) \mapsto  {}^xy$, for $x \in G$ and $y \in M$.
Further, let
$\mathrm e \colon M \rightarrowtail \Gamma \stackrel{\piN} \twoheadrightarrow N$ 
be a group extension 
whose class $[\mathrm e] \in \mathrm H^2 (N,M)$ is fixed under the standard $Q$-action
on $\mathrm H^2(N,M)$. 
Given $x\in G$, we write
\begin{equation*}
\ell_x(y) = {}^xy, \ y \in M,\quad 
i_x(n) = xnx^{-1},\ n \in N.
\end{equation*}
Write $\Aut_G(\mathrm e) $ for the subgroup
of $\Aut (\Gamma) \times G$ that consists of those pairs $(\alpha , x)$ which
make the diagram
\begin{equation*}
\begin{CD}
0
@>>> 
M 
@>>> 
\Gamma 
@>>> 
N 
@>>> 
1
\\
@.
@V{\ell_x}VV
@V{\alpha}VV
@V{i_x}VV
@.
\\
0
@>>> 
M 
@>>> 
\Gamma 
@>>> 
N 
@>>> 
1
\end{CD}
\end{equation*}
commutative.

The homomorphism 
\begin{equation*}
\beta\colon \Gamma \longrightarrow \Aut_G(\mathrm e ),\ 
\beta(y)=(i_y,i^N(\piN(y))), \ y \in \Gamma,
\end{equation*}
together with the obvious action
of $\Aut_G(\mathrm e )$ on $\Gamma$, yields a crossed module
$(\Gamma,\Aut_G(\mathrm e ),\beta)$ whence, in particular,
$\beta(\Gamma)$ is a normal subgroup of $\Aut_G(\mathrm e )$;
we denote by $\Out_G(\mathrm e)$
the cokernel of $\beta$ and write
the resulting crossed 2-fold extension as 
\begin{equation}
\begin{CD}
\hat {\mathrm e} \colon 0 @>>>  
M^N
 @>>> \Gamma @>{\beta}>> \Aut_G(\mathrm e ) @>>> 
\Out_G(\mathrm e) @>>> 1.
\end{CD}
\label{res1}
\end{equation}
The map
$\mathrm{Der}(N,M) \longrightarrow \Aut_G(\mathrm e)$
given by the association
\begin{equation*}
\mathrm{Der}(N,M) \ni d\longmapsto (\alpha_d,1),\ \alpha_d(y)= (d\piN(y))y,\ 
y \in \Gamma,
\end{equation*}
is an injective homomorphism; 
this homomorphism and the obvious map $\Aut_G(\mathrm e) \to G$
yield the group extension
\begin{equation*}
0 \longrightarrow \Der (N,M) \longrightarrow \Aut_G(\mathrm e ) \longrightarrow G \longrightarrow 1,
\end{equation*}
the map $\Aut_G(\mathrm e) \to G$ being surjective,
since the  class $[\mathrm e ] \in \mathrm H^2 (N,M)$ is supposed to be
fixed under $Q$.
Further, let $\zeta\colon M \to \mathrm{Der}(N,M)$ be the homomorphism
defined by
\linebreak
$(\zeta(m))(n)=m({}^nm)^{-1}$, as $m$ ranges over $M$ and $n$ over $N$.
With these preparations out of the way, the data fit into the commutative
diagram
\begin{equation}
\begin{CD}
@. 0 @. 0 @.@.
\\
@.
@VVV
@VVV
@.
@.
\\
@. 
M^N 
@= 
M^N
@. 
1
@. 
\\
@.
@VVV
@VVV
@VVV
@.
\\
0
@>>> 
M 
@>>> 
\Gamma 
@>{\piN}>> 
N 
@>>> 
1
\\
@.
@V{\zeta}VV
@V{\beta}VV
@V{i^N}VV
@.
\\
0 @>>> \Der (N,M) @>>>       \Aut_G(\mathrm e ) @>>> G @>>> 1
\\
@.
@VVV
@VVV
@VVV
@.
\\
0 @>>> \mathrm H^1(N,M) @>>> \Out_G(\mathrm e ) @>>> Q @>>> 1
\\
@.
@VVV
@VVV
@VVV
@.
\\
@. 1 @. 1 @.1 @.
\end{CD}
\label{diag1}
\end{equation}
with exact rows and columns.
We use the notation
\begin{equation*}
\begin{CD}
\overline {\mathrm e} \colon 
0 @>>> \mathrm H^1(N,M) @>>> \Out_G(\mathrm e ) @>>> Q @>>> 1
\end{CD}
\end{equation*}
for the bottom row extension of \eqref{diag1}.
This extension is the cokernel, in the category
of group extensions with abelian kernel,
of the morphism
$(\zeta,\beta,i)$ of group extensions.

Suppose now that the extension $\overline{\mathrm e}$ 
splits; we  then say that 
$\mathrm e$ {\em admits a crossed pair structure\/},
and we  refer to 
a section $\psi\colon Q \to  \Out_G(\mathrm e )$
of $\overline{\mathrm e}$ as
a {\em crossed pair structure
on the group extension\/} 
$\mathrm e\colon M \rightarrowtail \Gamma \stackrel{\piN} \twoheadrightarrow N$ 
{\em with respect to the group extension\/}
\eqref{ge}.
By definition, a {\em crossed pair\/}
$(\mathrm e,\psi)$ {\em with respect to the group extension\/}
\eqref{ge} {\em and the\/} $G$-{\em module\/} $M$
consists of a group extension
$\mathrm e\colon M \rightarrowtail \Gamma \twoheadrightarrow N$
whose class $[\mathrm e]\in \mathrm H^2(N,M)$ is fixed under $Q$
such that  the associated extension $\overline{\mathrm e}$ splits,
together with a section 
$\psi\colon Q \to  \Out_G(\mathrm e )$
of $\overline{\mathrm e}$ \cite[p.~152]{MR597986}.

Suitable
classes of crossed pairs with respect to 
\eqref{ge} and the $G$-module $M$
 constitute an abelian group 
$\mathrm{Xpext}(G,N; M)$ \cite[Theorem~1]{MR597986}. 
Moreover, 
cf. 
 \cite[Theorem~2]{MR597986},
suitably defined homomorphisms
\begin{equation*}
j\colon \mathrm H^2(G,M)\longrightarrow
\mathrm{Xpext} (G,N;M),\ 
\Delta\colon \mathrm{Xpext} (G,N;M)
\longrightarrow \mathrm H^3(Q,M^N)
\end{equation*}
yield an extension of the classical five term exact sequence to an
eight term exact sequence of the kind
\begin{equation}
\begin{aligned}
0\longrightarrow &\mathrm H^1(Q,M^N) 
\stackrel{\inf}\longrightarrow
\mathrm H^1(G,M) 
 \stackrel{\res}\longrightarrow 
\mathrm H^1(N,M)^Q
\stackrel{\Delta}\longrightarrow \mathrm H^2(Q,M^N) 
\\
\stackrel{\inf}\longrightarrow &\mathrm H^2(G,M)
 \stackrel{j}\longrightarrow
\mathrm{Xpext} (G,N;M)
 \stackrel{\Delta }\longrightarrow \mathrm H^3(Q,M^N) 
\stackrel{\inf}\longrightarrow
\mathrm H^3(G,M).
\end{aligned}
\label{13.11}
\end{equation}
For later reference, we recall the construction of $\Delta$.
To this end,
given a crossed pair
\begin{equation*}
\left(\mathrm e\colon 0 \to M \to \Gamma  \to N \to 1, 
\ \psi\colon Q \to  \Out_G(\mathrm e )\right)
\end{equation*}
with respect to the group extension \eqref{ge} and the $G$-module $M$,
let $B^\psi $ denote the fiber product group
$\Aut_G (\mathrm e ) \times_{\Out_G (\mathrm e)} Q$
with respect to the crossed pair structure map
${\psi\colon Q \to \Out_G (\mathrm e)}$
and, furthermore, let
$\partial^\psi\colon
\Gamma \to B^\psi$ denote
the obvious homomorphism; together with the obvious
action of $B^\psi$ on $\Gamma$ induced by the canonical homomorphism
 $B^\psi \to \Aut_G (\mathrm e )$,
the exact sequence
\begin{equation}
\mathrm e_\psi \colon 0 \longrightarrow M^N \longrightarrow \Gamma 
\stackrel{\partial^\psi}{\longrightarrow} B^\psi \longrightarrow
Q \longrightarrow 1
\label{epsi}
\end{equation}
is a crossed 2-fold extension and hence represents a class
in $\mathrm H^3(Q,M^N)$.
We refer to
$\mathrm e_\psi$ {\em as the crossed\/} 2-{\em fold extension associated to
the crossed pair\/} $(\mathrm e,\psi)$.
The homomorphism $\Delta\colon 
\mathrm{Xpext} (G,N;M) \to\mathrm H^3(Q,M^N)$
is given by the assignment to
a crossed pair
$(\mathrm e,\psi)$ of its associated crossed 2-fold extension
$\mathrm e_\psi$.

\begin{rema} By 
\cite[Theorem~1]{MR641328},
the association $\mathrm e \mapsto \overline {\mathrm e}$
yields a conceptual 
description of the differential 
$d_2 \colon \mathrm E^{0,2}_2 \to \mathrm E^{2,1}_2$
of 
the Lyndon-Hochschild-Serre spectral sequence 
$(\mathrm E^{p,q}_r,d_r)$
associated with the group extension \eqref{ge} and the $G$-module $M$.
\end{rema}

\begin{prop} \label{cmcp}
In the special case where the $N$-action on $M$ is trivial,
given a group extension 
$\mathrm e\colon M \rightarrowtail \Gamma \stackrel{\piN} 
\twoheadrightarrow N$ 
that admits a crossed pair structure,
crossed pair structures 
\linebreak
$\psi\colon Q \to \Out_G(\mathrm e)$
on the group extension $\mathrm e$
correspond bijectively to actions of $G$ on $\Gamma$ that turn
\[
i^N \circ \piN\colon \Gamma \longrightarrow G 
\]
into a crossed module in such a way that
the canonical homomorphism 
\[
 G\longrightarrow B^\psi =\Aut_G (\mathrm e ) \times_{\Out_G (\mathrm e)} Q
\]
is an isomorphism. \qed
\end{prop}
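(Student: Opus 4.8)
The plan is to turn everything into a statement about the commutative diagram \eqref{diag1}, specialized to the case of a trivial $N$-action on $M$. First I would record the simplifications that this hypothesis produces: here $M^N=M$, and the homomorphism $\zeta\colon M\to\Der(N,M)$ of \eqref{diag1} vanishes, since $(\zeta(m))(n)=m({}^nm)^{-1}=1$; consequently the left-hand column of \eqref{diag1} forces the comparison map $\Der(N,M)\to\mathrm H^1(N,M)$ to be an isomorphism. Comparing the two extensions $\Der(N,M)\rightarrowtail\Aut_G(\mathrm e)\twoheadrightarrow G$ and $\mathrm H^1(N,M)\rightarrowtail\Out_G(\mathrm e)\twoheadrightarrow Q$ appearing as the middle and bottom rows of \eqref{diag1}, whose kernels are now canonically identified, I would conclude that the square
\[
\begin{CD}
\Aut_G(\mathrm e) @>>> G\\
@VVV @VVV\\
\Out_G(\mathrm e) @>>> Q
\end{CD}
\]
is cartesian, i.e.\ the canonical map $\Aut_G(\mathrm e)\to\Out_G(\mathrm e)\times_Q G$ is an isomorphism. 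This pullback property is the one place where the triviality of the $N$-action is genuinely used, and from it the whole correspondence follows formally.

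Given a crossed pair structure $\psi\colon Q\to\Out_G(\mathrm e)$, that is, a section of $\overline{\mathrm e}$, I would form $B^\psi=\Aut_G(\mathrm e)\times_{\Out_G(\mathrm e)}Q$ and check, using the pullback just established, that the canonical homomorphism $G\to B^\psi$ is an isomorphism: an element of $B^\psi$ is a pair $(\xi,q)$ with $\xi\in\Aut_G(\mathrm e)$ having image $\psi(q)$ in $\Out_G(\mathrm e)$, and since $\Aut_G(\mathrm e)=\Out_G(\mathrm e)\times_Q G$ such $\xi$ correspond bijectively to elements $x\in G$ lying over $q$, whence $(\xi,q)\leftrightarrow x$. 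The first-coordinate action of $\Aut_G(\mathrm e)$ on $\Gamma$ restricts to $B^\psi$ and hence, through $G\cong B^\psi$, yields an action of $G$ on $\Gamma$. That this action turns $i^N\circ\piN\colon\Gamma\to G$ into a crossed module is then just the fact that $(\Gamma,B^\psi,\partial^\psi)$ is the crossed module underlying the crossed $2$-fold extension $\mathrm e_\psi$ of \eqref{epsi}: under $G\cong B^\psi$ the homomorphism $\partial^\psi$ goes over into $i^N\circ\piN$, because $\partial^\psi(y)=(\beta(y),1)$ has trivial image in $\Out_G(\mathrm e)$ and $G$-component $i^N(\piN(y))$.

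For the converse I would start from an action of $G$ on $\Gamma$ making $i^N\circ\piN$ a crossed module, i.e.\ a section $s\colon G\to\Aut_G(\mathrm e)$ of $\Aut_G(\mathrm e)\to G$. The Peiffer identity ${}^{\partial y}z=yzy^{-1}$ forces the action of $N=\im(i^N\circ\piN)$ on $\Gamma$ to be conjugation, so $s(n)=\beta(y)$ whenever $\piN(y)=n$; hence $s(N)\subseteq\beta(\Gamma)=\ker\bigl(\Aut_G(\mathrm e)\to\Out_G(\mathrm e)\bigr)$, the composite $G\xrightarrow{s}\Aut_G(\mathrm e)\to\Out_G(\mathrm e)$ kills $N$, and it therefore factors through a map $\psi\colon Q\to\Out_G(\mathrm e)$; commutativity of the square shows $\psi$ is a section of $\overline{\mathrm e}$, i.e.\ a crossed pair structure. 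I would then verify that the two assignments are mutually inverse, the identification $G\cong B^\psi$ again doing all the bookkeeping. The step I expect to require the most care is the compatibility underlying the converse: an action making $i^N\circ\piN$ a crossed module induces a $Q$-module structure on $\ker(i^N\circ\piN)=M$, and one must see that it coincides with the prescribed $G$-module structure $\ell$ on $M$; this is possible precisely because the $N$-action on $M$ is trivial, so that $\ell$ already factors through $Q=G/N$, and it is exactly this hypothesis that both reconciles the two module structures and upgrades the square above to a pullback.
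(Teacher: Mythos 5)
The paper states this proposition without proof (it ends with \verb|\qed| immediately), so there is no argument of the authors' to compare yours against; your route through the diagram \eqref{diag1} --- observing that a trivial $N$-action kills $\zeta$, hence makes $\Der(N,M)\to\mathrm H^1(N,M)$ an isomorphism, hence makes the square $\Aut_G(\mathrm e)\to G$, $\Out_G(\mathrm e)\to Q$ cartesian --- is surely the intended one, and the forward direction (from $\psi$ to the action, including the identification of $\partial^\psi$ with $i^N\circ\piN$ under $G\cong B^\psi$) is correct and complete.

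There is, however, one genuine flaw in your converse. You identify ``an action of $G$ on $\Gamma$ making $i^N\circ\piN$ a crossed module'' with ``a section $s\colon G\to\Aut_G(\mathrm e)$ of $\Aut_G(\mathrm e)\to G$,'' and this requires that each $\alpha_x$ restrict on $M$ to the \emph{prescribed} operator $\ell_x$. The crossed module axioms only give you that $\alpha_x$ preserves $M$ and induces $i_x$ on $N$; they do not pin down $\alpha_x|_M$. Your stated justification --- that both module structures factor through $Q$ because the $N$-action on $M$ is trivial --- does not close this gap: two $Q$-module structures on $M$ need not coincide merely because both factor through $Q$. A concrete failure: take $N=1$, so $\Gamma=M$ and $i^N\circ\piN$ is the zero map $M\to G=Q$; then \emph{every} $Q$-action on the abelian group $M$ satisfies the crossed module axioms, yet $\Out_G(\mathrm e)\to Q$ is an isomorphism and there is exactly one crossed pair structure. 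The repair is to read the compatibility $\alpha_x|_M=\ell_x$ not as something to be derived but as part of the hypothesis: it is exactly what is needed for the ``canonical homomorphism $G\to B^\psi$'' in the statement to be defined at all. Once that is granted, the rest of your converse (the Peiffer identity forcing $s(i^N(n))=\beta(y)$ for any lift $y$ of $n$, well-defined since $M$ is central, hence the descent of $G\to\Out_G(\mathrm e)$ to a section $\psi$ of $\overline{\mathrm e}$) goes through, and the pullback property even makes the isomorphism $G\to B^\psi$ automatic.
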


\begin{rema}
\label{pseudo}
Given the group extension \eqref{ge}, consider a group extension
\begin{equation*}
\mathrm e \colon 1 \longrightarrow X \longrightarrow K \stackrel{\piN} \longrightarrow N \longrightarrow 1,
\end{equation*}
the group $X$ not necessarily being abelian,
let $\phi= i^N\circ \piN \colon K \to G$ denote the composite
of $i$ and $\piN$, and
let $\Aut(\mathrm e) $ denote the subgroup
of $\Aut (K)$ that consists of the automorphisms 
of $K$ that map $X$ to itself;
such a homomorphism $\phi$ is referred to in \cite{MR0059911} as 
a {\em normal homomorphism\/}.
Conjugation in $K$ yields a homomorphism 
$\beta\colon K \to \Aut(\mathrm e)$ from $K$ onto a normal subgroup $\beta(K)$
of $\Aut(\mathrm e)$, and the restriction $\zeta$
of $\beta$ to $X$, that is,
conjugation in $K$
with elements of $X$, yields a homomorphism 
$\zeta\colon X \to \Aut(\mathrm e)$ from $X$ onto a normal subgroup $\zeta(X)$
of $\Aut(\mathrm e)$ as well;
let
$\mathrm{can}\colon\Aut(\mathrm e)\to \Aut(\mathrm e)/\zeta(X)$
denote the canonical surjection.
A {\em modular structure on\/} $\phi$ is a
homomorphism $\theta\colon G \to \Aut(\mathrm e)/\zeta(X)$ making the diagram
\begin{equation*}
\xymatrix{
K  \ar[d]^{\beta} \ar[r]^{\piN}\ar[dr]^{\phi} &N \ar[d]^{i^N}\\
\Aut(\mathrm e)  \ar[d]^{\mathrm{can}}  &G \ar[dl]^{\theta}\\
\Aut(\mathrm e)/\zeta(X)
}
\end{equation*}
commutative  \cite{MR0059911}. A 
{\em pseudo-module\/} is defined to be
a pair 
$(\phi,\theta)$ that consists of a normal homomorphism $\phi$
and a modular structure $\theta$ on $\phi$ \cite{MR0059911}.

Let $(\phi,\theta)$ be a pseudo-module and consider
the two abstract kernels
${G \to \Out(X)}$ and ${Q\to \Out(K)}$ induced by that pseudo-module.
Now, fix an abstract $G$-kernel structure 
\linebreak
$\omega\colon 
G \to \Out(X)$ on $X$ in advance and consider the group
$\Aut_G(\mathrm e)$
that consists of the pairs $(\alpha,x)\in \Aut(\mathrm e)\times G$
 which
make the diagram
\begin{equation*}
\begin{CD}
1
@>>> 
X 
@>>> 
K 
@>>> 
N 
@>>> 
1
\\
@.
@V{\alpha|X}VV
@V{\alpha}VV
@V{i_x}VV
@.
\\
1
@>>> 
X 
@>>> 
K 
@>>> 
N 
@>>> 
1
\end{CD}
\end{equation*}
commutative in such a way that the image of $\alpha|X$
in $\Out(X)$ coincides with $\omega(x) \in \Out(X)$.
Then the modular structures on $\phi$ that induce, in particular,
the abstract $G$-kernel structure $\omega$ on $X$
are given by
homomorphisms $\theta\colon G \to \Aut_G(\mathrm e)/\zeta(X)$.
In the special case where $X$ is abelian,
an abstract $G$-kernel structure on $X$ is an ordinary $G$-module structure,
and those modular structures $\theta\colon G \to \Aut_G(\mathrm e)/\zeta(X)$
correspond bijectively to
crossed pair structures $\psi\colon Q \to \Out_G(\mathrm e)$
on $\mathrm e$.
\end{rema}

\subsection{Crossed pairs and normal algebras}
\label{cpna}

Let $T|S$ be a $Q$-normal Galois extension of commutative rings, 
with structure extension 
\begin{equation*}
\begin{CD}
\mathrm e_{(T|S)}\colon
1 @>>> N 
@>{i^N}>> G @>>> Q @>>> 1
\end{CD} 
\end{equation*}
and structure homomorphism $\llambda \colon G \to \Aut^S(T)$;
in particular, the group $N$ is finite.
Let 
$\left(\mathrm e\colon \mathrm U(T) \rightarrowtail \Gamma  \twoheadrightarrow N, 
\ 
\psi\colon Q \to  \Out_G(\mathrm e )\right)$
be a crossed pair with respect to the group
extension $\mathrm e_{(T|S)}$ and the $G$-module $\mathrm U(T)$.
The corresponding crossed 2-fold extension 
\eqref{res1}
now takes the form
\begin{equation*}
\hat {\mathrm e} \colon 0 \longrightarrow  \mathrm U(S) \longrightarrow \Gamma \longrightarrow \Aut_G(\mathrm e ) \longrightarrow 
\Out_G(\mathrm e) \longrightarrow 1.
\end{equation*}
To the crossed pair $(\mathrm e , \psi )$, associate
a $Q$-normal $S$-algebra $(A_\mathrm e , \sigma_\psi)$ as follows.

The composite
$\vartheta \colon \Gamma \to N \to \Aut (T)$ 
yields an action of $\Gamma$ on $T$; let 
$A_\mathrm e $ denote the crossed product  algebra 
$(T,N, \mathrm e , \vartheta)$.
Since the group $N$ is finite, $A_\mathrm e $ is an 
Azumaya $S$-algebra;  this fact also follows from 
\cref{tpf}~(xi).
Recall that 
there is an obvious injection $i \colon \Gamma \to \mathrm U(A_\mathrm e )$. The following
is immediate.

\begin{prop}
\label{6.1}
Setting
\begin{equation}
{}^{i_{\sharp} (\alpha ,x)}{(ty)} = ({}^x t)({}^\alpha y),
\end{equation}
as $t$ ranges over $T$, $y$ over $\Gamma$, and $(\alpha , x)$ 
over $\Aut_G(\mathrm e )\  (\subseteq \Aut(\Gamma) \times G)$,  
we obtain a morphism
\begin{equation*}(i,i_{\sharp} ) \colon (\Gamma, \Aut_G(\mathrm e ), \beta ) 
\longrightarrow (\mathrm U(A_\mathrm e), \Aut (A_\mathrm e , Q), \partial )
\end{equation*}
of crossed modules which, in turn, induces the morphism
\begin{equation*}
\begin{CD}
\phantom{{\mathrm e}_{(A_\mathrm e ,Q)}}
\hat{\mathrm e} \colon 
0 @>>> \mathrm U(S) @>>> \Gamma
@>{\beta}>>\Aut_G(\mathrm e)  @>>> \Out_G(\mathrm e) @>>> 1
\\
@.
@|
@V{i}VV
@V{i_{\sharp}}VV
@V{i_\flat}VV
@.
\\
\phantom{\hat{\mathrm e}\ }
{\mathrm e}_{(A_\mathrm e ,Q)}  \colon 
0 @>>> \mathrm U(S) @>>> \mathrm U(A_{\mathrm e}) 
@>{\partial}>> \Aut(A_{\mathrm e},Q) @>>>\Out(A_{\mathrm e},Q)  @>>> 1
\end{CD}
\end{equation*}
of crossed $2$-fold extensions, where $i_\flat$ denotes the induced
homomorphism.
\end{prop}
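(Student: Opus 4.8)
The plan is to exhibit the crossed product $A_\mathrm e=(T,N,\mathrm e,\vartheta)$ concretely through the two subrings that generate it and to define $i_\sharp(\alpha,x)$ on generators. By \ref{3.1}, a set-section $v\colon N\to\Gamma$ of $\piN$ displays $A_\mathrm e$ as the free left $T$-module $\oplus_{n\in N}Tv_n$, but it is cleaner to use the presentation in which $A_\mathrm e$ is generated by the commutative subring $T$ together with the image $i(\Gamma)$ of the canonical injection $i\colon\Gamma\to\mathrm U(A_\mathrm e)$ (cf. \ref{threet}(iv)), subject to $i(y)\,t=({}^{\piN(y)}t)\,i(y)$, to $i(y_1)i(y_2)=i(y_1y_2)$, and to $i(u)=j(u)$ for $u\in\mathrm U(T)$, where $j\colon\mathrm U(T)\hookrightarrow T$ and ${}^{\piN(y)}t$ is the action of $\piN(y)\in N=\Aut(T|S)$. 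Using the universal property of the twisted group ring, I would define $i_\sharp(\alpha,x)$ as the map determined by $t\mapsto{}^xt$ on $T$ (the $G$-action via $\llambda$) and $i(y)\mapsto i(\alpha(y))$ on $i(\Gamma)$, so that on a product it reads ${}^{i_\sharp(\alpha,x)}(t\,i(y))=({}^xt)\,i(\alpha(y))$, matching the formula in the statement.

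First I would check that this prescription respects the three defining relations, hence defines a ring endomorphism. The second relation is immediate since $\alpha$ and $i$ are group homomorphisms. The kernel relation is forced by the defining square of $\Aut_G(\mathrm e)$, which makes $\alpha$ restrict to $\ell_x$ on $M=\mathrm U(T)$, so $i(\alpha(u))=j({}^xu)={}^x(j(u))$ for $u\in\mathrm U(T)$, consistent with the action on $T$. The crucial point is the twisting relation, which reduces to the identity ${}^{\piN(\alpha(y))}({}^xt)={}^x({}^{\piN(y)}t)$: the defining square gives $\piN(\alpha(y))=i_x(\piN(y))=x\,\piN(y)\,x^{-1}$ in $N$, and since $\llambda|_N$ is the inclusion $N\hookrightarrow\Aut^S(T)$ one computes ${}^{x\,\piN(y)\,x^{-1}}({}^xt)={}^x({}^{\piN(y)}t)$ by composing the relevant ring automorphisms of $T$. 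Invertibility follows because $i_\sharp(\alpha^{-1},x^{-1})$ is a two-sided inverse, and the same bookkeeping shows $i_\sharp$ is a homomorphism into $\Aut(A_\mathrm e)$. To land in $\Aut(A_\mathrm e,Q)$, I would use that $A_\mathrm e$ is Azumaya over its center $S=T^N$ (by \ref{threet}(iii) and \ref{tpf}(xi)) and that $i_\sharp(\alpha,x)$ restricts on $T$ to $\llambda(x)$, hence on $S$ to $\res(\llambda(x))=\kappaQ(\piQ(x))$ by the commuting diagram \eqref{CD1}; thus $(i_\sharp(\alpha,x),\piQ(x))$ is a legitimate element of the fibre product $\Aut(A_\mathrm e)\times_{\Aut(S)}Q$.

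Next I would verify that $(i,i_\sharp)$ is a morphism of crossed modules. Equivariance is immediate: both actions are "apply the automorphism", so ${}^{i_\sharp(\alpha,x)}i(y)=i(\alpha(y))=i({}^{(\alpha,x)}y)$. For the commuting square $\partial\circ i=i_\sharp\circ\beta$, I would evaluate both automorphisms of $A_\mathrm e$ on generators: the inner automorphism by $i(y)$ sends $i(z)\mapsto i(y)i(z)i(y)^{-1}=i(yzy^{-1})=i(i_y(z))$ and $t\mapsto i(y)\,t\,i(y)^{-1}={}^{\piN(y)}t$, which agree with $i_\sharp(\beta(y))=i_\sharp(i_y,i^N(\piN(y)))$, whose grade in $Q$ is $\piQ(i^N(\piN(y)))=e$ since $N=\ker\piQ$. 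Passing to crossed $2$-fold extensions is then formal: the crossed module morphism induces maps on kernels $\ker\beta\to\ker\partial$ and cokernels $\Out_G(\mathrm e)\to\Out(A_\mathrm e,Q)$. Here $\ker\partial=\mathrm U(S)$ and $\ker\beta=M^N=\mathrm U(S)$ (an $N$-fixed unit of $T$ lies in $S$ and its inverse is again $N$-fixed), and since $i|_{\mathrm U(T)}=j$ the induced kernel map is the identity of $\mathrm U(S)$, while the cokernel map is by definition $i_\flat$; naturality of the kernel/cokernel construction yields the displayed commutative diagram.

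The main obstacle I expect is the well-definedness of $i_\sharp(\alpha,x)$, specifically preservation of the twisting relation; everything hinges on reading $\piN(\alpha(y))=x\,\piN(y)\,x^{-1}$ off the square defining $\Aut_G(\mathrm e)$ and on the compatibility $\llambda|_N=(N\hookrightarrow\Aut^S(T))$ between the $G$-action on $T$ and the $N$-action used to build $A_\mathrm e$. A secondary point requiring care is tracking the grade, so that $i_\sharp$ genuinely lands in $\Aut(A_\mathrm e,Q)$ and not merely in $\Aut(A_\mathrm e)$, together with the identification $M^N=\mathrm U(S)$ that matches the left-hand kernels of the two crossed $2$-fold extensions.
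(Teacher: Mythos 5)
Your proof is correct and fills in, in the expected way, exactly what the paper declares ``immediate'': checking the twisting relation via $\piN(\alpha(y))=x\,\piN(y)\,x^{-1}$ and $\llambda|_N=(N\hookrightarrow\Aut^S(T))$, the ideal relation via $\alpha|_{\mathrm U(T)}=\ell_x$, the grade via \eqref{CD1}, and the kernel identification $\mathrm U(T)^N=\mathrm U(S)$. These are the same computations the paper carries out explicitly for the analogous Proposition \ref{7.4}, so there is nothing to flag.
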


Given a crossed pair
$\left(\mathrm e\colon 0 \to \mathrm U(T) \to \Gamma  \to N \to 1, 
\ 
\psi\colon Q \to  \Out_G(\mathrm e )\right)$
with respect to the group
extension $\mathrm e_{(T|S)}$ and the $G$-module $\mathrm U(T)$,
let
\begin{equation*}
\sigma _\psi = i_{\flat} \circ \psi \colon Q \longrightarrow 
\Out_G(\mathrm e) \longrightarrow
\Out (A_\mathrm e , Q);
\end{equation*}
it is then obvious 
that $(A_\mathrm e , \sigma_\psi )$ is a $Q$-normal (Azumaya)
$S$-algebra, and 
we   refer to $(A_\mathrm e , \sigma_\psi )$ as a $Q$-{\em normal
crossed pair algebra with respect to the\/} 
$Q$-{\em normal Galois extension\/}
$T|S$ of commutative rings.

\begin{thm}
\label{normalcrossed}
 Let $T|S$ be a $Q$-normal Galois extension of commutative rings, 
with structure 
extension 
$\mathrm {\mathrm e}_{(T|S)} \colon
N\rightarrowtail G \twoheadrightarrow Q$
and structure homomorphism $\llambda \colon G \to \Aut^S(T)$,
cf. Section {\rm \ref{normalr}} above.
Then a class $k\in \mathrm H^3 (Q,\mathrm U(S))$ is the Teich\-m\"uller class
of some crossed pair algebra $(A_\mathrm e , \sigma_\psi)$ with respect
to the $Q$-normal Galois extension
$T|S$ if and only if $k$ is split in $T|S$ in the sense that 
$k$ goes to zero
under inflation 
$\mathrm H^3(Q,\mathrm U(S)) \to \mathrm H^3(G,\mathrm U(T))$.
\end{thm}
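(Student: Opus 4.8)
The plan is to identify the Teich\-m\"uller class of the crossed pair algebra $(A_\mathrm e , \sigma_\psi )$ with the value $\Delta([\mathrm e,\psi])\in \mathrm H^3(Q,\mathrm U(S))$ of the homomorphism $\Delta$ occurring in the eight term exact sequence \eqref{13.11}, taken with the $G$-module $M=\mathrm U(T)$. Since $T|S$ is a Galois extension of commutative rings with Galois group $N=\Aut(T|S)$, we have $T^N=S$ and hence $M^N=\mathrm U(T)^N=\mathrm U(S)$, endowed with the $Q$-module structure coming from $\kappaQ$; this identifies the target of $\Delta$ with $\mathrm H^3(Q,\mathrm U(S))$ and the map $\inf$ at the right-hand end of \eqref{13.11} with the inflation map $\mathrm H^3(Q,\mathrm U(S))\to\mathrm H^3(G,\mathrm U(T))$. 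Once the identification of the Teich\-m\"uller class with $\Delta([\mathrm e,\psi])$ is in hand, both implications follow from the exactness of \eqref{13.11} at $\mathrm H^3(Q,\mathrm U(S))$, which asserts $\im(\Delta)=\ker(\inf)$.

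First I would establish the identification. By construction, $\Delta([\mathrm e,\psi])$ is the class of the crossed $2$-fold extension $\mathrm e_\psi$ of \eqref{epsi}, obtained from $\hat{\mathrm e}$ of \eqref{res1} by pulling back along the crossed pair structure map $\psi\colon Q\to\Out_G(\mathrm e)$, so that its middle-to-right portion is the fiber product $B^\psi=\Aut_G(\mathrm e)\times_{\Out_G(\mathrm e)}Q$. On the other hand, the Teich\-m\"uller complex $\mathrm e_{(A_\mathrm e ,\sigma_\psi)}$ is the corresponding pullback of $\mathrm e_{(A_\mathrm e ,Q)}$ along $\sigma_\psi$, with fiber product group $B^{\sigma_\psi}$. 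Now \cref{6.1} supplies a morphism $(1,i,i_{\sharp},i_{\flat})\colon \hat{\mathrm e}\to\mathrm e_{(A_\mathrm e ,Q)}$ of crossed $2$-fold extensions which is the identity on the kernel $\mathrm U(S)$ and lies over $i_{\flat}\colon\Out_G(\mathrm e)\to\Out(A_\mathrm e ,Q)$. Because $\sigma_\psi=i_{\flat}\circ\psi$, pulling this morphism back along $\psi$ on the source and along $\sigma_\psi$ on the target produces a morphism $\mathrm e_\psi\to\mathrm e_{(A_\mathrm e ,\sigma_\psi)}$ which is the identity on both $\mathrm U(S)$ and $Q$, that is, a congruence; the induced map $B^\psi\to B^{\sigma_\psi}$ is the one determined by $i_{\sharp}$ on the fiber products. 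By the congruence-invariance of the class of a crossed $2$-fold extension recalled in \cref{one} (cf. \cite{crossed}), this gives $\Delta([\mathrm e,\psi])=[\mathrm e_\psi]=[\mathrm e_{(A_\mathrm e ,\sigma_\psi)}]$, the Teich\-m\"uller class of $(A_\mathrm e ,\sigma_\psi)$.

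With the identification in place, the \emph{only if} direction is immediate: if $k$ is the Teich\-m\"uller class of some crossed pair algebra $(A_\mathrm e ,\sigma_\psi)$, then $k=\Delta([\mathrm e,\psi])\in\im(\Delta)=\ker(\inf)$, so $\inf(k)=0$ and $k$ is split in $T|S$. Conversely, if $k$ is split, i.e. $\inf(k)=0$, then $k\in\ker(\inf)=\im(\Delta)$ by exactness of \eqref{13.11}, so there is a crossed pair $(\mathrm e,\psi)$ with $\Delta([\mathrm e,\psi])=k$; the associated crossed pair algebra $(A_\mathrm e ,\sigma_\psi)$, which is an Azumaya $S$-algebra since $N$ is finite (cf. \cref{tpf}(xi)), then has Teich\-m\"uller class $k$.

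The main obstacle is the first step, namely verifying that the pulled-back morphism of \cref{6.1} is a genuine congruence and hence that $\Delta([\mathrm e,\psi])$ really is the Teich\-m\"uller class of $(A_\mathrm e ,\sigma_\psi)$. This amounts to checking that the two pullback constructions are compatible—that pulling back $\mathrm e_{(A_\mathrm e ,Q)}$ along the section $\sigma_\psi\colon Q\to\Out(A_\mathrm e ,Q)$ reproduces the Teich\-m\"uller complex of \eqref{pb1}—and to tracing the identity maps on $\mathrm U(S)$ and $Q$ through the fiber product groups $B^\psi$ and $B^{\sigma_\psi}$, using that $i_{\sharp}$ restricts to the identity on $\mathrm U(S)$. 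The remaining ingredients, the Galois identification $\mathrm U(T)^N=\mathrm U(S)$ and the exactness of \eqref{13.11} at $\mathrm H^3(Q,\mathrm U(S))$, are supplied by the definition of a Galois extension and by \cite{MR597986}, respectively.
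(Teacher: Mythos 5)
Your proposal is correct and follows essentially the same route as the paper: the congruence $\mathrm e_\psi \to \mathrm e_{(A_\mathrm e,\sigma_\psi)}$ you construct by pulling back the morphism of \cref{6.1} along $\psi$ and $\sigma_\psi$ is exactly the content of \cref{6.3}, and the rest is the exactness of \eqref{13.11} at $\mathrm H^3(Q,\mathrm U(S))$, just as in the paper.
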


With $M=\mathrm U(T)$
and $M^N=\mathrm U(S)$, the theorem
is a consequence of 
the exactness, at $\mathrm H^3(Q,\mathrm U(S))$,
of the sequence \eqref{13.11}.
Indeed, by construction, the homomorphism $\Delta$
is given by the assignment to a crossed pair
 $
\left (\mathrm e \colon  \mathrm U(T)\rightarrowtail \Gamma \twoheadrightarrow N,\ 
\ppsi\colon Q \to \Out_G(\mathrm e)\right)$
with respect to the group
extension $\mathrm e_{(T|S)}$ and the $G$-module $\mathrm U(T)$
of the corresponding crossed 2-fold extension \eqref{epsi}, which now takes the
form
\begin{equation*}
\mathrm e_\psi \colon 0 \longrightarrow \mathrm U(S) \longrightarrow \Gamma \stackrel{\partial^\psi}{\longrightarrow} B^\psi \longrightarrow
Q \longrightarrow 1.
\end{equation*}

Theorem \ref{normalcrossed} is therefore a consequence of the following, which
is again immediate.

\begin{prop}
\label{6.3}
Given a crossed pair $(\mathrm e, \psi)$
with respect to the group
extension $\mathrm e_{(T|S)}$ and the $G$-module $\mathrm U(T)$, the morphism
$(i,i_{\sharp})$ of crossed modules 
in Proposition {\rm \ref{6.1}} above
induces a congruence morphism
\begin{equation*}
\begin{CD}
\phantom{{\mathrm e}_{(A_\mathrm e ,\sigma_{\psi})}}
{\mathrm e}_{\psi}
\colon 
0 @>>> \mathrm U(S) @>>> \Gamma
@>{\partial^{\psi}}>>B^\psi  @>>> Q @>>> 1
\\
@.
@|
@V{i}VV
@V{\hat i}VV
@|
@.
\\
\phantom{\mathrm e_{\psi}\ }
{\mathrm e}_{(A_\mathrm e ,\sigma_{\psi})}  \colon 
0 
@>>> 
\mathrm U(S) 
@>>> 
\mathrm U(A_{\mathrm e}) 
@>{\partial^{\sigma_\psi}}>> 
B^{\sigma_\psi}
@>>>
Q @>>> 1
\end{CD}
\end{equation*}
of crossed $2$-fold extensions.
\end{prop}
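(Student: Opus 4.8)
The plan is to exhibit both $\mathrm e_\psi$ and $\mathrm e_{(A_\mathrm e,\sigma_\psi)}$ as pullbacks, along sections of their respective cokernel projections, of the two crossed $2$-fold extensions $\hat{\mathrm e}$ and $\mathrm e_{(A_\mathrm e,Q)}$ that are already related by the morphism $(1,i,i_\sharp,i_\flat)$ furnished by Proposition \ref{6.1}, and then to obtain the asserted congruence by functoriality of the pullback operation on crossed $2$-fold extensions.

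First I would observe that, by the very construction recalled in Subsection \ref{6.11}, the crossed $2$-fold extension $\mathrm e_\psi$ associated to the crossed pair $(\mathrm e,\psi)$ is nothing but the pullback of $\hat{\mathrm e}$ along the crossed pair structure section $\psi\colon Q \to \Out_G(\mathrm e)$: indeed $B^\psi=\Aut_G(\mathrm e)\times_{\Out_G(\mathrm e)}Q$ is exactly the fiber product replacing the tail $\Aut_G(\mathrm e)\to\Out_G(\mathrm e)$ of $\hat{\mathrm e}$ by $B^\psi\to Q$. Symmetrically, by the definition of the Teich\-m\"uller complex in Subsection \ref{twothree}, the crossed $2$-fold extension $\mathrm e_{(A_\mathrm e,\sigma_\psi)}$ is the pullback of $\mathrm e_{(A_\mathrm e,Q)}$ along the section $\sigma_\psi=i_\flat\circ\psi\colon Q \to \Out(A_\mathrm e,Q)$, the group $B^{\sigma_\psi}$ being identified with $\Aut(A_\mathrm e,Q)\times_{\Out(A_\mathrm e,Q)}Q$.

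Next I would construct the middle vertical map. The right-hand square of the morphism of crossed $2$-fold extensions in Proposition \ref{6.1} says precisely that the grade map $\Aut_G(\mathrm e)\to\Out_G(\mathrm e)$ followed by $i_\flat$ agrees with $i_\sharp$ followed by $\Aut(A_\mathrm e,Q)\to\Out(A_\mathrm e,Q)$. Hence, for $(\gamma,q)\in B^\psi$ (so that the image of $\gamma$ in $\Out_G(\mathrm e)$ is $\psi(q)$), the pair $(i_\sharp(\gamma),q)$ satisfies the fiber product condition defining $B^{\sigma_\psi}$, because the image of $i_\sharp(\gamma)$ in $\Out(A_\mathrm e,Q)$ equals $i_\flat(\psi(q))=\sigma_\psi(q)$. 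The universal property of the fiber product therefore yields a homomorphism $\hat i\colon B^\psi\to B^{\sigma_\psi}$, $(\gamma,q)\mapsto(i_\sharp(\gamma),q)$, covering the identity of $Q$. That the square relating $\partial^\psi$ and $\partial^{\sigma_\psi}$ commutes, and that $(i,\hat i)$ is a morphism of crossed modules (compatibility of the $B^\psi$- and $B^{\sigma_\psi}$-actions with $i$), is inherited verbatim from the corresponding properties of $(i,i_\sharp)$ established in Proposition \ref{6.1}. Since the induced maps on the kernel $\mathrm U(S)$ and on the cokernel $Q$ are both the identity, the resulting morphism $(1,i,\hat i,1)\colon \mathrm e_\psi\to\mathrm e_{(A_\mathrm e,\sigma_\psi)}$ is a congruence.

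The only point demanding genuine care is the bookkeeping in the second paragraph, namely matching the two fiber product presentations of $B^{\sigma_\psi}$ (as $\Aut(A_\mathrm e)\times_{\Out(A_\mathrm e)}Q$, as in Subsection \ref{twothree}, versus as $\Aut(A_\mathrm e,Q)\times_{\Out(A_\mathrm e,Q)}Q$) and confirming that $\sigma_\psi$ is indeed a set-theoretic section of the grade homomorphism $\Out(A_\mathrm e,Q)\to Q$. Both follow at once from the fact that $\psi$ is a section of $\overline{\mathrm e}$ and that the diagram of Proposition \ref{6.1} commutes, so no real difficulty arises, which is why the statement can be recorded as immediate.
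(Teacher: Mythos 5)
Your argument is correct and is exactly the verification the paper has in mind: the paper records Proposition \ref{6.3} as ``immediate'' without writing out a proof, and your pullback-functoriality description (inducing $\hat i$ on the fiber products $B^\psi\to B^{\sigma_\psi}$ from the right-hand square of Proposition \ref{6.1}, with identity on kernel and cokernel) is precisely the intended justification.
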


\begin{proof}[Proof of Theorem {\rm \ref{normalcrossed}}]
By exactness, it is immediate that
the Teich\-m\"uller class
of any crossed pair algebra $(A_\mathrm e , \sigma_\psi)$
with respect
to $T|S$ is split in $T|S$. Hence the condition is necessary.
To establish sufficiency,
consider
a class $k\in \mathrm H^3 (Q,\mathrm U(S))$ 
which is split in $T|S$, that is,
goes to zero
under inflation 
$\mathrm H^3(Q,\mathrm U(S)) \to \mathrm H^3(G,\mathrm U(T))$.
By exactness, $k$ then arises from some crossed pair $(\mathrm e, \psi)$
with respect to the group
extension $\mathrm e_{(T|S)}$ and the $G$-module $\mathrm U(T)$, that
is,
\begin{equation*}
k=[{\mathrm e}_{\psi}]\in \mathrm H^3 (Q,\mathrm U(S)).
\end{equation*}
By Proposition \ref{6.3},
the Teich\-m\"uller class
of  the associated 
crossed pair algebra $(A_\mathrm e , \sigma_\psi)$ with respect
to $T|S$ 
coincides with $[{\mathrm e}_{\psi}]=k$.
\end{proof}

\section{Normal Deuring embedding and Galois descent for
Teich\-m\"uller classes}
\label{7}

As before, $S$ denotes a commutative ring
and $\kappaQ\colon Q \to \Aut(S)$ an action of a group $Q$ on $S$.
Let $T|S$ be a $Q$-normal Galois extension of commutative rings, with 
structure extension
\begin{equation}\mathrm e_{(T|S)} \colon 1 \longrightarrow N 
\stackrel{i^N}
\longrightarrow G \stackrel{\piQ}{\longrightarrow} Q \longrightarrow 1
\label{eTS}
\end{equation}
and structure homomorphism $\llambda\colon G \to \Aut^S(T)$,
cf. \eqref{CD1}.
In this section, we  prove, among others, that if a class
$k \in \mathrm H^3(Q,\mathrm U(S))$ goes under inflation to the Teich\-m\"uller class
in $\mathrm H^3(G,\mathrm U(T))$ of some $G$-normal $T$-algebra, then $k$ is itself
the Teich\-m\"uller class of some $Q$-normal $S$-algebra. To this end,
we  reexamine Deuring's embedding problem, cf.
Subsection \ref{eaas} and \cref{nazet}.

\subsection{The definitions} 

Let $A$ be a central $T$-algebra,
$(C, \sigma_Q\colon Q \to \Out(C) )$ a $Q$-normal $S$-algebra, and
$A\subseteq C$ an embedding of $A$ into $C$. 
We refer to the embedding of $A$ into $C$
as
a $Q$-{\em normal Deuring embedding with
respect to\/} $\sigma_Q\colon Q \to \Out(C) $ 
{\em and\/} {\rm\eqref{eTS}}
if
each automorphism $\llambda (x)$ of $T$, 
as $x$ ranges over $G$, extends to an
automorphism $\alpha $ of $C$ in such a way 
that 
\begin{itemize}
\item[(i)] 
$[\alpha ] = \sigma_Q (\piQ (x))\in \Out (C)$, and
\item[(ii)] 
$\alpha$ maps $A$ to itself.
\end{itemize}

\begin{rema}
In the special case where $Q$ is the trivial group,
the group $G$ boils down to the group $N=\Aut(S|R)$ and,
since each automorphism $\alpha$ of $C$ 
that extends some $x\in N$ is required to 
map $A$ to itself and to
map to the trivial element
of $\Out(C)$,
that automorphism $\alpha$ 
necessarily extends to an inner automorphism of $C$ that normalizes $A$;
thus the notion of normal Deuring embedding
then comes down to the notion of Deuring embedding introduced in 
Subsection \ref{eaas}.
\end{rema}

\begin{rema} Given an embedding of $A$ into $C$ 
such that $A$ coincides with the centralizer of $T$ in $C$,
an automorphism $\alpha$ of $C$ extending an automorphism
$\llambda(x)$ of $T$ for $x \in G$
necessarily  maps $A$ to itself. Thus, in the definition
of a $Q$-normal Deuring embedding, condition (ii)
is then redundant.
\end{rema}

For technical reasons, we need a stronger concept
of a normal Deuring embedding. We  now prepare for this definition.

Let $A$ be a central $T$-algebra, $C$ a central $S$-algebra,
and suppose the algebra $A$ to be embedded into $C$.
Recall the crossed module $(\mathrm U(C),\Aut(C), \partial_C)$
associated to the central $S$-algebra $C$,
and consider the associated crossed 2-fold extension
\begin{equation}
\mathrm e_C \colon 0 \longrightarrow \mathrm U(S) 
\longrightarrow \mathrm U(C) \stackrel{\partial_C}\longrightarrow \Aut(C) \longrightarrow \Out(C) 
\longrightarrow 1,
\label{eC}
\end{equation}
cf. \eqref{eA}.
The normalizer $N^{\mathrm U(C)}(A)$
of $A$ in $\mathrm U(C)$ and the centralizer $C^{\mathrm U(C)}(T)$ of $T$
in $\mathrm U(C)$, together with 
$\mathrm U(A)$ and $\mathrm U(C)$, constitute an ascending sequence
\begin{equation*}
\mathrm U(A) \subseteq C^{\mathrm U(C)}(T) \subseteq N^{\mathrm U(C)}(A)
\subseteq
\mathrm U(C)
\end{equation*}
of groups.
When $A$ coincides with the centralizer of $T$ in $C$, 
the inclusion $\mathrm U(A) \subseteq C^{\mathrm U(C)}(T)$
is the identity.

We continue with the general case where $A$ 
does not necessarily coincide with the centralizer of $T$ in $C$. 
Let $\Aut^A(C)$ denote the group of automorphisms of $C$ that map $A$ 
to itself. 
The action of  $\Aut(C)$ on $\mathrm U(C)$
induces an action of $\Aut^A(C)$ on 
each of the groups $\mathrm U(A)$,
$C^{\mathrm U(C)}(T)$, and $N^{\mathrm U(C)}(A)$,
and the restrictions of the homomorphism $\partial_C$
together with the actions yield three crossed modules
\begin{align}
&(N^{\mathrm U(C)}(A),\Aut^A(C), \partial^N_C)
\label{cr1}
\\
&(C^{\mathrm U(C)}(T),\Aut^A(C), \partial^T_C)
\label{cr2}
\\
&(\mathrm U(A),\Aut^A(C), \partial^A_C),
\label{cr3}
\end{align}
each homomorphism $\partial^N_C$, $\partial^T_C$, $\partial^A_C$
being the corresponding restriction of the homomorphism
$\partial_C\colon \mathrm U(C) \to \Aut(C)$.
We write the associated 
crossed 2-fold extensions as
\begin{align}
\mathrm e_C^A &\colon 0
\longrightarrow
\mathrm U(S) 
\longrightarrow
\mathrm U(A) 
\stackrel{\partial^A_C}
\longrightarrow
\Aut^A(C)
\longrightarrow
\Out(C,A)
\longrightarrow
1
\label{ce3}
\\
\mathrm e_C^T &\colon 0
\longrightarrow
\mathrm U(S) 
\longrightarrow
C^{\mathrm U(C)}(T) 
\stackrel{\partial^T_C}
\longrightarrow
\Aut^A(C)
\longrightarrow
\Out(C,T)
\longrightarrow
1
\label{ce2}
\\
\mathrm e_C^N &\colon 0
\longrightarrow
\mathrm U(S) 
\longrightarrow
N^{\mathrm U(C)}(A) 
\stackrel{\partial^N_C}
\longrightarrow
\Aut^A(C)
\longrightarrow
\Out^A(C)
\longrightarrow
1,
\label{ce1}
\end{align}
the groups
$\Out(C,A)$,
$\Out(C,T)$, and
$\Out^A(C)$ being defined by exactness.
The
inclusions
$\mathrm U(A) \subseteq C^{\mathrm U(C)}(T) \subseteq N^{\mathrm U(C)}(A)$
induce a commutative diagram
\begin{equation*}
\begin{xymatrix}
{
\mathrm e_C^A \colon 0
\ar[r]&
\mathrm U(S) 
\ar@{=}[d]
\ar[r]&
\mathrm U(A) 
\ar[d]
\ar[r]^{\partial^A_C}&
\Aut^A(C)
\ar@{=}[d]
\ar[r]&
\Out(C,A)
\ar[d]
\ar[r]&
1
\\
\mathrm e_C^T \colon 0
\ar[r]&
\mathrm U(S) 
\ar@{=}[d]
\ar[r]&
C^{\mathrm U(C)}(T) 
\ar[d]
\ar[r]^{\partial^T_C}&
\Aut^A(C)
\ar@{=}[d]
\ar[r]&
\Out(C,T)
\ar[d]
\ar[r]&
1
\\
\mathrm e_C^N \colon 0
\ar[r]&
\mathrm U(S) 
\ar[r]&
N^{\mathrm U(C)}(A) 
\ar[r]^{\partial^N_C}
&
\Aut^A(C)
\ar[r]&
\Out^A(C)
\ar[r]&
1
}
\end{xymatrix}
\end{equation*}
of morphisms of crossed 2-fold extensions and, by diagram chase,
the induced homomorphisms
$\Out(C,A) \to \Out(C,T)$ and
$\Out(C,T)\to 
\Out^A(C)$ are surjective.

Restriction induces canonical homomorphisms
\begin{equation*}
\res\colon \Out(C,A) \longrightarrow \Out^S(A),
\
\res\colon \Out(C,T) \longrightarrow \Aut^S(T)
\end{equation*}
(where the notation \lq\lq res\rq\rq\ is slightly abused) 
in such a way that the diagram
\begin{equation*}
\begin{xymatrix}
{
&\Out(C,A) \ar[d]\ar[r]^{\res} &\Out^S(A)\ar[d]^{\res}
\\
&\Out(C,T) \ar[r]^{\res} &\Aut^S(T)
}
\end{xymatrix}
\end{equation*}
is commutative. Moreover, the obvious homomorphism
$\Out^A(C) \to\Out(C)$
is injective, and we identify
$\Out^A(C)$ with its isomorphic image in $\Out(C)$ if need be.

Now,  
given a homomorphism $\chi_G\colon G \to \Out(C,A)$, its composite
with  
the restriction map 
$\res\colon \Out(C,A) \to \Out(A)$
yields a $G$-normal structure on $A$.
However, in order for such a homomorphism to match
the other data, in particular the given $Q$-normal structure
$\sigma_Q\colon Q \to \Out(C)$, we must impose further conditions.
We now spell out the details.

Let $\partial^A_{C,\sharp}\colon \mathrm U(A)/\mathrm U(S) \to \Aut^A(C)$
denote the (injective) homomorphism induced by 
the crossed module structure map $\partial^A_C$ in the crossed module \eqref{cr3}.
The crossed modules \eqref{cr1} and \eqref{cr3} yield the commutative diagram
\begin{equation*}
\begin{xymatrix}
{
& &1\ar[d]&1\ar[d] &
\\
0
\ar[r]
&
\mathrm U(S) 
\ar@{=}[d]
\ar[r]
&
\mathrm U(A) 
\ar[d]
\ar[r] 
&
\mathrm U(A)/\mathrm U(S)
\ar[d]^{\partial^A_{C,\sharp}}
\ar[r]
&
1
&
\\
0\ar[r]
&
\mathrm U(S) 
\ar[r]&
N^{\mathrm U(C)}(T)
\ar[d]
\ar[r]^{\partial^N_C}&
\Aut^A(C)
\ar[d]
\ar[r]&
\Out^A(C)
\ar@{=}[d]
\ar[r]&
1
\\
&
1
\ar[r]&
N^{\mathrm U(C)}(A)/\mathrm U(A) 
\ar[d] 
\ar[r] 
&
\Out(C,A)
\ar[d]
\ar[r]
&
\Out^A(C)
\ar[r]&
1
\\
& &1&1 &
}
\end{xymatrix}
\end{equation*}
with exact rows and columns, the third row being defined by exactness.
This third row is an ordinary group extension, and we  denote it by
\begin{equation}
\begin{xymatrix}
{
\mathrm e_{(A,C)}\colon
1
\ar[r]&
N^{\mathrm U(C)}(A)/\mathrm U(A)  
\ar[r] 
&
\Out(C,A)
\ar[r]
&
\Out^A(C)
\ar[r]&
1.
}
\end{xymatrix}
\label{eAC}
\end{equation}
We define a {\em strong\/} $Q$-{\em normal Deuring embedding\/}
of $A$ into $C$ 
{\em with respect to the\/} $Q$-{\em normal structure\/}
$\sigma_Q\colon Q \to \Out(C)$ {\em and the structure extension\/} {\rm\eqref{eTS}}
to consist of an embedding of $A$ into $C$ together with
a homomorphism $\chi_G\colon G \to\Out(C,A)$
that is compatible with the other data in the following sense:
\begin{itemize}
\item 
The restriction $\chi_N\colon N \to N^{\mathrm U(C)}(A)/\mathrm U(A)$
 to $N=\Aut(T|S)$ of the homomorphism $\chi_G$
turns the embedding of  $A$ into $C$ 
into a strong
 Deuring embedding 
relative to the action 
$\mathrm{Id}\colon N \to \Aut(T|S)$ 
of $N$ on $T$
in such a way that  the diagram
\begin{equation}
\begin{gathered}
\begin{xymatrix}
{
\mathrm e_{(T|S)}\colon
1
\ar[r]&
N  
\ar[d]^{\chi_N} 
\ar[r]^{i^N} 
&
G
\ar[d]^{\chi_G}
\ar[r]^{\piQ}
&
Q
\ar[d]^{\sigma_Q}
\ar[r]&
1
\\
\mathrm e_{(A,C)}\colon
1
\ar[r]&
N^{\mathrm U(C)}(A)/\mathrm U(A)  
\ar[r] 
&
\Out(C,A)
\ar[r]
&
\Out^A(C)
\ar[r]&
1
}
\end{xymatrix}
\end{gathered}
\end{equation}
is commutative.

\item
The 
composite
\begin{equation}
G \stackrel{\chi_G}\longrightarrow\Out (C,A)\stackrel{\mathrm{res}}
\longrightarrow \Aut^S(T)
\label{comp00}
\end{equation}
 coincides with 
$\llambda\colon G \to \Aut^S(T)$.

\end{itemize}

\begin{rema}
In the special case where $Q$ is the trivial group,
this notion of strong normal Deuring embedding
comes down to the notion of strong 
Deuring embedding introduced in 
Subsection \ref{eaas}.
\end{rema}

Given a strong $Q$-normal Deuring embedding 
$(A\subseteq C,\chi_G)$ with 
respect to the $Q$-normal structure
$\sigma_Q\colon Q \to \Out(C)$  and the 
group extension {\rm\eqref{eTS}},
the composite of $\chi_G$
with 
the restriction map 
$\res\colon \Out(C,A) \to \Out(A)$
yields a $G$-normal structure 
\begin{equation}
\sigma_G\colon G \longrightarrow \Out(A)
\label{assG}
\end{equation}
on $A$ relative to the
action $\llambda\colon G \to \Aut^S(T)$
of $G$ on $T$; we  refer to this structure as being
{\em associated to the strong \/} $Q$-{\em normal  Deuring embedding}.

\subsection{Discussion of the notion of normal Deuring embedding}

Recall that $G$ denotes the fiber product group
${\Aut^S(T) \times_{\Aut(S)} Q}$ relative to the action 
$\kappaQ\colon Q \to \Aut(S)$ of $Q$ on $S$,
that 
$\llambda\colon G \to \Aut^S(T)$
is the associated obvious homomorphism, 
and that $\llambda$, restricted to $N$, boils down to the identity
$N \to \Aut(T|S)$,
cf. \eqref{CD1} above.

Let $A$ be a central $T$-algebra,
consider an embedding of 
$A$ into a central $S$-algebra $C$, and
let $\sigma_Q\colon Q \to \Out(C)$
be a $Q$-normal structure
on $C$. 
Consider the fiber product group
$B^{A,\sigma_Q} =\Aut^A(C)\times_{\Out (C)} Q$
relative to the $Q$-normal structure $\sigma_Q$ on $C$.
The following is immediate.

\begin{prop}
\label{abs1} Abstract nonsense identifies the kernel of
the canonical homomorphism $B^{A,\sigma_Q} \to Q$
with the normal subgroup 
$\mathrm {IAut}^A(C)$
of $\Aut^A(C)$ that consists
of the inner automorphisms of $C$ that map $A$ to itself.
Consequently the data determine a crossed module
$(N^{\mathrm U(C)}(A), B^{A,\sigma_Q},
\partial^{A,\sigma_Q})$,
the requisite action  of $B^{A,\sigma_Q}$ on 
$N^{\mathrm U(C)}(A)$ being induced from
the canonical homomorphism $B^{A,\sigma_Q} \to \Aut^A(C)$,
in such a way that the sequence
\begin{equation}
\begin{xymatrix}{
0 \ar[r] &\mathrm U(S) \ar[r] &N^{\mathrm U(C)}(A) 
\ar[r]^{{\phantom{(T)}}\partial^{A,\sigma_Q}} &
B^{A,\sigma_Q}
\ar[r] &Q
}
\end{xymatrix}
\label{eCsQ0}
\end{equation}
is exact. \qed
\end{prop}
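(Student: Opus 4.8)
The plan is to read off every assertion from the ambient crossed module $(\mathrm U(C),\Aut(C),\partial_C)$ attached to the central $S$-algebra $C$, cf. \eqref{eA}, by restricting it along the inclusion $\Aut^A(C)\subseteq\Aut(C)$ and pulling it back along $\sigma_Q$. Nothing beyond the fiber-product description of $B^{A,\sigma_Q}=\Aut^A(C)\times_{\Out(C)}Q$ and the two defining properties of a central $S$-algebra — namely $Z(C)=S$, whence $Z(C)\cap\mathrm U(C)=\mathrm U(S)$, and $\ker(\partial_C)=\mathrm U(S)$ — will be needed.

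First I would identify the kernel of the grade homomorphism. A pair $(\alpha,q)\in\Aut^A(C)\times Q$ lies in $B^{A,\sigma_Q}$ precisely when the class of $\alpha$ in $\Out(C)$ equals $\sigma_Q(q)$, and it lies over the neutral element of $Q$ precisely when $q$ is neutral; since $\sigma_Q$ of the neutral element is the trivial class of $\Out(C)$, this forces $\alpha$ to be inner, i.e. $\alpha\in\partial_C(\mathrm U(C))$, while $\alpha\in\Aut^A(C)$ says $\alpha$ maps $A$ to itself. Thus the kernel is canonically $\mathrm {IAut}^A(C)$, as asserted.

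Next I would set up the crossed module. I would take $\partial^{A,\sigma_Q}\colon N^{\mathrm U(C)}(A)\to B^{A,\sigma_Q}$ to be the assignment $u\mapsto(\partial_C(u),e)$; this is well defined because $u\in N^{\mathrm U(C)}(A)$ makes the inner automorphism $\partial_C(u)$ preserve $A$, hence lie in $\Aut^A(C)$, and its class in $\Out(C)$ is trivial. The action of $B^{A,\sigma_Q}$ on $N^{\mathrm U(C)}(A)$ is the one obtained from the canonical projection $(\alpha,q)\mapsto\alpha$, each $\alpha\in\Aut^A(C)$ carrying $N^{\mathrm U(C)}(A)$ into itself. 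The two crossed-module axioms are then inherited verbatim from $(\mathrm U(C),\Aut(C),\partial_C)$: equivariance reduces to the identity $\partial_C(\alpha(u))=\alpha\,\partial_C(u)\,\alpha^{-1}$ inside $\Aut^A(C)$, and hence to conjugation in $B^{A,\sigma_Q}$ whose second coordinate is unaffected, while the Peiffer identity ${}^{\partial^{A,\sigma_Q}(u)}v=uvu^{-1}$ is merely the statement that $\partial_C(u)$ acts on $v$ by conjugation by $u$.

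Finally I would verify the four-term exactness. Exactness at $\mathrm U(S)$ and at $N^{\mathrm U(C)}(A)$ both come from $\ker(\partial_C)=\mathrm U(S)$: an element $u\in N^{\mathrm U(C)}(A)$ is killed by $\partial^{A,\sigma_Q}$ iff $\partial_C(u)$ is the identity iff $u\in Z(C)\cap\mathrm U(C)=\mathrm U(S)$. Exactness at $B^{A,\sigma_Q}$ matches the image of $\partial^{A,\sigma_Q}$ with the kernel computed in the first step: the image is $\{(\partial_C(u),e):u\in N^{\mathrm U(C)}(A)\}$, and an inner automorphism $\partial_C(u)$ preserves $A$ exactly when $u$ normalizes $A$, so this image is precisely $\mathrm {IAut}^A(C)$. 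Since every step is a direct translation of the corresponding fact about the ambient crossed module, I do not expect any real obstacle; the only point demanding a word of care is the central-algebra identity $Z(C)\cap\mathrm U(C)=\mathrm U(S)$ that pins down the left-hand kernel, which is exactly why the proposition is labelled immediate.
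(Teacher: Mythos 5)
Your proposal is correct and is exactly the verification the paper has in mind: the proposition is stated as "immediate" with no written proof, and your argument simply spells out the fiber-product description of $B^{A,\sigma_Q}=\Aut^A(C)\times_{\Out(C)}Q$ together with $\ker(\partial_C)=\mathrm U(S)$, which is all that is needed. The one point worth noting is that matching the image of $\partial^{A,\sigma_Q}$ with $\mathrm{IAut}^A(C)$ uses that an inner automorphism of $C$ maps $A$ onto itself precisely when the implementing unit lies in $N^{\mathrm U(C)}(A)$, which is the paper's own (tacitly two-sided) normalizer convention, so your proof is in complete accord with the text.
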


Since $G 
={\Aut^S(T) \times_{\Aut(S)} Q}$ 
(relative to the action $\kappaQ\colon Q \to \Aut(S)$ of $Q$ on $S$),
and since the composite
$
Q \stackrel{\sigmaQ} \longrightarrow \Out(C) \stackrel{\res}\longrightarrow    \Aut(S)$ coincides with 
$\kappaQ\colon Q \to \Aut(S)$,
by abstract nonsense,  the 
 combined homomorphism
\begin{equation*}
B^{A,\sigma_Q} \stackrel{\mathrm{can}}\longrightarrow 
\Aut^A(C) \stackrel{\res}\longrightarrow \Aut^S(T)
\end{equation*}
 and the
canonical homomorphism 
$\mathrm{can}\colon B^{A,\sigma_Q} \to Q$ 
induce a homomorphism
\begin{equation}
\ttheta\colon B^{A,\sigma_Q} =\Aut^A(C)\times_{\Out (C)} Q
\longrightarrow 
{\Aut^S(T) \times_{\Aut(S)} Q}= G .
\label{can21}
\end{equation}
The following is again immediate.

\begin{prop}
\label{prop1}
The embedding of $A$ into $C$ is a $Q$-normal Deuring embedding
with respect to the $Q$-normal structure $\sigma_Q \colon Q \to \Out(C)$
on $C$ and the group extension {\rm\eqref{eTS}}
if and only if 
the homomorphism 
$\ttheta\colon B^{A,\sigma_Q} \to G$ is surjective. \qed
\end{prop}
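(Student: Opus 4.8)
The plan is to show that the two conditions are literally the same statement after unwinding the fiber products, so the proof is a matching of descriptions. First I would set up the dictionary. A member of $B^{A,\sigma_Q}=\Aut^A(C)\times_{\Out(C)}Q$ is a pair $(\alpha,q)$ with $\alpha\in\Aut^A(C)$, $q\in Q$, and $[\alpha]=\sigma_Q(q)\in\Out(C)$; a member of $G=\Aut^S(T)\times_{\Aut(S)}Q$ is a pair $(\beta,q)$ with $\beta\in\Aut^S(T)$ and $\beta|_S=\kappaQ(q)$. Under these identifications $\llambda\colon G\to\Aut^S(T)$ is the first projection, $\piQ\colon G\to Q$ the second, and $\ttheta$ sends $(\alpha,q)$ to $(\res(\alpha),q)$, where $\res(\alpha)=\alpha|_T$. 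A preliminary remark I would record is that this restriction is well defined with values in $\Aut^S(T)$: since $T\subseteq A\subseteq C$ and $S=Z(C)$, the automorphism $\alpha$ preserves $S$ setwise and preserves $A$, hence preserves the center $T=Z(A)$ of $A$; moreover $\res(\alpha)|_S=\alpha|_S=\kappaQ(q)$ because applying $\res\colon\Out(C)\to\Aut(S)$ to $[\alpha]=\sigma_Q(q)$ gives $\alpha|_S=\kappaQ(q)$, the defining compatibility of a $Q$-normal structure. This is exactly the compatibility used to construct $\ttheta$, so $(\res(\alpha),q)$ genuinely lies in $G$.

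With this dictionary in place I would prove the two implications as mirror images. For the forward direction, assume the embedding is a $Q$-normal Deuring embedding and fix $x=(\beta,q)\in G$. Applying the defining property to $x$, with $\llambda(x)=\beta$ and $\piQ(x)=q$, yields an automorphism $\alpha$ of $C$ that extends $\beta$ (so $\alpha|_T=\beta$), satisfies $[\alpha]=\sigma_Q(q)$ by condition (i), and maps $A$ to itself by condition (ii), i.e.\ $\alpha\in\Aut^A(C)$. Then $(\alpha,q)$ is a bona fide element of $B^{A,\sigma_Q}$, and $\ttheta(\alpha,q)=(\res(\alpha),q)=(\beta,q)=x$, so $\ttheta$ is surjective. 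For the converse, assume $\ttheta$ is surjective and fix $x=(\beta,q)\in G$; choose a pre-image $(\alpha,q')\in B^{A,\sigma_Q}$ with $\ttheta(\alpha,q')=x$. Reading off the two components gives $q'=q$ and $\res(\alpha)=\alpha|_T=\beta=\llambda(x)$, while membership of $(\alpha,q')$ in $B^{A,\sigma_Q}$ gives $\alpha\in\Aut^A(C)$ (condition (ii)) and $[\alpha]=\sigma_Q(q)=\sigma_Q(\piQ(x))$ (condition (i)). Thus $\alpha$ extends $\llambda(x)$ as required, and the embedding is a $Q$-normal Deuring embedding.

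There is essentially no obstacle beyond careful bookkeeping: the content of the statement is that the fiber products $B^{A,\sigma_Q}$ and $G$ have been arranged so that a $Q$-normal Deuring extension of $\llambda(x)$ is precisely the same data as a $\ttheta$-pre-image of $x$. The only point that merits a moment's attention, and the one I would flag explicitly, is the well-definedness of $\res$ on $\Aut^A(C)$ together with the verification that $(\res(\alpha),q)$ satisfies the defining relation of $G$; once this is in hand, the equivalence falls out as a direct comparison of the two descriptions, which is why the statement is asserted to be immediate.
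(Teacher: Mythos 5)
Your proof is correct and is exactly the verification the paper leaves implicit: the proposition is stated as "immediate" with no written proof, and your unwinding of the two fiber products — together with the (worthwhile) check that restriction $\Aut^A(C)\to\Aut^S(T)$ is well defined and lands in the right fiber over $\Aut(S)$ — is precisely the bookkeeping the authors intend the reader to supply.
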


Whether or not the homomorphism $\ttheta$ is surjective,
we  now determine the kernel of $\ttheta$.
To this end,
let $\Aut^A(C|T)$ denote the subgroup of $\Aut^A(C)$
that consists of the automorphisms in $\Aut^A(C)$
that are the identity on $T$.
Since $T$
coincides with the center of $A$,
restriction induces a homomorphism from
$\Aut^A(C)$ to $\Aut(T)$, and since $S$ coincides with the center of $C$, 
the values of this restriction map
lie in the subgroup $\Aut^S(T)$ of $\Aut(T)$
that consists of the automorphisms of $T$ which map $S$ to itself.
Thus, all told, restriction yields an exact sequence
\begin{equation}
\begin{xymatrix}
{
1
\ar[r]
&\Aut^A(C|T)
\ar[r]
&\Aut^A(C)
\ar[r]^{\res}
&\Aut^S(T)
}
\end{xymatrix}
\label{exact1}
\end{equation}
of groups.

Consider the fiber product groups
\begin{equation*}
B^{A,\llambda}= \Aut^A(C) \times_{\Aut^S(T)}G,\ 
B^{A,\kappaQ}= \Aut^A(C) \times_{\Aut(S)}Q,
\end{equation*}
relative to the homomorphisms $\llambda\colon G \to \Aut^S(T)$
and $\kappaQ\colon Q \to \Aut(S)$, respectively, and let
$\mathrm{can}\colon B^{A,\llambda} \to G$
denote the canonical homomorphism.
Since $G$ is the fiber product group
${\Aut^S(T) \times_{\Aut(S)} Q}$ with respect to the homomorphism
$\kappaQ\colon Q \to \Aut(S)$,
by abstract nonsense, 
the canonical homomorphism from $B^{A,\llambda}$ to $B^{A,\kappaQ}$ is an 
isomorphism.
Moreover, the exact sequence \eqref{exact1}
induces an exact sequence 
\begin{equation}
\begin{xymatrix}
{
1
\ar[r]
&\Aut^A(C|T)
\ar[r]
&B^{A,\llambda}
\ar[r]^{\mathrm{can}}
&G
}
\end{xymatrix}
\label{exact2}
\end{equation}
of groups in such a way that
\begin{equation*}
\begin{xymatrix}
{
1
\ar[r]
&\Aut^A(C|T)
\ar@{=}[d]\ar[r]
&B^{A,\llambda}
\ar[d]
\ar[r]^{\mathrm{can}}
&G
\ar[d]^{\llambda}
\\
1
\ar[r]
&\Aut^A(C|T)
\ar[r]
&\Aut^A(C)
\ar[r] ^{\res}
&\Aut^S(T)
}
\end{xymatrix}
\end{equation*}
is a commutative diagram with exact rows.

Abstract nonsense yields
a canonical homomorphism 
\[
\Aut^A(C)\times_{\Out (C)} Q= B^{A,\sigma_Q} \longrightarrow 
B^{A,\kappaQ}=\Aut^A(C) \times_{\Aut(S)}Q
\]
and hence
a canonical homomorphism 
$B^{A,\sigma_Q} \to B^{A,\llambda}$ 
whose composite
$B^{A,\sigma_Q} \to B^{A,\llambda} \stackrel{\mathrm{can}}\to G$
with $\mathrm{can}\colon B^{A,\llambda}\to G$ coincides with 
$\ttheta\colon B^{A,\sigma_Q} \to G$.

\begin{prop}
\label{kernel}
\rm{(i)} The homomorphism
$B^{A,\sigma_Q} \to B^{A,\llambda}$ is injective.

\noindent
\rm{(ii)}
Under the identification of $B^{A,\sigma_Q}$ with its isomorphic
image in the group $B^{A,\llambda}$, 
the group $\Aut^A(C|T)$ being identified with its isomorphic image
in $B^{A,\llambda}$ via {\rm{\eqref{exact2}}},
the kernel of 
$\ttheta\colon B^{A,\sigma_Q} \to G$ gets identified with  
the normal subgroup of 
$\Aut^A(C|T)$ that consists of the automorphisms
in $\Aut^A(C|T)$ that are inner automorphisms of $C$.

\noindent
\rm{(iii)} Consequently the canonical homomorphism
from the centralizer $C^{\mathrm U(C)}(T)$
of $T$ in $\mathrm U(C)$
to $\Aut^A(C|T)$ yields a surjective homomorphism
\begin{equation*}
C^{\mathrm U(C)}(T) \longrightarrow \mathrm{ker}(\ttheta\colon B^{A,\sigma_Q} \to G).
\end{equation*}
\end{prop}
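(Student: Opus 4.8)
The plan is to treat all three assertions as bookkeeping around the two structural facts recorded just before the statement: that the canonical homomorphism $B^{A,\llambda}\to B^{A,\kappaQ}$ is an isomorphism, and that restriction to $T$ yields the exact sequence \eqref{exact2} with kernel $\Aut^A(C|T)$. First I would make the injection $B^{A,\sigma_Q}\to B^{A,\llambda}$ completely explicit, then read off $\ker(\ttheta)$ as the intersection of its image with $\ker(\mathrm{can})$, and finally recognise that intersection as the inner automorphisms lying in $\Aut^A(C|T)$. Throughout, $\res$ will denote restriction to $T$ on $\Aut^A(C)$ and, separately, the induced map $\Out(C)\to\Aut(S)$; keeping these two restrictions straight is the only real source of confusion.

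For (i), I would compose $B^{A,\sigma_Q}\to B^{A,\llambda}$ with the isomorphism $B^{A,\llambda}\cong B^{A,\kappaQ}$ and check that the resulting map is the canonical homomorphism $(\alpha,q)\mapsto(\alpha,q)$ induced by $\res\colon\Out(C)\to\Aut(S)$: applying $\res$ to the defining constraint $[\alpha]=\sigma_Q(q)$ of $B^{A,\sigma_Q}$ gives $\res(\alpha)=\kappaQ(q)$, so the same pair lands in $B^{A,\kappaQ}$. This map merely relaxes a constraint while keeping the underlying pair, hence is injective on the nose; since the second factor is an isomorphism, the first factor $B^{A,\sigma_Q}\to B^{A,\llambda}$ is injective.

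For (ii), tracing the isomorphism $B^{A,\kappaQ}\cong B^{A,\llambda}$ backwards shows that the injection of (i) is $(\alpha,q)\mapsto\bigl(\alpha,(\res(\alpha),q)\bigr)$, so its image consists of the pairs $(\alpha,g)\in B^{A,\llambda}$ with $g=(\res(\alpha),q)$ and $[\alpha]=\sigma_Q(q)$ in $\Out(C)$; correspondingly $\ttheta(\alpha,q)=(\res(\alpha),q)\in G$. By \eqref{exact2} the kernel of $\mathrm{can}\colon B^{A,\llambda}\to G$ is exactly the copy of $\Aut^A(C|T)$ sitting as the pairs $(\beta,e_G)$. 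Since $\ttheta$ is $\mathrm{can}$ precomposed with the injection, an element of $B^{A,\sigma_Q}$ lies in $\ker(\ttheta)$ precisely when $\res(\alpha)=\mathrm{id}_T$, $q=e_Q$, and $[\alpha]=\sigma_Q(e_Q)=1$ in $\Out(C)$. These three conditions say exactly that $\alpha\in\Aut^A(C|T)$ is an inner automorphism of $C$, whence $\ker(\ttheta)\cong\Aut^A(C|T)\cap\mathrm{IAut}^A(C)$ (with $\mathrm{IAut}^A(C)$ as in \cref{abs1}); this is normal in $\Aut^A(C|T)$ as the intersection of a subgroup with a normal subgroup of $\Aut(C)$.

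For (iii), I would use conjugation: for $u\in C^{\mathrm U(C)}(T)$ the automorphism $c\mapsto ucu^{-1}$ fixes $T$ pointwise because $u$ centralises $T$, and it maps $A$ to itself by the inclusion $C^{\mathrm U(C)}(T)\subseteq N^{\mathrm U(C)}(A)$ recorded in the ascending chain of groups above, so it lands in $\Aut^A(C|T)$ and, being inner, in $\ker(\ttheta)$ by (ii). Surjectivity is then immediate: any $\alpha\in\ker(\ttheta)$ is inner, say conjugation by some $u\in\mathrm U(C)$, and since $\alpha$ fixes $T$ pointwise this $u$ centralises $T$, i.e. $u\in C^{\mathrm U(C)}(T)$ maps to $\alpha$. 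The only delicate points are getting the explicit injection formula right so that the three simultaneous conditions in (ii) collapse to the single condition ``inner and identity on $T$'', and invoking the stated inclusion $C^{\mathrm U(C)}(T)\subseteq N^{\mathrm U(C)}(A)$ to guarantee that the conjugation map of (iii) is defined into $\Aut^A(C|T)$; there is no genuine analytic obstacle beyond these identifications.
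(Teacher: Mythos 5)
Your proposal is correct and follows essentially the same route as the paper: part (i) rests on the fact that $B^{A,\sigma_Q}$ and $B^{A,\kappa_Q}\cong B^{A,\llambda}$ are both fiber subgroups of $\Aut^A(C)\times Q$ (the paper phrases this as the right-hand square being a pull-back), part (ii) identifies $\ker(\ttheta)$ with $\mathrm{IAut}^A(C)\cap\Aut^A(C|T)$ exactly as the paper does, and part (iii) is the same ``immediate consequence'' of (ii), which you merely make explicit via conjugation. The only difference is presentational: you chase elements where the paper invokes abstract nonsense.
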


\begin{proof}
Since the canonical homomorphism
$B^{A,\llambda} \to B^{A,\kappaQ}$ is an 
isomorphism,
the right-hand square in the the commutative diagram
\begin{equation*}
\begin{xymatrix}
{
&B^{A,\sigmaQ}
\ar[d]\ar[r]
&B^{A,\llambda}
\ar[d]
\ar[r]^{\mathrm{can}}
&Q
\ar[d]^{\kappaQ}
\\
&\Aut^A(C)
\ar@{=}[r]
&\Aut^A(C)
\ar[r] ^{\res}
&\Aut(S)
}
\end{xymatrix}
\end{equation*}
is a pull back square, and hence
inspection of the diagram reveals that 
the homomorphism 
$B^{A,\sigmaQ} \to B^{A,\llambda}$
is injective. This establishes (i).

To justify (ii), we note first that 
the kernel  of 
$\Aut^A(C)\to \Out(C)$ is the normal subgroup 
$\mathrm {IAut}^A(C)$
of $\Aut^A(C)$ that consists
of the inner automorphisms of $C$ that map $A$ to itself.
Since the group $B^{A,\sigma_Q}$ is the fiber product group
$B^{A,\sigma_Q} =\Aut^A(C)\times_{\Out (C)} Q$,
abstract nonsense identifies the kernel of
the canonical homomorphism $B^{A,\sigma_Q} \to Q$
with $\mathrm {IAut}^A(C)$,
and it is immediate that
$\mathrm{ker}(\ttheta)$ is a subgroup of 
$\mathrm {IAut}^A(C)=\mathrm{ker}(B^{A,\sigma_Q} \to Q)$.
On the other hand, $B^{A,\sigma_Q}$ being identified with
the corresponding subgroup of $B^{A,\llambda}$,
the kernel of 
$\ttheta\colon B^{A,\sigma_Q} \to G$ gets identified with  
the intersection
$B^{A,\sigma_Q} \cap \Aut^A(C|T) \subseteq B^{A,\llambda}$
and hence with the intersection
\[
\mathrm {IAut}^A(C) \cap \Aut^A(C|T) \subseteq B^{A,\llambda}.
\]
Consequently the kernel of $\ttheta$ gets identified with the 
normal subgroup of 
$\Aut^A(C|T)$ that consists of the automorphisms
in $\Aut^A(C|T)$ that are inner automorphisms of $C$.

Finally, statement (iii) is an immediate consequence of (ii).
\end{proof}

\begin{prop}
\label{prop11}
Suppose that the embedding of $A$ into $C$ is  a $Q$-normal Deuring embedding
with respect to the $Q$-normal structure $\sigma_Q \colon Q \to \Out(C)$
on $C$  and  the group extension {\rm\eqref{eTS}}.

\noindent
{\rm (i)} 
The surjective homomorphism {\rm \eqref{can21}}
yields a crossed $2$-fold extension
\begin{equation}
\begin{xymatrix}
{
\mathrm e^{A,T}_{(C,\sigma_Q)}\colon 0\ar[r]
&\mathrm U(S)
\ar[r]
& C^{\mathrm U(C)}(T) 
\ar[r]^{{\phantom{(TTT)}}\partial^{A,T,\sigma_Q}{\phantom{(T)}}}
&B^{A,\sigma_Q}
\ar[r]^{\ttheta}
&G
\ar[r]
&1 .
}
\end{xymatrix}
\label{eCAT}
\end{equation}

\noindent
{\rm(ii)} The values of the $Q$-normal structure
$\sigma_Q\colon Q \to \Out(C)$ on $C$ lie in  the subgroup $\Out^A(C)$
$(=\mathrm{coker}( {\partial^N_C}\colon
N^{\mathrm U(C)}(A) 
\longrightarrow
\Aut^A(C)) $, cf. \eqref{ce1}$)$.
\end{prop}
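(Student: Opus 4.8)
The plan is to handle the two assertions separately. Statement (ii) is essentially a translation of the definition of a $Q$-normal Deuring embedding, while statement (i) is obtained by restricting the crossed module furnished by Proposition~\ref{abs1} to the centralizer $C^{\mathrm U(C)}(T)$ and then reading off exactness from Propositions~\ref{kernel} and~\ref{prop1}.

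For (i), I would first identify $\partial^{A,T,\sigma_Q}$ as the restriction of the boundary map $\partial^{A,\sigma_Q}\colon N^{\mathrm U(C)}(A)\to B^{A,\sigma_Q}$ of Proposition~\ref{abs1} along the inclusion $C^{\mathrm U(C)}(T)\subseteq N^{\mathrm U(C)}(A)$: it sends $u\in C^{\mathrm U(C)}(T)$ to the pair $(i_u,e)\in \Aut^A(C)\times_{\Out(C)}Q=B^{A,\sigma_Q}$, the inner automorphism $i_u$ lying in $\Aut^A(C)$ because $u$ normalizes $A$ and restricting to the identity on $T$ because $u$ centralizes $T$. The subgroup $C^{\mathrm U(C)}(T)$ is invariant under the $B^{A,\sigma_Q}$-action, since that action factors through $\mathrm{can}\colon B^{A,\sigma_Q}\to\Aut^A(C)$ and every member of $\Aut^A(C)$ maps $T=Z(A)$ to itself and hence preserves the centralizer of $T$. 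Consequently $(C^{\mathrm U(C)}(T),B^{A,\sigma_Q},\partial^{A,T,\sigma_Q})$ inherits the crossed module axioms from the crossed module of Proposition~\ref{abs1}, and it remains only to check exactness of \eqref{eCAT}.

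Exactness at $G$ is the surjectivity of $\ttheta$, which is Proposition~\ref{prop1} under the standing $Q$-normal Deuring embedding hypothesis. For exactness at $B^{A,\sigma_Q}$, the description of $\partial^{A,T,\sigma_Q}$ just given places its image inside $\ker(\ttheta)$, and Proposition~\ref{kernel}(iii) supplies the reverse inclusion, since the canonical map $C^{\mathrm U(C)}(T)\to\ker(\ttheta)$ is surjective; thus $\im(\partial^{A,T,\sigma_Q})=\ker(\ttheta)$, which by Proposition~\ref{kernel}(ii) is the group of inner automorphisms of $C$ fixing $T$ pointwise and mapping $A$ to itself. Finally, $\ker(\partial^{A,T,\sigma_Q})$ consists of those $u$ with $i_u=\mathrm{id}_C$, i.e.\ the central units, so $\ker(\partial^{A,T,\sigma_Q})=\mathrm U(S)$ (using $\mathrm U(S)\subseteq C^{\mathrm U(C)}(T)$, as central units centralize $T$); this gives exactness at $C^{\mathrm U(C)}(T)$ and at $\mathrm U(S)$ and exhibits $\mathrm U(S)$ as the central kernel, so that \eqref{eCAT} is a genuine crossed $2$-fold extension. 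I expect the one genuinely delicate point to be precisely this exactness at $B^{A,\sigma_Q}$, where one must know that every inner automorphism of $C$ fixing $T$ pointwise and preserving $A$ arises by conjugation with an element of the centralizer $C^{\mathrm U(C)}(T)$ rather than merely of the larger normalizer $N^{\mathrm U(C)}(A)$; this is exactly what forces the source of the crossed module to be $C^{\mathrm U(C)}(T)$, and it is the content of Proposition~\ref{kernel}(iii). The residual verifications (equivariance and the Peiffer identity) are inherited from the structures already in place.

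For (ii), recall from \eqref{ce1} that $\Out^A(C)$ is the image of $\Aut^A(C)$ in $\Out(C)$ under the injection $\Out^A(C)\to\Out(C)$, that is, the set of classes admitting a representative automorphism that maps $A$ to itself. Given $q\in Q$, the surjectivity of $\piQ\colon G\to Q$ in the structure extension \eqref{eTS} (valid since $T|S$ is a $Q$-normal Galois extension) lets me choose $x\in G$ with $\piQ(x)=q$; the $Q$-normal Deuring embedding hypothesis then provides an automorphism $\alpha$ of $C$ extending $\llambda(x)$ with $[\alpha]=\sigma_Q(\piQ(x))=\sigma_Q(q)$ and $\alpha(A)=A$, so that $\alpha\in\Aut^A(C)$ and therefore $\sigma_Q(q)=[\alpha]$ lies in the image of $\Aut^A(C)$ in $\Out(C)$, which is $\Out^A(C)$.
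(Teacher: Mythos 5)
Your proposal is correct and follows essentially the same route as the paper: part (i) is deduced from Proposition~\ref{prop1} (surjectivity of $\ttheta$) together with Proposition~\ref{kernel}(iii) (surjectivity of $C^{\mathrm U(C)}(T)\to\ker(\ttheta)$), with the kernel of $\partial^{A,T,\sigma_Q}$ identified with the central units $\mathrm U(S)$, and part (ii) amounts to observing that each $\sigma_Q(q)$ is represented by an automorphism in $\Aut^A(C)$. The paper phrases (ii) via the surjectivity of $B^{A,\sigma_Q}\to Q$ and a commutative diagram rather than unwinding the definition of a $Q$-normal Deuring embedding directly, but the content is identical; your write-up simply supplies the routine verifications the paper declares immediate.
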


\begin{proof} Statement (i) is an immediate  consequence of
\cref{prop1} 
and \cref{kernel}(iii).
Moreover, the diagram
\begin{equation*}
\begin{xymatrix}
{
&N^{\mathrm U(C)}(A)
\ar@{=}[d]\ar[r]^{\partial^{A,\sigma_Q}}
&B^{A,\sigmaQ}
\ar[d]
\ar[r] 
&Q
\ar[d]^{\sigmaQ}
\\
&N^{\mathrm U(C)}(A)
\ar[r]^{\partial^N_C}
&\Aut^A(C)
\ar[r] ^{\mathrm{can}}
&\Out(C)
}
\end{xymatrix}
\end{equation*}
is commutative and, in view of \cref{prop1},  the 
canonical homomorphism 
$B^{A,\sigmaQ} \to Q$ is surjective. Consequently the values
of 
$\sigma_Q\colon Q \to \Out(C)$ on $C$ lie in the subgroup  $\Out^A(C)$
($=\mathrm{coker}( {\partial^N_C}\colon
N^{\mathrm U(C)}(A) 
\longrightarrow
\Aut^A(C)) $).
\end{proof}

Given a $Q$-normal Deuring embedding of $A$ into $C$
with respect to the $Q$-normal structure $\sigma_Q \colon Q \to \Out(C)$
on $C$ and the group extension {\rm\eqref{eTS}},
in view of \cref{prop11}(ii),
let
\begin{equation}
\begin{xymatrix}{
\mathrm e^A_{(C,\sigma_Q)}\colon
0 \ar[r] &\mathrm U(S) \ar[r] &N^{\mathrm U(C)}(A) 
\ar[r]^{{\phantom{(T)}}\partial^{A,\sigma_Q}} &
B^{A,\sigma_Q}
\ar[r] &Q  \ar[r] &1
}
\end{xymatrix}
\label{eCsQ1}
\end{equation}
denote the associated crossed 2-fold extension
induced from {\rm\eqref{ce1}}
via the $Q$-normal structure
${\sigma_Q\colon Q \to \Out^A(C)}$
on $C$; the underlying sequence
of groups and homomorphisms plainly coincides with \eqref{eCsQ0}.
Recall that
 the 
Teich\-m\"uller complex $\mathrm e_{(C,\sigma_Q)}$ of the kind
\eqref{pb1}
associated to the $Q$-normal $S$-algebra $(C,\sigma_Q)$ is
the crossed $2$-fold extension 
\begin{equation}
\begin{xymatrix}{
\mathrm e_{(C,\sigma_Q)}\colon
0 \ar[r] &\mathrm U(S) \ar[r] &\mathrm U(C) 
\ar[r]^{{\phantom{(T)}}\partial^{\sigma_Q}} &
B^{\sigma_Q}
\ar[r] &Q \ar[r] &1
}
\end{xymatrix}
\label{pbC}
\end{equation}
induced from 
{\rm \eqref{eC}} via the $Q$-normal structure
$\sigmaQ \colon Q \to \Out(C)$ on $C$.
The following is again immediate.

\begin{prop}
\label{prop2}
Suppose that the embedding of $A$ into $C$ is a $Q$-normal Deuring embedding
with respect to the $Q$-normal structure $\sigma_Q \colon Q \to \Out(C)$
on $C$ and the group extension {\rm\eqref{eTS}}.

\noindent
{\rm(i)} The inclusion maps $N^{\mathrm U(C)}(A) \to \mathrm U(C)$ and
$B^{A,\sigma_Q} \to B^{\sigma_Q}$ yield a congruence 
\begin{equation}
\begin{gathered}
\begin{xymatrix}{
\mathrm e^A_{(C,\sigma_Q)}\colon
0 \ar[r] &\mathrm U(S) \ar@{=}[d]\ar[r] &N^{\mathrm U(C)}(A) 
\ar[d]
\ar[r]^{{\phantom{(T)}}\partial^{A,\sigma_Q}} &
B^{A,\sigma_Q}
\ar[d]
\ar[r] &Q \ar@{=}[d] \ar[r] &1
\\
\mathrm e_{(C,\sigma_Q)}\colon
0 \ar[r] &\mathrm U(S) \ar[r] &\mathrm U(C) 
\ar[r]^{{\phantom{(T)}}\partial^{\sigma_Q}} &
B^{\sigma_Q}
\ar[r] &Q \ar[r] &1
}
\end{xymatrix}
\end{gathered}
\end{equation}
of crossed $2$-fold extensions from the crossed $2$-fold extension 
{\rm\eqref{eCsQ1}} to the crossed $2$-fold extension
{\rm\eqref{pbC}}.

\noindent
{\rm(ii)} The injection $C^{\mathrm U(C)}(T) \to N^{\mathrm U(C)}(A)$
 yields the morphism
\begin{equation}
\begin{gathered}
\begin{xymatrix}
{
\mathrm e^{A,T}_{(C,\sigma_Q)}\colon 0\ar[r]
&\mathrm U(S)
\ar@{=}[d]
\ar[r]
& C^{\mathrm U(C)}(T) 
\ar[d]\ar[r]^{{\phantom{(T)}}\partial^{A,T,\sigma_Q}}
&B^{A,\sigma_Q}
\ar@{=}[d]
\ar[r]^{}
&G
\ar[d]^{\piQ}
\ar[r]
&1
\\
\mathrm e^A_{(C,\sigma_Q)}\colon 0\ar[r]
&\mathrm U(S)
\ar[r]
& N^{\mathrm U(C)}(A) 
\ar[r]^{{\phantom{(T)}}\partial^{A,\sigma_Q}}
&B^{A,\sigma_Q}
\ar[r]^{}
&Q
\ar[r]
&1
}
\end{xymatrix}
\end{gathered}
\label{kind1}
\end{equation}
of crossed $2$-fold extensions
from the crossed $2$-fold extension 
{\rm{\eqref{eCAT}}}
to the crossed $2$-fold extension
{\rm \eqref{eCsQ1}}.   \qed
\end{prop}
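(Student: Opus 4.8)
The plan is to produce, in each case, the required vertical maps from the evident inclusions of subgroups of $\mathrm U(C)$ and of the associated fibre product groups, and then to check the commutativity of the resulting squares together with the crossed-module compatibility. Everything reduces to the facts that the relevant homomorphisms $\partial$ are given by conjugation in $C$ and that the fibre products are formed over $\Out(C)$, $\Aut^S(T)$ and $Q$, so that the universal properties of these pullbacks govern the maps to $Q$ and to $G$.

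For statement (i), I would take $\varphi\colon N^{\mathrm U(C)}(A)\hookrightarrow \mathrm U(C)$ to be the inclusion of the normalizer into the group of units, and $\psi\colon B^{A,\sigma_Q}\to B^{\sigma_Q}$ to be the homomorphism induced by the inclusion $\Aut^A(C)\hookrightarrow \Aut(C)$. The latter is well defined because $\Aut^A(C)\hookrightarrow \Aut(C)$ commutes with the canonical maps to $\Out(C)$, so the universal property of the fibre product $B^{\sigma_Q}=\Aut(C)\times_{\Out(C)}Q$ yields $\psi$ compatibly with the projections to $Q$. Both $\varphi$ and $\psi$ restrict to the identity on $\mathrm U(S)$ and cover the identity of $Q$; the square involving $\partial^{A,\sigma_Q}$ and $\partial^{\sigma_Q}$ commutes since both maps are induced by conjugation in $C$; and $(\varphi,\psi)$ is a morphism of crossed modules because the $B^{A,\sigma_Q}$- and $B^{\sigma_Q}$-actions are, in each case, conjugation in $C$, so $\varphi$ is $\psi$-equivariant. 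Hence $(1,\varphi,\psi,1)$ is a congruence of the asserted kind.

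For statement (ii), I would use the inclusion $C^{\mathrm U(C)}(T)\hookrightarrow N^{\mathrm U(C)}(A)$ furnished by the ascending chain $\mathrm U(A)\subseteq C^{\mathrm U(C)}(T)\subseteq N^{\mathrm U(C)}(A)\subseteq \mathrm U(C)$ recorded in the setup, the identity map of $B^{A,\sigma_Q}$ in the middle, and the structure projection $\piQ\colon G\to Q$ on the right. Exactness of the top row $\mathrm e^{A,T}_{(C,\sigma_Q)}$ is precisely \cref{prop11}(i), so it remains to verify the three squares. The square over $\mathrm U(S)$ commutes because $\mathrm U(S)\subseteq C^{\mathrm U(C)}(T)$; the square over $B^{A,\sigma_Q}$ commutes because $\partial^{A,T,\sigma_Q}$ is by construction the restriction of $\partial^{A,\sigma_Q}$ to $C^{\mathrm U(C)}(T)$; and the right-hand square commutes because, by the definition of $\ttheta$ in \eqref{can21}, the composite $\piQ\circ\ttheta\colon B^{A,\sigma_Q}\to G\to Q$ is exactly the canonical projection $B^{A,\sigma_Q}\to Q$ that terminates $\mathrm e^A_{(C,\sigma_Q)}$.

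The one point that carries actual content, and which I expect to be the main (though still modest) obstacle, is the identity $\piQ\circ\ttheta=\mathrm{can}$ in the right-hand square of (ii): one must unwind the construction \eqref{can21} of $\ttheta$, which combines the map $B^{A,\sigma_Q}\to \Aut^S(T)$ with the canonical map $B^{A,\sigma_Q}\to Q$ into the fibre product $G=\Aut^S(T)\times_{\Aut(S)}Q$, and observe that post-composing with the fibre-product projection $\piQ$ discards the $\Aut^S(T)$-coordinate and returns the canonical map to $Q$. Everything else is routine diagram-chasing, which is why the statement can be labelled immediate; I would leave those verifications to the reader, citing \cref{prop1}, \cref{prop11}, and \cref{kernel} for the well-definedness and exactness of the objects involved.
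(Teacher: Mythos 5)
Your proposal is correct and takes essentially the same route as the paper, which states the result as immediate and gives no proof: the vertical maps are the evident inclusions together with the map of fibre products induced by $\Aut^A(C)\hookrightarrow\Aut(C)$, and all the required commutativity and crossed-module equivariance follow because every $\partial$ is conjugation in $C$ and every action factors through the canonical maps to $\Aut(C)$. Your isolation of the identity $\piQ\circ\ttheta=\mathrm{can}$ as the only square with content, verified by unwinding the fibre-product description \eqref{can21} of $\ttheta$, is exactly the right observation.
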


\subsection{Results related with the two notions of normal Deuring embedding}

\begin{thm}
\label{7.1}
Let $A$ be a central $T$-algebra,  
$C$ a central $S$-algebra, and $A \subseteq C$ 
an embedding of $A$ into $C$
having the property that $A$ coincides with the centralizer of $T$
in $C$. Furthermore, let $\sigma_Q \colon Q \to \Out(C)$ be a $Q$-normal
structure on $C$, and suppose that
the embedding of $A$ into $C$ is a $Q$-normal Deuring embedding 
with respect to $\sigma_Q$  and  the group extension {\rm\eqref{eTS}}. 
Then the data determine 
a unique homomorphism $\chi_G \colon G \to \Out(C,A)$
that turns the given $Q$-normal
Deuring embedding of $A$ into $C$ into a strong $Q$-normal Deuring 
embedding of $A$ into $C$ with respect 
to the given data in such a way that, relative to
the associated
$G$-normal structure 
\begin{equation*}
\sigma_G\colon G \stackrel{\chi_G} \longrightarrow \Out(C,A)
\stackrel{\res}\longrightarrow \Out(A)
\end{equation*}
on $A$, cf. {\rm{\eqref{assG}}},
\begin{equation*}[\mathrm e_{(A,\sigma_G)} ] = \inf [{\mathrm e}_{(C,\sigma_Q)}] \in \mathrm H^3(G,\mathrm U(T)).
\end{equation*}
\end{thm}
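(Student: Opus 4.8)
The plan is to pin down the homomorphism $\chi_G$ explicitly, and then to identify \emph{both} sides of the asserted equation as classes of concrete crossed $2$-fold extensions over $G$ that are linked by a single morphism. Throughout I would exploit the hypothesis that $A$ is the centralizer of $T$ in $C$ through its two consequences: the centralizer $C^{\mathrm U(C)}(T)$ collapses to $\mathrm U(A)$, and the restriction $N^{\mathrm U(C)}(A)/\mathrm U(A)\to\Aut(T|S)=N$ is injective (an element of the kernel would centralize $T$, hence lie in $\mathrm U(A)$).

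First I would construct $\chi_G$. Since the embedding is a $Q$-normal Deuring embedding, \cref{prop1} gives that $\ttheta\colon B^{A,\sigma_Q}\to G$ is surjective, and \cref{prop11}(i) together with $C^{\mathrm U(C)}(T)=\mathrm U(A)$ shows $\ker(\ttheta)=\partial^{A,T,\sigma_Q}(\mathrm U(A))$ consists of the inner automorphisms of $C$ by units of $A$. I would then form the composite $B^{A,\sigma_Q}\xrightarrow{\mathrm{can}}\Aut^A(C)\to\Out(C,A)$ and observe that it annihilates $\ker(\ttheta)$, because those inner automorphisms die in $\Out(C,A)=\Aut^A(C)/\partial^A_C(\mathrm U(A))$; hence it descends to the desired $\chi_G\colon G\to\Out(C,A)$. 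Reading off $\res\circ\chi_G$ on a preimage recovers $\llambda$, while the restriction of $\chi_G$ to $N=\ker(\piQ)$ lands in $N^{\mathrm U(C)}(A)/\mathrm U(A)$ and induces $\sigma_Q$ on the quotient $\Out^A(C)\hookrightarrow\Out(C)$, so $\chi_G$ is indeed a strong $Q$-normal Deuring embedding structure map. Uniqueness follows since two admissible maps agree modulo $N^{\mathrm U(C)}(A)/\mathrm U(A)$ and have equal $\res$-values, so their difference dies under the injective restriction to $N$.

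Next I would identify $\inf[\mathrm e_{(C,\sigma_Q)}]$. Unwinding $\inf$ as $\iota_*\circ\piQ^*$, where $\iota\colon\mathrm U(S)\hookrightarrow\mathrm U(T)$ and $\piQ^*$ is pullback along $\piQ$ (this is the map on $\mathrm H^3$ induced by the morphism \eqref{mqng} of $\mathcat{Change}$), the congruence of \cref{prop2}(i) gives $[\mathrm e^A_{(C,\sigma_Q)}]=[\mathrm e_{(C,\sigma_Q)}]$, and the morphism of \cref{prop2}(ii), which is the identity on the coefficient group $\mathrm U(S)$ and is $\piQ$ on the base, yields $[\mathrm e^{A,T}_{(C,\sigma_Q)}]=\piQ^*[\mathrm e^A_{(C,\sigma_Q)}]$ by the functoriality of the isomorphism $\mathrm{Opext}^2\cong\mathrm H^3$ of \cite{crossed}. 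Hence $\inf[\mathrm e_{(C,\sigma_Q)}]=\iota_*[\mathrm e^{A,T}_{(C,\sigma_Q)}]$, where (again using $C^{\mathrm U(C)}(T)=\mathrm U(A)$) the extension $\mathrm e^{A,T}_{(C,\sigma_Q)}$ reads $0\to\mathrm U(S)\to\mathrm U(A)\xrightarrow{\partial^{A,T,\sigma_Q}}B^{A,\sigma_Q}\xrightarrow{\ttheta}G\to 1$. It then remains to match this with $[\mathrm e_{(A,\sigma_G)}]$, for which I would exhibit a morphism of crossed $2$-fold extensions from $\mathrm e^{A,T}_{(C,\sigma_Q)}$ to $\mathrm e_{(A,\sigma_G)}$ given by $\iota$ on the kernel, the identity on the common middle term $\mathrm U(A)$, the identity on the base $G$, and the homomorphism $\rho\colon B^{A,\sigma_Q}\to B^{\sigma_G}=\Aut(A)\times_{\Out(A)}G$, $\rho(\beta,q)=(\beta|_A,\ttheta(\beta,q))$. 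That $\rho$ lands in $B^{\sigma_G}$ is exactly the defining relation $\sigma_G=\res\circ\chi_G$ of the associated $G$-normal structure \eqref{assG}; that $\rho\circ\partial^{A,T,\sigma_Q}=\partial^{\sigma_G}$ and that $\rho$ covers $\mathrm{id}_G$ are direct, using that a unit of $A$ centralizes $T$. Functoriality of $\mathrm{Opext}^2\cong\mathrm H^3$ then gives $[\mathrm e_{(A,\sigma_G)}]=\iota_*[\mathrm e^{A,T}_{(C,\sigma_Q)}]$, which combined with the previous paragraph is the claim.

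The hard part will not be any single computation but the bookkeeping of the several fibre-product groups $B^{A,\sigma_Q}$, $B^{\sigma_G}$ and $G$, and, more delicately, tracking exactly where the centralizer hypothesis is used: it is precisely what forces $C^{\mathrm U(C)}(T)=\mathrm U(A)$, so that $\mathrm e^{A,T}_{(C,\sigma_Q)}$ and $\mathrm e_{(A,\sigma_G)}$ share the middle term $\mathrm U(A)$ and the identity map is available for $\rho$, and what supplies the injectivity of $N^{\mathrm U(C)}(A)/\mathrm U(A)\to N$ needed for uniqueness. The one point requiring genuine care is the variance in the principle \emph{``a morphism of crossed $2$-fold extensions with coefficient map $\alpha$ and base map $\beta$ yields $\alpha_*[\,\cdot\,]=\beta^*[\,\cdot\,]$''}, which I must apply twice, once with $\beta=\piQ$ and once with $\beta=\mathrm{id}_G$.
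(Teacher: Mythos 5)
Your proposal is correct and follows essentially the same route as the paper's proof: the paper likewise obtains $\chi_G$ as the map induced on cokernels by $B^{A,\sigma_Q}\to\Aut^A(C)$, uses the centralizer hypothesis to identify $C^{\mathrm U(C)}(T)$ with $\mathrm U(A)$ so that $\mathrm e^{A,T}_{(C,\sigma_Q)}$ has $\mathrm U(A)$ as middle term, and then exhibits exactly your two morphisms of crossed $2$-fold extensions (one over $\piQ$, one over $\mathrm{id}_G$ with coefficient map $\mathrm U(S)\hookrightarrow\mathrm U(T)$) to conclude $[\mathrm e_{(A,\sigma_G)}]=\inf[\mathrm e_{(C,\sigma_Q)}]$. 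Your explicit uniqueness argument via the injectivity of $N^{\mathrm U(C)}(A)/\mathrm U(A)\to\Aut(T)$ is a welcome addition, as the paper's proof leaves that point implicit.
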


\begin{proof}
Recall that the Teich\-m\"uller complex 
$\mathrm e_{(C,{\sigma_Q} )}$ 
of the $Q$-normal $S$-algebra $(C,{\sigma_Q} )$,
spelled out above as
\eqref{pbC},
represents the Teich\-m\"uller class 
$[{\mathrm e}_{(C,\sigma_Q)}] \in \mathrm H^3(Q,\mathrm U(S))$
of the $Q$-normal central $S$-algebra $(C,{\sigma_Q})$.

Suppose that
the embedding of $A$ into $C$ is a $Q$-normal Deuring embedding 
with respect to the $Q$-normal structure
$\sigma_Q \colon Q \to \Out(C)$ on $C$ and  the 
group extension  \eqref{eTS}.
By \cref{prop2}(i),
the crossed $2$-fold extension 
${\mathrm e}^A_{(C,\sigma_Q)}$, cf. \eqref{eCsQ1},
is available and
is congruent to ${\mathrm e}_{(C,\sigma_Q)}$, whence
\begin{equation*}
[{\mathrm e}_{(C,\sigma_Q)}] =
[{\mathrm e}^A_{(C,\sigma_Q)}]
\in \mathrm H^3(Q,\mathrm U(S)).
\end{equation*}
Moreover, 
by \cref{prop2}(ii),
the crossed 2-fold extension \eqref{eCAT}
is available and,
since the centralizer 
of $A$ in $C$ coincides with $T$,
the inclusion $\mathrm U(A)\subseteq  C^{\mathrm U(C)}(T)$
identifies the group $\mathrm U(A)$ 
of invertible elements of $A$ 
with the centralizer $C^{\mathrm U(C)}(T)$
of $T$ in $\mathrm U(C)$.
Hence the crossed 2-fold extension \eqref{eCAT}
has the form
\begin{equation}
\begin{xymatrix}
{
\mathrm e^{A,T}_{(C,\sigma_Q)}\colon 0\ar[r]
&\mathrm U(S)
\ar[r]
& \mathrm U(A)
\ar[r]^{{\phantom{(T)}}\partial^{A,T,\sigma_Q}{\phantom{(T)}}}
&B^{A,\sigma_Q}
\ar[r]^{}
&G
\ar[r]
&1 ,
}
\end{xymatrix}
\label{eCAT2}
\end{equation}
and the injection $\iota\colon \mathrm U(A) \to N^{\mathrm U(C)}(A)$
induces the morphism \eqref{kind1} of crossed 2-fold extensions in
\cref{prop2}(ii); this is
a morphism of crossed $2$-fold extensions of the kind
$(1,\iota,1,\piQ)\colon 
{\mathrm e}^{A,T}_{(C,\sigma_Q)} \to {\mathrm e}^A_{(C,\sigma_Q)}$.

Denote by $i \colon \mathrm U(S) \to \mathrm U(T)$
 the inclusion. The canonical homomorphism 
\begin{equation*}
{B^{A,\sigma_Q} =\Aut^A(C)\times_{\Out (C)} Q \longrightarrow \Aut^A(C)}
\end{equation*}
induces a morphism
\begin{equation*}
(\mathrm{Id},\,\cdot\,)\colon (\mathrm U(A),   B^{A,\sigma_Q}, \partial^{A,T,\sigma_Q})
\longrightarrow
(\mathrm U(A), \Aut^A(C),\partial^{A}_C)
\end{equation*}
of crossed modules and hence
a homomorphism 
$\chi_G \colon G \to \Out(C,A)$  
such that
\begin{equation*}
\begin{xymatrix}
{
\mathrm e^{A,T}_{(C,\sigma_Q)}\colon 0\ar[r]
&\mathrm U(S)
\ar[d]^{i}
\ar[r]
& \mathrm U(A) 
\ar@{=}[d]\ar[r]^{{\phantom{(TT)}}\partial^{A,T,\sigma_Q}{\phantom{(TT)}}} 
&B^{A,\sigma_Q}
\ar[d]
\ar[r]^{}
&G
\ar[d]^{\chi_G} 
\ar[r]
&1
\\
{\phantom{(T)}}\mathrm e_A
\colon 0\ar[r]
&\mathrm U(T)
\ar[r]
& \mathrm U(A)
\ar[r]^{{\phantom{(TT)}}\partial^{A}_C{\phantom{(TTTT)}}}
&\Aut^A(C)
\ar[r]^{}
&\Out(C,A)
\ar[r]
&1
}
\end{xymatrix}
\end{equation*}
is a morphism of crossed $2$-fold extensions from
\eqref{eCAT2} to
\eqref{eA}.
The homomorphism $\chi_G$ turns the given $Q$-normal
Deuring embedding of $C$ into $A$ into a strong $Q$-normal Deuring 
embedding of $C$ into $A$ with respect to the given data.

The $G$-normal structure $\sigma_G\colon
G \stackrel{\chi_G}\to \Out(C,A)\stackrel{\res}\to \Out(A)$ 
associated to the strong $Q$-normal Deuring embedding,
in turn, induces a
morphism
\begin{equation*}
\begin{xymatrix}
{
\mathrm e^{A,T}_{(C,\sigma_Q)}\colon 0\ar[r]
&\mathrm U(S)
\ar[d]^{i}
\ar[r]
& \mathrm U(A) 
\ar@{=}[d]\ar[r]^{{\phantom{(TT)}}\partial^{A,T,\sigma_Q}{\phantom{(TT)}}} 
&B^{A,\sigma_Q}
\ar[d]
\ar[r]^{}
&G
\ar@{=}[d]
\ar[r]
&1
\\
 \mathrm e_{(A,\sigma_G)}
\colon 0\ar[r]
&\mathrm U(T)
\ar[r]
& \mathrm U(A)
\ar[r]^{{\phantom{(TT)}}\partial^{\sigma_G}{\phantom{(TT)}}}
&B^{\sigma_G}
\ar[r]^{}
&G
\ar[r]
&1
}
\end{xymatrix}
\end{equation*}
of crossed $2$-fold extensions from \eqref{eCAT2} to
the corresponding crossed 2-fold extension $\mathrm e_{(A,\sigma_G)}$  
of the kind \eqref{pb1}.
Consequently $[\mathrm e_{(A,\sigma_G)}] 
= \inf[{\mathrm e}_{(C,{\sigma_Q})}]$. \end{proof}

Theorem \ref{7.1} has a converse; this converse
sort of a characterizes the
Teich\-m\"uller classes in $\mathrm H^3(Q,\mathrm U(S))$.

\begin{thm}
\label{7.2}
Let $k \in \mathrm H^3(Q,\mathrm U(S))$, let $A$ be a central 
$T$-algebra, and let 
$\rrr\colon G \to \Out(A)$
be a  $G$-normal structure on $A$
relative to the action $\llambda\colon G \to \Aut^S(T)$
of $G$ on $T$.
Suppose that
\begin{equation*}
\inf(k) = [{\mathrm e}_{(A,\rrr)}]\in \mathrm H^3(G,\mathrm U(T)).
\end{equation*}
Then there is a $Q$-normal $S$-central crossed product algebra 
\begin{equation*}
(C,\sigma_Q)=((A,N,\mathrm e,\vartheta),\sigma_Q )
\end{equation*}
related with the other data in the following way.

\begin{itemize}
\item
The $Q$-normal algebra $(C,\sigma_Q)=((A,N,\mathrm e,\vartheta),\sigma_Q )$ 
has
Teich\-m\"uller class $k$;
\item once the $Q$-normal algebra $((A,N,\mathrm e,\vartheta),\sigma_Q )$ 
has been fixed,
the data determine a homomorphism
$\chi_G\colon G \to \Out(C,A)$ that turns
the
obvious embedding of $A$ into $(A,N,\mathrm e, \vartheta)$ 
into a strong $Q$-normal Deuring
embedding with respect to 
$\sigma_Q \colon Q \to \Out (A,N,\mathrm e, \vartheta)$
 and  the group extension {\rm\eqref{eTS}};
\item
the associated
$G$-normal structure 
\begin{equation*}
G \stackrel{\chi_G} \longrightarrow \Out(C,A)
\stackrel{\res}\longrightarrow \Out(A)
\end{equation*}
on $A$, cf. {\rm{\eqref{assG}}}, and the given 
$G$-normal
structure $\rrr \colon G \to \Out(A)$ on $A$ coincide.
\end{itemize}
\end{thm}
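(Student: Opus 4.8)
The plan is to realize $C$ as a crossed product of $A$ and $N$, to equip it with a $Q$-normal structure by letting $G$ act through conjugation on the $N$-graded pieces, and finally to match the resulting Teichm\"uller class with $k$ on the nose. First I would restrict $\sigma_G$ along $i^N\colon N\hookrightarrow G$ to obtain the $N$-normal structure $\sigma_N=\sigma_G\circ i^N$ on the central $T$-algebra $A$, relative to the tautological Galois action of $N=\Aut(T|S)$ on $T$ (which is $\llambda\circ i^N$). Applying Proposition~\ref{2.5.1}(iii) to the morphism $(\mathrm{id}_T,i^N)\colon(T,G,\llambda)\to(T,N,\mathrm{id})$ of $\mathcat{Change}$ identifies $[\mathrm e_{(A,\sigma_N)}]\in\mathrm H^3(N,\mathrm U(T))$ with the restriction $\res^G_N[\mathrm e_{(A,\sigma_G)}]$. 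Since $[\mathrm e_{(A,\sigma_G)}]=\inf(k)$ by hypothesis and $N=\ker(\pi_Q)$, the composite $\res^G_N\circ\inf^G_Q$ vanishes (an inflated normalized cocycle restricts to $0$ on the kernel), so $[\mathrm e_{(A,\sigma_N)}]=0$. By \cite[\S 10 Theorem]{crossed}, exactly as in the proofs of Theorems~\ref{4.2} and \ref{fouro}, this vanishing produces a group extension $\mathrm e\colon 1\to\mathrm U(A)\xrightarrow{j}\Gamma\xrightarrow{\piN}N\to1$ together with a morphism $(1,\vartheta)\colon(\mathrm U(A),\Gamma,j)\to(\mathrm U(A),\Aut(A),\partial)$ of crossed modules inducing $\sigma_N$. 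I set $C=(A,N,\mathrm e,\vartheta)$. As $N$ is finite and $T|S$ is Galois, Propositions~\ref{threet} and \ref{tpf} give that $C$ is a central $S$-algebra with $A$ equal to the centralizer of $T$ in $C$, that $C$ is Azumaya over $S$ whenever $A$ is Azumaya over $T$, and that $\Gamma$ embeds into $N^{\mathrm U(C)}(A)$.

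Next I would build the $Q$-normal structure and the strong Deuring datum from the full $G$-structure, using that $N\triangleleft G$. For $x\in G$ conjugation $n\mapsto xnx^{-1}$ permutes $N$; choosing a representative $a_x\in\Aut(A)$ of $\sigma_G(x)$ (so $a_x$ restricts to $\llambda(x)$ on $T$), I would define a semilinear automorphism $\alpha_x$ of $C=\bigoplus_{n}Av_n$ that maps $A$ to itself by $a_x$ and sends $v_n$ to $u_{x,n}v_{xnx^{-1}}$ for suitable units $u_{x,n}\in\mathrm U(A)$. Compatibility with the relation $v_n a={}^{\vartheta(v_n)}a\,v_n$ forces $\mathrm{inn}(u_{x,n})\circ\vartheta(v_{xnx^{-1}})=a_x\circ\vartheta(v_n)\circ a_x^{-1}$ in $\Aut(A)$, which is solvable for $u_{x,n}$ precisely because the identity $\sigma_G(xnx^{-1})=\sigma_G(x)\sigma_N(n)\sigma_G(x)^{-1}$ already holds in $\Out(A)$; compatibility with $v_mv_n=\varphi(m,n)v_{mn}$ is a twisted cocycle condition on the $u_{x,n}$ that I would solve with the remaining freedom. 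Since $\alpha_n$ is inner in $C$ whenever $x=n\in N$, the assignment $x\mapsto[\alpha_x]\in\Out(C)$ descends to a homomorphism $\sigma_Q\colon Q\to\Out(C)$, while $x\mapsto[\alpha_x]\in\Out(C,A)$ gives the strong Deuring datum $\chi_G\colon G\to\Out(C,A)$ filling in the ladder over $\mathrm e_{(T|S)}$. By construction each $\alpha_x$ restricts on $A$ to a representative of $\sigma_G(x)$, so the associated $G$-normal structure $\res\circ\chi_G$ coincides with the given structure $\rrr\colon G\to\Out(A)$; the pseudo-module picture of Remark~\ref{pseudo}, with normal homomorphism $\phi=i^N\circ\piN\colon\Gamma\to G$, is what organizes this descent coherently.

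It then remains to identify $[\mathrm e_{(C,\sigma_Q)}]$ with $k$. The embedding $A\subseteq C$ just produced is a strong $Q$-normal Deuring embedding with $A$ the centralizer of $T$, so Theorem~\ref{7.1} applies and, since its associated $G$-normal structure is $\sigma_G$, yields $\inf[\mathrm e_{(C,\sigma_Q)}]=[\mathrm e_{(A,\sigma_G)}]=\inf(k)$ in $\mathrm H^3(G,\mathrm U(T))$. Writing $c_0=[\mathrm e_{(C,\sigma_Q)}]$ we get $\inf(k-c_0)=0$, so by exactness of the crossed pair eight term sequence \eqref{13.11} at $\mathrm H^3(Q,\mathrm U(S))$ there is a crossed pair $(\mathrm e',\psi')$ with respect to $\mathrm e_{(T|S)}$ and $\mathrm U(T)$ with $\Delta(\mathrm e',\psi')=k-c_0$. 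Using that $\mathrm U(T)$ is central in $\mathrm U(A)$, I would form the Baer sum of the defining extension $\mathrm e$ with the central extension $\mathrm e'$ (pushed in along $\mathrm U(T)\hookrightarrow\mathrm U(A)$) and transport the descent data accordingly, obtaining a crossed product $(A,N,\tilde{\mathrm e},\tilde\vartheta)$ of $A$ and $N$ with $A$ still the centralizer of $T$; additivity of Teichm\"uller complexes under Baer sum (as in Proposition~\ref{2.4.3}, together with Proposition~\ref{6.3} and Theorem~\ref{normalcrossed} which compute $\Delta(\mathrm e',\psi')$ as the class of the associated crossed pair algebra) shows its Teichm\"uller class is $c_0+(k-c_0)=k$.

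The main obstacle is this last exact identification of the class: obtaining the correct \emph{inflation} is immediate from Theorem~\ref{7.1}, but eliminating the inflation ambiguity $\ker(\inf)=\im(\Delta)$ rests on the eight term sequence \eqref{13.11} and on the verification that twisting $\mathrm e$ by a central extension of $N$ by $\mathrm U(T)$ simultaneously preserves the Deuring embedding data and shifts the Teichm\"uller class by exactly $\Delta$ of the corresponding crossed pair. The subordinate technical point is the coherence of the descended automorphisms $\alpha_x$ of paragraph two, namely the solvability of the twisted cocycle conditions on the units $u_{x,n}$ and that $x\mapsto[\alpha_x]$ is a genuine homomorphism into $\Out(C)$; this is routine but unavoidably computational, and I would leave its details to an explicit cocycle check.
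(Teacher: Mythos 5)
Your overall strategy---first build a crossed product of $A$ with $N$ realizing the restricted structure $\rrn=\rrr\circ i^N$, then correct the resulting class by a crossed pair twist---is a genuinely different route from the paper's, but it has a real gap at its central step, the construction of $\sigma_Q$ and $\chi_G$ in your second paragraph. The extension $\mathrm e\colon \mathrm U(A)\rightarrowtail\Gamma\twoheadrightarrow N$ you obtain from the vanishing of $[\mathrm e_{(A,\rrn)}]$ is only determined up to a torsor over $\mathrm H^2(N,\mathrm U(T))$, and for an arbitrary choice the descended automorphisms $\alpha_x$ of $C=(A,N,\mathrm e,\vartheta)$ need not exist: for each fixed $x\in G$, after choosing the units $u_{x,n}$ solving the inner-automorphism condition, the failure of multiplicativity against $v_mv_n=\varphi(m,n)v_{mn}$ is a $2$-cocycle of $N$ with values in $\mathrm U(T)$ whose class measures the difference between $[\mathrm e]$ and its $x$-transform, and the ``remaining freedom'' (rescaling $u_{x,n}$ by central units) only kills coboundaries. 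Even when $[\mathrm e]$ happens to be $G$-invariant there is a secondary obstruction to making $x\mapsto[\alpha_x]$ a homomorphism lifting to $\Out(C,A)$ over $\mathrm e_{(T|S)}$---this is exactly the splitting of the extension $\overline{\mathrm e}$, i.e.\ the existence of a crossed pair structure. So the step you defer as ``routine but unavoidably computational'' is in fact obstructed, and the obstruction does not vanish for a general $\mathrm e$. The paper sidesteps this by reversing the order of construction: Proposition \ref{7.3} applies the eight term sequence not to $\mathrm e_{(T|S)}$ but to the extension $B^{\rrn}\rightarrowtail B^{\rrr}\twoheadrightarrow Q$ (where $k$ genuinely dies, because $\inf(k)=\Delta[(\hat{\mathrm e},\hat\psi)]$ is killed in $\mathrm H^3(B^{\rrr},\mathrm U(T))$), producing a crossed pair $(\tilde{\mathrm e},\tilde\psi)$ with $\Delta[(\tilde{\mathrm e},\tilde\psi)]=k$ and $\Pphi^*[(\tilde{\mathrm e},\tilde\psi)]=[(\hat{\mathrm e},\hat\psi)]$; the extension $\mathrm e$ and all the descent data are then read off from $\Aut_{B^{\rrr}}(\tilde{\mathrm e})$ simultaneously (Propositions \ref{7.4} and \ref{7.61}).

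Your third step has a related, smaller problem. Granting that some admissible $\mathrm e$ exists, Theorem \ref{7.1} only pins down $\inf[\mathrm e_{(C,\sigma_Q)}]$, and you propose to absorb the ambiguity $k-c_0\in\ker(\inf)=\im(\Delta)$ by a Baer-sum twist of $\mathrm e$ against a crossed pair $(\mathrm e',\psi')$ for $\mathrm e_{(T|S)}$. The assertion that this twist shifts the Teichm\"uller class of the crossed product by exactly $\Delta[(\mathrm e',\psi')]$ while preserving the strong Deuring data is not a formal consequence of Propositions \ref{2.4.3} and \ref{6.3} (those concern tensor products of algebras and crossed pair algebras over $T$, not twists of the defining extension of a crossed product of $A$ with $N$); it is essentially equivalent to the congruence $\mathrm e_{\tilde\psi}\to\mathrm e_{((A,N,\mathrm e,\vartheta),\sigma_Q)}$ of Proposition \ref{7.61}(iii), which is the actual content of the identification of the class and would need to be proved, not cited.
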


\begin{comp} 
\label{compl1}
In the situation of Theorem {\rm \ref{7.2}}, if $A$ is an Azumaya $T$-algebra,
the algebra $(A,N,\mathrm e, \vartheta)$ is an Azumaya $S$-algebra.
\end{comp}

\begin{rema}
In the special case where $\inf (k) = 0$, the argument to be given 
comes down to that given for the statement of Theorem~\ref{normalcrossed}, 
and this theorem
is in fact a special case of Theorem~\ref{7.2}.
\end{rema}

\begin{proof}[Proof of Theorem {\rm \ref{7.2}}]
For convenience, we split the reasoning into 
Propositions
\ref{7.3} - \ref{7.61} below.

Consider a $G$-normal central $T$-algebra $(A,\rrr )$,
 and denote
by ${\rrn\colon N \to \Out(A)}$ 
the restriction of ${\rrr \colon G \to \Out(A)}$ to $N$
so that $(A,\rrn )$
is an $N$-normal central $T$-algebra.
The obvious unlabeled vertical arrow and the injection $i^N$ turn
\begin{equation*}
\begin{CD}
\mathrm e_{(A,\rrn )} \colon 
0 @>>> \mathrm U(T) @>>> \mathrm U(A) 
@>{\partial^{\rrn}}>> B^{\rrn} @>>> N @>>> 1
\\
@.
@|
@|
@VVV
@V{i^N}VV
@.
\\
\mathrm e_{(A,\rrr)} \colon 
0 @>>> \mathrm U(T) @>>> \mathrm U(A) 
@>{\partial^{\rrr}}>> B^{\rrr} @>>> G @>>> 1
\end{CD}
\end{equation*}
into a commutative diagram 
having as its rows the 
(exact)
Teich\-m\"uller complexes 
${\mathrm e}_{(A,\rrn )}$ and $\mathrm e_{(A,\rrr)}$
of $(A,\rrn)$ and $(A,\rrr)$, respectively.
Consequently the combined homomorphism
\begin{equation*}
\begin{CD}
B^{\rrr} @>>> G @>{\piQ}>> Q
\end{CD}
\end{equation*}
yields a
group
extension
\begin{equation}
1 \longrightarrow B^{\rrn} \longrightarrow B^{\rrr} \longrightarrow Q \longrightarrow 1.
\label{ext1}
\end{equation}
Let
\begin{equation}
\begin{CD}
\hat{\mathrm e} \colon 0 @>>> \mathrm U(T) @>>> \mathrm U(A) @>>> \mathrm U(A)/\mathrm U(T) @>>> 1
\end{CD}
\label{ehat}
\end{equation}
and
\begin{equation*}
\begin{CD}
 1 @>>> \mathrm U(A)/\mathrm U(T) @>{\pphi}>> B^{\rrn}@>>> N@>>> 1
\end{CD}
\end{equation*}
be
the obvious group extensions so that splicing them yields the
Teich\-m\"uller complex 
\begin{equation*}
\begin{CD}
{\mathrm e}_{(A,\rrn)}
\colon
 0 @>>> \mathrm U(T) @>>> \mathrm U(A) @>>>  B^{\rrn}@>>> N@>>> 1
\end{CD}
\end{equation*}
of $(A,\rrn)$.
We denote the resulting
morphism
\begin{equation}
\begin{gathered}
\begin{xymatrix}{
1 \ar[r]   &\mathrm U(A)/\mathrm U(T) \ar[d]^{\pphi} \ar[r] &B^{\rrr} \ar@{=}[d] \ar[r]
&G \ar[d]^{\piQ} \ar[r] &1 \\
1\ar[r]    &B^{\rrn} \ar@{=} \ar[r]      &B^{\rrr}\ar[r]&Q \ar[r] &1 
}
\end{xymatrix}
\end{gathered}
\label{PHI}
\end{equation}
of group extensions by $\Pphi $.

Consider
the Teich\-m\"uller complex 
\begin{equation*}
\mathrm e_{(A,\rrr )} \colon 0 \longrightarrow 
\mathrm U(T) \longrightarrow \mathrm U(A) 
\stackrel{\partial^{\rrr}}\longrightarrow B^{\rrr} \longrightarrow G \longrightarrow 1
\end{equation*}
associated to the given $G$-normal structure $\rrr\colon G \to \Out(A)$
on $A$, cf. \eqref{pb1}.
Since $\mathrm U(T)$ is a central subgroup of $\mathrm U(A)$,
the group extension $\hat {\mathrm e}$ spelled out above as
\eqref{ehat}
is a central extension and, as noted in Proposition \ref{cmcp},
$G$-crossed pair structures on $\hat{\mathrm e}$
are equivalent to $B^{\rrr}$-actions on $\mathrm U(A)$
that turn $\mathrm U(A)\to B^{\rrr}$
into a crossed module. Thus the action of $B^{\rrr}$ on $\mathrm U(A)$
that results from the given $G$-normal structure $\rrr\colon G \to \Out(A)$
via the associated crossed 2-fold extension 
$\mathrm e_{(A,\rrr )}$ induces a crossed pair structure
$\hat\ppsi \colon G \to \Out_{B^{\rrr}}(\hat{\mathrm e})$
on $\hat{\mathrm e}$ with respect to the group extension
\begin{displaymath}
\xymatrix{
1 \ar[r]   &\mathrm U(A)/\mathrm U(T)  \ar[r] &B^{\rrr}  \ar[r]
&G  \ar[r] &1
}\end{displaymath}
and the $G$-module $\mathrm U(T)$.
Then the canonical homomorphism
${\hat \bbeta\colon \Aut_{B^{\rrr}}(\hat {\mathrm e}) \to \Aut(A)}$
yields a morphism
\begin{equation*}
\xymatrix{
0\ar[r]          
&\mathrm U(T)\ar@{=}[d]\ar[r]   
&\mathrm U(A) \ar@{=}[d] \ar[r] 
&\Aut_{B^{\rrr}}(\hat{\mathrm e})\ar[d]^{\hat \bbeta}\ar[r]
&\Out_{B^{\rrr}}(\hat {\mathrm e})\ar[d]^{\hat \bbeta_{\sharp}} \ar[r] &1\\ 
0\ar[r] &\mathrm U(T)\ar[r] &
\mathrm U(A)  \ar[r]
&\Aut(A)\ar[r]
&
\Out(A)
\ar[r]   &1\\ 
}
\end{equation*}
of crossed $2$-fold extensions such that the 
composite
\begin{equation}
G  \stackrel{\hat \psi}\longrightarrow 
\Out_{B^{\rrr}}(\hat {\mathrm e}) 
\stackrel{\hat\bbeta_{\sharp}}\longrightarrow  \Out(A)
\label{comp101}
\end{equation}
coincides with $\sigmaG\colon G \to \Out(A)$.

\begin{prop}
\label{7.3}
Let $k \in \mathrm H^3(Q,\mathrm U(S))$, let $(A,\rrr)$ be a $G$-normal 
central $T$-algebra, and suppose that 
$\inf (k) = [{\mathrm e}_{(A,\rrr )}]\in \mathrm H^3(G,\mathrm U(T))$.
Then there is a group extension
\begin{equation*}
\tilde{\mathrm e} \colon 0 \longrightarrow \mathrm U(T) \longrightarrow \Gamma \longrightarrow B^{\rrn} \longrightarrow 1
\end{equation*}
together with a crossed pair structure
$\tilde \psi\colon Q \to \Out_{B^{\rrr}}(\tilde {\mathrm e})$ 
on $\tilde{\mathrm e}$ 
with respect to the group extension
{\rm \eqref{ext1}}
and the $B^{\rrr}$-module
$\mathrm U(T)$,
the requisite  module structure being induced by the map
$B^{\rrr} \to G$ in $\mathrm e_{(A,\rrr)}$,
 related with the
other data in the following way,
where $B^{\tilde \psi}$ denotes the fiber product group
$\Aut_{B^{\rrr}}(\tilde {\mathrm e})\times_{\Out_{B^{\rrr}}(\tilde {\mathrm e})}Q$
with respect to $\tilde \psi\colon Q \to \Out_{B^{\rrr}}(\tilde {\mathrm e})$.

\noindent
{\rm (i)}
The crossed $2$-fold extension
\begin{equation*}{\mathrm e}_{\tilde \psi} \colon 0 \longrightarrow \mathrm U(S) \longrightarrow \Gamma \longrightarrow B^{\tilde \psi} \longrightarrow Q \longrightarrow 1\end{equation*}
associated to the crossed pair 
$(\tilde {\mathrm e},\tilde \psi)$,
cf. {\rm \eqref{epsi}},
represents $k$.

\noindent
{\rm (ii)} Relative to the obvious actions of the group
$\Aut_{B^{\rrr}}(\tilde{\mathrm e})\, ( \subseteq \Aut (\Gamma) \times B^{\rrr})$ 
on the groups 
$\mathrm U(T), \mathrm U(A), \mathrm U(A)/\mathrm U(T)$, $\Gamma$, $B^{\rrn}$ 
and $N$,
the extension group $\Gamma $ in 
$\tilde {\mathrm e}$
fits into a commutative diagram
of $\Aut_{B^{\rrr}}(\tilde{\mathrm e})$-groups with exact rows and columns as follows:
\begin{equation}
\begin{CD}
@. @. 1 @. 1@.
\\
@. @. @VVV @VVV @.
\\
\hat{\mathrm e} \colon 0 @>>> \mathrm U(T) @>>> \mathrm U(A) @>>> \mathrm U(A)/\mathrm U(T) @>>> 1
\\
@. @|   @VVV @VV{\pphi}V @.
\\
\tilde{\mathrm e} \colon 0 @>>> \mathrm U(T) @>>> \Gamma @>>> B^{\rrn} @>>> 1
\\
@. @. @VVV  @VVV @.
\\
 @.  @. N @= N @.
\\
@. @. @VVV @VVV @.
\\
@. @. 1 @. 1@.
\end{CD}
\label{7.3.2.1}
\end{equation}
\end{prop}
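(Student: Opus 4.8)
\textbf{Plan of proof for Proposition \ref{7.3}.}
The strategy is to realize the class $k$ by lifting the crossed pair $(\hat{\mathrm e},\hat\psi)$ constructed just above the proposition from the group extension \eqref{PHI} to a crossed pair with respect to the group extension \eqref{ext1}. The key observation is that the hypothesis $\inf(k)=[{\mathrm e}_{(A,\rrr)}]$ together with the eight term exact sequence \eqref{13.11}, applied to the group extension \eqref{ext1} and the $B^{\rrr}$-module $\mathrm U(T)$, places us exactly in the situation where a class of $\mathrm H^3(Q,\mathrm U(S))$ that inflates into $\mathrm H^3(B^{\rrr},\mathrm U(T))$ arises from some crossed pair. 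So the first step is to identify the relevant instance of \eqref{13.11}: here the role of the top group is $B^{\rrr}$, the role of the normal subgroup is $B^{\rrn}$, and $M=\mathrm U(T)$ with $M^{B^{\rrn}}=\mathrm U(S)$. I would check that $\mathrm U(T)^{B^{\rrn}}=\mathrm U(S)$, which holds because $B^{\rrn}$ maps onto $N=\Aut(T|S)$ with kernel acting trivially on $T$ (it acts by inner automorphisms of $A$), so the invariants under $B^{\rrn}$ coincide with the invariants under $N$, namely $\mathrm U(S)=\mathrm U(T)^N$ since $T|S$ is Galois.

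The second step is to produce the group extension $\tilde{\mathrm e}\colon \mathrm U(T)\rightarrowtail \Gamma \twoheadrightarrow B^{\rrn}$ and its crossed pair structure $\tilde\psi$. Here I would use the established procedure behind $\Delta$ in \eqref{13.11}: since $\inf(k)$ equals the class of the Teich\-m\"uller complex $\mathrm e_{(A,\rrr)}$, which is itself the crossed 2-fold extension $\mathrm e_{\hat\psi}$ associated to the crossed pair $(\hat{\mathrm e},\hat\psi)$ with respect to \eqref{PHI}, the exactness of \eqref{13.11} at $\mathrm H^3(Q,\mathrm U(S))$ guarantees that $k$ lies in the image of $\Delta$, i.e. $k=[{\mathrm e}_{\tilde\psi}]$ for some crossed pair $(\tilde{\mathrm e},\tilde\psi)$ with respect to \eqref{ext1}. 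This gives statement (i) essentially by the definition of $\Delta$ as the assignment to a crossed pair of its associated crossed 2-fold extension \eqref{epsi}.

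The third step is to assemble the commutative diagram \eqref{7.3.2.1} in statement (ii). The top row $\hat{\mathrm e}$ is already in hand as \eqref{ehat}, and the middle row $\tilde{\mathrm e}$ is the one just produced. I would define the central vertical map $\mathrm U(A)\to\Gamma$ so that it is compatible with the quotient maps $\mathrm U(A)\to\mathrm U(A)/\mathrm U(T)$ and $\Gamma\to B^{\rrn}$, the right-hand vertical map being the homomorphism $\pphi\colon \mathrm U(A)/\mathrm U(T)\to B^{\rrn}$ from \eqref{PHI}; exactness of the columns then forces the bottom identification $N=N$. The required equivariance of the whole diagram under $\Aut_{B^{\rrr}}(\tilde{\mathrm e})$ follows from the fact that all the maps are induced from the crossed pair structure, which is by construction $B^{\rrr}$-equivariant. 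The main obstacle I anticipate is precisely verifying that the crossed pair $(\tilde{\mathrm e},\tilde\psi)$ supplied abstractly by the surjectivity of $\Delta$ can be chosen so that $\Gamma$ sits in the commutative diagram \eqref{7.3.2.1} compatibly with the already-fixed data $\hat{\mathrm e}$ and $\pphi$; this requires tracing through the explicit construction of the lift in \cite{MR597986} rather than invoking exactness as a black box, and checking that the $\Aut_{B^{\rrr}}(\tilde{\mathrm e})$-action restricts correctly on each constituent group. Once the diagram is in place with exact rows and columns, statements (i) and (ii) are immediate consequences of the construction of the crossed 2-fold extension associated to a crossed pair.
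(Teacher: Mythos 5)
Your overall strategy coincides with the paper's: apply the eight term exact sequence \eqref{13.11} to the group extension \eqref{ext1} with coefficients $\mathrm U(T)$ (checking, as you do, that $\mathrm U(T)^{B^{\rrn}}=\mathrm U(S)$), observe that $\inf(k)=[\mathrm e_{(A,\rrr)}]=\Delta[(\hat{\mathrm e},\hat\psi)]$ and therefore dies under further inflation to $\mathrm H^3(B^{\rrr},\mathrm U(T))$, and conclude by exactness at $\mathrm H^3(Q,\mathrm U(S))$ that $k=\Delta[(\tilde{\mathrm e},\tilde\psi)]$ for some crossed pair with respect to \eqref{ext1}. That settles (i) by the very definition of $\Delta$, exactly as in the paper.

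The step you flag as the main obstacle --- choosing $(\tilde{\mathrm e},\tilde\psi)$ compatibly with the already fixed data $\hat{\mathrm e}$ and $\pphi$ so that the diagram \eqref{7.3.2.1} exists --- is indeed where the content of (ii) lies, but your proposed remedy (``tracing through the explicit construction of the lift'') is not how it is closed, and an arbitrary preimage of $k$ under $\Delta$ will in general \emph{not} fit into \eqref{7.3.2.1}. The paper instead uses the naturality of the entire eight term sequence with respect to the morphism $\Pphi$ of group extensions \eqref{PHI}: this yields a homomorphism $\Pphi^{\ast}\colon \mathrm{Xpext}(B^{\rrr},B^{\rrn};\mathrm U(T))\to \mathrm{Xpext}(B^{\rrr},\mathrm U(A)/\mathrm U(T);\mathrm U(T))$ fitting into a commutative ladder with the two maps $\Delta$ and the inflation $\mathrm H^3(Q,\mathrm U(S))\to\mathrm H^3(G,\mathrm U(T))$. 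Since $\Delta\bigl(\Pphi^{\ast}[(\tilde{\mathrm e},\tilde\psi)]\bigr)=\inf(k)=\Delta[(\hat{\mathrm e},\hat\psi)]$, the difference of the two classes lies, by exactness, in the image of $\mathrm H^2(B^{\rrr},\mathrm U(T))$, and adjusting $(\tilde{\mathrm e},\tilde\psi)$ by the corresponding class achieves $\Pphi^{\ast}[(\tilde{\mathrm e},\tilde\psi)]=[(\hat{\mathrm e},\hat\psi)]$ on the nose. Identifying $(\hat{\mathrm e},\hat\psi)$ with the induced crossed pair $(\tilde{\mathrm e}\Pphi,\tilde\psi^{\Pphi})$ then produces precisely the commutative diagram \eqref{7.3.2.1} together with its $\Aut_{B^{\rrr}}(\tilde{\mathrm e})$-equivariance. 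This naturality-plus-diagram-chase argument is the one idea missing from your sketch; everything else matches the paper's proof.
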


\begin{proof} By 
\cite[Theorem~2]{MR597986},
the morphism \ref{PHI} of group extensions
induces a morphism for the
corresponding eight term exact sequences in group cohomology constructed
in 
\cite{MR597986}. In
particular, $\Pphi $ induces the commutative diagram
\begin{displaymath}
\scalefont{0.8}{
\xymatrix{ \mathrm H^2(B^{\rrr}, \mathrm U(T))  \ar@{=}[d] \ar[r]    
          &\mathrm{Xpext}(B^{\rrr} ,B^{\rrn} ; 
          \mathrm U(T))\ar[d]^{\Pphi^\ast }
          {\ar[r]^{\kern10mm \Delta }} 
          &\mathrm H^3(Q,\mathrm U(S)) \ar[d]^{\inf} \ar[r]
          &\mathrm H^3(B^{\rrr} ,\mathrm U(T)) \ar@{=}[d] \\ 
\mathrm H^2(B^{\rrr} , \mathrm U(T))  \ar[r] 
&\mathrm{Xpext} (B^{\rrr} ,\mathrm U(A)/\mathrm U(T);\mathrm U(T)){\ar[r]^{\kern15mm \Delta }}
&\mathrm H^3(G,\mathrm U(T))\ar[r]    
&\mathrm H^3(B^{\rrr} , \mathrm U(T)) .
}
}
\end{displaymath}
By the construction of 
$\Delta $, cf. Subsection~\ref{6.11} above  or 
\cite[Subsection 1.2]{MR597986}, 
\linebreak
$\Delta[(\hat{\mathrm e}, \hat\ppsi)] = [{\mathrm e}_{(A,\rrr )}]$, 
and so, by
exactness,
$\inf(k) = [\mathrm e_{(A,\rrr)}]$ 
goes to zero in $\mathrm H^3(B^{\rrr} ,\mathrm U(T))$. Therefore 
$k$ goes to zero in $\mathrm H^3(B^{\rrr}, \mathrm U(T))$, and hence
there is  
a group extension
\begin{equation*}
\tilde{\mathrm e} \colon 0 \longrightarrow \mathrm U(T) \longrightarrow \Gamma \longrightarrow B^{\rrn} \longrightarrow 1
\end{equation*}
of the asserted kind
together with a crossed pair structure
$\tilde \psi\colon Q \to \Out_{B^{\rrr}}(\tilde {\mathrm e})$ 
on $\tilde{\mathrm e}$ 
with respect to the group extension
{\rm \eqref{ext1}}
 and the $B^{\rrr}$-module
$\mathrm U(T)$ 
whose $B^{\rrr}$-module structure is
induced by the projection $B^{\rrr} \to G$ in $\mathrm e_{(A,\rrr)}$
so that
\begin{equation*}
\Delta [(\tilde{\mathrm e}, \tilde\ppsi)] = k \in \mathrm H^3(Q,\mathrm U(S));
\end{equation*}
moreover, making a suitable choice of 
$(\tilde{\mathrm e}, \tilde \ppsi)$
by means of some diagram chase if need be, we can arrange
for $[(\tilde{\mathrm e}, \tilde \ppsi)]$ to go to 
$[(\hat{\mathrm e}, \hat \ppsi)]$
in the sense that
\begin{equation*}
\Pphi^\ast [(\tilde{\mathrm e}, \tilde \ppsi)] 
= [(\hat{\mathrm e}, \hat \ppsi)]\in 
\mathrm{Xpext} (B^{\rrr} , \mathrm U(A)/\mathrm U(T) ; \mathrm U(T)).
\end{equation*}
The crossed pair $(\tilde{\mathrm e}, \tilde \ppsi)$ has the asserted 
properties.
For $\Delta [(\tilde{\mathrm e}, \tilde \ppsi)] = [{\mathrm e}_{\tilde \ppsi}]$ by definition,
and so assertion (i)  
holds. Moreover, since
$\Pphi^\ast [(\tilde{\mathrm e}, \tilde \ppsi)] = 
[(\hat{\mathrm e}, \hat \ppsi)]$, 
assertion (ii)
holds as well. The details are as follows, cf.  
\cite[Subsection 2.2]{MR597986}.

Since 
$\Pphi^\ast[(\tilde{\mathrm e}, \tilde\ppsi)] 
= [(\hat{\mathrm e}, \hat \ppsi)]$, 
we may
identify $(\hat{\mathrm e} , \hat \ppsi)$ with 
the induced crossed pair
$(\tilde{\mathrm e} \Pphi, \tilde \ppsi^\Pphi )$, cf.
\cite{MR597986}.
Recall that
$\tilde {\mathrm e}\Pphi $ is the group extension
induced from $\tilde{\mathrm e}$
via the injective homomorphism
$\pphi\colon \mathrm U(A)/\mathrm U(T) \to B^{\rrn}$
and let $\mathrm U =\ker (\Gamma \longrightarrow N)$; since $\pphi$
identifies $\mathrm U(A)/\mathrm U(T)$ with the kernel of 
$ B^{\rrn} \to N$, 
we can write the induced
group  extension $\tilde {\mathrm e}\Pphi $ as
\begin{equation*}
\tilde{\mathrm e} \Pphi \colon 0 \longrightarrow \mathrm U(T) \longrightarrow  \mathrm U \longrightarrow \mathrm U(A)/\mathrm U(T) \longrightarrow 1.
\end{equation*}
To explain the induced
crossed pair structure
$\tilde \ppsi^\Pphi\colon G \to   \Out_{B^{\rrr}}(\tilde {\mathrm e}\Pphi)$,
we note first that
the injection $\mathrm U \to \Gamma$ induces a  morphism
\begin{displaymath}
\xymatrix{
0\ar[r] &\mathrm U(S)\ar@{=}[d] \ar[r]& \mathrm U\ar[d]\ar[r] 
&\Aut_{B^{\rrr}}(\tilde{\mathrm e})\ar@{=}[d]\ar[r]
&\Out_{B^{\rrr} }(\tilde{\mathrm e})\times_Q G  \ar[d]\ar[r]& 1\\
0\ar[r] &\mathrm U(S) \ar[r]&\Gamma \ar[r] 
&\Aut_{B^{\rrr}}(\tilde{\mathrm e})\ar[r] 
 &\Out_{B^{\rrr} }(\tilde{\mathrm e})\ar[r]& 1 
}
\end{displaymath}
of crossed 2-fold extensions. 
Moreover,
restriction of the operators on $\Gamma $ 
to $\mathrm U$
yields a homomorphism
\begin{equation*}
\res\colon
\Aut_{B^{\rrr}}(\tilde{\mathrm e}) \longrightarrow 
\Aut_{B^{\rrr}}(\tilde{\mathrm e}\Pphi ),
\end{equation*}
and this homomorphism, in turn, yields a morphism
\begin{displaymath}
\xymatrix{
0\ar[r] &\mathrm U(S)\ar[d] \ar[r] &\mathrm U\ar@{=}[d]\ar[r] 
        &\Aut_{B^{\rrr}}(\tilde{\mathrm e})\ar[d]^{\res}\ar[r]
&\Out_{B^{\rrr} }(\tilde{\mathrm e})\times_Q G  \ar[d]^{\res_{\flat}}\ar[r]& 1\\
0\ar[r] &\mathrm U(T) \ar[r]&\mathrm U \ar[r] 
&\Aut_{B^{\rrr}}(\tilde{\mathrm e}\Pphi)\ar[r] 
 &\Out_{B^{\rrr} }(\tilde{\mathrm e}\Pphi)\ar[r]& 1 
}
\end{displaymath}
of crossed 2-fold extensions.
The 
crossed pair structure
$\tilde \ppsi^\Pphi\colon G \to \Out_{B^{\rrr} } (\tilde{\mathrm e}\Pphi)$
is the composite
\begin{equation*}
G  \stackrel{\tilde \ppsi_G}
\longrightarrow
\Out_{B^{\rrr} } (\tilde{\mathrm e}) \times_Q G 
\stackrel{\res_{\flat}}
\longrightarrow
 \Out_{B^{\rrr} } (\tilde{\mathrm e}\Pphi)
\end{equation*}
of ${\res_{\flat}}$ with 
the canonical lift of the crossed pair structure
$\tilde \ppsi \colon Q \to \Out_{B^{\rrr}}(\tilde{\mathrm e})$ 
on $\tilde{\mathrm e}$ to a homomorphism
$
\tilde \ppsi_G\colon G \to \Out_{B^{\rrr} } (\tilde{\mathrm e}) \times_Q G;
$
see 
\cite[Propositions 2.3 and 2.4]{MR597986}.
The identity $\Pphi^\ast[(\tilde{\mathrm e}, \tilde\ppsi)] 
= [(\hat{\mathrm e}, \hat \ppsi)]$ means that
the two crossed pairs
 $(\hat{\mathrm e} , \hat \ppsi)$ and
$(\tilde{\mathrm e} \Pphi, \tilde \ppsi^\Pphi )$
are congruent as crossed pairs.
Thus we may take $\mathrm U$ to be $\mathrm U(A)$ such that the following 
hold:
\begin{itemize}
\item
The injection
$\mathrm U(A) \to \Gamma $ induces a 
morphism $\hat{\mathrm e} \to \tilde{\mathrm e}$ of group extensions whose 
restriction to $\mathrm U(T)$ is the identity, 
as displayed in diagram \eqref{7.3.2.1} above,
and
\item
the crossed pair structure
$\hat\ppsi \colon G \to \Out_{B^{\rrr} } (\hat{\mathrm e})$ on $\hat{\mathrm e}$
is the composite
\begin{equation}
G \stackrel{\tilde \ppsi_G} \longrightarrow \Out_{B^{\rrr} } (\tilde{\mathrm e}) 
\stackrel{\res_{\sharp}} \longrightarrow \Out_{B^{\rrr} } (\hat{\mathrm e}) 
\label{comp102}
\end{equation}
of $\tilde \ppsi_G$ with the homomorphism
$\res_{\sharp}\colon \Out_{B^{\rrr} } (\tilde{\mathrm e}) \times_Q G \to \Out_{B^{\rrr} } (\hat{\mathrm e})$
induced by the obvious restriction homomorphism
$\res\colon
\Aut_{B^{\rrr}}(\tilde{\mathrm e}) \longrightarrow 
\Aut_{B^{\rrr}}(\hat{\mathrm e} )$.

\end{itemize}
The morphism
$\hat{\mathrm e}\to \tilde{\mathrm e}$ of group extensions
yields the commutative diagram \eqref{7.3.2.1} and,
by construction, this 
is a commutative diagram of 
$\Aut_{B^{\rrr}}(\tilde{\mathrm e})$-groups.
\end{proof}

We continue the proof of Theorem \ref{7.2}. Maintaining the hypotheses of 
Proposition \ref{7.3},
we write
\begin{equation*}\mathrm e \colon 1 \longrightarrow \mathrm U(A) \stackrel{j}{\longrightarrow} \Gamma \longrightarrow N \longrightarrow 1
\end{equation*}
for the group extension that arises as the middle column of 
diagram \eqref{7.3.2.1}
and denote by $\vartheta \colon \Gamma \to \Aut (A)$ the combined homomorphism
\begin{equation*}
\Gamma \longrightarrow B^{\rrn}\longrightarrow \Aut (A).
\end{equation*}
Consider
the crossed product algebra $(A,N,\mathrm e,\vartheta )$.
By construction
\begin{equation*} (A,N,\mathrm e,\vartheta ) = 
A^t \Gamma / <a-j(a), a\in \mathrm U(A)>,\end{equation*}
cf. \cref{three}. By 
\cref{threet}(iv),
since $T|S$ 
is a Galois extension of commutative rings
with Galois group $N$,
the group $\Gamma$ now gets identified with the
normalizer $N^{\mathrm U(A,N,\mathrm e,\vartheta )}(A)$
of $A$ in the crossed product algebra $(A,N,\mathrm e,\vartheta )$.

Recall  
the notation $B^{\sigmaG}$ 
for the fiber product group ${\Aut(A) \times_{\Out(A)}G }$
with respect to the given $G$-normal structure 
$\sigmaG \colon G \to \Out(A)$ on $A$,
cf. 
Subsection \ref{twothree}.
Furthermore, 
recall from Subsection~\ref{6.11} above 
that 
$\Aut_{B^{\rrr}}(\tilde{\mathrm e})$ denotes  the subgroup
of $\Aut (\Gamma) \times B^{\rrr}$ that consists of the pairs 
$(\alpha , x)$ which
render the diagram
\begin{equation*}
\begin{CD}
\tilde {\mathrm e}\colon 0
@>>> 
\mathrm U(T) 
@>>> 
\Gamma
@>>> 
B^{\rrn}
@>>> 
1
\\
@.
@V{\ell_x}VV
@V{\alpha}VV
@V{i_x}VV
@.
\\
\tilde {\mathrm e}\colon 0
@>>> 
\mathrm U(T) 
@>>> 
\Gamma
@>>> 
B^{\rrn} 
@>>> 
1
\end{CD}
\end{equation*}
commutative;
here, given $x \in  B^{\sigmaG}$, the notation  
$i_x\colon B^{\rrn} \to B^{\rrn}$
refers to conjugation by $x \in  B^{\sigmaG}$
and $\ell_x\colon \mathrm U(T)\to  \mathrm U(T)$
 to the canonical action of
$B^{\sigmaG}$ on $\mathrm U(T)$
(recall that $T$ denotes the center of $A$)
induced from the 
action of $B^{\sigmaG}$ on $A$ and hence on $\mathrm U(T)$
via the canonical
homomorphism $B^{\sigmaG}\to \Aut(A)$.

\begin{prop}
\label{7.4}
Setting
\begin{equation}{}^{(\alpha , x)}(ay) = 
{}^xa
{}^{\alpha}y,
\ a \in A,\ y \in \Gamma,
\label{rule1}
\end{equation}
where 
$(\alpha , x)
\in  \Aut_{B^{\rrr}}(\tilde {\mathrm e})\, 
(\subseteq \Aut (\Gamma) \times B^{\rrr} 
\subseteq \Aut (\Gamma) \times \Aut (A)\times G)$,
we obtain
a homomorphism
\begin{equation*}
\bbeta \colon  
 \Aut_{B^{\rrr}}(\tilde {\mathrm e})
\longrightarrow \Aut^A(A, N,\mathrm e,\vartheta ),
\end{equation*}
and this homomorphism, in turn, yields  morphisms
\begin{equation*}
\xymatrix{
0\ar[r]          
&\mathrm U(S)\ar@{=}[d]\ar[r]   
&\Gamma \ar@{=}[d] \ar[r] 
&\Aut_{B^{\rrr}}(\tilde {\mathrm e})\ar[d]^{\bbeta}\ar[r]
&\Out_{B^{\rrr}}(\tilde {\mathrm e})\ar[d]^{\bbeta_{\sharp}} \ar[r] &1\\ 
0\ar[r] &\mathrm U(S)\ar[r] &
N^{\mathrm U(A, N,\mathrm e,\vartheta )}(A)  \ar[r]
&\Aut^A(A, N,\mathrm e,\vartheta )\ar[r]
&
\Out^A(A, N,\mathrm e,\vartheta )
\ar[r]   &1\\ 
}
\end{equation*}
and
\begin{equation*}
\xymatrix{
0\ar[r]          
&\mathrm U(S)\ar@{=}[d]\ar[r]   
&\mathrm U(A) \ar@{=}[d] \ar[r] 
&\Aut_{B^{\rrr}}(\tilde {\mathrm e})\ar[d]^{\bbeta}\ar[r]
&\Out_{B^{\rrr}}(\tilde {\mathrm e})\times_QG\ar[d]^{\bbeta_{\flat}} \ar[r] &1\\ 
0\ar[r] &\mathrm U(S)\ar[r] &
\mathrm U(A)  \ar[r]
&\Aut^A(A, N,\mathrm e,\vartheta )\ar[r]
&
\Out((A, N,\mathrm e,\vartheta ),A)
\ar[r]   &1\\ 
}
\end{equation*}
of crossed $2$-fold extensions.
Furthermore, the homomorphisms $\bbeta_{\sharp}$, $\bbeta_{\flat}$,
and the obvious unlabeled homomorphisms render the diagram
\begin{equation}
\begin{CD}
\Out_{B^{\rrr}}(\tilde {\mathrm e})\times_QG
@>>>
\Out_{B^{\rrr}}(\tilde {\mathrm e})
\\
@V{\bbeta_{\flat}}VV
@V{\bbeta_{\sharp}}VV
\\
\Out((A, N,\mathrm e,\vartheta ),A)
@>>>
\Out^A(A, N,\mathrm e,\vartheta )
\end{CD}
\label{diag2}
\end{equation}
commutative.
\end{prop}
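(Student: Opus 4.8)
The plan is to define the action first on the twisted group ring $A^t\Gamma$ and then to check that it descends to the crossed product algebra $(A,N,\mathrm e,\vartheta)=A^t\Gamma/\langle y-j(y),\,y\in\mathrm U(A)\rangle$, cf. \cref{first} and \cref{3.1}. Concretely I would set ${}^{(\alpha,x)}(a\cdot g)={}^{x}a\cdot\alpha(g)$ for $a\in A$ and $g\in\Gamma$, where ${}^{x}a$ is the action of $x\in B^{\rrr}$ on $A$ through the canonical homomorphism $B^{\rrr}\to\Aut(A)$ and $\alpha(g)$ is the value of $\alpha\in\Aut(\Gamma)$. To see that this is an algebra endomorphism of $A^t\Gamma$, the one nontrivial point is compatibility with the twisted multiplication $(a g)(a'g')=a\,{}^{\vartheta(g)}a'\,gg'$; this reduces to the identity $\vartheta(\alpha(g))=x\,\vartheta(g)\,x^{-1}$ in $\Aut(A)$, which holds because $\alpha$ covers the conjugation $i_{x}$ on $B^{\rrn}$ by the very definition of $\Aut_{B^{\rrr}}(\tilde{\mathrm e})$ and because $\vartheta$ factors as $\Gamma\to B^{\rrn}\to\Aut(A)$.

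The step I expect to be the main obstacle, and would treat most carefully, is the descent to the quotient. Since the candidate map is multiplicative, it suffices to send each generator $y-j(y)$ $(y\in\mathrm U(A))$ into the defining ideal. Two facts are needed. First, $\alpha$ preserves the subgroup $\mathrm U(A)\subseteq\Gamma$: indeed $\alpha$ covers $i_{x}$, and $i_{x}$ preserves $\ker(B^{\rrn}\to N)=\mathrm U(A)/\mathrm U(T)$, this subgroup being normal in $B^{\rrr}$ as the kernel of $B^{\rrr}\to G$. Second, and decisively, one needs the identity $\alpha(u)={}^{x}u$ in $\mathrm U(A)$ for every $u\in\mathrm U(A)$; this is precisely the assertion that the inclusion $\mathrm U(A)\hookrightarrow\Gamma$ is a morphism of $\Aut_{B^{\rrr}}(\tilde{\mathrm e})$-groups, the source carrying the action through $B^{\rrr}\to\Aut(A)$ and the target the action through $\alpha$, which is guaranteed by the construction in \cref{7.3} where diagram \eqref{7.3.2.1} was produced as a diagram of $\Aut_{B^{\rrr}}(\tilde{\mathrm e})$-groups. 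Granting this, one computes ${}^{(\alpha,x)}(y)=\alpha(y)\equiv j(\alpha(y))=j({}^{x}y)={}^{x}j(y)={}^{(\alpha,x)}(j(y))$ modulo the ideal, so the generator maps to zero and the action descends; it visibly maps $A=A\cdot 1$ onto itself with restriction ${}^{x}$, whence $\bbeta(\alpha,x)\in\Aut^A(A,N,\mathrm e,\vartheta)$. That $\bbeta$ is a homomorphism is then immediate, since $\alpha\mapsto\alpha$ on $\Gamma$ and $x\mapsto{}^{x}$ on $A$ are both homomorphisms; in particular each $\bbeta(\alpha,x)$ is invertible, hence an automorphism.

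Next I would record two identifications that feed the morphisms of crossed modules. Writing $C=(A,N,\mathrm e,\vartheta)$, the group $\Gamma$ is identified with $N^{\mathrm U(C)}(A)$ by \cref{threet}(iv), since $T|S$ is Galois with group $N$, and under this identification $\bbeta(\alpha,x)$ restricts on $\Gamma$ to $\alpha$. Moreover the structure map of $\hat{\mathrm e}$, namely the conjugation homomorphism $\Gamma\to\Aut_{B^{\rrr}}(\tilde{\mathrm e})$ sending $y$ to $(i_y,\overline y)$ with $\overline y\in B^{\rrn}\subseteq B^{\rrr}$ the image of $y$, is carried by $\bbeta$ to conjugation in $C$: for $y\in\Gamma$ one finds $\bbeta(i_y,\overline y)(a\cdot g)={}^{\vartheta(y)}a\cdot ygy^{-1}=y(a\cdot g)y^{-1}$, that is $\partial^N_C(y)$, respectively $\partial^A_C(y)$ when $y\in\mathrm U(A)$.

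Finally the two displayed morphisms of crossed $2$-fold extensions arise from two morphisms of crossed modules sharing the second component $\bbeta$: the pair $(\mathrm{Id}_{\Gamma},\bbeta)$ from $(\Gamma,\Aut_{B^{\rrr}}(\tilde{\mathrm e}))$ to $(N^{\mathrm U(C)}(A),\Aut^A(C))$, and the pair $(\mathrm{Id}_{\mathrm U(A)},\bbeta)$ from $(\mathrm U(A),\Aut_{B^{\rrr}}(\tilde{\mathrm e}))$ to $(\mathrm U(A),\Aut^A(C))$. Compatibility with the crossed module boundaries is the conjugation computation just made, restricted to $\Gamma$, respectively $\mathrm U(A)$, together with the evident $\Aut^A(C)$-equivariance of $\bbeta$. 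Passing to cokernels defines $\bbeta_{\sharp}$ and $\bbeta_{\flat}$ and yields the two morphisms, the top rows being the crossed $2$-fold extension $\hat{\mathrm e}$ and the one produced in \cref{7.3}, whose kernels $\mathrm U(T)^{B^{\rrn}}=\mathrm U(S)$ match the bottom rows \eqref{ce1} and \eqref{ce3}. Naturality of the cokernel then makes diagram \eqref{diag2} commute, since the projection $\Out_{B^{\rrr}}(\tilde{\mathrm e})\times_Q G\to\Out_{B^{\rrr}}(\tilde{\mathrm e})$ and the canonical surjection $\Out(C,A)\to\Out^A(C)$ are both induced by the identity on the respective middle terms.
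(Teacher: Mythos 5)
Your proposal is correct and follows essentially the same route as the paper: the action is defined on the free left $A$-module underlying $A^t\Gamma$, multiplicativity is reduced to the identity $\vartheta({}^{\alpha}y)=x\,\vartheta(y)\,x^{-1}$ coming from the definition of $\Aut_{B^{\rrr}}(\tilde{\mathrm e})$ and the factorization of $\vartheta$ through $B^{\rrn}$, and preservation of the ideal $\langle a-j(a)\rangle$ is deduced from the $\Aut_{B^{\rrr}}(\tilde{\mathrm e})$-equivariance of diagram \eqref{7.3.2.1} established in Proposition \ref{7.3}(ii). Your additional explicit verification of the induced morphisms of crossed $2$-fold extensions and of diagram \eqref{diag2} fills in details the paper treats as immediate.
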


\begin{proof}
The left $A$-module that underlies the twisted group ring
$A^t \Gamma$ is the free $A$-module having $\Gamma$ as an $A$-basis,
whence it is manifest that \eqref{rule1} 
yields an action of the group
$\Aut_{B^{\rrr}}(\tilde {\mathrm e})$
on that left $A$-module.

Next we  show that the 
$\Aut_{B^{\rrr}}(\tilde {\mathrm e})$-action on the 
left $A$-module that underlies the twisted group ring
$A^t \Gamma$ is compatible with the multiplicative structure
of $A^t \Gamma$. To this end,
consider the crossed module 
$(\Gamma,\Aut_{B^{\rrr}}(\tilde {\mathrm e}),\beta)$, cf. 
the middle columns of the
commutative diagram \eqref{diag1} above.
Since
$\beta\colon \Gamma \to  \Aut_{B^{\rrr}}(\tilde {\mathrm e})$ is a morphism of
$\Aut_{B^{\rrr}}(\tilde {\mathrm e})$-groups,
given  $y\in \Gamma$ and
$(\alpha,x) \in \Aut_{B^{\rrr}}(\tilde {\mathrm e})$, 
\begin{equation*}
\beta({}^{(\alpha,x)}y) =(\alpha,x) \beta(y)(\alpha,x)^{-1}\in 
\Aut_{B^{\rrr}}(\tilde {\mathrm e}).
\end{equation*}
Let $\mathrm{can}\colon
 \Aut_{B^{\rrr}}(\tilde {\mathrm e})
\to \Aut(A)$ denote
the canonical homomorphism.
It  is now manifest that the action $\vartheta\colon \Gamma \to \Aut(A)$
of $\Gamma$ on $A$
factors through $\beta$, that is,
$\vartheta$ 
coincides with the combined homomorphism
\begin{equation*}
\Gamma \stackrel{\beta}\longrightarrow  \Aut_{B^{\rrr}}(\tilde {\mathrm e})
\stackrel{\mathrm{can}}\longrightarrow \Aut(A).
\end{equation*}
 Hence, given
${(\alpha,x) \in 
\Aut_{B^{\rrr}}(\tilde {\mathrm e})\,
(\subseteq \Aut (\Gamma) \times B^{\rrr})}$,  
$b \in A$, and $y\in \Gamma$, 
\begin{equation}
{}^{x\vartheta(y)x^{-1}} b =
{}^{\vartheta ({}^{\alpha }y)} b.
\label{rule2}
\end{equation}
Thus,
given $(\alpha , x) \in \Aut_{B^{\rrr}}(\tilde {\mathrm e})$, $y\in \Gamma $, 
$a\in A$, in view of \eqref{rule2}
we conclude  
\begin{equation*}
{}^{(\alpha , x)}(y a)
    = {}^{(\alpha ,x)} ({}^{\vartheta(y)}a\, y) 
    = ({}^{x\vartheta(y)x^{-1} x} a)^\alpha y 
= ({}^{\vartheta ({}^{\alpha }y) x} a)^\alpha y 
= {}^\alpha y \, {}^x a .
\end{equation*} 
Consequently \eqref{rule1} yields an action of
$\Aut_{B^{\rrr}}(\tilde {\mathrm e})$ on the algebra
$A^t \Gamma$.

Finally, to show that the 
action 
of  $\Aut_{B^{\rrr}}(\tilde {\mathrm e})$ on the algebra $A^t \Gamma$
preserves the two-sided ideal
$<a-j(a), a\in \mathrm U(A)>$ in $A^t \Gamma$,
let $a \in \mathrm U(A)$ and $(\alpha , x) \in \Aut_{B^{\rrr}}(\tilde {\mathrm e})$.
In view of 
\cref{7.3}(ii), 
$j({}^x a) = {}^\alpha (j(a))$,
whence
\begin{equation*}{}^{(\alpha , x)}(a- j(a)) =
({}^x a - j({}^x a)). \qedhere
\end{equation*}
\end{proof}

With respect to the crossed pair structure 
$\tilde \psi\colon Q \to \Out_{B^{\rrr}}(\tilde {\mathrm e})$ 
on $\tilde{\mathrm e}$,
the fiber product group
$B^{\tilde \ppsi} 
=\Aut_{B^{\rrr}}(\tilde {\mathrm e})\times_{\Out_{B^{\rrr}}(\tilde {\mathrm e})}Q$ is defined.
As before, we denote by
$
\tilde \ppsi_G\colon G \to \Out_{B^{\rrr} } (\tilde{\mathrm e}) \times_Q G
$
the canonical lift, into the fiber product group with respect to the
surjection $\piQ\colon G \to Q$,
 of the crossed pair structure
$\tilde \ppsi \colon Q \to \Out_{B^{\rrr}}(\tilde{\mathrm e})$ 
on $\tilde{\mathrm e}$.
Define
 $\chi_G \colon G \to  \Out((A, N,\mathrm e,\vartheta ),A)$ 
to be the combined homomorphism 
\begin{equation}
\chi_G \colon G\stackrel{\tilde \psi_G} \longrightarrow
\Out_{B^{\rrr}}(\tilde {\mathrm e})
\stackrel{\bbeta_{\flat}}
\longrightarrow
  \Out((A, N,\mathrm e,\vartheta ),A).
\label{comp103}
\end{equation}
Moreover, 
the composite homomorphism 
\begin{equation*}
\sigmaQ \colon Q \stackrel{\tilde \psi}
\longrightarrow 
\Out_{B^{\rrr}}(\tilde {\mathrm e})
\stackrel{\bbeta_{\sharp}}
\longrightarrow
 \Out^A(A, N,\mathrm e,\vartheta )
\end{equation*} 
yields a $Q$-normal structure
$\sigmaQ\colon Q \to \Out^A(A, N,\mathrm e,\vartheta )$
on the central $S$-algebra $(A, N,\mathrm e,\vartheta )$.
Denote by $i\colon \Gamma \to \mathrm U(A,N, \mathrm e, \vartheta )$
the inclusion and by $\tilde \bbeta$ the combined homomorphism
\begin{equation}
\tilde \bbeta
\colon B^{\tilde \ppsi} 
\stackrel{\mathrm{can}}
\longrightarrow
\Aut_{B^{\rrr}}(\tilde {\mathrm e})
\stackrel{\bbeta}
\longrightarrow
\Aut^A (A, N, \mathrm e, \vartheta).
\label{tildegamma}
\end{equation}

\begin{prop}
\label{7.61} 
Write $C=(A, N,\mathrm e,\vartheta )$.  
The homomorphisms $\sigmaQ$, $\llambda$, $\chi_G$,
$\sigmaG$, $i$, and $\tilde \gamma$
match in the following sense.

\noindent
{\rm(i)}
The homomorphisms $\sigmaQ$ and $\chi_G$
yield a commutative diagram
\begin{equation}
\begin{CD}
\mathrm e_{(T|S)}\colon
1
@>>>
N  
@>>>
G
@>{\piQ}>>
Q
@>>>
1
\\
@.
@V{\chi_N}VV
@V{\chi_G}VV
@V{\chi_Q}VV
@.
\\
\mathrm e_{(A,C)}\colon
1
@>>>
N^{\mathrm U(C)}(A)/\mathrm U(A)  
@>>>
\Out(C,A)
@>>>
\Out^A(C)
@>>>
1
\end{CD}
\label{diag4}
\end{equation}
with exact rows.

\noindent
{\rm(ii)}
The composite homomorphism
\begin{equation}
G \stackrel{\chi_G}\longrightarrow 
\Out((A, N,\mathrm e,\vartheta ),A) 
\stackrel{\res}\longrightarrow
 \Out(A)
\label{comp104}
\end{equation}
coincides with 
$\sigmaG \colon G \to \Out(A)$.

\noindent
{\rm(iii)}
The two homomorphisms 
$i$ and $\tilde \bbeta$ 
yield a
morphism
of crossed $2$-fold extensions
\begin{equation*}
\xymatrix{
1 \ar[r] &\mathrm U(S) \ar@{=}[d]\ar[r]&\Gamma\ar[d]^i\ar[r] & B^{\tilde \ppsi}\ar[d]^{\tilde \bbeta}\ar[r]& Q\ar[d]^{\sigmaQ} \ar[r]& 1\\
1 \ar[r] &\mathrm U(S) \ar[r]&  \mathrm U(A,N, \mathrm e, \vartheta )\ar[r]  & \Aut^A (A, N, \mathrm e, \vartheta) \ar[r]&  \Out^A(A, N,\mathrm e,\vartheta ) \ar[r]& 1
}
\end{equation*}
whence $(i, \tilde \bbeta )$ induces a congruence
$(1, i, \cdot , 1) \colon \mathrm e_{\tilde\ppsi} \longrightarrow \mathrm e_{((A,N,\mathrm e , \vartheta ),\sigma_Q)}$
of crossed $2$-fold extensions.

\noindent
{\rm(iv)} The homomorphism
$\chi_N\colon N \to N^{\mathrm U(C)}(A)/\mathrm U(A)$
turns the embedding of $A$ into 
\linebreak
$C=(A, N,\mathrm e,\vartheta )$ into a strong
$N$-normal Deuring embedding with respect to
$\mathrm{Id}\colon N \to \Aut(T|S)$.
\end{prop}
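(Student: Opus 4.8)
The plan is to prove Proposition \ref{7.61} by assembling the morphisms constructed in Propositions \ref{7.3} and \ref{7.4} and checking compatibility, all four statements being essentially diagram chases. First I would fix the notation $C = (A,N,\mathrm e,\vartheta)$ and recall that, by \cref{threet}(iv), since $T|S$ is a Galois extension of commutative rings with Galois group $N$, the group $\Gamma$ gets identified with the normalizer $N^{\mathrm U(C)}(A)$ of $A$ in $C$. This identification is the linchpin: it lets me transport the crossed pair data living over $\tilde{\mathrm e}$ and $B^{\rrr}$ into the Deuring-embedding data over $\mathrm e_{(A,C)}$ and $\Out(C,A)$.

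For statement (iii) I would use the homomorphism $\bbeta\colon \Aut_{B^{\rrr}}(\tilde{\mathrm e}) \to \Aut^A(C)$ from \cref{7.4}, restrict along the fiber product defining $B^{\tilde\ppsi}$ to obtain $\tilde\bbeta$ as in \eqref{tildegamma}, and combine it with the inclusion $i\colon \Gamma \to \mathrm U(A,N,\mathrm e,\vartheta)$. The two rows are the crossed 2-fold extension $\mathrm e_{\tilde\ppsi}$ associated to the crossed pair $(\tilde{\mathrm e},\tilde\ppsi)$, cf. \eqref{epsi}, and the Teichm\"uller-type complex $\mathrm e_{((A,N,\mathrm e,\vartheta),\sigma_Q)}$; commutativity of the resulting ladder follows from \cref{7.4} together with the fact, noted in \cref{7.3}(ii), that $j({}^x a) = {}^\alpha(j(a))$, so the squares built from $\partial^\psi$ and $\partial^{\sigma_Q}$ match. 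Since the outer vertical maps are identities and $i$ is injective with the middle square commuting, $(1,i,\cdot,1)$ is a congruence of crossed 2-fold extensions. Combined with \cref{7.3}(i), which says $\mathrm e_{\tilde\ppsi}$ represents $k$, this congruence will later yield that $C$ has Teichm\"uller class $k$.

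For statements (i) and (ii) I would read off the required commutativity from the way $\chi_G$ was defined as the composite \eqref{comp103}, $\chi_G = \bbeta_\flat \circ \tilde\ppsi_G$, and from the definition of $\sigmaQ = \bbeta_\sharp \circ \tilde\ppsi$. The commutative square \eqref{diag2} of \cref{7.4} relating $\bbeta_\flat$ and $\bbeta_\sharp$, together with the canonical lift property of $\tilde\ppsi_G$ over $\piQ\colon G \to Q$, gives the commutative diagram \eqref{diag4} with exact rows: the bottom row is $\mathrm e_{(A,C)}$ from \eqref{eAC}, the top row is the structure extension $\mathrm e_{(T|S)}$, and the restriction $\chi_N$ of $\chi_G$ to $N$ is the induced left vertical map. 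For (ii), I would trace $\res\circ\chi_G$ through the canonical homomorphism $\Out(C,A)\to\Out(A)$; because $\tilde\ppsi_G$ lifts $\tilde\ppsi$ and $\bbeta$ factors the action $\vartheta\colon\Gamma\to\Aut(A)$ as in \cref{7.4}, the composite \eqref{comp104} reduces to the composite \eqref{comp101} already shown to equal $\sigmaG$.

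Finally, statement (iv) is the specialization of (i)--(iii) to $N$: restricting the congruence and the commutative diagram \eqref{diag4} to the kernel $N$ and invoking \cref{deucent} (applicable because $A$ coincides with the centralizer of $T$ in $C$, by \cref{threet}(ii)), the homomorphism $\chi_N\colon N \to N^{\mathrm U(C)}(A)/\mathrm U(A)$ turns $A\subseteq C$ into a strong $N$-normal Deuring embedding with respect to $\mathrm{Id}\colon N \to \Aut(T|S)$. The main obstacle I anticipate is bookkeeping rather than conceptual: keeping the several fiber products $B^{\rrr}$, $B^{\tilde\ppsi}$, $\Out_{B^{\rrr}}(\tilde{\mathrm e})\times_Q G$, and $\Out(C,A)$ straight and verifying that the two independently defined lifts---$\tilde\ppsi_G$ into the cohomological fiber product and $\bbeta$ into $\Aut^A(C)$---are genuinely compatible over $\Aut_{B^{\rrr}}(\tilde{\mathrm e})$, so that the square \eqref{diag2} really does transport to the Deuring-embedding side without a sign or conjugation discrepancy.
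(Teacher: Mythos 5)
Your proposal is correct and follows essentially the same route as the paper: (i) and (ii) by unwinding the definitions $\chi_G=\bbeta_{\flat}\circ\tilde\ppsi_G$ and $\sigmaQ=\bbeta_{\sharp}\circ\tilde\ppsi$ against the square \eqref{diag2} and the identity that \eqref{comp101} equals $\sigmaG$; (iii) by direct assembly from Proposition \ref{7.4}; and (iv) via the identification $\Gamma=N^{\mathrm U(C)}(A)$, the centralizer property from Proposition \ref{threet}(ii), and the injectivity of $\eta_{\flat}$ (which is exactly what underlies Proposition \ref{deucent}, while the paper carries out the corresponding diagram chase directly to see that $\eta_{\flat}\circ\chi_N=\mathrm{Id}$). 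The only point worth making explicit in (iv) is that citing \cref{deucent} gives existence and uniqueness of \emph{some} strong Deuring structure map, so you must still check that your $\chi_N$ composed with $\eta_{\flat}$ is the identity on $N$ in order to identify it with that map — but this follows from the commutativity you established in (i) and (ii).
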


\begin{proof} 
(i) It is obvious that the diagram
\begin{equation*}
\begin{CD}
G
@>{\piQ}>> 
Q
\\
@V{\tilde \psi_G}VV
@V{\tilde \psi}VV
\\
\Out_{B^{\rrr}}(\tilde {\mathrm e})\times_QG
@>>>
\Out_{B^{\rrr}}(\tilde {\mathrm e})
\end{CD}
\end{equation*}
is commutative. Combining this diagram with the commutative diagram 
\eqref{diag2}, we obtain the right-hand square of \eqref{diag4}.
Since the lower row of that diagram is exact,
the homomorphisms $\chi_G$ and $\sigmaQ$
induce the requisite homomorphism
$\chi_N\colon N \to N^{\mathrm U(C)}/\mathrm U(A)$. 

\noindent
(ii)
Consider the diagram
\begin{equation*}
\xymatrix{
G  \ar[dr]_{\hat \psi} \ar[r]^{\tilde \ppsi_G{\phantom{TTTTTT}}} 
&\Out_{B^{\rrr} } (\tilde{\mathrm e}) \times_Q G \ar[d]^{\res_{\sharp}}
\ar[r]^{\bbeta_{\flat}}
&\Out((A, N,\mathrm e,\vartheta ),A)
\ar[d]^{\res}
\\
& 
\Out_{B^{\rrr}}(\hat {\mathrm e}) 
\ar[r]^{\bbeta_{\sharp}}& \Out(A)
}
\end{equation*}
The right-hand square is commutative in an obvious manner.
The left-hand triangle is 
commutative since, as noted earlier, 
the composite \eqref{comp102} coincides with 
$\hat \psi$.
The upper row
yields the homomorphism
$\chi_G \colon G \to \Out((A, N,\mathrm e,\vartheta ),A)$,
by the very definition \eqref{comp103} of $\chi_G$.

As noted above, the composite \eqref{comp101}, viz.
$G  \stackrel{\hat \psi}\longrightarrow 
\Out_{B^{\rrr}}(\hat {\mathrm e}) 
\stackrel{\hat\bbeta_{\sharp}}\longrightarrow  \Out(A)$,
 yields the
given $G$-normal structure ${\sigmaG\colon G \to \Out(A)}$ on $A$.
Consequently \eqref{comp104}
coincides with the structure map
$\sigmaG \colon G \to \Out(A)$ as asserted.

\noindent
(iii) This is obvious.

\noindent
(iv) Consider the commutative diagram
\begin{equation*}
\begin{CD}
\mathrm e_{(T|S)} \colon 1 @>>> N 
@>>> G @>>> Q @>>> 1
\\
@.
@V{\chi_N}VV
@V{\chi_G}VV
@V{\sigmaQ}VV
@.
\\
\mathrm e_{(A,C)}\colon 1 @>>>
N^{\mathrm U(C)}(A)/\mathrm U(A) 
@>>>
\Out(C,A)
@>>>
\Out^A(C)
@>>> 1
\\
@.
@V{\eta_{\flat}}VV
@V{\res}VV
@V{\res}VV
@.
\\
\phantom{\mathrm e_{(T|S)} \colon}
1 @>>> \Aut(T|S) 
@>>>  \Aut^S(T)  @>{\res}>> \Aut(S). @. 
\end{CD}
\end{equation*}
By construction, the outer-most
diagram coincides with the commutative diagram \eqref{CD1},
and the left-most column is the composite 
\eqref{compelev},
with $N$ substituted for $Q$ and $\Aut(T|S)$ for $\Aut(S)$.
Consequently the composite 
$\eta_{\flat}\circ \chi_N\colon N \to \Aut(T|S)$
is the identity.
Since $T|S$ is a Galois extension of commutative rings with Galois group $N$,
by \cref{threet}(ii),
the algebra $A$ coincides with the centralizer of $T$ in 
$C=(A,N,\mathrm e,\vartheta)$ whence,
by \cref{twosixo}(iii),
the homomorphism
$\eta_{\flat}$ is injective.
Consequently 
$\eta_{\flat}$ and $\chi_N$ are isomorphisms,  and
\[
\chi_N\colon N \longrightarrow N^{\mathrm U(C)}(A)/\mathrm U(A)
\]
turns the embedding of $A$ into $C$ into a strong
$N$-normal Deuring embedding with respect to
$\mathrm{Id}\colon N \to \Aut(T|S)$.
\end{proof}

We can now complete the proof of \cref{7.3}:
Since the structure homomorphism $\llambda\colon G \to \Out(A)$
is a $G$-normal structure relative to the 
action $\llambda\colon G \to \Aut^S(T)$ of $G$ on $T$,
by definition, the 
composite homomorphism 
$G  \stackrel{\sigmaG} \longrightarrow
\Out (A)\stackrel{\res}\longrightarrow 
\Aut^S(T)$ coincides with $\llambda\colon G \to \Aut^S(T)$; 
 since, by \cref{7.61}(ii),
the homomorphism \eqref{comp104}
coincides with 
$\sigmaG \colon G \to \Out(A)$,
we conclude that the composite
\begin{equation*}
G \stackrel{\chi_G}\longrightarrow \Out ((A,N,\mathrm e, \vartheta),A)
\stackrel{\mathrm {res}} \longrightarrow \Aut^S(T) 
\end{equation*}
coincides with $\llambda$, cf. \eqref{comp00}.

By Proposition \ref{7.61}(i),
the diagram \eqref{diag4} is commutative, 
and by Proposition \ref{7.61}(iv), the homomorphism
$\chi_N\colon N \to N^{\mathrm U(C)}(A)/\mathrm U(A)$
turns the embedding of $A$ into $C$ into a strong
$N$-normal Deuring embedding with respect to
$\mathrm{Id}\colon N \to \Aut(T|S)$.
Consequently, cf. \eqref{comp103},
the homomorphism 
$\chi_G\colon G \to \Out((A, N,\mathrm e,\vartheta ),A)$
turns the embedding
of $A$ into $(A, N,\mathrm e,\vartheta )$ into a strong $Q$-normal 
Deuring embedding with respect to the $Q$-normal structure
\linebreak
$\sigmaQ\colon Q \to \Out(A, N,\mathrm e,\vartheta )$ on 
$(A, N,\mathrm e,\vartheta )$
and the
structure extension {\rm\eqref{eTS}}.

\cref{7.61}(ii) says that
the  $G$-normal structure
$G \to \Out(A)$ on $A$
associated to the strong $Q$-normal Deuring embedding, 
cf. {\rm \eqref{assG}}, coincides with the given
$G$-normal structure $\sigmaG\colon G \to \Out(A)$ on $A$.

\cref{7.3}(i) and \cref{7.61}(iii) 
together entail that the $Q$-normal $S$-algebra
$((A,N,\mathrm e, \vartheta ), \sigma_Q )$
has Teich\-m\"uller class
$k$ as asserted since
the crossed 2-fold extension $\mathrm e_{\tilde \ppsi}$ represents $k$.

The proof of Theorem \ref{7.2} is now complete. \end{proof}

\begin{proof}[Proof of Complement {\rm{\ref{compl1}}}]
This is a consequence of 
\cref{tpf}(xi).
\end{proof}

Recall that 
$B^{\tilde \ppsi}$
denotes the fiber product group
$B^{\tilde \ppsi}=
\Aut_{B^{\sigmaG}}(\tilde {\mathrm e}) \times_{\Out_{B^{\sigmaG}}(\tilde {\mathrm e})}Q$
with respect to the crossed pair structure 
$\tilde \psi\colon Q \to \Out_{B^{\sigmaG}}(\tilde {\mathrm e})$
on $\tilde {\mathrm e}$, 
and that, likewise,
$B^{A,\sigmaQ}$ denotes the fiber product group
$B^{A,\sigmaQ}=
\Aut^A(A,N,\mathrm e,\vartheta ) \times_{\Out(A,N,\mathrm e,\vartheta )}Q$
with respect to the $Q$-normal structure
$\sigmaQ\colon Q \to \Out(A,N,\mathrm e,\vartheta )$
on $(A,N,\mathrm e,\vartheta )$.

\begin{comp} 
The 
canonical homomorphism $B^{\tilde \ppsi}\longrightarrow B^{A,\sigmaQ}$ induced by the
action 
$\tilde \bbeta \colon B^{\tilde \ppsi}\longrightarrow 
\Aut^A(A, N,\mathrm e,\vartheta )$
of $B^{\tilde \ppsi}$ on the crossed product algebra
$(A, N,\mathrm e,\vartheta)$, cf. {\rm \eqref{tildegamma}} above,
and the 
surjection $B^{\tilde \ppsi}\longrightarrow Q$
is an isomorphism.
\end{comp}

\begin{proof}
The homomorphism $B^{\tilde \ppsi}\to B^{A,\sigmaQ}$
makes the diagram
\begin{equation*}
\xymatrix{
0\ar[r]          &\mathrm U(S)\ar@{=}[d]\ar[r]   &\Gamma \ar@{=}[d] \ar[r] 
                                            &B^{\tilde\ppsi}\ar[d]\ar[r]
                                                 &Q\ar@{=}[d] \ar[r] &1\\ 
0\ar[r] &\mathrm U(S)\ar[r] &
N^{\mathrm U(A, N,\mathrm e,\vartheta )}(A)  \ar[r]
&B^{A,\sigmaQ}\ar[r]&Q\ar[r]   &1\\ 
}
\end{equation*}
commutative
whence the
homomorphism $B^{\tilde \ppsi}\to B^{A,\sigmaQ}$
is an isomorphism. 
\end{proof}

\section{Behavior of the crossed Brauer group under $Q$-normal Galois extensions}
\label{behavior}

Consider
a $Q$-normal Galois extension $T|S$ of commutative rings, with structure 
extension 
$\mathrm {\mathrm e}_{(T|S)} \colon
N\rightarrowtail G \stackrel{\pi_Q}
\twoheadrightarrow Q$ 
and structure homomorphism
$\llambda \colon G \to \Aut^S(T)$, cf. Section \ref{normalr} above, and
denote the injection of $S$ into $T$ by $i\colon S \to T$.
Then the abelian group $\mathrm{XB}(T|S;G,Q)$ is defined relative to
the associated morphism $(i,\pi_Q)\colon (S,Q,\kappaQ) \to (T,G,\llambda)$
in the change of actions category $\mathcat{Change}$, cf. \eqref{mqng} above.

\begin{thm}
\label{9.2}The sequence
\begin{equation*}\mathrm{XB}(T|S;G,Q) \stackrel{t}{\longrightarrow} \mathrm H^3 (Q,\mathrm U(S)) \stackrel{\inf}{\longrightarrow}
\mathrm H^3 (G,\mathrm U(T))\end{equation*}
is exact and, furthermore, natural in the data. 
Moreover, each class in the image of $t$ is 
also the Teich\-m\"uller class of some crossed pair algebra.
\end{thm}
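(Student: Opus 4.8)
The plan is to prove the two assertions of \cref{9.2} separately, exploiting the machinery already assembled. For the exactness of the three-term sequence, I would split the argument into the inclusion $\im(t) \subseteq \ker(\inf)$ and the reverse inclusion $\ker(\inf)\subseteq \im(t)$. The first inclusion should be essentially formal: a class in the image of $t$ is the Teich\-m\"uller class $[\mathrm e_{(A,\sigma)}]$ of a $Q$-normal Azumaya $S$-algebra $(A,\sigma)$ representing a member of $\mathrm{XB}(T|S;G,Q)$, i.e.\ one whose image in $\mathrm{XB}(T,G)$ vanishes. By the naturality of the Teich\-m\"uller map (\cref{nineo}(i)) together with \cref{2.5.1}(iii) applied to the change-of-actions morphism $(i,\pi_Q)\colon (S,Q,\kappaQ)\to(T,G,\llambda)$ of \eqref{mqng}, the inflation of $[\mathrm e_{(A,\sigma)}]$ to $\mathrm H^3(G,\mathrm U(T))$ is the Teich\-m\"uller class of the $G$-normal $T$-algebra $(T\otimes A,\sigma_{(i,\pi_Q)})$; but the latter is the image of $[(A,\sigma)]$ under the forgetful map into $\mathrm{XB}(T,G)$ followed by $t$, which is zero by hypothesis. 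Hence $\inf\circ t = 0$.

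For the harder inclusion $\ker(\inf)\subseteq\im(t)$, I would invoke the two main technical theorems of \cref{7}. Suppose $k\in\mathrm H^3(Q,\mathrm U(S))$ satisfies $\inf(k)=0\in\mathrm H^3(G,\mathrm U(T))$. The natural candidate is to produce a $Q$-normal Azumaya $S$-algebra with Teich\-m\"uller class $k$ by applying \cref{7.2}: if $\inf(k)$ is realized as the Teich\-m\"uller class $[\mathrm e_{(A,\rrr)}]$ of some $G$-normal Azumaya $T$-algebra $(A,\rrr)$, then that theorem, together with \cref{compl1}, produces a $Q$-normal crossed product Azumaya $S$-algebra $((A,N,\mathrm e,\vartheta),\sigma_Q)$ whose Teich\-m\"uller class is exactly $k$, and which represents a member of $\mathrm{XB}(T|S;G,Q)$ since it splits in $T|S$ by \cref{tpf}(ix)/(x). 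The one gap to bridge is that $\inf(k)=0$ a priori only gives the \emph{zero} class in $\mathrm H^3(G,\mathrm U(T))$, whereas \cref{7.2} wants a realization as a genuine Teich\-m\"uller class; but the zero class is the Teich\-m\"uller class of the trivial $G$-normal structure on $T$ itself (i.e.\ $(T,\llambda)$, cf.\ \cref{2.3.1}), so one may simply take $A=T$, $\rrr=\llambda$. This is precisely the degenerate case flagged in the Remark preceding the proof of \cref{7.2}, where the construction reduces to that of \cref{normalcrossed}.

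The second assertion — that \emph{every} class in $\im(t)$ is also the Teich\-m\"uller class of some crossed pair algebra — is where the crossed pair apparatus of \cref{cpal} enters, and I expect this to be the main obstacle, or at least the step requiring the most care. The idea is to combine the first assertion with \cref{normalcrossed}. Indeed, a class $k\in\im(t)$ lies in $\ker(\inf)$ by exactness, hence is split in $T|S$ in the sense of \cref{normalcrossed}; that theorem then asserts directly that $k$ is the Teich\-m\"uller class of some crossed pair algebra $(A_{\mathrm e},\sigma_\psi)$ with respect to the $Q$-normal Galois extension $T|S$. So the logical skeleton is short: exactness at $\mathrm H^3(Q,\mathrm U(S))$ feeds the splitting hypothesis of \cref{normalcrossed}, and \cref{normalcrossed} delivers the crossed pair algebra realization.

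The delicate point to verify carefully is the compatibility of the various realizations, in particular that the crossed product algebra produced by \cref{7.2} and the crossed pair algebra of \cref{normalcrossed} are being compared against the \emph{same} notion of Teich\-m\"uller class, and that the identification $M=\mathrm U(T)$, $M^N=\mathrm U(S)$ used throughout is consistent with the finiteness of $N=\Aut(T|S)$ (which holds since $T|S$ is a Galois extension). I would also want to confirm the naturality claim, which follows from the naturality of $t$ (\cref{nineo}) and of $\inf$ on the change of actions category $\mathcat{Change}$; this should be a routine diagram chase that I would relegate to the reader, as is done elsewhere in the paper. The essential mathematical content is entirely carried by \cref{7.2}, \cref{normalcrossed}, and the naturality of the Teich\-m\"uller map, so the proof amounts to assembling these in the correct order rather than to any new construction.
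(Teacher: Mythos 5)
Your proposal is correct and follows essentially the same route as the paper: the paper's proof is exactly "naturality gives $\inf\circ t=0$; Theorem~\ref{normalcrossed} gives $\ker(\inf)\subseteq\im(t)$ and the crossed pair realization." Your detour through Theorem~\ref{7.2} with $A=T$, $\sigma_G=\llambda$ is harmless since, as you note (and as the Remark before the proof of Theorem~\ref{7.2} confirms), that case degenerates precisely to Theorem~\ref{normalcrossed}.
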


\begin{proof}
The naturality of the constructions entails that $\inf \circ\, t = 0$.
Moreover, by Theorem~\ref{normalcrossed}, 
$\ker (\inf) \subset \mathrm{im}(t)$, 
and each class in
the image of $t$ comes from some crossed pair algebra. 
\end{proof}

Let $\Pic (T|S)$ denote the kernel of the homomorphism 
$\Pic (S) \to \Pic(T)$ induced by 
\linebreak
${i \colon S \to T}$.
Our next aim is to construct
a homomorphism
from
$\mathrm H^1(Q, \Pic (T|S))$ to 
\linebreak
$\mathrm{XB}(T|S;G,Q)$.
To this end, view $T$ as an $S$-module in the obvious way and let 
$A = \End_S(T)$. Now, given an automorphism $\alpha $ of $A$ so that
$\alpha | S$ is the identity, as above we can turn $T$ into a new $A$-module
${}^\alpha T$ be means of $\alpha $, and 
$J(\alpha ) = \Hom_A({}^\alpha T, T) $ is a 
faithful
finitely generated 
projective rank one  $S$-module; since $A\otimes T$ is
a matrix algebra, $J(\alpha )$ represents a member of $\Pic(T|S)$, 
and the association
$\alpha \mapsto [J(\alpha )]$
yields a  homomorphism $\Aut(A|S) \to \Pic (T|S)$
which we claim to be surjective. In order to justify this claim,
we first observe that the obvious map $j \colon T^t N \to A$, as 
explained in 
\cref{one},
is an isomorphism, since $T|S$ is a Galois
extension of commutative rings
with Galois group $N$. 
Now, given a derivation $d \colon N \to \mathrm U(T)$, define the automorphism
$\alpha_d$ of $T^t N$ by
\[
\alpha_d(tn) = d(n)tn,\  t\in T,\, n\in N.
\]
Then
\begin{equation*}
\Der (N,\mathrm U(T)) \longrightarrow \Aut(T^t N|S), \ 
d\longmapsto \alpha_d,
\end{equation*}
is a homomorphism,
and $[J(\alpha_d)]\in \Pic (T|S)$ is the image of
$[d]\in \mathrm H^1(N, \mathrm U(T))$ under the standard isomorphism 
$\mathrm H^1(N,\mathrm U(T)) \to \Pic(T|S)$ 
(with $N$ and $T$ substituted for $Q$ and $S$, respectively,
this is, e.g., a consequence of 
the exactness of 
\eqref{twelvet}
at the second term).
Hence the homomorphism $\Aut(A|S) \to \Pic (T|S)$ is surjective as asserted.
Consequently the obvious homomorphism
from $\Aut(A|S)$ to $\Aut(A,Q)$ fits into
a commutative diagram
\begin{equation*}
\xymatrix{
\Aut(A|S)      \ar[d]\ar[r]   &\Pic(T|S) \ar[d] \ar[r]^{\cong} & \Out(A|S) 
\\
\Aut (A,Q)            \ar[r]  &\Out(A,Q)\\
}
\end{equation*}
where the horizontal maps are surjective.
Since 
the $G$-action on $T$ and that on $N$ induce a canonical section 
$\sigma_0 \colon Q \to \Out (A,Q)$,
the canonical homomorphism
$\Out (A,Q) \to Q$ is surjective as well.
Consequently
the
sequence
\begin{equation*}
0 \longrightarrow \Pic(T|S) \longrightarrow \Out (A,Q) \longrightarrow Q \longrightarrow 1
\end{equation*}
is exact.
Now, given a derivation 
$d \colon Q \to \Pic (T|S)$, 
define the homomorphism
\begin{equation*} \sigma_d \colon Q \longrightarrow \Out (A,Q)\end{equation*}
by $\sigma (q) = d(q) \sigma_0 (q)$, as $q$ ranges over $Q$. Then
$(A,\sigma_d)$ is a $Q$-normal Azumaya $S$-algebra. 

We mention without proof the following. 

\begin{thm}
\label{10.10} 
The association $d\mapsto (\End_S(T) , \sigma _d)$,
as $d$ ranges over derivations from $Q$ to $\Pic (T|S)$,
 yields a natural isomorphism
\begin{equation*}
\mathrm H^1(Q, \Pic (T|S)) \longrightarrow \XBSQ \cap \mathrm{XB}(T|S;G,Q)
\end{equation*}
of abelian groups in such a way that the resulting
sequence
\begin{equation}
0
\longrightarrow
\mathrm H^1(Q,\Pic (T|S)) \longrightarrow
\mathrm{XB}(T|S;G,Q) \longrightarrow
\mathrm H^0(Q,\mathrm B(T|S))
\label{nat12}
\end{equation}
is exact.
\end{thm}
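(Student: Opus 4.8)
The plan is to realize the asserted isomorphism as the relative counterpart of the absolute machinery of Subsection~\ref{10.29} and to produce an inverse by a relative version of the injective homomorphism $\iota$ of Theorem~\ref{10.2}. First I would check that the construction lands in the indicated subgroup. The algebra $\End_S(T)$ is split, so its Brauer class in $\mathrm B(S)$ is trivial and $(\End_S(T),\sigma_d)$ represents a member of $\XBSQ$. To see that it also lies in $\mathrm{XB}(T|S;G,Q)$, I would apply the base change of Proposition~\ref{2.5.1}(i) along the morphism $(i,\piQ)\colon (S,Q,\kappaQ)\to(T,G,\llambda)$ of \eqref{mqng}: one gets $T\otimes_S\End_S(T)\cong\End_T(T\otimes_S T)$, a split $T$-algebra, and the induced $G$-normal structure becomes an induced structure in the sense of Subsection~\ref{indqn}, since the summand $\sigma_0$ comes from the semilinear $G$-action on $T$ while the twist $d(q)\in\Out(\End_S(T)|S)\cong\Pic(T|S)$ becomes trivial over $T$ by the very definition $\Pic(T|S)=\ker(\Pic(S)\to\Pic(T))$. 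Hence the class dies in $\mathrm{XB}(T,G)$.

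Next I would compute $\iota\big([(\End_S(T),\sigma_d)]\big)\in\mathrm H^1(Q,\Pic(S))$. Since $\End_S(T)$ is already split, $\iota$ is given directly by the derivation \eqref{der1}, and I would evaluate $d_{(T,\sigma_d)}$ pointwise: the induced part $\sigma_0$ contributes trivially by Proposition~\ref{10.6}, while the cocycle relation of Proposition~\ref{10.4}, together with the identification $\Out(\End_S(T)|S)\cong\Pic(T|S)$, shows that the twist by $d(q)$ contributes exactly $d(q)$. Thus $d_{(T,\sigma_d)}=d$, so the composite of the construction with $\iota$ is the map $\mathrm H^1(Q,\Pic(T|S))\to\mathrm H^1(Q,\Pic(S))$ induced by the inclusion $\Pic(T|S)\hookrightarrow\Pic(S)$. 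Because $\iota$ is an injective and additive homomorphism (Theorem~\ref{10.2}), this identity already forces $d\mapsto(\End_S(T),\sigma_d)$ to be a homomorphism of abelian groups.

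For bijectivity I would set up the relative analogue $\iota_{\mathrm{rel}}$ of $\iota$ as the inverse. Given $(\End_S(M),\sigma)$ representing a class in $\XBSQ\cap\mathrm{XB}(T|S;G,Q)$, I would show that, after replacing $M$ by $M\otimes L$ for a suitable $L\in\Pic(S)$, the derivation $d_{(M,\sigma)}$ of \eqref{der1} takes its values in $\Pic(T|S)$: base changing to $T$ and using that the class is induced there (Theorem~\ref{nineo}(ii)) shows, via Proposition~\ref{10.6} applied over $T$ and the Galois descent identification $\Pic(T|S)\cong\mathrm H^1(N,\mathrm U(T))$ (exactness of \eqref{ldes} with $N,T$ in place of $Q,S$), that each $d_{(M,\sigma)}(q)$ becomes free over $T$. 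The assignment $[(\End_S(M),\sigma)]\mapsto[d_{(M,\sigma)}]\in\mathrm H^1(Q,\Pic(T|S))$ is then the candidate inverse; the identity $d_{(T,\sigma_d)}=d$ of the previous paragraph gives $\iota_{\mathrm{rel}}\circ(\text{construction})=\mathrm{id}$, and the relative form of Proposition~\ref{10.7} (equal relative derivations force normal Brauer equivalence) gives the other composite, hence surjectivity onto the intersection.

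The hard part is precisely the well-definedness of $\iota_{\mathrm{rel}}$ \emph{with coefficients in $\Pic(T|S)$} rather than in $\Pic(S)$: one must verify that two admissible choices of $M$ change $d_{(M,\sigma)}$ only by an inner derivation that already comes from $\Pic(T|S)$, equivalently that $\mathrm H^1(Q,\Pic(T|S))\to\mathrm H^1(Q,\Pic(S))$ is injective, i.e.\ that $\Pic(S)^Q\to(\Pic(S)/\Pic(T|S))^Q$ is surjective. I would attack this through the Galois-descent exact sequence for $\Pic$ attached to $T|S$ together with the Lyndon--Hochschild--Serre mechanism for the structure extension $\mathrm e_{(T|S)}$ with coefficients $\mathrm U(T)$, this being the Hilbert~90-type input underlying the whole relative theory; this is the step I expect to be the main obstacle, since the injectivity of the construction is equivalent to it. Granting this, naturality on $\mathcat{Change}$ is inherited from the naturality of $\iota$ (Theorem~\ref{10.2}) and of base change, and the exactness of \eqref{nat12} follows formally: the arrow $\mathrm{XB}(T|S;G,Q)\to\mathrm H^0(Q,\mathrm B(T|S))$ sends a class to its underlying $Q$-fixed Brauer class split by $T$, its kernel is exactly $\XBSQ\cap\mathrm{XB}(T|S;G,Q)$, and injectivity of the first arrow yields exactness at $\mathrm H^1(Q,\Pic(T|S))$.
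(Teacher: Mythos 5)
You should know at the outset that the paper offers no argument for Theorem~\ref{10.10} at all --- it is introduced with the words ``We mention without proof the following'' --- so there is no proof of record to measure yours against, and I can only assess your proposal on its own terms. Its architecture is reasonable and consistent with the surrounding text: composing the candidate map with the injection $\iota\colon \XBSQ\to\mathrm H^1(Q,\Pic(S))$ of Theorem~\ref{10.2}, computing $d_{(T,\sigma_d)}=d$ via Propositions~\ref{10.4} and~\ref{10.6}, and reducing everything to statements about first cohomology of Picard groups is exactly what the setup in Section~\ref{behavior} (the identifications $\Out(\End_S(T)|S)\cong\Pic(T|S)\cong\mathrm H^1(N,\mathrm U(T))$ and the split extension $0\to\Pic(T|S)\to\Out(\End_S(T),Q)\to Q\to 1$) is preparing for. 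Your verification that $(\End_S(T),\sigma_d)$ lands in $\XBSQ\cap\mathrm{XB}(T|S;G,Q)$, the descent to cohomology classes, and the homomorphism property are all in order.

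The proposal is nevertheless not a proof, because it defers precisely the two points on which the theorem turns. First, as you yourself observe, since $\iota$ is injective and $\iota\bigl([(\End_S(T),\sigma_d)]\bigr)$ is the image of $[d]$ under the inclusion-induced map, injectivity of the theorem's map is \emph{equivalent} to injectivity of $\mathrm H^1(Q,\Pic(T|S))\to\mathrm H^1(Q,\Pic(S))$, i.e.\ to surjectivity of $\Pic(S)^Q\to(\Pic(S)/\Pic(T|S))^Q$; you announce that you ``would attack'' this and ``expect it to be the main obstacle,'' but you give no argument, and none of the ingredients you cite visibly produces it --- the quotient $\Pic(S)/\Pic(T|S)$ injects into $\Pic(T)^N$ but carries no evident $Q$-equivariant splitting, so the relevant connecting map has no a priori reason to vanish. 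Second, the surjectivity step has the same unresolved character: from $[(\End_S(M),\sigma)]\in\mathrm{XB}(T|S;G,Q)$ and Theorem~\ref{nineo}(ii) one only learns that the base-changed split algebra is induced over \emph{some} module of the form $(T\otimes M)\otimes_T J'$ with $[J']\in\Pic(T)$, so each value $T\otimes d_{(M,\sigma)}(q)$ is only known to have the form ${}^g[J']\,[J']^{-1}$ for $g\in G$ over $q$, not to be free; to replace $M$ by $M\otimes L$ with $L\in\Pic(S)$ and force the values of the derivation into $\Pic(T|S)$ you must lift $[J']$ along $\Pic(S)\to\Pic(T)$ up to a $G$-invariant class, which is again an unproven descent statement of the same kind. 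Until these two lifting statements are established --- or the argument is rerouted through the identification $\Pic(T|S)\cong\mathrm H^1(N,\mathrm U(T))$ and the crossed-pair sequence \eqref{13.11}, as the comparison maps \eqref{13.5} and Theorem~\ref{withoutp} suggest the author intends --- your proposal establishes only that the association is a well-defined homomorphism into $\XBSQ\cap\mathrm{XB}(T|S;G,Q)$, not that it is bijective.
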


\section{Relative theory and equivariant Brauer group}
\label{releqb}

Given a morphism $(f,\varphi ) \colon (S,Q,\kappa ) \to (T,G,\lambda )$
in the change of actions category 
\linebreak
$\mathcat{Change}$ introduced 
in Subsection \ref{coa},
we denote by $\mathrm{EB}(T|S; G,Q)$ the kernel of the
combined map
\begin{equation*}
\mathrm{EB}(S,Q) 
\longrightarrow \mathrm{XB}(S,Q) \longrightarrow \mathrm{XB}(T,G) ; 
\end{equation*}
this kernel 
$\mathrm{EB}(T|S; G,Q)$
is the subgroup of $\mathrm{EB}(S,Q)$ that consists of classes 
of $Q$-equivariant
$S$-algebras $(A,\tau )$ so that $(A\otimes T, \tau_{(f,\varphi)})$
is an induced $G$-normal split algebra
and hence, in view of \cref{eighto},
an induced $G$-equivariant split algebra; see 
\cref{2.5.1}(ii)
for the notation
$\tau_{(f,\varphi)}$.
Thus, in particular,
$\mathrm{EB}(S|S;Q,Q)$ is the 
kernel of the canonical homomorphism from
$\mathrm{EB}(S,Q)$ to $\mathrm{XB}(S,Q)$
 whereas
$\mathrm{EB}(S|S;\{e\},Q)$ is the kernel of the forgetful homomorphism 
from $\mathrm{EB}(S,Q)$ to $\mathrm{B}(S)$.
It is obvious that
the restriction homomorphism
$\res$ from $\mathrm{EB}(S,Q)$ to $\mathrm{XB}(S,Q)$ 
induces a homomorphism
$\res \colon \mathrm{EB}(T|S;G, Q) \longrightarrow \mathrm{XB}(T|S;G,Q)$.

Consider
a $Q$-normal Galois extension $T|S$ of commutative rings, with structure 
extension 
$\mathrm {\mathrm e}_{(T|S)} \colon
N\rightarrowtail G \stackrel{\pi_Q}
\twoheadrightarrow Q$
and structure homomorphism
$\llambda \colon G \to \Aut^S(T)$, cf. Section \ref{normalr} above, and
denote the injection of $S$ into $T$ by $i\colon S \to T$.
Then the abelian groups $\mathrm{EB}(T|S;G,Q)$ 
and $\mathrm{XB}(T|S;G,Q)$
are defined relative to
the morphism 
$
(i,\pi_Q)\colon (S,Q,\kappaQ) \longrightarrow (T,G,\llambda)
$
in the change of actions category $\mathcat{Change}$
associated with the data, cf. \eqref{mqng} above.

\begin{thm}
\label{11.3}
Suppose that $Q$ is a finite group.
Then
the sequence 
\begin{equation}
\label{11.4}
\mathrm{EB}(T|S;G,Q) \stackrel{\res}\to \mathrm{XB}(T|S;G,Q) 
\stackrel{t}\to \mathrm H^3(Q,\mathrm U(S)) \stackrel{\inf}\to \mathrm H^3(G, \mathrm U(T)) 
\end{equation}
is exact and natural.
\end{thm}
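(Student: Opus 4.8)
The plan is to establish exactness of the four-term sequence \eqref{11.4} by treating the four positions separately, leveraging the absolute-case results from \cref{cI} together with the relative machinery developed in the present chapter. The key observation is that \eqref{11.4} is the relative (kernel) version of the absolute exact sequence \eqref{twelvet}; thus I expect most of the exactness to descend from the absolute case by passing to kernels of the change-of-actions maps, the one genuinely new ingredient being the exactness at $\mathrm{XB}(T|S;G,Q)$, where the splitting over $T$ must be used.

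First I would address \emph{exactness at} $\mathrm H^3(Q,\mathrm U(S))$. That $\inf \circ\, t = 0$ on the relative group is the naturality of the Teich\-m\"uller map combined with \cref{9.2}, which already gives exactness of $\mathrm{XB}(T|S;G,Q) \stackrel{t}\to \mathrm H^3(Q,\mathrm U(S)) \stackrel{\inf}\to \mathrm H^3(G,\mathrm U(T))$. So this portion is immediate from \cref{9.2}. Next, for \emph{exactness at} $\mathrm{XB}(T|S;G,Q)$, I would show $t \circ \res = 0$ and $\ker(t) \subseteq \operatorname{im}(\res)$. The vanishing of the composite follows because the class in the image of $\res$ is, by definition of $\mathrm{EB}(T|S;G,Q)$, represented by a $Q$-equivariant structure, and Theorem~\ref{fouro} (or \cref{elevo}(iii), using finiteness of $Q$ so that $\mathrm M_{|Q|}(A)$ stays Azumaya) forces zero Teich\-m\"uller class. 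For the reverse inclusion, I would take a $Q$-normal Azumaya $S$-algebra $(A,\sigma)$ representing a class in $\mathrm{XB}(T|S;G,Q)$ with $t[(A,\sigma)] = 0$, invoke \cref{fouro} to lift $\sigma$ (after passing to a matrix algebra, hence up to normal Brauer equivalence) to an equivariant structure, and check that the resulting equivariant class lands in the relative kernel $\mathrm{EB}(T|S;G,Q)$ and restricts to $[(A,\sigma)]$. This is essentially the argument for \emph{exactness at} $\mathrm{XB}(S,Q)$ in \cref{twelvetw}, restricted to the kernels of the maps to $\mathrm{XB}(T,G)$.

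The portion I expect to be the main obstacle is \emph{exactness at} $\mathrm{EB}(T|S;G,Q)$, that is, identifying $\ker(\res)$ inside the relative equivariant Brauer group. In the absolute sequence \eqref{twelvet} this position is controlled by $\operatorname{cpr}\colon \mathrm H^2(Q,\mathrm U(S)) \to \mathrm{EB}(S,Q)$, and the subtlety is that in the relative setting one must verify that a $Q$-equivariant algebra whose \emph{normal} class dies in $\mathrm{XB}(T|S;G,Q)$ (rather than merely in $\mathrm{XB}(S,Q)$) arises from a crossed product of the appropriate relative kind, compatibly with the splitting condition over $T$ encoded in the definition of $\mathrm{EB}(T|S;G,Q)$. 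Here I would run the exactness-at-$\mathrm{EB}(S,Q)$ argument from \cref{twelvetw} --- producing, via \cref{diagact}, a group extension $\mathrm e\colon \mathrm U(S)\rightarrowtail \Gamma \twoheadrightarrow Q$ with $(A,\tau)$ equivariantly Brauer equivalent to $(\End_S(M_{\mathrm e}),\tau_{\mathrm e})$ --- and then track how scalar extension to $T$ interacts with $\mathrm e$ to confirm membership in the relative kernel. The bookkeeping needed to keep the $T$-splitting data coherent through the equivariant Brauer equivalences is where the real work lies; once that is in place, exactness follows as in the absolute case.

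Finally, \emph{naturality} of \eqref{11.4} in the data is inherited from the naturality already established for each constituent functor on $\mathcat{Change}$ --- namely $\mathrm{EB}$, $\mathrm{XB}$, $\mathrm H^3(Q,\mathrm U(S))$, and the maps $\res$, $t$, $\inf$ --- so that the relative kernels and the connecting homomorphisms are compatible with morphisms of $Q$-normal Galois extensions in the sense of \cref{normalr}. I would assemble these naturality statements, most of which are recorded in \cref{nineo}, \cref{elevo}, and \cref{genaz}, and note that passing to kernels of natural transformations preserves naturality, leaving only routine verification to the reader.
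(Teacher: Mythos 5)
Your treatment of the two positions where exactness is actually asserted coincides with the paper's: exactness at $\mathrm H^3(Q,\mathrm U(S))$ is quoted from \cref{9.2}, and exactness at $\mathrm{XB}(T|S;G,Q)$ is obtained by using \cref{fouro} (with $Q$ finite, so $\mathrm M_{|Q|}(A)$ is again Azumaya) to replace a class with vanishing Teich\-m\"uller class by an equivariant one, and then checking membership in the relative kernel via \cref{nineo}(ii) and \cref{eighto} --- exactly the paper's argument. The one point to flag is that the step you single out as ``the main obstacle,'' namely exactness at $\mathrm{EB}(T|S;G,Q)$, is not part of \cref{11.3} at all: the four-term sequence \eqref{11.4} has no map into $\mathrm{EB}(T|S;G,Q)$, so exactness is only claimed at the two middle terms. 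The identification of $\ker(\res)$ with the image of $\mathrm{cpr}$ is deferred to the eight-term sequence of \cref{12.4}, whose proof the paper then derives from \cref{twelvetw} together with the present theorem; so the bookkeeping you describe there, while not wrong, is work that belongs to a later statement rather than to this one.
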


\begin{proof}
The statement of the theorem is a consequence of  
Theorems 
\ref{fouro},
\ref{nineo}~(ii),
\ref{elevo},
\ref{7.1},
 and \ref{9.2}.

For if $(A,\sigma )$ represents a member of $\mathrm{XB}(T|S;G,Q)$ 
with zero Teich\-m\"uller class, by 
\cref{fouro},  we may assume
$(A,\sigma )$ to be equivariant, i.e., $\sigma = \sigma_\tau $ for some
equivariant structure $\tau $. Now
the $G$-normal algebra
 $(A\otimes T, \sigma_{(i,\pi^{\substack{\mbox{\tiny{$G$}}}})})$
represents zero in $\mathrm{XB}(T,G)$ and hence is an induced $G$-normal split
algebra, by \cref{nineo} (ii).
By 
\cref{eighto},
$(A\otimes T, \tau_{(i,\pi^{\substack{\mbox{\tiny{$G$}}}})})$ is an induced
$G$-equivariant split algebra. \end{proof}

Let $R=S^Q$, let
$\mathrm e_G \colon \mathrm U(T) \stackrel{\iEG}
\rightarrowtail \EG 
\stackrel{\piEG}
\twoheadrightarrow G$
be a group extension, and 
denote the restriction to $N$ of the group extension ${\mathrm e}_ G$
by
${\mathrm e_N \colon 
\mathrm U(T) \rightarrowtail \EN 
\stackrel{\piEN} \twoheadrightarrow N}$. 
Then the crossed product 
$S$-algebra $A=(T,N,{\mathrm e}_N, \piEN)$ and
the crossed product $R$-algebra
$(T,G,{\mathrm e}_G,\llambda \circ \piEG)$ are defined,
the former being an Azumaya $S$-algebra,
since $T|S$ is a Galois extension of commutative rings with Galois 
group $N$ (cf. 
\cref{tpf}~(xi)),
and 
$(T,G,{\mathrm e}_ G,\llambda \circ \piEG)$ contains $A$ 
as a subalgebra.
Consider the resulting group extension
$\mathrm e_Q\colon \EN \stackrel{\jEQ}\rightarrowtail \EG 
\stackrel{\piEQ}\twoheadrightarrow Q$, of the kind 
\eqref{getw},
 and introduce
 the notation
$i^{\substack{\mbox{\tiny{$\EN$}}}}\colon \EN \to \mathrm U(A)$
for the obvious injection. 
Conjugation in $\EG$ induces an action 
$\thetaEG \colon \EG \to \Aut(A)$
of $\EG$ on $A$ such that
the pair 
$(i^{\substack{\mbox{\tiny{$\EN$}}}},\thetaEG)$
is a morphism 
$(\EN,\EG,\jEQ) \to (\mathrm U(A),\Aut(A),\partial)$
of crossed modules of the kind 
\eqref{morthr},
and this morphism, in turn, induces
a $Q$-normal structure 
${\sigma_{\thetaEG}
\colon Q \to \Out(A)}$
on $A$; thus
the crossed product $R$-algebra 
of $(T,G,\mathrm e_G,\llambda \circ \piEG)$
can now be written as the crossed product $R$-algebra
$(A,Q,\mathrm e_Q,\thetaEG)$ relative to the group extension
$\mathrm e_Q$
and the morphism 
$(i^{\substack{\mbox{\tiny{$\EN$}}}},\thetaEG)$
of crossed modules,
cf. 
\cref{three}.
In particular,
the left 
$A$-module  
$\BbB$
that underlies the algebra 
$(T,G,{\mathrm e}_ G,\llambda \circ \piEG)
\cong (A,Q,\mathrm e_Q,\vartheta_{\mathrm e_G})$
is free with basis in one-one 
correspondence with the elements of $Q$,
and the $Q$-equivariant structure 
$\tau_{\mathrm e_Q}\colon Q \longrightarrow \Aut({}_A\End (\BbB))$
given as 
\eqref{tauo}
is defined.
When the group $Q$ is finite, the algebra 
${}_A\End(\BbB)$ is an Azumaya $S$-algebra.

\begin{prop}
Suppose that the group $Q$ is finite. Then the assignment to a group extension
$\mathrm e_G$ of $G$ by $\mathrm U(T)$ of the $Q$-equivariant algebra 
$({}_A\End(\BbB)^{\mathrm{op}},\tau^{\mathrm{op}}_{\mathrm e_Q})$
yields a homomorphism
\begin{equation}
\label{13.3}
\mathrm{cpr}\colon \mathrm H^2(G,\mathrm U(T)) \longrightarrow 
\mathrm{EB}(T|S;G,Q) 
\end{equation}
of abelian groups that is natural on the change of actions category
$\mathcat{Change}$.
In the special case where $T=S$ and $N$ is the trivial group, 
the homomorphism {\rm \eqref{13.3}} comes essentially down to
{\rm \eqref{inftwty}},
viz.
\begin{equation}
\mathrm{cpr} \colon 
\mathrm H^2(Q,\mathrm U(S)) \longrightarrow \mathrm{EB}(S|S;Q,Q). \qed
\label{inf2}
\end{equation}
\end{prop}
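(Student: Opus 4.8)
The plan is to follow the pattern of the absolute construction yielding \eqref{inftwty}, upgraded to the present tower of crossed products, and to treat separately well-definedness, the statement that the class lies in the indicated subgroup, the homomorphism property, naturality, and the special case. First I would record that, since $Q$ is finite, $\BbB$ is a free left $A$-module of rank $\vert Q\vert$, so that ${}_A\End(\BbB)\cong \mathrm M_{\vert Q\vert}(A^{\mathrm{op}})$ and hence ${}_A\End(\BbB)^{\mathrm{op}}\cong \mathrm M_{\vert Q\vert}(A)$ is an Azumaya $S$-algebra (as $A$ is Azumaya over $S$), carrying the $Q$-equivariant structure $\tau^{\mathrm{op}}_{\mathrm e_Q}$ of \eqref{tauo}; thus $({}_A\End(\BbB)^{\mathrm{op}},\tau^{\mathrm{op}}_{\mathrm e_Q})$ represents a class in $\mathrm{EB}(S,Q)$. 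Well-definedness of the assignment on $\mathrm H^2(G,\mathrm U(T))$ then follows because congruent extensions $\mathrm e_G$ of $G$ by $\mathrm U(T)$ yield isomorphic crossed product algebras $(T,G,\mathrm e_G,\llambda\circ\piEG)$, by the remark following Proposition \ref{3.1}, and hence isomorphic $Q$-equivariant algebras $({}_A\End(\BbB)^{\mathrm{op}},\tau^{\mathrm{op}}_{\mathrm e_Q})$.

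The main step is to verify that the resulting class actually lies in $\mathrm{EB}(T|S;G,Q)$, i.e., by the definition recalled in Section \ref{releqb}, that the change-of-actions algebra $({}_A\End(\BbB)^{\mathrm{op}}\otimes_S T, (\tau^{\mathrm{op}}_{\mathrm e_Q})_{(i,\piQ)})$ obtained along $(i,\piQ)\colon (S,Q,\kappaQ)\to(T,G,\llambda)$ is an induced $G$-normal split algebra. Writing $C=(T,G,\mathrm e_G,\llambda\circ\piEG)=(A,Q,\mathrm e_Q,\thetaEG)$ for the two descriptions of the crossed product $R$-algebra, I would view $C$ simultaneously as the free left $A$-module $\BbB$ of rank $\vert Q\vert$ and as a free left $T$-module $M_{\mathrm e_G}$ of rank $\vert G\vert$. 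Applying Proposition \ref{tpf}(ix) to the Galois extension $T|S$ with group $N$ and the crossed product $A=(T,N,\mathrm e_N,\piEN)$ gives an isomorphism $T\otimes_S A^{\mathrm{op}}\cong {}_T\End(M_{\mathrm e_N})$ of $S$-algebras, $M_{\mathrm e_N}$ being the free rank-$\vert N\vert$ $T$-module underlying $A$; passing to $\vert Q\vert\times\vert Q\vert$ matrices and using $\vert N\vert\,\vert Q\vert=\vert G\vert$ I obtain
\[
T\otimes_S {}_A\End(\BbB)\cong \mathrm M_{\vert Q\vert}\bigl(T\otimes_S A^{\mathrm{op}}\bigr)\cong {}_T\End(M_{\mathrm e_G}),
\]
so that $T\otimes_S {}_A\End(\BbB)^{\mathrm{op}}$ is split over $T$. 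By Proposition \ref{2.5.1}(ii) the change of actions carries $\tau^{\mathrm{op}}_{\mathrm e_Q}$ to a $G$-equivariant structure on this split algebra, and the content of the step is that, under the isomorphism above, this structure agrees with the induced $G$-equivariant structure $\tau_{\mathrm e_G}$ that Proposition \ref{tpf}(ii) attaches to ${}_T\End(M_{\mathrm e_G})$ via the semi-linear $\EG$-action on $M_{\mathrm e_G}$ given by left multiplication in $C$. Granting this, the change-of-actions algebra is an induced $G$-equivariant split algebra, whose associated $G$-normal structure is therefore an induced $G$-normal structure; hence it represents the identity in $\mathrm{XB}(T,G)$ by Theorem \ref{nineo}(ii), and the original class lies in $\mathrm{EB}(T|S;G,Q)$.

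Finally, the homomorphism property follows as for \eqref{inftwty}: the Baer sum of two extensions $\mathrm e_G,\mathrm e_G'$ of $G$ by $\mathrm U(T)$ corresponds to the tensor product of the associated crossed product algebras, hence, by the argument of Proposition \ref{2.4.3} together with Proposition \ref{diagact}, to the tensor product of the associated equivariant algebras modulo equivariant Brauer equivalence; naturality on $\mathcat{Change}$ is immediate from the functoriality of the crossed product construction and of $\mathrm{EB}(T|S;G,Q)$, and I would leave these routine verifications essentially to the reader. The special case $T=S$, $N=\{e\}$ collapses the tower: then $G=Q$, $\EN=\mathrm U(S)$, $A=(S,\{e\},\mathrm e_N,\piEN)=S$, the derived extension $\mathrm e_Q\colon \mathrm U(S)\rightarrowtail \EG\twoheadrightarrow Q$ is the input extension, and $\BbB$ is the free $S$-module underlying $(S,Q,\mathrm e_Q,\kappaQ\circ\piEQ)$, so ${}_A\End(\BbB)={}_S\End(\BbB)$ and the construction is literally that defining \eqref{inftwty}, now with target the kernel $\mathrm{EB}(S|S;Q,Q)$ of $\mathrm{EB}(S,Q)\to\mathrm{XB}(S,Q)$. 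The hard part is precisely the compatibility asserted in the second paragraph: one must track the two a priori different $G$-equivariant structures—the one obtained by extending $\tau_{\mathrm e_Q}$ along $T$ and the induced one $\tau_{\mathrm e_G}$—through the tower identification $C=(T,G,\mathrm e_G,\llambda\circ\piEG)=(A,Q,\mathrm e_Q,\thetaEG)$, showing that both descend from the single left-multiplication action of $\EG$ on $C$, after carefully matching the opposite-algebra bookkeeping inherited from Proposition \ref{tpf}(ii) and (ix).
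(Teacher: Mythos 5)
The paper states this proposition with a terminal \qed and offers no proof of its own, treating it as immediate from the crossed-product machinery of \cref{tpf} and the tower $C=(T,G,\mathrm e_G,\llambda\circ\piEG)=(A,Q,\mathrm e_Q,\thetaEG)$ set up just before the statement; your proposal is a correct unwinding of exactly that intended argument. The compatibility you defer as "the hard part" is in fact a one-line conjugation computation: for $x\in\EG$ lifting $g\in G$, one has $\ell_x\circ\ell_t\circ f\circ\ell_x^{-1}=\ell_{{}^{g}t}\circ({}^{\beta_1(x)}f)$ in ${}_T\End(C)$, since $xtx^{-1}={}^{g}t$ for $t\in T$ and both $\beta_1$ and $\beta_1'$ of \cref{tpf}(ii) are conjugation by left multiplication, trivial on $\EN$ and on $\mathrm U(T)$ respectively, so the two $G$-equivariant structures agree under the splitting isomorphism.
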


\section{The eight term exact sequence}
\label{eightterme}

Given a morphism
 $(f,\varphi ) \colon (S,Q, \kappa ) \to (T,G,\lambda )$ 
in the change of actions category $\mathcat{Change}$ introduced in 
Subsection \ref{coa}, the group $Q$ being finite,
the corresponding relative version of the exact sequence 
\eqref{twelvet}
takes the following form:
\begin{equation}
\ldots \stackrel{\dDelta}\longrightarrow \mathrm H^2(Q,\mathrm U(S))
\stackrel{\mathrm{cpr}} \longrightarrow \mathrm{EB}(T|S;G,Q) 
\stackrel{\res}\longrightarrow
\mathrm{XB}(T|S;G,Q) 
\stackrel{t}\longrightarrow \mathrm H^3(Q,\mathrm U(S))
\label{12.31}
\end{equation}

\begin{rema}
In the special case where $T = S$ and $G$ is the trivial group,
in view of the isomorphism 
\eqref{tenthr})
from $\XBSQ$ onto $\mathrm H^1(Q,\Pic(S))$,
the exact sequence \eqref{12.31} has the form of
the C(hase-)R(osenberg-)A(us\-lander-)B(rumer) sequence
\cite[Theorem 7.6 p.~62]{MR0195923}, 
\cite{MR0191894}. 
Other versions of the CRAB-sequence were obtained by Childs 
\cite[Theorem 2.2]{MR0311701},
Fr\"ohlich and Wall \cite[Theorem 1]{MR0409424},  \cite{genbrauer},
\cite[Theorem 4.2]{MR1803361}
(upper and middle long sequence),
Hattori \cite{MR539592}, Kanzaki \cite{MR0241469},
Ulbrich \cite{MR558913}, Yokogawa \cite{MR0480466}, and 
Villamayor-Zelinski \cite{MR0460308}.
\end{rema}

Consider
a $Q$-normal Galois extension $T|S$ of commutative rings, with structure 
extension 
$\mathrm {\mathrm e}_{(T|S)} \colon
N\rightarrowtail G \stackrel{\pi_Q}
\twoheadrightarrow Q$
and structure homomorphism
$\llambda \colon G \to \Aut^S(T)$, cf. Section \ref{normalr} above, and
denote the injection of $S$ into $T$ by $i\colon S \to T$.
Then the abelian groups $\mathrm{EB}(T|S;G,Q)$ 
and $\mathrm{XB}(T|S;G,Q)$
are defined relative to
the morphism 
$
(i,\pi_Q)\colon (S,Q,\kappaQ) \longrightarrow (T,G,\llambda)
$
in the change of actions category $\mathcat{Change}$
associated with the data, cf. \eqref{mqng} above.

\begin{thm}
\label{12.4}
The group $Q$ being finite, 
the extension
\begin{equation}
\begin{aligned}
0\longrightarrow &\mathrm H^1(Q,\mathrm U(S)) 
\stackrel{\jp}\longrightarrow\mathrm{EPic} (S,Q) 
\stackrel{\mup}\longrightarrow (\Pic (S))^Q
\stackrel{\dDelta}\longrightarrow \mathrm H^2(Q,\mathrm U(S))
\\
\stackrel{\mathrm{cpr}} \longrightarrow
&\mathrm{EB}(T|S;G,Q) \stackrel{\res}\longrightarrow
\mathrm{XB}(T|S;G,Q) 
\stackrel{t}\longrightarrow \mathrm H^3(Q,\mathrm U(S))
\stackrel{\inf} \longrightarrow  \mathrm H^3(G,\mathrm U(T))
\end{aligned}
\label{AAAAA}
\end{equation}
of the exact sequence 
{\rm \eqref{ldes}}
is defined
and yields an eight term exact sequence that is natural in terms
of the data.
If, furthermore, $S|R$ and $T|R$ are Galois extensions of commutative rings
over $R=S^Q=T^G$,
with Galois groups $Q$ and $G$, respectively,  
 then,
with $\Pic (S|R)$,
$\Pic (R)$ and $\mathrm B(T|R)$ substituted for,
respectively 
$\mathrm H^1(Q,\mathrm U(S))$,
$\mathrm{EPic} (S,Q)$ and $\mathrm{EB}(S,Q)$,
where $R = S^Q$,
the homomorphisms {\rm cpr} and {\rm res} being
modified accordingly,
the sequence is exact as well.
\end{thm}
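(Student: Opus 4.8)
The plan is to prove exactness term by term, exploiting the fact that the sequence \eqref{AAAAA} is assembled from three pieces that have already been analyzed separately. The first four terms, up through $\dDelta\colon (\Pic(S))^Q \to \mathrm H^2(Q,\mathrm U(S))$, constitute exactly the sequence \eqref{ldes} associated with the $Q$-graded Picard category $\mathcat{Pic}_{S,Q}$, and their exactness is already established (the sequence is exact because $\mathcat{Pic}_{S,Q}$ is group-like). The middle portion, exactness at $\mathrm H^2(Q,\mathrm U(S))$ and at $\mathrm{EB}(T|S;G,Q)$, should be handled by adapting the proof of \cref{twelvetw}; indeed \cref{twelvetw} establishes exactness of the absolute version \eqref{twelvet} at $\mathrm H^2(Q,\mathrm U(S))$, at $\mathrm{EB}(S,Q)$, and at $\mathrm{XB}(S,Q)$, and the relative kernels $\mathrm{EB}(T|S;G,Q)$ and $\mathrm{XB}(T|S;G,Q)$ are defined precisely as kernels of the forgetful maps to $\mathrm{XB}(T,G)$. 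The rightmost two terms, exactness at $\mathrm{XB}(T|S;G,Q)$ and the inflation tail to $\mathrm H^3(G,\mathrm U(T))$, are the content of \cref{11.3} (and \cref{9.2}), which I may invoke directly.

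Concretely, I would proceed as follows. First I would verify that each composite in \eqref{AAAAA} is zero, mostly by naturality of the constructions together with the composite-is-zero statements already proved in \cref{twelvetw} and \cref{11.3}. Exactness at the first three terms is inherited verbatim from \eqref{ldes}, since those terms and maps are literally the same. For exactness at $\mathrm H^2(Q,\mathrm U(S))$, I would mimic the corresponding argument in the proof of \cref{twelvetw}: given a group extension $\mathrm e$ of $Q$ by $\mathrm U(S)$ whose associated $Q$-equivariant algebra $(\End_S(M_{\mathrm e}),\tau_{\mathrm e})$ represents zero in $\mathrm{EB}(T|S;G,Q)$, one produces, via equivariant Brauer equivalence and \cref{diagact}, a rank one module $J$ with $[J]\in (\Pic(S))^Q$ realizing $\mathrm e$ as $\mathrm e_J$, so that $\dDelta[J]=[\mathrm e]$. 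One must additionally check that the relative constraint (that $\mathrm e$ comes from a group extension $\mathrm e_G$ of $G$ by $\mathrm U(T)$ restricting appropriately) is compatible; this is exactly the information packaged in $\mathrm{cpr}$ via \eqref{13.3}. For exactness at $\mathrm{EB}(T|S;G,Q)$, the relative analogue of the "$\ker(\res)\subset\im(\mathrm{cpr})$" argument in \cref{twelvetw} applies, using \cref{fouro}, \cref{8.00}, and \cref{diagact} to reduce an equivariant algebra in the kernel of $\res$ to a crossed-product form over $G$; here the $Q$-normal Galois extension structure ensures the relevant crossed products over $N$, and hence over $G$, are Azumaya by \cref{tpf}(xi).

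Exactness at $\mathrm{XB}(T|S;G,Q)$ and at $\mathrm H^3(Q,\mathrm U(S))$ (with the inflation tail) is precisely \cref{11.3}, whose proof already combines \cref{fouro}, \cref{nineo}(ii), \cref{elevo}, \cref{7.1}, and \cref{9.2}; I would simply cite it. For the final Galois-descent assertion, I would invoke Galois descent (Subsection~\ref{galext}(ii)) exactly as in the second half of \cref{twelvetw}: when $S|R$ and $T|R$ are Galois with groups $Q$ and $G$, the canonical homomorphisms $\mathrm H^1(Q,\mathrm U(S))\to \Pic(S|R)$, $\mathrm{EPic}(S,Q)\to \Pic(R)$, and $\mathrm{EB}(S,Q)\to \mathrm B(T|R)$ are isomorphisms, so substituting these groups and adjusting $\mathrm{cpr}$ and $\res$ accordingly preserves exactness. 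Naturality on $\mathcat{Change}$ follows because every constituent map is natural, a point already recorded for each piece.

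The main obstacle I anticipate is exactness at $\mathrm{EB}(T|S;G,Q)$, specifically the surjectivity direction $\ker(\res)\subset\im(\mathrm{cpr})$. In the absolute case \eqref{twelvet} the crossed product is formed over $Q$ acting on $S$, whereas here the relevant crossed product must be formed over $G$ acting on $T$, and one has to track carefully that the resulting group extension $\mathrm e_G$ of $G$ by $\mathrm U(T)$ restricts to the correct data over $N$ and that its image under $\mathrm{cpr}$ in \eqref{13.3} recovers the given equivariant class. Threading the diagram chase through the structure extension $\mathrm e_{(T|S)}$, while keeping the $N$-, $G$-, and $Q$-levels aligned, is where the bookkeeping is heaviest; the constructions of \cref{diagact} and the identification $\End_S(M)^{\mathrm{op}}\otimes\End_S(M_{\mathrm e})\cong \End_S(\Hom_S(M,M_{\mathrm e}))$ should do the essential work once the levels are matched.
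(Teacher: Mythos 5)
Your overall decomposition is the one the paper uses: the first four terms are literally \eqref{ldes}, the last two terms are \cref{11.3}, and the middle comes from \cref{twelvetw}; the paper's own proof is the single sentence that the theorem is an immediate consequence of \cref{twelvetw} and \cref{11.3}. But there is a concrete confusion in your treatment of the middle terms. The map $\mathrm{cpr}$ occurring in \eqref{AAAAA} has domain $\mathrm H^2(Q,\mathrm U(S))$ — it is the map \eqref{inftwty} of \cref{twelvetw}, with codomain restricted to the subgroup $\mathrm{EB}(T|S;G,Q)\subseteq\mathrm{EB}(S,Q)$ — whereas the map \eqref{13.3} you invoke has domain $\mathrm H^2(G,\mathrm U(T))$ and belongs to the \emph{comparison} sequence \eqref{BBBBB}, not to \eqref{AAAAA}. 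Consequently the ``relative constraint'' you propose to check at $\mathrm H^2(Q,\mathrm U(S))$, and the ``main obstacle'' you anticipate at $\mathrm{EB}(T|S;G,Q)$ (constructing a group extension $\mathrm e_G$ of $G$ by $\mathrm U(T)$ restricting correctly over $N$), concern the wrong map: the preimage to be produced is an extension of $Q$ by $\mathrm U(S)$, exactly as in the absolute case, and no crossed product over $G$ acting on $T$ enters.

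Once the map is correctly identified, the middle exactness is purely formal and requires no reworking of \cref{twelvetw}. Since $\res\circ\mathrm{cpr}=0$ in $\mathrm{XB}(S,Q)$, the image of $\mathrm{cpr}$ lies in $\ker\bigl(\mathrm{EB}(S,Q)\to\mathrm{XB}(S,Q)\to\mathrm{XB}(T,G)\bigr)=\mathrm{EB}(T|S;G,Q)$, so $\mathrm{cpr}$ factors through the relative group with \emph{unchanged} kernel; this gives exactness at $\mathrm H^2(Q,\mathrm U(S))$. Likewise
\begin{equation*}
\ker\bigl(\res|_{\mathrm{EB}(T|S;G,Q)}\bigr)
=\mathrm{EB}(T|S;G,Q)\cap\ker\bigl(\res\colon\mathrm{EB}(S,Q)\to\mathrm{XB}(S,Q)\bigr)
=\mathrm{EB}(T|S;G,Q)\cap\im(\mathrm{cpr})=\im(\mathrm{cpr}),
\end{equation*}
the last equality because $\im(\mathrm{cpr})\subseteq\mathrm{EB}(T|S;G,Q)$; this gives exactness at $\mathrm{EB}(T|S;G,Q)$. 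The remaining terms are \cref{11.3} as you say, and your Galois-descent remarks for the final assertion are fine. So the skeleton of your proof is right, but the paragraph on the middle terms should be replaced by the subgroup argument above rather than a relative rerun of the crossed-product construction.
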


\begin{proof}
This is an immediate consequence of \cref{twelvetw} and
\cref{11.3}. 
\end{proof}

\begin{rema}
 In terms of  the notation $B_0(R;\Gamma)$
for the group that corresponds to our 
\linebreak
$\mathrm{EB}(S|S;Q,Q)$
(where our notation $Q$ and 
$S$
corresponds to $\Gamma$ and 
$R$, respectively),
a homomorphism of the kind \eqref{inf2} above is given in
\cite[Theorem 4.2]{MR1803361}.
After the statement of Theorem 4.2, the authors of \cite{MR1803361}
remark that there is no direct construction for the map from
$\mathrm H^2(\Gamma; \mathrm U(R))$ to $B_0(R;\Gamma)$.
Our construction of \eqref{inf2}
is direct, however.
\end{rema}

\begin{rema}
\label{hochserre}
In the special case where $T|S|R$ are ordinary Galois extensions of fields,
the exact sequence boils down to the classical low degree five 
term exact sequence
\begin{equation}
0
\to
\Ho^2(Q,\mathrm U(S))
\to
\Ho^2(G,\mathrm U(T))
\to
\Ho^2(N,\mathrm U(T))^Q
\to
\Ho^3(Q,\mathrm U(S))
\to
\Ho^3(G,\mathrm U(T)),
\label{AAAAAA}
\end{equation}
see \cite[p.~130]{MR0052438}.
\end{rema}

\section{Relationship with the eight term exact sequence in
the cohomology of a group extension}
\label{13}

Let $T|S$ be a $Q$-normal Galois extension of commutative rings, 
 with structure extension
$\mathrm {\mathrm e}_{(T|S)} \colon
N\rightarrowtail G \stackrel{\pi_Q}\twoheadrightarrow Q$
and structure homomorphism 
$\kappa_G\colon G \to \Aut^S(T)$, 
cf. Section \ref{normalr} 
above; in particular, $N$ is a finite group.
Since $\mathrm U(T)^N$  
coincides with $\mathrm U(S)$,
the eight term exact sequence 
in
\cite{MR597986} associated with the group extension
$\mathrm e_{(T|S)}$ and the $G$-module $\mathrm U(T)$,
reproduced as \eqref{13.11} above,
has the following form:
\begin{equation}
\begin{aligned}
0\longrightarrow &\mathrm H^1(Q,\mathrm U(S)) 
\stackrel{\inf}\longrightarrow
\mathrm H^1(G,\mathrm U(T)) 
 \stackrel{\res}\longrightarrow 
\mathrm H^1(N,\mathrm U(T))^Q
\stackrel{\Delta}\longrightarrow \mathrm H^2(Q,\mathrm U(S))
\\ 
\stackrel{\inf}\longrightarrow &\mathrm H^2(G,\mathrm U(T)) \stackrel{j}\longrightarrow
\mathrm{Xpext} (G,N;\mathrm U(T))
 \stackrel{\Delta }\longrightarrow \mathrm H^3(Q,\mathrm U(S)) \stackrel{\inf}\longrightarrow
\mathrm H^3(G,\mathrm U(T)).
\end{aligned}
\label{BBBBB}
\end{equation}

\subsection{Relationship between  the two long exact sequences}

Consider the morphism
$(i,\pi_Q )\colon(S,Q,\kappaQ) \longrightarrow (T,G,\llambda)$
 associated to the
given $Q$-normal Galois extension, cf. \ref{mqng}, 
in the change of actions category $\mathcat{Change}$ introduced in 
Subsection \ref{coa}.
The  abelian groups
$\mathrm{EB}(T|S;G,Q)$ and
$\mathrm{XB}(T|S;G,Q)$ are now defined relative to 
this morphism.

The assignment to a crossed pair  $
\left (\mathrm e \colon  \mathrm U(T)\rightarrowtail \Ggamma \twoheadrightarrow N,\ 
\ppsi\colon Q \to \Out_G(\mathrm e)\right)$
with respect
to ${\mathrm e}_ {(T|S)}$ and $\mathrm U(T)$
of its associated crossed pair algebra 
$(A_{\mathrm e},\sigma_\ppsi )$,
cf. Section~\ref{cpal} above,  
yields a homomorphism
\begin{equation}
\mathrm{cpa}\colon
\mathrm{Xpext} (G,N;\mathrm U(T)) \longrightarrow \mathrm{XB}(T|S;G,Q). 
\label{13.2}
\end{equation}
Let $\mathrm{EPic}(T|S,Q)$  denote the kernel
of the induced homomorphisms 
\[
\mathrm{EPic} (S,Q)
\stackrel{\mumu_{\substack{\mbox{\tiny{$\mathcat{Pic}_{S,Q}$}}}}} 
\longrightarrow \Pic(S)\stackrel{i_*} \longrightarrow \Pic(T) 
\]
and 
$\Pic(T|S)$ that of the induced homomorphism 
$i_*\colon \Pic(S) \to \Pic (T)$.
With $T$ and $G$ substituted for $S$ and $Q$, respectively,
the isomorphism 
\eqref{picf}
takes the form
\begin{equation}
j_{\substack{\mbox{\tiny{$\mathcat{Pic}_{T,G}$}}}}\colon
\mathrm H^1(G,\mathrm U(T)) \longrightarrow \mathrm{EPic} (T|T,G), 
\label{form1}
\end{equation}
and Galois descent, cf. Subsection \ref{galext}~(ii), yields an isomorphism 
\[
\mathrm{EPic} (T|S,Q) \longrightarrow
\mathrm{EPic} (T|T,G)
\]
whence \eqref{form1} induces a homomorphism
\begin{equation}
\label{13.4}
\mathrm H^1(G,\mathrm U(T)) \longrightarrow\mathrm{EPic} (T|S,Q) 
\end{equation}
of abelian groups.
The homomorphism \eqref{13.4} admits, of course, a straightforward
direct description.
Likewise,
with $T$ and $N$ substituted for $Q$ and $S$, respectively,
the isomorphism 
\eqref{picf}
takes the form
\begin{equation}
j_{\substack{\mbox{\tiny{$\mathcat{Pic}_{T,N}$}}}}\colon
\mathrm H^1(N,\mathrm U(T)) \longrightarrow \mathrm{EPic} (T|T,N), 
\label{form2}
\end{equation}
and Galois descent yields an isomorphism 
$\mathrm{Pic} (T|S) \to
\mathrm{EPic} (T|T,N)$
whence \eqref{form2} induces an isomorphism
\begin{equation}
\label{13.5}
\mathrm H^1(N,\mathrm U(T)) \longrightarrow \Pic (T|S) 
\end{equation}
of abelian groups, necessarily compatible with the $Q$-module structures;
the isomorphism \eqref{13.5} is entirely classical.
Below we do not distinguish in notation
between \eqref{13.4}
and its composite $\mathrm H^1(G,\mathrm U(T)) \to \mathrm{EPic}(T,Q)$
with the canonical injection of
$\mathrm{EPic}(T|S,Q)$ into $\mathrm{EPic}(T,Q)$, nor
between \eqref{13.5}
and its composite $\mathrm H^1(N,\mathrm U(T)) \to \Pic(S)$
with the canonical injection
$\Pic(T|S) \to \Pic(S)$.
Direct inspection establishes the following.

\begin{thm}
\label{eighttermcomp}
The group  $Q$ being finite, the homomorphisms 
{\rm \eqref{13.4}}, {\rm \eqref{13.5}}, 
{\rm \eqref{13.3}}, and {\rm \eqref{13.2}}
of abelian groups are natural on the category $\mathcat{Change}$
and induce a morphism of exact sequences 
from
{\rm \eqref{BBBBB}} to  
{\rm \eqref{AAAAA}}. \qed
\end{thm}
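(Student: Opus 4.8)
The plan is to display the two rows as the exact sequences \eqref{BBBBB} (quoted from \cite{MR597986} via \eqref{13.11}) and \eqref{AAAAA} (established in \cref{12.4}), and to note that every vertical arrow has already been produced: the identity on the terms $\mathrm H^1(Q,\mathrm U(S))$, $\mathrm H^2(Q,\mathrm U(S))$, $\mathrm H^3(Q,\mathrm U(S))$ and on $\mathrm H^3(G,\mathrm U(T))$; the homomorphism \eqref{13.4} on the $\mathrm H^1(G,\mathrm U(T))$-term; its $Q$-fixed restriction \eqref{13.5} on the $\mathrm H^1(N,\mathrm U(T))^Q$-term; the map $\mathrm{cpr}$ of \eqref{13.3} on the $\mathrm H^2(G,\mathrm U(T))$-term; and the map $\mathrm{cpa}$ of \eqref{13.2} on the $\mathrm{Xpext}(G,N;\mathrm U(T))$-term. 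Since each of these arrows was already shown natural on $\mathcat{Change}$, the naturality assertion is automatic, so the entire content is the commutativity of the seven squares of the resulting ladder. First I would dispose of the right end, then attack the two $\mathrm{cpr}$-squares, and finally the left end.

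The rightmost square, with horizontal maps $\Delta$ and $t$ and vertical maps $\mathrm{cpa}$ and the identity, commutes precisely because the Teich\-m\"uller class of the crossed pair algebra $(A_\mathrm e,\sigma_\psi)$ equals $\Delta[(\mathrm e,\psi)]$; this is exactly \cref{6.3}, which supplies a congruence from $\mathrm e_\psi$, cf. \eqref{epsi}, to the Teich\-m\"uller complex $\mathrm e_{(A_\mathrm e,\sigma_\psi)}$. The adjacent square (the two $\mathrm H^3$-columns, both verticals the identity and both horizontals the same inflation) commutes trivially. This settles the right end.

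The two $\mathrm{cpr}$-squares are the heart of the matter. In each I would start from the congruence class of a group extension---of $G$ by $\mathrm U(T)$ in the $j/\res$ square, of $Q$ by $\mathrm U(S)$ in the $\inf/\mathrm{cpr}$ square---and trace it along both composites. Along the lower-left route one forms the crossed product $R$-algebra $(T,G,\mathrm e_G,\llambda\circ\piEG)\cong(A,Q,\mathrm e_Q,\thetaEG)$ with $A=(T,N,\mathrm e_N,\piEN)$ and passes to $({}_A\End(\BbB)^{\mathrm{op}},\tau^{\mathrm{op}}_{\mathrm e_Q})$; along the upper-right route one restricts the extension to $N$ and applies $\mathrm{cpa}$, obtaining the crossed pair algebra $(A_{\mathrm e_N},\sigma_\psi)=((T,N,\mathrm e_N,\vartheta),\sigma_\psi)$. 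The required identification is that the $Q$-normal algebra underlying the equivariant algebra of the first route is normally Brauer equivalent to the crossed pair algebra of the second. This I would extract from the crossed-product formalism of \cref{tpf}: the isomorphism ${}_A\End(\BbB)\cong\mathrm M_{\vert Q\vert}(A^{\mathrm{op}})$ together with the fixed-point identification $(A,Q,\varphi,\vartheta)^{\mathrm{op}}\cong{}_A\End(\BbB)^Q$ of \cref{tpf}(viii), the description of the induced normal structure via \cref{tpf}(iv),(vii), and the matrix-stabilization congruence \cref{2.4.2}, which collapses $\mathrm M_{\vert Q\vert}(A)$ to $A_{\mathrm e_N}$ without changing the class. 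For the $\mathrm H^2(Q,\mathrm U(S))$-square one uses in addition that the $N$-restriction of an inflated class is split, so that $A$ degenerates to a split Azumaya algebra and $({}_A\End(\BbB)^{\mathrm{op}},\ldots)$ reduces to the $S$-level construction \eqref{inf2}. Keeping track of opposite algebras and of the passage to normal Brauer equivalence classes is where the bookkeeping is heaviest, and I expect this to be the main obstacle.

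The three left-hand squares, relating $\inf,\res$ and the group-cohomology connecting map $\Delta$ to $\jp,\mup,\dDelta$, I would treat through the identifications that define the vertical maps: the isomorphisms \eqref{form1} and \eqref{form2}, together with the Galois-descent isomorphisms $\mathrm{EPic}(T|S,Q)\cong\mathrm{EPic}(T|T,G)$ and $\Pic(T|S)\cong\mathrm{EPic}(T|T,N)$ built into \eqref{13.4} and \eqref{13.5}. Under these the low-degree part of \eqref{BBBBB} is carried onto the four-term sequence \eqref{ldes}, so commutativity of the first two squares follows from the functoriality of \eqref{ldes} under the change-of-actions morphism $(i,\pi_Q)$ of \eqref{mqng}, while commutativity of the $\Delta/\dDelta$ square follows from the naturality of the connecting homomorphisms, both being induced by the same assignment of a group extension to a crossed $2$-fold extension (the $\omega$-construction for $\dDelta$ and the $\mathrm e\mapsto\overline{\mathrm e}$ construction of \cite{MR597986} for $\Delta$). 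These squares are formal once the descent identifications are in place, so the weight of the proof rests on the two $\mathrm{cpr}$-squares above.
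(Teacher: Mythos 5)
Your proposal is correct and coincides with the paper's approach: the paper offers no written argument beyond the phrase ``direct inspection'' preceding the theorem, and your square-by-square verification --- \cref{6.3} for the $\Delta/t$ square, the crossed-product formalism of \cref{tpf} together with the matrix stabilization of \cref{2.4.2} for the two $\mathrm{cpr}$-squares, and the descent identifications \eqref{form1}, \eqref{form2} for the left-hand end --- is precisely the inspection the paper intends. The only place demanding real care is the opposite-algebra and Brauer-equivalence bookkeeping in the $\mathrm{cpr}$-squares, which you correctly identify and for which you cite the right tools.
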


\begin{rema}
Consider the classical case where $R$, $S$, and $T$ are fields.
Now the group 
$\mathrm{Xpext} (G,N;\mathrm U(T))$ 
comes down to $\mathrm H^2(N,\mathrm U(T))^Q$ and
$\mathrm{XB}(T|S;G,Q)$ to $\mathrm B(T|S)^Q$, and
\eqref{13.2} boils down to the classical isomorphism
$\mathrm H^2(N,\mathrm U(T))^Q \to \mathrm B(T|S)^Q$.
Furthermore, the groups 
$\mathrm H^1(N,\mathrm U(T))$,
$\mathrm H^1(G,\mathrm U(T))$, 
$\mathrm{EPic}(T|S,Q)$, and
$\mathrm{Pic}(T|S)$ are zero, and \eqref{13.3}
is an isomorphism.
Thus the morphism {\rm \eqref{BBBBB}} $\to $ 
{\rm \eqref{AAAAA}}
of exact sequences in Theorem \ref{eighttermcomp} above
is then an isomorphism of exact sequences.
\end{rema}

\subsection{An application}
\label{speccase}
Let $T|S$ be a Galois extension of commutative rings, with Galois group $N$,
suppose that $T$ carries a $Q$-action that extends the given $Q$-action
on $S$, and define
the group $\mathrm {EB}(T|S,Q)$ to be the kernel of the induced 
homomorphism  $\mathrm {EB}(S,Q) \to \mathrm {EB}(T,Q)$.
Relative to the induced $Q$-action on $N$, the 
semi-direct product group $N \rtimes Q$ is defined, and
$T|S$ is a $Q$-normal Galois extension of rings,
having as structure extension the split extension
$\mathrm {\mathrm e}_{(T|S)} \colon
N\rightarrowtail N \rtimes Q \twoheadrightarrow Q$.
Consider the commutative diagram
\begin{equation}
\begin{CD}
@.
0
@.
0
@.
0
\\
@.
@VVV
@VVV
@VVV
\\
0
@>>>
\widetilde{\mathrm{EB}(T|S,Q)}
@>>>
\mathrm{EB}(T|S;N \rtimes Q,Q)
@>>>
\mathrm{EB}(T|T;Q,Q)
\\
@.
@VVV
@VVV
@VVV
\\
0
@>>>
\mathrm{EB}(T|S,Q)
@>>>
\mathrm{EB}(S,Q) 
@>>> 
\mathrm{EB}(T,Q) 
\\
@.
@VVV
@VVV
@VVV
\\
0
@>>>
\mathrm{XB}(T;Q,N \rtimes Q) 
@>>>
\mathrm{XB}(T,N \rtimes Q) 
@>>>
\mathrm{XB}(T,Q) 
\end{CD}
\label{appl1}
\end{equation}
of abelian groups with exact rows and columns,
the 
subgroup 
$\widetilde{\mathrm{EB}(T|S,Q)}$ of $\mathrm{EB}(T|S,Q)$
being defined
by the requirement that the upper row be exact.

The group $N$ being finite, suppose now that $Q$ is a finite group as well.
The  corresponding homomorphism
\eqref{13.3}, viz.
$\mathrm{cpr}\colon \mathrm H^2(N \rtimes Q,\mathrm U(T)) \to \mathrm {EB}(T|S;N \rtimes Q,Q)$,
and the
homomorphism \eqref{inf2}, with $T$ substituted for $S$, viz.
$\mathrm{cpr}\colon \mathrm H^2(Q,\mathrm U(T)) \to \mathrm {EB}(T|T;Q,Q)$,
yield
the commutative diagram
\begin{equation}
\begin{CD}
0
@>>>
\mathrm{ker}(\mathrm{res})
@>>>
\mathrm H^2(N \rtimes Q,\mathrm U(T))
@>{\mathrm{res}}>>
\mathrm H^2(Q,\mathrm U(T)) 
\\
@.
@VVV
@V{\mathrm{cpr}}VV
@V{\mathrm{cpr}}VV
\\
0
@>>>
\widetilde{\mathrm{EB}(T|S,Q)}
@>>>
\mathrm{EB}(T|S;N \rtimes Q,Q)
@>>>
\mathrm{EB}(T|T;Q,Q)  
\end{CD}
\label{appl2}
\end{equation}
with exact rows and hence a homomorphism
$\mathrm{ker}(\mathrm{res}) \to \mathrm{EB}(T|S,Q)$
of abelian groups.
Suppose, furthermore, that $S$ and $T$ are fields.
Then the homomorphism from
$\mathrm{XB}(T,N \rtimes Q)$
to
$\mathrm{XB}(T,Q)$
in the lower row of the diagram \eqref{appl1}
comes down to the obvious injection
$\mathrm B(T)^{N \rtimes Q} \to \mathrm B(T)^Q$
whence the group $\mathrm{XB}(T;Q,N \rtimes Q)$ is now trivial
and the inclusion
$\widetilde{\mathrm{EB}(T|S,Q)}\subseteq \mathrm{EB}(T|S,Q)$
is the identity. Moreover,
the right-hand and the middle vertical arrow 
in \eqref{appl2}
are isomorphisms
whence the induced homomorphism
$\mathrm{ker}(\mathrm{res}) \to \mathrm{EB}(T|S,Q)$
is an isomorphism. This observation
recovers and casts new light on the main result of
\cite{MR2191717}, obtained there via relative group cohomology.
Our argument is elementary and does not invoke relative group cohomology.
Indeed, the main point of our reasoning is the identification
of the group cohomology group
$\mathrm H^2(N \rtimes Q,\mathrm U(T))$
with the group $\mathrm{EB}(T|S;N \rtimes Q,Q)$;
under the present circumstances, this group is the subgroup
of the $Q$-equivariant Brauer group $\mathrm {EB}(S,Q)$ of $S$ 
that consists of
classes of $Q$-equivariant central simple $S$-algebras $A$
such that $A\otimes T$ is a matrix algebra over $T$.
Likewise,
the group $\mathrm{EB}(T|T;Q,Q)$
is the subgroup
of the $Q$-equivariant Brauer group $\mathrm {EB}(T,Q)$ of $T$ 
that consists of
classes of $Q$-equivariant matrix algebras over $T$.
The group 
$\mathrm{EB}(T|S,Q)$ then appears as the kernel of the canonical 
homomorphism 
$\mathrm{EB}(T|S;N \rtimes Q,Q)
\to
\mathrm{EB}(T|T;Q,Q)$ and,
in view of the identifications of
$\mathrm H^2(N \rtimes Q,\mathrm U(T))$
with $\mathrm{EB}(T|S;N \rtimes Q,Q)$
and of
$\mathrm H^2(Q,\mathrm U(T))$
with $\mathrm{EB}(T|T;Q,Q)$,
the identification of
$\mathrm{ker}(\mathrm{res}\colon \mathrm H^2(N \rtimes Q,\mathrm U(T))
\to \mathrm H^2(Q,\mathrm U(T)))$
with $\mathrm{EB}(T|S,Q)$ is immediate.
In particular,  
when the group $Q$ is trivial,
that result comes down to the classical Brauer-Hasse-Noether isomorphism
between the corresponding second group cohomology group
and the corresponding subgroup of the ordinary Brauer group.

\subsection{A variant of the relative theory}
\label{variantrt}

In the situation of the relative versions
\eqref{12.31} and \eqref{AAAAA}
of the long exact sequence  
\eqref{twelvet},
in general, there is no obvious reason for 
a homomorphism
$\ome$ from $\mathrm H^0(Q,\mathrm B(T|S))$ to 
\linebreak
$\mathrm H^2(Q,\Pic(T|S))$
to exist 
that would complete
\begin{equation*}
\xymatrix{
\mathrm H^0(Q,\mathrm B(T|S)) \ar[d] 
&\mathrm H^2(Q,\Pic(T|S))  \ar[d] 
\\ 
\mathrm H^0(Q,\mathrm B(S))\ar[r]^{\ome}
&\mathrm H^2(Q,\Pic(S))
}
\end{equation*}
to a commutative square
and hence would 
complete the exact sequence \eqref{nat12}
to a corresponding relative version
of an exact sequence of the kind 
\eqref{FW}.
We  now show that a variant of the relative theory
includes such a homomorphism.

The object $(S,Q,\kappaQ)$ of the category
$\mathcat{Change}$ being given,
let $(T,G,\llambda)$ be another object of $\mathcat{Change}$,
and let $(f,\varphi)\colon (S,Q,\kappaQ) \to (T,G,\llambda)$ be a 
morphism in  $\mathcat{Change}$ having $\varphi\colon G \to Q$ 
surjective, cf. 
Subsection \ref{coa}.

\subsubsection{The standard approach}
\label{standa}

We  say that
two $Q$-normal Azumaya $S$-algebras $(A_1,\sigma_1)$ and 
 $(A_2,\sigma_2)$ such that $T \otimes A_1$
and  $T \otimes A_2$ are matrix algebras over $T$
are {\em relatively Brauer equivalent\/}
if there are faithful finitely
generated projective  $S$-modules
modules $M_1$ and $M_2$ 
having the property that
 $T \otimes M_1$ and $T \otimes M_2$
are free as $T$-modules,
together with induced $Q$-normal structures
\[
\rho_1\colon Q \to \Out(B_1),
\
B_1 = \End_S(M_1),
\
\rho_2\colon Q \to \Out(B_2),
\
B_2 = \End_S(M_2),
\]
such that 
$(A_1 \otimes B_1, \sigma_1 \otimes \rho_1 )$
and $(A_2 \otimes B_2, \sigma_2 \otimes \rho_2)$
are isomorphic $Q$-normal {$S$-algebras}.
Just as for $\mathrm{XB}(S,Q)$,
under the operations of
tensor product and that of taking opposite algebras,
the equivalence classes constitute an abelian
group,
the identity element being represented by
$(S,\kappaQ)$.
We refer to this group
as the $T$-{\em relative $Q$-crossed Brauer group of\/} $S$ 
{\em with respect to the 
morphism\/} $(f,\varphi)$ in $\mathcat{Change}$,
denote this group 
by $\mathrm{XB}_{\mathrm{fr}}(T|S;G,Q)$,
and we refer to the construction just given as the 
{\em standard construction\/}.
The $T$-{\em relative 
$Q$-equivariant Brauer group\/} $\mathrm {EB}_{\mathrm{fr}}(T|S;G,Q)$ 
{\em with respect to the 
morphism\/} $(f,\varphi)$ in $\mathcat{Change}$
arises 
in the same way as the relative $Q$-crossed Brauer group,
save that, in the definition, \lq equivariant\rq\ is substituted for
\lq crossed\rq, and we  likewise say
that this construction is the {\em standard construction\/}.
In particular, when we forget the actions, that is, we take
the groups $G$ and
$Q$ to be trivial, this construction
yields an abelian group $\mathrm B_{\mathrm{fr}}(T|S)$ which we refer to as the
$T$-{\em relative Brauer group of\/} $S$,
obtained by the {\em standard construction\/}.

The group $\mathrm B_{\mathrm{fr}}(T|S)$ acquires a $Q$-module structure. 
Indeed, let $R=S^Q$.
Given an $S$-module $M$ 
and $x\in Q$,
let ${}^xM$ denote the $S$-module
whose underlying $R$-module is just $M$, and whose $S$-module structure
is given by
\[
S \otimes M \longrightarrow M,\ (s\otimes q)\longmapsto {}^x\!s\, q,
\ s \in S,\ q \in M.
\]
Consider a faithful finitely generated projective $S$-module
$M$ such that $T \otimes M$ is a free $T$-module, let $x \in Q$, and 
pick a pre-image $y \in G$ of $x\in Q$.
Then the
association
\begin{equation}
T \otimes {}^xM \longrightarrow {}^y(T \otimes M),\ 
t \otimes q \longmapsto {}^y t \otimes q,
\label{assoc1}
\end{equation}
yields an isomorphism of $T$-modules, and
since $T\otimes M$
 is a free $T$-module, so is
${}^y(T \otimes M)$; further,
\[
T \otimes {}^x\End_S(M) \cong {}^y(T \otimes \End_S(M))
\cong {}^y(T \otimes \End_S(M))\cong\End_S ({}^y(T \otimes M))
\]
is a matrix algebra over $T$.
Likewise,
given an
Azumaya $S$-algebra $A$ such that $T\otimes A$ is a matrix algebra 
over $T$
and $x\in Q$, to
 show that $T \otimes {}^xA$ is a matrix algebra over $T$,
pick a pre-image $y \in G$ of $x \in Q$ and note that
the corresponding association 
\eqref{assoc1}
yields an isomorphism of $T$-algebras. Since
$T \otimes A$ is a matrix algebra over $T$, so is  
${}^y (T \otimes A)$.

By construction,
the canonical homomorphism
\begin{equation*}
\mathrm B_{\mathrm{fr}}(T|S) \longrightarrow \mathrm B(T|S)
\end{equation*}
is a morphism 
of $Q$-modules but in general there is no reason for this homomorphism
to be injective nor to be surjective.
The assignment to a $Q$-equivariant Azumaya $S$-algebra 
representing a member of
$\mathrm {EB}_{\mathrm{fr}}(T|S;G,Q)$ 
of the associated $Q$-normal Azumaya $S$-algebra
yields a homomorphism 
$\mathrm{res}_{\mathrm{fr}}\colon \mathrm {EB}_{\mathrm{fr}}(T|S;G,Q)\to \mathrm {XB}_{\mathrm{fr}}(T|S;G,Q)$
of abelian groups,
the assignment to a $Q$-normal Azumaya $S$-algebra $(A, \sigma )$
representing a member of 
$\mathrm {XB}_{\mathrm{fr}}(T|S;G,Q)$ of its Teichm\"uller complex
 $\mathrm e_{(A,  \sigma )}$
yields a homomorphism
$t_{\mathrm{fr}}\colon
\mathrm {XB}_{\mathrm{fr}}(T|S;G,Q)
\longrightarrow \mathrm H^3(Q,\mathrm U(S))
$ of abelian groups and,
when the group $Q$ is finite,
the construction of the homomorphism 
$\mathrm{cpr}\colon \mathrm H^2(Q, \mathrm U(S))\to \mathrm {EB}(T|S;G,Q)$,
cf. \eqref{inf2} above,
lifts to a homomorphism
\[
\mathrm{cpr}_{\mathrm{fr}}\colon \mathrm H^2(Q, \mathrm U(S))\longrightarrow 
\mathrm {EB}_{\mathrm{fr}}(T|S;G,Q).
\]

\begin{rema}
The abelian groups $\mathrm {EB}(T|S;G,Q)$ and $\mathrm {XB}(T|S;G,Q)$
being defined relative to the given morphism $(f,\varphi)$ in
$\mathcat{Change}$,
the obvious maps yield homomorphisms
\begin{align}
\mathrm {EB}_{\mathrm{fr}}(T|S;G,Q) &\longrightarrow 
\mathrm {EB}(T|S;G,Q)
\label{can52}
\\ 
\mathrm {XB}_{\mathrm{fr}}(T|S;G,Q) &\longrightarrow \mathrm {XB}(T|S;G,Q) 
\label{can53}
\end{align}
of abelian groups
that
make the diagram
\begin{equation*}
\begin{CD}
\mathrm {EB}_{\mathrm{fr}}(T|S;G,Q)
@>{\res_{\mathrm{fr}}}>>
\mathrm {XB}_{\mathrm{fr}}(T|S;G,Q)
@>{t_{\mathrm{fr}}}>>
\mathrm H^3(Q,\mathrm U(S))
\\
@VVV
@VVV
@|
\\
\mathrm {EB}(T|S;G,Q)
@>{\res}>>
\mathrm {XB}(T|S;G,Q)
@>{t}>>
\mathrm H^3(Q,\mathrm U(S))
\end{CD}
\end{equation*}
commutative and, when the group $Q$ is finite,
the homomorphisms
$\mathrm{cpr}_{\mathrm{fr}}$ from $\mathrm H^2(Q,\mathrm U(S))$
to
$\mathrm {EB}_{\mathrm{fr}}(T|S;G,Q)
$ and
$\mathrm{cpr}$ from $\mathrm H^2(Q,\mathrm U(S))$
to 
$\mathrm {EB}(T|S;G,Q)
$
extend the diagram to a larger commutative diagram
having four terms in each row.
However, there is no reason for the homomorphisms
\eqref{can52} or \eqref{can53}
to be injective nor to be surjective,
nor is there a reason, when $Q$ is a finite group, for 
$\mathrm{cpr}_{\mathrm{fr}}\colon \mathrm H^2(Q, \mathrm U(S))\to \mathrm {EB}_{\mathrm{fr}}(T|S;G,Q)$ to be injective or surjective. In the classical situation
where $R$, $S$, $T$ are fields etc.,
these homomorphisms are, of course, isomorphisms.
\end{rema}

Let $\mathrm {Pic}(T|S)$
denote the kernel of the homomorphism
$\mathrm {Pic}(S) \to \mathrm {Pic}(T)$
induced by the ring homomorphism $f\colon S \to T$,
necessarily a morphism of $G$-modules when $G$ acts on
$S$ through $\varphi\colon G \to Q$ whence, in particular,
the abelian subgroup  $\mathrm {Pic}(T|S)^Q$
of $Q$-invariants is defined, and let
$\mathrm {EPic}(T|S,Q)$
denote the kernel of the homomorphism
$\mathrm {EPic}(S,Q) \to \mathrm {EPic}(T,G)$
induced by the morphism $(f,\varphi)$ in $\mathcat{Change}$.
It is immediate that the low degree exact sequence 
\eqref{FW}
restricts to the exact sequence
\begin{equation}
0 \longrightarrow \mathrm H^1(Q, \mathrm U (S)) 
\stackrel{\jp}
\longrightarrow 
\mathrm{EPic}(T|S,Q)
\stackrel{\mup|}
\longrightarrow \Pic(T|S)^Q
\stackrel{\dDelta|}
\longrightarrow \mathrm H^2(Q, \mathrm U (S)) 
\label{ldesf}
\end{equation}
of abelian groups.
In the appendix (cf. Subsection \ref{esgsmc} below), we shall show that,
with a suitably defined Picard category
$\mathcat {Pic}_{T|S;G,Q}$ substituted for $\mathcat C_Q$,
 the sequence
\eqref{ldesf}
is as well a special case of the exact sequence 
\eqref{fittwelve}.

\begin{thm}
\label{CTC}
Suppose that the group $Q$ is finite. Then
the extension 
\begin{equation}
\ldots
\stackrel{\dDelta}\longrightarrow \mathrm H^2(Q,\mathrm U(S))
\stackrel{\mathrm{cpr}_{\mathrm{fr}}} \longrightarrow
\mathrm{EB}_{\mathrm{fr}}(T|S;G,Q) \stackrel{\res_{\mathrm{fr}}}\longrightarrow
\mathrm{XB}_{\mathrm{fr}}(T|S;G,Q) 
\stackrel{t_{\mathrm{fr}}}\longrightarrow \mathrm H^3(Q,\mathrm U(S)) 
\label{CCCC}
\end{equation}
of the exact sequence
{\rm \eqref{ldesf}}
is defined and yields a seven term  exact sequence
that is natural in terms of the data.
\end{thm}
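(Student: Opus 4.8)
The plan is to mirror the proof of Theorem~\ref{twelvetw}, carrying throughout the extra condition encoded in the subscript $\mathrm{fr}$, namely that $T\otimes M$ be a free $T$-module (equivalently, after tensoring with $T$, that the relevant split algebras become matrix algebras over $T$), and to invoke the already-established exactness of \eqref{ldesf} for the first four terms. Thus exactness must be verified only at $\mathrm H^2(Q,\mathrm U(S))$, at $\mathrm{EB}_{\mathrm{fr}}(T|S;G,Q)$, and at $\mathrm{XB}_{\mathrm{fr}}(T|S;G,Q)$, once it is noted that $\mathrm{cpr}_{\mathrm{fr}}$, $\res_{\mathrm{fr}}$, and $t_{\mathrm{fr}}$ are well defined by the constructions given just before the statement. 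The guiding observation, used repeatedly, is that the class of free $T$-modules and of matrix algebras over $T$ is closed under tensor products and under the operations $M\mapsto \End_S(M)$, $M\mapsto\Hom_S(M,M')$, and $M\mapsto {}^yM$ for $y\in G$ (cf.\ the discussion of the $Q$-action on $\mathrm B_{\mathrm{fr}}(T|S)$ and the isomorphism \eqref{assoc1} preceding the theorem); hence each module or algebra manufactured in the absolute argument automatically inherits the required framing.

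For exactness at $\mathrm{XB}_{\mathrm{fr}}(T|S;G,Q)$ I would argue exactly as for $\mathrm{XB}(S,Q)$ in Theorem~\ref{twelvetw}: the composite $t_{\mathrm{fr}}\circ\res_{\mathrm{fr}}$ vanishes because induced $Q$-normal structures have zero Teich\-m\"uller class (Theorem~\ref{fouro}, Theorem~\ref{elevo}), while for $\ker(t_{\mathrm{fr}})\subseteq\im(\res_{\mathrm{fr}})$ I would take a $Q$-normal Azumaya $S$-algebra $(A,\sigma)$ that frames over $T$ and has zero Teich\-m\"uller class, apply Theorem~\ref{fouro} to make $\sigma_{|Q|}$ on $\mathrm M_{|Q|}(A)$ equivariant, observe that $\mathrm M_{|Q|}(A)=A\otimes\End_S(S^{|Q|})$ still frames over $T$ since $T\otimes S^{|Q|}=T^{|Q|}$ is free, and note that, $Q$ being finite, $\mathrm M_{|Q|}(A)$ is Azumaya; this realizes the class as $\res_{\mathrm{fr}}$ of a framed equivariant class. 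The easy half of exactness at $\mathrm{EB}_{\mathrm{fr}}$, namely $\res_{\mathrm{fr}}\circ\mathrm{cpr}_{\mathrm{fr}}=0$, follows from the fact that the $Q$-equivariant structure $\tau_{\mathrm e}$ on $\End_S(M_{\mathrm e})$ attached to a crossed product is induced, exactly as in the absolute case.

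The two substantial points are exactness at $\mathrm H^2(Q,\mathrm U(S))$ and the inclusion $\ker(\res_{\mathrm{fr}})\subseteq\im(\mathrm{cpr}_{\mathrm{fr}})$. For the former I would repeat the computation in the proof of Theorem~\ref{twelvetw}, but starting from a class $[J]\in\Pic(T|S)^Q$ rather than $\Pic(S)^Q$: forming the group extension $\mathrm e_J$, the crossed product $(S,Q,\mathrm e_J,\kappaQ\circ\pi_J)$, and the free module $M_{\mathrm e_J}$ it underlies, the identification \eqref{iso9}--\eqref{desc1} of $\tau_{\mathrm e_J}$ with a trivially induced structure on $\End_S(\Hom_S(J,M_{\mathrm e_J}))$ goes through unchanged, the new content being that $[J]\in\Pic(T|S)$ forces $T\otimes J$ free, hence $T\otimes M_{\mathrm e_J}$ free and $T\otimes\End_S(M_{\mathrm e_J})$ a matrix algebra over $T$, so that $\mathrm{cpr}_{\mathrm{fr}}([\mathrm e_J])$ is defined and vanishes; the converse direction reconstructs from a framed extension $\mathrm e$ a rank-one framed module $J$ with $\dDelta|[J]=[\mathrm e]$ via the semi-linear action used there. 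For $\ker(\res_{\mathrm{fr}})\subseteq\im(\mathrm{cpr}_{\mathrm{fr}})$ I would follow the long argument at $\mathrm{EB}(S,Q)$ in Theorem~\ref{twelvetw}: starting from a framed $Q$-equivariant $(A,\tau)$ whose associated normal algebra is trivial in $\mathrm{XB}_{\mathrm{fr}}$, produce framed split algebras realizing a relative Brauer equivalence, lift the relevant $Q$-normal structure to an equivariant one by Theorem~\ref{fouro}, apply Lemma~\ref{8.00} to deduce the equivariant structures are induced, and finally Proposition~\ref{diagact} to exhibit $(A,\tau)$ as a framed $\mathrm{cpr}_{\mathrm{fr}}$-image.

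The main obstacle will be ensuring that every module and algebra produced in this transcription of the absolute proof actually satisfies the framing condition, since the relative Brauer equivalences defining $\mathrm{XB}_{\mathrm{fr}}$ and $\mathrm{EB}_{\mathrm{fr}}$ restrict to framed modules only. Concretely, in the $\mathrm{EB}_{\mathrm{fr}}$ step one must check that the group extensions $\mathrm e,\mathrm e_1,\mathrm e_2$ and their crossed products inherit framings compatibly, and that the relation $[\mathrm e]+[\mathrm e_1]=[\mathrm e_2]$ in $\mathrm H^2(Q,\mathrm U(S))$ survives the passage to the framed groups; the closure properties recorded in the first paragraph are precisely what make these checks routine rather than delicate. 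Naturality on $\mathcat{Change}$ follows, as in Theorems~\ref{twelvetw}, \ref{11.3}, and \ref{12.4}, from the naturality of each constituent homomorphism, and I would leave this verification to the reader.
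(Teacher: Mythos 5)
Your proposal is correct and follows essentially the same route as the paper: the paper's proof likewise transcribes the argument for Theorem~\ref{twelvetw}, verifying exactness only at $\mathrm H^2(Q,\mathrm U(S))$, $\mathrm{EB}_{\mathrm{fr}}(T|S;G,Q)$, and $\mathrm{XB}_{\mathrm{fr}}(T|S;G,Q)$, with the same salient modification that $[J]\in\Pic(T|S)^Q$ forces $T\otimes M_{\mathrm e_J}$ and hence $T\otimes\Hom_S(J,M_{\mathrm e_J})$ to be free over $T$, and that conversely the module $J$ recovered from a framed equivariant trivialization lies in $\Pic(T|S)$. The closure properties you record in your first paragraph are exactly the routine checks the paper leaves implicit.
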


\begin{proof} 
Essentially the same reasoning as that for 
\cref{twelvetw}
establishes this theorem as well. We  explain only the requisite salient 
modifications.

\noindent
{\em Exactness at\/} $\mathrm{XB}_{\mathrm{fr}}(T|S;G,Q)$: This follows again 
from 
\cref{elevo}
or \cref{fouro}.

\noindent{\em Exactness at\/} $\mathrm H^2(Q,\mathrm U(S))$:
Let $J$ represent a class in $(\Pic (T|S))^Q$, 
and proceed as in the proof of 
the exactness at $\mathrm H^2(Q,\mathrm U(S))$ in
\cref{twelvetw}.
Now $T \otimes J$ is free as a $T$-module and, 
with reference to the associated group extension $\mathrm e_J$, 
cf. \eqref{eJ},
by construction,
$M_{\mathrm e_J}$ is free as an $S$-module whence
$T \otimes M_{\mathrm e_J}$ is free as a $T$-module.
Hence 
\[
T \otimes \Hom_S(J,M_{\mathrm e_J})\cong 
\Hom_T(T \otimes J,T\otimes M_{\mathrm e_J})
\]
is free as a $T$-module.
Consequently $(\End_S(M_{\mathrm e_J}),\tau_{\mathrm e_J})$
represents zero in $\mathrm{EB}_{\mathrm{fr}}(T|S;G,Q)$.

Conversely, let
$\mathrm e \colon \mathrm U(S) \rightarrowtail \Ggamma 
\twoheadrightarrow Q$
be a group extension, 
and proceed as in the proof of 
the exactness at $\mathrm H^2(Q,\mathrm U(S))$ in
\cref{twelvetw}.
Thus suppose that $(\End_S(M_{\mathrm e}), \tau_{\mathrm e})$ represents
zero in $\mathrm{EB}_{\mathrm{fr}}(T|S;G,Q)$. 
Then there are 
$S^tQ$-modules
 $M_1$ and $M_2$
whose underlying $S$-modules
are faithful and finitely generated projective 
such that the following hold, where we denote by
${\tau_1\colon Q \to \Aut(\End_S(M_1))}$
and
${\tau_2\colon Q \to \Aut(\End_S(M_2))}$
 the associated trivially induced
$Q$-equivariant structures:
The algebras
$
(\End_S(M_{\mathrm e}),\tau_{\mathrm e})\otimes 
(\End_S(M_1),\tau_1)
$
and
$(\End_S(M_2),\tau_2) $
are isomorphic 
as $Q$-equivariant $S$-algebras and, furthermore, the $T$-modules
 $T\otimes M_1$ and $T\otimes M_2$ are free as $T$-modules.
Consequently the 
$T$-module $T \otimes S$ arising
from the finitely generated and projective  rank one $S$-module
 $J 
= \Hom_{\End_S(M_{\mathrm e}\otimes M_1)}(M_{\mathrm e}\otimes M_1,M_2)$
is free of rank one
whence
$[J]\in \Pic (T|S)$.
The group extension  $\mathrm e_J$, cf. 
\eqref{eJ},
is now defined relative to $J$,  
whence $[J]\in (\Pic (T|S))^Q$,
and the $\Ggamma$-action on $J$
induces a homomorphism $\Ggamma \to \Aut(J,Q)$
which yields
a congruence
$(1,\cdot,1) \colon {\mathrm e} \to \mathrm e_J $ of group extensions,
and this congruence
entails that
$\dDelta [J] = [\mathrm e]\in \mathrm H^2(Q,\mathrm U(S))$.

\noindent{\em Exactness at $\mathrm{EB}_{\mathrm{fr}}(T|S;G,Q)$\/}:
The reasoning in the proof of \cref{twelvetw}
which shows that
the composite $\res \circ \,\mathrm{cpr}$ is
zero shows as well that the composite 
$\res_{\mathrm{fr}} \circ \,\mathrm{cpr}_{\mathrm{fr}}$ is zero.

To show  that 
$\ker (\res_{\mathrm{fr}}) \subset \im (\mathrm{cpr}_{\mathrm{fr}})$,
let $(A,\tau )$ 
be a $Q$-equivariant Azumaya $S$-algebra
representing a member of $\mathrm{EB}_{\mathrm{fr}}(T|S;G,Q)$, 
and suppose that
the class 
of its associated $Q$-normal algebra
$(A,\sigma_\tau)$
goes to zero in $\mathrm{XB}_{\mathrm{fr}}(T|S;G,Q)$. 
As in the proof of 
the exactness at $\mathrm{EB}(S,Q)$ in
\cref{twelvetw},
there are two induced $Q$-equivariant split algebras
$(\End_S(M_1),\tau_1)$ and $(\End_S(M_2),\tau_2)$
over faithful finitely generated projective $S$-modules
$M_1$ and $M_2$, respectively,
such that 
$(A,\tau) \otimes (\End_S(M_1),\tau_1)$ and $(\End_S(M_2),\tau_2)$
are isomorphic as $Q$-equivariant central $S$-algebras 
but now we may furthermore take 
$M_1$ and $M_2$ to have the property that
the $T$-modules $T \otimes M_1$ and $T \otimes M_2$ are free of finite rank.
Essentially the same reasoning 
as that in the proof of
the exactness at $\mathrm{EB}(S,Q)$ in
\cref{twelvetw}
yields a group extension
$\mathrm e\colon \mathrm U(S)\rightarrowtail \Ggamma \twoheadrightarrow Q$
such that
\[
{\mathrm{cpr}_{\mathrm{fr}}([\mathrm e]) = [(\End_S(M_{\mathrm e}),
\tau_{\mathrm e})]= [(A,\tau)]  \in 
\mathrm {EB}_{\mathrm{fr}}(T|S;G,Q)}. \qedhere
\]
\end{proof}

Consider a
$Q$-normal Galois
extension  $T|S$ of commutative rings, with structure extension
$\mathrm {\mathrm e}_{(T|S)} \colon
N\rightarrowtail G \stackrel{\pi_Q}\twoheadrightarrow Q$
and structure homomorphism $\kappa_G\colon G \to \Aut^S(T)$, cf. Section 
{\rm \ref{normalr}}  above, and
take the morphism $(f,\varphi)$ to be
the morphism
$(i,\pi_Q)\colon(S,Q,\kappaQ) \longrightarrow (T,G,\llambda)$
in $\mathcat{Change}$ associated to that
$Q$-normal Galois extension, cf. \eqref{mqng}.

\begin{thm}
\label{14.44}
Suppose that the group $Q$ is finite. Then the extension
\begin{equation}
\begin{aligned}
0\longrightarrow 
&\mathrm H^1(Q,\mathrm U(S)) 
\stackrel{j_{\substack{\mbox{\tiny{$\mathcat{Pic}_{S,Q}$}}}}|}
\longrightarrow
\mathrm{EPic} (T|S,Q) 
\stackrel{\omu_{\substack{\mbox{\tiny{$\mathcat{Pic}_{S,Q}$}}}}|}
\longrightarrow 
(\Pic (T|S))^Q
\stackrel{\ome_{\substack{\mbox{\tiny{$\mathcat{Pic}_{S,Q}$}}}}|}
\longrightarrow 
\mathrm H^2(Q,\mathrm U(S))
\\
\stackrel{\mathrm{cpr}_{\mathrm{fr}}} \longrightarrow
&\mathrm{EB}_{\mathrm{fr}}(T|S;G,Q) 
\stackrel{\res_{\mathrm{fr}}}\longrightarrow
\mathrm{XB}_{\mathrm{fr}}(T|S;G,Q) 
\stackrel{t_{\mathrm{fr}}}\longrightarrow \mathrm H^3(Q,\mathrm U(S))
\stackrel{\inf} \longrightarrow  \mathrm H^3(G,\mathrm U(T))
\end{aligned}
\label{CCCCC}
\end{equation}
of the exact sequence {\rm \eqref{ldesf}}
is defined and yields an eight term exact sequence that is natural in terms of
the data.
\end{thm}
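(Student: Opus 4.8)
The plan is to follow the pattern of the proof of \cref{12.4}, assembling the eight-term sequence \eqref{CCCCC} from two already-established pieces: the seven-term exact sequence \eqref{CCCC} of \cref{CTC}, and the exactness of the three-term tail
\[
\mathrm{XB}_{\mathrm{fr}}(T|S;G,Q) \stackrel{t_{\mathrm{fr}}}{\longrightarrow} \mathrm H^3(Q,\mathrm U(S)) \stackrel{\inf}{\longrightarrow} \mathrm H^3(G,\mathrm U(T)),
\]
which is the $\mathrm{fr}$-analogue of \cref{9.2} (equivalently of \cref{11.3}). Since \eqref{CCCC} and \eqref{CCCCC} share their first six terms and the corresponding maps, and \eqref{CCCC} already supplies exactness at every term up to and including $\mathrm{XB}_{\mathrm{fr}}(T|S;G,Q)$, the only thing left to verify for \eqref{CCCCC} is that $\inf$ is defined on the tail and that the sequence is exact at the terminal copy of $\mathrm H^3(Q,\mathrm U(S))$, i.e.\ that $\im(t_{\mathrm{fr}}) = \ker(\inf)$.

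First I would specialize \cref{CTC} to the morphism $(i,\pi_Q)\colon(S,Q,\kappaQ) \to (T,G,\llambda)$ of \eqref{mqng} attached to the given $Q$-normal Galois extension; here $\pi_Q\colon G \to Q$ is surjective precisely because $T|S$ is $Q$-normal, so the hypotheses of \cref{CTC} are met, and $\inf\colon \mathrm H^3(Q,\mathrm U(S)) \to \mathrm H^3(G,\mathrm U(T))$ is the inflation induced functorially by $(i,\pi_Q)$ on the change of actions category. This delivers all the maps of \eqref{CCCCC} and all of its exactness assertions except at the terminal $\mathrm H^3(Q,\mathrm U(S))$.

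For the inclusion $\im(t_{\mathrm{fr}}) \subseteq \ker(\inf)$, I would use that every class of $\mathrm{XB}_{\mathrm{fr}}(T|S;G,Q)$ maps, under \eqref{can53}, into $\mathrm{XB}(T|S;G,Q) = \ker(\mathrm{XB}(S,Q) \to \mathrm{XB}(T,G))$, and that the Remark following \eqref{can53} exhibits $t_{\mathrm{fr}}$ as the composite of \eqref{can53} with $t$; since $\inf \circ t = 0$ by \cref{9.2}, one gets $\inf \circ t_{\mathrm{fr}} = 0$. For the reverse inclusion $\ker(\inf) \subseteq \im(t_{\mathrm{fr}})$, let $k \in \mathrm H^3(Q,\mathrm U(S))$ with $\inf(k) = 0$, i.e.\ $k$ split in $T|S$. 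By \cref{normalcrossed}, $k = [\mathrm e_{(A_\mathrm e , \sigma_\psi)}]$ is the Teich\-m\"uller class of some crossed pair algebra $(A_\mathrm e , \sigma_\psi)$, with $A_\mathrm e = (T,N,\mathrm e,\vartheta)$ a crossed product of $T$ by the finite Galois group $N=\Aut(T|S)$. The essential point is that this crossed pair algebra represents a class in the \emph{smaller} group $\mathrm{XB}_{\mathrm{fr}}(T|S;G,Q)$: applying \cref{tpf}(ix) with $T,S,N$ in the roles of $S,R,Q$ yields $T \otimes_S A_\mathrm e^{\mathrm{op}} \cong {}_T\End(\BbB)$, where $\BbB$ is the \emph{free} $T$-module underlying the crossed product, so $T \otimes A_\mathrm e$ is a matrix algebra over $T$. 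Hence $[(A_\mathrm e ,\sigma_\psi)] \in \mathrm{XB}_{\mathrm{fr}}(T|S;G,Q)$ and $t_{\mathrm{fr}}[(A_\mathrm e ,\sigma_\psi)] = k$, giving $k \in \im(t_{\mathrm{fr}})$.

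Naturality in the data would be inherited verbatim from the naturality assertions of \cref{CTC}, \cref{9.2}, and the inflation map, and I expect it to be routine. The main obstacle is exactly the observation that crossed pair algebras lie in $\mathrm{XB}_{\mathrm{fr}}$ rather than merely in $\mathrm{XB}$: this is the one place where the $\mathrm{fr}$-version genuinely departs from the naive relative theory underlying \cref{12.4}, and it is what forces the appeal to the freeness of $\BbB$ over $T$ in \cref{tpf}(ix); verifying that the relative Brauer equivalence (using modules $M$ with $T\otimes M$ free) is respected throughout is the delicate bookkeeping to watch.
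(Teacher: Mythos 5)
Your proposal is correct and follows essentially the route the paper itself takes: the paper's proof simply says "essentially the same reasoning as for Theorem~\ref{12.4}" (i.e.\ splice the seven-term sequence of Theorem~\ref{CTC} with the exactness of the three-term tail at the terminal $\mathrm H^3(Q,\mathrm U(S))$, the latter coming from Theorem~\ref{normalcrossed}), and your write-up supplies exactly the detail the paper leaves to the reader, namely that a crossed pair algebra lies in $\mathrm{XB}_{\mathrm{fr}}(T|S;G,Q)$ because Proposition~\ref{tpf}(ix) exhibits $T\otimes A_{\mathrm e}$ as a matrix algebra over $T$ --- which is precisely the content of the lift \eqref{14.2} stated after the theorem.
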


\begin{proof} Essentially the same reasoning as that for Theorem \ref{12.4}
establishes this theorem as well. We leave the details to the reader.
\end{proof}

The homomorphism \eqref{13.2} now lifts to a homomorphism
\begin{equation}
\label{14.2}
\mathrm{Xpext} (G,N;\mathrm U(T)) \longrightarrow \mathrm{XB}_{\mathrm{fr}}(T|S;G,Q) 
\end{equation}
such that  \eqref{13.2} may be written as the composite
\begin{equation}
\mathrm{Xpext} (G,N;\mathrm U(T)) \longrightarrow \mathrm{XB}_{\mathrm{fr}}(T|S;G,Q)
\longrightarrow  \mathrm{XB}(T|S;G,Q)
\end{equation}
and, when $Q$ and hence $G$ is a finite group, 
the homomorphism \eqref{13.3}
lifts to a homomorphism
\begin{equation}
\label{14.3}
\mathrm H^2(G,\mathrm U(T)) \longrightarrow 
\mathrm{EB}_{\mathrm{fr}}(T|S;G,Q) 
\end{equation}
such that
 \eqref{13.3} may be written as the composite
\begin{equation*}
\mathrm H^2(G,\mathrm U(T)) \longrightarrow 
\mathrm{EB}_{\mathrm{fr}}(T|S;G,Q) 
\longrightarrow
\mathrm{EB}(T|S;G,Q).
\end{equation*}
Theorem \ref{eighttermcomp}, adjusted to the present circumstances,
takes the following form which, again, we spell out
without proof.

\begin{thm}
\label{CTCC}
The group  $Q$ being finite, the maps {\rm \eqref{14.2}},
{\rm \eqref{13.4}}, {\rm \eqref{13.5}},
and {\rm \eqref{14.3}}
are natural homomorphisms of abelian groups 
and induce a morphism {\rm \eqref{BBBBB}} $\to $ 
{\rm \eqref{CCCCC}}
of exact sequences.
\end{thm}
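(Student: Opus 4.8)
The plan is to present this, like its unframed predecessor \cref{eighttermcomp}, as a comparison of two eight term exact sequences that is checked square by square, organizing the genuinely new content around the two connecting homomorphisms. First I would record that the four vertical arrows are well defined natural homomorphisms of abelian groups. The maps \eqref{13.4} and \eqref{13.5} were constructed above as homomorphisms, \eqref{13.5} being in fact an isomorphism compatible with the $Q$-module structures and hence restricting to a homomorphism $\mathrm H^1(N,\mathrm U(T))^Q \to (\Pic(T|S))^Q$; the maps \eqref{14.2} and \eqref{14.3} are the lifts of \eqref{13.2} and \eqref{13.3} produced in the discussion preceding the theorem, the point being that the crossed pair algebra $(A_\mathrm e,\sigma_\psi)$ and the crossed product algebra attached to a class in $\mathrm H^2(G,\mathrm U(T))$ are split by $T$ and therefore represent members of the framed groups $\mathrm{XB}_{\mathrm{fr}}(T|S;G,Q)$ and $\mathrm{EB}_{\mathrm{fr}}(T|S;G,Q)$. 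Naturality on $\mathcat{Change}$ in each case follows from the functoriality on $\mathcat{Change}$ of all the constructions involved---the graded Picard and Brauer categories, group cohomology, and the crossed product and crossed pair algebra assignments---exactly as in \cref{eighttermcomp}.

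Next I would verify commutativity of the seven squares of the ladder \eqref{BBBBB} $\to$ \eqref{CCCCC}, the vertical arrows at the first, fourth, seventh and eighth terms being identities. The rightmost square, involving $\inf$ on $\mathrm H^3$, commutes trivially. The squares not straddling a connecting map commute by the very construction of the lifts: the square at $\mathrm H^2(G,\mathrm U(T))$ commutes because \eqref{14.3} was defined by the crossed product construction that produces $\mathrm{cpr}_{\mathrm{fr}}$ out of an inflated class, and the square joining \eqref{14.3} to \eqref{14.2} commutes because passing from a $2$-cocycle on $G$ through $j$ and then forming the crossed pair algebra yields, up to equivalence, the same $Q$-normal Azumaya $S$-algebra as forming the crossed product equivariant algebra and restricting its normal structure via $\res_{\mathrm{fr}}$. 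The two remaining squares, those straddling $\Delta$, carry the real content.

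The key square is the one at $\mathrm{Xpext}(G,N;\mathrm U(T))$, where I must show $\Delta = t_{\mathrm{fr}} \circ \eqref{14.2}$. This is precisely \cref{normalcrossed} together with \cref{6.3}, which identify, for a crossed pair $(\mathrm e,\psi)$, the crossed $2$-fold extension $\mathrm e_\psi$ of \eqref{epsi} representing $\Delta[(\mathrm e,\psi)]$ with the Teich\-m\"uller complex of the associated crossed pair algebra $(A_\mathrm e,\sigma_\psi)$; since \eqref{14.2} sends $(\mathrm e,\psi)$ to the class of $(A_\mathrm e,\sigma_\psi)$ in $\mathrm{XB}_{\mathrm{fr}}(T|S;G,Q)$ and $t_{\mathrm{fr}}$ computes its Teich\-m\"uller class, the two maps agree on the nose. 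The square at $\mathrm H^1(N,\mathrm U(T))^Q$ requires the identification of $\Delta\colon \mathrm H^1(N,\mathrm U(T))^Q \to \mathrm H^2(Q,\mathrm U(S))$ with $\dDelta| \circ \eqref{13.5}$ (the obstruction map of \eqref{ldesf}); under the classical isomorphism \eqref{13.5} both maps realize the $d_2$ differential of the Lyndon--Hochschild--Serre spectral sequence of $\mathrm e_{(T|S)}$ with coefficients in $\mathrm U(T)$, so they coincide.

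Finally I would note that all these verifications are compatible with the canonical homomorphisms \eqref{can52} and \eqref{can53} from the framed groups to their unframed counterparts, so that the ladder \eqref{BBBBB} $\to$ \eqref{CCCCC} lifts the ladder \eqref{BBBBB} $\to$ \eqref{AAAAA} already obtained in \cref{eighttermcomp}; this both furnishes a consistency check and lets the routine square chases be inherited from the unframed case. The main obstacle is the square at $\mathrm H^1(N,\mathrm U(T))^Q$, where one must trace the abstract definition of $\Delta$ through the spectral-sequence comparison to the obstruction map $\dDelta$, rather than reading off commutativity from a single explicit construction; everything else reduces to bookkeeping already carried out in \cref{normalcrossed}, \cref{CTC} and \cref{14.44}.
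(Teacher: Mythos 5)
Your proposal is correct and follows exactly the route the paper intends: the paper states this theorem explicitly ``without proof,'' as the framed analogue of \cref{eighttermcomp}, which itself is justified only by ``direct inspection'' of the squares, and your square-by-square verification—with the two $\Delta$-squares handled via \cref{6.3}/\cref{normalcrossed} and the spectral-sequence identification of $\dDelta$ with the transgression—is precisely that inspection carried out. The only place your argument is thinner than it could be is the square at $\mathrm H^1(N,\mathrm U(T))^Q$, but you correctly isolate it as the one nontrivial verification rather than glossing over it.
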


\subsubsection{The Morita equivalence approach}
\label{moritaa}

We define the  $Q$-{\em graded relative Brauer precategory
associated with the morphism\/} 
$(f,\varphi)$ in $\mathcat {Change}$
to be
the precategory $\mathpzc {PreB}_{T|S;G,Q}$ that has as its
 {\em objects\/} the
Azumaya $S$-algebras $A$ such that
$T \otimes A$ is a matrix algebra over $T$, 
 a {\em morphism\/} $([M],x)\colon A \to B$ in 
$\mathpzc {PreB}_{T|S;G,Q}$ {\em of grade\/} 
$x\in Q$ between two 
Azumaya algebras $A$ and $B$ in $\mathpzc B_{T|S;G,Q}$,
necessarily an isomorphism in $\mathpzc {PreB}_{T|S;G,Q}$, 
being a morphism in $\mathcat B_{S,Q}$, that is,
a pair
$([M],x)$ where $[M]$ is an isomorphism class of an invertible
$(B,A)$-bimodule $M$ of grade  $x\in Q$,
such that, furthermore, $T \otimes M$ is free as a $T$-module.
There is no reason for composition in the ambient category
 $\mathpzc B_{S,Q}$ to induce an operation of composition in
 $\mathpzc {PreB}_{T|S;G,Q}$ since,
given three Azumaya algebras $A$, $B$, $C$ 
in $\mathcat {PreB}_{T|S;G,Q}$
and morphisms
 $([{}_BM_A],x)\colon A \to B$
and $([{}_AM_C],x)\colon C\to A$ of grade $x \in Q$
in $\mathpzc {PreB}_{T|S;G,Q}$, while 
the composite 
$([{}_BM_A\otimes_A {}_AM_C],x)\colon C\to B$  
of grade $x\in Q$ in $\mathpzc B_{S,Q}$
is defined,
there is no reason for the
$(T \otimes B,T\otimes C)$-bimodule 
\[
T \otimes ({}_BM_A\otimes_A {}_AM_C) \cong
{}_{T\otimes B}(T \otimes M)_{T \otimes A}
\otimes_{(T\otimes A)} 
{}_{T \otimes A}(T\otimes M)_{T\otimes C}
\]
to be free as a $T$-module.
To overcome this difficulty,
we take
the  $Q$-{\em graded relative Brauer category
associated with the morphism\/} 
$(f,\varphi)$ in $\mathcat {Change}$
to be
the subcategory $\mathpzc B_{T|S;G,Q}$ 
of $\mathpzc B_{S,Q}$ 
generated by
 $\mathpzc {PreB}_{T|S;G,Q}$.
Thus a morphism  
in $\mathpzc B_{T|S;G,Q}$ 
of grade $x \in Q$ 
between two objects $A$ and $B$ of 
$\mathpzc B_{T|S;G,Q}$ is
a morphism  $([{}_BM_A],x)\colon A \to B$ 
in $\mathpzc B_{S,Q}$  
of grade $x \in Q$
such that there are objects
$A_1$,\ldots, $A_n$ of
$\mathpzc B_{T|S;G,Q}$ 
and morphisms 
$([{}_{A_{j+1}}M_{A_j}],x)\colon A_j \to A_{j+1}$ 
in $\mathpzc {PreB}_{T|S;G,Q}$
such that, when we write $A$ as $A_0$ and $B$ as $A_n$,
\begin{equation}
{}_BM_A \cong 
{}_{A_n}M_{A_{n-1}} \otimes_{A_{n-1}}
\ldots
\otimes_{A_2}
{}_{A_2}M_{A_1} \otimes_{A_1} {}_{A_1}M_{A_0} .
\label{chain}
\end{equation}
We then define composition, monoidal structure,
the operation of inverse, and the unit object
as in $\mathpzc B_{S,Q}$. 
The resulting category $\mathpzc B_{T|S;G,Q}$
is
a group-like stably $Q$-graded
symmetric monoidal category.
 Hence
 the category
$\mathpzc {Rep}(Q, \mathpzc B_{T|S;G,Q})$
is group-like and thence
\linebreak
$k\mathpzc {Rep}(Q, \mathpzc B_{T|S;G,Q})$
is an abelian group.
When the groups $G$ and $Q$ are trivial, 
that is, we consider merely the homomorphism $f \colon S \to T$
of commutative rings, the same construction 
yields a precategory $\mathcat {PreB}_{T|S}$ and, accordingly,
the corresponding group-like symmetric monoidal category
$\mathpzc B_{T|S}$ 
which we refer to as the
{\em relative Brauer category associated with the homomorphism\/}
$f\colon S \to T$ of commutative rings.
The category $\mathcat B_{T|S}$ has
$\mathrm U(\mathpzc{B}_{T|S})  
=\Pic(T|S)$ as its unit group,
is group-like,
and $k\mathcat B_{T|S}$ is therefore an abelian group.
The ring homomorphism $f\colon S \to T$ being a constituent of the morphism
$(f,\varphi)$ in $\mathcat{Change}$ having $\varphi$ surjective,
the category $\mathpzc{B}_{T|S;G,Q}$
has $\mathcat{Ker}(\mathpzc B_{T|S;G,Q})=\mathpzc B_{T|S}$
and $\mathrm U(\mathpzc{B}_{T|S;G,Q}) =\mathrm U(\mathpzc{B}_{T|S})  
=\Pic(T|S)$ as its unit group.

Given two objects $A$ and $B$ of 
$\mathpzc B_{T|S;G,Q}$
we {\em define,
with respect to the morphism $(f,\varphi)$
in\/} $\mathcat{Change}$,
 a {\em relative Morita equivalence 
of grade $x \in Q$
between $A$ and $B$}
to be a string of isomorphisms in
$\mathpzc {PreB}_{T|S;G,Q}$ 
of the kind \eqref{chain} above.
It is immediate that, as in the classical situation,
given two objects $A_1$ and $A_2$ of $\mathpzc B_{T|S}$,
a relative Brauer equivalence
\[
A_1\otimes \End_S(M_1) \cong A_2\otimes \End_S(M_2)
\] 
between  $A_1$ and $A_2$
induces a string 
\[
A_1\simeq A_1\otimes \End_S(M_1) \cong A_2\otimes \End_S(M_2)
\simeq A_2
\]
of isomorphisms in
$\mathpzc {PreB}_{T|S}$ 
and hence
a relative Morita equivalence between $A_1$ and $A_2$
(of grade $e\in Q$)
whence
the obvious association induces a homomorphism
\begin{equation}
\mathrm B_{\mathrm{fr}}(T|S) \longrightarrow k\mathpzc B_{T|S}
\label{mfr}
\end{equation}
of abelian groups, necessarily surjective.
Moreover, since
$\mathcat{Ker}(\mathpzc B_{T|S;G,Q})$
is stably graded, 
$k\mathpzc B_{T|S}=k\mathcat{Ker}(\mathpzc B_{T|S;G,Q})$ 
acquires a $Q$-module structure,
and the homomorphism \eqref{mfr} is
a morphism of $Q$-modules.

\begin{prop}
The homomorphism {\rm \eqref{mfr}} is an isomorphism, that is,
relative Brauer equivalence
is equivalent to relative Morita equivalence.
\end{prop}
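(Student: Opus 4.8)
The plan is to show that the surjective homomorphism \eqref{mfr} is injective by unwinding what relative Morita equivalence means at the level of the monoidal category $\mathpzc B_{T|S}$ and comparing it with the relation defining $\mathrm B_{\mathrm{fr}}(T|S)$. Recall that \eqref{mfr} is already established to be surjective, so everything reduces to injectivity. The key observation is that a morphism of grade $e$ in $\mathpzc B_{T|S}$ between two objects $A_1$ and $A_2$ is, by the very definition of $\mathpzc B_{T|S}$ as the subcategory of $\mathpzc B_{S,Q}$ generated by $\mathpzc {PreB}_{T|S;G,Q}$, a \emph{string} of isomorphisms of the kind \eqref{chain}, each member of which is an invertible bimodule $M_j$ of grade $e$ with $T \otimes M_j$ free over $T$. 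Two objects become identified in $k\mathpzc B_{T|S}$ precisely when such a chain of grade-$e$ isomorphisms exists after tensoring each with suitable split objects.

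First I would make the link between an isomorphism $[M]\colon A_1 \to A_2$ in the Brauer category and a Morita equivalence explicit, exactly as recalled in Subsection \ref{bcacr}: an invertible $(A_2,A_1)$-bimodule $M$ is the same datum as a Morita equivalence, which in the Azumaya setting amounts to an isomorphism $A_1 \otimes \End_S(M_1) \cong A_2 \otimes \End_S(M_2)$ of $S$-algebras for appropriate faithful finitely generated projective $S$-modules $M_1, M_2$. The extra hypothesis carried by a morphism in $\mathpzc {PreB}_{T|S;G,Q}$ is that $T \otimes M$ is \emph{free} over $T$; I would translate this into the statement that the modules $M_1, M_2$ occurring in the Morita equivalence can be chosen so that $T \otimes M_1$ and $T \otimes M_2$ are free $T$-modules. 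This is the precise hypothesis appearing in the definition of relative Brauer equivalence in Subsection \ref{standa}, where one demands that $T \otimes M_1$ and $T \otimes M_2$ be free as $T$-modules. Thus a single grade-$e$ isomorphism in $\mathpzc {PreB}_{T|S;G,Q}$ encodes exactly a relative Brauer equivalence in the sense of $\mathrm B_{\mathrm{fr}}(T|S)$.

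Next I would handle the passage from a single isomorphism to a \emph{string} of them as in \eqref{chain}: if $A_0, A_1, \dots, A_n$ are connected by grade-$e$ isomorphisms in $\mathpzc {PreB}_{T|S;G,Q}$, then by composing (tensoring) successive relative Brauer equivalences one obtains a single relative Brauer equivalence between $A_0$ and $A_n$, using that the tensor product of faithful finitely generated projective $S$-modules whose $T$-extensions are free again has free $T$-extension, and that $\End_S$ is compatible with tensor products as recorded in Subsection \ref{bcacr}. This shows that whenever $[A_1] = [A_2]$ in $k\mathpzc B_{T|S}$ the classes already agree in $\mathrm B_{\mathrm{fr}}(T|S)$, giving injectivity of \eqref{mfr}. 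I would then note that the displayed string
\[
A_1\simeq A_1\otimes \End_S(M_1) \cong A_2\otimes \End_S(M_2)\simeq A_2,
\]
appearing just before the statement, is precisely the witness that the relative Brauer equivalence produces such a chain in $\mathpzc {PreB}_{T|S}$, so that surjectivity and injectivity are two sides of the same translation.

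The main obstacle I anticipate is the freeness bookkeeping across the string: one must ensure that the intermediate bimodules $M_j$ in \eqref{chain}, and the endomorphism algebras of the modules realizing the Morita equivalences, all remain within the class of objects whose $T$-extension is free (rather than merely projective), and that composition does not destroy this property. This is where the distinction between $\mathpzc {PreB}_{T|S;G,Q}$ and $\mathpzc B_{T|S;G,Q}$ genuinely matters: the precategory fails to be closed under composition for exactly this reason, so the argument must respect the generated-subcategory structure and only invoke freeness at the level of individual generators. Once this is tracked carefully, the remaining verifications are the routine Morita-theoretic identifications already available from Subsection \ref{bcacr}, and I would leave those to the reader in the style of the surrounding text.
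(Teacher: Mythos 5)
Your overall strategy matches the paper's: surjectivity is already known, so everything reduces to showing that a single grade-$e$ morphism $[M]\colon A\to B$ in $\mathpzc{PreB}_{T|S}$ forces $A$ and $B$ to be relatively Brauer equivalent, after which the extension to strings of the form \eqref{chain} follows from transitivity. Your handling of the chain is fine. The problem is that the step you defer as ``routine Morita-theoretic identifications'' is precisely where the content of the proposition lives, and you never actually produce the progenerators witnessing the relative Brauer equivalence. You assert that the modules $M_1, M_2$ in an equivalence $A_1\otimes\End_S(M_1)\cong A_2\otimes\End_S(M_2)$ ``can be chosen'' with free $T$-extensions, but the whole point is to exhibit such a choice starting from the single datum that $T\otimes M$ is free; without that, the ``freeness bookkeeping'' you correctly flag as the main obstacle is not resolved.

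The paper resolves it with one explicit computation: from $B^{\mathrm{op}}\cong {}_A\End(M)$ one gets $\End_S(M)\cong A\otimes B^{\mathrm{op}}$, hence
\begin{equation*}
\End_S(M)\otimes B \cong A\otimes B^{\mathrm{op}}\otimes B \cong A\otimes \End_S(B),
\end{equation*}
so the relative Brauer equivalence is realized by the specific progenerators $M$ (on the $B$ side) and $B$ itself viewed as an $S$-module (on the $A$ side). The first has free $T$-extension by the defining hypothesis on morphisms of $\mathpzc{PreB}_{T|S}$; the second does because $T\otimes B$ is a matrix algebra over $T$ and hence free as a $T$-module --- a small but essential observation absent from your write-up. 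You should supply this displayed identity and the remark about $T\otimes B$; with those two lines your argument closes and coincides with the paper's.
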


\begin{proof} The classical argument, suitably rephrased, carries over:
Let $A$ and $B$ be  two 
Azumaya $S$-algebras $A$ in $\mathpzc B_{T|S}$ and
consider 
 a {\em morphism\/} $[M]\colon A \to B$ in 
$\mathpzc {PreB}_{T|S}$.
We must show that $A$ and $B$ are relatively Brauer equivalent.
Now $B^{\mathrm{op}}\cong {}_A\End(M)$ (the algebra of
left $A$-endomorphisms of $M$), and
\[
\End_S(M)\cong A \otimes ({}_A\End(M)) \cong A \otimes B^{\mathrm{op}}
\]
whence
\[
\End_S(M)\otimes B 
\cong A \otimes B^{\mathrm{op}}\otimes B 
\cong 
A \otimes \End_S(B).
\]
Since $T \otimes M$ and $T\otimes B$ are free as
$T$-modules,  $A$ and $B$ are relatively Brauer equivalent.
\end{proof}

With $N$, $T$, $S$ substituted for, respectively,
$Q$, $S$, $R$, 
the standard homomorphism
\eqref{ordinarycp}
 from $\mathrm H^2(N,U)$ to $\mathrm B(T|S)$,
necessarily a morphism of $Q$-modules,
lifts to a morphism
\begin{equation}
\mathrm H^2(N,U)) \longrightarrow \mathrm B_{\mathrm{fr}}(T|S)
\label{standardQ}
\end{equation}
of $Q$-modules.
By construction, then, 
the assignment to an automorphism in $\mathcat B_{T|S;G,Q}$
of an  Azumaya algebra $A$ in $\mathcat B_{T|S;G,Q}$ of its grade in $Q$
yields a homomorphism 
\begin{equation*}
\pi^{\substack{\mbox{\tiny{$\Aut_{\mathcat B_{T|S;G,Q}}(A)$}}}}
\colon \Aut_{\mathcat B_{T|S;G,Q}}(A) \longrightarrow Q
\end{equation*}
which
is surjective if and only if the Brauer class
$[A]\in \mathrm B_{\mathrm{fr}}(T|S)$ of $A$ in $\mathrm B_{\mathrm{fr}}(T|S)\cong k\mathpzc B_{T|S}$ 
is fixed under $Q$, and
the group $\Aut_{\mathcat B_{T|S;G,Q}}(A)$ associated to 
an Azumaya $S$-algebra 
$A$ in $\mathcat B_{T|S}$ 
whose Brauer class $[A]\in \mathrm B_{\mathrm{fr}}(T|S)$ 
is fixed under $Q$ fits into a group 
extension of the kind \eqref{eUCC},
viz.
\begin{equation}
\mathrm e^{\substack{\mbox{\tiny{$\Pic(T|S)$}}}}_A\colon
1
\longrightarrow
\Pic (T|S)
\longrightarrow
\Aut_{\mathcat B_{T|S;G,Q}}(A)
\stackrel{\pi^{\substack{\mbox{\tiny{$\Aut_{\mathcat B_{T|S;G,Q}}(A)$}}}}}
\longrightarrow
Q
\longrightarrow
1,
\label{eApicf}
\end{equation}
with abelian kernel in such a way that the assignment to $A$ of 
$\mathrm e^{\substack{\mbox{\tiny{$\Pic(T|S)$}}}}_A$
yields a homomorphism 
\begin{equation}
\ome_{\mathcat B_{T|S;G,Q}}\colon \mathrm H^0(Q,\mathrm B_{\mathrm{fr}}(T|S)) \longrightarrow \mathrm H^2(Q,\Pic(T|S)).
\label{d2picf}
\end{equation}
The sequence \eqref{fittwelve}
 now takes the  form
\begin{equation}
\scalefont{0.8}{
0 \longrightarrow \mathrm H^1(Q, \Pic (T|S)) 
\stackrel{j_{\mathpzc B_{T|S;G,Q}}}\longrightarrow 
k\mathcat{Rep}(Q,\mathcat B_{T|S;G,Q})
\stackrel{\omu_{\mathpzc B_{T|S;G,Q}}}\longrightarrow \mathrm B_{\mathrm{fr}}(T|S)^Q
\stackrel{\ome_{\mathpzc B_{T|S;G,Q}}}
\longrightarrow \mathrm H^2(Q, \Pic (T|S)) 
}
\label{FW2}
\end{equation}
and 
is an exact sequence of abelian groups 
since the category $\mathcat B_{T|S;G,Q}$ is group-like. 
Furthermore, the association that defines the homomorphism
\eqref{theta}
yields an injective homomorphism
\begin{equation}
\theta_{\mathrm{fr}}\colon
\mathrm {XB}_{\mathrm{fr}}(T|S;G,Q) \longrightarrow
k\mathcat{Rep}(Q,\mathcat B_{T|S;G,Q})
\label{thetaf}
\end{equation} 
in such a way that the diagram
\begin{equation*}
\begin{CD}
\mathrm {XB}_{\mathrm{fr}}(T|S;G,Q) 
@>{\theta_{\mathrm{fr}}}>>
k\mathcat{Rep}(Q,\mathcat B_{T|S;G,Q})
\\
@VVV
@VVV
\\
\mathrm {XB}(S,Q) 
@>{\theta}>>
k\mathcat{Rep}(Q,\mathcat B_{S,Q})
\end{CD}
\end{equation*} 
is commutative, the unlabeled vertical arrows being the obvious maps,
and the argument for 
\cref{tentwentytw}~(iii)],
adjusted to the present situation, shows that
if $Q$ (and hence $G$) is a finite group,
the homomorphism 
$\theta_{\mathrm{fr}}$ is surjective
and hence an isomorphism of abelian groups.
Thus when the group $Q$ is finite,
the exact sequence \eqref{FW2}
is available with 
$\mathrm {XB}_{\mathrm{fr}}(T|S;G,Q)$ 
substituted for
$k\mathcat{Rep}(Q,\mathcat B_{T|S;G,Q})$.

Consider a $Q$-normal Galois
extension  $T|S$ of commutative rings, with structure extension
$\mathrm {\mathrm e}_{(T|S)} \colon
N\rightarrowtail G \stackrel{\pi_Q}\twoheadrightarrow Q$
and structure homomorphism $\kappa_G\colon G \to \Aut^S(T)$, cf. Section 
{\rm \ref{normalr}}  above, and
take the morphism $(f,\varphi)$ to be
the morphism
$(i,\pi_Q)\colon(S,Q,\kappaQ) \longrightarrow (T,G,\llambda)$
in $\mathcat{Change}$ associated to that
$Q$-normal Galois extension, cf. \ref{mqng}.
Comparison of the exact sequences \eqref{FW}
and \eqref{FW2} with
\cite[(1.9)]{MR597986} yields the following result, which
we spell out without proof.

\begin{thm}
\label{withoutp}
Write $U=\mathrm U(T)$.
The various groups and homomorphisms fit into a commutative diagram
\begin{equation*}
\scalefont{0.8}{
\xymatrix{
0 \ar[r] 
&\mathrm H^1(Q,\mathrm H^1(N,U)) \ar[d]^{\cong}  \ar[r]
&\mathrm{Xpext}(G,N;U) \ar[d]  \ar[r]
&\mathrm H^0(Q,\mathrm H^2(N,U))\ar[d]  \ar[r]^{d_2} 
&\mathrm H^2(Q,\mathrm H^1(N,U)) \ar[d]^{\cong}
\\ 
0 \ar[r] 
&\mathrm H^1(Q,\Pic (T|S))  \ar[d]\ar[r]^{j}  
&k\mathcat{Rep}(Q,\mathcat B_{T|S;G,Q}) \ar[d] \ar[r]^{\mumu} 
&\mathrm H^0(Q, \mathrm B_{\mathrm{fr}}(T|S)) \ar[d] \ar[r]^{\ome}
&\mathrm H^2(Q,\Pic(T|S))  \ar[d] 
\\ 
0 \ar[r] 
&\mathrm H^1(Q,\Pic (S)) \ar[r]^{j}  
&k\mathcat{Rep}(Q,\mathcat B_{S,Q}) \ar[r]^{\mumu} 
&\mathrm H^0(Q,\mathrm B(S))\ar[r]^{\ome}
&\mathrm H^2(Q,\Pic(S))
}
}
\end{equation*}
with exact rows;
here the top row is the exact sequence
{\rm \cite[(1.9)]{MR597986}}, the middle row the sequence 
{\rm \eqref{FW2}},
 the bottom row  the exact sequence
{\rm \eqref{FW}},
the unlabeled arrow from
$\mathrm H^0(Q,\mathrm H^2(N,U))$ to $\mathrm H^0(Q,\mathrm B_{\mathrm{fr}}(T|S))$
is induced by the 
homomorphism
\eqref{standardQ}, 
and the other unlabeled arrows are either the obvious ones
or have been introduced before.
If, furthermore, the group $Q$ is a finite group, the above diagram
is available with 
$\mathrm {XB}_{\mathrm{fr}}(T|S;G,Q)$
substituted for
$k \mathcat{Rep}(Q, \mathcat B_{T|S;G,Q})$
and 
$\mathrm {XB}(S,Q)$ for
$k \mathcat{Rep}(Q, \mathcat B_{S,Q})$.
\end{thm}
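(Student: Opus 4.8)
The plan is to realize the asserted diagram by identifying its three rows with the three named exact sequences, constructing the vertical homomorphisms, and then checking commutativity one square at a time. The exactness of the rows requires nothing new: the top row is \cite[(1.9)]{MR597986}, while the middle and bottom rows are the exact sequences \eqref{FW2} and \eqref{FW}, both of which are instances of \eqref{fittwelve} for the group-like stably $Q$-graded symmetric monoidal categories $\mathcat B_{T|S;G,Q}$ and $\mathcat B_{S,Q}$. Thus the whole content lies in the construction of the vertical maps together with the verification of commutativity.

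First I would set up the vertical homomorphisms. The two outer verticals between the top and middle rows are obtained by applying $\mathrm H^1(Q,-)$ and $\mathrm H^2(Q,-)$ to the $Q$-equivariant isomorphism \eqref{13.5}, namely $\mathrm H^1(N,\mathrm U(T)) \to \Pic(T|S)$; since that isomorphism respects the $Q$-module structures, the induced maps on $Q$-cohomology are again isomorphisms. The second vertical (top to middle) is the composite of the crossed-pair-algebra homomorphism \eqref{14.2}, from $\mathrm{Xpext}(G,N;\mathrm U(T))$ to $\mathrm{XB}_{\mathrm{fr}}(T|S;G,Q)$, with the injection $\theta_{\mathrm{fr}}$ of \eqref{thetaf}; the third vertical (top to middle) is $\mathrm H^0(Q,-)$ applied to the morphism of $Q$-modules \eqref{standardQ}, from $\mathrm H^2(N,\mathrm U(T))$ to $\mathrm B_{\mathrm{fr}}(T|S)$. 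The verticals from the middle to the bottom row are the canonical comparison maps induced by the inclusion of $\mathcat B_{T|S;G,Q}$ as a stably $Q$-graded symmetric monoidal subcategory of $\mathcat B_{S,Q}$, together with the compatible homomorphisms $\Pic(T|S) \hookrightarrow \Pic(S)$ and $\mathrm B_{\mathrm{fr}}(T|S) \to \mathrm B(S)$.

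Next I would dispatch the routine squares. The three squares between the middle and bottom rows commute because the inclusion $\mathcat B_{T|S;G,Q} \hookrightarrow \mathcat B_{S,Q}$ is a morphism of group-like stably $Q$-graded symmetric monoidal categories carrying $\Pic(T|S)$ to $\Pic(S)$ on unit groups, whence the constructions $j$, $\mumu$, $\ome$ entering \eqref{fittwelve} are transformed naturally. Among the top-to-middle squares, the left one commutes because, under \eqref{13.5}, the image of $\mathrm H^1(Q,\mathrm H^1(N,\mathrm U(T)))$ under the crossed-pair construction consists exactly of the classes of split algebras with induced normal structure parametrized by a derivation into $\Pic(T|S)$; this is the content of \cref{10.10}, and it identifies the top map with $j_{\mathcat B_{T|S;G,Q}}$. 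The middle square commutes because the underlying algebra of the crossed pair algebra $(A_{\mathrm e},\sigma_\psi)$ attached to a crossed pair $(\mathrm e,\psi)$ is the crossed product $(T,N,\mathrm e,\vartheta)$, whose Brauer class in $\mathrm B_{\mathrm{fr}}(T|S)$ is by construction precisely the image of $[\mathrm e]\in \mathrm H^0(Q,\mathrm H^2(N,\mathrm U(T)))$ under \eqref{standardQ}.

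The remaining right-hand square between the top and middle rows is where I expect the main obstacle to lie: it asserts that the spectral-sequence differential $d_2$ corresponds, under \eqref{13.5}, to the obstruction map $\ome_{\mathcat B_{T|S;G,Q}}$ of \eqref{d2picf}. The strategy is to compare the two group extensions that compute these maps. On the top side, $d_2[\mathrm e]$ is represented by the extension $\overline{\mathrm e}$ with kernel $\mathrm H^1(N,\mathrm U(T))$ occurring as the bottom row of \eqref{diag1}, the association $\mathrm e \mapsto \overline{\mathrm e}$ lifting $d_2$ by \cite[Theorem~1]{MR641328}; on the middle side, $\ome_{\mathcat B_{T|S;G,Q}}[A_{\mathrm e}]$ is represented by the extension $\mathrm e^{\Pic(T|S)}_{A_{\mathrm e}}$ of \eqref{eApicf} with kernel $\Pic(T|S)$. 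The morphism of crossed modules of \cref{6.1} identifies $\Aut_G(\mathrm e)$ with $\Aut_{\mathcat B_{T|S;G,Q}}(A_{\mathrm e})$ compatibly with the grade homomorphisms onto $Q$, while \eqref{13.5} identifies the two kernels; tracing both identifications through shows the two extensions congruent, so the square commutes. Finally, the supplementary assertion that for finite $Q$ one may replace $k\mathcat{Rep}(Q,\mathcat B_{T|S;G,Q})$ and $k\mathcat{Rep}(Q,\mathcat B_{S,Q})$ by $\mathrm{XB}_{\mathrm{fr}}(T|S;G,Q)$ and $\mathrm{XB}(S,Q)$ follows since $\theta_{\mathrm{fr}}$ and $\theta$ are then isomorphisms, cf. the argument for \cref{tentwentytw}(iii).
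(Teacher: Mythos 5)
Your proposal is correct and is precisely the ``comparison of the exact sequences'' that the paper invokes: the theorem is stated there explicitly without proof, and your square-by-square verification --- with the rows taken as given, the verticals built from \eqref{13.5}, \eqref{14.2} composed with $\theta_{\mathrm{fr}}$, \eqref{standardQ}, and the categorical inclusion, and the $d_2$-versus-$\ome$ square correctly singled out as the essential point and handled by comparing the extensions $\overline{\mathrm e}$ and $\mathrm e^{\Pic(T|S)}_{A_{\mathrm e}}$ --- supplies the intended argument. The only imprecision is that \cref{6.1} maps $\Aut_G(\mathrm e)$ into $\Aut(A_{\mathrm e},Q)$ rather than directly into $\Aut_{\mathcat B_{T|S;G,Q}}(A_{\mathrm e})$, so one must still compose with the homomorphism $\Theta$ of \eqref{Thetain} before comparing with the extension \eqref{eApicf}; this does not affect the argument.
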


\begin{rema}
The exact sequences \eqref{CCCCC}
and \eqref{FW2} 
are presumably related with an equivariant 
Amitsur cohomology 
spectral sequence of the kind
given in \cite[Sections 1 and 2]{MR0311701} and
\cite[Theorem 7.3 p.~61]{MR0195923} 
in the same way as the exact sequences \eqref{13.11}
and \cite[(1.9)]{MR597986} are related with the spectral sequence
associated with a group extension and a module over the extension group,
cf. also \cite{MR641328}.
\end{rema}

\section{Appendix}

As a service to the reader,
we recollect some more material
from the theory of stably graded
symmetric monoidal categories \cite{genbrauer},
\cite{MR0349804},
\cite{MR1803361}
and use it to illustrate some of the constructions in the present paper.

Recall that an
{\em $S$-progenerator\/} is
a faithful finitely generated projective $S$-module.
Given two $Q$-equivariant Azumaya $S$-algebras
$(A,\tau_A)$ and $(B,\tau_B)$,
a $(B,A,Q)$-{\em bimodule\/} $(M,\tau_M)$ 
is a $(B,A)$-{\em bimodule\/} $M$
together with
an $S^tQ$-module structure $\tau_M\colon Q \to \Aut(M)$ 
which is compatible with the 
$Q$-equivariant structures $\tau_A\colon Q \to \Aut(A)$ and 
$\tau_B\colon Q \to \Aut(B)$
in the sense that
\begin{equation}
{}^x(bya) ={}^x \!  b\, {}^x \! y \, {}^x \! a,\ x \in Q,
\, a \in A,\, b \in B.
\end{equation}

The object $(S,Q,\kappaQ)$ of the category
$\mathcat{Change}$ being given,
let $(T,G,\llambda)$ be another object of $\mathcat{Change}$,
and let $(f,\varphi)\colon (S,Q,\kappaQ) \to (T,G,\llambda)$ be a 
morphism in  $\mathcat{Change}$ having $\varphi\colon G \to Q$ 
surjective, cf. 
Subsection \ref{coa} above

\subsection{Examples of symmetric monoidal categories}
\label{ordinary}

\noindent
--- $\mathpzc{Mod}_S$: the category of $S$-modules, a symmetric monoidal 
   category under the operation of tensor product, with $S$ as unit object,
and $\mathrm U(\mathpzc{Mod}_S)=\mathrm U(S)$;

\noindent
--- $\mathpzc{Gen}_S$ \cite[\S 2 p.~17]{genbrauer}, \cite[p.~229]{MR0349804}, 
\cite[\S 2]{MR1803361}: the symmetric monoidal
subcategory of $\mathpzc{Mod}_S$, necessarily a groupoid,
whose objects are the {\em $S$-progenerators\/}, with
morphisms only the invertible ones,
having $S$ as its unit object
and $\mathrm U(\mathpzc{Gen}_S)=\mathrm U(S)$ as its unit group;

\noindent
---  $\mathpzc{Pic}_S$: the symmetric monoidal subcategory of
$\mathpzc{Gen}_S$, necessarily group-like,
of invertible modules,
written in \cite[\S 2 p.~17]{genbrauer}, \cite[\S 2]{MR1803361} as 
$\mathpzc{C}_R$, reproduced in 
Subsection \ref{piccat} above;

\noindent
---  $\mathpzc{Az}_S$: the symmetric monoidal subcategory of
$\mathpzc{Gen}_S$, necessarily a groupoid, 
having
the Azumaya $S$-algebras as objects,  
invertible algebra morphisms between Azumaya $S$-algebras
as morphisms, the ground ring
$S$ as its unit object, and unit group $\mathrm U(\mathpzc{Az}_S)$ trivial
\cite[\S 2 p.~18]{genbrauer}, \cite[p.~229]{MR0349804}, \cite[\S 2]{MR1803361};

\noindent
---  $\mathpzc{XAz}_S$: the quotient category of
$\mathpzc{Az}_S$, necessarily a groupoid, 
having the same objects
as $\mathpzc{Az}_S$, and having as  morphisms  
$A \to B$ between two objects $A$ and $B$
equivalence classes
of morphisms $h \colon A \to B$ 
in $\mathpzc{Az}_S$ under the equivalence relation
$h_1 \sim h_2 \colon A \to B$ if $h_1=h_2 \circ I_a$ for some
$a \in \mathrm U(A)$
\cite[\S 5 p.~43]{genbrauer},  
\cite[\S 2]{MR1803361}, where the notation $I_a$ refers to the inner 
automorphism of $A$ induced by $a\in \mathrm U(A)$; this category
has $S$ as its unit object, and its unit group
$\mathrm U(\mathpzc{XAz}_S)$ is trivial;

\noindent
--- 
$\mathpzc B_S$,  the {\em Brauer category\/}
of the commutative ring $S$, reproduced in 
Subsection \ref{bcacr} above;

\noindent
---  with respect to the ring homomorphism 
$f\colon S \to T$, with
the obvious interpretations,
the relative categories $\mathcat{Mod}_{T|S}$,
 $\mathcat{Gen}_{T|S}$,  $\mathcat{Pic}_{T|S}$,
 $\mathcat{Az}_{T|S}$,
 $\mathcat{XAz}_{T|S}$, taken as full subcategories of, respectively,
$\mathcat{Mod}_{S}$,
 $\mathcat{Gen}_{S}$,  $\mathcat{Pic}_{S}$,
 $\mathcat{Az}_{S}$,
 $\mathcat{XAz}_{S}$;

\noindent
--- $\mathcat{B}_{T|S}$, with respect to the ring homomorphism 
$f\colon S \to T$, the {\em relative Brauer category\/}, 
introduced in Subsection \ref{moritaa} above;

\noindent
--- 
$\mathpzc{EB}_{S,Q}$,
the equivariant Brauer category
$\mathpzc{EB}_{S,Q}$ of $S$ relative to the given
action 
of $Q$ on $S$,
written 
 as  $\mathpzc{B}(R,\Gamma)$
in \cite[\S 5 p.~41]{genbrauer} 
and \cite[\S 3]{MR1803361};
 its objects are the
$Q$-equivariant Azumaya algebras $(A,\tau)$; 
a {\em morphism\/} ${[(M,\tau_M)]\colon (A,\tau_A) \to (B,\tau_B)}$ 
in $\mathpzc{EB}_{S,Q}$
between two 
given 
$Q$-equivariant Azumaya algebras $(A,\tau_A)$ and 
 $(B,\tau_B)$, necessarily an isomorphism in $\mathpzc{EB}_{S,Q}$,  
is an isomorphism class $[(M,\tau_M)]$
of a $(B,A,Q)$-bimodule $(M,\tau_M\colon Q \to \Aut(M))$ 
whose underlying $(B,A)$-bimodule $M$
is invertible; the operations of tensor product and that of assigning to
a $Q$-equivariant Azumaya $S$-algebra its opposite algebra 
(as a  $Q$-equivariant Azumaya $S$-algebra)
turn
$\mathpzc{EB}_{S,Q}$ into a group-like 
symmetric monoidal category having
$(S,\kappaQ)$ is its unit object and
$\mathrm U(\mathpzc{EB}_{S,Q}) = \mathrm {EPic}(S)$ 
as its unit group
\cite[\S 3]{MR1803361},
 \cite[Proposition 3.1]{MR1803361}.

\noindent
--- $\mathpzc{EB}_{T|S;G,Q}$,
the {\em relative equivariant Brauer category
associated with the morphism\/} 
$(f,\varphi)$ in $\mathcat {Change}$; it
has as its objects the
$Q$-equivariant Azumaya algebras $(A,\tau)$
such that the  
$G$-equivariant Azumaya algebra
$(T\otimes A, \tau^G)$ that arises by scalar extension
has its underlying central $T$-algebra
$T\otimes A$ isomorphic to a matrix algebra;
given two 
$Q$-equivariant Azumaya algebras $(A,\tau_A)$ and 
 $(B,\tau_B)$ in $\mathpzc{EB}_{S,Q}$,  
a {\em morphism\/} $(A,\tau_A) \to (B,\tau_B)$ 
in the associated precategory
$\mathpzc{PreEB}_{T|S;G,Q}$, necessarily an isomorphism
in $\mathpzc{EB}_{T|S;G,Q}$,
is a morphism
$[M,\tau_M]\colon (A,\tau_A) \to (B,\tau_B)$
in $\mathpzc{EB}_{S,Q}$,  
that is, an isomorphism class of a
$(B,A,Q)$-bimodule $(M,\tau_M\colon Q \to \Aut(M))$ 
whose underlying $(B,A)$-bimodule $M$
is invertible, such that, furthermore, 
the resulting $T^{t}G$-module $T \otimes M$
is free as a $T$-module.
We then take $\mathpzc{EB}_{T|S;G,Q}$ to be the resulting
subcategory of  $\mathpzc{EB}_{S,Q}$
generated by $\mathpzc{PreEB}_{T|S;G,Q}$, that is, we define
morphisms and
composition of morphisms as finite strings in
 $\mathpzc{EB}_{S,Q}$, of morphisms in 
 $\mathpzc{PreEB}_{T|S;G,Q}$, and we define the
monoidal structure, the operation
of inverse, and the unit object as in $\mathpzc{EB}_{S,Q}$.
The resulting category 
$\mathpzc{EB}_{T|S;G,Q}$ is a group-like symmetric monoidal category
and
has
$\mathrm U(\mathpzc{EB}_{T|S;G,Q}) = \mathrm {EPic}(T|S)$.

\subsection{Examples of stably $Q$-graded symmetric monoidal categories}
\label{esgsmc}

\noindent
---  $\mathpzc{Mod}_{S,Q}$, 
a stably  $Q$-graded symmetric monoidal category
that arises from  $\mathpzc{Mod}_S$ 
as follows: Given two $S$-modules $M$ and $N$,
a {\em morphism $M\to N$ of\/} $S$-{\em modules of grade\/} 
$x \in Q$ 
is  a pair $(\varphi,x)$ having $\varphi \colon M \to N$
a morphism over $R=S^Q$ such that 
$\varphi (sy)=({}^x\!s)y$ ($s\in S$, $y \in M$)
\cite[p.~229]{MR0349804}, \cite[\S 2]{MR1803361}.

Enhancing each of the categories $\mathpzc C =\mathpzc{Gen}_S, 
\mathpzc{Pic}_S,
\mathpzc{Az}_S, \mathpzc{XAz}_S, \mathpzc{B}_S$ 
in Subsection \ref{ordinary} above to a stably $Q$-graded 
symmetric monoidal
category $\mathpzc C_Q$ in the same was as enhancing
the category
$\mathpzc{Mod}_S$  of $S$-modules to the stably $Q$-graded symmetric monoidal
category 
$\mathpzc{Mod}_{S,Q}$ just explained
 yields the following stably $Q$-graded 
symmetric monoidal
categories:

\noindent
---  $\mathpzc{Gen}_{S,Q}$,  
written in \cite{MR1803361} as
$\mathpzc{Gen}_{R}$;

\noindent
---  $\mathpzc{Pic}_{S,Q}$,  
written in \cite[\S 3]{MR1803361} as
$\mathpzc{C}_{R}$,  reproduced in 
Subsection \ref{piccat} above;

\noindent
---  $\mathpzc{Az}_{S,Q}$,  
written in \cite[\S 2]{MR1803361} as
$\mathpzc{Az}_{R}$;

\noindent
---  $\mathpzc{XAz}_{S,Q}$,  
written in \cite[\S 5 p.~43]{genbrauer} as
$Q-\widetilde{\mathpzc{Az}_R}$
and in \cite[\S 2]{MR1803361} as $Q\mathpzc{Az}_R$
(beware: the notation $Q$ in [op. cit.] has nothing to do with our notation $Q$
for a group, and
the  tilde-notation in \cite[\S 5 p.~43]{genbrauer}
refers to the additional structure of a twisting
and need not concern us here);
morphisms are now enhanced
via the $Q$-grading, that is to say,
a morphism $([h],x)\colon A \to B$ in $\mathpzc{Az}_{S,Q}$ 
of grade $x\in Q$ has $[h]$ an equivalence class of an isomorphism
$h\colon A \to B$  of algebras over
$R=S^Q$ such that $(h,x)$ is, furthermore, a morphism
in $\mathpzc{Mod}_{S,Q}$ of grade  $x\in Q$;

\noindent
---  $\mathpzc{B}_{S,Q}$,  the stably $Q$-graded Brauer category
associated with the commutative ring $S$ and the $Q$-action 
$\kappaQ \colon Q \to \Aut(S)$ on $S$,
reproduced
in 
Subsection \ref{sqgbc} above.

The morphism
${(f,\varphi)\colon (S,Q,\kappaQ) \to (T,G,\llambda)}$ 
in  $\mathcat{Change}$ having $\varphi$ surjective being given,
similarly to the construction
of the category
$\mathpzc B_{T|S;G,Q}$ in Subsection \ref{variantrt} above,
for each of the stably $Q$-graded symmetric monoidal categories 
$\mathpzc C_{S,Q} = 
\mathpzc {Mod}_{S,Q}, 
\mathpzc {Gen}_{S,Q},
\mathpzc {Pic}_{S,Q} 
$,
the stably $Q$-graded symmetric monoidal category
$\mathpzc C_{T|S;G,Q}$ 
is the  subcategory that
arises from the ambient category 
$\mathpzc C_{S,Q}$
in essentially the same way as
$\mathpzc B_{T|S;G,Q}$
arises from the ambient category 
$\mathpzc B_{S,Q}$ save that
there is no need to pass through a corresponding precategory:
The objects of $\mathpzc C_{T|S;G,Q}$ are those objects $C$ 
of $\mathpzc C_{S,Q}$
having the property that $T \otimes C$ is free as a $T$-module, and
$\mathpzc C_{T|S;G,Q} = 
\mathpzc {Mod}_{T|S;G,Q}$, 
$\mathpzc {Gen}_{T|S;G,Q}$,
$\mathpzc {Pic}_{T|S;G,Q}
$
is the respective full subcategory of $\mathpzc C_{S,Q}$.
Likewise,
for  the stably $Q$-graded symmetric monoidal categories 
$\mathpzc C_{S,Q} = 
\mathpzc {Az}_{S,Q}$
and  
$\mathpzc C_{S,Q} =\mathpzc {XAz}_{S,Q}$,
the stably $Q$-graded symmetric monoidal category 
$\mathpzc C_{T|S;G,Q}$ 
arises as the subcategory that
has
as its objects  
Azumaya $S$-algebras $A$ such that  $T\otimes A$
is a matrix algebra over $T$, and
$ \mathpzc {Az}_{T|S;G,Q}$
 is the corresponding 
full subcategory of 
$\mathpzc {Az}_{S,Q}$
and
$\mathpzc {XAz}_{T|S;G,Q}
$ 
that of $\mathpzc {XAz}_{S,Q}$.
Now, with $\mathpzc {Pic}_{T|S;G,Q}$ substituted for $\mathcat C_{S,Q}$,
the exact sequence
\eqref{fittwelve}
yields  the exact sequence \eqref{ldesf}.

\begin{rema}
For an object of $\mathpzc {Gen}_{S,Q}$, that is, for a 
faithful
finitely generated projective $S$-module $M$,
the group $\Aut(M,Q)$ introduced in 
\cref{eight}
is canonically isomorphic to the group
$\Aut_{\mathpzc {Gen}_{S,Q}}(M)$.
\end{rema}

\subsection{The standard constructions revisited}

The endomorphism functor
$\mathpzc {End}\colon  \mathpzc{Gen}_S \to \mathpzc{Az}_S$
induces an exact sequence 
\begin{equation}
0
\longrightarrow
\Pic(S)
\longrightarrow
k\mathpzc{Gen}_S \stackrel{\End}
\longrightarrow k\mathpzc{Az}_S
\longrightarrow 
\mathrm B(S)
\longrightarrow 0
\end{equation}
of abelian monoids
\cite[\S 5 p.~38]{genbrauer},
\cite[Introduction]{MR0349804}, 
\cite[\S 3]{MR1803361}.
This yields 
$\Pic(S)$
as the maximal subgroup of the abelian monoid
$k\mathpzc{Gen}_S$ and recovers the {\em standard construction\/} of
$\mathrm B(S)$, cf. 
Subsection \ref{disc},
as the cokernel of the homomorphism 
$\End$ of abelian monoids, the cokernel of a morphism of monoids being 
suitably interpreted (in terms of the associated equivalence relation
and \lq\lq cofinality\rq\rq, cf. \cite[\S 12]{MR0349804}).
The obvious functor $\Omega\colon \mathpzc{Az}_S \to
\mathpzc{B}_S$ induces the isomorphism
$\mathrm B(S) \to k\mathpzc{B}_S$ of abelian groups
\cite[\S 5 p.~38]{genbrauer},
\cite[\S 3, Theorem 3.2 (i)]{MR1803361} quoted in 
Subsection \ref{disc}.

Likewise, the endomorphism functor
$\mathpzc {End}\colon  \mathpzc{Gen}_{S,Q} \to \mathpzc{Az}_{S,Q}$
induces an exact sequence 
\begin{equation*}
0
\longrightarrow
\mathrm {EPic}(S,Q)
\longrightarrow
k\mathcat{Rep}(Q,\mathpzc{Gen}_{S,Q}) \stackrel{\End}
\longrightarrow 
k\mathcat{Rep}(Q,\mathpzc{Az}_{S,Q})
\longrightarrow 
\mathrm {EB}(S,Q)
\longrightarrow 0
\end{equation*}
of abelian monoids
\cite[\S 5 p.~38]{genbrauer},
\cite[Introduction]{MR0349804}, 
\cite[\S 3]{MR1803361}.
This yields 
\linebreak
$\mathrm {EPic}(S,Q)$
as the maximal subgroup of the abelian monoid
$k\mathcat{Rep}(Q,\mathpzc{Gen}_{S,Q})$
and recovers the {\em standard construction\/} of
$\mathrm {EB}(S,Q)$, cf. 
\cref{eleven},
as the cokernel
of the corresponding homomorphism 
$\End$ of abelian monoids.
The obvious functor $\Omega\colon \mathpzc{Az}_{S,Q} \to
\mathpzc B_{S,Q}$ induces an isomorphism
$\mathrm {EB}(S,Q) \to k\mathpzc{EB}_{S,Q}$ of abelian groups
\cite[\S 5 p.~38]{genbrauer},
\cite[\S 3, Theorem 3.2 (i)]{MR1803361}, that is, equivariant 
Brauer equivalence is equivalent to equivariant
Morita equivalence. Moreover, that obvious functor $\Omega$
factors as
\begin{equation}
\mathpzc{Az}_{S,Q} 
\stackrel{\Omega^{\mathpzc{Az}}}
\longrightarrow 
\mathpzc{XAz}_{S,Q} 
\stackrel{\Omega^{\mathpzc{XAz}}}
\longrightarrow 
\mathpzc{B}_{S,Q},
\end{equation}
and the functor $\Omega^{\mathpzc{XAz}}\colon
\mathpzc{XAz}_{S,Q} \longrightarrow 
\mathpzc{B}_{S,Q}$
induces the injection  
$\theta\colon \mathrm {XB}(S,Q)\to k\mathpzc{Rep}(Q,\mathpzc{B}_{S,Q})$
 of abelian groups spelled out as 
\eqref{theta}.

Recall that, given a stably $Q$-graded category $\mathpzc C_Q$,
the notation $k_Q\mathpzc C_Q$ refers to the monoid
$k\mathpzc C=k\mathpzc{Ker}(\mathpzc C_Q)=k\mathpzc C_Q$,  
viewed as a $Q$-monoid, cf.
Subsection \ref{stablygraded}.
The functor
\linebreak
$
\mathpzc{End}\colon \mathpzc{Gen}_{S,Q}
\to
\mathpzc{Az}_{S,Q}
$
induces, furthermore, a homomorphism
\begin{equation}
\mathrm H^0(Q,k_Q\mathpzc{Gen}_{S,Q})
\longrightarrow
k\mathpzc{Rep}(Q,\mathpzc{XAz}_{S,Q})
\end{equation}
of monoids \cite[\S 3]{MR1803361}. Indeed, let $M$ be an object of
$\mathpzc{Gen}_{S,Q}$. By construction, the 
grading homomorphism
$\Aut_{\mathpzc{Gen}_{S,Q}}(M) \to Q$ 
is surjective if and only if
the isomorphism class of $M$ in $k\mathpzc{Gen}_{S,Q}$
is fixed under $Q$. 
Hence  an object $M$ of
$\mathpzc{Gen}_{S,Q}$
whose isomorphism class in $k\mathpzc{Gen}_{S,Q}$
is fixed under $Q$
determines the exact sequence
\begin{equation}
1
\longrightarrow
\Aut_S(M) 
\longrightarrow
\Aut_{\mathpzc{Gen}_{S,Q}}(M)
\longrightarrow
Q
\longrightarrow
1,
\end{equation}
plainly congruent to
the exact sequence
\eqref{extseventyone};
in particular, the group
$\Aut_{\mathpzc{Gen}_{S,Q}}(M)$ is canonically isomorphic to
the group $\Aut(M,Q)$, cf. 
\eqref{eightthirtythree}.
Now, for any object $M$ of $\mathpzc{Gen}_S$,
the groups
$\Aut_{\mathpzc{Gen}_S}(M)$, $\Aut_S(M)$, and $\mathrm U(\End_S(M))$
coincide,
and the induced action of
$\Aut_{\mathpzc{Gen}_{S,Q}}(M)$
on $\End_S(M)$ yields a commutative diagram of the kind
\begin{equation*}
\scalefont{0.8}{
\begin{CD}
1
@>>>
\Aut_S(M) 
@>>>
\Aut_{\mathpzc{Gen}_{S,Q}}(M)
@>>>
Q
@>>>
1
\\
@.
@|
@VVV
@VVV
@.
\\
@.
\mathrm U(\End_S(M))
@>>>
\Aut(\End_S(M),Q)
@>>>
\Out(\End_S(M),Q)
@>>>
1
\end{CD}
}
\end{equation*}
and hence an induced $Q$-normal structure
$Q \to \Out(\End_S(M))$
on the split algebra $\End_S(M)$.
Thus
the endomorphism functor
$\mathpzc {End}\colon  \mathpzc{Gen}_{S,Q} \to \mathpzc{Az}_{S,Q}$
induces an exact sequence 
\begin{equation}
\scalefont{0.8}{
0
\longrightarrow
\mathrm H^0(Q,\Pic(S))
\longrightarrow
\mathrm H^0(Q,k_Q\mathpzc{Gen}_{S,Q})\stackrel{\End}
\longrightarrow
k\mathpzc{Rep}(Q,\mathpzc{XAz}_{S,Q})
\longrightarrow
\mathrm{XB}(S,Q)
\longrightarrow
0
}\label{Seq2}
\end{equation}
of abelian monoids \cite[\S 3]{MR1803361} which,
in turn, recovers the {\em standard construction\/}
of the crossed Brauer group $\mathrm{XB}(S,Q)$ of $S$
relative to $Q$ given in
\cref{cbg}.
The unit object of $\mathpzc{XAz}_{S,Q}$ is represented by
$(S,\kappaQ)$.
This kind of construction is given in
\cite[Theorem 4 p.~43]{genbrauer},
\cite[Section 3, a few lines before Theorem 3.2]{MR1803361}
(the cokernel of $\End$ being written as $QB(R,\Gamma)$).
In general,
for the \lq\lq crossed\rq\rq\ versions, the equivalence between
Brauer and Morita equivalence persists only when the group $Q$ is finite, 
that is the 
canonical homomorphism 
\[
\theta\colon \mathrm{XB}(S,Q) \longrightarrow 
k\mathpzc{Rep}(Q,\mathpzc B_{S,Q})
\]
of abelian groups given as
\eqref{theta} above
is injective, see 
\cref{tentwentytw}~(i),
but to prove that $\theta$ is surjective we need the additional hypothesis
that $Q$ be a finite group,  see 
\cref{tentwentytw} (iii).

The above constructions,
applied, with  respect to the morphism
$(f,\varphi)$ in $\mathcat{Change}$,
 to the functors
$\mathpzc {End}\colon  \mathpzc{Gen}_{T|S} \to \mathpzc{Az}_{T|S}$ and
$\mathpzc {End}\colon  \mathpzc{Gen}_{T|S;G,Q} \to \mathpzc{Az}_{T|S;G,Q}$,
yield the exact sequences
\begin{equation*}
\scalefont{0.8}{
\begin{aligned}
0
\longrightarrow
\Pic(T|S)
\longrightarrow
k\mathpzc{Gen}_{T|S} &\stackrel{\End}
\longrightarrow k\mathpzc{Az}_{T|S}
\longrightarrow 
\mathrm B_{\mathrm{fr}}(T|S)
\longrightarrow 0
\\
0
\longrightarrow
\mathrm {EPic}(T|S,Q)
\longrightarrow
k\mathcat{Rep}(Q,\mathpzc{Gen}_{T|S;G,Q}) &\stackrel{\End}
\longrightarrow k\mathcat{Rep}(Q,\mathpzc{Az}_{T|S;G,Q})
\longrightarrow 
\mathrm {EB}_{\mathrm{fr}}(T|S;G,Q)
\longrightarrow 0
\\
0
\longrightarrow
\mathrm H^0(Q,\Pic(T|S))
\longrightarrow
\mathrm H^0(Q,k_Q\mathpzc{Gen}_{T|S;G,Q}) &\stackrel{\End}
\longrightarrow
k\mathpzc{Rep}(Q,\mathpzc{XAz}_{T|S;G,Q})
\longrightarrow
\mathrm{XB}_{\mathrm{fr}}(T|S;G,Q)
\longrightarrow
0
\end{aligned}
}
\end{equation*}
of abelian monoids.
These recover the standard constructions of the abelian groups 
$\mathrm B_{\mathrm{fr}}(T|S)$, $\mathrm{EB}_{\mathrm{fr}}(T|S;G,Q)$, and 
$\mathrm{XB}_{\mathrm{fr}}(T|S;G,Q)$, cf. Subsection \ref{standa} above.

\chapter{Examples}
\label{cIII}

\begin{abstract}
We describe various non-trivial examples
that illustrate the approach to the 
\lq\lq Teichm\"uller cocycle map\rq\rq\ 
developed in \cref{cI} and \cref{cII}
in terms of crossed $2$-fold extensions and generalizations thereof.
\end{abstract}

\section{Introduction}

We recall the classical situation for number fields and show
how it extends to rings of integers in number fields.
We then construct
explicit examples of a non-trivial Teichm\"uller class
that arise in
 Grothendieck's theory of the Brauer group
of a topological space.
We finally interpret various group 3-cocycles
constructed in $\mathrm C^*$-algebra theory
as variants of the Teichm\"uller 3-cocycle.

With hindsight it is interesting to note that
versions of the Teichm\"uller cocycle
arise in $\mathrm C^*$-algebra theory,
see 
\cref{dynamical}
for details.
While such cocycles
were studied already in the 1980s, to our knowledge, their relationship
with the Teichm\"uller cocycle was never pointed out in the literature.

\section{Number fields}
\label{numberfields}

\subsection{General remarks}
\label{generalremarks}
Consider an algebraic number field $K$
(a finite-dimensional extension of the field $\mathbb Q$ of rational numbers).
Let $Q$ be a finite group of operators on $K$, let
$\fiel = K^Q$,  and consider the resulting
Galois extension $K|\fiel$.
Let $J_K$ denote the abelian group of {\em id\`eles\/}
of $K|\fiel$ and $C_K$ that of {\em id\`ele classes\/}, and
consider the familiar $Q$-module extension
\begin{equation}
0
\longrightarrow
\mathrm U(K)
\longrightarrow
J_K
\longrightarrow
C_K
\longrightarrow 
0 
\label{ext11}
\end{equation}
\cite[(III.2)  p.~117]{MR3058613}.
By the \lq\lq main theorem of class field theory\rq\rq,
$\Ho^2(Q,C_K)\cong \tfrac 1{[K:\fiel]}\mathbb Z/\mathbb Z$
\cite[\S VII.3 Lemma 6 p.~49]{MR2467155},
\cite[RESULT p.~196]{MR0220697},
\cite[(III.6.8) Theorem p.~150]{MR3058613},
the group $\Ho^2(Q,C_K)$ has a canonical generator, referred to as
the {\em fundamental class\/} of the extension $K|\fiel$ and written as
$u_{K|\fiel}\in \Ho^2(Q,C_K)$. As a side remark we note that,
given a group extension
$C_K \rightarrowtail  W_{K|\fiel} \twoheadrightarrow Q$
that represents the class $u_{K|\fiel}\in \Ho^2(Q,C_K)$,
the group $W_{K|\fiel}$ is referred to as the {\rm Weil group\/}
of the field extension $K|\fiel$ 
\cite[Ch. XV]{MR2467155}, \cite[\S 11.6 p.~200]{MR0220697},
\cite{MR546607}. The Weil group is uniquely determined
since $\Ho^1(Q,C_K)$ is zero \cite[Ch. XV]{MR2467155}.

Let 
$m$ denote the l.c.m. of the local degrees.
We summarize the results of
 \cite[Theorem 2, Theorem 3]{MR0025442}, 
\cite{MR0047700}, and others as follows, cf.
\cite[\S VII.4 Theorem 12 and Theorem 14 p.~53]{MR2467155},
\cite[\S 11.4 Case $r=3$ p.~199]{MR0220697}.
\begin{prop}
{\rm (i)} 
The boundary homomorphism
$\delta\colon\Ho^2(Q,C_K) \to \Ho^3(Q,\mathrm U(K))$ 
in the long exact cohomology sequence
associated with \eqref{ext11}
is surjective, and 
$\Ho^3(Q,\mathrm U(K))$ 
is cyclic of order $s=\tfrac {[K:\fiel]}m$, generated
by the image $t_{K|\fiel}= \delta(u_{K|\fiel})\in \Ho^3(Q,\mathrm U(K))$.

\noindent
{\rm (ii)} 
The class  $t_{K|\fiel}$
splits in some extension field $L$ of $K$
that is normal over $\fiel$, indeed, 
things may be arranged in such a way that
$L|K$ is cyclic.
\end{prop}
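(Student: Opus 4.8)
The plan is to derive both statements from the long exact cohomology sequence attached to \eqref{ext11} together with the standard semilocal computation of the cohomology of the id\`ele group. First I would record the segment
\[
\Ho^2(Q,J_K)\longrightarrow \Ho^2(Q,C_K)\stackrel{\delta}\longrightarrow \Ho^3(Q,\mathrm U(K))\longrightarrow \Ho^3(Q,J_K)
\]
of that sequence. By Shapiro's lemma the id\`ele cohomology decomposes as $\Ho^q(Q,J_K)\cong\bigoplus_v \Ho^q(Q_w,\mathrm U(K_w))$, the sum ranging over the places $v$ of $\fiel$, where $Q_w$ is the decomposition group at a chosen place $w$ above $v$ and $K_w$ the completion. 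Local class field theory identifies $\Ho^2(Q_w,\mathrm U(K_w))$ with $\tfrac1{n_v}\mathbb Z/\mathbb Z$ via the local invariant, $n_v$ being the local degree, and Tate--Nakayama applied to the local class formation gives $\Ho^3(Q_w,\mathrm U(K_w))\cong\hat{\Ho}^1(Q_w,\mathbb Z)=0$; hence $\Ho^3(Q,J_K)=0$ and $\delta$ is surjective. The map $\Ho^2(Q,J_K)\to\Ho^2(Q,C_K)=\tfrac1{[K:\fiel]}\mathbb Z/\mathbb Z$ is the sum of the local invariants, so its image is the subgroup generated by the $\tfrac1{n_v}$, namely $\tfrac1m\mathbb Z/\mathbb Z$ with $m$ the l.c.m. of the local degrees. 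Exactness then yields $\Ho^3(Q,\mathrm U(K))\cong(\tfrac1{[K:\fiel]}\mathbb Z/\mathbb Z)/(\tfrac1m\mathbb Z/\mathbb Z)$, cyclic of order $s=[K:\fiel]/m$ and, since $u_{K|\fiel}$ generates $\Ho^2(Q,C_K)$, generated by $t_{K|\fiel}=\delta(u_{K|\fiel})$. This proves~(i).

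For (ii), let $L\supseteq K$ be a field normal over $\fiel$, put $G=\Gal(L|\fiel)$ and $N=\Gal(L|K)$, so that $\mathrm U(K)=\mathrm U(L)^N$ and inflation $\inf\colon\Ho^3(Q,\mathrm U(K))\to\Ho^3(G,\mathrm U(L))$ is defined; here \lq\lq $t_{K|\fiel}$ splits in $L$\rq\rq\ means $\inf(t_{K|\fiel})=0$. The inclusions $J_K\hookrightarrow J_L$ and $C_K\hookrightarrow C_L$ give a morphism of the two id\`ele-class sequences; naturality of the connecting homomorphism, combined with the compatibility of $\inf$ with $\delta$, produces a commutative square showing $\inf(t_{K|\fiel})=\delta_L(\bar u)$, where $\bar u\in\Ho^2(G,C_L)$ is the class-formation inflation of $u_{K|\fiel}$. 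Since inflation in a class formation preserves invariants, $\bar u$ has invariant $\tfrac1{[K:\fiel]}$. By the analogue of~(i) for $L|\fiel$ the kernel of $\delta_L$ is the subgroup $\tfrac1{m_L}\mathbb Z/\mathbb Z$, where $m_L$ is the l.c.m. of the local degrees of $L|\fiel$; hence $\inf(t_{K|\fiel})=0$ precisely when $[K:\fiel]\mid m_L$.

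It then remains to produce such an $L$ with $L|K$ cyclic, and this is the crux. I would take $L=KK'$ for a cyclic extension $K'|\fiel$ linearly disjoint from $K$ over $\fiel$; then $L|\fiel$ is Galois, being a compositum of Galois extensions, while $L|K$ is cyclic, isomorphic to $\Gal(K'|\fiel)$. By the Grunwald--Wang theorem one may choose $K'|\fiel$ cyclic together with a prescribed place $v_0$ of $\fiel$ at which the local degree of $K'$ is divisible by $[K:\fiel]$; the local degree of $L=KK'$ at a place above $v_0$ is then a multiple of that of $K'$, so $[K:\fiel]\mid m_L$ and $\inf(t_{K|\fiel})=0$. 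The main obstacle is exactly this last construction: one must navigate the exceptional \lq\lq special case\rq\rq\ of Grunwald--Wang for $2$-power local degrees, handled by enlarging the target degree or by treating the primes dividing $[K:\fiel]$ separately, and one must keep the linear disjointness that guarantees $L|K$ cyclic and $L|\fiel$ normal at the same time.
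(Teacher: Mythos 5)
Your proof is correct. The paper itself does not prove this proposition; it states it as a summary of classical results and points to Eilenberg--Mac\,Lane \cite{MR0025442}, Nakayama \cite{MR0047700}, and Artin--Tate \cite{MR2467155}, so the comparison is really with those sources. Your part (i) is essentially the argument of \cite[VII.4]{MR2467155}: the semilocal decomposition $\Ho^q(Q,J_K)\cong\bigoplus_v\Ho^q(Q_w,\mathrm U(K_w))$, the vanishing $\Ho^3(Q_w,\mathrm U(K_w))\cong\hat\Ho^1(Q_w,\MZ)=0$ from Tate--Nakayama, and the identification of $\ker\delta=\im\bigl(\Ho^2(Q,J_K)\to\Ho^2(Q,C_K)\bigr)$ with $\tfrac1m\MZ/\MZ$ via the sum of local invariants; the only implicit step is the colimit over finite sets of places with vanishing unramified unit cohomology, which is standard. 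Your part (ii) is also correct --- the reduction of $\inf(t_{K|\fiel})=0$ to the divisibility $[K:\fiel]\mid m_L$ via compatibility of inflation with $\delta$ and preservation of invariants is exactly the classical criterion --- but you reach the required $L$ by Grunwald--Wang, whereas the classical route (and the one the paper itself leans on a page later, quoting Albert--Brauer--Hasse--Noether that every Brauer class has a cyclic cyclotomic splitting field) takes $L|K$ to be a suitable \emph{cyclic cyclotomic} extension: for any prescribed integer and any finite set of places one can find a cyclic cyclotomic extension of $\fiel$ whose local degrees at those places are divisible by that integer. That construction gives the divisibility directly, makes $L|\fiel$ abelian (hence normal) for free, and entirely avoids the Grunwald--Wang special case at $2$ that you have to navigate; your appeal to Grunwald--Wang does work, since only divisibility (not exact prescription) of local degrees is needed, and since $\Gal(L|K)\cong\Gal(K'|K\cap K')$ is cyclic even without linear disjointness, but it is heavier machinery than the problem requires.
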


Under the present circumstances,
the eight term exact sequence \eqref{AAAAA}
boils down to the classical five term exact sequence,
cf. 
\eqref{AAAAAA},
given, e.~g., in
\cite[p.~130]{MR0052438},
combined with the canonical isomorphisms
\[
\Ho^2(Q,\mathrm U(K))\cong \mathrm B(K|\fiel),
\quad
\Ho^2(G,\mathrm U(L))\cong \mathrm B(L|\fiel),
\quad
\Ho^2(N,\mathrm U(L))^Q\cong \mathrm B(L|K)^Q,
\]
and the exactness of this sequence
entails that
$t_{K|\fiel}$ is the Teichm\"uller class associated to some
$Q$-normal crossed product central simple 
$K$-algebra having $L$ as maximal commutative
subalgebra.
In the literature, the generator
$t_{K|\fiel}=\delta(u_{K|\fiel})\in \Ho^3(Q,\mathrm U(K))$
is referred to as the {\em Teichm\"uller $3$-class\/}
\cite[\S VII.4 p.~52]{MR2467155},
\cite[\S 11.4 Case $r=3$ p.~199]{MR0220697}, here interpreted
as the {\em obstruction\/} to the global degree being computed
as the l.c.m. of the local degrees.

\subsection{Explicit examples}
Thus to get examples, all we need is
a Galois extension $K|\fiel$ having $s>1$.
While, in view of the Hilbert-Speiser Theorem,  
this is impossible when the Galois group $Q$ is cyclic,
for example,
the fields $K=\mathbb Q(\sqrt{13},\sqrt{17})$
or $K=\mathbb Q(\sqrt{2},\sqrt{17})$
have as Galois group $Q$ the four group and
$s=2$ \cite{MR0025442}, see also 
\cite[\S 11.4 p.~199]{MR0220697} and
\cite[Ch. VIII Exampe 4.5 p.~238]{milneCFT}.

Since it is hard to find truly explicit examples in the literature,
we  now briefly sketch a construction of such examples.
According to classical results
due to Albert, Brauer, Hasse, and E. Noether,
 every member of $\mathrm B(K)$
has a cyclic cyclotomic splitting field
\cite[Satz~4, Satz 5 p.~118]{zbMATH03255773},
\cite[VIII.2 Theorem 2.6 p.~229]{milneCFT},
\cite[10.5 Step 3. p.~191]{MR0220697}.
Indeed, the argument in the last reference shows
that, given a central simple $K$-algebra $A$,
there is a cyclic cyclotomic field $L|K$ 
such that $[L_{\mathcat P}:K_{\mathcat p}]\equiv 0(\mathcat m_{\mathcat p})$
for every prime $\mathcat p$ of $K$ and such that 
$[L:K]=\mathrm {l.c.m.}(\mathcat m_{\mathcat p})$.
Thus, consider a cyclic cyclotomic extension
$L=K(\zeta)$ having Galois group $N$ cyclic of order $n$ (say).
Let $\sigma$ denote a generator of $N$,  
let $\eta \in \mathrm U(K)$,
and 
consider the cyclic central simple $K$-algebra
$D(\sigma,\eta)$ generated by 
$L=K(\zeta)$ and some (indeterminate)
$u$ subject to the relations
\begin{equation}
u \lambda = \sigma(\lambda)u,\ u^n = \eta,\ \lambda \in L=K(\zeta),
\end{equation}
necessarily a crossed product of $N$ with 
$L$ relative  to the $\mathrm U(L)$-valued  
2-cocycle of $N$
determined by $\eta$. By construction,
 $D(\sigma,\eta)$ is split by $L$.
Moreover,
given $\vartheta \in \mathrm U(L)$,
the member $\eta_\vartheta=\eta \prod_{j=0}^{n-1} \vartheta^{\sigma^j}$ 
of $K$ yields
the algebra
 $D(\sigma, \eta_\vartheta)$, and
the association $u \longmapsto \vartheta u$
induces an isomorphism
\begin{equation}
D(\sigma, \eta) \longrightarrow D(\sigma, \eta_\vartheta)
\end{equation}
of central $K$-algebras.
The field $L|\fiel$
is the composite field
$\fiel(\zeta)K$. Hence
$L|\fiel$ is a Galois extension, and 
the Galois group $G$
of $L|\fiel$
 is a central extension of
$Q=\mathrm{Gal}(K|\fiel)$ by the cyclic group 
$N=\mathrm{Gal}(\fiel(\zeta)|\fiel)$ of order $n$,
a split extension if and only if 
$\fiel(\zeta)\cap K = \fiel$.
The member $\eta$ of $K$ represents the corresponding cohomology class
$[\eta]\in \Ho^2(N,\mathrm U(L))$, and
$[\eta]\in \Ho^2(N,\mathrm U(L))^Q$
if and only if, given $x \in Q=\mathrm{Gal}(K|\fiel)$, 
there is some $\vartheta_x\in \mathrm U(L)$
such that the association
 $u \longmapsto \vartheta_x u$
induces an automorphism
\begin{equation}
\Theta_x\colon D(\sigma, \eta) \longrightarrow D(\sigma, \eta)
\end{equation}
of central $K$-algebras that extends the automorphism
$x\colon K \to K$ over $\fiel$.

The sequence
\begin{equation}
0
\longrightarrow
\Ho^2(N,\mathrm U(L))
\longrightarrow
\Ho^2(N, J_L)
\stackrel{\mathrm{inv}_1}
\longrightarrow 
\tfrac 1{|N|}\mathbb Z /\mathbb Z
\longrightarrow
0
\label{neuk11}
\end{equation}
is well known to be exact,
cf., e.~g., \cite[III.5.6 Proposition p.~143]{MR3058613},
and taking $Q$-invariants, we obtain the injection
\begin{equation}
0
\longrightarrow
\Ho^2(N,\mathrm U(L))^Q
\longrightarrow
\Ho^2(N, J_L)^Q .
\end{equation}
Given a prime $\mathcat p$ of $K$, for each prime $\mathcat P$
of $L$ above $\mathcat p$, the local extension
$L_{\mathcat P}|  K_{\mathcat p}$
is likewise a
cyclic cyclotomic extension.
From a given system of local invariants 
in $\Ho^2(N, J_L)^Q$ that goes to zero under
$\mathrm{inv}_1\colon \Ho^2(N, J_L)
\to
\tfrac 1{|N|}\mathbb Z /\mathbb Z$,
at each prime
$\mathcat P$
of $L$ above $\mathcat p$ 
that occurs in that system of local invariants,
we can construct an explicit cyclic central
$K_{\mathcat p}$-algebra $D(\sigma_{\mathcat P},\eta_{\mathcat P})$
defined in terms of a prime element
of $K_{\mathcat p}$ and,
using, e.~g., the recipe in the proof of
 \cite[Satz 9 p.~119 ff.]{zbMATH03255773},
we can then construct
a member $\eta$ of $K$
such that the cyclic algebra $D(\sigma,\eta)$
has the given local invariants.
By construction, then, the class of 
$D(\sigma,\eta)$ in $\Ho^2(N,\mathrm U(L))$ is $Q$-invariant.
Hence $D(\sigma,\eta)$ acquires a $Q$-normal structure, necessarily 
non-trivial when its Teichm\"uller class is non-zero,
and the above reasoning classifies those cyclic $Q$-normal algebras
that are non-trivially $Q$-normal.

\section{Rings of integers and beyond}
\label{ringsofint}
Let $R$ be a regular domain, and let $K$ denote its quotient field.
By \cite[Theorem 7.2 p.~388]{MR0121392},
the induced homomorphism
$\mathrm B(R)\to \mathrm B(K)$
between the Brauer groups is a monomorphism.
It is known that, furthermore,
the canonical map $\mathrm B(R) \to \cap_{\mathcat p} 
\mathrm B(R_{\mathcat p})$
from the Brauer group $\mathrm B(R)$ to the intersection 
$\cap_{\mathcat p} \mathrm B(R_{\mathcat p})$ taken over all height one primes $\mathcat p$ is an isomorphism, cf., e.~g., 
\cite[Theorem 9.7 p.~64]{MR1692654}.

Let $K$ be an algebraic number field, $S$ its ring of integers,
and let $r$ denote the number of embeddings of $K$ into the reals.
The Brauer group $\mathrm B(S)$ of $S$ is zero when $r=1$ and
isomorphic to a direct product of $r-1$ copies of
the cyclic group with two elements
when $r \geq 2$.
This is a consequence of a result in
\cite{MR0228471},
see, e.~g.,
\cite[(6.49) p.~151]{MR1610222}.
While a central $S$-Azumaya algebra
representing a non-trivial member of 
$\mathrm B(S)$ need not be representable as
an ordinary crossed product with respect to a Galois extension
of $S$,
see, e.~g., \cite{MR909029} and the literature there,
a right $H$-Galois extension $T|S$ of rings of integers
with respect to a general finite-dimensional Hopf algebra $H$
which splits  all classes in the Brauer group
$\mathrm B(S)$ can easily be found
\cite[Proposition 2.1 p.~246]{MR909029}.
The question as to, whether or not, given a finite group $Q$ of 
operators on $K$ and hence
on $S$,
along these lines, $Q$-normal $S$-Azumaya algebras arise 
is a largely unexplored territory. 
The example \cite[Remark 2.6 p.~249]{MR909029}
yields a $Q$-equivariant $Q$-normal Azumaya algebra
for $Q$ the cyclic group with two elements.

Consider now an algebraic number field $K$, a finite group $Q$
of operators on $K$, let $\fiel =K^Q$, and let
$S$ be the ring of integers in $K$ and $R$ that in $\fiel$.
Consider a  field extension $L|K$ such that $K|\fiel$ is normal, with 
Galois group $G$, let $N=\mathrm{Gal}(L|K)$, so that the 
Galois groups fit into an extension $N \rightarrowtail G \twoheadrightarrow Q$,
and let $T$ denote the ring of integers in $L$.
Let $\mathbb S_{L|K}$ denote the finite set of primes
of $K$ 
 that ramify in $L$ and let
 $\mathbb S_{L}$ denote the finite set of primes of $L$
above the primes in $\mathbb S_{L|K}$. Inverting the primes
in 
 $\mathbb S_{L}$ 
and those in $\mathbb S_{L|K}$
we obtain a Galois extension $T_{\mathbb S_{L}}|S_{\mathbb S_{L|K}}$
of commutative rings with Galois group $N$.
Let, furthermore, 
 $\mathbb S_{K|\fiel}$
denote those primes of $\fiel$ such that
the primes in 
$\mathbb S_{L|K}$
are exactly the primes above
 $\mathbb S_{K|\fiel}$,
and let
$R_{\mathbb S_{K|\fiel}}$ denote
the corresponding ring that arises from $R$ by inverting the primes
in $\mathbb S_{K|\fiel}$.
Then the data constitute a $Q$-normal Galois extension of 
commutative rings but, while 
$R_{\mathbb S_{K|\fiel}}= S_{\mathbb S_{L|K}}^Q$,
the ring extension 
$S_{\mathbb S_{L|K}}|R_{\mathbb S_{K|\fiel}}$
need not be a Galois extension of commutative rings.
Recall the exact sequence 
\eqref{BBBBB},
for $T_{\mathbb S_{L}}|S_{\mathbb S_{L|K}}|R_{\mathbb S_{K|\fiel}}$
as well as for $L|K|\fiel$.
The inclusions into the quotient fields yield
a commutative diagram
\begin{equation}
\begin{CD}
\mathrm H^2(G,\mathrm U(T_{\mathbb S_{L}})) 
@>{j}>>
\mathrm{Xpext} (G,N;\mathrm U(T_{\mathbb S_{L}}))
@>{\Delta}>> \mathrm H^3(Q,\mathrm U(S_{\mathbb S_{L|K}})) 
\\
@VVV
@VVV
@VVV
\\
\mathrm H^2(G,\mathrm U(L)) 
@>{j}>>
\mathrm H^2(N,\mathrm U(L))^Q
@>{\Delta}>> \mathrm H^3(Q,\mathrm U(K)) 
\end{CD}
\end{equation}
Suitably interpreting the constructions in
Section \ref{numberfields} above,
we can then construct crossed pair extensions
that represent members of
$\mathrm{Xpext} (G,N;\mathrm U(T_{\mathbb S_{L}}))$
whose images in 
$\mathrm H^2(N,\mathrm U(L))^Q$
have non-zero values in 
$\mathrm H^3(Q,\mathrm U(K))$.
Hence the associated crossed pair algebras
then have non-zero Teichm\"uller class in 
$\mathrm H^3(Q,\mathrm U(S_{\mathbb S_{L|K}}))$.
This yields non-trivial examples 
of Teichm\"uller classes 
of normal Azumaya algebras
over rings of algebraic numbers
with finitely many primes inverted.
We intend to give the details at another occasion.
The Galois module structure
of groups like $\mathrm U(S)$ and $\Pic(S)$ is delicate, cf., e.~g.,
\cite{MR717033}, and the calculation of the relevant group cohomology
groups is not an easy matter. More work is called for in this area.

\section{Examples arising in algebraic topology}

\subsection{General remarks}
Let $\Bsp$ be a topological space, and let $S$ denote the algebra of 
continuous complex-valued functions on $\Bsp$.
Isomorphism classes of Azumaya $S$-algebras of rank $n>1$ correspond 
bijectively
to isomorphism classes of principal
$\mathrm {PGL}(n,\mathbb C)$-bundles.

When $\Bsp$ is a finite CW-complex,
by a Theorem of Serre
 \cite[Theorem 1.6]{MR1608798},
the Brauer group $\mathrm B(S)$ is canonically isomorphic to
the torsion part $\Ho^3(\Bsp)_{\mathrm{tors}}$ of the third
integral cohomology group $\Ho^3(\Bsp)$ of $\Bsp$. The isomorphism
is realized explicitly as follows:
Let $\mathcat{Map}(\Bsp, \mathbb C)$ 
denote the sheaf of germs of
continuous $\mathbb C$-valued functions on $\Bsp$
and
$\mathcat{Map}(\Bsp, \mathbb C^*)$
that of 
continuous $\mathbb C^*$-valued functions on $\Bsp$. 
The exponential exact sequence 
\begin{equation*}
0
\longrightarrow
\mathbb Z
\longrightarrow
\mathcat{Map}(\Bsp, \mathbb C)
\longrightarrow
\mathcat{Map}(\Bsp, \mathbb C^*)
\longrightarrow
0
\end{equation*}
of sheaves on $\Bsp$ yields an isomorphism
$\Ho^2(\Bsp,\mathcat{Map}(\Bsp, \mathbb C^*)) \cong \Ho^3(\Bsp)$
of sheaf cohomology groups
(valid more generally for paracompact $\Bsp$).
The theorem of Serre's just quoted says that, $\Bsp$ being a finite CW-complex,
the canonical map from 
the Brauer group
$\mathrm B(S)$ to $\Ho^2(\Bsp,\mathcat{Map}(\Bsp, \mathbb C^*))$
is an isomorphism onto 
$\Ho^2(\Bsp,\mathcat{Map}(\Bsp, \mathbb C^*))_{\mathrm{tors}}$.

Let $\prin\colon P \to \Bsp$ be a principal $\mathrm {PGL}(n,\mathbb C)$-bundle
and,
relative to the adjoint action of 
$\mathrm {PGL}(n,\mathbb C)$ on $\mathrm {M}_n(\mathbb C)$,
 let $\zeta$ denote the associated vector bundle
\[
\zeta\colon E=P \times _{\mathrm {PGL}(n,\mathbb C)}
\mathrm M_n(\mathbb C)\longrightarrow \Bsp
\] 
on $\Bsp$.
The $S$-module of continuous sections $A=\Gamma(\zeta)$
of $\zeta$ acquires the structure of an Azumaya $S$-algebra
in an obvious manner in such a way that
the group $\mathrm U(A)$ of units of $A$ gets naturally identified
with the space of sections of
the associated fiber bundle
\begin{equation*}
\mathrm u_\xi\colon 
P\times _{\mathrm {PGL}(n,\mathbb C)}\mathrm {GL}_n(\mathbb C)
\longrightarrow \Bsp
\end{equation*}
relative to the adjoint action of 
$\mathrm {PGL}(n,\mathbb C)$ on $\mathrm {GL}_n(\mathbb C)$, endowed with
the pointwise group structure.
Thus the  group $\mathrm U(A)$ of units of $A$ 
can be written as the group
\[
\overline{\mathcal G}_{\xi}\cong 
\mathrm{Map}_{\mathrm{PGL}(n,\mathbb C)}(P,\mathrm{GL}(n,\mathbb C))
\]
of $\mathrm{PGL}(n,\mathbb C)$-equivariant maps
from $P$ to $\mathrm{GL}(n,\mathbb C)$,
and the group $\overline{\mathcal G}_{\xi}$, in turn,
maps canonically onto the group 
$
\mathcal G_{\xi}\cong 
\mathrm{Map}_{\mathrm{PGL}(n,\mathbb C)}(P,\mathrm{PGL}(n,\mathbb C))
$
of gauge transformations of $\prin$.
The group $\Aut(\prin)$ of bundle automorphisms of $\prin$, i.~e., pairs
$(\Phi,\varphi)$ of homeomorphisms that make the diagram
\begin{equation}
\begin{CD}
P@>{\Phi}>> P
\\
@V{\xi}VV
@V{\xi}VV
\\
\Bsp
@>{\varphi}>>
\Bsp
\end{CD}
\label{diag101}
\end{equation}
commutative, yields, in a canonical way, a subgroup
of the group $\Aut(A)$ of ring automorphisms of $A$, and the assignment to a
 section of $\mathrm u_\xi$ of the induced gauge transformation
of the kind \eqref{diag101}
with $\varphi=\mathrm{Id}$
yields a homomorphism 
$\partial\colon \overline{\mathcal G}_{\xi} \to \Aut(\prin)$.
Denote by $\mathrm Z_n(\mathbb C)\cong \mathbb C^*$ 
the central diagonal subgroup of
$\mathrm {GL}_n(\mathbb C)$.
Identifying
the kernel of $\partial$  with the space 
of sections of the associated bundle
\[
P\times _{\mathrm {PGL}(n,\mathbb C)}\mathrm {Z}_n(\mathbb C)
\longrightarrow \Bsp, 
\]
necessarily trivial, since
$\mathrm Z_n(\mathbb C)$ is the center of $\mathrm {GL}_n(\mathbb C)$, we see that 
the  kernel of $\partial$ is canonically isomorphic to the abelian group
$\mathrm U(S)$ of continuous functions from $\Bsp$ to $\mathbb C^*$.
Denote the group
of homeomorphisms of $\Bsp$
by $\mathrm{Homeo}(\Bsp)$, 
and let $\Out(\prin)\subseteq \mathrm{Homeo}(\Bsp)$ denote the image of
 $\Aut(\prin)$ in $\mathrm{Homeo}(\Bsp)$
under the forgetful map
which assigns to a member
$(\Phi,\varphi)$ of $\Aut(\prin)$ 
the second component
$\varphi$. The group $\Out(\prin)$ is the group of homeomorphisms $\varphi$ of $\Bsp$
such that the induced princiapl bundle $\varphi^*\prin$ is isomorphic
to $\prin$.
Thus the principal bundle $\prin$ determines the crossed 2-fold extension
\begin{equation}
0
\longrightarrow
\mathrm{Map}(\Bsp,\mathbb C^*)
\longrightarrow
\overline{\mathcal G}_\prin
\stackrel{\partial}\longrightarrow
\Aut(\prin)
\longrightarrow
\Out(\prin)
\longrightarrow
1.
\end{equation}

Consider a group $Q$, and suppose that
$Q$ acts on $\Bsp$ via a homomorphism $\sigma\colon Q \to \Out(\prin)$.
Requiring that the action be via 
a homomorphism $\sigma\colon Q \to \Out(\prin)$
is equivalent to requiring that some group $\Gamma$
that maps onto $Q$ act on the total space $P$ of $\prin$ in such a way that
given $q \in Q$, there exists some $\gamma \in \Gamma$ such that 
\begin{equation*}
\begin{CD}
P@>{\gamma}>> P
\\
@V{\xi}VV
@V{\xi}VV
\\
\Bsp
@>{q}>>
\Bsp 
\end{CD}
\end{equation*}
is an automorphism
of principal $\mathrm {PGL}(n,\mathbb C)$-bundles.
Requiring that $\Gamma$ act by bundle automorphisms is equivalent
to requiring that the $\Gamma$-action on $P$ commute with
the $\mathrm {PGL}(n,\mathbb C)$-action.
The homomorphism $\sigma$ then induces the requisite $Q$-action
\[
\kappaQ \colon Q \longrightarrow \Aut(S)
= \Aut(\mathrm{Map}(\Bsp,\mathbb C))
\]
 on 
$S=\mathrm{Map}(\Bsp,\mathbb C)$, and
$(A,\sigma) =(\Gamma(\zeta),\sigma)$ 
is a $Q$-normal Azumaya $S$-algebra.

\subsection{Explicit examples involving metacyclic groups}
\label{revisited}

Consider a metacyclic group $G$ given by a presentation
\begin{equation}
G(r,s,t,f) = \langle x,y;\ y^r = 1,\ x^s = y^f,
\ xyx^{-1} = y^t \rangle
\label{0.22}
\end{equation}
where
\[
s > 1,\quad r > 1,
\quad t^s \equiv 1 \mod r,\quad tf \equiv f \mod r,
\]
so that, in particular, the numbers ${\tfrac {t^s-1}r}$ and
${\tfrac {(t-1)f}r}$ are positive integers.
The group $G$ is an extension 
\begin{equation}
1
\longrightarrow
N
\longrightarrow
G
\longrightarrow
Q
\longrightarrow
1
\label{gemm}
\end{equation}
of the cyclic 
group $Q=C_s$ of order $s$ by the cyclic 
group $N=C_r$ of order $r$ generated by $y$.
The upshot of the present subsection is
an explicit $Q$-normal crossed pair algebra
having a ring of the kind $S=\Map(\Bsp,\mathbb C)$ 
for some topological space $\Bsp$
as its center
and having
non-zero Teichm\"uller class in $\Ho^3(Q,\mathrm U(S))$, to be given as
\eqref{ntcpa} below.

Suppose that the g.c.d. $({\tfrac {t^s-1}r},r)$ is non-trivial, 
let $\ell>1$ denote a non-trivial divisor of $({\tfrac {t^s-1}r},r)$,
let
$C_{\ell r}$ denote the cyclic group of order $\ell r$, let $v$ denote a 
generator of $C_{\ell r}$, and let $C_\ell$ denote 
the cyclic subgroup of  $C_{\ell r}$ of order $\ell$ generated by $v^r$.
The assignment to
$v$ of $y$ yields a group extension
\begin{equation}
\mathrm e_{\ell r}\colon 
0
\longrightarrow
C_\ell
\longrightarrow
C_{\ell r}
\longrightarrow
C_r
\longrightarrow
1
\label{elr}
\end{equation}
representing the generator of 
$\Ho^2(C_r,\mathbb  Z/\ell)\cong \mathbb  Z/\ell$.

Since $\tfrac {t^s-1}{\ell r}$ is an integer, the association
$Q \times C_{\ell r} \longrightarrow C_{\ell r}$ given by 
$(x,v) \longmapsto v^t$
yields an action of the group $Q=C_s$ on 
$C_{\ell r}=\langle v; v^{\ell r}=1\rangle$.
The induced action 
\begin{equation}
G \times C_{\ell r} \longrightarrow C_{\ell r} 
\label{action22}
\end{equation}
of $G$ on $C_{\ell r}$  via the
projection $G \to C_s$
and the obvious homomorphism $\partial\colon C_{\ell r} \to G$
then constitute a crossed module.

\begin{prop}
\label{crossedvalue}
With respect to a suitable choice of the isomorphism
\[
\Ho^3(C_s,\MZ/\ell) \cong \MZ/(\ell,s),
\]
the resulting associated crossed $2$-fold extension
\begin{equation}
\mathrm e^2\colon 
0
\longrightarrow
C_\ell
\longrightarrow
C_{\ell r}
\longrightarrow
G
\longrightarrow
C_s
\longrightarrow
1
\label{crossed12}
\end{equation}
represents the class in
$\Ho^3(C_s,\MZ/\ell)$ that corresponds to
$\tfrac {(t-1)f}r \mod (\ell,s)$.
\end{prop}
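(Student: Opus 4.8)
The plan is to compute an explicit inhomogeneous $3$-cocycle representing $[\mathrm e^2]\in\mathrm H^3(C_s,\MZ/\ell)$ by the standard crossed $2$-fold extension-to-cocycle recipe of \cite{crossed} recalled in \cref{twothree}, and then to identify its cohomology class by comparison with the $2$-periodic resolution of the cyclic group $C_s$. First I would pin down the coefficient module: the kernel is $C_\ell=\MZ/\ell$, the normal subgroup $N=C_r$ acts trivially on it (as $N$ maps to the neutral element of $Q$), and $Q=C_s=\langle\bar x\rangle$ acts on $C_\ell=\langle v^r\rangle$ through $\bar x\cdot v^r=(v^r)^t$, i.e.\ by multiplication by $t$; this action is trivial exactly when $\ell\mid t-1$, which is the regime in which the clean isomorphism $\mathrm H^3(C_s,\MZ/\ell)\cong\MZ/(\ell,s)$ holds. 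I would fix the \emph{suitable choice of isomorphism} to be the standard one coming from the $2$-periodic resolution $\cdots\xrightarrow{x-1}\MZ C_s\xrightarrow{N}\MZ C_s\xrightarrow{x-1}\MZ C_s\to\MZ$, under which $\mathrm H^3(C_s,\MZ/\ell)=\ker(N)/\operatorname{im}(x-1)$ with $N=1+x+\cdots+x^{s-1}$.

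Next I would run the recipe using the section $u_{\bar x^i}=x^i$ of $p\colon G\to C_s$ for $0\le i<s$. The factor set $f(\bar x^i,\bar x^j)=u_iu_ju_{i+j}^{-1}=x^{s\,\epsilon(i,j)}=y^{f\epsilon(i,j)}$ is governed by the \emph{carry function} $\epsilon(i,j)=[\,i+j\ge s\,]\in\{0,1\}$, using $x^s=y^f$; note $y^f$ is central in $G$ since $tf\equiv f\pmod r$. Lifting through $\partial(v)=y$ gives the normalized lift $\tilde f(\bar x^i,\bar x^j)=v^{f\epsilon(i,j)}\in C_{\ell r}$, and the crossed-module action of $x^i$ on $C_{\ell r}$ is multiplication by $t^i$ on exponents of $v$. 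Writing $C_{\ell r}=\MZ/\ell r$ additively (so $v\leftrightarrow1$, $C_\ell\leftrightarrow r\MZ/\ell r\MZ$) and setting $F(i,j)=f\epsilon(i,j)$, the associativity defect is
\[
c(i,j,k)=t^{i}F(j,k)+F(i,j{+}k)-F(i{+}j,k)-F(i,j)\in\MZ/\ell r .
\]

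The crux of the computation is the $2$-cocycle identity for the carry function, $\epsilon(j,k)-\epsilon(i{+}j,k)+\epsilon(i,j{+}k)-\epsilon(i,j)=0$, which collapses the defect to
\[
c(i,j,k)=f\bigl(t^{i}-1\bigr)\epsilon(j,k).
\]
Since $r\mid f(t-1)$ and $t^{i}-1=(t-1)(1+t+\cdots+t^{i-1})$, I get $r\mid f(t^{i}-1)$, so $c$ indeed takes values in $C_\ell=r\MZ/\ell r$; dividing by $r$ yields the reduced $\MZ/\ell$-valued cocycle $\bar c(i,j,k)=\tfrac{(t-1)f}{r}\,\sigma_i\,\epsilon(j,k)$ with $\sigma_i=1+t+\cdots+t^{i-1}$. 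Finally I would read off the class by pulling $\bar c$ back along a comparison chain map $\phi$ from the periodic resolution to the bar resolution, using $\phi_0(e_0)=(\,)$, $\phi_1(e_1)=(x)$, $\phi_2(e_2)=\sum_{i=0}^{s-1}(x^{i}\mid x)$, and $\phi_3(e_3)=\sum_{i=0}^{s-1}(x\mid x^{i}\mid x)$ (whose boundary I would check equals $(x-1)\phi_2(e_2)$). Evaluating, only the term $\epsilon(s{-}1,1)=1$ survives and $\sigma_1=1$, giving $\bar c(\phi_3 e_3)=\tfrac{(t-1)f}{r}\bmod\ell$; this element lies automatically in $\ker(N)$ and represents $\tfrac{(t-1)f}{r}\bmod(\ell,s)$, as asserted.

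The main obstacle I anticipate is the final identification step rather than the defect computation: matching $\bar c$ against $\ker(N)/\operatorname{im}(x-1)$ with the correct normalization of the isomorphism $\ker(N)\cong{}_s(\MZ/\ell)\cong\MZ/(\ell,s)$ (absorbing any sign and the choice of generator into the phrase \emph{suitable choice}), together with the careful bookkeeping of exponents modulo $\ell r$, $r$, and $\ell$. I would also take care over the role of the $C_s$-action on $C_\ell$: the clean statement $\mathrm H^3(C_s,\MZ/\ell)\cong\MZ/(\ell,s)$ and the formula $\tfrac{(t-1)f}{r}$ are consistent precisely in the trivial-action situation $\ell\mid t-1$ (where $\tfrac{(t-1)f}{r}$ is moreover well defined modulo $(\ell,s)$ under $f\mapsto f+r$), which covers the metacyclic examples of interest; the defect formula $c=f(t^{i}-1)\epsilon(j,k)$ holds verbatim in general and would show the class to vanish in the degenerate nontrivial-action cases, matching the relevant cohomology.
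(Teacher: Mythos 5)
Your argument is correct and lands on exactly the same representative, the element $\tfrac{(t-1)f}{r}\bmod\ell$ of $\ker(N)/\mathrm{im}(x-1)\cong\Ho^3(C_s,\MZ/\ell)$, but by a genuinely different and much more computational route than the paper. The paper never writes down a $3$-cocycle: it takes the free crossed module $\partial\colon\MZ C_s\langle b\rangle\to F_x$ of the presentation $\langle x;x^s\rangle$ of $C_s$, lifts the identity of $C_s$ to a morphism of crossed $2$-fold extensions by setting $\alpha_0(x)=x$ and $\alpha_1(b)=v^f$, and reads off the class directly as $\alpha_2((x-1)b)={}^x(v^f)v^{-f}=(v^r)^{(t-1)f/r}$, using the identification of $\Ho^3(C_s,-)$ with $C_s$-maps out of $\mathrm IC_s\langle b\rangle$ modulo restrictions from $\MZ C_s\langle b\rangle$ --- a two-line computation that is the whole point of the crossed-module formalism. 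Your route --- the section $u_{\bar x^i}=x^i$, the factor set governed by the carry function $\epsilon$, the collapse of the associativity defect to $f(t^i-1)\epsilon(j,k)$ via the $2$-cocycle identity for $\epsilon$, and the comparison map $\phi_3(e_3)=\sum_i(x\mid x^i\mid x)$ into the bar resolution --- checks out (the single surviving term $\sigma_1\epsilon(s-1,1)$ is precisely the paper's ${}^x(v^f)v^{-f}$ divided by $r$), and it buys a completely explicit inhomogeneous cocycle at the cost of the bookkeeping you flag. Your caveat about the $C_s$-action on $C_\ell$ being multiplication by $t$ is well taken and is in fact treated more carefully by you than by the paper: the isomorphism $\Ho^3(C_s,\MZ/\ell)\cong\MZ/(\ell,s)$ invoked in the statement only exists when $t\equiv1\pmod\ell$, and both proofs really compute the class in $\ker(N)/\mathrm{im}(x-1)$ in general, the hypothesis being absorbed into ``suitable choice of the isomorphism''. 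One small quibble: in the nontrivial-action case the class is still represented by $\tfrac{(t-1)f}{r}$ in $\ker(N)/\mathrm{im}(x-1)$ and need not vanish, so your closing parenthetical overstates the degeneration; this does not affect the case the proposition is actually about.
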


\begin{proof}
Let $F_x$ denote the free group on $x$,
let 
$\mathbb Z C_s\langle b\rangle$
denote the free $C_s$-module on a single generator $b$,
view $\mathbb Z C_s\langle b\rangle$
as an $F_x$-group via the canonical projection $F_x \to C_s$,
define the morphism
$\partial \colon \mathbb Z C_s\langle b\rangle \to F_x$
of $F_x$-groups by
$\partial (b)=x^s$, and note that
$\partial \colon \mathbb Z C_s\langle b\rangle
\to
F_x
$
is the free crossed module associated to
the presentation $\langle x; x^s\rangle$
of the group $C_s$.
Using the familiar notation $\mathrm IC_s \subseteq \mathbb ZC_s$
for the augmentation ideal of $C_s$,
consider the associated crossed 2-fold extension
\begin{equation}
\begin{CD}
0
@>>>
\mathrm IC_s\langle b\rangle
@>>>
\mathbb Z C_s\langle b\rangle
@>{\partial}>>
F_x
@>>>
C_s
@>>>
1, 
\end{CD}
\end{equation}
and lift the identity of $C_s$
to a morphism
\begin{equation}
\begin{CD}
0
@>>>
\mathrm IC_s\langle b\rangle
@>>>
\mathbb Z C_s\langle b\rangle
@>{\partial}>>
F_x
@>>>
C_s
@>>>
1 
\\
@.
@V{\alpha_2}VV
@V{\alpha_1}VV
@V{\alpha_0}VV
@|
@.
\\
0
@>>>
C_\ell
@>>>
C_{\ell r}
@>{\partial}>>
G
@>>>
C_s
@>>>
1
\end{CD}
\label{morcrossed2}
\end{equation}
of crossed 2-fold extensions as follows: 
With an abuse of the notation $x$,
let $\alpha_0(x)=x$, and let
$\alpha_1(b)=v^f$.
Then
\begin{align*}
\alpha_1((x-1)b)&={}^x(v^f)v^{-f}=v^{(t-1)f}
=(v^r)^{\tfrac {(t-1)f}r} \in C_\ell \subseteq C_{\ell r}
\end{align*}
Consequently
$\alpha_2((x-1)b)=(v^r)^{\tfrac {(t-1)f}r} \in C_\ell \subseteq C_{\ell r}$, 
whence $\alpha_2$ represents
the member of $\Ho^3(C_s,\MZ/\ell) \cong \MZ/(\ell,s)$
that corresponds to $\tfrac {(t-1)f}r \mod (\ell,s)$.
\end{proof}

\begin{rema} 
\label{nontrivial}
It is immediate that, for a suitable choice of the
parameters,
$\tfrac {(t-1)f}r$ is non-trivial modulo $(\ell,s)$.
For example, as in the situation of 
\cite[Theorem E]{MR1014607}, suppose that
$p$ is a prime that divides $r$, $s$, 
$\tfrac {t^s-1}r$ and $f$, but that it does not
divide
$\tfrac {(t-1)f}r$.
Then, with $\ell = p$,
the 2-cocycle $\alpha_2$ 
and hence the crossed $2$-fold extension $\mathrm e^2$ 
represent a generator of
$\Ho^3(C_s,\mathbb  Z/p)\cong \MZ/p$. 
Indeed, for a suitable choice of the data,
in the notation of \cite[Theorem E]{MR1014607},
this class is that written there as $\omega_x c_x$.
\end{rema}

Since $\Ho^2(C_s,\mathbb C^*)$ is trivial,
the homomorphism
$\Ho^3(C_s,\mathbb  Z/\ell) \to \Ho^3(C_s,\mathbb C^*)$
induced by the canonical injection
$C_\ell \to \mathbb C^*$
is injective
whence, when $\alpha_2$ represents a non-trivial coho\-mology class,
the composite of $\alpha_2$ with
the canonical injection
$C_\ell \to \mathbb C^*$
yields a non-trivial coho\-mology class in 
$\Ho^3(C_s,\mathbb C^*)\cong \mathbb Z/s$.
To construct a crossed 2-fold extension
representing that cohomology class, let
$\widehat C_{\ell r}$ denote the universal group characterized by the 
requirement that the diagram
\begin{equation}
\begin{CD}
\mathrm e_{\ell r}
\colon
0
@>>>
C_\ell
@>>>
C_{\ell r}
@>>>
C_r
@>>>
1
\\
@.
@VVV
@VVV
@|
@.
\\
\mathrm e^*
\colon
0
@>>>
\mathbb C^*
@>>>
\widehat C_{\ell r}
@>>>
C_r
@>>>
1
\end{CD}
\label{diag22}
\end{equation}
be commutative with exact rows.
The $G$-action \eqref{action22} on $C_{\ell r}$
and the trivial $G$-action on $\mathbb C^*$ combine to 
a
 $G$-action
\begin{equation}
G \times \widehat C_{\ell r} \longrightarrow \widehat C_{\ell r}
\label{action32}
\end{equation}
on $\widehat C_{\ell r}$ that turns the 
obvious map $\widehat \partial\colon
\widehat C_{\ell r} \to G$
into a crossed module.
The cohomology class under discussion is represented by the resulting 
crossed 2-fold extension
\begin{equation}
\mathrm e^*_2 \colon
0
\longrightarrow
\mathbb C^*
\longrightarrow
\widehat C_{\ell r}
\stackrel{\widehat \partial}\longrightarrow
G
\longrightarrow
C_s
\longrightarrow
1 .
\label{crossed22}
\end{equation}
Since the group $\mathbb C^*$ is a divisible abelian group,
the bottom row extension $\mathrm e^*$ in \eqref{diag22} splits in the category
of abelian groups.
However, when the class represented by $\alpha_2$ is non-trivial, 
such a splitting
cannot be compatible with the $G$-module structures.

Under the present circumstances,
since the action of $N$ on $\mathbb C^*$ is trivial,
the bottom diagram of what corresponds to 
\eqref{diag1},
with $M=\mathbb C^*$,
takes the form
\begin{equation}
\begin{CD}
0 @>>> \Hom (N,\mathbb C^*) @>>>       \Aut_G(\mathrm e^* ) @>>> G @>>> 1
\\
@.
@V{\cong}VV
@VVV
@VVV
@.
\\
0 @>>> \mathrm H^1(N,\mathbb C^*) @>>> \Out_G(\mathrm e^* ) @>>> Q @>>> 1
\end{CD}
\label{diag10}
\end{equation}
with exact rows and,
cf. 
\cref{cmcp},
the $G$-action  \eqref{action32}
(turning $\widehat C_{\ell r}$ together with the canonical
homomorphism $\widehat \partial\colon
\widehat C_{\ell r} \to G$
into a $G$-crossed module)
determines and is determined by a crossed pair structure
$\psi\colon Q \to  \Out_G(\mathrm e^*)$ on 
the group extension $\mathrm e^*$.
The resulting crossed pair 
\[
\left(\mathrm e^*\colon \mathbb C^* \rightarrowtail \widehat C_{\ell r}
\twoheadrightarrow C_r,
\psi\colon Q \to  \Out_G(\mathrm e^* )\right)
\]
represents a non-trivial class 
\begin{equation}
[(\mathrm e^*,\psi)] \in
\mathrm{Xpext}(G,N;\mathbb C^*)
\label{class2}
\end{equation}
in the group $\mathrm{Xpext}(G,N;\mathbb C^*)$
of crossed pair extensions with respect to the
group extension \eqref{gemm} and the (trivial) $G$-module
$\mathbb C^*$,
cf. \cite[Theorem~1]{MR597986} and
Subsection \ref{6.11}
for these notions
and,
cf. 
 \cite[Theorem~2]{MR597986}
or Subsection \ref{6.11},
the homomorphism
\begin{equation*}
\Delta\colon \mathrm{Xpext} (G,N;\mathbb C^*)
\longrightarrow \mathrm H^3(Q,\mathbb C^*)
\end{equation*}
sends the class \eqref{class2}
to
$[\mathrm e^*_2] \in \mathrm H^3(Q,\mathbb C^*) \cong \mathbb Z/s$.

Let $\pi \colon \widetilde \Bsp \to \Bsp$
be a regular covering projection
having the group $N=C_r$ 
as deck transformation group, and let
$S= \mathrm{Map}(\Bsp,\mathbb C)$ and
$T= \mathrm{Map}(\widetilde \Bsp,\mathbb C)$.
Then $T|S$ is a Galois extension of 
commutative rings with Galois group $N$, cf. 
\cref{exathr}.
Suppose that
$\widetilde \Bsp$ is endowed with a  
$G$-action that extends the $N$-action.
Then the quotient group $Q = G/N$ acts on $\Bsp$ in an obvious manner, and
$T|S$ is a $Q$-normal Galois extension of commutative rings, 
with structure extension 
\eqref{gemm}
and structure homomorphism $\llambda \colon G \to \Aut^S(T)$.
By construction, 
$\mathrm U(S)= \Map(\Bsp,\mathbb C^*)$
and
$\mathrm U(T)=\Map(\widetilde \Bsp,\mathbb C^*)$.
Since the groups $N$, $G$, and $Q$ are finite,
the homomorphisms
\begin{align}
\Ho^*(N,\mathbb C^*)&\longrightarrow
\Ho^*(N,\mathrm U(T)),
\\
\Ho^*(Q,\mathbb C^*) &\longrightarrow
\Ho^*(Q,\mathrm U(S)),
\\
\mathrm{Xpext} (G,N;\mathbb C^*)
&\longrightarrow \mathrm{Xpext} (G,N;\mathrm U(T)),
\label{explicit}
\end{align}
induced by the canonical injections
$\mathbb C^* \to \Map(\widetilde \Bsp,\mathbb C^*)$
and
$\mathbb C^* \to \Map(\Bsp,\mathbb C^*)$
(induced by the assignments to a member of $\mathbb C^*$ of the
associated constant maps), respectively,
are isomorphisms,
the third homomorphism being an isomorphism
in view of the naturality of the exact sequence
{\rm \cite[(1.9)]{MR597986}}
(spelled out as the top sequence in the diagram in
\cref{withoutp}).

For later reference, we now give an explicit description
of a representative of the image of \eqref{class2}
under \eqref{explicit}.
To this end, let
$C_T$ denote the universal group characterized by the 
requirement that the diagram
\begin{equation}
\begin{CD}
\mathrm e^*
\colon
0
@>>>
\mathbb C^*
@>>>
\widehat C_{\ell r}
@>>>
C_r
@>>>
1
\\
@.
@VVV
@VVV
@|
@.
\\
\mathrm e_T
\colon
0
@>>>
\mathrm U(T)
@>>>
C_T
@>>>
C_r
@>>>
1
\end{CD}
\label{diag222}
\end{equation}
be commutative with exact rows.
The $G$-action \eqref{action32} on $\widehat C_{\ell r}$
and the $G$-action on $\mathrm U(T)$ combine to 
a
 $G$-action
\begin{equation}
G \times C_T \longrightarrow C_T
\label{action42}
\end{equation}
on $C_T$.
With $M=\mathrm U(T)$,
 diagram 
\eqref{diag1}
takes the form
\begin{equation}
\begin{CD}
@. 0 @. 0 @.@.
\\
@.
@VVV
@VVV
@.
@.
\\
@. 
\mathrm U(S)
@= 
\mathrm U(S)
@. 
1
@. 
\\
@.
@VVV
@VVV
@VVV
@.
\\
0
@>>> 
\mathrm U(T)
@>>> 
C_T
@>>> 
N 
@>>> 
1
\\
@.
@VVV
@VVV
@VVV
@.
\\
0 @>>> \Der (N,\mathrm U(T)) @>>>       \Aut_G(\mathrm e_T ) @>>> G @>>> 1
\\
@.
@VVV
@VVV
@VVV
@.
\\
0 @>>> \mathrm H^1(N,\mathrm U(T)) @>>> \Out_G(\mathrm e_T ) @>>> Q @>>> 1
\\
@.
@VVV
@VVV
@VVV
@.
\\
@. 1 @. 1 @.1 @. ,
\end{CD}
\label{diag100}
\end{equation}
with exact rows and columns, the $G$-action 
\eqref{action42}
on $C_T$ induces a section
$\Psi_T\colon G \to \Aut_G(\mathrm e_T)$
for the third row group extension in \eqref{diag100},
and this section, in turn,
induces a section
$\psi_T\colon Q \to \Out_G(\mathrm e_T )$
for the bottom row extension in \eqref{diag100} in such a way that
$(\mathrm e_T,\psi_T)$ is a crossed pair.
By construction, then, the image 
\[
\Delta[(\mathrm e_T,\psi_T)] \in \Ho^3(Q,\mathrm U(S)) 
\]
of the class 
\begin{equation}
[(\mathrm e_T,\psi_T)]\in \mathrm {Xpext}(G,N;\mathrm U(T))
\label{class3}
\end{equation}
is represented by the crossed two-fold extension 
that arises 
as the top row of the commutative diagram
\begin{equation}
\begin{CD}
\mathrm e^2_T \colon
0
@>>>
\mathrm U(S)
@>>>
C_T
@>{\partial_T}>>
B^{\psi_T}
@>>>
Q
@>>>
1
\\
@.
@|
@|
@VVV
@V{\psi_T}VV
@.
\\
\phantom{\mathrm e^2_T \colon}
0
@>>>
\mathrm U(S)
@>>>
C_T
@>>>
\Aut_G(\mathrm e_T)
@>>>
\Out_Q(\mathrm e_T)
@>>>
1,
\end{CD}
\label{101}
\end{equation}
the group $B^{\psi_T}$ being characterized by the requirement that
the right-hand square be a pull back square.

The naturality of the constructions entails that the diagram
\begin{equation}
\begin{CD}
\mathrm{Xpext} (G,N;\mathbb C^*)
@>{\Delta}>> 
\mathrm H^3(Q,\mathbb C^*)
\\
@V{\cong}VV
@V{\cong}VV
\\
\mathrm{Xpext}(G,N;\mathrm U(T))
@>{\Delta}>>
\Ho^3(Q,\mathrm U(S))
\end{CD}
\label{diag102}
\end{equation}
is commutative. 
In the case at hand the commutativity of \eqref{diag102}
is an immediate consequence of the observation that
the above homomorphism
$\Psi_T\colon G \to \Aut_G(\mathrm e_T )$
induces a homomorphism $G \to B^{\psi_T}$
which makes the diagram
\begin{equation}
\begin{CD}
\mathrm e_2^* \colon
0
@>>>
\mathbb C^*
@>>>
\widehat C_{\ell r}
@>{\widehat \partial}>>
G
@>>>
Q
@>>>
1
\\
@.
@|
@|
@VVV
@V{\psi_T}VV
@.
\\
\mathrm e^2_T \colon
0
@>>>
\mathrm U(S)
@>>>
C_T
@>{\partial_T}>>
B^{\psi_T}
@>>>
Q
@>>>
1
\end{CD}
\label{103}
\end{equation}
commutative.
This commutativity, in turn,
implies that (i) the class \eqref{class3}
yields a non-trivial class in
the group
$\mathrm{Xpext}(G,N;\mathrm U(T))$ of crossed pair extensions
with respect to
\eqref{gemm}
and $\mathrm U(T)$ and that (ii)
this class goes under $\Delta$ to the image 
in $\Ho^3(Q,\mathrm U(S))$ of the class
$[\mathrm e_2] \in \mathrm H^3(Q,\mathbb C^*) \cong \mathbb Z/s$,
non-trivial for suitable choices of the parameters, cf. Remark \ref{nontrivial}
above.

Recall
the homomorphism
\eqref{13.2},
of the kind 
\[
\mathrm{cpa}\colon \mathrm{Xpext}(G,N;\mathrm U(T)) 
\longrightarrow \mathrm{XB}(T|S;G,Q).
\]
This homomorphism fits into the
commutative diagram
\begin{equation}
\begin{CD}
\Ho^1(Q,\Ho^1(N,\mathrm U(T)))
@>{\mathrm d_2}>>
\Ho^3(Q,\mathrm U(S))
\\
@V{\alpha}VV
@|
\\
\mathrm{Xpext}(G,N;\mathrm U(T))
@>{\Delta}>>
\Ho^3(Q,\mathrm U(S))
\\
@V{\mathrm{cpa}}VV
@|
\\
\mathrm{XB}(T|S;G,Q)
@>{t}>>
\Ho^3(Q,\mathrm U(S)) .
\end{CD}
\end{equation}
Here the upper square is part of the diagram
in \cite[Subsection 1.4]{MR597986},
and
the lower square results from
\cref{eighttermcomp}.
Consequently the $Q$-normal crossed pair algebra
\begin{equation}
\left(A_{\mathrm e_T},\sigma_{\psi_T}\right)
\label{ntcpa}
\end{equation}
with respect to the $Q$-normal Galois extension $T|S$ of commutative rings
that
arises from the
crossed pair
$(\mathrm e_T,\psi_T)$
with respect to
\eqref{gemm}
and $\mathrm U(T)$ 
via the construction in 
Subsection \ref{cpna}
has non-zero Teichm\"uller class in $\Ho^3(Q,\mathrm U(S))$.

To realize this kind of example concretely,
consider a faithful unitary representation 
$E$ of complex dimension $n$
of the metacyclic group $G$
\cite[\S 47 ~p.~335]{MR2215618}.
Things can be arranged in such a way that
the unitary $G$-representation
yields an action of $G$ on the unit sphere 
$S^{2n-1}\subseteq \mathbb C^n$
so that the restriction of the action 
to $N=\mathbb C_r$ is free but, apart from
trivial cases,
 the $G$-action itself will not be free.
Thus we may take $\widetilde X=S^{2n-1}$ and
$X=S^{2n-1}/C_r$ (a lens space) and carry out the above construction.

\begin{rema}
The above observation that the bottom row in \eqref{diag22}
splits in the category of abelian groups
translates, 
in  view of the exactness of the sequence
{\rm \cite[(1.9)]{MR597986}}
(spelled out as the top sequence in the diagram in
\cref{withoutp})
 to the fact that the above homomorphism
$\alpha$ is an isomorphism.
\end{rema}

\begin{rema}
Apart from trivial cases,
while $C_T$ acquires a $G$-action,
this action does not turn
the obvious map $C_T \to G$ into a crossed module
since the action of $N$ on the kernel $\mathrm U(T)$
of $C_T \to G$ is non-trivial when $N$ is non-trivial.
Thus we cannot get away with the crossed pair concept,
more general than that of a crossed module.
\end{rema}

\section{Examples arising from $\mathrm C^*$-dynamical systems}
\label{dynamical}
The group 3-cocycle in \cite[II 3.1 p. 147]{MR574031}
with values in the group of units of the center of a von Neumann algebra
is an instance of a Teichm\"uller cocycle in the
von Neumann algebra context. The aim of this 3-cocycle was indeed 
to explore a crossed product construction formally
of the same kind as the crossed product
in 
\cref{three} above,
and these ideas were pushed further in
\cite{MR548118,MR587749}
to gain structural insight into von Neumann algebras
(but to our knowledge the relationship with the Teichm\"uller cocycle
was not observed in the literature).

Given a topological space $X$,
the results of \cite{MR0150608,MR0163182}
are nowadays well known to establish an isomorphism
\begin{equation}
\delta\colon \mathrm B(X) \longrightarrow \check{\Ho}^3(X,\mathbb Z)
\end{equation}
between the Brauer group $\mathrm B(X)$
of Morita equivalence classes $[A]$ in the sense of Rieffel 
of continuous-trace
$\mathrm C^*$-algebras $A$ having spectrum $X$
and the third $\v{C}ech$-cohomology group $\check{\Ho}^3(X,\mathbb Z)$,
see, e.~g., \cite{MR1446378, MR1239913}.
The continuous-trace
$\mathrm C^*$-algebras $A$ having spectrum $X$ can be characterized as the
$\mathrm C^*$-algebras which are locally Morita equivalent
to the commutative algebra $C_0(X)$ of continuous complex-valued functions
on $X$ that vanish at infinity, and the Dixmier-Douady class is the obstruction
to building a global equivalence with $C_0(X)$
from the local equivalences.

Let now $Q$ denote a group and suppose that $Q$ acts on $X$ 
and hence on $C_0(X)$.
Given a
 continuous-trace
$\mathrm C^*$-algebras $A$ having spectrum $X$,
just as before, we define a $Q$-normal structure on $A$ to be
a homomorphism $\kappa \colon Q \to \Out(A)=\Aut(A)/\mathrm{Inn}(A)$.
Then, with a suitable definition of
the group 
$\mathrm U(A)$ of units of $A$,
 the Teichm\"uller class in $\Ho^3(Q,\mathrm U(A))$ is defined, 
just as before in the ordinary algebraic case.
Since
the algebraic theory developed 
in \cref{cI} and \cref{cII}
involves only the objects themselves but
does not involve any cocycles,
it is now a laborious but most likely rather straightforward endeavor to
extend that theory
to the $\mathrm C^*$-algebra case.

In \cite{MR1446378}, the theory 
of $\mathrm C^*$-algebra Brauer groups
was extended so that
group actions can be accomodated, and 
a corresponding equivariant Brauer group
was defined. 
In  \cite[Lemma 4.6]{MR1446378},
even a version of a Teichm\"uller cocycle shows up
(but the authors did not recognize that the cocycle 
they constructed is a kind of Teichm\"uller cocycle).

In this area there are presumably many examples
of a non-trivial Teichm\"uller class
to be found and new phenomena are lurking behind.
See also 
\cite{MR2278062, MR1712519, MR946434, MR1119194, MR687968}. 

\section{Complements}

Other explicit examples of a non-trivial Teichm\"uller cocycle
can be found in \cite{MR1836363} and \cite{MR657422}.

\begin{rema}
In \cite{MR1255884},
the Teichm\"uller cocycle serves as a crucial means
for building a Galois theory of skew fields.
It is worthwhile noting that, in \lq\lq non-commutative Galois theory\rq\rq,
a counterexample in
\cite[p.~141]{MR0002858} serves as well as a counterexample in 
\cite[p.~558]{MR892917},
\cite[p.~298]{MR0037834},
and \cite[\S VI.11 p.~147]{MR0222106}.
\end{rema}

\section*{Acknowledgement}
We gratefully acknowledge
support by the Labex CEMPI (ANR-11-LABX-0007-01).

\nocite{MR549932} 
\addcontentsline{toc}{section}{References}

\bibliographystyle{alpha}
\def\cprime{$'$} \def\cprime{$'$} \def\cprime{$'$} \def\cprime{$'$}
  \def\cprime{$'$} \def\cprime{$'$} \def\cprime{$'$} \def\cprime{$'$}
  \def\dbar{\leavevmode\hbox to 0pt{\hskip.2ex \accent"16\hss}d}
  \def\cprime{$'$} \def\cprime{$'$} \def\cprime{$'$} \def\cprime{$'$}
  \def\cprime{$'$} \def\Dbar{\leavevmode\lower.6ex\hbox to 0pt{\hskip-.23ex
  \accent"16\hss}D} \def\cftil#1{\ifmmode\setbox7\hbox{$\accent"5E#1$}\else
  \setbox7\hbox{\accent"5E#1}\penalty 10000\relax\fi\raise 1\ht7
  \hbox{\lower1.15ex\hbox to 1\wd7{\hss\accent"7E\hss}}\penalty 10000
  \hskip-1\wd7\penalty 10000\box7}
  \def\cfudot#1{\ifmmode\setbox7\hbox{$\accent"5E#1$}\else
  \setbox7\hbox{\accent"5E#1}\penalty 10000\relax\fi\raise 1\ht7
  \hbox{\raise.1ex\hbox to 1\wd7{\hss.\hss}}\penalty 10000 \hskip-1\wd7\penalty
  10000\box7} \def\polhk#1{\setbox0=\hbox{#1}{\ooalign{\hidewidth
  \lower1.5ex\hbox{`}\hidewidth\crcr\unhbox0}}}
  \def\polhk#1{\setbox0=\hbox{#1}{\ooalign{\hidewidth
  \lower1.5ex\hbox{`}\hidewidth\crcr\unhbox0}}}
  \def\polhk#1{\setbox0=\hbox{#1}{\ooalign{\hidewidth
  \lower1.5ex\hbox{`}\hidewidth\crcr\unhbox0}}}

\end{document}